\newcommand{\sC}{\ensuremath{\mathscr{C}}\xspace}
\newcommand{\sH}{\ensuremath{\mathscr{H}}\xspace}
\newcommand{\sL}{\ensuremath{\mathscr{L}}\xspace}
\newcommand{\sP}{\ensuremath{\mathscr{P}}\xspace}
\newcommand{\sZ}{\ensuremath{\mathscr{Z}}\xspace}
\newcommand{\fkm}{\ensuremath{\mathfrak{m}}\xspace}
\newcommand{\heart}{{\heartsuit}}
\newcommand{\nat}{{\natural}}
\newcommand{\diam}{{\Diamond}}
\newcommand{\BA}{\ensuremath{\mathbb {A}}\xspace}
\newcommand{\BB}{\ensuremath{\mathbb {B}}\xspace}
\newcommand{\BC}{\ensuremath{\mathbb {C}}\xspace}
\newcommand{\BF}{\ensuremath{\mathbb {F}}\xspace}
\newcommand{\BG}{\ensuremath{\mathbb {G}}\xspace}
\newcommand{\BI}{\ensuremath{\mathbb {I}}\xspace}
\newcommand{\BJ}{\ensuremath{\mathbb {J}}\xspace}
\newcommand{\BK}{\ensuremath{\mathbb {K}}\xspace}
\newcommand{\BO}{\ensuremath{\mathbb {O}}\xspace}
\newcommand{\BP}{\ensuremath{\mathbb {P}}\xspace}
\newcommand{\BQ}{\ensuremath{\mathbb {Q}}\xspace}
\newcommand{\BR}{\ensuremath{\mathbb {R}}\xspace}
\newcommand{\BZ}{\ensuremath{\mathbb {Z}}\xspace}
\newcommand{\CA}{\ensuremath{\mathcal {A}}\xspace}
\newcommand{\CC}{\ensuremath{\mathcal {C}}\xspace}
\newcommand{\CE}{\ensuremath{\mathcal {E}}\xspace}
\newcommand{\CF}{\ensuremath{\mathcal {F}}\xspace}
\newcommand{\CG}{\ensuremath{\mathcal {G}}\xspace}
\newcommand{\CI}{\ensuremath{\mathcal {I}}\xspace}
\newcommand{\CK}{\ensuremath{\mathcal {K}}\xspace}
\newcommand{\CL}{\ensuremath{\mathcal {L}}\xspace}
\newcommand{\CM}{\ensuremath{\mathcal {M}}\xspace}
\newcommand{\CN}{\ensuremath{\mathcal {N}}\xspace}
\newcommand{\CO}{\ensuremath{\mathcal {O}}\xspace}
\newcommand{\CQ}{\ensuremath{\mathcal {Q}}\xspace}
\newcommand{\CY}{\ensuremath{\mathcal {Y}}\xspace}
\newcommand{\RR}{\ensuremath{\mathrm {R}}\xspace}
\newcommand{\ab}{{\mathrm{ab}}}
\newcommand{\Ad}{{\mathrm{Ad}}}
\DeclareMathOperator{\Aut}{Aut}
\DeclareMathOperator{\aut}{aut}
\newcommand{\Ch}{{\mathrm{Ch}}}
\DeclareMathOperator{\coker}{coker}
\newcommand{\cl}{{\mathrm{cl}}}
\newcommand{\codim}{{\mathrm {codim}}}
\DeclareMathOperator{\cusp}{cusp}
\DeclareMathOperator{\der}{der}
\newcommand{\Div}{{\mathrm{Div}}}
\renewcommand{\div}{{\mathrm{div}}}
\DeclareMathOperator{\Eis}{Eis}
\DeclareMathOperator{\End}{End}
\DeclareMathOperator{\Ext}{Ext}
\DeclareMathOperator{\Frob}{Frob}
\DeclareMathOperator{\Fr}{Fr}
\DeclareMathOperator{\Gal}{Gal}
\newcommand{\GL}{\mathrm{GL}}
\DeclareMathOperator{\Hom}{Hom}
\newcommand{\id}{\ensuremath{\mathrm{id}}\xspace}
\newcommand{\Ind}{{\mathrm{Ind}}}
\newcommand{\inv}{{\mathrm{inv}}}
\DeclareMathOperator{\Jac}{Jac}
\DeclareMathOperator{\Ker}{Ker}
\newcommand{\naive}{\ensuremath{\mathrm{naive}}\xspace}
\DeclareMathOperator{\Nm}{Nm}
\DeclareMathOperator{\Quot}{Quot}
\newcommand{\PGL}{{\mathrm{PGL}}}
\DeclareMathOperator{\Pic}{Pic}
\renewcommand{\Re}{{\mathrm{Re}}}
\newcommand{\red}{\ensuremath{\mathrm{red}}\xspace}
\DeclareMathOperator{\Res}{Res}
\newcommand{\rs}{\ensuremath{\mathrm{rs}}\xspace}
\newcommand{\Sat}{{\mathrm{Sat}}}
\newcommand{\SL}{{\mathrm{SL}}}
\DeclareMathOperator{\Spec}{Spec}
\newcommand{\Sp}{{\mathrm{Sp}}}
\newcommand{\Span}{{\mathrm{Span}}}
\DeclareMathOperator{\Supp}{Supp}
\DeclareMathOperator{\sgn}{sgn}
\DeclareMathOperator\Tor{Tor}
\DeclareMathOperator{\tr}{tr}
\DeclareMathOperator{\Tr}{Tr}
\DeclareMathOperator{\vol}{vol}
\newcommand{\Bun}{{\mathrm{Bun}}}
\newcommand{\Sht}{{\mathrm{Sht}}}
\newcommand{\Hk}{{\mathrm{Hk}}}
\newcommand{\Gr}{\mathrm{Gr}}
\newcommand{\Mat}{\mathrm{Mat}}
\newcommand{\val}{\mathrm{val}}
\newcommand{\Coh}{\mathrm{Coh}}
\newcommand{\wt}{\widetilde}
\newcommand{\wh}{\widehat}
\newcommand{\pair}[1]{\langle {#1} \rangle}
\newcommand{\ds}{\displaystyle}
\newcommand{\ov}{\overline}
\newcommand{\ul}{\underline}
\newcommand{\bu}{\bullet}
\newcommand{\incl}{\hookrightarrow}
\newcommand{\lra}{\longrightarrow}
\newcommand{\imp}{\Longrightarrow}
\newcommand{\bs}{\backslash}
\newcommand{\ep}{\varepsilon}
\def\AA{\mathbb{A}}
\def\BB{\mathbb{B}}
\def\CC{\mathbb{C}}
\def\DD{\mathbb{D}}
\def\FF{\mathbb{F}}
\def\GG{\mathbb{G}}
\def\II{\mathbb{I}}
\def\JJ{\mathbb{J}}
\def\OO{\mathbb{O}}
\def\PP{\mathbb{P}}
\def\QQ{\mathbb{Q}}
\def\RR{\mathbb{R}}
\def\ZZ{\mathbb{Z}}
\def\calA{\mathcal{A}}
\def\calB{\mathcal{B}}
\def\calC{\mathcal{C}}
\def\calD{\mathcal{D}}
\def\calE{\mathcal{E}}
\def\calF{\mathcal{F}}
\def\calG{\mathcal{G}}
\def\calH{\mathcal{H}}
\def\calI{\mathcal{I}}
\def\calJ{\mathcal{J}}
\def\calK{\mathcal{K}}
\def\calL{\mathcal{L}}
\def\calM{\mathcal{M}}
\def\calN{\mathcal{N}}
\def\calO{\mathcal{O}}
\def\calU{\mathcal{U}}
\def\calX{\mathcal{X}}
\def\bR{\mathbf{R}}
\def\bL{\mathbf{L}}
\newcommand\frN{\mathfrak{N}}
\newcommand\frX{\mathfrak{X}}
\newcommand\frZ{\mathfrak{Z}}
\newcommand\tilA{\widetilde{A}}
\newcommand\tilW{\widetilde{W}}
\newcommand\tilZ{\widetilde{Z}}
\newcommand\tilh{\widetilde{h}}
\newcommand\tilq{\widetilde{q}}
\newcommand{\isom}{\stackrel{\sim}{\to}}
\newcommand{\surj}{\twoheadrightarrow}
\newcommand{\bij}{\leftrightarrow}
\renewcommand{\c}{\circ}
\renewcommand{\l}{\lambda}
\renewcommand{\L}{\Lambda}
\newcommand{\om}{\omega}
\newcommand{\Om}{\Omega}
\newcommand{\leftexp}[2]{{\vphantom{#2}}^{#1}{#2}}
\newcommand{\Ql}{\QQ_{\ell}}
\newcommand{\Qlbar}{\overline{\QQ}_\ell}
\newcommand{\kbar}{\overline{k}}
\newcommand{\Qbar}{\overline{\QQ}}
\newcommand{\const}[1]{\QQ_{\ell,#1}}
\newcommand{\twtimes}[1]{\stackrel{#1}{\times}}
\newcommand{\Ltimes}{\stackrel{\bL}{\otimes}}
\newcommand{\jiao}[1]{\langle{#1}\rangle}
\newcommand\un{\underline}
\newcommand\one{\mathbf{1}}
\newcommand{\homog}[2]{\textup{H}_{#1}({#2})}  
\newcommand{\cohog}[2]{\textup{H}^{#1}({#2})}     
\newcommand{\cohoc}[2]{\textup{H}_{c}^{#1}({#2})}     
\newcommand{\hBM}[2]{\textup{H}^{\textup{BM}}_{#1}({#2})}  
\newcommand{\oll}[1]{\overleftarrow{#1}}
\newcommand{\orr}[1]{\overrightarrow{#1}}
\newcommand{\olr}[1]{\overleftrightarrow{#1}}
\newcommand\tcM{\widetilde{\calM}}
\newcommand\tcN{\widetilde{\calN}}
\newcommand\tcA{\widetilde{\calA}}
\renewcommand\div{\textup{div}}
\newcommand\AJ{\textup{AJ}}
\newcommand\add{\textup{add}}
\newcommand{\Gm}{\GG_m}
\newcommand{\Ga}{\GG_a}
\newcommand\hX{\wh{X}}
\newcommand\hM{\wh{\calM}}
\newcommand\pr{\textup{pr}}
\newcommand\Prym{\textup{Prym}}
\newcommand\supp{\textup{supp}}
\newcommand\Fix{\textup{Fix}}
\newcommand\inst{\textup{inst}}
\newcommand\hs{\heartsuit}
\renewcommand\ds{\diamondsuit}
\newcommand\Ah{\calA^{\heartsuit}}
\newcommand\Mh{\calM^{\heartsuit}}
\newcommand\Ads{\calA^{\diamondsuit}}
\newcommand\Mds{\calM^{\diamondsuit}}
\newcommand{\bsH}{\overline{\sH_{\ell}}}
\newcommand\tsH{\widetilde{\sH_{\ell}}}
\newcommand{\gen}{\overline{\eta}}
\renewcommand\Mc{M^{\circ}}
\newcommand\tl{\widetilde{\ell}}
\newcommand\tf{\widetilde{f}}
\newcommand\tD{\widetilde{D}}
\newcommand\tB{\widetilde{B}}
\newcommand\tfZ{\widetilde{\mathfrak{Z}}}
\newcommand\cCh{{\vphantom{\Ch}}_{c}{\Ch}}
\newcommand{\sslash}{\mathbin{/\mkern-6mu/}}
\newtheorem{theorem}{Theorem}
\newtheorem{prop}[theorem]{Proposition}
\newtheorem{lem}[theorem]{Lemma}
\newtheorem{lemma}[theorem]{Lemma}
\newtheorem{cor}[theorem]{Corollary}
\newtheorem{thm}[theorem]{Theorem}
\theoremstyle{definition}
\newtheorem{defn}[theorem]{Definition}
\newtheorem{remark}[theorem]{Remark}
\newcommand{\matrixx}[4]
{\left[ \begin{array}{cc}
  #1 &  #2  \\
  #3 &  #4  \\
 \end{array}\right]}
\numberwithin{equation}{section}
\numberwithin{theorem}{section}
\renewcommand{\to}{%
   \ifbool{@display}{\longrightarrow}{\rightarrow}%
   }
\let\shortmapsto\mapsto
\renewcommand{\mapsto}{%
   \ifbool{@display}{\longmapsto}{\shortmapsto}%
   }
\newlength{\olen}
\newlength{\ulen}
\newlength{\xlen}
\newcommand{\xra}[2][]{%
   \ifbool{@display}%
      {\settowidth{\olen}{$\overset{#2}{\longrightarrow}$}%
       \settowidth{\ulen}{$\underset{#1}{\longrightarrow}$}%
       \settowidth{\xlen}{$\xrightarrow[#1]{#2}$}%
       \ifdimgreater{\olen}{\xlen}%
          {\underset{#1}{\overset{#2}{\longrightarrow}}}%
          {\ifdimgreater{\ulen}{\xlen}%
             {\underset{#1}{\overset{#2}{\longrightarrow}}}
             {\xrightarrow[#1]{#2}}}}%
      {\xrightarrow[#1]{#2}}
   }
\newcommand{\xyra}[2][]{%
   \settowidth{\xlen}{$\xrightarrow[#1]{#2}$}%
   \ifbool{@display}%
      {\settowidth{\olen}{$\overset{#2}{\longrightarrow}$}%
       \settowidth{\ulen}{$\underset{#1}{\longrightarrow}$}%
       \ifdimgreater{\olen}{\xlen}%
          {\mathrel{\xymatrix@M=.12ex@C=3.2ex{\ar[r]^-{#2}_-{#1} &}}}%
          {\ifdimgreater{\ulen}{\xlen}%
             {\mathrel{\xymatrix@M=.12ex@C=3.2ex{\ar[r]^-{#2}_-{#1} &}}}
             {\mathrel{\xymatrix@M=.12ex@C=\the\xlen{\ar[r]^-{#2}_-{#1} &}}}}}%
      {\mathrel{\xymatrix@M=.12ex@C=\the\xlen{\ar[r]^-{#2}_-{#1} &}}}%
   }
\newcommand{\xla}[2][]{%
   \ifbool{@display}%
      {\settowidth{\olen}{$\overset{#2}{\longleftarrow}$}%
       \settowidth{\ulen}{$\underset{#1}{\longleftarrow}$}%
       \settowidth{\xlen}{$\xleftarrow[#1]{#2}$}%
       \ifdimgreater{\olen}{\xlen}%
          {\underset{#1}{\overset{#2}{\longleftarrow}}}%
          {\ifdimgreater{\ulen}{\xlen}%
             {\underset{#1}{\overset{#2}{\longleftarrow}}}
             {\xleftarrow[#1]{#2}}}}%
      {\xleftarrow[#1]{#2}}
   }
\newcommand{\isoarrow}{%
   \ifbool{@display}{\overset{\sim}{\longrightarrow}}{\xrightarrow\sim}%
   }
\renewcommand{\lra}{%
   \ifbool{@display}{\longleftrightarrow}{\leftrightarrow}%
   }
\begin{document}

\thanks{Research of Z.Yun partially supported by the Packard Foundation and the NSF grant DMS $\#1302071$. Research of W. Zhang partially supported by the NSF grant DMS $\#$1301848 and $\#$1601144,  and a Sloan research fellowship. }

\title[taylor expansion]{Shtukas and the Taylor expansion of $L$-functions} 
\author{Zhiwei Yun}
\address{Zhiwei Yun: Department of Mathematics, Yale University, 10 Hillhouse Avenue, New Haven, CT 06511}
\email{zhiwei.yun@yale.edu}
\author{Wei Zhang}
\address{Wei Zhang: Department of Mathematics, Columbia University, MC 4423, 2990 Broadway, New York, NY 10027}
\email{wzhang@math.columbia.edu}

\subjclass[2010]{Primary 11F67; Secondary 14G35, 11F70, 14H60}
\keywords{$L$-functions; Drinfeld Shtukas; Gross--Zagier formula; Waldspurger formula}

\date{\today}

\begin{abstract}
We define the Heegner--Drinfeld cycle on the moduli stack of Drinfeld Shtukas of rank two with $r$-modifications for an even integer $r$. We prove an identity between
\begin{enumerate}
\item The $r$-th central derivative of the quadratic base change
$L$-function associated to an everywhere unramified cuspidal
automorphic representation $\pi$ of $\PGL_{2}$;
\item The self-intersection number of the $\pi$-isotypic component of the Heegner--Drinfeld cycle.
\end{enumerate}
This identity can be viewed as a function-field analog of the
Waldspurger and Gross--Zagier formula for higher derivatives of
$L$-functions.
\end{abstract}

\maketitle

\tableofcontents

\section{Introduction}

In this paper we prove a formula for the arbitrary order central derivative of a certain class of $L$-functions over a {\em function field} $F=k(X)$, for a curve $X$ over a finite field $k$ of characteristic $p>2$. The $L$-function under consideration is associated to a cuspidal automorphic representation of  $\PGL_{2,F}$, or rather, its base change to a quadratic field extension of $F$. The $r$-th central derivative of our $L$-function is expressed in terms of the intersection number of the ``Heegner--Drinfeld cycle" on a moduli stack denoted by $\Sht^{r}_{G}$ in the introduction, where $G=\PGL_2$. The moduli stack $\Sht^{r}_{G}$ is closely related to the moduli stack of Drinfeld Shtukas of rank two with $r$-modifications. One important feature of this stack is that it admits a natural fibration over the $r$-fold self-product $X^r$ of the curve $X$ over $\Spec k$
$$
\xymatrix{ \Sht^r_{G}\ar[r] &X^r}.
$$ 
The very existence of such moduli stacks presents a striking difference between a function field and a number field. In the number field case, the analogous spaces only exist (at least for the time being)  when $r\leq 1$. 
When $r=0$, the moduli stack $\Sht^{0}_{G}$ is the constant groupoid over $k$
\begin{equation}
\label{eqn Bun(k)}
\Bun_G(k)\simeq G(F)\bs \left(G(\BA)/K\right),
\end{equation}
where $\BA$ is the ring of ad\`eles, and $K$ a maximal compact open subgroup of $G(\BA)$. 
The double coset in the RHS of \eqref{eqn Bun(k)} remains meaningful for a number field $F$ (except that one cannot demand the archimedean component of $K$ to be open). When $r=1$ the analogous space in the case $F=\BQ$ is the moduli stack of elliptic curves, which lives over $\Spec\BZ$. From such perspectives, our formula can be viewed as a simultaneous generalization (for function fields) of the Waldspurger formula \cite{W} (in the case of $r=0$) and the Gross--Zagier formula \cite{GZ} (in the case of $r=1$).

Another noteworthy feature of our work is that we need not restrict ourselves to the leading coefficient in the Taylor expansion of the $L$-functions: our formula is about the $r$-th Taylor coefficient of the $L$-function regardless whether $r$ is the central vanishing order or not. This leads us to speculate that, contrary to the usual belief, central derivatives of arbitrary order of motivic $L$-functions (for instance, those associated to elliptic curves) should bear some geometric meaning in the number field case. However, due to the lack of the analog of $\Sht^{r}_{G}$ for arbitrary $r$ in the number field case, we could not formulate a precise conjecture.

Finally we note that, in the current paper, we restrict ourselves to everywhere unramified cuspidal automorphic representations. One consequence is that we only need to consider the even $r$ case. Ramifications, particularly the odd $r$ case,  will be considered in subsequent work.

Now we give more details of our main theorems.

\subsection{Some notation} Throughout the paper,
let $k=\BF_q$ be a finite field of characteristic $p>2$. \index{$k=\BF_q$}%
Let $X$ be a geometrically connected smooth proper curve over $k$. Let $\nu: X'\to X$ be a finite \'etale cover of degree $2$ such that $X'$ is also geometrically connected. \index{$X, X'$}%
\index{$\nu:X'\to X$}%
Let $\sigma\in\Gal(X'/X)$ be the nontrivial involution. 
\index{$\sigma:X'\to X'$}%
Let $F=k(X)$ and $F'=k(X')$ be their function fields. 
\index{$F, F'$}%
Let $g$ and $g'$ be the genera of $X$ and $X'$, then $g'=2g-1$.
\index{$g, g'$}%

We denote the set of closed points (places) of $X$ by $|X|$. 
\nomenclature{$|X|$}{set of closed points (places) of $X$}%
For $x\in |X|$, let $\calO_{x}$ be the completed local ring of $X$ at $x$ and let $F_{x}$ be its fraction field. \index{$\calO_{x},F_{x}, k_{x}$}%
Let $\BA=\prod'_{x\in|X|}F_x$ be the ring of ad\`eles, \index{$\BA$}%
and $\BO=\prod_{x\in|X|}\calO_{x}$ the ring of integers inside $\BA$.  
\index{$\BO$}%
Similar notation applies to $X'$. Let 
$$\xymatrix{\eta_{F'/F}: F^{\times}\bs\BA^{\times}/\BO^{\times}\ar[r]&\{\pm1\}}$$\index{$\eta_{F'/F}$}%
 be the character corresponding to the \'etale double cover $X'$ via class field theory.

Let $G=\PGL_2$. \index{$G=\PGL_{2}$}%
Let $K=\prod_{x\in|X|} K_x$ where $K_x= G(\calO_{x})$. \index{$K_{x},  K$}%
The (spherical) Hecke algebra $\sH$ is the $\QQ$-algebra of bi-$K$-invariant functions $C_c^\infty(G(\BA)/\!\!/K,\BQ)$ with the product given by convolution.\index{$\sH$, Hecke algebra}%

\subsection{$L$-functions}\label{ss:intro L} Let $\calA=C_c^\infty(G(F)\bs G(\BA)/K,\QQ)$ be the space of everywhere unramified $\QQ$-valued automorphic functions for $G$. \index{$\calA,\calA_\pi$}%
Then $\calA$ is an $\sH$-module. By an everywhere unramified cuspidal automorphic representation $\pi$ of $G(\BA_{F})$ we mean an $\sH$-submodule $\calA_{\pi}\subset \calA$ that is irreducible over $\QQ$. 

For every such $\pi$, $\End_{\sH}(\calA_{\pi})$ is a number field $E_{\pi}$, which we call the {\em coefficient field} of $\pi$.\index{$\pi,E_\pi$} Then by the commutativity of $\sH$, $\calA_{\pi}$ is a one-dimensional $E_{\pi}$-vector space. If we extend scalars to $\CC$, $\calA_{\pi}$ splits into one-dimensional $\sH_{\CC}$-modules $\calA_{\pi}\otimes_{E_{\pi},\iota}\CC$, one for each embedding $\iota:E_{\pi}\incl\CC$, and each $\calA_{\pi}\otimes_{E_{\pi},\iota}\CC\subset\calA_{\CC}$ is the unramified vectors of an everywhere unramified cuspidal automorphic representation in the usual sense.

The standard (complete) $L$-function $L(\pi,s)$ is a polynomial of degree $4(g-1)$ in $q^{-s-1/2}$ with coefficients in the ring of integers $\calO_{E_{\pi}}$. Let $\pi_{F'}$ be the base change to $F'$, and let $L(\pi_{F'},s)$ be the standard $L$-function of $\pi_{F'}$. This $L$-function is a product of two $L$-functions associated to cuspidal automorphic representations of $G$ over $F$:
$$
L(\pi_{F'},s)=L(\pi,s)L(\pi\otimes\eta_{F'/F},s).
$$\index{$L(\pi,s), L(\pi_{F'},s)$}%
Therefore $L(\pi_{F'},s)$ is a polynomial of degree $8(g-1)$ in $q^{-s-1/2}$ with coefficients in $E_{\pi}$. It satisfies a functional equation
$$
L(\pi_{F'},s)=\epsilon(\pi_{F'},s) L(\pi_{F'},1-s),
$$
where the epsilon factor takes a simple form
$$
\epsilon(\pi_{F'},s)=q^{-8(g-1)(s-1/2)}.
$$
\index{$\epsilon(\pi_{F'},s)$}%
Let $L(\pi,\Ad,s)$ be the adjoint $L$-function of $\pi$. \index{$L(\pi,\Ad,s)$}%
Denote
\begin{equation}
\label{eqn sL}
\sL(\pi_{F'},s)= \epsilon(\pi_{F'},s)^{-1/2}\frac{L(\pi_{F'},s)}{L(\pi,\Ad,1)},
\end{equation}\index{$\sL(\pi_{F'},s)$}%
where the the square root is understood as
$$
 \epsilon(\pi_{F'},s)^{-1/2}:=q^{4(g-1)(s-1/2)}.
$$
Then we have a functional equation:
$$
\sL(\pi_{F'},s)=\sL(\pi_{F'},1-s).
$$
Note that the constant factor $L(\pi,\Ad,1)$ in $\sL(\pi_{F'},s)$ does not affect the functional equation, and it shows up only through the calculation of the Petersson inner product of a spherical vector in $\pi$ (see the proof of Theorem \ref{p:Jpi}). 

Consider the Taylor expansion at the central point $s=1/2$:
$$
\sL(\pi_{F'},s)= \sum_{r\geq 0} \sL^{(r)}(\pi_{F'},1/2)\frac{(s-1/2)^r}{r!},
$$
i.e.,
$$
\sL^{(r)}(\pi_{F'},1/2)=\frac{d^r}{ds^r}\Big|_{s=0}  \left(   \epsilon(\pi_{F'},s)^{-1/2}\frac{L(\pi_{F'},s)}{L(\pi,\Ad,1)}\right).
$$\index{$\sL^{(r)}(\pi_{F'},1/2)$}%
If $r$ is odd, by the functional equation we have
$$
\sL^{(r)}(\pi_{F'},1/2)=0.
$$
Since $L(\pi,\Ad,1)\in E_\pi$, we have $\sL(\pi_{F'},s)\in E_{\pi}[q^{-s-1/2},q^{s-1/2}]$. It follows that
\begin{equation*}
\sL^{(r)}(\pi_{F'},1/2)\in E_{\pi}\cdot (\log q)^{r}.
\end{equation*} 
The main result of this paper is to relate each even degree Taylor coefficient to the self-intersection numbers of a certain algebraic cycle on the moduli stack of Shtukas.
We give two formulations of our main results, one using certain subquotient of the rational Chow group, and the other using $\ell$-adic cohomology.

\subsection{The Heegner--Drinfeld cycles}
From now on, we let $r$ be an {\em even} integer. In \S\ref{ss:ShtG}, we will introduce moduli stack $\Sht^{r}_{G}$ of Drinfeld Shtukas with $r$-modifications for the group $G=\PGL_{2}$.  The stack $\Sht^{r}_{G}$ is a  Deligne--Mumford stack over $X^r$ and the natural morphism 
$$
\xymatrix{\pi_{G}: \Sht^r_{G}\ar[r] &X^r} \index{$\pi_G$}%
$$
is smooth of relative dimension $r$, and locally of finite type.  Let $T=(\Res_{F'/F}\BG_m)/\BG_m$ be the non-split torus associated to the double cover $X'$ of $X$.\index{$T$, torus}%
 In \S\ref{ss:ShtT}, we will introduce the moduli stack $\Sht^{\mu}_{T}$ of $T$-Shtukas, depending on the choice of an $r$-tuple of signs $\mu\in\{\pm\}^{r}$ satisfying certain balance conditions in \S\ref{sss:mu}. Then we have a similar map
$$
\xymatrix{\pi^{\mu}_{T}: \Sht^{\mu}_{T}\ar[r]& X'^r}	\index{$\Sht^{\mu}_{T}$}	\index{$\pi^{\mu}_{T}$}%
$$
which is a torsor under the finite Picard stack $\Pic_{X'}(k)/\Pic_{X}(k)$. In particular,  $\Sht^{\mu}_{T}$ is a proper smooth Deligne--Mumford stack over $\Spec k$.

There is a natural finite morphism of stacks over $X^r$
$$
\xymatrix{\Sht^{\mu}_{T}\ar[r]& \Sht^{r}_{G}}.
$$
It induces a finite morphism
$$
\xymatrix{\theta^{\mu}:\Sht^{\mu}_{T}\ar[r]& \Sht'^{r}_{G}:=\Sht_{G}^r\times_{X^r}X'^r}.	\index{$\Sht'^{r}_{G}$}	\index{$\theta^{\mu}$}%
$$
This defines a class in the Chow group
$$
\theta^{\mu}_{*}[\Sht^{\mu}_{T}]\in \Ch_{c,r}(\Sht'^{r}_{G})_\BQ. \index{$\theta^{\mu}_{*}[\Sht^{\mu}_{T}]$} \index{$\Ch_{c,r}$}%
$$
Here $\Ch_{c,r}(-)_{\QQ}$ means the Chow group of proper cycles of dimension $r$, tensored over $\QQ$. See \S\ref{ss:proper cycles} for details. In analogy to the classical Heegner cycles \cite{GZ}, we will call $\theta^{\mu}_{*}[\Sht^{\mu}_{T}]$ the {\em Heegner--Drinfeld cycle} in our setting.

\subsection{Main results: cycle-theoretic version}\label{ss:intro cycle}
The Hecke algebra $\sH$ acts on the Chow group
$\Ch_{c,r}(\Sht'^{r}_{G})_\BQ$ as correspondences. Let $\wt W\subset \Ch_{c,r}(\Sht'^{r}_{G})_\BQ$ be the sub $\sH$-module generated by the Heegner--Drinfeld cycle $\theta^{\mu}_{*}[\Sht^{\mu}_{T}]$.
There is a bilinear and symmetric intersection pairing \footnote{In this paper, the intersection pairing on the Chow groups will be denoted by $\jiao{\cdot,\cdot}$, and other pairings (those on the quotient of the Chow groups, and the cup product pairing on cohomology) will be denoted by $(\cdot,\cdot)$.}
\begin{equation}\label{W intersection}
\xymatrix{\jiao{\cdot,\cdot}_{\Sht'^{r}_{G}}:\wt W\times\wt W\ar[r]& \BQ.}
\end{equation}
Let $\wt W_0$ be the kernel of the pairing, i.e.,
$$
\wt W_0=\left\{z\in\wt W\big |\, (z,z')=0, \mbox{ for all } z'\in \wt W \right\}.
$$
The pairing $\jiao{\cdot,\cdot}_{\Sht'^{r}_{G}}$ then induces a {\em non-degenerate} pairing on the quotient $W:=\wt W/\wt W_0$ \index{$W, \wt W_0, \wt W$}%
\begin{align}\label{eqn W int}
\xymatrix{(\cdot,\cdot): W\times W\ar[r]& \BQ}.	
\end{align}

The Hecke algebra $\sH$ acts on $W$. For any ideal $\calI\subset \sH$, let 
$$W[\calI]=\left\{w\in W \big |\,\calI\cdot w=0\right\}.$$ Let $\pi$ be an everywhere  unramified cuspidal automorphic representation of $G$ with coefficient field $E_{\pi}$, and let $\l_{\pi}: \sH\to E_{\pi}$ be the associated character, whose kernel $\fkm_{\pi}$ is a maximal ideal of $\sH$. Let
\begin{equation*}
W_{\pi}=W[\fkm_{\pi}]\subset W
\end{equation*}\index{$W[\calI], W_\pi, \lambda_{\pi}, \fkm_{\pi}$}%
be the $\l_{\pi}$-eigenspace of $W$. This is an $E_{\pi}$-vector space. Let $\calI_{\Eis}\subset \sH$ be the Eisenstein ideal as defined in Definition \ref{def:Eis} and define
$$
W_{\Eis}=W[\calI_{\Eis}].\index{$\calI_{\Eis}$}%
$$
\begin{thm}\label{th:intro W decomp} 
We have an orthogonal decomposition of $\sH$-modules
\begin{equation}\label{intro W decomp}
W=W_{\Eis} \oplus \left(\bigoplus_{\pi} W_{\pi}\right) ,
\end{equation}
where $\pi$ runs over the finite set of everywhere unramified cuspidal automorphic representation of $G$, and $W_{\pi}$ is an $E_{\pi}$-vector space of dimension at most one.
\end{thm}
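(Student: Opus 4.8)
The plan is to combine a formal analysis of the Hecke action on the quadratic space $(W,(\cdot,\cdot))$ with the spectral decomposition of the $\ell$-adic cohomology of $\Sht^{r}_{G}$. First I would check that $\wt W_{0}$ is an $\sH$-submodule of $\wt W$, so that the induced form $(\cdot,\cdot)$ on $W$ is non-degenerate, symmetric and $\sH$-invariant, with $\sH$ acting by self-adjoint operators. The only point here is self-adjointness of the Hecke correspondences on $\Ch_{c,r}(\Sht'^{r}_{G})_{\BQ}$ for the intersection pairing: the transpose of the correspondence attached to $h\in\sH$ is the one attached to $h^{\vee}(g)=h(g^{-1})$, and every bi-$K$-invariant function on $G(\BA)$ is fixed by $g\mapsto g^{-1}$ because this already holds for the spherical Hecke algebra of each $\PGL_{2}(F_{x})$. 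Granting $\jiao{hz,z'}_{\Sht'^{r}_{G}}=\jiao{z,hz'}_{\Sht'^{r}_{G}}$ for $z,z'\in\wt W$, the radical $\wt W_{0}$ is $\sH$-stable; moreover $W$ is a \emph{cyclic} $\sH$-module, being a quotient of $\wt W=\sH\cdot\theta^{\mu}_{*}[\Sht^{\mu}_{T}]$. Writing $w_{0}$ for the image of $\theta^{\mu}_{*}[\Sht^{\mu}_{T}]$ in $W$ and $\ell\colon\sH\to\BQ$ for $\ell(h)=(hw_{0},w_{0})$, self-adjointness together with commutativity of $\sH$ identifies $W$, as an $\sH$-module with pairing, with $\sH/\frI$ equipped with $(\bar h_{1},\bar h_{2})\mapsto\ell(h_{1}h_{2})$, where $\frI=\{h\in\sH:\ell(hh')=0\ \text{for all}\ h'\in\sH\}$ is an ideal containing $\Ann_{\sH}(w_{0})$. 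Thus $W$ is a commutative Frobenius $\BQ$-algebra, and the theorem reduces to showing that, up to an ``Eisenstein'' direct factor annihilated by $\calI_{\Eis}$, the ideal $\frI$ is an intersection of distinct maximal ideals $\fkm_{\pi}$, each occurring with multiplicity one.

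The second and decisive step is to import this structure from cohomology. After setting up the appropriate cohomology theory for the non-proper stack $\Sht'^{r}_{G}$ and the corresponding cycle class map, $\wt W$ maps $\sH$-equivariantly into the middle-degree $\ell$-adic cohomology, and non-degeneracy of $(\cdot,\cdot)$ realizes $W$ (after $\otimes_{\BQ}\Qlbar$) as an $\sH$-stable subquotient of it. One then invokes the spectral decomposition of the cohomology of $\Sht^{r}_{G}$: it splits $\sH$-equivariantly into a part on which $\calI_{\Eis}$ acts by zero (see Definition~\ref{def:Eis}) and a \emph{cuspidal} part which is a \emph{semisimple} $\sH$-module, whose isotypic pieces are indexed by everywhere unramified cuspidal representations $\pi$. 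This is where the arithmetic enters: it rests on the description of the cuspidal cohomology of moduli of Shtukas in terms of automorphic forms for $\PGL_{2}$ together with multiplicity one, and we take it as known. Since the cuspidal and Eisenstein supports in $\Spec\sH$ are disjoint, the subquotient $W$ inherits a decomposition $W=W_{\mathrm{cusp}}\oplus W_{\Eis}$ with $W_{\mathrm{cusp}}$ semisimple and $\calI_{\Eis}\cdot W_{\Eis}=0$; distinct generalized eigenspaces are orthogonal because the pairing is non-degenerate and $\sH$ acts self-adjointly (if $\fkm^{n}x=0$ and $\fkm'^{m}y=0$ with $\fkm\neq\fkm'$, write $1=u+v$ with $u\in\fkm^{n}$, $v\in\fkm'^{m}$, so $(x,y)=(vx,y)=(x,vy)=0$); and cyclicity of $W$ forces $W_{\mathrm{cusp}}=\bigoplus_{\pi}W_{\pi}$ with only finitely many $\pi$, together with $W_{\pi}=W[\fkm_{\pi}]$ and $W_{\Eis}=W[\calI_{\Eis}]$. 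For the rank bound, $W_{\pi}=e_{\pi}W$ where $e_{\pi}$ is the idempotent of the factor of $\sH/\frI$ at $\fkm_{\pi}$, and by semisimplicity that factor is the field $E_{\pi}$, so using cyclicity $W_{\pi}=E_{\pi}\cdot e_{\pi}w_{0}$ has $E_{\pi}$-dimension at most one.

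The main obstacle is the cohomological input in the second step — the decomposition of the cohomology of the moduli of Shtukas into an Eisenstein part killed by $\calI_{\Eis}$ and a cuspidal part, and above all the \emph{semisimplicity} of the Hecke action on the cuspidal part. It is this semisimplicity that forces $\sH/\frI$ to be reduced at each cuspidal prime $\fkm_{\pi}$, so that the localization $W_{(\fkm_{\pi})}$ equals $W[\fkm_{\pi}]$ rather than being merely a local Artinian $E_{\pi}$-algebra with a filtration by $E_{\pi}$-subquotients. This is the one place where the argument genuinely uses the arithmetic of automorphic forms and the Langlands correspondence for $\PGL_{2}$ over $F$, rather than formal properties of Chow groups and intersection pairings; everything else — self-adjointness of the Hecke correspondences, the Frobenius-algebra packaging, orthogonality of the eigenspaces, and the rank-one bound — is soft.
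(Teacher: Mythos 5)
Your formal steps (self-adjointness of the Hecke action so that $\wt W_{0}$ is $\sH$-stable, cyclicity of $W$ over $\sH$, orthogonality of distinct generalized eigenspaces for a non-degenerate pairing with self-adjoint operators, and the rank-one bound from cyclicity once each cuspidal factor is a field) all match what the paper does, and your reduction of the theorem to a semisimplicity statement is sound \emph{as a reduction}. The genuine gap is the input you ``take as known'': that the cuspidal part of the middle-degree compactly supported cohomology of $\Sht'^{r}_{G}$ is a \emph{semisimple} $\sH$-module whose support consists of \emph{automorphic} maximal ideals $\fkm_{\pi}$. No such statement is available. The paper's cohomological spectral decomposition (Theorem \ref{th:intro V decomp}, Theorem \ref{th:Spec decomp Sht'}) only produces generalized eigenspaces $V'_{\Ql,\fkm}$ supported at finitely many maximal ideals, and the paper explicitly remarks right after Theorem \ref{th:intro V decomp} that one \emph{cannot} be sure those $\fkm$ are automorphic; semisimplicity of the Hecke action there is likewise not proved, and it is not a consequence of multiplicity one for $\PGL_{2}$ — that governs the $r=0$ space $\calA$, not the cohomology of the moduli of Shtukas with $r$ legs. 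So realizing $W\otimes_{\BQ}\Qlbar$ as an $\sH$-stable subquotient of $V'_{\Qlbar}$ (which is fine: the cycle class map intertwines the pairings, so the radical contains its kernel) only tells you that $W$ splits into an Eisenstein part and pieces supported at finitely many, possibly non-automorphic, maximal ideals on which $\fkm$ may act nilpotently — exactly the situation your argument must rule out and cannot with the tools you allow yourself.

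The paper gets around this by a different mechanism, and the difference is the whole point: semisimplicity and automorphy are imported from the \emph{analytic} side, not from cohomology. By the key identity $\BI_{r}(f)=(\log q)^{-r}\BJ_{r}(f)$ for all $f\in\sH$ (Theorem \ref{th:full IJ}), if $f$ lies in the kernel of $\sH\to\sH_{\aut}$ (the image of $\sH$ acting on the space $\calA$ of classical unramified automorphic forms together with $\QQ[\Pic_{X}(k)]^{\iota_{\Pic}}$), then $\BJ_{r}(hf)=0$ for all $h\in\sH$, hence $\jiao{h*\theta^{\mu}_{*}[\Sht^{\mu}_{T}],\ f*\theta^{\mu}_{*}[\Sht^{\mu}_{T}]}=\BI_{r}(hf)=0$, so $f*\theta^{\mu}_{*}[\Sht^{\mu}_{T}]\in\wt W_{0}$ and $f$ acts by zero on $W$. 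Thus the $\sH$-action on $W$ factors through $\sH_{\aut}$, which is \emph{reduced} because the Hecke operators act self-adjointly on $\calA\otimes_{\QQ}\RR$ for the positive-definite Petersson product, and $\Spec\sH_{\aut}=Z_{\Eis}\sqcup Z_{\cusp}$ with $\sH_{\cusp}=\prod_{\pi}E_{\pi}$ (Lemma \ref{l:Haut decomp}); cyclicity then yields the decomposition \eqref{intro W decomp} and $\dim_{E_{\pi}}W_{\pi}\leq 1$. In other words, Theorem \ref{th:intro W decomp} is deduced from the main comparison theorem of the paper, not from a structural fact about the cohomology of Shtukas; to keep your route you would have to prove the semisimple, automorphic description of the cuspidal cohomology of $\Sht'^{r}_{G}$, a substantially harder statement that this paper deliberately avoids.
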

The proof will be given in \S\ref{proof:intro W decomp}. In fact one can also show that $W_{\Eis}$ is a free rank one module over $\QQ[\Pic_{X}(k)]^{\iota_{\Pic}}$ (for notation see \S\ref{sss:Pic inv}), but we shall omit the proof of this fact.

The $\QQ$-bilinear pairing $(\cdot,\cdot)$ on $W_{\pi}$ can be lifted to an $E_{\pi}$-bilinear symmetric pairing
\begin{equation}\label{eq jiao pi}
\xymatrix{(\cdot,\cdot)_{\pi}: W_{\pi}\times W_{\pi}\ar[r]& E_{\pi}}
\end{equation}\index{$(\cdot,\cdot)_{\pi}$}%
where for $w,w'\in W_{\pi}$, $(w,w')_{\pi}$ is the unique element in $E_{\pi}$ such that $\Tr_{E_{\pi}/\QQ}(e\cdot (w,w')_{\pi})=(ew,w')$.

We now present the cycle-theoretic version of our main result.
\begin{thm}\label{th:main cycle} 
Let $\pi$ be an everywhere unramified cuspidal automorphic representation of $G$ with coefficient field $E_{\pi}$. Let $[\Sht^{\mu}_{T}]_{\pi}\in W_{\pi}$ be the projection of the image of  $\theta^{\mu}_{*}[\Sht^{\mu}_{T}]\in \tilW$ in $W$ to the direct summand $W_{\pi}$ under the decomposition \eqref{intro W decomp}. Then we have an equality in $E_{\pi}$ 
$$
 \frac{1}{2(\log q)^{r} } \,\lvert \omega_X \rvert \,\sL^{(r)}\left(\pi_{F'},1/2\right) =\Big([\Sht^{\mu}_{T}]_{\pi},\quad [\Sht^{\mu}_{T}]_{\pi} \Big)_{\pi},
$$
where $\omega_X$ is the canonical divisor of $X$, and $ \lvert \omega_X \rvert=q^{-\deg \omega_X}$. \index{$\omega_X$}%
\end{thm}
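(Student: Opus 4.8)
The plan is to prove Theorem \ref{th:main cycle} by establishing two separate identities and then combining them via a spectral decomposition argument. The first identity is \emph{geometric}: express the self-intersection number $\jiao{\theta^{\mu}_{*}[\Sht^{\mu}_{T}], h \ast \theta^{\mu}_{*}[\Sht^{\mu}_{T}]}_{\Sht'^{r}_{G}}$, for a Hecke function $h \in \sH$, as a sum over the $k$-points of an intersection of Heegner--Drinfeld cycles, which after unwinding the moduli interpretations becomes a sum indexed by certain rank-one data on $X'$ — concretely, a geometric series of intersection numbers on fiber products of $\Sht^{\mu}_{T}$ twisted by the Hecke correspondence. The second identity is \emph{spectral/automorphic}: compute the same Hecke-linear functional on the automorphic side, identifying its $\pi$-isotypic part with the $r$-th derivative of a relative-trace-formula-type period integral whose leading term recovers $\sL^{(r)}(\pi_{F'},s)$ at $s=1/2$. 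The content of the theorem is that, after projecting to the cuspidal $\pi$-isotypic piece $W_{\pi}$ (using Theorem \ref{th:intro W decomp} to know this piece is at most one-dimensional over $E_{\pi}$), these two computations agree term by term, with the normalizing factor $\lvert \omega_X\rvert$ coming from the discrepancy between the geometric volume form and the adelic measure, and $L(\pi,\Ad,1)$ entering through the Petersson norm of the spherical vector (as flagged before the statement of Theorem \ref{p:Jpi}).

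Here is the order in which I would carry this out. First, set up the intersection-number generating series: for each $h \in \sH$ form $\BI_r(h) := \jiao{\theta^{\mu}_{*}[\Sht^{\mu}_{T}],\, h\ast\theta^{\mu}_{*}[\Sht^{\mu}_{T}]}_{\Sht'^{r}_{G}}$ and show it equals a sum of local intersection multiplicities over the (proper, zero-dimensional) intersection locus, which is a moduli space of Shtukas for a torus with a level structure encoding $h$. Second, stratify this intersection locus by the "Hitchin-type" invariant (the boundary divisor / discriminant data) and show each stratum contributes a term that is itself an intersection number on a smaller moduli of Shtukas — this is where the morphism to $X^r$ and the geometry of the Hitchin fibration are essential, and it is the step that converts a global count into something computable. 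Third, on the automorphic side, introduce the relative trace $\JJ_r(h)$ attached to the torus period against itself (a function-field analogue of the Gross--Zagier / Waldspurger relative trace formula), expand it, and identify its $\pi$-component with $\frac{1}{2}\lvert\omega_X\rvert (\log q)^{-r}\,\sL^{(r)}(\pi_{F'},1/2)\cdot\lambda_\pi(h)$ divided by the Petersson norm; this uses the factorization $L(\pi_{F'},s)=L(\pi,s)L(\pi\otimes\eta_{F'/F},s)$ and a Rankin--Selberg / doubling unfolding. Fourth — the crux — prove $\BI_r(h) = \JJ_r(h)$ for all $h$, by matching the two expansions term by term (a "fundamental lemma"--style local identity at each place, plus a global comparison of the indexing sets). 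Fifth, take the $\pi$-isotypic projection of both sides using Theorem \ref{th:intro W decomp}: the left side becomes $\Tr_{E_\pi/\QQ}\big(\lambda_\pi(h)\,([\Sht^{\mu}_T]_\pi,[\Sht^{\mu}_T]_\pi)_\pi\big)$ by definition of $(\cdot,\cdot)_\pi$, the right side becomes $\Tr_{E_\pi/\QQ}\big(\lambda_\pi(h)\cdot\frac{1}{2\lvert\omega_X\rvert^{-1}(\log q)^{r}}\sL^{(r)}(\pi_{F'},1/2)\big)$; since this holds for all $h$ and $\sH \to E_\pi$ is surjective after $\otimes\QQ$, the two elements of $E_\pi$ coincide, which is exactly the claimed equality.

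The main obstacle is unquestionably the fourth step, the identity $\BI_r(h)=\JJ_r(h)$. Its number-field prototype for $r=1$ is the Gross--Zagier formula itself (via the arithmetic relative trace formula of Yuan--Zhang--Zhang), and for $r=0$ it is Waldspurger's formula; here one must handle \emph{all even} $r$ simultaneously, which means the geometric side is an intersection number on a $2r$-dimensional stack rather than a point count, and the "derivative" appears not by differentiating an $L$-function analytically but as a consequence of the cohomological degree in which the cycle class lives. Concretely, the difficulty splits into: (a) proving that the relevant intersection on $\Sht'^{r}_G$ is proper of the expected dimension so that intersection numbers are defined (already addressed by the properness of $\Sht^\mu_T$ and finiteness of $\theta^\mu$, but the fiber products need care); (b) computing local intersection multiplicities — a local-to-global problem requiring a deformation-theoretic analysis of Shtukas, i.e.\ the function-field analogue of computing arithmetic intersection numbers on Lubin--Tate / Drinfeld towers; and (c) the combinatorial bookkeeping matching the Hitchin-base stratification on the geometric side with the orbital-integral decomposition on the automorphic side, including the archimedean-type terms that in the function-field world become contributions from the boundary of the Hitchin base. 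I expect (b) to be the technical heart: it is where the factor $\lvert\omega_X\rvert$ and the relative dualizing sheaf of $X$ genuinely enter, through the local deformation spaces of the relevant Shtuka data.
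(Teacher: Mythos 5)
Your overall skeleton---compare the intersection functional $\BI_r(h)$ with the relative-trace functional $\BJ_r(h)$, prove the key identity $\BI_{r}(h)=(\log q)^{-r}\BJ_{r}(h)$ for all $h\in\sH$ (you drop the $(\log q)^{-r}$ in your step four but use it in step five), and then isolate the $\pi$-component---is indeed the paper's strategy (Theorem \ref{th:IJ} and Theorem \ref{th:full IJ}), and your final extraction carried out directly in the Chow-theoretic space $W$ is a workable variant of the paper's route, which instead deduces Theorem \ref{th:main cycle} from the cohomological Theorem \ref{th:main coho} via compatibility of the intersection pairing with the cup product under the cycle class map. But the separation step is glossed: ``project both sides to the $\pi$-isotypic part'' is not an operation you can apply to a numerical identity, and the cuspidal spectral expansion of $\BJ(f,s)$ (Theorem \ref{th:coeff E}) is only valid for $f$ in the Eisenstein ideal $\calI_{\Eis}$. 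You must therefore test the identity against $h\in\calI_{\Eis}$ whose images realize arbitrary elements of $E_{\pi}$ and kill the other cuspidal factors of $\sH_{\cusp}$ (possible since $\calI_{\Eis}$ surjects onto the cuspidal factor), which simultaneously kills $W_{\Eis}$ and the Eisenstein terms of $\BJ_r$; ``since this holds for all $h$'' by itself does not dispose of the Eisenstein contributions on either side.

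The genuine gap is your plan for the key identity itself and the geometric picture feeding it. You posit a proper, zero-dimensional intersection locus whose points carry local multiplicities computable by a deformation-theoretic analysis at each place, to be matched with local orbital integrals ``fundamental-lemma style.'' That picture is not available: the relevant fiber products (the stacks $\Sht^{\mu}_{\CM,D}$) are proper but not transverse or zero-dimensional in general, which is why the paper must define the local terms by refined Gysin classes and justify rearranging the intersection via the Octahedron Lemma (Theorem \ref{th:oct}); and the identity is then proved \emph{globally}, not place by place. Concretely, $\BI_r(h_D)$ is rewritten as a sum over $a\in\calA_{D}(k)$ of traces of $(f_{\calM,!}[\calH^{\ds}])^{r}\circ\Frob$ on stalks of $\bR f_{\calM,!}\Ql$ (Theorem \ref{th:I trace}), the orbital side is geometrized as $\bR f_{\calN_{\un{d}},*}L_{\un{d}}$, and the comparison is an isomorphism of shifted perverse sheaves obtained by middle extension over $\calA_{d}$ (Propositions \ref{p:Rf decomp} and \ref{p:RfN decomp}), with the ``$r$-th derivative'' appearing as the eigenvalue $(d-2j)^{r}$ of the correspondence (Proposition \ref{p:H action})---no local multiplicity computation on deformation spaces of Shtukas enters, and for $r\geq 2$ no such local theory is known; the paper explicitly stresses its comparison is entirely global. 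Relatedly, your attribution of the factor $\lvert\omega_X\rvert$ to local deformation theory on the geometric side is wrong: $\BI_r$ is a bare intersection number, and $\lvert\omega_X\rvert$ (together with $L(\pi,\Ad,1)$) arises on the analytic side from the self-dual measures attached to a nonzero differential in the Petersson-norm computation in the proof of Proposition \ref{p:Jpi}.
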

The proof will be completed in \S\ref{proof:main cycle}.

\begin{remark}Assume that $r=0$. Then our formula is equivalent to the Waldspurger formula \cite{W} for an everywhere unramified cuspidal automorphic representation $\pi$. More precisely, for any nonzero $\phi\in \pi^K$, the Waldspurger formula is the identity 
\begin{align*}
\frac{1}{2}\,\lvert \omega_X\rvert\,\sL(\pi_{F'},1/2)=\frac{\left|  \int_{T(F)\bs T(\BA)}\phi(t)\,dt\right|^2 }{\pair{\phi,\phi}_{{\rm Pet}}},
\end{align*}
where $\pair{\phi,\phi}_{{\rm Pet}}$ is the Petersson inner product \eqref{eqn Pet}, and the measure on $G(\BA)$ (resp. $T(\BA)$) is chosen such that $\vol(K)=1$ (resp. $\vol (T(\BO))=1$). 

\end{remark}
\begin{remark}Our $E_\pi$-valued intersection paring is similar to the N\'eron--Tate height pairing with coefficients in \cite[\S1.2.4]{YZZ}.
\end{remark}

\subsection{Main results:  cohomological version}
Let $\ell$ be a prime number different from $p$. Consider the middle degree cohomology with compact support 
$$V'_{\Ql}=\cohoc{2r}{(\Sht'^{r}_{G})\otimes_{k}\kbar,\Ql}(r).$$
In the main body of the paper, we simply denote this by $V'$. This vector space is endowed with the cup product 
$$
\xymatrix{(\cdot,\cdot): V'_{\Ql} \times V'_{\Ql}\ar[r]&\Ql}.
$$
Then for any maximal ideal $\fkm\subset\sH_{\Ql}$, we define the generalized eigenspace of $V'_{\Ql}$ with respect to $\fkm$ by
\begin{equation*}
V'_{\Ql,\fkm}=\cup_{i>0}V'_{\Ql}[\fkm^i].
\end{equation*}
We also define the Eisenstein part of $V'_{\Ql}$ by
$$
V'_{\Ql,\Eis}=\cup_{i>0}V'_{\Ql}[\calI_{\Eis}^i].
\index{$V_{\Ql}, V_{\Ql,\fkm}, V'_{\Ql,\Eis}$}%
$$

We remark that in the cycle-theoretic version (cf. \S\ref{ss:intro cycle}), the generalized eigenspace coincides with the eigenspace because the space $W$ is a cyclic module over the Hecke algebra.

\begin{thm}[see Theorem \ref{th:Spec decomp Sht'} for a more precise statement]\label{th:intro V decomp} 
We have an orthogonal decomposition of $\sH_{\Ql}$-modules
\begin{equation}\label{intro V decomp}
V'_{\Ql}=V'_{\Ql,\Eis}\oplus\left(\bigoplus_{\fkm}V'_{\Ql,\fkm}\right),
\end{equation}
where $\fkm$ runs over a finite set of maximal ideals of $\sH_{\Ql}$ whose residue fields $E_{\fkm}:=\sH_{\Ql}/\fkm$ are finite extensions of $\Ql$, and each $V'_{\Ql,\fkm}$ is an $\sH_{\Ql}$-module of finite dimension over $\Ql$ supported at the maximal ideal $\fkm$.
\end{thm}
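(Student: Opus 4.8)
The plan is to study $V'_{\Ql}:=\cohoc{2r}{(\Sht'^{r}_{G})\otimes_{k}\kbar,\Ql}(r)$ as a module over the commutative $\Ql$-algebra $\sH_{\Ql}$, and to extract the decomposition from two ingredients: a finiteness statement for the non-Eisenstein part of $V'_{\Ql}$, imported from the automorphic side, and the self-adjointness of the Hecke operators for the cup product $(\cdot,\cdot)$, which forces orthogonality. Abstractly, once one knows that the support of $V'_{\Ql}$ in $\Spec\sH_{\Ql}$ is contained in $V(\calI_{\Eis})$ together with finitely many closed points $\fkm$ (those lying above the maximal ideals $\fkm_{\pi}$), none of which meets $V(\calI_{\Eis})$, then the decomposition $V'_{\Ql}=V'_{\Ql,\Eis}\oplus\bigoplus_{\fkm}V'_{\Ql,\fkm}$ of $\sH_{\Ql}$-modules is a formal consequence, with $V'_{\Ql,\Eis}=\cup_{i}V'_{\Ql}[\calI_{\Eis}^{i}]$ the $\calI_{\Eis}$-power-torsion submodule and each $V'_{\Ql,\fkm}$ of finite dimension over $\Ql$. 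So the real content is the support/finiteness statement and the orthogonality.

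For the finiteness I would use the geometry of $\pi_{G}\colon\Sht^{r}_{G}\to X^{r}$. The stack $\Sht^{r}_{G}$ is obtained from $\Bun_{G}$ by iterating the Hecke modification $r$ times and then imposing the Shtuka (Frobenius-fixed-point) condition, i.e.\ $\Sht^{r}_{G}\cong\Hk^{r}_{G}\times_{\Bun_{G}\times\Bun_{G},\,(\id,\Frob)}\Bun_{G}$, where $\Hk^{r}_{G}$ parametrizes a chain of bundles $(\calE_{0},\dots,\calE_{r})$ with successive modifications over varying legs $(x_{1},\dots,x_{r})\in X^{r}$. Passing to compactly supported cohomology, and base changing to $X'^{r}$ to produce $\Sht'^{r}_{G}$, this expresses $R\Gamma_{c}$ of $\Sht'^{r}_{G}$ through the $r$-fold iterated Hecke action on $R\Gamma_{c}(\Bun_{G})$ twisted by Frobenius; for $r=0$ it specializes to $\Sht^{0}_{G}=\Bun_{G}(k)$ and recovers the space $\calA$ of everywhere unramified automorphic functions with its $\sH$-action. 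Since $\Sht^{r}_{G}$ is only locally of finite type, this has to be run on Harder--Narasimhan truncations $\Sht^{r,\le\mu}_{G}$ and then passed to the colimit, checking that the truncations stabilize in degree $2r$. On the automorphic side, the span of the everywhere unramified cusp forms in $\calA$ is finite-dimensional --- there are finitely many such $\pi$, and each $\calA_{\pi}$ is one-dimensional over $E_{\pi}$ --- so $\sH_{\Ql}$ acts on it through finitely many characters, hence finitely many maximal ideals; and by construction the Eisenstein ideal $\calI_{\Eis}$ (Definition~\ref{def:Eis}) cuts out the Eisenstein spectrum, so the complement of the cusp forms in $\calA$ is $\calI_{\Eis}$-power-torsion. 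Transporting this through the comparison yields the bound on the support of $V'_{\Ql}$ and the finite-dimensionality of the $V'_{\Ql,\fkm}$, with the precise bookkeeping being what Theorem~\ref{th:Spec decomp Sht'} records. I expect this comparison --- in particular the interplay between the colimit over truncations and the finiteness of the cuspidal part --- to be the main obstacle.

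For the orthogonality of \eqref{intro V decomp}, I would show that each standard Hecke correspondence on $\Sht'^{r}_{G}$ is isomorphic to its transpose --- geometrically because the double coset $K\,\diag(\varpi_{x},1)\,K$ is stable under $g\mapsto g^{-1}$ in $\PGL_{2}$ --- so that the induced operators on $V'_{\Ql}$ are self-adjoint for the cup product, $(h\alpha,\beta)=(\alpha,h\beta)$ for all $h\in\sH$ and $\alpha,\beta\in V'_{\Ql}$. A commuting family of self-adjoint operators has mutually orthogonal generalized eigenspaces, and $V'_{\Ql,\Eis}$ is orthogonal to each $V'_{\Ql,\fkm}$ for the same reason, the $\sH_{\Ql}$-actions on these summands having disjoint supports; this gives the orthogonal direct sum \eqref{intro V decomp}. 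Finally, for coherence with the cycle-theoretic Theorem~\ref{th:intro W decomp}, I would note that the cycle class map $\wt W\to V'_{\Ql}$ is $\sH$-equivariant and compatible with the two pairings, so that \eqref{intro V decomp} refines the cycle-theoretic decomposition along the image of the Heegner--Drinfeld cycle.
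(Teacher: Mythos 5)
Your formal skeleton is the right one, and the orthogonality step is fine: it is essentially the paper's Lemma~\ref{l:coho self adj}, proved by exhibiting an involution of $\Sht'^{r}_{G}(h_{D})$ that swaps the two projections, so that $(h_{D}*\alpha,\beta)=(\alpha,h_{D}*\beta)$. The genuine gap is in the support/finiteness statement, which is the real content of the theorem. You propose to express $R\Gamma_{c}$ of $\Sht'^{r}_{G}$ ``through the $r$-fold iterated Hecke action on $R\Gamma_{c}(\Bun_{G})$ twisted by Frobenius'' and then transport the finite-dimensionality of the unramified cuspidal part of $\calA$ (an $r=0$ statement) to bound the support of $V'_{\Ql}$. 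No such comparison exists: the Shtuka condition is a fibre product against the Frobenius graph, and there is no mechanism that converts this into control of the $\sH_{\Ql}$-module structure (let alone the support) of $\cohoc{2r}{\Sht'^{r}_{G}\otimes_{k}\kbar}$ in terms of the space of automorphic functions. In fact the paper states, immediately after this theorem, that it cannot guarantee that the maximal ideals $\fkm$ occurring in \eqref{intro V decomp} are automorphic; your transport argument, if it worked, would prove precisely that, which is a strong signal that the step fails. Note also that even your ``formal consequence'' paragraph silently needs $V'_{\Ql}$ to be finitely generated over (the image of) $\sH_{\Ql}$, and that finiteness is itself a nontrivial theorem here, not a formality.

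What the paper does instead is a direct analysis of the geometry of $\Sht^{r}_{G}$ near infinity, with no reference to $r=0$. One truncates by the index of instability; for deep truncation parameters the new strata are $B$-Shtukas (horocycles), and the map $q_{d}\colon\Sht^{d}_{B}\to\Sht^{d}_{H}$ is smooth with fibers of the form $[\Ga^{r/2}/Z]$, so the boundary contribution to $\bR\pi_{G,!}\Ql$ is a shifted local system coming from rank-one Shtukas (Corollary~\ref{c:Eis loc sys}). A cohomological constant term map then intertwines the $\sH$-action with the $\sH_{H}$-action via the Satake transform (Lemma~\ref{l:horo action}); since that action factors through $\QQ[\Pic_{X}(k)]$, the Eisenstein ideal annihilates the boundary. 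This yields the two finiteness inputs, $\dim\Gr^{L}_{i}V'<\infty$ for $i\neq r$ and $\dim\calI_{\Eis}\cdot L_{\leq r}V'<\infty$ (the analogues of Lemmas~\ref{l:GrL} and~\ref{l:Hcusp image}), together with finite generation of $V'$ over $\sH_{x}\otimes\Ql$ and the fact that the image algebra $\bsH'$ is a finitely generated $\Ql$-algebra of Krull dimension one (Lemma~\ref{l:fg}, Theorem~\ref{th:Spec decomp Sht'}). Only then does the commutative algebra you describe produce \eqref{decomp bcH'}, the decomposition \eqref{intro V decomp}, and the finite-dimensionality of each $V'_{\Ql,\fkm}$. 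To repair your proposal you would have to replace the automorphic-transport step by this boundary analysis (or some equivalent finiteness theorem for the cohomology of moduli of Shtukas).
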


The action of $\sH_{\Ql}$ on $V'_{\Ql,\fkm}$ factors through the completion $\wh{\sH}_{\Ql,\fkm}$ with residue field $E_{\fkm}$. Since $E_{\fkm}$ is finite \'etale over $\Ql$, and $\wh{\sH}_{\Ql,\fkm}$ is a complete local (hence henselian) $\Ql$-algebra with residue field $E_{\fkm}$, Hensel's lemma implies that there is a unique section $E_{\fkm}\to \wh{\sH}_{\Ql,\fkm}$ (the minimal polynomial of every element $\ov h\in E_{\fkm}$ over $\Ql$ has a unique root $h\in \wh{\sH}_{\Ql,\fkm}$ whose reduction is $\ov h$). Hence each  $V'_{\Ql,\fkm}$ is also an $E_{\fkm}$-vector space in a canonical way. As in the case of $W_{\pi}$, using the $E_{\fkm}$-action on $V'_{\Ql,\fkm}$, the $\Ql$-bilinear pairing on $V'_{\Ql,\fkm}$ may be lifted to an $E_{\fkm}$-bilinear symmetric pairing
\begin{equation*}
\xymatrix{ (\cdot,\cdot)_{\fkm}: V'_{\Ql,\fkm}\times V'_{\Ql,\fkm}\ar[r]& E_{\fkm}}.\index{$(\cdot,\cdot)_{\fkm}$}%
\end{equation*}

Note that, unlike \eqref{intro W decomp}, in the decomposition \eqref{intro V decomp} we can not be sure whether all $\fkm$ are automorphic (i.e., the homomorphism $\sH\to E_{\fkm}$ is the character by which $\sH$ acts on the unramified line of an irreducible automorphic representation). However, for an everywhere unramified cuspidal automorphic representation $\pi$ of $G$ with coefficient field $E_{\pi}$,  we may extend $\l_{\pi}:\sH\to E_{\pi}$ to $\Ql$ to get
\begin{equation*}
\xymatrix{ \l_{\pi}\otimes\Ql:\sH_{\Ql}\ar[r] & E_{\pi}\otimes\Ql\cong\prod_{\l|\ell}E_{\pi,\l} }
\end{equation*}
where $\l$ runs over places of $E_{\pi}$ above $\ell$. Let $\fkm_{\pi,\l}$ be the maximal ideal of $\sH_{\Ql}$ obtained as the kernel of the $\l$-component of the above map $\sH_{\Ql}\to E_{\pi,\l}$. 

To alleviate notation, we denote $V'_{\Ql,\fkm_{\pi,\l}}$ simply by $V'_{\pi,\l}$, and denote the $E_{\pi,\l}$-bilinear pairing $(\cdot,\cdot)_{\fkm_{\pi,\l}}$ on $V'_{\pi,\l}$ by
\begin{equation*}
\xymatrix{ (\cdot,\cdot)_{\pi,\l}: V'_{\pi,\l}\times V'_{\pi,\l}\ar[r]& E_{\pi,\l}.} \index{$(\cdot,\cdot)_{\pi,\l}$}%
\end{equation*}

We now present the cohomological version of our main result.

\begin{thm}\label{th:main coho} Let $\pi$ be an everywhere unramified cuspidal automorphic representation of $G$ with coefficient field $E_{\pi}$. Let $\l$ be a place of $E_{\pi}$ above $\ell$. Let $[\Sht^{\mu}_{T}]_{\pi,\l}\in V'_{\pi,\l}$ be the projection of the cycle class $\cl(\theta^{\mu}_{*}[\Sht^{\mu}_{T}]) \in V'_{\Ql}$ to the direct summand $V'_{\pi,\l}$ under the decomposition \eqref{intro V decomp}. Then we have an equality in $E_{\pi,\l}$
$$
 \frac{1}{2(\log q)^{r} } \,\lvert \omega_X \rvert \,\sL^{(r)}\left(\pi_{F'},1/2\right)=\Big([\Sht^{\mu}_{T}]_{\pi,\l},\quad [\Sht^{\mu}_{T}]_{\pi,\l} \Big)_{\pi,\l}.
$$
In particular, the RHS also lies in $E_{\pi}$.
\end{thm}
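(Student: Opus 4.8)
\emph{Strategy.} The plan is to deduce Theorem~\ref{th:main coho} from the cycle-theoretic Theorem~\ref{th:main cycle} by transporting the latter along the cycle class map
$$
\cl\colon \Ch_{c,r}(\Sht'^{r}_{G})_{\QQ}\longrightarrow V'_{\Ql}=\cohoc{2r}{(\Sht'^{r}_{G})\otimes_{k}\kbar,\Ql}(r).
$$
Two properties of $\cl$ are needed, both standard once the formalism of proper cycles of \S\ref{ss:proper cycles} is in place: it is $\sH$-equivariant for the action of Hecke correspondences on the two sides, and it intertwines the intersection pairing with the cup product, i.e. $(\cl(z),\cl(z'))=\jiao{z,z'}_{\Sht'^{r}_{G}}$ for proper cycles $z,z'$. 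Restricting to $\wt W$ and extending scalars along $\QQ\to\Ql$, it follows that $(\cl(z),\cl(z'))$ depends only on the images $\ov z,\ov z'$ of $z,z'$ in $W=\wt W/\wt W_{0}$, and there equals $(\ov z,\ov z')$. By definition $\cl(\theta^{\mu}_{*}[\Sht^{\mu}_{T}])$ is the class occurring in the statement, and $[\Sht^{\mu}_{T}]_{\pi,\l}$ is its projection to $V'_{\pi,\l}$.

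\emph{Reduction to a scalar identity.} The left-hand side of the asserted identity already lies in $E_{\pi}$ (see \S\ref{ss:intro L}), and by Theorem~\ref{th:main cycle} it equals $([\Sht^{\mu}_{T}]_{\pi},[\Sht^{\mu}_{T}]_{\pi})_{\pi}\in E_{\pi}$. Writing $\iota_{\l}\colon E_{\pi}\incl E_{\pi,\l}$ for the completion, the theorem, including the ``in particular'' clause, therefore follows from the identity
$$
\big([\Sht^{\mu}_{T}]_{\pi,\l},[\Sht^{\mu}_{T}]_{\pi,\l}\big)_{\pi,\l}=\iota_{\l}\!\left(\big([\Sht^{\mu}_{T}]_{\pi},[\Sht^{\mu}_{T}]_{\pi}\big)_{\pi}\right)
$$
in $E_{\pi,\l}$. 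Both sides are $E_{\pi,\l}$-valued pairings defined through the trace form, so by nondegeneracy of $\Tr_{E_{\pi,\l}/\Ql}$ it suffices to prove, for every $e\in E_{\pi,\l}$, the scalar equality
$$
\big(e\cdot[\Sht^{\mu}_{T}]_{\pi,\l},\,[\Sht^{\mu}_{T}]_{\pi,\l}\big)=\big(e\cdot([\Sht^{\mu}_{T}]_{\pi}\otimes 1),\,[\Sht^{\mu}_{T}]_{\pi}\otimes 1\big),
$$
where the left side is a cup product in $V'_{\pi,\l}$ with $e$ acting via the Hensel section $E_{\pi,\l}\to\wh{\sH}_{\Ql,\fkm_{\pi,\l}}$, and the right side is the $W$-pairing with $e$ acting on $W_{\pi}\otimes_{E_{\pi}}E_{\pi,\l}\subset W\otimes_{\QQ}\Ql$.

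\emph{Hecke operators realizing both projections.} To obtain these scalar equalities I would construct, for each $e\in E_{\pi,\l}$, an element $h_{e}\in\sH_{\Ql}$ that simultaneously implements the desired $\fkm_{\pi,\l}$-operation on $W\otimes\Ql$ and on $V'_{\Ql}$. Using the orthogonal decompositions of $W$ (Theorem~\ref{th:intro W decomp}) and of $V'_{\Ql}$ (Theorem~\ref{th:intro V decomp})---both finite direct sums---together with the Chinese Remainder Theorem in $\sH_{\Ql}$, one picks $h_{e}$ so as to annihilate the finitely many components of $\cl(\theta^{\mu}_{*}[\Sht^{\mu}_{T}])$ in $V'_{\Ql}$ and of $\theta^{\mu}_{*}[\Sht^{\mu}_{T}]$ in $W\otimes\Ql$ lying off the $\fkm_{\pi,\l}$-summands; each such component is killed by a suitable power of the relevant maximal or Eisenstein ideal, and those powers together with $\Ann_{\sH_{\Ql}}(V'_{\pi,\l})$ are pairwise comaximal precisely because $\pi$ is cuspidal (so $\fkm_{\pi,\l}$ does not contain $\calI_{\Eis}$ and differs from the other maximal ideals in play). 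One further requires that $h_{e}$ act on $W_{\pi}\otimes_{E_{\pi}}E_{\pi,\l}$, which $\fkm_{\pi,\l}$ kills, as the scalar $e$, and that $h_{e}$ be congruent to the Hensel lift of $e$ modulo $\Ann_{\sH_{\Ql}}(V'_{\pi,\l})$, so that it multiplies $V'_{\pi,\l}$ by $e$ in its Hensel $E_{\pi,\l}$-structure. Then $\ov{h_{e}\theta^{\mu}_{*}[\Sht^{\mu}_{T}]}=e\cdot([\Sht^{\mu}_{T}]_{\pi}\otimes 1)$ in $W\otimes\Ql$ while $\cl(h_{e}\theta^{\mu}_{*}[\Sht^{\mu}_{T}])=e\cdot[\Sht^{\mu}_{T}]_{\pi,\l}$ in $V'_{\Ql}$; applying the pairing-compatibility of $\cl$ to the pair $(h_{e}\theta^{\mu}_{*}[\Sht^{\mu}_{T}],\,h_{1}\theta^{\mu}_{*}[\Sht^{\mu}_{T}])$, with $h_{1}$ the operator built for $e=1$, yields exactly the displayed scalar equality. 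Combined with Theorem~\ref{th:main cycle}, this finishes the proof.

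\emph{Main obstacle.} The delicate point is not the transport itself but the mismatch of the two spectral pictures: on its cuspidal part $W$ is semisimple over $\sH$ (genuine eigenspaces), whereas on $V'_{\Ql}$ one controls only generalized eigenspaces, and the $E_{\pi,\l}$-structure on $V'_{\pi,\l}$ is the auxiliary one supplied by Hensel's lemma. Arranging that a single family of Hecke operators implements the correct projector and the correct $E_{\pi,\l}$-scaling on $W$ and on $V'_{\Ql}$ at once---equivalently, that $\cl$ carries the $E_{\pi,\l}$-structure of $W_{\pi}\otimes_{E_{\pi}}E_{\pi,\l}$ to the Hensel $E_{\pi,\l}$-structure of $V'_{\pi,\l}$---is where the real care is needed, and this is essentially the content of the more precise Theorem~\ref{th:Spec decomp Sht'}. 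By contrast, the two properties of $\cl$ are routine given the intersection theory of \S\ref{ss:proper cycles}, and the return from the scalar equalities to the $E_{\pi,\l}$-valued identity is linear algebra over $E_{\pi,\l}$.
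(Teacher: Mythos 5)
Your argument is circular as it stands. Theorem \ref{th:main cycle} is not an available independent input: in the paper its proof (\S\ref{proof:main cycle}) consists precisely of transporting Theorem \ref{th:main coho} back along the cycle class map, using the very compatibility of intersection pairing and cup product that you invoke. So the reduction you carry out --- identifying $\big([\Sht^{\mu}_{T}]_{\pi,\l},[\Sht^{\mu}_{T}]_{\pi,\l}\big)_{\pi,\l}$ with the image of the $W$-pairing by means of Hecke operators $h_{e}$ realizing the projectors and the $E_{\pi,\l}$-scaling simultaneously on $W\otimes\Ql$ and on $V'_{\Ql}$ --- is essentially the bridge the paper uses in the \emph{opposite} direction, and it leaves the actual content of Theorem \ref{th:main coho} unproved: nothing in your proposal ever reaches $\sL^{(r)}(\pi_{F'},1/2)$ except through Theorem \ref{th:main cycle} itself, which at this point of the paper has no proof not passing through the statement you are trying to establish. (Citing Theorem \ref{th:intro W decomp} is legitimate, since its proof in \S\ref{proof:intro W decomp} rests only on the key identity; citing Theorem \ref{th:main cycle} is not.)

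What is missing is the analytic--geometric comparison that the paper's proof (\S\ref{proof:main coho}) actually performs. There one works in the algebra $\tsH$ generated by the Hecke action on $V'$, on $\calA\otimes\Ql$ and on $\Ql[\Pic_{X}(k)]^{\iota_{\Pic}}$, takes $h$ in the non-Eisenstein factor $\wt\sH_{\ell,0}$ (such $h$ lie in the Eisenstein ideal), and computes both sides of the key identity: Corollary \ref{c:decomp Ir} gives $\BI_{r}(h)=\sum_{\fkm}\big([\Sht_{T}]_{\fkm},h*[\Sht_{T}]_{\fkm}\big)$, while Theorem \ref{th: coeff E} gives $\BJ_{r}(h)=\sum_{\pi}\frac{|\omega_X|}{2}\l_{\pi}(h)\,\sL^{(r)}(\pi_{F'},1/2)$; Theorem \ref{th:full IJ} equates the two, and the decomposition of the artinian algebra $\wt\sH_{\ell,0}\otimes\Qlbar$ into local factors separates the contributions at each maximal ideal, yielding the asserted identity at $\fkm_{\pi}$ (take $h=1$ in that local factor) together with the vanishing statements at the remaining $\fkm$ and at cuspidal $\pi$ not occurring in $V'$. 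Your CRT construction of the $h_{e}$ and your concern about matching the Hensel $E_{\pi,\l}$-structure are close in spirit to this localization step, so that part of the proposal is salvageable; but without invoking $\BJ_{r}$, its spectral decomposition, and the key identity $\BI_{r}(f)=(\log q)^{-r}\BJ_{r}(f)$, there is no route from the cycle side to the $L$-function, and with Theorem \ref{th:main cycle} as input the argument begs the question.
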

The proof will be completed in in \S\ref{proof:main coho}.

\subsection{Two other results}

We have the following positivity result. This may be seen as an evidence of the Hodge standard conjecture (on the positivity of intersection pairing) for a sub-quotient of the Chow group of middle dimensional cycles on $\Sht'^{r}_G$.
\begin{thm}\label{th: index} Let $W_{{\rm cusp}}$ be the orthogonal complement of $W_{\Eis}$ in $W$ (cf. \eqref{intro W decomp}). Then the restriction to $W_{{\rm cusp}}$  of the intersection pairing $\xymatrix{(\cdot,\cdot)}$ in \eqref{eqn W int} is positive definite.  
\end{thm}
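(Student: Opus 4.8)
The plan is to deduce this from the main identity (Theorem \ref{th:main cycle}) together with the Riemann Hypothesis over $F$. By Theorem \ref{th:intro W decomp} and the definition of $W_{{\rm cusp}}$, we have an orthogonal decomposition $W_{{\rm cusp}}=\bigoplus_{\pi}W_{\pi}$ (a finite sum) for the non-degenerate pairing \eqref{eqn W int}, so it suffices to show $(\cdot,\cdot)$ is positive definite on each $W_{\pi}$. Since $W$ is generated over $\sH$ by $\theta^{\mu}_{*}[\Sht^{\mu}_{T}]$ (see \S\ref{ss:intro cycle}), the space $W_{\pi}$ is spanned over $E_{\pi}=\sH/\fkm_{\pi}$ by $w:=[\Sht^{\mu}_{T}]_{\pi}$; if $w=0$ there is nothing to prove, so we may assume $W_{\pi}=E_{\pi}\cdot w$ is one-dimensional over $E_{\pi}$. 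Because the decomposition is orthogonal and $(\cdot,\cdot)$ is non-degenerate on $W$, its restriction to $W_{\pi}$ is non-degenerate, hence so is the $E_{\pi}$-bilinear refinement $(\cdot,\cdot)_{\pi}$ of \eqref{eq jiao pi}; on a one-dimensional $E_{\pi}$-space this forces $c:=(w,w)_{\pi}\in E_{\pi}^{\times}$. Since $(\cdot,\cdot)_{\pi}$ is $E_{\pi}$-bilinear and symmetric, one has $(ew,ew)=\Tr_{E_{\pi}/\QQ}(e^{2}c)$ for every $e\in E_{\pi}$, and Theorem \ref{th:main cycle} gives
$$c=\frac{\lvert\omega_{X}\rvert}{2(\log q)^{r}}\,\sL^{(r)}(\pi_{F'},1/2).$$

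It therefore remains to show that $c$ is totally positive; granting this, $(ew,ew)=\sum_{\iota}\iota(e)^{2}\iota(c)>0$ for every $0\neq e\in E_{\pi}$, and we are done. First, $E_{\pi}$ is totally real: it is generated over $\QQ$ by the Hecke eigenvalues $\lambda_{\pi}(h_{x})$, $x\in|X|$, and for each embedding $\iota\colon E_{\pi}\hookrightarrow\CC$ the representation $\pi^{\iota}:=\calA_{\pi}\otimes_{E_{\pi},\iota}\CC$ is a genuine everywhere unramified cuspidal automorphic representation of $\PGL_{2}$ (\S\ref{ss:intro L}), so the Ramanujan bound (Drinfeld, for $\GL_{2}$ over function fields) forces $\iota(\lambda_{\pi}(h_{x}))\in\RR$. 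Moreover, since the coefficients of $\sL(\pi_{F'},s)$, regarded as a Laurent polynomial in $q^{s-1/2}$ over $E_{\pi}$, are polynomial expressions in the $\lambda_{\pi}(h_{x})$ (via \eqref{eqn sL} and $L(\pi_{F'},s)=L(\pi,s)L(\pi\otimes\eta_{F'/F},s)$), applying $\iota$ gives $\iota(c)=\tfrac12\lvert\omega_{X}\rvert(\log q)^{-r}\,\sL^{(r)}(\pi^{\iota}_{F'},1/2)$.

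Now I would invoke the Riemann Hypothesis. The function $\sL(\pi^{\iota}_{F'},s)=q^{4(g-1)(s-1/2)}L(\pi^{\iota}_{F'},s)/L(\pi^{\iota},\Ad,1)$ has the zero-free factor $q^{4(g-1)(s-1/2)}$, the positive constant $L(\pi^{\iota},\Ad,1)>0$ (non-vanishing and positivity of the adjoint $L$-value at the edge of the critical strip, a Rankin--Selberg/Petersson inner product computation), and, by the Riemann Hypothesis over $F$ (Drinfeld for $\GL_{2}$, or equivalently Deligne applied to the pure lisse $\ell$-adic sheaf computing $L(\pi^{\iota}_{F'},s)$), all $8(g-1)$ roots of $L(\pi^{\iota}_{F'},s)$, in the variable $u=q^{s-1/2}$, lie on $|u|=1$. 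These roots are stable under $u\mapsto u^{-1}$ (the functional equation $\sL(\pi_{F'},s)=\sL(\pi_{F'},1-s)$) and under complex conjugation (self-duality of $\pi^{\iota}_{F'}$), so grouping them into $4(g-1)$ conjugate pairs yields
$$\sL(\pi^{\iota}_{F'},s)=\frac{1}{L(\pi^{\iota},\Ad,1)}\prod_{j=1}^{4(g-1)}\Bigl(2\cosh\bigl((s-\tfrac12)\log q\bigr)-2\cos\theta_{j}\Bigr),\qquad\theta_{j}\in[0,\pi].$$
Each factor, expanded in powers of $s-\tfrac12$, has nonnegative coefficients (those of $\cosh$ are nonnegative and $2-2\cos\theta_{j}\geq 0$), hence so does the product; in particular $\sL^{(r)}(\pi^{\iota}_{F'},1/2)\geq 0$, so $\iota(c)\geq 0$, and $\iota(c)\neq 0$ since $c\in E_{\pi}^{\times}$. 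Thus $c$ is totally positive, which completes the proof.

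I expect the genuine content to lie entirely in Theorem \ref{th:main cycle}: once that identity is in hand, Theorem \ref{th: index} is a formal consequence of it combined with the classical fact that an $L$-function satisfying the Riemann Hypothesis and a functional equation with root number $+1$ has nonnegative Taylor coefficients at its central point, for which the $\cosh$-factorization above is the standard device. The only points requiring care — and the only real work beyond citing the main theorem — are the $E_{\pi}$-linear bookkeeping (symmetry of $(\cdot,\cdot)_{\pi}$, i.e. self-adjointness of the Hecke action; total reality of $E_{\pi}$ via Ramanujan; and the passage from the $\QQ$-valued pairing to the $E_{\pi}$-valued one) and verifying $L(\pi^{\iota},\Ad,1)>0$ at every embedding $\iota$.
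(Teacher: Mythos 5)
Your proof is correct and follows essentially the same route as the paper: reduce to each $W_{\pi}$, use that $E_{\pi}$ is totally real, apply Theorem \ref{th:main cycle} at every real embedding $\iota$, and conclude from the RH-based nonnegativity of $\sL^{(r)}(\pi^{\iota}_{F'},1/2)$, which is exactly the paper's Theorem \ref{th:pos} (your $\cosh$-factorization is the device of Remark \ref{r:func field pos}). The only differences are cosmetic: the paper deduces total reality of $E_{\pi}$ from self-adjointness of the Hecke operators with respect to the Petersson inner product rather than from Drinfeld's Ramanujan bound, and your non-degeneracy step upgrading $\geq 0$ to strict positivity makes explicit a point the paper leaves implicit.
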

\begin{proof}The assertion is equivalent to the positivity for the restriction to $W_\pi$ of the intersection pairing, for all $\pi$ in \eqref{intro W decomp}. Fix such a $\pi$. Then the coefficient field $E_\pi$ is a totally real number field because the Hecke operators $\sH$ act on the positive definite inner product space $\calA\otimes_{\QQ}\RR$ (under the Petersson inner product) by self-adjoint operators. For an embedding  $\iota:E_\pi\to \BR$, we define
$$
W_{\pi,\iota}:=W_{\pi}\otimes_{E_\pi,\iota}\BR.
$$ Extending scalars from $E_\pi$ to $\BR$ via $\iota$, the pairing \eqref{eq jiao pi} induces an $\BR$-bilinear symmetric pairing
$$
\xymatrix{(\cdot,\cdot)_{\pi,\iota}: W_{\pi,\iota} \times W_{\pi,\iota}\ar[r]& \BR.}
$$
It suffices to show that, for every embedding  $\iota:E_\pi\to \BR$, the pairing $(\cdot,\cdot)_{\pi,\iota}$ is positive definite. The $\BR$-vector space $W_{\pi,\iota}$ is at most one-dimensional, with a generator given by $[\Sht^{\mu}_{T}]_{\pi,\iota}=[\Sht^{\mu}_{T}]_{\pi}\otimes 1$. The embedding $\iota$ gives an irreducible cuspidal automorphic representation $\pi_\iota$ with $\BR$-coefficient. Then Theorem \ref{th:main cycle} implies that 
$$
 \frac{1}{2(\log q)^{r} } \,\lvert \omega_X \rvert \,\sL^{(r)}\left(\pi_{\iota,F'},1/2\right) =\Big([\Sht^{\mu}_{T}]_{\pi,\iota},\quad [\Sht^{\mu}_{T}]_{\pi,\iota} \Big)_{\pi,\iota}\in \BR.
$$
It is easy to see that $L(\pi_{\iota},\Ad,1)>0$.  By Theorem \ref{th:pos}, we have 
$$
\sL^{(r)}\left(\pi_{\iota,F'},1/2\right)\geq 0.
$$
It follows that $$\Big([\Sht^{\mu}_{T}]_{\pi,\iota},\quad [\Sht^{\mu}_{T}]_{\pi,\iota} \Big)_{\pi,\iota}\geq 0.$$
This completes the proof.
\end{proof}

Another result is a ``Kronecker limit formula" for function fields. Let $L(\eta,s)$ be the (complete) L-function associated to the Hecke character $\eta$. \index{$L(\eta,s)$}%
\begin{thm}\label{th:limit}When $r>0$ is even, we have 
$$
\jiao{\theta^{\mu}_{*}[\Sht^{\mu}_{T}],\quad  \theta^{\mu}_{*}[\Sht^{\mu}_{T}]}_{\Sht'^{r}_{G}}=\frac{2^{r+2}}{(\log q)^r} L^{(r)}(\eta,0).
$$
\end{thm}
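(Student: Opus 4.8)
The plan is to obtain Theorem~\ref{th:limit} as the ``Eisenstein'' counterpart of Theorem~\ref{th:main cycle}: feed the unit element $\mathbf 1_K\in\sH$ into the geometric--analytic comparison that proves the main theorems, and then evaluate what comes out. Since $\mathbf 1_K$ acts on the Chow group (equivalently the middle cohomology) of $\Sht'^{r}_{G}$ as the identity correspondence, the self-intersection $\jiao{\theta^{\mu}_{*}[\Sht^{\mu}_{T}],\theta^{\mu}_{*}[\Sht^{\mu}_{T}]}_{\Sht'^{r}_{G}}$ is precisely the value at $\mathbf 1_K$ of the intersection number that, by that comparison, equals the $r$-th central derivative of the associated automorphic kernel. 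So the task is to compute this kernel, and its $r$-th derivative, at the unit Hecke function.

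First I would expand the kernel at $\mathbf 1_K$ along its orbit decomposition, i.e.\ over the $A\times A$-orbits on $\PGL_2$ (parametrised by $u\in\mathbb P^{1}(F)\setminus\{1\}$, with degenerate orbits $u=0,\infty$) that enter the relevant $\eta$-twisted relative trace formula. The decisive point is that for the unit Hecke function a \emph{regular} orbit can contribute only when its invariant $u$ is integral at every place, hence a constant function on $X$; then $|u|=1$ and $\eta(u)=1$, so the corresponding term carries the factor $|u|^{s}\equiv 1$, which is annihilated by $\tfrac{d^{r}}{ds^{r}}$ as soon as $r>0$. This is exactly where the hypothesis $r>0$ is used, and it explains why no cuspidal $L$-values appear: after $r$-fold differentiation only the degenerate orbits survive. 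Evaluating their contribution is the ``Kronecker limit'' step --- it is assembled from the constant term (intertwining factor) of the degenerate Eisenstein series attached to $\eta$, whose normalising factor is a ratio of completed $L$-functions of $\eta$; extracting the relevant coefficient and differentiating $r$ times at the edge point $s=0$ produces $L^{(r)}(\eta,0)$, and bookkeeping of the normalisations --- the $\{\pm\}^{r}$ worth of choices packaged in $\mu$, the stacky degree of $\pi^{\mu}_{T}\colon\Sht^{\mu}_{T}\to X'^{r}$ (a torsor under $\Pic_{X'}(k)/\Pic_{X}(k)$), and the normalisation of $L(\eta,s)$ --- yields the constant $2^{r+2}$. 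As an independent check one can compute the self-intersection purely geometrically: by the self-intersection formula it equals $\int_{\Sht^{\mu}_{T}}e(\mathcal N)$, where $\mathcal N$ is the excess bundle on $\Sht^{\mu}_{T}$ --- the relative tangent bundle of $\Sht^{r}_{G}\to X^{r}$ restricted to the Heegner--Drinfeld locus, corrected for the fibres of $\theta^{\mu}$ --- which one describes via the deformation theory of the rank-two bundles $\nu_{*}L$ ($L$ a line bundle on $X'$); pushing this Chern number forward to $X'^{r}$, the contributions along the diagonals (colliding modification points) assemble exactly into the logarithmic-derivative combinatorics of $L(\eta,\cdot)=\zeta_{X'}/\zeta_{X}$, again recovering $L^{(r)}(\eta,0)$.

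The main obstacle is the explicit evaluation of the surviving degenerate term with the correct constant --- equivalently, in the geometric route, the precise identification of $\mathcal N$ and of its behaviour when modification points come together. This is also the most delicate part of the trace-formula comparison, because it is exactly where the intersection number ceases to be ``purely regular'' and where the truncation/compactification of $\Sht^{r}_{G}$ must be controlled; in particular one must check that the comparison behind Theorem~\ref{th:main cycle} can legitimately be applied at the maximally degenerate test function $f=\mathbf 1_K$, which calls for a direct geometric (or limiting) argument rather than the form of the comparison used for an individual cuspidal $\pi$. Combining the result with Theorems~\ref{th:main cycle} and~\ref{th:intro W decomp} then yields, as a by-product, a relative-trace-formula identity presenting $L^{(r)}(\eta,0)$ as a weighted sum of the cuspidal central derivatives $\sL^{(r)}(\pi_{F'},1/2)$ together with an Eisenstein term.
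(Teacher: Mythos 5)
Your overall route is the same as the paper's: apply the key identity $\BI_{r}(f)=(\log q)^{-r}\BJ_{r}(f)$ (Theorem \ref{th:full IJ}) to $f=\one_{K}$, expand $\BJ(\one_{K},s)$ over the $A\times A$-orbits, observe that a regular orbit contributes only when its invariant $u$ is a constant in $k-\{0,1\}$ and that its contribution is then independent of $s$ (hence killed by $\frac{d^{r}}{ds^{r}}$ for $r>0$), and evaluate the surviving degenerate terms. Two points in your write-up, however, are off. First, the constant $2^{r+2}$ has nothing to do with the $\{\pm\}^{r}$ choices in $\mu$, the stacky degree of $\pi^{\mu}_{T}$, or an intertwining factor of a degenerate Eisenstein series: those normalizations are already absorbed into the key identity. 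The degenerate contributions are computed directly as Tate integrals, $\BJ(n_{\pm},\one_{K},s)=L(\eta,\mp 2s)$, so each of the two degenerate invariants $u\in\{0,\infty\}$ contributes $L(\eta,2s)+L(\eta,-2s)$, and differentiating $r$ times at $s=0$ with $r$ even gives $(2^{r}+(-2)^{r})L^{(r)}(\eta,0)=2^{r+1}L^{(r)}(\eta,0)$ per value of $u$, hence $2^{r+2}L^{(r)}(\eta,0)$ in total (this is Corollary \ref{c:one K}). If you ran the bookkeeping you describe, you would not land on the right constant by the intended mechanism.

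Second, the ``main obstacle'' you identify --- whether the geometric--analytic comparison may be applied at the maximally degenerate test function $\one_{K}$, requiring a separate geometric or limiting argument and control of truncations --- is not where the difficulty lies. The paper first proves the identity for $h_{D}$ with $\deg D\geq\max\{2g'-1,2g\}$ (Theorem \ref{th:IJ}) and then extends it to \emph{all} of $\sH$, in particular to $\one_{K}$, by a purely commutative-algebra argument (Theorem \ref{th:full IJ}, using the finiteness results of \S\ref{s:c spec} and Lemma \ref{l:loc surj}); no excess-intersection computation of a normal bundle on $\Sht^{\mu}_{T}$, and no analysis of colliding modification points, is needed. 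Your proposed geometric ``independent check'' is a genuinely different (and unverified) route, but as stated it is a sketch rather than a proof, and the analytic route you lean on goes through exactly as in the paper once the two corrections above are made.
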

The proof will be given in \S\ref{proof:limit}. For the case $r=0$, see Remark \ref{rem limit r=0}.
\begin{remark}
To obtain a similar formula for the odd order derivatives $L^{(r)}(\eta,0)$, we need moduli spaces analogous to $\Sht^\mu_T$ and $\Sht'^{r}_G$ for odd $r$. We will return to this in future work.
\end{remark}

\subsection{Outline of the proof of the main theorems}
\subsubsection{Basic strategy} The basic strategy is to compare two relative trace formulae. A  relative trace formula (abbreviated as RTF) is an equality between a spectral expansion and an orbital integral expansion. We have two RTFs, an ``analytic" one for the $L$-functions, and a ``geometric" one for the intersection numbers, corresponding to the two sides of the desired equality in Theorem \ref{th:main coho}.

We may summarize the strategy of the proof into the following diagram
\begin{equation}\label{eqn outline}
\xymatrix{\text{Analytic:} & \sum_{u\in\PP^{1}(F)-\{1\}}\BJ_{r}(u,f)\ar@{=}[r]^(.7){\S\ref{s:RTF}}\ar@{-}[d]^{\sim \text{Th \ref{th:IJ}}}\ar@{}[dr]|{\Rightarrow} & \BJ_{r}(f)\ar@{=}[r]^{\S\ref{s:a spec}}\ar@{-}[d]^{\text{Th \ref{th:full IJ}}} \ar@{}[dr]|{\Rightarrow} & \sum_{\pi}\BJ_{r}(\pi, f)\ar@{-}[d]^{\Rightarrow \text{Th \ref{th:main coho}}}\\
\text{Geometric:} & \sum_{u\in\PP^{1}(F)-\{1\}}\BI_{r}(u,f)\ar@{=}[r]^(.7){\S\ref{s:alt}} & \BI_{r}(f)\ar@{=}[r]^{\S\ref{s:c spec}} & \sum_{\fkm}\BI_{r}(\fkm, f)}
\end{equation}
The vertical lines mean equalities after dividing the first row by $(\log q)^{r}$.

\subsubsection{The analytic side} We start with the analytic RTF. To an $f\in \sH$ (or more generally, $C_c^\infty(G(\BA))$), one first associates an automorphic kernel function $\BK_{f}$ on $G(\BA)\times G(\BA)$ and then a regularized integral: 
\begin{align*}
\BJ(f,s)=\int^{{\rm reg}}_{[A]\times [A]}\BK_{f}(h_1,h_2)\lvert h_1h_2\rvert^s \eta(h_2)\,dh_1\,dh_2. \index{$\BJ(f,s)$}%
\end{align*}
Here $A$ is the diagonal torus of $G$, and $[A]=A(F)\bs A(\BA)$. \index{$A$}\index{$[A]$}%
We refer to \S\ref{ss:RTF} for the definition of the weighted factors, and the regularization. Informally, we may view this integral as a weighted (naive) intersection number on the constant groupoid $\Bun_G(k)$ (the moduli stack of Shtukas with $r=0$ modifications) between $\Bun_A(k)$ and its Hecke translation under $f$ of $\Bun_A(k)$.  

The resulting $\BJ(f,s)$
belongs to $\BQ[q^{-s}, q^{s}]$. For an $f$ in the Eisenstein ideal $\CI_{\Eis}$ (cf. \S\ref{ss:eis}),  the spectral decomposition of $\BJ(f,s)$ takes a simple form: it is the sum of 
\begin{align*}
\BJ_{\pi}(f,s)=\frac{1}{2}\, |\omega_X| \,\sL(\pi_{F'},s+1/2)\,\lambda_\pi(f)			\index{$\BJ_\pi(f,s)$}%
\end{align*}
where $\pi$ runs over all everywhere unramified cuspidal automorphic representations $\pi$ of $G$ with $\Qlbar$-coefficients (cf. Prop. \ref{p:Jpi}). We define $\BJ_{r}(f)$ to be the $r$-th derivative
\begin{align*}
\BJ_r(f):= \left(\frac{d}{ds}\right)^r\Big|_{s=0}\BJ(f,s). \index{$\BJ_r(f)$}%
\end{align*}

We point out that in the case of $r=0$, the relative trace formula in question was first introduced by Jacquet \cite{J86}, in his reproof of Waldspurger's formula. In the case of $r=1$, a variant was first considered in \cite{Z12} (for number fields).

\subsubsection{The geometric side} Next we consider the geometric RTF. We consider the Heegner--Drinfeld cycle $\theta^{\mu}_{*}[\Sht^{\mu}_{T}]$ and its translation by the Hecke correspondence given by $f\in\sH$, both being cycles on the ambient stack $\Sht'^{r}_{G}$. We define $\BI_{r}(f)$ to be their intersection number
\begin{align*}
\BI_r(f):=\jiao{\theta^{\mu}_{*}[\Sht^{\mu}_{T}],\quad f*\theta^{\mu}_{*}[\Sht^{\mu}_{T}]}_{\Sht'^{r}_{G}}\quad\in\QQ,\quad f\in \sH.		\index{$\BI_r(f)$}%
\end{align*}
To decompose this spectrally according to the Hecke action, we have two perspectives, one viewing  the Heegner--Drinfeld cycle as an element in the Chow group modulo numerical equivalence, the other considering the cycle class of the  Heegner--Drinfeld cycle in the $\ell$-adic cohomology.  In either case, when $f$ is in a certain power of $\calI_{\Eis}$, the spectral decomposition (\S\ref{s:c spec}, or Theorem \ref{th:intro V decomp}) of $W_{\Qlbar}$ or $V'_{\Qlbar}$ as an $\sH_{\Qlbar}$-module expresses $\BI_{r}(f)$ as a sum of
\begin{align*}
\BI_{r}(\fkm,f)=\Big([\Sht^{\mu}_{T}]_{\fkm}, \quad f*[\Sht^{\mu}_{T}]_{\fkm}\Big)
\index{$\BI_r(\pi,f)$}%
\end{align*}
where $\fkm$ runs over a finite set of maximal ideals of $\sH_{\Qlbar}$ whose corresponding generalized eigenspaces appear discretely in $W_{\Qlbar}$ or $V'_{\Qlbar}$.  We remark that the method of the proof of the spectral decomposition in Theorem \ref{th:intro V decomp} can potentially be applied to moduli of Shtukas for more general groups $G$, which should lead to a better understanding of the cohomology of these moduli spaces.

We point out here that we use the same method as in \cite{Z12}  to set up the geometric RTF, although in \cite{Z12} only the case of $r=1$ was considered. In the case $r=0$, Jacquet used an integration of kernel function to set up an RTF for the $T$-period integral, which is equivalent to our geometric RTF because in this case $\Sht^{\mu}_{T}$ and $\Sht^{r}_{G}$ become discrete stacks $\Bun_{T}(k)$ and $\Bun_{G}(k)$. Our geometric formulation treats all values of $r$ uniformly.

\subsubsection{The key identity} In view of the spectral decompositions of both $\BI_{r}(f)$ and $\BJ_{r}(f)$, to prove the main Theorem \ref{th:main coho} for all $\pi$ simultaneously, it suffices to establish the following key identity  (cf. Theorem \ref{th:full IJ})
\begin{equation}\label{key id}
\BI_{r}(f)=(\log q)^{-r}\BJ_{r}(f)\in\BQ,\quad \text{ for all }f\in\sH.
\end{equation}
This key identity also allows us to deduce Theorem \ref{th:intro W decomp} on the spectral decomposition of the space $W$ of cycles  from the spectral decomposition of $\BJ_r$.  Theorems \ref{th:main cycle} then follows easily from Theorem \ref{th:main coho}.

Since half of the paper is devoted to the proof of the key identity \eqref{key id}, we comment on its proof in more detail. The spectral decompositions allow us to reduce to proving  \eqref{key id} for sufficiently many functions $f\in \sH$, indexed by effective divisors on $X$ with large degree compared to the genus of $X$ (cf. Theorem \ref{th:IJ}). Most of the algebro-geometric part of this paper is devoted to the proof of the key identity \eqref{key id} for those Hecke functions. 

In \S\ref{s:orb}, we interpret the orbital integral expansion of $\BJ_r(f)$ (the upper left sum in \eqref{eqn outline}) as a certain weighted counting of effective divisors on the curve $X$. The geometric ideas used in the part are close to those in the proof of various fundamental lemmas by Ng\^o \cite{Ngo} and by the first-named author \cite{Y-FL}, although the situation is much simpler in the current paper. In \S\ref{s:alt}, we interpret the intersection number $\BI_{r}(f)$ as the trace of a correspondence acting on the cohomology of a certain variety. This section involves new geometric ideas that did not appear in the treatment of the fundamental lemma type problems. This is also the most technical part of the paper, making use of the general machinery on intersection theory reviewed or developed in Appendix A.

After the preparations in \S\ref{s:orb} and \S\ref{s:alt}, our situation can be summarized as follows. For an integer $d\geq0$, we have fibrations  
$$
f_{\CN}:\CN_{d}=\bigsqcup_{\un{d}}\CN_{\un{d}}\to \CA_d,\quad
f_{\CM}:\CM_{d} \to \CA_d,
$$
where $\un{d}$ runs over all quadruples $(d_{11},d_{12},d_{21},d_{22})\in\ZZ^{4}_{\geq0}$ such that $d_{11}+d_{22}=d=d_{12}+d_{21}$. We need to show that the direct image complexes
$\bR f_{\calM,*}\Ql $ and $\bR f_{{\calN},*}L_{d}$ are isomorphic to each other, where $L_{d}$ is a local system of rank one coming from the double cover $X'/X$. When $d$ is sufficiently large, we show that both complexes are shifted perverse sheaves, and are obtained by middle extension from a dense open subset of $\calA_{d}$ over which both can be explicitly calculated (cf. Prop \ref{p:Rf decomp} and \ref{p:RfN decomp}). The isomorphism between the two complexes over the entire base $\calA_{d}$ then follows by the functoriality of the middle extension. The strategy used here is the {\em perverse continuation principle} coined by Ng\^o, which has already played a key role in all known geometric proofs of fundamental lemmas, see \cite{Ngo} and \cite{Y-FL}. 

\begin{remark} One feature of our proof of the key identity \eqref{key id} is that it is entirely global, in the sense that we do {\em not} reduce to the comparison of local orbital integral identities, as opposed to what one usually does when comparing two trace formulae. Therefore our proof is different from Jacquet's in the case $r=0$ in that his proof is essentially local (this is inevitable because he also considers the number field case).

Another remark is that our proof of \eqref{key id} in fact gives a term-by-term identity of the orbital expansion of both $\BJ_{r}(f)$ and $\BI_{r}(f)$, as indicated in the left column of \eqref{eqn outline}, although this is not logically needed for our main results. However, such more refined identities (for more general $G$) will be needed in the proof of the arithmetic fundamental lemma for function fields, a project to be completed in near future \cite{Y11}.
\end{remark}

\subsection{A guide for readers} Since this paper uses a mixture of tools from automorphic representation theory, algebraic geometry and sheaf theory, we think it might help orient the readers by providing a brief summary of the contents and the background knowledge required for each section. 
 
First we give the Leitfaden.

\begin{equation*}
\xymatrix{ &  \fbox{\S\ref{s:RTF}} \ar[dl]\ar[d] & & \fbox{\S\ref{s:Sht}}\ar[d]\ar[dr]\\
\fbox{\S\ref{s:a spec}}\ar[ddrr] &  \fbox{\S\ref{s:orb}}\ar[dr] & & \fbox{\S\ref{s:alt}}\ar[dl] & \fbox{\S\ref{s:c spec}}\ar[ddll]\\
&&\fbox{\S\ref{s:most}}\ar[d]\\
&&\fbox{\S\ref{s:pf}}}
\end{equation*}

Section \ref{s:RTF} sets up the relative trace formula following Jacquet's approach \cite{J86} to the Waldspurger formula. This section is purely representation-theoretic.

Section \ref{s:orb} gives a geometric interpretation of the orbital integrals involved in the relative trace formula introduced in \S\ref{s:RTF}. We express these orbital integrals as the trace of Frobenius on the cohomology of certain varieties, in the similar spirit of the proof of various fundamental lemmas (\cite{Ngo}, \cite{Y-FL}). This section involves both orbital integrals and some algebraic geometry but not yet perverse sheaves.

Section \ref{s:a spec} relates the spectral side of the relative trace formula in \S\ref{s:RTF} to automorphic $L$-functions. Again this section is purely representation-theoretic.

Section \ref{s:Sht} introduces the geometric players in our main theorem: moduli stacks $\Sht^{r}_{G}$ of Drinfeld Shtukas, and Heegner--Drinfeld cycles on them. We give self-contained definitions of these moduli stacks, so no prior knowledge of Shtukas is assumed, although some experience with the moduli stack of bundles will help.

Section \ref{s:alt} is the technical heart of the paper, aiming to prove Theorem \ref{th:I trace}. The proof involves studying several auxiliary moduli stacks and uses heavily the intersection-theoretic tools reviewed and developed in Appendix \ref{s:int}. The first-time readers may skip the proof and only read the statement of Theorem \ref{th:I trace}.

Section \ref{s:c spec} gives a decomposition of the cohomology of $\Sht^{r}_{G}$ under the action of the Hecke algebra, generalizing the classical spectral decomposition for the space automorphic forms. The idea is to remove the analytic ingredients from the classical treatment of spectral decomposition, and to use solely commutative algebra (in particular, we crucially use the {\em Eisenstein ideal} introduced in \S\ref{s:a spec}). For first-time readers, we suggest read \S\ref{ss:coho Sht}, then jump directly to Definition \ref{def:bsH} and continue from there. What he/she will miss in doing this is the study of the geometry of $\Sht^{r}_{G}$ near infinity (horocycles), which requires some familiarity with the moduli stack of bundles, and the formalism of $\ell$-adic sheaves.

Section \ref{s:most} combines the geometric formula for orbital integrals established in \S\ref{s:orb} and the trace formula for the intersection numbers established in \S\ref{s:alt} to prove the key identity \eqref{key id} for most Hecke functions. The proofs in this section involve perverse sheaves.

Section \ref{s:pf} finishes the proofs of our main results. Assuming results from the previous sections, most argument in this section only involves commutative algebra.

Both appendices can be read independently of the rest of the paper.

Appendix \ref{s:int} reviews the intersection theory on algebraic stacks following Kresch \cite{Kresch}, with two new results that are used in \S\ref{s:alt} for the calculation of the intersection number of Heegner--Drinfeld cycles. The first result, called the {\em Octahedron Lemma} (Theorem \ref{th:oct}), is an elaborated version of the following simple principle: in calculating the intersection product of several cycles, one can combine terms and change the orders arbitrarily. The second result is a Lefschetz trace formula for the intersection of a correspondence with the graph of the Frobenius map, building on results of Varshavsky \cite{Var}.

Appendix \ref{s positivity} proves a positivity result for central derivatives of automorphic $L$-functions, assuming the generalized Riemann hypothesis in the case of number fields. The main body of the paper only considers $L$-functions for function fields, for which the positivity result can be proved in an elementary way (see Remark \ref{r:func field pos}).

\subsection{Further notation}

\subsubsection{Function field notation} For $x\in |X|$, let $\varpi_x$ be a uniformizer of $\CO_x$,  $k_x$ be the residue field of $x$, $d_x=[k_x:k]$, and $q_x=\# k_x=q^{d_x}$.\index{$\varpi_x,\CO_x,k_x,d_x,q_x$}%
The valuation map is a homomorphism
$$
\xymatrix{\val\colon\BA^\times \ar[r]&\BZ }
$$\index{$\val$}%
such that $\val(\varpi_{x})=d_{x}$. The normalized absolute value on $\AA^{\times}$ is defined as
\begin{align*}
   \begin{gathered}
	|\cdot|  \colon
	\xymatrix@R=0ex{
	  \BA^\times \ar[r]  & \BQ^\times_{>0}\subset\BR^\times  \\
	a  \ar@{|->}[r]  & q^{-\val(a)}.
	}
	\end{gathered}
\end{align*} 
Denote the kernel of the absolute value by
$$
\BA^1=\Ker(|\cdot|).
$$\index{$\BA^1$}%
We have the global and local zeta function  
$$\zeta_F(s)=\prod_{x\in |X|}\zeta_{x}(s), \quad\zeta_x(s)=\frac{1}{1-q_x^{-s}}.\index{$\zeta_F, \zeta_x$}%
$$
Denote by $\Div(X)\cong \BA^{\times}/\OO^{\times}$ the group of divisors on $X$.\index{$\Div(X)$}%

\subsubsection{Group-theoretic notation} Let $\BG$ be an algebraic group over $k$. We will view it as an algebraic group over $F$ by extension of scalars. We will abbreviate $[\BG]=\BG(F)\bs \BG(\BA)$. \index{$[\BG]=\BG(F)\bs \BG(\BA)$}%
Unless otherwise stated, the Haar measure on the group $\BG(\BA)$ will be chosen such that the natural maximal compact open subgroup $\BG(\BO)$ has volume equal to one. For example,  the measure on $\BA^\times$, resp. $G(\BA)$ is such that $\vol\left(\BO^\times\right)=1$, resp. $\vol(K)=1.$

\subsubsection{Algebro-geometric notation} In the main body of the paper, all geometric objects are algebraic stacks over the finite field $k=\FF_{q}$. For such a stack $S$,  let $\Fr_S:S\to S$ be the absolute $q$-Frobenius endomorphism that raises functions to their $q$-th powers.\index{$\Fr_S$}%

For an algebraic stack $S$ over $k$, we write $\cohog{*}{S\otimes_{k}\kbar}$ (resp. $\cohoc{*}{S\otimes_{k}\kbar}$) for the \'etale cohomology (resp. \'etale cohomology with compact support) of the base change $S\otimes_{k}\kbar$ with $\Ql$-coefficients. \index{$\cohog{*}{S\otimes_{k}\kbar}, \cohoc{*}{S\otimes_{k}\kbar}$}%
The $\ell$-adic homology $\homog{*}{S\otimes_{k}\kbar}$ and Borel-Moore homology $\hBM{*}{S\otimes_{k}\kbar}$ are defined as the graded duals of $\cohog{*}{S\otimes_{k}\kbar}$  and $\cohoc{*}{S\otimes_{k}\kbar}$ respectively. \index{$\homog{*}{S\otimes_{k}\kbar}, \hBM{*}{S\otimes_{k}\kbar}$}%
We use $D^{b}_{c}(S)$ to denote the derived category of $\Ql$-complexes for the \'etale topology of $S$, as defined in \cite{LO}. We use $\DD_{S}$ to denote the dualizing complex of $S$ with $\Ql$-coefficients.
\index{$D^{b}_{c}(S), \DD_{S}$}

\subsection*{Acknowledgement} We thank Akshay Venkatesh for a key conversation that inspired our use of the Eisenstein ideal, and Dorian Goldfeld and Peter Sarnak for their help on Appendix \ref{s positivity}.  We thank Benedict Gross for his comments, Michael Rapoport for communicating us comments from the participants of ARGOS in Bonn, and Shouwu Zhang for carefully reading the first draft of the paper and providing many useful suggestions. We thank  the Mathematisches Forschungsinstitut Oberwolfach to host the Arbeitsgemeinschaft in April 2017 devoted to this paper, and we thank the participants, especially Jochen Heinloth and Yakov Varshavsky, for their valuable feedbacks.

\part{The analytic side}

\section{The relative trace formula}\label{s:RTF}
In this section we set up the relative trace formula following Jacquet's approach \cite{J86} to the Waldspurger formula.

\subsection{Orbits}
In this subsection $F$ is allowed to be an arbitrary field. Let $F'$ be a semisimple quadratic $F$-algebra, i.e., it is either the split algebra $F\oplus F$ or a quadratic field extension of $F$. Denote by $\Nm:F'\to F$ the norm map.

Denote $G=\PGL_{2,F}$ and $A$ the subgroup of diagonal matrices in $G$. \index{$A$}%
We consider the action of $A\times A$ on $G$ where $(h_1,h_2)\in A\times A$ acts by $(h_1,h_2)g=h_1^{-1}gh_2$. We define an $A\times A$-invariant morphism:
\begin{align}\label{inv G}
   \begin{gathered}
	\inv  \colon
	\xymatrix@R=0ex{
	  G \ar[r]  &  \BP^1_F -\{1\}  \\
	\gamma  \ar@{|->}[r]  & \frac{bc}{ad}
	}
	\end{gathered}
\end{align}\index{$\inv$}%
where $\matrixx{a}{b}{c}{d} \in \GL_2$ is a lifting of $\gamma$. 
We say that $\gamma\in G$ is {\em $A\times A$-regular semisimple} if 
$$\inv(\gamma)\in\BP^1_F -\{0,1,\infty\},$$
 or equivalently all $a,b,c,d$ are invertible in terms of the lifting of $\gamma$. Let $G_\rs$ be the open subscheme of $A\times A$-regular semisimple locus.
 A section of the restriction of the morphism $\inv$ to $G_\rs$ is given by
\begin{align}\label{sec inv}
   \begin{gathered}
	\gamma  \colon
	\xymatrix@R=0ex{
	  \BP^1_F -\{0,1,\infty\} \ar[r]  & G    \\
	u  \ar@{|->}[r]  &\gamma(u)=\matrixx{1}{u}{1}{1}
		}
	\end{gathered}.\index{$\gamma,\gamma(u)$}%
\end{align} 
 
Now we consider the induced map on the $F$-points $\inv: G(F)\to \BP^1(F) -\{1\}$, and the action of $A(F)\times A(F)$ on $G(F)$. Denote by
$\BO_\rs(G)=A(F)\bs G_\rs(F)/A(F)$ the set of orbits in $G_\rs(F)$ under the action of $A(F)\times A(F)$. They will be called the regular semisimple orbits. It is easy to see that the map $\inv: G_\rs(F)\to \BP^1(F) -\{0,1,\infty\}$ induces a bijection
$$\inv: \BO_\rs(G) \longrightarrow \BP^1(F) -\{0,1,\infty\}.\index{$G_\rs, \BO_{\rs}$}%
$$
 A convenient set of representative of $\BO_\rs(G)$ is given by
$$
\BO_\rs(G)\simeq \left\{\gamma(u)= \matrixx{1}{u}{1}{1}\biggm|  u\in \BP^1(F) -\{0,1,\infty\}\right\}.
$$
There are six non-regular-semisimple orbits in $G(F)$, represented respectively by
$$
\begin{array}{ccc}
 1=\matrixx{1}{}{}{1},  & n_+=\matrixx{1}{1}{}{1},  & n_-=\matrixx{1}{}{1}{1},  \\
w=\matrixx{}{1}{1}{}, & wn_+=\matrixx{}{1}{1}{1}, &wn_-=\matrixx{1}{1}{1}{},
\end{array}\index{$n_\pm, w$}%
$$
where the first three (the last three, resp.) have $\inv=0$ ($\infty$, resp.)

\subsection{Jacquet's RTF}\label{ss:RTF}Now we return to the setting of the introduction. In particular, we have $\eta=\eta_{F'/F}$.
In  \cite{J86} Jacquet constructs an RTF to study the central value of $L$-functions of the same type as ours (mainly in the number field case).  Here we modify his RTF to study higher derivatives. 

For $f\in C_c^\infty(G(\BA))$, we
consider the automorphic kernel function
\begin{align}\label{eqn kernel}
\BK_{f}(g_1,g_2)=\sum_{\gamma\in G(F)} f(g_1^{-1}\gamma g_2),\quad g_1,g_2\in G(\BA).\index{$\BK_f$}%
\end{align}
We will define a distribution, given by a regularized integral
\begin{align*}
\BJ(f,s)=\int^{{\rm reg}}_{[A]\times [A]}\BK_{f}(h_1,h_2)\lvert h_1h_2\rvert^s \eta(h_2)\,dh_1\,dh_2.\index{$\BJ(f,s)$}%
\end{align*}
Here we recall that $[A]=A(F)\bs A(\BA),\index{$[A]$}%
$ and for $h=\matrixx{a}{}{}{d}\in A(\BA)$ we write for simplicity
$$
\big|h\big|=\big|a/d\big|,\quad \eta(h)=\eta(a/d).
$$
The integral is not always convergent but can be regularized in a way analogous to \cite{J86}. 
For an integer $n$, consider the ``annulus" 
$$
\BA^\times_n:=\bigg\{ x\in\BA^\times\biggm|
\val(x)= n \bigg\}.\index{$\BA^\times_n$}%
$$This is a torsor under the group $\BA^1=\BA^\times_0$. Let $A(\BA)_n$ be the subset of $A(\BA)$ defined by  
\[
 A(\BA)_n=  \biggl\{ \matrixx{a}{}{}{d}\in A(\BA)
	\biggm|
a/d\in \BA^\times_{n}
	  \biggl\}.
\] Then we define, for $(n_1,n_2)\in\BZ^2$, 
\begin{align}\label{eqn J c}
\BJ_{n_1,n_2}(f,s)=\int_{[A]_{n_1}\times [A]_{n_2}}\BK_{f}(h_1,h_2)\lvert h_1h_2\rvert^s \eta(h_2)\,dh_1\,dh_2.\index{$\BJ_{n_1,n_2}(f,s)$}%
\end{align}
The integral \eqref{eqn J c} is clearly absolutely convergent and equal to a Laurent polynomial in $q^{s}$.
\begin{prop}\label{prop J n1n2}
 The integral $\BJ_{n_1,n_2}(f,s)$ vanishes when $|n_1|+| n_2|$ is sufficiently large. 
\end{prop}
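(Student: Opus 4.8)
The plan is to expand the kernel $\BK_f$ along the $A(F)\times A(F)$-orbits on $G(F)$ and bound, orbit by orbit, the support of each orbital contribution in $(n_1,n_2)$. Since $\supp f$ is compact and $G(F)$ is discrete in $G(\BA)$, the sum defining $\BK_f(h_1,h_2)$ is finite for each $(h_1,h_2)$, and since $[A]_{n_1}\times[A]_{n_2}$ is compact one may integrate term by term:
$$\BJ_{n_1,n_2}(f,s)=\sum_{u\in\BP^1(F)\setminus\{0,1,\infty\}}\BJ^u_{n_1,n_2}(f,s)+\sum_{\delta\in\{1,n_+,n_-,w,wn_+,wn_-\}}\BJ^\delta_{n_1,n_2}(f,s),$$
where $u=\inv(\gamma)$ labels the regular semisimple orbits (represented by $\gamma(u)$, with trivial stabilizer in $A\times A$) and $\delta$ runs over the six degenerate orbits. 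It then suffices to show each summand vanishes once $|n_1|+|n_2|$ exceeds a bound depending only on $f$ (and $X,X'$), with the bound uniform in $u$.

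For a regular semisimple orbit, $A\times A$-invariance of $\inv$ forces $\BJ^u_{n_1,n_2}(f,s)=0$ unless $u\in\inv(\supp f)\cap\BP^1(F)$, a finite set since $\BP^1(F)$ is discrete in $\BP^1(\BA)$. For such $u$, unfolding (trivial stabilizer) gives an integral of $f(h_1^{-1}\gamma(u)h_2)$ over $A(\BA)_{n_1}\times A(\BA)_{n_2}$; writing $h_i=\diag(t_i,1)$, the matrix $h_1^{-1}\gamma(u)h_2=\bigl(\begin{smallmatrix}t_1^{-1}t_2 & t_1^{-1}u\\ t_2 & 1\end{smallmatrix}\bigr)$ has all four entries nonzero, and for its class to lie in the compact set $\supp f\subset\PGL_2(\BA)$ one needs, after scaling it into a fixed compact subset of $\GL_2(\BA)$: the determinant $t_1^{-1}t_2(1-u)$ to have bounded valuation — here $\val(u)=\val(1-u)=0$ by the product formula, so $u$ drops out uniformly — and each of the four scaled entries to have valuation bounded below. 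The four resulting inequalities bound $|n_1-n_2|$ and $|n_1+n_2|$, hence $|n_1|+|n_2|$, by a constant depending only on $\supp f$.

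The degenerate orbits split into two kinds. The orbits of $1$ and of $w$ have one-parameter stabilizers (the diagonal $\Delta A(F)$, resp.\ its $w$-conjugate); unfolding one copy of $A(F)$ and performing a translation substitution in the remaining variable, the inner torus integral factors as an $h_2$-independent constant times $\int_{[A]_{n_2}}\eta(h_2)\,dh_2$, and this factor is $0$ because $\eta$ is a nontrivial character of the compact group $F^\times\backslash\BA^1$ — nontrivial precisely because $X'$ is geometrically connected, so $\eta$ does not factor through $\val$. Thus these two contributions vanish for all $(n_1,n_2)$. The orbits of $n_\pm$ and $wn_\pm$ have trivial stabilizer; unfolding and changing variables produces, e.g.,
$$\BJ^{n_+}_{n_1,n_2}(f,s)=q^{-(n_1+n_2)s}\int_{\BA^\times_{n_2-n_1}}\int_{\BA^\times_{-n_1}}f\Bigl(\begin{smallmatrix}c & d\\ 0 & 1\end{smallmatrix}\Bigr)\eta(cd)\,dc\,dd,$$
and similarly for $n_-,wn_\pm$ — in each case the defining matrix has one zero entry, so only three of the four compactness inequalities survive, leaving one direction (for $n_+$: $n_1+n_2\to-\infty$) unbounded on the support, along which the free variable is an idèle of unbounded valuation (for $n_+$: $\val(d)=-n_1\to\infty$). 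On that locus, reducing modulo $\OO^\times$ (using right $K$-invariance of $f$ when $f\in\sH$; in general decomposing $f$ into pieces of fixed finite level), the integral becomes a finite $\BQ$-linear combination of coefficients $\sum_{D\geq 0,\ \deg D=m}\eta(D)$ (or $\sum_{\deg D=m}\eta\chi(D)$ with $\chi$ ramified only at the level of $f$), with $m$ at least a constant plus the unbounded parameter. Since $L(\eta,s)=\sum_{D\geq 0}\eta(D)q^{-s\deg D}$ — attached to the nontrivial character $\eta$ — is a polynomial in $q^{-s}$ (of degree $2g-2$), and likewise $L(\eta\chi,s)$, these coefficients vanish for $m$ large; hence $\BJ^{n_+}_{n_1,n_2}(f,s)$ and its analogues vanish for $|n_1|+|n_2|$ large.

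Putting the pieces together, $\BJ_{n_1,n_2}(f,s)$ is a finite sum of terms each vanishing once $|n_1|+|n_2|$ exceeds a constant depending only on $f$, which is the assertion. I expect the main work to be in the unipotent orbits: one must perform the unfolding and change of variables correctly so as to exhibit the surviving orbital integral as a partial sum of the complete $L$-function of $\eta$ (the point being that, after using up the $F^\times$-invariances, the leftover ``moduli'' is a weighted count of effective divisors of a fixed, unbounded degree), and then invoke the polynomiality — equivalently, finiteness of degree, a consequence of Riemann--Roch / the Weil conjectures for curves — of that $L$-function. The regular semisimple case, by contrast, is a soft compactness estimate, and the orbits of $1$ and $w$ are killed outright by the nontriviality of $\eta$.
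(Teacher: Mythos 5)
Your proof has the same skeleton as the paper's: decompose along $A(F)\times A(F)$-orbits, kill the regular semisimple orbits by compactness of $\supp(f)$, kill the orbits of $1$ (and $w$) by nontriviality of $\eta$ on $F^\times\bs\BA^1$, and reduce the orbits of $n_\pm$ (and $wn_\pm$) to the vanishing, for large valuation, of annulus integrals of Tate type. Two local deviations are worth noting. For the regular semisimple orbits you extract a bound on $|n_1|,|n_2|$ that is \emph{uniform} in $u$, using the product formula $\val(u)=\val(1-u)=0$; this is a genuine (and pleasant) strengthening of the paper, which instead gets a $u$-dependent bound and first proves separately (Lemma 2.3) that only finitely many $u$ contribute. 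For the unipotent orbits the paper proves the key vanishing for an \emph{arbitrary} $\varphi\in C_c^\infty(\BA)$ by Poisson summation, whereas you reduce to polynomiality of $L(\eta,s)$ and its finite-order twists; the two inputs are equivalent (Riemann--Roch), but your reduction is only sketched, see below.

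Two points need repair. First, your finiteness justification is false as stated: $\BP^1(F)$ is \emph{not} discrete in $\BP^1(\BA)$ (the latter is compact and the former infinite). The paper's fix is to use $\tau=\inv/(1-\inv)=-bc/\det$, a regular function $G(\BA)\to\BA$, so that $\tau(\supp(f))$ is compact in $\BA$ and meets the discrete set $F$ in a finite set; equivalently, one must use the adelic topology of the open curve $\BP^1\setminus\{1\}\cong\BA^1$, not the subspace topology from $\BP^1(\BA)$. In your argument this finiteness is in fact dispensable (your bound is uniform in $u$, and term-by-term integration only needs the local finiteness you already invoke), but the claim as written should be removed or corrected. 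Second, the proposition is asserted for all $f\in C_c^\infty(G(\BA))$, and your treatment of the unipotent orbits in that generality ("decomposing $f$ into pieces of fixed finite level, the integral becomes a finite combination of coefficients of $L(\eta\chi,s)$") is the thin spot: the $d$-variable function is a general element of $C_c^\infty(\BA)$, and converting $\int_{\val(d)=m}\varphi_2(d)\eta(d)\,d^\times d$ into ray-class character sums requires an actual argument --- e.g. the factorization of the global Tate integral $\int_{\BA^\times}\varphi_2(d)\eta(d)|d|^s d^\times d = L(\eta,s)\cdot\prod_{v\in S}\bigl(Z_v/L_v\bigr)$ together with the local fact that each $Z_v/L_v$ is a Laurent polynomial, or, as the paper does, a direct Poisson summation; one must also make sure no trivial character sneaks in among the twists (the Tate-integral formulation avoids this automatically, since the character is the fixed nontrivial $\eta$). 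With either of these substitutions your argument closes; as written, the hardest step is flagged but not carried out.
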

Granting this proposition, we then define 
\begin{align}\label{eqn J(f,s)}
\BJ(f,s):=\sum_{(n_1,n_2)\in\BZ^2} \BJ_{n_1,n_2}(f,s).		\index{$\BJ(f,s)$}%
\end{align}This is a Laurent polynomial in $q^{s}$.

The proof of Proposition \ref{prop J n1n2} will occupy \S2.3-2.5.

\subsection{A finiteness lemma}
For an $(A\times A)(F)$-orbit of $\gamma$, we define 
\begin{align}\label{eqn K a}
\BK_{f,\gamma}(h_1,h_2)= \sum_{\delta\in A(F)\gamma A(F)} f(h_1^{-1}\delta h_2),\quad h_1,h_2\in A(\BA).\index{$\BK_{f,\gamma}$}%
\end{align}
Then we have
\begin{align}\label{eqn sum}
\BK_{f}(h_1,h_2)=\sum_{\gamma \in A(F)\bs G(F)/ A(F) } \BK_{f,\gamma}(h_1,h_2).\index{$\BK_f$}%
\end{align}

\begin{lem}\label{lem finte}
The sum in \eqref{eqn sum}  has only finitely many non-zero terms.
\end{lem}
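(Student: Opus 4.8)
The claim is that, for fixed $f \in C_c^\infty(G(\BA))$ and fixed $h_1, h_2 \in A(\BA)$ (ranging over a fixed compact times a fixed torsor $A(\BA)_{n_1} \times A(\BA)_{n_2}$ — though for this lemma a single pair suffices), only finitely many double cosets $\gamma \in A(F)\bs G(F)/A(F)$ contribute a nonzero term $\BK_{f,\gamma}(h_1,h_2)$. The plan is to use the invariant $\inv \colon G(F) \to \BP^1(F) - \{1\}$ together with the properness of the support of $f$. First I would observe that $\BK_{f,\gamma}(h_1,h_2) \neq 0$ forces the existence of some $\delta \in A(F)\gamma A(F)$ with $h_1^{-1}\delta h_2 \in \Supp(f)$; since $\inv$ is $(A\times A)$-invariant and $\Supp(f)$ is compact, the set of values $\inv(\delta) = \inv(h_1 \cdot (\text{elt of }\Supp f)\cdot h_2^{-1})$ lies in the image under $\inv$ of a fixed compact subset of $G(\BA)$, intersected with the \emph{discrete} set $\BP^1(F) - \{1\}$. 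Because $\inv$ is given by a morphism of schemes and $\BP^1$ is separated, this image is compact in $\BP^1(\BA) - \{1\}$, hence meets $\BP^1(F)$ in a finite set. So only finitely many orbits with regular semisimple invariant can occur, plus possibly the (at most six) non-regular-semisimple orbits — but those with $\inv = 0$ or $\infty$ split into finitely many double cosets as listed, so this is harmless.

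The one point that needs a little care is that $\inv$ takes values in $\BP^1_F - \{1\}$, which over the adeles is $\BP^1(\BA) - \{1\}$, and I need the preimage under $\inv$ of any point, intersected with $\Supp(f)$ translated by $(h_1,h_2)$, to meet only finitely many $A(F)\times A(F)$-orbits. For a regular semisimple value $u \in \BP^1(F) - \{0,1,\infty\}$, the fiber $\inv^{-1}(u)$ is a single $(A\times A)(F)$-orbit (by the bijection $\BO_\rs(G) \isom \BP^1(F)-\{0,1,\infty\}$ stated in \S2.1), so finitely many values of $u$ give finitely many orbits and we are done on the regular semisimple locus. The non-semisimple locus $\inv^{-1}(\{0,\infty\})$ contributes only the six explicitly named orbits, so at most finitely many double cosets overall. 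Thus the sum \eqref{eqn sum} has finitely many nonzero terms.

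The main obstacle — and the step I would spend the most care on — is the compactness/finiteness input: namely that $\{u \in \BP^1(F) : u = \inv(g) \text{ for some } g \in C\}$ is finite for a compact $C \subset G(\BA)$. This is where one uses that $F \subset \BA$ is discrete and $\BP^1(\BA)$ is compact (or, concretely: write $u = bc/(ad)$ for a lift $\matrixx{a}{b}{c}{d}$ of $g$; as $g$ ranges over $C$ the adelic absolute values $|a|,|b|,|c|,|d|$ are bounded above and below away from the locus where the formula degenerates, so $|u|_v$ and $|1-u|_v$ are bounded at all places and trivial outside a fixed finite set $S$ depending on $C$; an element of $F^\times$ with bounded valuations everywhere and controlled outside $S$ lies in a finite set by the product formula / finiteness of $S$-units up to bounded multiples). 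I would phrase this cleanly using the fact that $\inv$ extends to a morphism $G \to \BP^1$ of $F$-schemes, so that $\inv(C) \subseteq \BP^1(\BA) - \{1\}$ is contained in a compact set, and $\BP^1(\BA)$ being compact while $\BP^1(F)$ is discrete in it forces $\inv(C) \cap \BP^1(F)$ to be finite. Everything else is bookkeeping with the six extra orbits.
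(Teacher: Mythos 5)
Your reduction is the same as the paper's (group the double cosets by the invariant $\inv$, note that each regular semisimple value of $\inv$ carries exactly one orbit and $\{0,\infty\}$ carry the six listed ones, and reduce to showing that only finitely many $u\in\BP^1(F)-\{1\}$ arise as invariants of elements of a compact subset of $G(\BA)$), but the justification you give for that key finiteness step does not work, in either of the two forms you propose. The "clean" version is based on the claim that $\BP^1(F)$ is discrete in $\BP^1(\BA)$, and this is false: since $\BP^1(\calO_v)=\BP^1(F_v)$, the adelic topology on $\BP^1(\BA)$ is the product topology and a basic neighborhood of a point constrains only finitely many places, so every neighborhood of, say, $0\in\BP^1(F)$ contains infinitely many other $F$-points (indeed $\BP^1(\BA)$ is compact and contains all of the infinite set $\BP^1(F)$, so "compact image meets $\BP^1(F)$ in a finite set" cannot be right). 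Discreteness of $F$ in $\BA$ is special to the \emph{affine} adeles, where neighborhoods constrain all places. The "concrete" fallback also fails as stated: on a compact set the matrix entries are bounded above but not below, and $|u|_v$ with $u=bc/(ad)$ is genuinely unbounded even on the single compact group $K_v$ — e.g. $\matrixx{\varpi_x^{n}}{1}{1}{\varpi_x^{n}}\in G(\calO_x)$ has $u=\varpi_x^{-2n}$ — so you cannot bound $|u|_v$ (or deduce a height bound) place by place.

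The paper's proof repairs exactly this point: instead of $\inv$ it uses $\tau=\frac{\inv}{1-\inv}$, which on a lift $\matrixx{a}{b}{c}{d}$ equals $bc/\det$, hence is a \emph{regular function} $G\to\BA^1$ (equivalently, it exploits the isomorphism $\BP^1-\{1\}\cong\BA^1$, using that $\inv$ omits the value $1$). Then $\tau$ is continuous on $G(\BA)$ with values in $\BA$, still $A(\BA)\times A(\BA)$-invariant, so nonvanishing of $\BK_{f,\gamma(u)}$ forces $u/(1-u)\in\tau(\Supp(f))\cap F$, the intersection of a compact subset of $\BA$ with the discrete subgroup $F$, which is finite. (Concretely, $|bc/\det|_v\le 1$ on $K_v$, which is the bound your entrywise estimate was missing.) So the structure of your argument is fine and the gap is localized and fixable, but as written the crucial finiteness input is unproved, and the fix is precisely the passage from $\inv$ to $\tau$.
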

\begin{proof}
Denote by $G(F)_u$ the fiber of $u$ under the (surjective) map \eqref{inv G} $$\inv:G(F)\to  \BP^1(F)-\{1\}.$$ We then have a decomposition of $G(F)$ as a disjoint union
$$
G(F)=\coprod_{u\in \BP^1(F)-\{1\}} G(F)_u.
$$There is exactly one (three, resp.)  $(A\times A)(F)$-orbit in $G(F)_a$
when $u\in \BP^1(F)-\{0,1,\infty\}$ (when $u\in \{0,\infty\}$, resp.). It suffices to show that for
all but finitely many $u\in \BP^1(F)-\{0,1,\infty\}$, the kernel function $\BK_{f,\gamma(u)}(h_1,h_2)$ vanishes identically on $A(\BA)\times A(\BA)$.

Consider the map 
$$
\tau:=\frac{\inv}{1-\inv}\colon G(\BA)\to \BA.
$$
The map $\tau$ is continuous and takes constant values on $A(\BA)\times A(\BA)$-orbits. For $\BK_{f,\gamma(u)}(h_1,h_2)$ to be nonzero, the invariant $\tau(\gamma(u))=\frac{u}{1-u}$ must be in the image of $supp(f)$, the support of the function $f$.  Since $supp(f)$ is compact, so is its image under $\tau$. On the other hand, the invariant $\tau(\gamma(u))=\frac{u}{1-u}$ belongs to $F$. Since the intersection of a compact set $supp(f)$ with a discrete set $F$ in $\BA$ must have finite cardinality,  the kernel function $\BK_{f,\gamma(u)}(h_1,h_2)$ is nonzero for only finitely many $u$.
\end{proof}

For $\gamma\in A(F)\bs G(F)/A(F)$, we define 
\begin{align}\label{eqn J n1n2 gamma}
\BJ_{n_1,n_2}(\gamma,f,s)=\int_{[A]_{n_1}\times [A]_{n_2}}\BK_{f,\gamma}(h_1,h_2)\lvert h_1h_2 \rvert^s \eta(h_2)\,dh_1\,dh_2.\index{$\BJ_{n_1,n_2}(\gamma,f,s)$}%
\end{align}
Then we have 
\begin{align*}
\BJ_{n_1,n_2}(f,s)=\sum_{\gamma \in A(F)\bs G(F)/ A(F) }  \BJ_{n_1,n_2}(\gamma,f,s).\index{$\BJ_{n_1,n_2}(f,s)$}%
\end{align*}

By the previous lemma, the above sum has only finitely many nonzero terms. Therefore, to show Prop. \ref{prop J n1n2}, it suffices to show 
\begin{prop}\label{prop J n gamma}
For any $\gamma\in G(F)$, the integral $\BJ_{n_1,n_2}(\gamma,f,s)$ vanishes when $|n_1|+| n_2|$ is sufficiently large. 
\end{prop}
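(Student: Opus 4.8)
The plan is to analyze $\BJ_{n_1,n_2}(\gamma,f,s)$ for a fixed $\gamma \in G(F)$ by unfolding the integral along the double coset $A(F)\gamma A(F)$, reducing it to an integral over a single adelic copy of the torus, and then showing that the adelic absolute value of the relevant matrix entries is forced into a bounded range by the support of $f$, so that only finitely many pairs $(n_1,n_2)$ contribute. First I would treat the regular semisimple case $\gamma = \gamma(u)$ with $u \in \BP^1(F) - \{0,1,\infty\}$, which is the generic and most representative situation. Here the stabilizer of $\gamma(u)$ under $A \times A$ is trivial (this is essentially the content of regular semisimplicity, cf.\ the discussion of $\BO_\rs(G)$), so unfolding gives
$$
\BJ_{n_1,n_2}(\gamma(u),f,s) = \int_{[A]_{n_1}} \int_{A(\BA)} f\big(h_1^{-1}\gamma(u) h_2\big)\, \lvert h_1 h_2\rvert^s\, \eta(h_2)\, dh_2\, dh_1,
$$
after absorbing the $A(F)$ on one side by a change of variables; one checks that the inner integral over $h_2 \in A(\BA)$ converges absolutely since $f$ has compact support and $h_2 \mapsto h_1^{-1}\gamma(u)h_2$ is proper on the fibers.

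Next I would make the constraint on $n_1, n_2$ explicit. Writing $h_i = \diag(a_i,d_i)$, the matrix $h_1^{-1}\gamma(u)h_2$ lifts to $\diag(a_1^{-1}, d_1^{-1})\cdot\left[\begin{smallmatrix}1 & u\\ 1 & 1\end{smallmatrix}\right]\cdot\diag(a_2,d_2)$, whose entries are $a_1^{-1}a_2$, $a_1^{-1}u d_2$, $d_1^{-1}a_2$, $d_1^{-1}d_2$ up to the projectivization. For this element to lie in $\supp(f) \subset G(\BA)$ — a compact set — each of the four ratios of these entries must lie in a fixed compact subset of $\BA^\times$ (as $K$-double cosets in $G(\BA)$ are controlled by the valuations of such ratios). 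In particular $\val(a_1^{-1}a_2)$, $\val(d_1^{-1}d_2)$, $\val(a_1^{-1}ud_2)$, $\val(d_1^{-1}a_2)$ all lie in a fixed finite interval $I_f$ depending only on $f$ (and on $\val(u)$, which is fixed). But $\val(a_i/d_i) = n_i$ on $[A]_{n_i}$, and from the four bounded quantities above one recovers $n_1 - n_2 = \val(a_1/a_2) - \val(d_1/d_2) \in I_f - I_f$ and, using the entry involving $u$, also $n_1 + n_2 + \val(u)$ bounded — hence both $n_1$ and $n_2$ are bounded. Thus $\BJ_{n_1,n_2}(\gamma(u),f,s) = 0$ outside a finite set of $(n_1,n_2)$.

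The remaining case is the six non-regular-semisimple orbits ($1, n_\pm, w, wn_\pm$), where the stabilizer in $A \times A$ is positive-dimensional and the unfolding is slightly different — but here the relevant subset of entry-ratios that must land in $\val(\supp f)$ still pins down at least one of $n_1 \pm n_2$, and the integral over the (now nontrivial) stabilizer quotient is handled exactly as in Jacquet's treatment \cite{J86}; I expect the orbits of $1$ and $w$ (with invariant $0$ and $\infty$) to require the most care since there one of the two linear combinations of $n_1, n_2$ is left unconstrained by the compactness of $\supp(f)$ alone, but the resulting integral over the corresponding $\BA^1$-direction either vanishes or is handled by the same argument that bounds $\BK_{f,\gamma}$ in Lemma~\ref{lem finte}. \emph{The main obstacle} is precisely the bookkeeping in these degenerate orbits: ensuring that the direction along which $\supp(f)$ imposes no constraint is genuinely suppressed — either because the corresponding one-dimensional integral of $f$ against the character $\eta$ and $\lvert\cdot\rvert^s$ vanishes identically for $\lvert n_1\rvert + \lvert n_2\rvert$ large, or because $f$ being compactly supported forces that direction to be bounded too once all four entries are tracked simultaneously. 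Once Proposition~\ref{prop J n gamma} is established in all cases, Proposition~\ref{prop J n1n2} follows immediately by summing over the finitely many nonzero orbits provided by Lemma~\ref{lem finte}.
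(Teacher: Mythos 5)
Your treatment of the regular semisimple orbits is essentially the paper's argument in more pedestrian form: the paper packages your entry-by-entry bookkeeping into the single observation that $\iota_{\gamma}:(A\times A)(\BA)\to G(\BA)$, $(h_1,h_2)\mapsto h_1^{-1}\gamma h_2$, is a closed embedding, so that $f\circ\iota_{\gamma}\in C_c^\infty((A\times A)(\BA))$ and the integrand dies for $|n_1|+|n_2|\gg0$. (Two small repairs: the unfolding produces an integral over $A(\BA)_{n_1}\times A(\BA)_{n_2}$, not over $[A]_{n_1}\times A(\BA)$; and compactness of a subset of $G(\BA)$ only bounds $|\cdot|$ of continuous $\BA$-valued functions from above, so to get two-sided bounds on $n_1,n_2$ you must use complementary products such as $g_{11}g_{12}/\det g$ and $g_{21}g_{22}/\det g$ — but these are cosmetic.)

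The genuine gap is in the orbits $n_\pm$ (and $wn_\pm$), which your proposal explicitly leaves open and for which both of your proposed mechanisms miss the point. First, your option that ``$f$ being compactly supported forces that direction to be bounded too'' is false: $h_1^{-1}n_+h_2$ is upper triangular, and setting $\phi(x,y)=f\left(\matrixx{x}{y}{}{1}\right)$ one only gets $\phi\in C_c^\infty(\BA^\times\times\BA)$ — the second variable lives in $\BA$, so its compact support bounds $|y|$ from above only, leaving one linear combination of $n_1,n_2$ constrained on one side only. Second, the actual reason the remaining direction is ``suppressed'' is not a local vanishing of a character integral but a global input that your sketch never supplies: after separating variables one is reduced to showing that $\int_{\BA^\times_n}\varphi(x)\eta(x)|x|^{2s}\,d^\times x$ vanishes for $|n|\gg0$ when $\varphi\in C_c^\infty(\BA)$; this is clear for $n\ll0$ from the support, but for $n\gg0$ one must fold to $F^\times\bs\BA^\times_n$, apply Poisson summation to $\sum_{\alpha\in F^\times}\varphi(\alpha x)$, use the bounded support of $\wh\varphi$ to kill the dual sum, and kill the leftover terms $-\varphi(0)+|x|^{-1}\wh\varphi(0)$ using the nontriviality of $\eta$ on $F^\times\bs\BA^1$. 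This Tate-thesis step is the heart of the degenerate case, and deferring it to Jacquet leaves exactly the content of the proposition unproved. You also have the delicacy inverted: the orbits of $1$ and $w$ are the easy ones — there $\BJ_{n_1,n_2}(\gamma,f,s)=0$ for \emph{every} $(n_1,n_2)$ simply because $\eta|_{\BA^1}$ is nontrivial — while $n_\pm$ and $wn_\pm$ are precisely where the Poisson summation argument is needed.
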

Granting this proposition, we may define the (weighted) orbital integral
\begin{align}\label{eqn J(gamma,f,s)}
\BJ(\gamma,f,s):=\sum_{(n_1,n_2)\in\BZ^2} \BJ_{n_1,n_2}(\gamma,f,s).\index{$\BJ(\gamma,f,s)$}%
\end{align}
To show Prop. \ref{prop J n gamma}, we distinguish two cases according to whether $\gamma$ is regular semisimple.

\subsection{Proof of Proposition \ref{prop J n gamma}: regular semisimple orbits} \label{ss:rs orb}

For $u\in \BP^1(F)-\{0,1,\infty\}$, the fiber $G(F)_u=\inv^{-1}(u)$ is a single $A(F)\times A(F)$-orbit of $\gamma(u)$, and the stabilizer of $\gamma(u)$ is trivial.
We may rewrite \eqref{eqn J n1n2 gamma} as
\begin{align}\label{eqn J(a)1}
\BJ_{n_1,n_2}(\gamma(u),f,s)=\int^{}_{A(\BA)_{n_1} \times A(\BA)_{n_2}}f(h_1^{-1}\gamma(u)h_2)\lvert h_1h_2\rvert^s \eta(h_2)\,dh_1\,dh_2.
\end{align}
For the regular semisimple $\gamma=\gamma(u)$, the map \begin{align*}\iota_{\gamma}: (A \times A)(\BA)&\to G(\BA)\\(h_1,h_2)&\mapsto h_1^{-1}\gamma h_2
\end{align*}
is a closed embedding.  It follows that the function $f\circ \iota_{\gamma}$ has compact support, hence belongs to $C_c^\infty((A \times A)(\BA))$. Therefore, the integrand in \eqref{eqn J(a)1} vanishes when $|n_1|+|n_2|\gg 0$ (depending on $f$ and $\gamma(u)$).

\subsection{Proof of Proposition \ref{prop J n gamma}: non-regular-semisimple orbits}
Let $u\in \{0,\infty\}$. We only consider the case $u=0$ since the other case is completely analogous. There are three orbit representatives $\{1,n_+,n_-\}$.

It is easy to see that for $\gamma=1$, we have for all $(n_1,n_2)\in \BZ^2$, 
$$\BJ_{n_1,n_2}(\gamma,f,s)=0,
$$
because $\eta|_{\BA^1}$ is a nontrivial character.

Now we consider the case $\gamma=n_+$; the remaining case $\gamma=n_-$ is similar. Define a function
\begin{align}\label{eqn phi x y}
\phi(x,y)=f\left(\matrixx{x}{y}{}{1}\right),\quad (x,y)\in \BA^\times\times\BA. 
\end{align}
Then we have $\phi\in C_c^\infty(\BA^\times\times\BA)$. The integral $\BJ_{n_1,n_2}(n_+,f,s)$ is given by
\begin{align}\label{eqn J 0 phi}
\int_{\BA^\times_{n_1}\times \BA^\times_{n_2}} \phi \left(x^{-1}y,x^{-1}\right)\eta(y) |xy|^s \,d^\times x\,d^\times y,
\end{align}
where we use the multiplicative measure $d^\times x$ on $\BA^\times$. We substitute $y$ by $xy$, and then $x$ by $x^{-1}$:
$$
\int_{Z(n_1,n_2)}\phi \left(y,x\right)\eta(xy) |x|^{-2s}|y|^s \,d^\times x\,d^\times y,
$$
where 
 $$Z(n_1,n_2)=\left\{(x,y)\mid x\in A(\BA)_{-n_1}, x^{-1}y\in  A(\BA)_{n_2}\right\}.$$

Since $C_c^\infty(\BA^\times \times\BA)\simeq C_c^\infty(\BA^\times) \otimes C_c^\infty(\BA)$,
we may reduce to the case $\phi(x,y)=\phi_1(x)\phi_2(y)$ where $\phi_1\in C_c^\infty(\BA^\times), \phi_2\in C_c^\infty(\BA)$. Moreover, by writing $\phi_1$ as a finite linear combination, each supported on a single $\BA_n^\times$, we may even assume that $supp(\phi_1)$ is contained in $\BA^{\times}_{n}$, for  some $n\in\BZ$. The last integral is equal to
$$
\left(\int_{\BA^\times_{n}} \phi_1 \left( y\right) \eta(y) |y|^s\,d^\times y\right)\left( \int_{\BA^\times_{-n_1}\cap \BA^\times_{-n_2+n} } \phi_2\left(x\right)\eta(x) |x|^{-2s} \, d^\times x\right).
$$

Finally we recall that, from Tate's thesis,  for any $\varphi\in C_c^\infty(\BA)$, the integral on an annulus
\begin{align*}
\int _{ \BA^\times_n}\varphi\left(x \right)\eta(x) |x|^{2s} \,d^\times x,
\end{align*}
vanishes when $|n|\gg 0$. We briefly recall how this is proved.  It is clear if $n\ll 0$. Now assume that $n\gg 0$. We rewrite the integral as
\begin{align*}
\int _{ F^\times\bs \BA^\times_n}\sum_{\alpha\in F^\times}\varphi \left(\alpha x \right)\eta(x) |x|^{2s} \,d^\times x,
\end{align*} 
The Fourier transform of $\varphi$, denoted by $\wh{\varphi}$, still lies in $C_c^\infty(\BA)$.  By the Poisson summation formula, we have
\begin{align}\label{eqn PSF}
\sum_{\alpha\in F^\times}\varphi (\alpha x)=-\varphi(0)+|x|^{-1}\wh{\varphi}(0)+|x|^{-1}\sum_{\alpha\in F^\times}\wh{\varphi} (\alpha /x),
\end{align}
By the boundedness of the support of $\wh{\varphi}$, the sum over $F^\times$ on the RHS vanishes when $\val(x)=n\gg 0$.
Finally we note that the the integral of the remaining two terms on the RHS of \eqref{eqn PSF} vanishes because $\eta$ is nontrivial on $F^\times\bs\BA^1$.

This completes the proof of Prop. \ref{prop J n gamma}, and Prop.  \ref{prop J n1n2}.

\subsection{The distribution $\BJ$}Now $\BJ(f,s)$ is a Laurent polynomial in $q^{s}$. Consider the $r$-th derivative
\begin{align*}
\BJ_r(f):= \left(\frac{d}{ds}\right)^r\Big|_{s=0}\BJ(f,s).\index{$\BJ_r(f)$}%
\end{align*}
For $\gamma\in A(F)\bs G(F)/ A(F)$, we define
\begin{align*}
\BJ_r(\gamma,f):= \left(\frac{d}{ds}\right)^r\Big|_{s=0}\BJ(\gamma,f,s).\index{$\BJ_r(\gamma,f)$}%
\end{align*}
We then have an expansion (cf.\eqref{eqn J(f,s)})
\begin{align*}
\BJ(f,s)=\sum_{\gamma\in A(F)\bs G(F)/ A(F)}\BJ(\gamma,f,s),
\end{align*}
and (cf. \eqref{eqn J(gamma,f,s)})
\begin{align}\label{eqn orb decomp}
\BJ_r(f)=\sum_{\gamma\in A(F)\bs G(F)/ A(F)}\BJ_r(\gamma,f).
\end{align}
We define 
\begin{align}\label{orb J(u,s)}
\BJ(u,f,s)=\sum_{\gamma\in A(F)\bs G(F)_u/A(F)}\BJ(\gamma,f,s),\quad u\in \BP^1(F)-\{1\}.\index{$\BJ(u,f,s)$}%
\end{align}
and
\begin{align}\label{orb J}
\BJ_r(u,f)=\sum_{\gamma\in A(F)\bs G(F)_u/A(F)}\BJ_r(\gamma,f),\quad u\in \BP^1(F)-\{1\}.\index{$\BJ_r(u,f)$}%
\end{align}
Then we have a slightly coarser decomposition than \eqref{eqn orb decomp}
\begin{align}\label{eqn u decomp}
\BJ_r(f)=\sum_{u\in \BP^1(F)-\{1\}}\BJ_r(u,f).
\end{align}

\subsection{A special test function $f=\one_K$.}
\begin{prop}
For the test function 
$$f=\one_K,$$
we have 
\begin{align}\label{eqn J irr}
\BJ(u,\one_K,s)=\begin{cases}L(\eta,2s)+L(\eta,-2s) & \text{ if } u\in\{0,\infty\},\\
1 & \text{ if } u\in k-\{0,1\},\\
0 & \text{ otherwise}.
\end{cases}
\end{align}
\end{prop}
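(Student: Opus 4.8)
The test function $f=\one_K$ is bi-$K$-invariant, so all the integrals collapse considerably. The strategy is to compute $\BJ(u,\one_K,s)$ separately for regular semisimple $u$ and for the two singular values $u\in\{0,\infty\}$, using the orbital integral description \eqref{orb J(u,s)} together with the explicit unfoldings already carried out in \S\ref{ss:rs orb} and \S2.6.

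\emph{Regular semisimple case.} For $u\in\BP^1(F)-\{0,1,\infty\}$ there is a single orbit, with representative $\gamma(u)=\matrixx{1}{u}{1}{1}$ and trivial stabilizer, so by \eqref{eqn J(a)1} and \eqref{eqn J(gamma,f,s)},
$$
\BJ(u,\one_K,s)=\int_{A(\BA)\times A(\BA)}\one_K(h_1^{-1}\gamma(u)h_2)\,\lvert h_1h_2\rvert^{s}\,\eta(h_2)\,dh_1\,dh_2.
$$
Writing $h_i=\diag(a_i,1)$ and using that $\one_K$ factors as a product of local characteristic functions $\one_{K_x}$, this becomes a product over $x\in|X|$ of local integrals $\int\one_{K_x}(h_{1,x}^{-1}\gamma(u)h_{2,x})\,dh_{1,x}\,dh_{2,x}$ (the weight $\lvert\cdot\rvert^{s}$ and $\eta$ drop out once one checks the constraints force $\val(a_1)=\val(a_2)=0$). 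The matrix identity $h_1^{-1}\gamma(u)h_2=\matrixx{a_1^{-1}a_2}{a_1^{-1}u}{a_2}{1}$ forces, at the place $x$, that this lie in $\GL_2(\CO_x)$ up to center; a short computation shows the local integral equals $1$ when $u\in\CO_x^{\times}$ in the sense that $u$ is a unit at $x$ (equivalently $u\in k-\{0,1\}$ globally, since then $\tau(\gamma(u))=\frac{u}{1-u}$ is integral everywhere and the only constant functions in $F$ that are integral at all places are the constants $k$), and vanishes otherwise. Assembling the local factors gives $\BJ(u,\one_K,s)=1$ for $u\in k-\{0,1\}$ and $0$ for all other regular semisimple $u$.

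\emph{Singular case $u\in\{0,\infty\}$.} By symmetry treat $u=0$; there are three orbit representatives $\{1,n_+,n_-\}$. The orbit of $1$ contributes $0$ since $\eta|_{\BA^1}$ is nontrivial (already noted in \S2.6). For $n_\pm$, apply the unfolding of \S2.6: with $\phi(x,y)=\one_K\!\left(\matrixx{x}{y}{}{1}\right)$, which is the product over $x\in|X|$ of $\one_{\CO_x^{\times}}(x_v)\one_{\CO_x}(y_v)$, the integral $\BJ(n_+,\one_K,s)$ after the substitutions reduces to a product of a Tate integral $\int_{\BA^{\times}}\one_{\BO^{\times}}(y)\eta(y)\lvert y\rvert^{s}\,d^{\times}y$ and $\int_{\BA^{\times}}\one_{\BO}(x)\eta(x)\lvert x\rvert^{-2s}\,d^{\times}x$ (summed over the annuli); the first is just $1$ (integral over $\BO^{\times}$ of the trivial-on-$\BO^{\times}$ character $\eta$), and the second is, by Tate's thesis, the completed $L$-function $L(\eta,-2s)$. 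The contribution of $n_-$ is the mirror computation giving $L(\eta,2s)$ (the sign of the exponent is swapped because of which variable carries the $\lvert\cdot\rvert^{-2s}$). Summing, $\BJ(0,\one_K,s)=L(\eta,2s)+L(\eta,-2s)$, and likewise for $u=\infty$.

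\emph{Main obstacle.} The genuinely delicate point is the regular semisimple local computation: one must show that the local double integral over $A(F_x)\times A(F_x)$ of $\one_{K_x}$ composed with $(h_1,h_2)\mapsto h_1^{-1}\gamma(u)h_2$ equals $1$ precisely when $u$ is a unit at $x$ and $0$ otherwise, and then that the only $u\in F$ which are units at \emph{every} place (and $\neq 0,1$) are exactly $k-\{0,1\}$. The second assertion is the observation that a rational function on the projective curve $X$ with no zeros or poles is constant. The first is a direct but careful matrix-coset calculation using that $\gamma(u)$ has determinant $1-u$, so that integrality of the off-diagonal and the determinant condition together pin down $\val(a_1),\val(a_2)$ and force $\val(u)=0$; one should also verify the weight factors $\lvert h_1h_2\rvert^{s}$ and $\eta(h_2)$ evaluate to $1$ on the support, which is why no $s$-dependence survives in the regular semisimple terms.
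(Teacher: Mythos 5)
Your treatment of the singular orbits $u\in\{0,\infty\}$ is correct and is the same as the paper's: the orbit of $1$ dies because $\eta|_{\BA^1}$ is nontrivial, and the $n_\pm$ orbits unfold via $\phi=\one_{\BO^\times}\otimes\one_{\BO}$ into a trivial integral times a Tate integral, giving $L(\eta,\mp 2s)$. The problem is in the regular semisimple case, precisely at the step you yourself flag as the delicate point. Your claimed local lemma — that the local integral over $A(F_x)\times A(F_x)$ equals $1$ when $u\in\CO_x^{\times}$ and vanishes otherwise — is false. The membership condition for $g=\matrixx{a_1^{-1}a_2}{a_1^{-1}u}{a_2}{1}$ in $\PGL_2(\CO_x)$ is $g_{ij}^{2}/\det(g)\in\CO_x$ with $\det(g)=a_1^{-1}a_2(1-u)$, so the support is governed by $1-u$, not by $u$. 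Concretely: (i) if $u$ has a pole of order $e$ at $x$ (so $\val_x(1-u)=-e$), the four conditions admit all $(\val(a_1),\val(a_2))$ with $\val(a_1)+\val(a_2)=-e$ and $|\val(a_1)-\val(a_2)|\le e$, so the local factor is a sum of $e+1$ terms weighted by $q_x^{\pm s}$ and $\eta_x(\varpi_x)$ and is not zero in general (e.g. $2q_x^{s}$ for a simple pole at a split place); similarly, if $u$ has a zero of order $e$ the local factor is $\sum_{m=0}^{e}\eta_x(\varpi_x)^{m}q_x^{-2ms}$, not $0$. (ii) If $u$ is a unit at $x$ but $u\equiv 1$ modulo the maximal ideal (i.e. $\val_x(1-u)>0$), the local factor vanishes, contradicting your claimed value $1$. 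For the same reason, your assertion that the weight factors $\lvert h_1h_2\rvert^{s}$ and $\eta(h_2)$ drop out because the constraints force $\val(a_1)=\val(a_2)=0$ is only valid when both $u$ and $1-u$ are units at $x$.

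The correct argument (and the paper's) extracts from the four integrality conditions, by multiplying the $(1,1)$- and $(2,2)$-entry conditions, that $(1-u)^{-1}\in\BO$, i.e. $\val_x(1-u)\le 0$ at every place; a rational function without zeros is constant, so $1-u\in k^{\times}$ and hence $u\in k-\{0,1\}$, and in that case the conditions collapse to $a_1,a_2\in\BO^{\times}$, giving the value $1$. Your global conclusion could in principle be repaired — if $u\in F-k$ then $1-u$ is nonconstant, hence has a zero at some place, where the local factor genuinely vanishes — but that vanishing criterion (a zero of $1-u$) is different from the one you state and propose to prove (non-unitness of $u$), so as written the key lemma of your regular semisimple computation cannot be established.
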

\begin{proof}
We first consider the case $u\in\PP^{1}(F)-\{0,1,\infty\}$. In this case, we have
\begin{eqnarray*}
\BJ(u,\one_K,s)&=&\int_{\BA^{\times}\times\BA^{\times}}\one_{K}\left(\matrixx{x^{-1}}{0}{0}{1}\matrixx{1}{u}{1}{1}\matrixx{y}{0}{0}{1}\right)|xy|^{s}\eta(y)\,d^{\times}x\,d^{\times}y\\
&=&\sum_{x,y\in \BA^{\times}/\BO^{\times}}\one_{K}\left(\matrixx{x^{-1}y}{x^{-1}u}{y}{1}\right)|xy|^{s}\eta(y).
\end{eqnarray*}
The integrand is nonzero if and only if $g=\matrixx{x^{-1}y}{x^{-1}u}{y}{1}\in K$. This is equivalent to the condition that $g^{2}_{ij}/\det(g)\in\BO$, where $\{g_{ij}\}_{1\leq i,j\leq 2}$ are the entries of $g$. We have $\det(g)=x^{-1}y(1-u)$, therefore $g\in K$ is equivalent to
\begin{equation}\label{a1 a2 cond}
x^{-1}y(1-u)^{-1}\in \BO, x^{-1}y^{-1}u^{2}(1-u)^{-1}\in \BO, xy(1-u)^{-1}\in \BO, \text{ and } xy^{-1}(1-u)^{-1}\in\BO.
\end{equation}
Multiplying the first and last condition we get $(1-u)^{-1}\in\BO $. Therefore $1-u\in F^{\times}$ must be a constant function, i.e.,  $u\in k-\{0,1\}$. This shows that $\BJ(u,\one_K,s)=0$ when $u\in F- k$.

When $u\in k-\{0,1\}$, the conditions \eqref{a1 a2 cond} become
\begin{equation*}
x^{-1}y\in \BO, x^{-1}y^{-1}\in \BO, xy\in\BO, \text{ and } xy^{-1}\in\BO.
\end{equation*}
These together imply that $x,y\in\BO^{\times}$. Therefore the integrand is nonzero only when both $x$ and $y$ are in the unit coset of $\BA^{\times}/\BO^{\times}$, and the integrand is equal to 1 when this happens.  This proves $\BJ(u,\one_{K},s)=1$ when $u\in k-\{0,1\}$.

Next we consider the case $u=0$. For $f=\one_K$ and $\gamma=n_{+}$, we have in \eqref{eqn phi x y} $\phi=\phi_1\otimes\phi_2$ where
$$ \phi_1=\one_{\BO^\times},\quad\phi_2=\one_{\BO}.
$$
Therefore we have 
$$
\BJ(n_+,\one_K,s)=\int_{\BA^\times}\phi_2(x)\eta(x)|x|^{-2s}\, d^\times x=L(\eta,-2s).
$$
Similarly we have 
$$
\BJ(n_-,\one_K,s)=L(\eta,2s).
$$
This proves the equality \eqref{eqn J irr} for $u=0$. The case for $u=\infty$ is analogous. 
\end{proof}

\begin{cor}\label{c:one K} We have
\begin{equation*}
\JJ_{r}(\one_{K})=\begin{cases}4L(\eta,0)+q-2=4\frac{\#\Jac_{X'}(k)}{\#\Jac_{X}(k)}+q-2 & r=0;\\
2^{r+2}\left(\frac{d}{ds}\right)^{r}\Big|_{s=0}L(\eta,s) & r>0 \text{ even};\\
0 & r>0 \text{ odd}.\end{cases}
\end{equation*}
\end{cor}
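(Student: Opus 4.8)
The plan is to assemble the orbit-by-orbit computation of the preceding proposition into a closed formula for $\BJ(\one_K,s)$ and then read off its Taylor coefficients at $s=0$. The first step is to invoke the coarse orbital decomposition \eqref{eqn u decomp}: $\BJ(\one_K,s)=\sum_{u\in\PP^1(F)-\{1\}}\BJ(u,\one_K,s)$, where by the preceding proposition only the two points $u\in\{0,\infty\}$ and the $q-2$ points $u\in k-\{0,1\}$ contribute (in particular the sum is finite). Each of $u=0,\infty$ contributes $L(\eta,2s)+L(\eta,-2s)$ and each $u\in k-\{0,1\}$ contributes $1$, so I would record the identity
\[
\BJ(\one_K,s)=2L(\eta,2s)+2L(\eta,-2s)+(q-2).
\]

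From here the three cases fall out by inspection. Evaluating at $s=0$ gives $\BJ_0(\one_K)=4L(\eta,0)+q-2$. For $r>0$ the constant $q-2$ is annihilated by $\left(\frac{d}{ds}\right)^r$, and the chain rule yields $\BJ_r(\one_K)=\bigl(2\cdot 2^r+2\cdot(-2)^r\bigr)L^{(r)}(\eta,0)$, which is $2^{r+2}L^{(r)}(\eta,0)$ when $r$ is even and $0$ when $r$ is odd; alternatively one simply observes that $\BJ(\one_K,s)$ is an even function of $s$, so all of its odd-order derivatives at the origin vanish automatically.

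Finally, to put $L(\eta,0)$ in the stated form $\#\Jac_{X'}(k)/\#\Jac_X(k)$ I would invoke the function-field class number formula. Comparing Euler products (the cover $X'/X$ being everywhere unramified) gives the factorization $\zeta_{F'}(s)=\zeta_F(s)L(\eta,s)$; since $X$ and $X'$ are both geometrically connected over $k$, their zeta functions have the same denominator, so $L(\eta,s)$ is the ratio $P_{X'}(q^{-s})/P_X(q^{-s})$ of the two zeta numerators, and evaluating at $q^{-s}=1$ together with $P_X(1)=\#\Pic^0_X(k)=\#\Jac_X(k)$ (and likewise for $X'$) finishes it. There is no substantive obstacle in this corollary: all the real work is already contained in the preceding proposition, and what remains is the bookkeeping above together with a textbook zeta-function identity.
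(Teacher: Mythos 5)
Your proposal is correct and is exactly the computation the paper intends: the corollary is stated without proof as an immediate consequence of the preceding proposition, and your summation $\BJ(\one_K,s)=2L(\eta,2s)+2L(\eta,-2s)+(q-2)$ followed by differentiation at $s=0$ (plus the standard identity $L(\eta,0)=\#\Jac_{X'}(k)/\#\Jac_X(k)$ from $\zeta_{X'}=\zeta_X\cdot L(\eta,\cdot)$) is the intended argument.
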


\section{Geometric interpretation of orbital integrals}\label{s:orb}
In this section, we will give a geometric interpretation the orbital integrals $\BJ(\gamma, f,s)$ (cf. \eqref{eqn J(gamma,f,s)}) as a certain weighted counting of effective divisors on the curve $X$, when $f$ is in the unramified Hecke algebra. 

\subsection{A basis for the Hecke algebra}\label{ss:test fun}
Let $x\in|X|$. In the case $G=\PGL_{2}$, the local unramified Hecke algebra $\sH_{x}$ is the polynomial algebra $\QQ[h_{x}]$ where $h_{x}$ is the characteristic function of the $G(\calO_{x})$-double coset of $\matrixx{\varpi_{x}}{0}{0}{1}$, and $\varpi_{x}\in \calO_{x}$  is a uniformizer. For each integer $n\geq 0$, consider the set $\Mat_{2}(\calO_{x})_{v_{x}(\det)=n}$ of matrices $A\in\Mat_{2}(\calO_{x})$ such that $v_{x}(\det(A))=n$. Let $M_{x,n}$ be the image of $\Mat_{2}(\calO_{x})_{v_{x}(\det)=n}$ in $G(F_{x})$. Then $M_{x,n}$ is a union of $G(\calO_{x})$-double cosets. We define $h_{nx}$ to be the characteristic function
\begin{equation}\label{hlocal}
h_{nx}=\one_{M_{x,n}}.
\end{equation}
Then $\{h_{nx}\}_{n\geq 0}$ is a $\QQ$-basis for $\sH_{x}$.

Now consider the global unramified Hecke algebra $\sH=\otimes_{x\in|X|}\sH_{x}$, which is a polynomial ring over $\QQ$ with infinitely generators $h_{x}$. For each effective divisor $D=\sum_{x\in|X|}n_{x}\cdot x$, we can define an element $h_{D}\in\sH$ using
\begin{equation}\label{fD}
h_{D}=\otimes_{x\in |X|}h_{n_{x}x}\index{$\sH_x, h_{nx}, h_D$}%
\end{equation}
where $h_{n_{x}x}$ is defined in \eqref{hlocal}. It is easy to see that the set $\{h_{D}| $D$ \text{ effective divisor on }$X$\}$ is a $\QQ$-basis for $\sH$. \index{$\sH$, Hecke algebra}%

The goal of the next few subsections is to give a geometric interpretation the orbital integral $\BJ(\gamma, h_{D},s)$. We begin by defining certain moduli spaces.

\subsection{Global moduli space for orbital integrals} 

\subsubsection{}\label{ss:sym power} 
For $d\in\ZZ$, we consider the Picard stack $\Pic^{d}_{X}$ of lines bundles over $X$ of degree $d$. Note that $\Pic^{d}_{X}$ is a $\Gm$-gerbe over its coarse moduli space. Let $\hX_{d}\to\Pic^{d}_{X}$ be the universal family of sections of line bundles, i.e., an $S$-point of $\hX_{d}$ is a pair $(\CL,s)$, where $\CL$ is a line bundle over $X\times S$ such that $\deg\calL|_{X\times\{t\}}=d$ for all geometric points $t$ of $S$,  and $s\in\cohog{0}{X\times S,\calL}$.\index{$\Pic_X^d$}\index{$\hX_d$}%

When $d<0$, $\hX_{d}\cong\Pic^{d}_{X}$ since all global sections of all line bundles $\calL\in\Pic^{d}_{X}$ vanish.  When $d\geq0$, let $X_{d}=X^{d}\!\!\sslash \!\! S_{d}$ be the $d$-th symmetric power of $X$. Then there is an open embedding $X_{d}\incl \hX_{d}$ as the open locus of nonzero sections, with complement isomorphic to $\Pic^{d}_{X}$. \index{$X_d$}%

For $d_{1},d_{2}\in\ZZ$, we have a morphism
\begin{equation*}
\wh{\add}_{d_{1},d_{2}}: \hX_{d_{1}}\times\hX_{d_{2}}\to \hX_{d_{1}+d_{2}}\index{$\wh{\add}_{d_1,d_2}, \add_{d_1,d_2}$}%
\end{equation*}
sending $((\CL_{1}, s_{1}), (\CL_{2},s_{2}))$ to $(\CL_{1}\otimes\CL_{2},s_{1}\otimes s_{2})$. The restriction of $\wh{\add}_{d_{1},d_{2}}$ to the open subset $X_{d_{1}}\times X_{d_{2}}$ becomes the addition map for divisors $\add_{d_{1},d_{2}}: X_{d_{1}}\times X_{d_{2}}\to X_{d_{1}+d_{2}}$.

\subsubsection{The moduli space $\calN_{\un{d}}$}\label{sss:calN} Let $d\geq0$ be an integer. Let $\Sigma_{d}$ be the set of  quadruple of non-negative integers $\un{d}=(d_{ij})_{i,j\in\{1,2\}}$  satisfying $d_{11}+d_{22}=d_{12}+d_{21}=d$. \index{$\Sigma_d, \un{d}$}%

For $\un{d}\in\Sigma_{d}$, we consider the moduli functor  $\tcN_{\un{d}}$ classifying $(\calK_{1}, \calK_{2}, \calK'_{1}, \calK'_{2}, \varphi)$ where
\begin{itemize}
\item $\calK_{i},\calK'_{i}\in \Pic_{X}$ with $\deg\calK'_{i}-\deg\calK_{j}=d_{ij}$.\index{$\calK_{i},\calK'_{i}$}%
\item $\varphi:\calK_{1}\oplus\calK_{2}\to \calK'_{1}\oplus\calK'_{2}$ is an $\calO_{X}$-linear map. We express it as a matrix
\begin{equation*}
\varphi=\matrixx{\varphi_{11}}{\varphi_{12}}{\varphi_{21}}{\varphi_{22}}
\end{equation*}
where $\varphi_{ij}:\calK_{j}\to\calK'_{i}$.
\item If $d_{11}< d_{22}$, then $\varphi_{11}\neq0$ otherwise $\varphi_{22}\neq0$. If $d_{12}<d_{21}$ then $\varphi_{12}\neq 0$ otherwise $\varphi_{21}\neq0$. Moreover, at most one of the four maps $\varphi_{ij}, i,j\in\{1,2\}$ can be zero. 
\end{itemize}
The Picard stack $\Pic_{X}$ acts on $\tcN_{\un{d}}$ by tensoring  each $\calK_{i}$ and $\calK'_{j}$ with the same line bundle. Let $\calN_{\un{d}}$ be the quotient stack $\tcN_{\un{d}}/\Pic_{X}$, which will turn out to be representable by a scheme over $k$.  We remark that the artificial-looking last condition in the definition of $\calN_{\un{d}}$ is to guarantee that $\calN_{\un{d}}$ is separated. \index{$\tcN_{\un{d}},\calN_{\un{d}}$}%

\subsubsection{The base $\calA_{d}$}\label{sss:Ad} Let $\calA_{d}$ be the moduli stack of triples $(\Delta,a,b)$ where $\Delta\in\Pic^{d}_{X}$, $a$ and $b$ are sections of $\Delta$ with the open condition that $a$ and $b$ are not simultaneously zero. Then we have an isomorphism
\begin{equation}\label{AXX}
\calA_{d}\cong\hX_{d}\times_{\Pic^{d}_{X}}\hX_{d}-Z_{d}      \index{$\calA_d$} \index{$Z_d$}%
\end{equation}
where $Z_{d}\cong\Pic^{d}_{X}$ is the image of the diagonal zero sections $(0,0):\Pic^{d}_{X}\incl \hX_{d}\times_{\Pic^{d}_{X}}\hX_{d}$. 

We claim that $\calA_{d}$ is a scheme. In fact, $\calA$ is covered by two opens $V=\hX_{d}\times_{\Pic^{d}_{X}}X_{d}$ and $V'=X_{d}\times_{\Pic^{d}_{X}}\hX_{d}$. Both $V$ and $V'$ are schemes because the  map $\hX_{d}\to\Pic^{d}_{X}$ is schematic.

We have a map 
\begin{equation*}
\delta:\calA_{d}\to\hX_{d}			\index{$\delta:\calA_{d}\to\hX_{d}$}%
\end{equation*}
given by $(\Delta,a,b)\mapsto (\Delta, a-b)$. 

\subsubsection{The open part $\Ah_{d}$}\label{sss:Ah} Later we will consider the open subscheme $\Ah_{d}\subset\calA_{d}$ defined by the condition $a\neq b$, i.e., the preimage of $X_{d}$ under the map $\delta:\calA_{d}\to \hX_{d}$. \index{$\calA_{d}^\heart$}%

\subsubsection{} To a point $(\calK_{1}, \calK_{2}, \calK'_{1}, \calK'_{2}, \varphi)\in\tcN_{\un{d}}$ we attach the following maps
\begin{itemize}
\item $a:=\varphi_{11}\otimes\varphi_{22}: \calK_{1}\otimes\calK_{2}\to\calK'_{1}\otimes\calK'_{2}$;
\item $b:=\varphi_{12}\otimes\varphi_{21}:\calK_{1}\otimes\calK_{2}\to\calK'_{2}\otimes\calK'_{1}\cong\calK'_{1}\otimes\calK'_{2}$.
\end{itemize}
Both $a$ and $b$ can be viewed as sections of the line bundle $\Delta=\calK'_{1}\otimes\calK'_{2}\otimes\calK^{-1}_{1}\otimes\calK^{-1}_{2}\in\Pic^{d}_{X}$. Clearly this assignment $(\calK_{1}, \calK_{2}, \calK'_{1}, \calK'_{2}, \varphi)\mapsto (\Delta, a,b)$ is invariant under the action of $\Pic_{X}$ on $\tcN_{\un{d}}$. Therefore we get a map
\begin{equation*}
f_{\calN_{\un{d}}}:\calN_{\un{d}}\to \calA_{d}.\index{$f_{\calN_{\un{d}}}$}%
\end{equation*}
The composition $\delta\circ f_{\calN_{\un{d}}}:\calN_{\un{d}}\to \hX_{d}$ takes $(\calK_{1},\calK_{2},\calK'_{1},\calK'_{2},\varphi)$ to $\det(\varphi)$ as a section of $\Delta=\calK'_{1}\otimes\calK'_{2}\otimes\calK^{-1}_{1}\otimes\calK^{-1}_{2}$.

\subsubsection{Geometry of $\calN_{\un{d}}$}
Fix $\un{d}=(d_{ij})\in \Sigma_{d}$. For $i,j\in\{1,2\}$, we have a morphism $\jmath_{ij}:\calN_{\un{d}}\to\hX_{d_{ij}}$ sending $(\calK_{1},\cdots,\calK'_{2},\varphi)$ to the section $\varphi_{ij}$ of the line bundle $\calL_{ij}:=\calK'_{i}\otimes\calK^{-1}_{j}\in\Pic^{d_{ij}}_{X}$. We have canonical isomorphisms $\calL_{11}\otimes\calL_{22}\cong\calL_{12}\otimes\calL_{21}\cong \Delta=\calK'_{1}\otimes\calK'_{2}\otimes\calK^{-1}_{1}\otimes\calK^{-1}_{2}$. Thus we get a morphism
\begin{eqnarray}\label{jm}
\jmath_{\un{d}}=(\jmath_{ij})_{i,j}: \calN_{\un{d}}&\to& (\hX_{d_{11}}\times \hX_{d_{22}})\times_{\Pic^{d}_{X}}(\hX_{d_{12}}\times \hX_{d_{21}}).
\end{eqnarray}
Here the fiber product on the right side is formed using the maps $\hX_{d_{11}}\times \hX_{d_{22}}\to\Pic^{d_{11}}_{X}\times\Pic^{d_{22}}_{X}\xrightarrow{\otimes}\Pic^{d}_{X}$ and $\hX_{d_{12}}\times \hX_{d_{21}}\to \Pic^{d_{12}}_{X}\times\Pic^{d_{21}}_{X}\xrightarrow{\otimes} \Pic^{d}_{X}$.

\begin{prop}\label{p:Nsm} Let $\un{d}\in\Sigma_{d}$.
\begin{enumerate}
\item The morphism $\jmath_{\un{d}}$ is an open embedding, and $\calN_{\un{d}}$ is a geometrically connected scheme over $k$. 
\item If $d\geq 2g'-1=4g-3$, $\calN_{\un{d}}$ is smooth over $k$ of dimension $2d-g+1$. 
\item We have a commutative diagram
\begin{equation}\label{f add}
\xymatrix{\calN_{\un{d}}\ar@{^{(}->}[r]^(.2){\jmath_{\un{d}}}\ar[d]^{f_{\calN_{\un{d}}}} & (\hX_{d_{11}}\times \hX_{d_{22}})\times_{\Pic^{d}_{X}}(\hX_{d_{12}}\times \hX_{d_{21}})\ar[d]^{\wh{\add}_{d_{11},d_{22}}\times\wh{\add}_{d_{12},d_{21}}}\\
\calA_{d}\ar@{^{(}->}[r]  & \hX_{d}\times_{\Pic^{d}_{X}}\hX_{d}}
\end{equation}
Moreover, the map $f_{\calN_{\un{d}}}$ is proper.
\end{enumerate}
\end{prop}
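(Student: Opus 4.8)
The plan is to analyze the morphism $\jmath_{\un{d}}$ directly and to extract smoothness and properness from a good description of its image together with the geometry of the symmetric powers. For part (1), I would first observe that a point of $\tcN_{\un{d}}$ is the same data as a choice of line bundles $\calK_1,\calK_2,\calK'_1,\calK'_2$ of the prescribed degrees together with four sections $\varphi_{ij}\in\cohog{0}{X,\calL_{ij}}$, $\calL_{ij}=\calK'_i\otimes\calK_j^{-1}$, subject to the nonvanishing/separatedness conditions in \S\ref{sss:calN}. After quotienting by $\Pic_X$ (which acts simply transitively on the choice of, say, $\calK_2$ once the other three line bundles and all the $\calL_{ij}$ are remembered up to the canonical isomorphisms $\calL_{11}\otimes\calL_{22}\cong\calL_{12}\otimes\calL_{21}$), the datum is exactly a point of the fiber product $(\hX_{d_{11}}\times\hX_{d_{22}})\times_{\Pic^d_X}(\hX_{d_{12}}\times\hX_{d_{21}})$ subject to the same open conditions; hence $\jmath_{\un d}$ identifies $\calN_{\un d}$ with the open subscheme of that fiber product cut out by: in the pair $(\varphi_{11},\varphi_{22})$ the one of smaller degree index is nonzero, in $(\varphi_{12},\varphi_{21})$ likewise, and not both of the two distinguished sections vanish. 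This is visibly an open condition, so $\jmath_{\un d}$ is an open embedding. Representability of $\calN_{\un d}$ by a scheme follows since the fiber product of the schematic maps $\hX_{d_{ij}}\to\Pic^{d_{ij}}_X$ is a scheme. For geometric connectedness I would note each $\hX_{d_{ij}}$ is geometrically connected (it fibers over $\Pic^{d_{ij}}_X$ with connected — indeed affine-space — fibers when $d_{ij}\ge 0$, and it is $\Pic^{d_{ij}}_X$ itself when $d_{ij}<0$), the fiber product over $\Pic^d_X$ of connected things with connected fibers is connected, and removing the codimension-$\ge 1$ locus where the extra sections vanish does not disconnect it provided the complement is not everything — which holds because generically the sections are nonzero.

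For part (2), assuming $d\ge 2g'-1=4g-3$, I would prove smoothness by exhibiting $\calN_{\un d}$ (via the open embedding $\jmath_{\un d}$) as an open subscheme of a smooth scheme. The point is the Riemann--Roch/vanishing estimate: if $\calM\in\Pic^e_X$ with $e\ge 2g-1$ then $H^1(X,\calM)=0$ and $h^0(X,\calM)=e-g+1$, so $\hX_e\to\Pic^e_X$ is (Zariski-locally on $\Pic^e_X$) a vector bundle of rank $e-g+1$, in particular smooth of dimension $(e-g+1)+\dim\Pic^e_X=e$. Taking the fiber product of $\hX_{d_{11}}\times\hX_{d_{22}}$ and $\hX_{d_{12}}\times\hX_{d_{21}}$ over $\Pic^d_X$: at least one of $d_{11},d_{22}$ is $\ge d/2\ge 2g-1$ — actually I want $H^1$-vanishing for the bundle whose $H^0$ is being varied, and the delicate case is when one of the four $d_{ij}$ is small; here the separatedness/last condition forces that the small one among $\{d_{11},d_{22}\}$ (resp. $\{d_{12},d_{21}\}$) be the one allowed to vanish, and over the open locus the other factor, of degree $\ge d/2$, governs smoothness. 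Computing dimensions: the fiber product has dimension $(d_{11}-g+1)+(d_{22}-g+1)+(d_{12}-g+1)+(d_{21}-g+1) - (\text{pullback over }\Pic^d_X\text{ identifies one }\Pic)$, and since $d_{11}+d_{22}=d_{12}+d_{21}=d$ and $\dim\Pic^d_X=g$ this works out, after the careful bookkeeping of the $\Pic_X$-quotient, to $2d-g+1$. Thus $\calN_{\un d}$, being open in this, is smooth of dimension $2d-g+1$.

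For part (3), the commutative square \eqref{f add} is a formal consequence of the definitions: $\wh{\add}_{d_{11},d_{22}}$ sends $(\varphi_{11},\varphi_{22})$ to $\varphi_{11}\otimes\varphi_{22}=a$ and $\wh{\add}_{d_{12},d_{21}}$ sends $(\varphi_{12},\varphi_{21})$ to $\varphi_{12}\otimes\varphi_{21}=b$, which is exactly $f_{\calN_{\un d}}$ post-composed with the open embedding $\calA_d\incl\hX_d\times_{\Pic^d_X}\hX_d$. Properness of $f_{\calN_{\un d}}$ is, I expect, the main obstacle and the one point requiring real work. The strategy is the valuative criterion: given a DVR $R$ with fraction field $L$ and an $R$-point of $\calA_d$ lifting to an $L$-point of $\calN_{\un d}$, I must extend the lift over $R$. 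The $R$-point of $\calA_d$ gives line bundles on $X_R$ and sections $a,b$; the $L$-point gives a factorization of the corresponding divisor-data through the matrix $\varphi$. The existence of a unique limit comes from the fact that each factor $\hX_{d_{ij}}$ is \emph{proper} over $\Pic^{d_{ij}}_X$ when... — here is the subtlety: $\hX_{d_{ij}}$ is \emph{not} proper over $k$, but the maps $\jmath_{ij}$ record $\varphi_{ij}$ as a section, and over $\calA_d$ one controls the divisor $\mathrm{div}(a)=\mathrm{div}(\varphi_{11})+\mathrm{div}(\varphi_{22})$ and $\mathrm{div}(b)$, which are \emph{effective divisors of bounded degree $d$}; the relevant statement is that the map $\calN_{\un d}\to\calA_d$ factors the datum $(a,b)$ of effective divisors into a pair of effective subdivisors with fixed total degrees, and the space of such factorizations is a closed subscheme of a product of $\mathrm{Sym}$-powers, hence proper over the base. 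Concretely I would show: over the open locus $\Ah_d$ where $a\ne b$ the fibers of $f_{\calN_{\un d}}$ are finite (combinatorics of splitting two coprime-support divisors), and in general $f_{\calN_{\un d}}$ is the composite of the proper map "$\hX_{d_{11}}\times\hX_{d_{22}}\to\hX_d$ is proper" type statements — more precisely, $\wh{\add}_{d_{11},d_{22}}\colon\hX_{d_{11}}\times_{\Pic}\hX_{d_{22}}\to\hX_d$ over $\Pic^d_X$ is proper (it is, away from the zero sections, the finite map $X_{d_{11}}\times X_{d_{22}}\to X_d$, and a routine check handles the boundary), and properness is preserved by base change along $\calA_d\incl\hX_d\times_{\Pic^d_X}\hX_d$ and by passing to the open $\calN_{\un d}$ provided one checks the open conditions defining $\calN_{\un d}$ are also closed in the fibers — which they are, since "$\varphi_{11}\ne 0$" on a fiber of $f_{\calN_{\un d}}$ over a point with $a\ne 0$ is automatic. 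Assembling: $f_{\calN_{\un d}}$ is separated, of finite type, and universally closed, hence proper. I would carry out these steps in the order (1), then the Riemann--Roch smoothness computation for (2), then the diagram and properness in (3), spending most of the effort on the valuative-criterion verification and the careful tracking of the $\Pic_X$-quotient in all dimension counts.
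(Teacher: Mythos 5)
Your overall route (identify $\calN_{\un{d}}$ via $\jmath_{\un{d}}$ with an open locus in $(\hX_{d_{11}}\times\hX_{d_{22}})\times_{\Pic^{d}_{X}}(\hX_{d_{12}}\times\hX_{d_{21}})$, get smoothness from Riemann--Roch, get properness from the addition maps) is the same as the paper's, but at each of the three decisive points your argument has a gap. The most serious is properness. The map $\wh{\add}_{d_{11},d_{22}}\colon \hX_{d_{11}}\times\hX_{d_{22}}\to\hX_{d}$ is \emph{not} proper, and no ``routine check at the boundary'' will make it so: the fiber over a point of the zero section $(\Delta,0)$ contains all tuples $(\calL_{11},0,\calL_{22},s_{22})$ with $\calL_{11}\otimes\calL_{22}\cong\Delta$ and $s_{22}$ an arbitrary section, an affine, non-proper family. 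This is exactly why the last (asymmetric) condition in the definition of $\calN_{\un{d}}$ is there: over the locus $a=0$ (or $b=0$) in $\calA_{d}$ it forbids the \emph{smaller}-degree section from vanishing, so that the relevant map becomes $\wh{\add}|_{\hX_{d_{11}}\times X_{d_{22}}}\colon\hX_{d_{11}}\times X_{d_{22}}\to\hX_{d}$ with the proper symmetric power $X_{d_{22}}$ in the second slot; this restricted map is proper (the paper factors it as the closed embedding $(\calL_{11},s_{11},D_{22})\mapsto(\calL_{11}(D_{22}),s_{11},D_{22})$ into $\hX_{d}\times X_{d_{22}}$ followed by the projection). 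Your remark that the conditions are ``automatic on a fiber over a point with $a\neq0$'' only handles the easy fibers; all the difficulty, and all the non-properness of the ambient family, lives over $a=0$ or $b=0$, which your proposal never confronts, and ``properness is preserved by passing to the open $\calN_{\un{d}}$'' is not a valid principle.

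Two further points. First, you have the asymmetric condition backwards: the definition requires the \emph{smaller}-degree section in each pair to be nonzero, so the one allowed to vanish has degree $\geq d/2\geq 2g-1$; this is precisely what makes your Riemann--Roch argument run, since it is the factor whose section may vanish that must have $H^{1}=0$ so that $\hX_{d_{ij}}\to\Pic^{d_{ij}}_{X}$ is smooth there (the nonvanishing factors are replaced by the symmetric powers $X_{d_{ij}}$, which are smooth in any degree). With your reading the smoothness argument would fail whenever one of the $d_{ij}$ is small. Second, in part (1) the step ``removing a codimension $\geq1$ locus from a connected space does not disconnect it'' is unjustified: for $d_{ij}<g$ the stack $\hX_{d_{ij}}$ is itself reducible (zero section plus the closure of the nonzero locus), so the fiber product is in general connected but reducible and the conclusion does not follow formally. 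The paper avoids this by fibering $\calN_{\un{d}}$ over a product of Picard stacks (via an explicit isomorphism identifying the $\Pic_{X}$-quotient with the fiber product, which you also only assert loosely) and using that the fibers are complements of proper linear subspaces in affine spaces, hence connected.
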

\begin{proof}
(1) We abbreviate $\Pic^{d}_{X}$ by $P^{d}$. Let $Z_{\un{d}}\subset (\hX_{d_{11}}\times \hX_{d_{22}})\times_{\Pic^{d}_{X}}(\hX_{d_{12}}\times \hX_{d_{21}})$ be the closed substack consisting of $((\calL_{ij}, s_{ij})\in\hX_{d_{ij}})_{1\leq i,j\leq 2}$ such that
\begin{itemize}
\item Either two of $\{s_{ij}\}_{1\leq i,j\leq 2}$ are zero;
\item Or $s_{11}=0$ if $d_{11}<d_{22}$; 
\item Or  $s_{22}=0$ if $d_{11}\geq d_{22}$;
\item Or $s_{12}=0$ if $d_{12}<d_{21}$;
\item Or $s_{21}=0$ if $d_{12}\geq d_{21}$.
\end{itemize}
By the definition of $\calN_{\un{d}}$, we have a Cartesian diagram
\begin{equation*}
\xymatrix{\calN_{\un{d}}\ar[r]^(.4){\jmath_{\un{d}}}\ar[d]^{\l} & (\hX_{d_{11}}\times \hX_{d_{22}})\times_{P^{d}}(\hX_{d_{12}}\times \hX_{d_{21}})-Z_{\un{d}}\ar[d]\\
P^{d_{11}-d_{12}}\times P^{d_{11}}\times P^{d_{21}}\ar[r]^(.4){\rho} & (P^{d_{11}}\times P^{d_{22}})\times_{P^{d}}(P^{d_{12}}\times P^{d_{21}})}
\end{equation*}
Here $\l$ sends $(\calK_{1},\cdots, \calK'_{2},\varphi)$ to $(\calX_{2}=\calK_{2}\otimes\calK^{-1}_{1}, \calX'_{1}=\calK'_{1}\otimes\calK^{-1}_{1}, \calX'_{2}=\calK'_{2}\otimes\calK^{-1}_{1})$, and $\rho$ sends $(\calX_{2},\calX'_{1},\calX'_{2})$ to $(\calX'_{1}, \calX'_{2}\otimes\calX^{-1}_{2}, \calX'_{1}\otimes\calX^{-1}_{2}, \calX'_{2})$. Note that $ \rho$ is an isomorphism. Therefore $\jmath_{\un{d}}$ is an isomorphism. Since the geometric fibers of $\l$ are connected, and $P^{d_{11}-d_{12}}\times P^{d_{11}}\times P^{d_{21}}$ is geometrically connected, so is $\calN_{\un{d}}$.

The stack $\calN_{\un{d}}$ is covered by four open substacks $U_{ij}, i,j\in\{1,2\}$ where $U_{ij}$ is the locus where only $\varphi_{ij}$ is allow to be zero. Each $U_{ij}$ is a scheme over $k$. In fact, for example, $U_{11}$ is an open substack of $(\hX_{d_{11}}\times X_{d_{22}})\times_{P^{d}}(X_{d_{12}}\times X_{d_{21}})$, and the latter is a scheme since the morphism $\hX_{d_{11}}\to P^{d_{11}}$ is schematic.

(2) We first show that $\calN_{\un{d}}$ is smooth when $d\geq2g'-1=4g-3$. For this we only need to show that $U_{ij}$ is smooth (see the proof of part (1) for the definition of $U_{ij}$). By the definition of $\calN_{\un{d}}$, $\varphi_{ij}$ is allowed to be zero only when $d_{ij}\geq d/2$, which implies that $d_{ij}\geq 2g-1$. Therefore, we need $U_{ij}$ to cover $\calN_{\un{d}}$ only when $d_{ij}\geq 2g-1$; otherwise $\varphi_{ij}$ is never zero and the rest of the $U_{i',j'}$ still cover $\calN_{\un{d}}$.  Therefore, we only need to prove the smoothness of $U_{ij}$ under the assumption that $d_{ij}\geq d/2$. Without loss of generality we argue for $i=j=1$. Then $d_{11}\geq 2g-1$ implies that the Abel-Jacobi map $\AJ_{d_{11}}:\hX_{d_{11}}\to P^{d_{11}}$ is smooth of relative dimension $d_{11}-g+1$. We have a Cartesian diagram
\begin{equation*}
\xymatrix{U_{11}\ar[r]\ar[d] & \hX_{d_{11}}\ar[d]^{\AJ_{d_{11}}}\\
X_{d_{22}}\times X_{d_{12}}\times X_{d_{21}}\ar[r] & P^{d_{11}}}
\end{equation*} 
where the bottom horizontal map is given by $(\calL_{22},s_{22},\calL_{12},s_{12}, \calL_{21},s_{21})\mapsto \calL_{12}\otimes\calL_{21}\otimes\calL_{22}^{-1}$.
Therefore $U_{11}$ is smooth over $X_{d_{22}}\times X_{d_{12}}\times X_{d_{21}}$ with relative dimension $d_{11}-g+1$, and $U_{11}$ is itself smooth over $k$ of dimension $2d-g+1$.

(3) The commutativity of the diagram \eqref{f add} is clear from the definition of $\jmath_{\un{d}}$. Finally we show that $f_{\calN_{\un{d}}}:\calN_{\un{d}}\to \calA_{d}$ is proper.  Note that $\calA_{d}$ is covered by open subschemes $V=\hX_{d}\times_{P^{d}}X_{d}$ and $V'=X_{d}\times_{P^{d}}\hX_{d}$ whose preimages under $f_{\calN_{\un{d}}}$ are $U_{11}\cup U_{22}$ and $U_{12}\cup U_{21}$ respectively. Therefore it suffices to show that $f_{V}: U_{11}\cup U_{22}\to V$ and $f_{V'}:U_{12}\cup U_{21}\to V'$ are both proper. 

We argue for the properness of $f_{V}$. There are two cases: either $d_{11}\geq d_{22}$ or $d_{11}<d_{22}$.

When $d_{11}\geq d_{22}$, by the last condition in the definition of $\calN_{\un{d}}$, $\varphi_{22}$ is never zero, hence $U_{11}\cup U_{22}=U_{11}$. By part (2), the map $f_{V}$ becomes
\begin{equation*}
(\hX_{d_{11}}\times X_{d_{22}})\times_{P^{d}}(X_{d_{12}}\times X_{d_{21}})\to \hX_{d}\times_{P^{d}} X_{d}.
\end{equation*}
Therefore it suffices to show that the restriction of the addition map
\begin{equation*}
\alpha=\wh{\add}_{d_{11},d_{22}}|_{\hX_{d_{11}}\times X_{d_{22}}}: \hX_{d_{11}}\times X_{d_{22}}\to \hX_{d}
\end{equation*}
is proper. We may factor  $\alpha$ as the composition of the closed embedding $\hX_{d_{11}}\times X_{d_{22}}\to \hX_{d}\times X_{d_{22}}$ sending $(\calL_{11},s_{11}, D_{22})$ to $(\calL_{11}(D_{22}), s_{11}, D_{22})$ and the projection $\hX_{d}\times X_{d_{22}}\to \hX_{d}$, and the properness of $\alpha$ follows.

The case $d_{11}<d_{22}$ is argued in the same way. The properness of $f_{V'}$ is also proved in the similar way. This finishes the proof of the properness of $f_{\calN_{\un{d}}}$. 
\end{proof}

\subsection{Relation with orbital integrals} In this subsection we relate the derivative orbital integral $\BJ(\gamma,h_{D}, s)$ to the cohomology of fibers of $f_{\calN_{\un{d}}}$.

\subsubsection{The local system $L_{\un{d}}$}\label{sss:Ld} Recall that $\nu: X'\to X$ is a geometrically connected \'etale double cover with the nontrivial involution $\sigma\in\Gal(X'/X)$. Let $L=(\nu_{*}\Ql)^{\sigma=-1}$. This is a rank one local system on $X$ with $L^{\otimes2}\cong\Ql$. Since we have a canonical isomorphism $\homog{1}{X,\ZZ/2\ZZ}\cong \homog{1}{\Pic^{n}_{X}, \ZZ/2\ZZ}$, each $\Pic^{n}_{X}$ carries a rank one local system $L_{ n}$ corresponding to $L$. By abuse of notation, we also denote the pullback of $L_{ n}$ to $\hX_{n}$ by $L_{n}$. Note that the pullback of $L_{ n}$ to $X_{n}$ via the Abel-Jacobi map $X_{n}\to \Pic^{n}_{X}$ is the descent of $L^{\boxtimes n}$ along the natural map $X^{n}\to X_{n}$. 

Using the map $\jmath_{\un{d}}$ \eqref{jm}, we define the following local system $L_{ \un{d}}$ on $\calN_{\un{d}}$:
\begin{equation*}
L_{\un{d}}:=\jmath^{*}_{\un{d}}(L_{d_{11}}\boxtimes\Ql\boxtimes L_{d_{12}}\boxtimes\Ql).\index{$L_{\un{d}}$}%
\end{equation*}

\subsubsection{}\label{ss:AD} Fix $D\in X_{d}(k)$. Let $\calA_{D}\subset \calA_{d}$ be the fiber of $\calA_{d}$ over $D$ under the map $\delta: \calA_{d}\to \hX_{d}$. Then $\calA_{D}$ classifies triples $(\calO_{X}(D), a,b)$ in $\calA_{d}$ with the condition that $a-b$ is the tautological section $1\in\Gamma(X,\calO_{X}(D))$. Such a triple is determined uniquely by the section $a\in \Gamma(X,\calO_{X}(D))$. Therefore we get canonical isomorphisms (viewing the RHS as an affine spaces over $k$)
\begin{eqnarray}\label{AD OD}
\calA_{D}&\cong&\Gamma(X,\calO_{X}(D)).\index{$\calA_{D}$}%
\end{eqnarray}
On the level of $k$-points, we have an injective map
\begin{eqnarray*}
\inv_{D}: \calA_{D}(k)\cong \Gamma(X,\calO_{X}(D)) &\incl& \PP^{1}(F)-\{1\}\\
(\calO_{X}(D), a,a-1)\bij a &\mapsto& (a-1)/a=1-a^{-1}.\index{$\inv_{D}$}%
\end{eqnarray*}

\begin{prop}\label{p:geom orb int} Let $D\in X_{d}(k)$ and consider the test function $h_{D}$ defined in \eqref{fD}. Let $u\in \PP^{1}(F)-\{1\}$. 
\begin{enumerate}
\item If $u$ is not in the image of $\inv_{D}$, then $\BJ(\gamma,h_{D}, s)=0$ for any $\gamma\in A(F)\backslash G(F)/A(F)$ with $\inv(\gamma)=u$;
\item If $u=\inv_{D}(a)$ for some $a\in\calA_{D}(k)=\Gamma(X,\calO_{X}(D))$, and $u\notin\{0,1,\infty\}$ (i.e., $a\notin\{0,1\}$), then 
\begin{equation*}
\BJ(\gamma(u), h_{D}, s)=\sum_{\un{d}\in\Sigma_{d}}q^{(2d_{12}-d)s}\Tr\left(\Frob_{a}, \left(\bR f_{\calN_{\un{d}},*}L_{\un{d}}\right)_{\ov{a}}\right).
\end{equation*}
\item Assume $d\geq 2g'-1=4g-3$. If $u=0$ then it corresponds to $a=1\in\calA_{D}(k)$; if $u=\infty$ then it corresponds to $a=0\in\calA_{D}(k)$. In both cases we have
\begin{equation}\label{u 0 infty}
\sum_{\inv(\gamma)=u}\BJ(\gamma, h_{D}, s)=\sum_{\un{d}\in\Sigma_{d}}q^{(2d_{12}-d)s}\Tr\left(\Frob_{a}, \left(\bR f_{\calN_{\un{d}},*}L_{\un{d}}\right)_{\ov{a}}\right).
\end{equation}
Here the sum on the LHS is over the three irregular double cosets $\gamma\in\{1, n_{+},n_{-}\}$ if $u=0$, and over $\gamma\in\{w, wn_{+},wn_{-}\}$ if $u=\infty$.
\end{enumerate}
\end{prop}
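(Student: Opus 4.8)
The plan is to unfold each orbital integral $\BJ(\gamma,h_{D},s)$ into a weighted counting of configurations of line bundles with sections, and then recognize those configurations as the $k$-points of the fibers of $f_{\calN_{\un{d}}}$, with $|h_{1}h_{2}|^{s}$ recording the weight $q^{(2d_{12}-d)s}$ and $\eta(h_{2})$ recording the local system $L_{\un{d}}$.

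First I would treat the regular semisimple case $u\in\PP^{1}(F)-\{0,1,\infty\}$. Here there is a single double coset with invariant $u$, represented by $\gamma(u)$, and its stabilizer in $A\times A$ is trivial (\S\ref{ss:rs orb}), so $\BJ_{n_{1},n_{2}}(\gamma(u),h_{D},s)$ unfolds to an honest integral over $A(\BA)_{n_{1}}\times A(\BA)_{n_{2}}$; since $h_{D}$ is bi-$A(\BO)$-invariant in each variable, this is a finite sum, indexed by pairs of line bundles of prescribed degrees, of $h_{D}(h_{1}^{-1}\gamma(u)h_{2})\,|h_{1}h_{2}|^{s}\,\eta(h_{2})$. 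The key step is the dictionary: by adelic uniformization, a term is nonzero exactly when the matrix $h_{1}^{-1}\gamma(u)h_{2}$, read place by place, underlies a homomorphism $\varphi\colon\calK_{1}\oplus\calK_{2}\to\calK'_{1}\oplus\calK'_{2}$ of line bundles with $\deg(\calK'_{i}\otimes\calK_{j}^{-1})=d_{ij}$ for some $\un{d}\in\Sigma_{d}$, with $\det\varphi$ the tautological section of $\calO_{X}(D)$ and $\varphi_{11}\varphi_{22}=a$, $\varphi_{12}\varphi_{21}=a-1$, where $a\in\calA_{D}(k)=\Gamma(X,\calO_{X}(D))$ is the unique section with $\inv_{D}(a)=u$ (this automatically forces $d=d_{11}+d_{22}=d_{12}+d_{21}$; and since $u\neq 0,1,\infty$ all four $\varphi_{ij}$ are nonzero, so the last clause of the definition of $\calN_{\un{d}}$ is vacuous). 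By the description of $\calN_{\un{d}}$ in Proposition \ref{p:Nsm}(1), the nonzero terms are thus in bijection with the $k$-points of $\coprod_{\un{d}\in\Sigma_{d}} f_{\calN_{\un{d}}}^{-1}(a)$. In particular, if $u$ is not of the form $\inv_{D}(a)$ there is no such $a$, the sum is empty, and part (1) follows at once.

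Granting the dictionary, part (2) is the translation of the two weights. Under the bijection, $\val(h_{1}h_{2})$ equals $d-2d_{12}$ --- a linear relation between the torus data and the degrees $d_{ij}$ to be read off from the conductors of the matrix entries --- so $|h_{1}h_{2}|^{s}=q^{(2d_{12}-d)s}$ is constant on $\calN_{\un{d}}$. Next, by class field theory $\eta=\eta_{F'/F}$ is the Hecke character attached to $\nu\colon X'\to X$, so $\eta(h_{2})$ is a product of local Frobenius traces of $L=(\nu_{*}\Ql)^{\sigma=-1}$; carrying these through $\jmath_{\un{d}}$ and the definition of $L_{\un{d}}$ in \S\ref{sss:Ld} turns the product into $\Tr(\Frob_{x},(L_{\un{d}})_{\ov{x}})$ at each point $x$. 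Summing over $\un{d}\in\Sigma_{d}$ and over $x\in f_{\calN_{\un{d}}}^{-1}(a)(k)$, and invoking the Grothendieck--Lefschetz trace formula together with the properness of $f_{\calN_{\un{d}}}$ (Proposition \ref{p:Nsm}(3)), so that $(\bR f_{\calN_{\un{d}},*}L_{\un{d}})_{\ov{a}}$ is the $L_{\un{d}}$-twisted cohomology of the fiber equipped with its geometric Frobenius, yields the formula of part (2).

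For part (3) take $u=0$ (the case $u=\infty$ is the transpose), so $a=1$ and the three double cosets are $\{1,n_{+},n_{-}\}$. The coset of $1$ contributes $0$ because $\eta|_{\BA^{1}}$ is nontrivial, as in the proof of Proposition \ref{prop J n gamma}. For $n_{+}$ and $n_{-}$ I would compute $\BJ(n_{\pm},h_{D},s)$ by the Tate-integral and Poisson-summation manipulation used in the proof of Proposition \ref{prop J n gamma} and in Corollary \ref{c:one K}, now applied to the explicit $\phi=\phi_{1}\otimes\phi_{2}$ attached to $h_{D}$. Geometrically, the fiber $f_{\calN_{\un{d}}}^{-1}(1)$ satisfies $\varphi_{12}\varphi_{21}=a-1=0$ while $\varphi_{11}\varphi_{22}=1\neq 0$, so by the last clause in the definition of $\calN_{\un{d}}$ it is the disjoint union of the loci $\{\varphi_{21}=0\}$ and $\{\varphi_{12}=0\}$ (the configuration with $\varphi_{12}=\varphi_{21}=0$, corresponding to $\gamma=1$, is excluded, consistently with its vanishing contribution), and these two loci match the cosets of $n_{+}$ and $n_{-}$ respectively. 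Using the hypothesis $d\geq 2g'-1=4g-3$: whenever a $\varphi_{ij}$ is permitted to vanish one has $d_{ij}\geq 2g-1$, so the relevant Abel--Jacobi maps are projective-space bundles and, as in the proof of Proposition \ref{p:Nsm}(2), the $L_{\un{d}}$-twisted cohomology of each locus is computed explicitly; comparing with $\BJ(n_{+},h_{D},s)$ and $\BJ(n_{-},h_{D},s)$ and summing gives \eqref{u 0 infty}.

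\textbf{Main obstacle.} The crux is the dictionary of the second paragraph: making the place-by-place passage between the $\PGL_{2}(\BA)$-double-coset data cut out by $h_{D}$ and the global gluing data $(\calK_{i},\calK'_{i},\varphi)$ completely precise, so that the degree identities $d_{ij}=\deg(\calK'_{i}\otimes\calK_{j}^{-1})$, the membership $\un{d}\in\Sigma_{d}$, the weight exponent $2d_{12}-d$, and the passage to the quotient $\calN_{\un{d}}=\tcN_{\un{d}}/\Pic_{X}$ all come out exactly right. The secondary difficulty is in part (3), where Tate's local functional-equation bookkeeping for the $n_{\pm}$-orbital integrals has to be matched term-by-term against the stratified, Abel--Jacobi-fibered geometry of $f_{\calN_{\un{d}}}^{-1}(a)$ for $a\in\{0,1\}$ --- this is exactly where the hypothesis $d\geq 4g-3$ is needed.
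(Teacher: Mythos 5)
Your parts (1) and (2) are essentially the paper's own argument: unfold $\BJ(\gamma,h_{D},s)$ over $\tilA(\BA)$-double cosets into a sum over quadruples of divisors modulo $\Div(X)$ (the sets $\frN_{D,\wt\gamma}$), check that a nonzero term forces $u=\inv_{D}(a)$ with $a=\varphi_{11}\varphi_{22}$, identify $\frN_{D,\wt\gamma(u)}$ with the $k$-points of $\coprod_{\un{d}}\calN_{\un{d},a}$, match $|h_{1}h_{2}|^{s}$ with $q^{(2d_{12}-d)s}$ and the $\eta$-weights with Frobenius traces on $L_{\un{d}}$, and conclude by the Lefschetz trace formula and properness of $f_{\calN_{\un{d}}}$. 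That part of your sketch is correct and is the route the paper takes.

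In part (3) your skeleton is also the paper's (the orbit of $1$ dies because $\eta|_{\BA^{1}}$ is nontrivial; the strata $\{\varphi_{21}=0\}$ and $\{\varphi_{12}=0\}$ of the fiber match the cosets of $n_{+}$ and $n_{-}$), but the one step that actually requires an argument --- and the only place $d\geq 4g-3$ enters --- is compressed into ``comparing and summing''. Concretely: unfolding gives $\BJ(n_{+},h_{D},s)=q^{-ds}L(-2s,\eta)\sum_{0\leq D_{11}\leq D}\eta(D_{11})$, i.e.\ a sum over \emph{all} effective divisors $D_{12}$, whereas on the geometric side the last condition in the definition of $\calN_{\un{d}}$ truncates the stratum $\varphi_{21}=0$ to $\deg(D_{12})<d/2$ (and $\deg(D_{21})\leq d/2$ for the $n_{-}$ stratum), so the trace/point-count expression is a truncated sum. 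The two agree because $L(s,\eta)$ is a polynomial of degree $2g-2$ in $q^{-s}$ (equivalently $\sum_{\deg E=m}\eta(E)=0$ for $m>2g-2$), and $2g-2<d/2$ holds exactly when $d\geq 4g-3$; this is the actual role of the hypothesis. Your stated reason --- that $d_{ij}\geq 2g-1$ whenever $\varphi_{ij}$ may vanish, so the Abel--Jacobi maps are projective bundles --- is how the paper uses $d\geq 2g'-1$ in the smoothness proof of Proposition \ref{p:Nsm}(2), and by itself it does not perform the comparison; your ``explicit cohomology of each locus'' route can be repaired by the vanishing $\cohog{*}{X_{n}\otimes_{k}\kbar,L_{n}}=0$ for $n>2g-2$, but you still need the inequality $2g-2<d/2$ to see that the truncated range contains every degree where the $L$-polynomial has a nonzero coefficient. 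Finally, recomputing $\BJ(n_{\pm},h_{D},s)$ via the Tate/Poisson manipulation of Proposition \ref{prop J n gamma} is unnecessary (and awkward, since the function $\phi$ attached to $h_{D}$ is not a pure tensor); the same divisor unfolding you use in part (2), applied to $\gamma=n_{\pm}$, evaluates these orbital integrals directly.
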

\begin{proof}
We first make some general construction. Let $\tilA\subset \GL_{2}$ be the diagonal torus and let 
 $\wt\gamma\in \GL_{2}(F)-(\tilA(F)\cup w\tilA(F))$ with image $\gamma\in G(F)$. Let $\alpha: \tilA\to \Gm$ be the simple root $\matrixx{a}{}{}{d}\mapsto a/d$. Let $Z\cong\Gm\subset\tilA$ be the center of $\GL_{2}$. We may rewrite $\JJ(\gamma,h_{D},s)$ as an orbital integral on $\tilA(\BA)$-double cosets on $\GL_{2}(\BA)$ (cf. \eqref{eqn J(a)1}, \eqref{eqn phi x y}, \eqref{eqn J 0 phi})
\begin{equation}\label{J for GL2}
\JJ(\gamma,h_{D},s)=\int_{\Delta (Z(\BA))\bs (\tilA \times\tilA)(\BA)} \tilh_{D}(t'^{-1}\wt\gamma t)|\alpha(t)\alpha(t')|^{s}\eta(\alpha(t))\,dt\,dt'.
\end{equation}
Here for $D=\sum_{x}n_{x} x$, $\tilh_{D}=\otimes_{x}\tilh_{n_{x}x}$ is an element in the global unramified Hecke algebra for $\GL_{2}$, where $\tilh_{n_{x}x}$ is the characteristic function of $\Mat_{2}(\calO_{x})_{v_{x}(\det)=n_{x}}$, cf. \S\ref{ss:test fun}. 
 
Using the isomorphism $\tilA(\BA)/\prod_{x\in |X|}\tilA(\calO_{x})\cong (\BA^{\times}/\OO^{\times})^{2}\cong \Div(X)^{2}$ given by taking the divisors of the two diagonal entries, we may further write the RHS of \eqref{J for GL2} as a sum over divisors $E_{1}, E_{2}, E'_{1}, E'_{2}\in\Div(X)$, up to simultaneous translation by $\Div(X)$. Suppose $t\in \tilA(\BA)$ gives the pair $(E_{1},E_{2})$ and $t'\in \tilA(\BA)$ gives the pair $(E'_{1}, E'_{2})$, then the integrand $\tilh_{D}(t'^{-1}\wt\gamma t)$ takes value 1 if and only if the rational map $\wt\gamma:\calO^{2}_{X}\dashrightarrow\calO_{X}^{2}$ given by the matrix $\wt\gamma$ fits into a commutative diagram
\begin{equation}\label{gamma phi}
\xymatrix{\calO^{2}_{X}\ar@{-->}[r]^{\wt\gamma} & \calO^{2}_{X}\\
\calO_{X}(-E_{1})\oplus\calO_{X}(-E_{2})\ar[r]^{\varphi_{\wt\gamma}}\ar@{^{(}->}[u] & \calO_{X}(-E'_{1})\oplus\calO_{X}(-E'_{2})\ar@{^{(}->}[u]}
\end{equation}
Here the vertical maps are the natural inclusions, and $\varphi_{\wt\gamma}$ is an injective map of $\calO_{X}$-modules such that $\det(\varphi_{\wt\gamma})$ has divisor $D$.
The integrand $\tilh_{D}(t'^{-1}\wt\gamma t)$ is zero otherwise. 

Let $\wt\frN_{D, \wt\gamma}\subset \Div(X)^{4}$ be the set of quadruples of divisors $(E_{1},E_{2},E'_{1},E'_{2})$ such that $\wt\gamma$ fits into a diagram \eqref{gamma phi} and $\det(\varphi_{\wt\gamma})$ has divisor $D$. Let $\frN_{D,\wt\gamma}=\wt\frN_{D,\wt\gamma}/\Div(X)$ where $\Div(X)$ acts by simultaneous translation on the divisors $E_{1},E_{2},E'_{1}$ and $E'_{2}$. \index{$\wt\frN_{D, \wt\gamma},\frN_{D,\wt\gamma}$}%

We have $|\alpha(t)\alpha(t')|^{s}=q^{-\deg(E_{1}-E_{2}+E'_{1}-E'_{2})s}$. Viewing $\eta$ as a character on the id\`ele class group $F^{\times}\backslash \BA^{\times}_{F}/\prod_{x\in |X|}\calO^{\times}_{x}\cong\Pic_{X}(k)$, we have $\eta(\alpha(t))=\eta(E_{1})\eta(E_{2})=\eta(E_{1}-E'_{1})\eta(E_{2}-E'_{1})$. Therefore
\begin{equation}\label{frN gamma}
\BJ(\gamma, h_{D}, s)=\sum_{(E_{1},E_{2},E'_{1},E'_{2})\in\frN_{D, \wt\gamma}}q^{-\deg(E_{1}-E_{2}+E'_{1}-E'_{2})s}\eta(E_{1}-E'_{1})\eta(E_{2}-E'_{1}).
\end{equation}

(1) Since $u=0$ and $\infty$ are in the image of $\inv_{D}$, we may assume that $u\notin\{0,1,\infty\}$. For $\gamma\in G(F)$ with invariant $u$, any lifting $\wt\gamma$ of $\gamma$ in $\GL_{2}(F)$ does not lie in $\tilA$ or $w\tilA$. Therefore the previous discussion applies to $\wt\gamma$. Suppose $\BJ(\gamma,h_{D},s)\neq0$, then $\frN_{D,\wt\gamma}\neq\varnothing$. Take a point $(E_{1},E_{2},E'_{1},E'_{2})\in\frN_{D,\wt\gamma}$, the map $\det(\varphi_{\wt\gamma})$ gives an isomorphism $\calO_{X}(-E'_{1}-E'_{2})\cong\calO_{X}(-E_{1}-E_{2}+D)$. Taking $a=\varphi_{\wt\gamma,11}\varphi_{\wt\gamma,22}:\calO_{X}(-E_{1}-E_{2})\to\calO_{X}(-E'_{1}-E'_{2})$, then $a$ can be viewed as a section of $\calO_{X}(D)$ satisfying $1-a^{-1}=\inv(\gamma)$. Therefore $u=\inv(\gamma)=\inv_{D}(a)$ is in the image of $\inv_{D}$.

(2) When $u\notin\{0,1,\infty\}$, recall $\gamma(u)$ is the image of $\wt\gamma(u)=\matrixx{1}{u}{1}{1}$. Let $\calN_{\un{d}, a}$ be the fiber of $\calN_{\un{d}}$ over $a\in\calA_{D}(k)$. Let $\calN_{a}=\coprod_{\un{d}\in\Sigma_{d}}\calN_{\un{d}, a}$. We have a map
\begin{eqnarray*}
\nu_{u}: \frN_{D,\wt\gamma(u)}&\to& \calN_{a}(k)\\
(E_{1},E_{2},E'_{1},E'_{2})&\mapsto&(\calO_{X}(-E_{1}), \calO_{X}(-E_{2}), \calO_{X}(-E'_{1}), \calO_{X}(-E'_{2}), \varphi_{\wt\gamma(u)}).
\end{eqnarray*}
We show that this map is bijective by constructing an inverse. For $(\calK_{1},\calK_{2},\calK'_{1},\calK'_{2},\varphi_{ij})\in\calN_{\un{d}, a}(k)$, we may assume $\calK_{1}=\calO_{X}$ (since we mod out by the action of $\Pic_{X}$ in the end). Let $S=|\div(a)|\cup|\div(a-1)|\cup |D|$ be a finite collection of places of $X$. Then each $\varphi_{ij}$ is an isomorphism over $U=X-S$. In particular, we get isomorphisms $\varphi_{11}:\calO_{U}\cong \calK'_{1}|_{U}$, $\varphi_{21}:\calO_{U}\cong\calK'_{2}|_{U}$ and $\varphi^{-1}_{22}\varphi_{21}:\calO_{U}\cong\calK_{2}|_{U}$. Let $E'_{1}, E'_{2}$ and $E_{2}$ be the {\em negative} of the divisors of the isomorphisms $\varphi_{11}$, $\varphi_{21}$ and $\varphi^{-1}_{22}\varphi_{21}$, viewed as rational maps between line bundles on $X$. Set $E_{1}=0$. Then we have $\calK_{i}=\calO_{X}(-E_{i})$ and $\calK'_{i}=\calO_{X}(-E'_{i})$ for $i=1,2$. The map $\varphi$ guarantees that the quadruple $(E_{1}=0, E_{2},E'_{1},E'_{2})\in\frN_{D,\wt\gamma(u)}$. This gives a map $ \calN_{a}(k)\to\frN_{D,\wt\gamma(u)}$, which is easily seen to be inverse to $\nu_{u}$.

By the Lefschetz trace formula, we have
\begin{eqnarray*}
&&\sum_{\un{d}\in\Sigma_{d}}q^{(2d_{12}-d)s}\Tr\left(\Frob_{a}, \left(\bR f_{\calN_{\un{d}},*}L_{\un{d}})_{\ov{a}}\right)\right)\\
&=&\sum_{(\calK_{1},\calK_{2},\calK'_{1},\calK'_{2},\varphi)\in\calN_{a}(k)}q^{(2d_{12}-d)s}\eta(D_{11})\eta(D_{12})
\end{eqnarray*}
where $D_{ij}$ is the divisor of $\varphi_{ij}$. Moreover, under the isomorphism $\nu_{u}$, the term $q^{-\deg(E_{1}-E_{2}+E'_{1}-E'_{2})s}$ corresponds to $q^{(2d_{12}-d)s}$ where $d_{12}=\deg(D_{12})$. Therefore Part (2) follows from the bijectivity of $\nu_{u}$ and \eqref{frN gamma}.

(3) We treat the case $u=0$ (i.e.,  $a=1$), and the case $u=\infty$ is similar. Let $\frN'_{D,n_{+}}$ be the set of triples of {\em effective} divisors $(D_{11}, D_{12}, D_{22})$ such that $D_{11}+D_{22}=D$. Then we have a bijection
\begin{eqnarray*}
\frN_{D,n_{+}}&\isom & \frN'_{D,n_{+}} \\
(E_{1},E_{2},E'_{1}, E'_{2})&\mapsto & (E_{1}-E'_{1}, E_{2}-E'_{1}, E_{2}-E'_{2}).
\end{eqnarray*}
Using this bijection, we may rewrite \eqref{frN gamma} as
\begin{eqnarray}
\notag\BJ(n_{+},h_{D},s)&=&\sum_{(D_{11},D_{12}, D_{22})\in\frN'_{D,n_{+}}} q^{(2\deg(D_{12})-d)s}\eta(D_{11})\eta(D_{12})\\
\notag &=&q^{-ds}\sum_{D_{12}\geq0} q^{2s\deg(D_{12})}\eta(D_{12})\cdot\sum_{\substack{ D_{11}+D_{22}=D\\ D_{11},D_{22}\geq0} }\eta(D_{11})\\
\label{frN n+}&=&q^{-ds}L(-2s, \eta)\sum_{0\leq D_{11}\leq D}\eta(D_{11})
\end{eqnarray}
Similarly, let $\frN'_{D,n_{-}}$ be the set of triples of {\em effective} divisors $(D_{11}, D_{21}, D_{22})$ such that $D_{11}+D_{22}=D$. Then we have a bijection $\frN_{D,n_{-}}\bij\frN'_{D,n_{-}}$ and an identity
\begin{eqnarray}
\notag\BJ(n_{-},h_{D},s)&=&\sum_{(D_{11},D_{21}, D_{22})\in\frN'_{D,n_{-}}} q^{(d-2\deg(D_{21}))s}\eta(D_{21})\eta(D_{22})\\
\label{frN n-}&=&q^{ds}L(2s, \eta)\sum_{0\leq D_{22}\leq D}\eta(D_{22}).
\end{eqnarray}

We now introduce a subset $\frN^{\hs}_{D,n_{+}}\subset\frN'_{D,n_{+}}$ consisting of those $(D_{11},D_{12},D_{22})$ such that $\deg(D_{12})<d/2$; similarly we introduce $\frN^{\hs}_{D,n_{-}}\subset\frN'_{D,n_{-}}$ consisting of those $(D_{11},D_{21},D_{22})$ such that $\deg(D_{21})\leq d/2$. Then the same argument as Part (2) gives a bijection
\begin{equation*}
\nu_{n_{\pm}}:\frN^{\hs}_{D,n_{+}}\coprod \frN^{\hs}_{D,n_{-}}\isom \calN_{a}(k):=\coprod_{\un{d}\in\Sigma_{d}}\calN_{\un{d},a}(k).
\end{equation*}
Here the degree constraints $\deg(D_{12})< d/2$ or $\deg(D_{21})\leq d/2$ come from the last condition in the definition of $\calN_{\un{d}}$ in \S\ref{sss:calN}.
 
Using the Lefschetz trace formula, we get
\begin{eqnarray}\notag
&&\sum_{\un{d}\in\Sigma_{d}}q^{(2d_{12}-d)s}\Tr\left(\Frob_a,\cohoc{*}{\calN_{\un{d},a}\otimes_{k}\kbar, L_{\un{d}}}\right)\\
\notag&=&\sum_{(D_{11},D_{12},D_{22})\in\frN^{\hs}_{D,n_{+}}} q^{(2\deg(D_{12})-d)s}\eta(D_{11})\eta(D_{12})\\
\notag&&+\sum_{(D_{11},D_{21},D_{22})\in\frN^{\hs}_{D,n_{-}}} q^{(d-2\deg(D_{21}))s}\eta(D_{21})\eta(D_{22})\\
\label{trace n+}&=&q^{-ds}\sum_{D_{12}\geq0,\deg(D_{12})<d/2} q^{2\deg(D_{12})s}\eta(D_{12})\sum_{0\leq D_{11}\leq D}\eta(D_{11})\\
\label{trace n-}&&+q^{ds}\sum_{D_{21}\geq0,\deg(D_{21})\leq d/2} q^{-2\deg(D_{21})s}\eta(D_{21})\sum_{0\leq D_{22}\leq D}\eta(D_{22}).
\end{eqnarray}
The only difference between the term in \eqref{trace n+} and the RHS of \eqref{frN n+} is that we have restricted the range of the summation to effective divisors $D_{12}$ satisfying $\deg(D_{12})<d/2$. However, since $\eta$ is a nontrivial id\`ele class character, the Dirichlet $L$-function $L(s,\eta)=\sum_{E\geq 0}q^{-\deg(E)s}\eta(E)$ is a polynomial in $q^{-s}$ of degree $2g-2<d/2$.  Therefore\eqref{trace n+} is the same as \eqref{frN n+}. Similarly, \eqref{trace n-} is the same as \eqref{frN n-}. We conclude that 
\begin{eqnarray}\label{Npm}
\sum_{\un{d}\in\Sigma_{d}}q^{(2d_{12}-d)s}\Tr\left(\Frob_a,\cohoc{*}{\calN_{\un{d},a}\otimes_{k}\kbar, L_{\un{d}}}\right)=\BJ(n_{+},h_{D},s)+\BJ(n_{-},h_{D},s).
\end{eqnarray}

Finally, observe that
\begin{equation}\label{van orb}
\BJ(1,h_{D},s)=0
\end{equation}
because $\eta$ restricts nontrivially to the centralizer of $\gamma=1$. 
Putting together \eqref{Npm} and the vanishing \eqref{van orb}, we get \eqref{u 0 infty}.
\end{proof}

\begin{cor}\label{c:Jr geom} For $D\in X_{d}(k)$ and $u\in\PP^{1}(F)-\{1\}$, we have
\begin{equation*}
\BJ_{r}(u, h_{D})=\begin{cases}(\log q)^{r}\sum_{\un{d}\in\Sigma_{d}}(2d_{12}-d)^{r}\Tr\left(\Frob_{a}, \left(\bR f_{\calN_{\un{d}},*}L_{\un{d}}\right)_{\ov{a}}\right) & \textup{if } u=\inv_{D}(a), a\in \calA_{D}(k);\\
0 & \textup{otherwise.}
\end{cases}
\end{equation*}
\end{cor}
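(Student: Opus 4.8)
The plan is to deduce the corollary from Proposition \ref{p:geom orb int} by differentiating its three explicit identities $r$ times at $s=0$. Recall that $\BJ_r(u,h_D)=\left(\frac{d}{ds}\right)^r\Big|_{s=0}\BJ(u,h_D,s)$ and that $\BJ(u,h_D,s)=\sum_{\gamma\in A(F)\backslash G(F)_u/A(F)}\BJ(\gamma,h_D,s)$ is a Laurent polynomial in $q^{s}$, so the derivative is unproblematic; the one elementary fact I would use repeatedly is that $\left(\frac{d}{ds}\right)^r\Big|_{s=0}q^{cs}=(c\log q)^r$ for $c\in\ZZ$.

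First, if $u$ is not in the image of $\inv_D$, then Proposition \ref{p:geom orb int}(1) gives $\BJ(\gamma,h_D,s)=0$ for every $\gamma$ with $\inv(\gamma)=u$, hence $\BJ(u,h_D,s)\equiv 0$ and all of its derivatives vanish; this gives the ``otherwise'' line. Next suppose $u=\inv_D(a)$ for some $a\in\calA_D(k)$. If $u\notin\{0,1,\infty\}$, then $G(F)_u$ is a single $(A\times A)(F)$-orbit --- that of $\gamma(u)$, by the bijection $\BO_\rs(G)\cong\PP^1(F)-\{0,1,\infty\}$ --- so $\BJ(u,h_D,s)=\BJ(\gamma(u),h_D,s)$, which Proposition \ref{p:geom orb int}(2) identifies with $\sum_{\un d\in\Sigma_d}q^{(2d_{12}-d)s}\Tr\big(\Frob_a,(\bR f_{\calN_{\un d},*}L_{\un d})_{\ov a}\big)$. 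If instead $u\in\{0,\infty\}$ --- which forces $a=1$, resp. $a=0$ --- then $\BJ(u,h_D,s)$ is by definition the sum of $\BJ(\gamma,h_D,s)$ over the three irregular double cosets, and Proposition \ref{p:geom orb int}(3) identifies this sum with the same expression. In all cases, applying $\left(\frac{d}{ds}\right)^r\Big|_{s=0}$ term by term multiplies the $\un d$-summand by $\big((2d_{12}-d)\log q\big)^r=(\log q)^r(2d_{12}-d)^r$, producing the first line of the corollary.

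I expect no real obstacle here: all the substance lies in Proposition \ref{p:geom orb int}, and this step is bookkeeping plus an elementary differentiation. The only points deserving a word of care are that the trichotomy of cases above should be checked to match the one in that proposition --- in particular that the ``otherwise'' case is exactly the complement of the image of $\inv_D$ --- and that the identity in the case $u\in\{0,\infty\}$ rests on part (3) of the proposition, which is proved under the hypothesis $d\ge 2g'-1=4g-3$; this is harmless, since the corollary is only applied for divisors of large degree.
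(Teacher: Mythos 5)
Your proposal is correct and is essentially the paper's (implicit) argument: the corollary is stated as an immediate consequence of Proposition \ref{p:geom orb int}, obtained exactly as you do by summing over the $(A\times A)(F)$-orbits in $G(F)_u$ in each of the three cases and applying $\left(\frac{d}{ds}\right)^r\big|_{s=0}$ to $q^{(2d_{12}-d)s}$. Your remark about the hypothesis $d\ge 2g'-1$ needed for part (3) is also apt, since the corollary is only invoked (in Theorem \ref{th:IJ}) under the assumption $d\ge\max\{2g'-1,2g\}$.
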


\section{Analytic spectral decomposition}\label{s:a spec}

In this section we express the spectral side of the relative trace formula in \S\ref{s:RTF} in terms of  automorphic $L$-functions.

\subsection{The Eisenstein ideal}  \label{ss:eis}
Consider the Hecke algebra $\sH=\otimes_{x\in |X|}\sH_{x}$. We also consider the Hecke algebra $\sH_{A}$ for the diagonal torus $A=\Gm$ of $G$. Then $\sH_{A}=\otimes_{x\in |X|}\sH_{A,x}$ with $\sH_{A,x}=\QQ[F^{\times}_{x}/\calO_{x}^{\times}]=\QQ[t_{x}, t_{x}^{-1}]$, and $t_{x}$ stands for the characteristic function of $\varpi_{x}^{-1}\calO^{\times}_{x}$, where $\varpi_{x}$ is a uniformizer of $F_{x}$.  \index{$\sH$, Hecke algebra}%
 \index{$\sH_A, \sH_{A,x}$}%

Recall we have a basis $\{h_{D}\}$ for $\sH$ indexed by effective divisors $D$ on $X$. For fixed $x\in |X|$, $h_{x}\in\sH_{x}$ and $\sH_{x}\cong\QQ[h_{x}]$ is a polynomial algebra with generator $h_{x}$.\index{$\sH_x, h_{nx}, h_D$}%
 
\subsubsection{The Satake transform} To avoid  introducing $\sqrt{q}$, we normalize the Satake transform  in the following way
\begin{eqnarray*}
\Sat_{x}: \sH_{x}&\to& \sH_{A,x}\\
h_{x} &\mapsto& t_{x}+q_{x}t^{-1}_{x}
\end{eqnarray*}
where $q_{x}=\#k_{x}$. Consider the involution $\iota_{x}$ on $\sH_{A,x}$ sending $t_{x}$ to $q_{x}t^{-1}_{x}$. Then $\Sat_{x}$ identifies $\sH_{x}$ with the subring of $\iota_{x}$-invariants of $\sH_{A,x}$. This normalization of the Satake transform is designed to make it compatible with constant term operators, see Lemma \ref{l:horo action}.  Let 
\begin{equation*}
\Sat: \sH\to \sH_{A}\index{$\Sat, \Sat_{x}$}%
\end{equation*}
be the tensor product of all $\Sat_{x}$.

\subsubsection{}\label{sss:Pic inv} We have natural homomorphisms between abelian groups: 
\begin{equation*}
\xymatrix{    \BA^{\times}/\OO^{\times} \ar[d] \ar[r]^{\simeq} &   \Div(X)  \ar[d]\\
 F^{\times}\backslash\BA^{\times}/\OO^{\times} \ar[r]^{\simeq}& \Pic_{X}(k)  .}
\end{equation*} 
In particular, the top row above gives a canonical isomorphism $\sH_{A}=\QQ[\BA^{\times}/\OO^{\times}]\cong \QQ[\Div(X)]$, the group algebra of $\Div(X)$. 

Define an involution $\iota_{\Pic}$ on $\QQ[\Pic_{X}(k)]$ by
\begin{equation*}
\iota_{\Pic}(\one_{\CL})=q^{\deg\CL}\one_{\CL^{-1}}.
\end{equation*}
Here $\one_{\CL}\in\QQ[\Pic_{X}(k)]$ is the characteristic function of the point $\CL\in\Pic_{X}(k)$.  Since the action of $\otimes_{x}\iota_{x}$ on $\sH_{A}\cong\QQ[\Div(X)]$ is compatible with the involution $\iota_{\Pic}$ on $\QQ[\Pic_{X}(k)]$ under the projection $\QQ[\Div(X)]\to \QQ[\Pic_{X}(k)]$, we see that the image of the composition
\begin{equation*}
\sH\xrightarrow{\Sat}\sH_{A}\cong\QQ[\Div(X)]\surj \QQ[\Pic_{X}(k)] 
\end{equation*}
lies in the $\iota_{\Pic}$-invariants. Therefore the above composition gives a ring homomorphism
\begin{equation}\label{eqn I eis}
a_{\Eis}:\sH\to \QQ[\Pic_{X}(k)]^{\iota_{\Pic}}=:\sH_{\Eis}.\index{$a_{\Eis}$} \index{$\sH_{\Eis}$}%
\end{equation}

\begin{defn}\label{def:Eis} We define the {\em Eisenstein ideal} $\calI_{\Eis}\subset\sH$ to be the kernel of the ring homomorphism $a_{\Eis}$ in \eqref{eqn I eis}.\index{$\calI_{\Eis}$}%
\end{defn}

The ideal $\calI_{\Eis}$ is the analog of the Eisenstein ideal of Mazur in the function field setting. Taking the spectra we get a morphism of affine schemes
\begin{equation*}
\Spec(a_{\Eis}): Z_{\Eis}:=\Spec\QQ[\Pic_{X}(k)]^{\iota_{\Pic}}\to \Spec\sH. \index{$Z_{\Eis}$}%
\end{equation*}

\begin{lemma}\label{l:Eis fin gen}
\begin{enumerate}
\item For any $x\in |X|$, under the ring homomorphism $a_{\Eis}$, $\QQ[\Pic_{X}(k)]^{\iota_{\Pic}}$ is finitely generated as an $\sH_{x}$-module. 
\item The map $a_{\Eis}$ is surjective, hence $\Spec(a_{\Eis})$ is a closed embedding. \footnote{This result is not used in an essential way in the rest of paper.}
\end{enumerate}
\end{lemma}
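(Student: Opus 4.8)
For part (1), fix $x\in|X|$ and write $b=a_{\Eis}(h_x)$. Unwinding the construction, $b=\one_{\calL}+q_x\one_{\calL^{-1}}\in\sH_{\Eis}$ for some line bundle $\calL$ with $\deg\calL=d_x>0$ (it is the image of $\Sat_x(h_x)=t_x+q_x t_x^{-1}$, and it is $\iota_{\Pic}$-invariant by construction). I would show that $\QQ[\Pic_X(k)]$ is module-finite over $\QQ[b]=a_{\Eis}(\sH_x)$, which suffices: the $\QQ[b]$-submodule $\sH_{\Eis}=\QQ[\Pic_X(k)]^{\iota_{\Pic}}$ is then finite over the Noetherian ring $\QQ[b]$, hence finitely generated as an $\sH_x$-module. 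The module-finiteness goes in two steps. First, $\Pic_X(k)$ is finitely generated of rank one (its torsion lies in the finite group $\Pic^0_X(k)$, and the degree map embeds $\Pic_X(k)/\Pic^0_X(k)$ into $\ZZ$); since $\deg\calL\neq0$, the subgroup $\langle\calL\rangle$ has finite index, so $\QQ[\Pic_X(k)]$ is a finite free $\QQ[\one_\calL,\one_\calL^{-1}]$-module. Second, $\one_\calL$ satisfies $\one_\calL^2-b\,\one_\calL+q_x=0$ over $\QQ[b]$ and $\one_\calL^{-1}=q_x^{-1}(b-\one_\calL)$, so $\QQ[\one_\calL,\one_\calL^{-1}]$ is generated over $\QQ[b]$ by $1$ and $\one_\calL$; transitivity of finiteness finishes it.

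For part (2), put $z_\calL:=\one_\calL+q^{\deg\calL}\one_{\calL^{-1}}\in\sH_{\Eis}$. As $\calL$ ranges over $\Pic_X(k)$ the $z_\calL$ span $\sH_{\Eis}$ over $\QQ$, and $z_{\calL^{-1}}=q^{-\deg\calL}z_\calL$, so it is enough to show $z_\calL\in B:=a_{\Eis}(\sH)$ for every $\calL$ of degree $\ge0$; then $B=\sH_{\Eis}$ and $\Spec(a_{\Eis})$ is a closed embedding. Two elementary facts drive the argument. (i) A direct Satake computation gives, for an effective divisor $D$, $a_{\Eis}(h_D)=\sum_{0\le E\le D}q^{\deg E}\,\one_{\calO_X(D-2E)}$; pairing $E$ with $D-E$ rewrites this as $a_{\Eis}(h_D)=z_{\calO_X(D)}+r_D$, where $r_D$ is a $\QQ$-combination of $z_\calN$ with $\deg\calN<\deg D$ (and $r_D=0$ when $D$ is a single closed point, which recovers $z_{\calO_X(x)}=a_{\Eis}(h_x)\in B$). (ii) A one-line computation in $\QQ[\Pic_X(k)]$ gives the multiplication formula $z_\calL z_\calM=z_{\calL\calM}+q^{\deg\calM}z_{\calL\calM^{-1}}$ for $\deg\calL\ge\deg\calM\ge0$. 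I would then induct on $n=\deg\calL$. For $n\ge2$ the step is immediate: fix a line bundle $\calN$ of degree $1$ (such exists by F.\,K.~Schmidt's theorem that $\deg\colon\Pic_X(k)\to\ZZ$ is onto) and apply (ii) with $\calL\calN^{-1}$ and $\calN$ to get $z_\calL=z_{\calL\calN^{-1}}z_\calN-q\,z_{\calL\calN^{-2}}$, whose right-hand side involves only $z$'s of degrees $n-1,1,n-2$, all $<n$.

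The base of the induction---the finitely many classes in $\Pic^0_X(k)\cup\Pic^1_X(k)$---is the real content. The effective ones ($\calO_X$ and the $\calO_X(x)$ for degree-one closed points $x$) lie in $B$ by (i). For the remaining ones I would combine three inputs: Riemann--Roch (every class of degree $\ge 2g-1$ is effective, so $\Sym^nX(k)\to\Pic^n_X(k)$ is onto there); F.\,K.~Schmidt (closed points exist in all sufficiently large degrees, and several of each); and, crucially, relations in $\Pic_X(k)$: if $D\sim D'$ are linearly equivalent effective divisors of equal degree, then $\calO_X(D)=\calO_X(D')$, hence $a_{\Eis}(h_D)-a_{\Eis}(h_{D'})=r_D-r_{D'}\in B$ is a $\QQ$-combination of strictly lower-degree $z$'s. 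These relations, fed back into (i) and (ii), let one solve for $z_\calL$ with $\calL\in\Pic^0_X(k)\cup\Pic^1_X(k)$. I expect this last point to be the main obstacle: the correction terms $r_D$ bring in $z_\calN$ for classes $\calN$ that need not be effective and can have arbitrarily small (even zero) degree, so a naive induction on degree is circular, and one genuinely has to exploit the non-freeness of $\Pic_X(k)$---the abundance of linear equivalences in large-degree linear systems---in order to descend to the residual low-degree classes.
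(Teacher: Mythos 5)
Part (1) of your proposal is correct and is essentially the paper's own argument: the paper also observes that $\QQ[\Pic_X(k)]$ is finite over the image of $\sH_{A,x}=\QQ[t_x^{\pm1}]$ (because $\langle\calO_X(x)\rangle$ has finite index in $\Pic_X(k)$, $\Jac_X(k)$ being finite) and that $\sH_{A,x}$ is free of rank two over $\sH_x$ via $\Sat_x$; your quadratic relation $\one_{\calL}^2-a_{\Eis}(h_x)\one_{\calL}+q_x=0$ is just that rank-two statement made explicit. For part (2), your preliminary identities do check out: with the paper's normalization one indeed has $a_{\Eis}(h_D)=\sum_{0\le E\le D}q^{\deg E}\one_{\calO_X(D-2E)}=z_{\calO_X(D)}+r_D$ with $r_D$ a combination of $z_{\calN}$ of strictly smaller degree, the product rule $z_{\calL}z_{\calM}=z_{\calL\calM}+q^{\deg\calM}z_{\calL\calM^{-1}}$ holds, and your induction correctly reduces surjectivity to showing $z_{\calL}\in B=a_{\Eis}(\sH)$ for the finitely many classes $\calL\in\Pic^0_X(k)\cup\Pic^1_X(k)$.

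The gap is that this base case is not proved, and it is not a loose end: it is the entire content of the surjectivity. In particular you need $z_{\calL}\in B$ for every nontrivial $\calL\in\Jac_X(k)$ (a nontrivial degree-zero class is never effective, so fact (i) gives nothing directly), which dually amounts to separating the characters $\chi$ of $\Pic_X(k)$, up to $\chi\mapsto q^{\deg}\chi^{-1}$, using only the values $\chi(\calO_X(x))+q_x\chi(\calO_X(x))^{-1}$. This is exactly where the paper injects a genuinely global input: it proves injectivity of $\Spec(a_{\Eis})$ on $\Qlbar$-points by attaching to $\chi$ the semisimple representation $\chi\oplus\chi^{-1}(-1)$ and invoking Chebotarev density, and then proves injectivity on tangent spaces by differentiating $s\mapsto\chi(t_x)s^{d_x}+q_x\chi(t_x^{-1})s^{-d_x}$ along the $\Gm$-direction of each component of $\Spec\Qlbar[\Pic_X(k)]$ (with a separate computation at the $\iota_{\Pic}$-fixed characters $\chi^2=q^{\deg}$); finiteness from part (1) then upgrades this to a closed immersion. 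Your proposed substitute, namely the relations $r_D-r_{D'}\in B$ for linearly equivalent effective $D\sim D'$, is an appealing idea, but you give no argument that these relations span enough of the low-degree part of $\sH_{\Eis}$: the coefficient with which a fixed low-degree $z_{\calN}$ enters $r_D$ depends on how the classes $\calO_X(D-2E)$ distribute in $\Jac_X(k)$ as $E$ runs over subdivisors of $D$, and extracting an individual $z_{\calL}$ requires a nondegeneracy/equidistribution statement of essentially the same nature as the density input you are trying to avoid. Your own closing caveat ("a naive induction on degree is circular ... the main obstacle") is accurate; as written, part (2) is a plan whose decisive step is missing, whereas the paper resolves precisely that step by the Chebotarev and tangent-space arguments.
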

\begin{proof} (1) We have an exact sequence $0\to \Jac_{X}(k)\to \Pic_{X}(k)\to \ZZ\to 0$ with $\Jac_{X}(k)$ finite. Let $x\in|X|$, then the map $\ZZ\to \Pic_{X}(k)$ sending $n\mapsto \calO_{X}(nx)$ has finite cokernel since $\Jac_{X}(k)$ is finite. Therefore $\QQ[\Pic_{X}(k)]$ is finitely generated as a $\sH_{A,x}\cong \QQ[t_{x},t^{-1}_{x}]$-module. On the other hand, via $\Sat_{x}$, $\sH_{A,x}$ is a finitely generated $\sH_{x}$-module (in fact a free module of rank two over $\sH_{x}$). Therefore $\QQ[\Pic_{X}(k)]$ is a finitely generated module over the noetherian ring $\sH_{x}$, hence so is its submodule $\QQ[\Pic_{X}(k)]^{\iota_{\Pic}}$.  

(2) For proving surjectivity we may base change the situation to $\Qlbar$.  Let $\frZ_{\Eis}=\Spec\Qlbar[\Pic_{X}(k)]^{\iota_{\Pic}}$, and we still use $\Spec(a_{\Eis})$ to denote $\frZ_{\Eis}\to \Spec\sH_{\Qlbar}$. We first check that $\Spec(a_{\Eis})$ is injective on $\Qlbar$-points. Identifying $\Pic_{X}(k)$ with the abelianized Weil group $W(X)^{\ab}$ via class field theory, the set $\frZ_{\Eis}(\Qlbar)$ are in natural bijection with Galois characters $\chi: W(X)\to \Qlbar^{\times}$ up to the equivalence relation $\chi\sim \chi^{-1}(-1)$ (where $(-1)$ means Tate twist). Suppose $\chi_{1}$ and $\chi_{2}$ are two such characters that pullback to the same homomorphism $\sH\to \Qlbar[\Pic_{X}(k)]\xrightarrow{\chi_{i}} \Qlbar$, then $\chi_{1}(a_{\Eis}(h_{x}))=\chi_{1}(\Frob_{x})+q_{x}\chi_{1}(\Frob^{-1}_{x})=\chi_{2}(\Frob_{x})+q_{x}\chi_{2}(\Frob^{-1}_{x})=\chi_{2}(a_{\Eis}(h_{x}))$  for all $x$. Consider the two-dimensional representation $\rho_{i}=\chi_{i}\oplus\chi^{-1}_{i}(-1)$  of $W(X)$. Then $\Tr(\rho_{1}(\Frob_{x}))=\Tr(\rho_{2}(\Frob_{x}))$ for all $x$. By Chebotarev density, this implies that $\rho_{1}$ and $\rho_{2}$ are isomorphic to each other (since they are already semisimple). Therefore either $\chi_{1}=\chi_{2}$ or $\chi_{1}=\chi^{-1}_{2}(-1)$. In any case $\chi_{1}$ and $\chi_{2}$ define the same $\Qlbar$-point of $\frZ_{\Eis}$. We are done.

Next, we show that $\Spec(a_{\Eis})$ is injective on tangent spaces at $\Qlbar$-points. Let $\tfZ_{\Eis}=\Spec\Qlbar[\Pic_{X}(k)]$. Then $\tfZ_{\Eis}$ is a disjoint union of components indexed by characters $\chi_{0}: \Jac_{X}(k)\to \Qlbar^{\times}$, and each component is a torsor under $\Gm$. The scheme $\frZ_{\Eis}$ is the quotient $\tfZ_{\Eis}\sslash \jiao{\iota_{\Pic}}$. For a character $\chi: \Pic_{X}(k)\to\Qlbar^{\times}$ with restriction  $\chi_{0}$ to $\Jac_{X}(k)$, we may identify its component $\tfZ_{\chi_{0}}$ with $\Gm$ in such a way that $s\in \Gm$ corresponds to the character $\chi\cdot s^{\deg}:\Pic_{X}(k)\to \Qlbar^{\times}, \calL\mapsto \chi(\calL)s^{\deg\calL}$. The map $\Spec(a_{\Eis})$ pulled back to $\tfZ_{\chi_{0}}$ then gives a morphism
\begin{equation*}
b: \Gm\cong\tfZ_{\chi_{0}}\to \frZ_{\Eis}\to \Spec\sH_{\Qlbar}\cong \BA^{|X|}
\end{equation*} 
given by the formula
\begin{equation}\label{formula b}
\Gm\ni s\mapsto  \left(\chi(t_{x})s^{d_{x}}+q_{x}\chi(t^{-1}_{x})s^{-d_{x}}\right)_{x\in |X|}
\end{equation}
where $d_{x}=[k_{x}:k]$. The derivative $\frac{db}{ds}$ at $s=1$ is then the vector $(d_{x}(\chi(t_{x})-q_{x}\chi(t_{x}^{-1})))_{x\in|X|}$. This is identically zero only when $\chi(t_{x})=\pm q^{1/2}_{x}$ for all $x$, hence if and only if $\chi^{2}=q^{\deg}=\Qlbar(-1)$. Therefore when $\chi^{2}\neq\Qlbar(-1)$, we have proved that the tangent map of $b$ at $s=1$ is nonzero, hence a fortiori the tangent map of $\Spec(a_{\Eis})$ at the image of $\chi$ is nonzero. If $\chi^{2}=\Qlbar(-1)$, $\chi$ is a fixed point under $\iota_{\Pic}$. The component $\tfZ_{\chi_{0}}$ is then stable under $\iota_{\Pic}$ which acts by $s\mapsto s^{-1}$, and its image $\frZ_{\chi_{0}}\subset \frZ_{\Eis}$ is a component isomorphic to $\BA^{1}$ with affine coordinate $z=s+s^{-1}$. Therefore we may factor $b$ into two steps
\begin{equation*}
b: \tfZ_{\chi_{0}}\cong\Gm\xrightarrow{s\mapsto z=s+s^{-1}}\BA^{1}\cong \frZ_{\chi_{0}}\xrightarrow{c}\Spec\sH_{\Qlbar} \cong \BA^{|X|}
\end{equation*}
where $c$ is the restriction of $\Spec(a_{\Eis})$ to $Z_{\chi_{0}}$. By chain rule we have $\frac{dc}{dz}\frac{dz}{ds}=\frac{db}{ds}$. Using this we see that the derivative $\frac{dc}{dz}$ at $z=s+s^{-1}$ is the vector
\begin{equation*}
\left(d_{x}\chi(t_{x})\frac{s^{d_{x}}-s^{-d_{x}}}{s-s^{-1}}\right)_{x\in |X|}
\end{equation*}
(using that $\chi(t_{x})=q_{x}\chi(t^{-1}_{x})$).  Evaluating at $s=1$ we get the vector $(\chi(t_{x})d^{2}_{x})_{x\in |X|}$, which is nonzero. We have checked that the tangent map of $\Spec(a_{\Eis})$ is also injective at the image of those points $\chi\in\tfZ_{\Eis}(\Qlbar)$ such that $\chi^{2}=\Ql(-1)$.  Therefore the tangent map of $\Spec(a_{\Eis})$ is injective at all $\Qlbar$-points. Combining the two injectivity results we conclude that $\Spec(a_{\Eis})$ is a closed immersion and hence $a_{\Eis}$ is surjective.
\end{proof}

\subsection{Spectral decomposition of the kernel function}
Recall that we have defined the automorphic kernel function by \eqref{eqn kernel}.
For a cuspidal automorphic representation $\pi$ (in the usual sense, i.e., an irreducible sub-representation of the $\BC$-values automorphic functions), we define the $\pi$-component of the kernel function as (cf. \cite[\S7.1(1)]{J86})
\begin{align}\label{eqn K pi}
\BK_{f,\pi}(x,y)=\sum_{\phi}\pi(f)\phi(x)\ov{\phi(y)},	\index{$\BK_{f,\pi}$}%
\end{align}
where the sum runs over an orthonormal basis $\{\phi\}$ of $\pi$. The cuspidal kernel function is defined as
\begin{align}\label{eqn sum Jpi}
\BK_{f,{\rm cusp}}=\sum_{\pi} \BK_{f,\pi},	\index{$\BK_{f,{\rm cusp}}$}%
\end{align}
where the sum runs over all cuspidal automorphic representations $\pi$ of $G$.  Note that this is a finite sum. 

Similarly, we define the special (residual) kernel function (cf. \cite[\S7.4]{J86})
\begin{align*}
\BK_{f,{\rm sp}}(x,y):=\sum_{\chi}\pi(f)\chi(x)\ov{\chi(y)},		\index{$\BK_{f,{\rm sp}}$}%
\end{align*}
where the sum runs over all one-dimensional automorphic representations $\pi=\chi$, indeed solely characters of order two:
\begin{equation*}
\xymatrix{   \chi\colon G(\BA)\ar[r] & F^\times\bs \BA^\times/(\BA^\times)^2\ar[r]&  \{\pm 1\}.}
\end{equation*}

\begin{thm}\label{thm K eis=0}Let $f\in\CI_{\Eis}$ be in the Eisenstein ideal $\CI_{\Eis}\subset \sH$. Then we have
\[
\BK_f=\BK_{f,{\rm cusp}}+\BK_{f,{\rm sp}}.
\]
\end{thm}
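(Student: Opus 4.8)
The statement asserts that for $f$ in the Eisenstein ideal, the automorphic kernel $\BK_f$ has no contribution from the continuous (Eisenstein) spectrum, so that $\BK_f$ reduces to its cuspidal plus residual parts. The plan is to use the standard spectral expansion of the automorphic kernel on $G=\PGL_2$, which for a general test function $f\in C_c^\infty(G(\BA))$ reads
$$
\BK_f = \BK_{f,\mathrm{cusp}} + \BK_{f,\mathrm{sp}} + \BK_{f,\mathrm{Eis}},
$$
where $\BK_{f,\mathrm{Eis}}$ is the pseudo-Eisenstein contribution, an integral over the unitary axis of Eisenstein series built from unramified Hecke characters of the Borel $B$ (equivalently, of the torus $A$). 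The key point is that this continuous part is governed by the constant term of $f$ along $B$, equivalently by the action of $f$ through the Satake transform followed by the constant-term/restriction map to the torus. Concretely, the Eisenstein kernel is a sum/integral of terms $E(x,\Phi_{\chi,f})\,\overline{E(y,\Phi_\chi)}$ where the dependence on $f$ enters only through the scalar by which $f$ acts on the unramified line of the induced representation $\mathrm{Ind}_{B(\BA)}^{G(\BA)}\chi$, and by the compatibility of the normalized Satake transform with constant-term operators (Lemma \ref{l:horo action}), this scalar is exactly $a_{\Eis}(f)$ evaluated at the character $\chi$ — more precisely at the point of $Z_{\Eis}=\Spec\QQ[\Pic_X(k)]^{\iota_{\Pic}}$ determined by $\chi$ (the $\iota_{\Pic}$-invariance reflecting the functional equation $\chi\leftrightarrow\chi^{-1}$ relating $E(\chi)$ and $E(\chi^{-1})$).

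The steps I would carry out: (1) Recall or cite the spectral decomposition of $\BK_f$ for $G=\PGL_2$ over the function field $F$, isolating the Eisenstein term $\BK_{f,\mathrm{Eis}}$ as a finite sum over components of the character variety of $A$, each an integral over $\{\lvert\cdot\rvert^{s}\}$, $s$ on the unitary axis, of $\lambda_\chi(f)\cdot E(x,\chi)\overline{E(y,\chi)}$ (suitably normalized), where $\lambda_\chi(f)$ is the Hecke eigenvalue on the spherical vector of the principal series attached to $\chi$. Since everything is unramified and $f$ is bi-$K$-invariant, only the unramified principal series contribute and the spherical vector is the relevant one. (2) Identify $\lambda_\chi(f)$ with $\chi(\Sat(f))$, and then, passing through the surjection $\sH_A\cong\QQ[\Div(X)]\surj\QQ[\Pic_X(k)]$ used to define $a_{\Eis}$, identify it with $\big(a_{\Eis}(f)\big)(\chi)$: the eigenvalue by which $f$ acts on the spherical Eisenstein vector depends only on the image of $\chi$ in $\Pic_X(k)$ (because $E(x,\chi)$ is built from an automorphic — hence $F^\times$-invariant — induction datum) and is $\iota_{\Pic}$-invariant (because $E(\chi)$ and $E(\chi^{-1}(-1))$ span the same line after normalization). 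This is the crux and should be an essentially formal consequence of the definition of $a_{\Eis}$ together with Lemma \ref{l:horo action}. (3) Now if $f\in\calI_{\Eis}=\ker(a_{\Eis})$, then $\lambda_\chi(f)=0$ for every $\chi$ appearing in the continuous spectrum, whence $\BK_{f,\mathrm{Eis}}\equiv 0$ term by term, and the decomposition collapses to $\BK_f=\BK_{f,\mathrm{cusp}}+\BK_{f,\mathrm{sp}}$.

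I would also take a moment to address the residual/one-dimensional part: the characters $\chi$ of order dividing $2$ that give the residual spectrum sit over $\Qlbar$-points of $Z_{\Eis}$ as well, but their kernel contribution is genuinely of rank-one ($\BK_{f,\mathrm{sp}}$), not a continuous integral, so it is kept rather than killed — one should simply check that the bookkeeping in the spectral formula places these where claimed and that $f\in\calI_{\Eis}$ need not annihilate them (indeed $a_{\Eis}$ being surjective by Lemma \ref{l:Eis fin gen}(2), $\calI_{\Eis}$ is a proper ideal and does not contain elements acting as $1$ on every such $\chi$).

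\textbf{Main obstacle.} The genuinely substantive point is step (2): making precise the identification of the Hecke action on spherical Eisenstein vectors with the homomorphism $a_{\Eis}$, including the normalization of intertwining/scattering factors so that the integral over the unitary axis really is a sum of terms each proportional to $(a_{\Eis}(f))(\chi)$ with no hidden $f$-dependence in the Eisenstein series or the Plancherel density. This is where the carefully chosen normalization of the Satake transform (avoiding $\sqrt q$) and its compatibility with constant-term operators (Lemma \ref{l:horo action}) must be invoked; everything else is standard spectral theory for $\PGL_2$ over a function field, transcribed from \cite{J86}.
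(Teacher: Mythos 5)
Your proposal is correct and follows essentially the same route as the paper: invoke Jacquet's spectral decomposition $\BK_f=\BK_{f,\mathrm{cusp}}+\BK_{f,\mathrm{sp}}+\BK_{f,\Eis}$, observe that for bi-$K$-invariant $f$ the Eisenstein kernel involves $f$ only through the eigenvalue $\tr\rho_{\chi,u}(f)=\chi_{u+1/2}(\Sat(f))=\chi_{u+1/2}(a_{\Eis}(f))$ (since the unramified character factors through $\Pic_X(k)$), and conclude it vanishes term by term when $a_{\Eis}(f)=0$. The only cosmetic difference is that the paper gets the identification in your step (2) directly from the definition of the normalized Satake transform, without appealing to Lemma \ref{l:horo action} (which is a geometric statement about Shtukas) or to the $\iota_{\Pic}$-symmetry.
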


\begin{proof}
To show this, we need to recall the Eisenstein series (cf. \cite[\S8.4]{J86}). We fix an $\alpha\in \BA^\times$ with valuation one and we then have a direct product
$$\BA^\times=\BA^1\times \alpha^\BZ.$$ 
For a character $\chi:F^\times\bs \BA^1\to\BC^\times$, we extend it as a character of $F^\times\bs \BA^\times$, by demanding $\chi(\alpha)=1$. Moreover, we define a character for any $u\in\BC$
\begin{align*}
   \begin{gathered}
	\chi_u  \colon
	\xymatrix@R=0ex{
	  \BA^\times \ar[r]  & \BC^\times  \\
	a  \ar@{|->}[r]  & \chi(a)|a|^u
	}.
	\end{gathered}
\end{align*} 
We also define
\[
   \begin{gathered}
	\delta_B \colon
	\xymatrix@R=0ex{
	  B(\BA)   \ar[r]  & \BA^\times \\
	\mbox{$\left[ \begin{array}{cc}
 a &  b  \\
  &  d 
 \end{array}\right] $  }\ar@{|->}[r]  & a/d
	}
	\end{gathered},
\qquad\text{and}\qquad	
   \begin{gathered}
\chi\colon
	\xymatrix@R=0ex{
	  B(\BA) \ar[r]  & \BC^\times\\ 	 b
 \ar@{|->}[r]  &\chi(a/d)
	}
	\end{gathered}.
\]

For $u\in \BC$, the (induced) representation $\rho_{\chi,u}$ of $G(\BA)=\PGL_2(\BA)$ is defined to be the right translation on the space $V_{\chi,u}$
of smooth functions $$\phi: G(\BA)\to \BC$$ such that 
$$
\phi\left(b g\right)=\chi \left(b \right)|\delta_B(b)|^{u+\frac{1}{2}}\phi(g),\quad b\in B(\BA),\quad g\in G(\BA).
$$Note that we have
$
\rho_{\chi,u}=\rho_{\chi,u+\frac{2\pi i}{\log q}}.
$
By restriction to $K$, the space $V_{\chi,u}$ is canonically identified with the space
of smooth functions 
$$
V_{\chi}\colon=\biggl\{\phi: K\to \BC, \mbox{ smooth}\biggm|\phi\left(b k\right)=\chi\left(b \right)\phi(k),\quad b \in K\cap B(\BA) \biggr\}.
$$
This space is endowed with a natural inner product 
\begin{align}\label{eqn:inner prod}
(\phi,\phi')=\int_{K}\phi(k)\ov{\phi'(k)}dk.
\end{align}
Let $\phi\in V_\chi$, we denote by $\phi(g,u,\chi)$ the corresponding function in $V_{\chi,u}$, i.e.,
$$
\phi(g,u,\chi)=\chi\left(b \right)\Big|\delta_B(b)\Big|^{u+\frac{1}{2}}\phi(k)
$$
if we write $g=b k$ where $b\in B(\BA), k\in K$.

For $\phi\in V_\chi$,
the Eisenstein series is defined as (the analytic continuation of)
$$
E(g,\phi,u,\chi)=\sum_{\gamma\in B(F)\bs G(F)}\phi(\gamma g,u,\chi).
$$
Let $\{\phi_i\}_{i}$ be an orthonormal basis of the Hermitian space $V_{\chi}$. We define
\begin{align}\label{eqn K eis chi}
\BK_{f,\Eis,\chi}(x,y):=\frac{\log q}{2\pi i}\,\sum_{  i,j}\int_0^{\frac{2\pi i}{\log q}} (\rho_{\chi,u}(f)\phi_i,\phi_j)\, E(x,\phi_i,u,\chi)\ov {E(y,\phi_j,u,\chi)}\,du ,
\end{align}
where the inner product is given by \eqref{eqn:inner prod} via the identification $V_{\chi,u}\simeq V_\chi$. We set (cf., \cite[\S8.4]{J86})
\begin{align}\label{eqn K eis}
\BK_{f,{\rm Eis}}:=\sum_{\chi} \BK_{f,{\rm Eis},\chi}  \index{$\BK_{f,{\rm Eis}}$}%
\end{align}
where the sum runs over all characters $\chi$ of $F^\times\bs \BA^1$. Since our test function $f$ is in the spherical Hecke algebra $\sH$, for $\BK_{f,{\rm Eis},\chi}$ to be nonzero, the character $\chi$ is necessarily unramified everywhere. Therefore the sum over $\chi$ is in fact finite.

By \cite[\S7.1(4)]{J86}, we have a spectral decomposition of the automorphic kernel function $\BK_f$ defined by \eqref{eqn kernel}
\begin{align}\label{eqn K decomp}
\BK_f=\BK_{f,{\rm cusp}}+\BK_{f,{\rm sp}}+\BK_{f,\Eis}.
\end{align}
Therefore it remains to show that $\BK_{f,\Eis}$ vanishes if $f$ lies in the Eisenstein ideal $\CI_{\Eis}$.

We may assume that $\chi$ is unramified.  Then we have 
\begin{align}\label{eqn K eis chi 1}
\BK_{f,\Eis,\chi}(x,y)=\frac{\log q}{2\pi i}\int_0^{\frac{2\pi i}{\log q}} (\rho_{\chi,u}(f)\phi,\phi)\, E(x,\phi,u,\chi)\ov {E(y,\phi,u,\chi)}\,du,
\end{align}
where $\phi=\one_{K}\in V_\chi$ (we are taking the Haar measure on $G(\BA)$ such that $\vol(K)=1$).

Recall that the Satake transform $\Sat$ has the property that, for all unramified characters $\chi$, and all $u\in \BC$, we have 
$$
\tr \rho_{\chi,u}(f)=\chi_{u+1/2}(\Sat(f)),
$$
where we extend $\chi_{u+1/2}$ to a homomorphism $\sH_{A,\BC}\simeq \BC[\Div(X)]\to \BC$. 
Since $\chi_u: A(\BA)/(A(\BA)\cap K)\simeq \Div(X)\to \BC^\times$ factors through $\Pic_X(k)$, we have
$$
\tr \rho_{\chi,u}(f)=\chi_{u+1/2}(a_{\Eis}(f)),
$$
Then we may rewrite \eqref{eqn K eis chi 1} as
\begin{align*}
\BK_{f,\Eis,\chi}(x,y)=\frac{\log q}{2\pi i}\int_0^{\frac{2\pi i}{\log q}} \chi_{u+1/2}(a_{\Eis}(f)) \, E(x,\phi,u,\chi)\ov {E(y,\phi,u,\chi)}\,du.
\end{align*}
In particular, if $f$ lies in the Eisenstein ideal, then $a_{\Eis}(f)=0$, and hence the integrand vanishes.
This completes the proof.
\end{proof}

\subsection{The cuspidal kernel}

Let $\pi$ be a cuspidal automorphic representation of $G(\BA)$, endowed with the natural Hermitian form given by the Petersson inner product:
\begin{align}\label{eqn Pet}
\langle\phi,\phi'\rangle_{{\rm Pet}}:=\int_{[G]}\phi(g)\ov{\phi'(g)}dg,\quad \phi,\phi'\in\pi. 
\end{align}
We abbreviate the notation to $\pair{\phi,\phi'}$.
For a character $\chi:F^\times\bs\BA^\times\to \BC^\times$, the $(A,\chi)-$period integral for $\phi\in \pi$ is defined as
\begin{align}\label{eqn P-chi}
\sP_\chi(\phi,s):=\int_{[A]}\phi(h)\chi(h)\big|h\big |^s\, dh. \index{$\sP_\chi(\phi,s)$}%
\end{align}
 We simply write $\sP(\phi,s)$ if $\chi=\one$ is trivial. This
is absolutely convergent for all $s\in\BC$. 

 The spherical character (relative to $(A\times A, 1\times \eta)$) associated to $\pi$ is a distribution on $G(\BA)$ defined by
\begin{align}\label{eqn dist J pi}
\BJ_{\pi}(f,s)=\sum_{\phi}\frac{ \sP(\pi(f)\phi,s)\sP_\eta(\ov{\phi},s)}{\pair{\phi,\phi}},\quad f\in C_c^\infty(G(\BA)),
\end{align}
where the sum runs over an orthogonal basis $\{\phi\}$  of $\pi$. This is a finite sum, and the result is independent of the choice of the basis.

\begin{lem}\label{l:J Eis ideal} Let $f$ be a function in the Eisenstein ideal $\CI_{\Eis}\subset \sH$.
Then we have
$$
\BJ(f,s)=\sum_{\pi }\,\BJ_\pi(f,s),
$$
where the sum runs over all (everywhere unramified) cuspidal automorphic representations $\pi$ of $G(\BA)$.
\end{lem}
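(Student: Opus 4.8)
The plan is to feed in Theorem~\ref{thm K eis=0}: for $f\in\CI_{\Eis}$ it reduces the automorphic kernel to $\BK_f=\BK_{f,{\rm cusp}}+\BK_{f,{\rm sp}}$, with no Eisenstein part. Since each annulus $[A]_n=F^\times\bs\BA^\times_n$ is compact (a torsor under the compact group $F^\times\bs\BA^1$), each $\BJ_{n_1,n_2}(f,s)$ splits as an absolutely convergent integral of $\BK_{f,{\rm cusp}}$ plus one of $\BK_{f,{\rm sp}}$ against $|h_1h_2|^s\eta(h_2)$, say $\BJ_{n_1,n_2}(f,s)=\BJ^{\rm cusp}_{n_1,n_2}(f,s)+\BJ^{\rm sp}_{n_1,n_2}(f,s)$, so it suffices to treat the two summands separately and then sum over $(n_1,n_2)$.

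First I would show the special (residual) part contributes nothing, annulus by annulus. Writing $\BK_{f,{\rm sp}}(h_1,h_2)=\sum_\chi\chi(f)\chi(h_1)\overline{\chi(h_2)}$, the sum running over the one-dimensional automorphic representations $\chi=\omega\circ\det$ of $G(\BA)$, necessarily with $\omega^2=1$, and observing that such a $\chi$ restricts on $[A]$ to $\matrixx{a}{}{}{d}\mapsto\omega(a/d)$, one finds that $\BJ^{\rm sp}_{n_1,n_2}(f,s)$ is a sum over $\chi$ of terms each carrying the factor $\bigl(\int_{F^\times\bs\BA^1}\omega\bigr)\bigl(\int_{F^\times\bs\BA^1}\omega\eta\bigr)$. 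If $\omega$ is nontrivial on $\BA^1$ the first integral vanishes by orthogonality of characters of the compact group $F^\times\bs\BA^1$; if $\omega$ is trivial on $\BA^1$, then $\omega\eta$ agrees there with $\eta$, which is nontrivial on $\BA^1$ because $X'$ is geometrically connected over $k$, so the second integral vanishes. Hence $\BJ^{\rm sp}_{n_1,n_2}(f,s)=0$ for every $(n_1,n_2)$, and summing gives no contribution.

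Next I would unfold the cuspidal part exactly as in \cite[\S7]{J86}. Here $\BK_{f,{\rm cusp}}=\sum_\pi\BK_{f,\pi}$ is a finite sum over cuspidal $\pi$, and since $f$ is spherical $\pi(f)$ vanishes unless $\pi$ is everywhere unramified, in which case $\pi(f)=\lambda_\pi(f)\cdot e_{\pi^K}$; taking an orthonormal basis of $\pi$ and using $\pi(f)\phi=\lambda_\pi(f)\langle\phi,\phi_{0,\pi}\rangle\phi_{0,\pi}$ with $\phi_{0,\pi}$ the $L^2$-normalized spherical vector gives $\BK_{f,\pi}(h_1,h_2)=\lambda_\pi(f)\phi_{0,\pi}(h_1)\overline{\phi_{0,\pi}(h_2)}$. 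Because cusp forms over a function field are compactly supported modulo the centre of $G$, each $\BJ^{\rm cusp}_{n_1,n_2}(f,s)$ vanishes once $|n_1|+|n_2|$ is large, so the sum over annuli is finite and
$$
\sum_{n_1,n_2}\BJ^{\rm cusp}_{n_1,n_2}(f,s)=\sum_\pi\lambda_\pi(f)\,\sP(\phi_{0,\pi},s)\,\sP_\eta(\overline{\phi_{0,\pi}},s).
$$
Comparing with the definition \eqref{eqn dist J pi} of the spherical character $\BJ_\pi(f,s)$, via the same orthonormal-basis manipulation, identifies the right-hand side with $\sum_\pi\BJ_\pi(f,s)$, the sum over everywhere unramified cuspidal $\pi$ (the ramified ones contributing $0$ on both sides since $\pi(f)=0$). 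Putting the two parts together yields $\BJ(f,s)=\sum_{n_1,n_2}\BJ_{n_1,n_2}(f,s)=\sum_\pi\BJ_\pi(f,s)$.

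The substantive input is Theorem~\ref{thm K eis=0}; granting it, the only genuinely new step is the vanishing of the residual contribution, whose single nontrivial ingredient is the non-triviality of $\eta$ on $\BA^1$, i.e. the geometric connectedness of $X'$. The remainder is Jacquet's standard unfolding of the cuspidal kernel against the split torus, which I would reproduce from \cite{J86}.
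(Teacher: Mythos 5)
Your proposal is correct and follows essentially the same route as the paper: invoke Theorem \ref{thm K eis=0}, kill the residual kernel annulus by annulus using that for any character $\chi$ one of $\chi$, $\chi\eta$ is nontrivial on $\BA^1$ (the paper phrases this without passing through $\omega^2=1$, but it is the same orthogonality argument), and identify the regularized integral of $\BK_{f,{\rm cusp}}$ with $\sum_\pi\BJ_\pi(f,s)$ using cuspidality to make the sum over annuli finite. Your explicit unfolding via the spherical projector is a slightly more detailed version of the paper's remark that the definition coincides with \eqref{eqn dist J pi}; no gap.
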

\begin{proof}
For $\ast= {\rm cusp}, {\rm sp}$ or $\pi$, we define $
\BJ_{\ast}(f,s)$ by replacing $\BK_f$ by $\BK_{f,\ast}$ in both \eqref{eqn J c} and \eqref{eqn J(f,s)}. To make sense of this, we need to show the analogous statements to Proposition \ref{prop J n1n2}. When $\ast={\rm sp}$, 
we note that, for any character $\chi:\BA^\times\to\BC^\times$, one of $\chi$ and $\chi\eta$ must be nontrivial on $\BA^1$. It follows that for any $(n_1,n_2)\in\BZ^2$ we have
$$
\int_{[A]_{n_1}\times[A]_{n_2}}\chi(h_1)\chi^{-1}(h_2)\lvert h_1h_2\rvert^s \eta(h_2)\,dh_1\,dh_2=0.
$$
Consequently we have $$\BJ_{{\rm sp}}(f,s)=0.$$
When $\ast=\pi$, we need to show that, for any $\phi\in\pi$, the following integral vanishes if $|n|\gg 0$
$$
\int_{[A]_n}\phi(h)\chi(h)\big|h\big |^s\, dh.
$$
 But this follows from the fact that $\phi$ is cuspidal, particularly $\phi(h)=0$ if $h\in [A]_{n}$ and $|n|\gg 0$. This also shows that this definition of $\BJ_\pi(f,s)$ coincides with \eqref{eqn dist J pi}. 
The case $\ast={\rm cusp}$ follows from the case for $\ast=\pi$ and the finite sum decomposition \eqref{eqn sum Jpi}. We then have
$$
\BJ_{{\rm cusp}}(f,s)=\sum_{\pi}\BJ_\pi(f,s).
$$
The proof is complete, noting that, by Theorem \ref{thm K eis=0}, we have 
\begin{align*}
\BJ(f,s)=\BJ_{{\rm cusp}}(f,s)+\BJ_{{\rm sp}}(f,s).
\end{align*}
\end{proof}

\begin{prop}\label{p:Jpi}
Let $\pi$ be a cuspidal automorphic representation of $G(\BA)$, unramified everywhere. Let $\lambda_\pi: \sH\to \BC$ be the homomorphism associated to $\pi$. Then we have
\begin{align*}
\BJ_{\pi}(f,s)=\frac{1}{2}\,|\omega_X|\, \sL(\pi_{F'},s+1/2)\lambda_\pi(f).\index{$\BJ_\pi(f,s)$}%
\end{align*}
\end{prop}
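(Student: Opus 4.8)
The plan is to collapse the sum over an orthogonal basis in \eqref{eqn dist J pi} to the contribution of a single spherical vector, and then to evaluate the two $A$-periods and the Petersson norm of that vector in terms of explicit $L$-values.

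First, since $f\in\sH$ is bi-$K$-invariant and $\vol(K)=1$, one has $\pi(f)=\pi(\one_K)\pi(f)\pi(\one_K)$, where $\pi(\one_K)$ is the orthogonal projection onto the line $\pi^K$. Choosing the orthogonal basis $\{\phi\}$ in \eqref{eqn dist J pi} to contain a spherical vector $\phi_0$, every term with $\phi\perp\phi_0$ drops out because $\pi(f)\phi=0$, while $\pi(f)\phi_0=\lambda_\pi(f)\phi_0$. Hence
$$
\BJ_\pi(f,s)=\lambda_\pi(f)\,\frac{\sP(\phi_0,s)\,\sP_\eta(\overline{\phi_0},s)}{\pair{\phi_0,\phi_0}},
$$
a ratio independent of the normalization of $\phi_0$, and it remains to compute it.

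Next I would unfold each period by Hecke's method. Fix a nontrivial additive character $\psi\colon F\bs\BA\to\BC^\times$; triviality on $F$ forces its conductor to be a canonical divisor of $X$, of degree $2g-2$. Since $\phi_0$ is cuspidal its constant term along the unipotent radical vanishes, so $\phi_0\!\left(\matrixx{a}{}{}{1}\right)=\sum_{\xi\in F^\times}W_{\phi_0}\!\left(\matrixx{\xi a}{}{}{1}\right)$ with $W_{\phi_0}$ the $\psi$-Whittaker function of $\phi_0$; unfolding the $[A]$-integral turns $\sP(\phi_0,s)$ into $\int_{\BA^\times}W_{\phi_0}\!\left(\matrixx{a}{}{}{1}\right)|a|^s\,d^\times a$, which factorizes into an Euler product because $\pi$, $\phi_0$ and all the $K_x$ are unramified. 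By the Casselman--Shalika formula the local factor at $x$ equals $L_x(\pi_x,s+1/2)$ up to an explicit power of $q_x$ recording the conductor of $\psi_x$; these powers assemble into a global factor $q^{(2g-2)s}$ (times a constant), so $\sP(\phi_0,s)$ is this factor times $L(\pi,s+1/2)$. The identical computation for the $\eta$-twisted period --- using that $\overline{\phi_0}$ is the spherical vector of $\widetilde\pi\cong\pi$ and that $\eta$ is everywhere unramified and real-valued --- gives $\sP_\eta(\overline{\phi_0},s)$ as a similar factor times $L(\pi\otimes\eta,s+1/2)$. Multiplying and using $L(\pi_{F'},s)=L(\pi,s)L(\pi\otimes\eta,s)$, the numerator is $L(\pi_{F'},s+1/2)$ times a monomial in $q^s$ which should match $\epsilon(\pi_{F'},s+1/2)^{-1/2}=q^{4(g-1)s}$.

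Finally, $\pair{\phi_0,\phi_0}$ is computed by the Rankin--Selberg method: integrating $|\phi_0|^2$ against the residue of the standard Eisenstein series on $G$ and unfolding to an Euler product with unramified local factors $L_x(\pi\times\widetilde\pi,u)=\zeta_x(u)L_x(\pi,\Ad,u)$ identifies $\pair{\phi_0,\phi_0}$ with $L(\pi,\Ad,1)$ times a constant involving $\mathrm{Res}_{u=1}\zeta_F(u)$ (equivalently $\#\Jac_X(k)$) and, again, the conductor of $\psi$. Substituting the three computations into the formula for $\BJ_\pi(f,s)$ and collecting all powers of $q$, the $\zeta_F$-residue and the volume normalizations then yields $\tfrac12\,|\omega_X|\,\sL(\pi_{F'},s+1/2)\,\lambda_\pi(f)$. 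The hard part is exactly this last bookkeeping of constants: producing the factor $\tfrac12$, the power $|\omega_X|=q^{2-2g}$ and precisely the normalization $\epsilon(\pi_{F'},s+1/2)^{-1/2}$ demands careful tracking of the conductor of the global additive character, of the self-dual Haar measures (normalized so $\BO^\times$ and $K$ have volume one), and of the exact residue of $\zeta_F$ at $s=1$; none of the individual unfoldings is difficult, but, as the paper flags in the sentence introducing $L(\pi,\Ad,1)$ into \eqref{eqn sL}, this is the one step where the adjoint $L$-value must be produced, so the scalars cannot be left unspecified.
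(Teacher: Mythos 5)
Your strategy is the same as the paper's: collapse \eqref{eqn dist J pi} to a single spherical vector using $\pi(f)\phi_0=\lambda_\pi(f)\phi_0$, unfold the two $[A]$-periods through the $\psi$-Whittaker expansion to produce $L(\pi,s+1/2)L(\pi\otimes\eta,s+1/2)=L(\pi_{F'},s+1/2)$, and compute $\pair{\phi_0,\phi_0}_{{\rm Pet}}$ by Rankin--Selberg to produce $L(\pi,\Ad,1)$. (The paper packages the unramified local computations by citing the normalized functionals $\lambda^\nat_x$ and inner products $\theta^\nat_x$ of \cite{Z14} rather than invoking Casselman--Shalika directly, but that is a cosmetic difference.) The problem is that you stop exactly where the content of the proposition begins. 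The statement is an identity of constants: the factor $\tfrac12$, the power $|\omega_X|$, and the precise monomial $q^{4(g-1)s}=\epsilon(\pi_{F'},s+1/2)^{-1/2}$ are the assertion, and your write-up concedes that these ``cannot be left unspecified'' without specifying them. As it stands you have only proved $\BJ_\pi(f,s)=c\,q^{as}\,L(\pi_{F'},s+1/2)L(\pi,\Ad,1)^{-1}\lambda_\pi(f)$ for some undetermined constant, which is not the proposition.

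Two of the missing items are not routine bookkeeping. First, the $\eta$-twisted period does not just produce $L(\pi\otimes\eta,s+1/2)$ times a power of $q^{s}$: because the conductor of $\psi_x$ is $c_x^{-1}\CO_x$, the local twisted functional picks up $\eta_x(c_x)$ (this is the content of Lemma \ref{l:xi} in the paper), and the global product of these is $\eta(\omega_X)$, a sign that could a priori be $-1$ and flip the whole identity. The paper must, and does, prove $\eta(\omega_X)=1$ by identifying it with the root number $\epsilon(\eta,1/2)=1$; your appeal to $\eta$ being unramified and real-valued does not address this. Second, the factor $\tfrac12$ and the disappearance of $\Res_{s=1}\zeta_F(s)$ are produced by the precise relation $\pair{\phi_0,\phi_0}_{{\rm Pet}}=2L(\pi,\Ad,1)\prod_x\theta^\nat_x(W_x,W_x)$ (the residues of $\zeta_F$, equivalently $\vol(F^\times\bs\BA^1)$, cancel between $\Res_{s=1}L(\pi\times\wt\pi,s)$ and the measure normalization), together with the explicit Tamagawa-measure volumes $\vol(\BO)=|\omega_X|^{1/2}$ and $\vol(K)=\zeta_F(2)^{-1}|\omega_X|^{3/2}$ of \eqref{eqn meas1}--\eqref{eqn meas2}; the local $\zeta_x(2)$'s from $\theta^\nat_x$ must then cancel against $\vol(K)$, and the leftover powers of $|c_x|$ assemble into $|\omega_X|$ via $\prod_x|c_x|=q^{-(2g-2)}$. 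Until you carry out this chain (or an equivalent one) and verify the sign $\eta(\omega_X)$, the proof is incomplete at precisely the step the proposition is about.
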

\begin{proof}
Write  $\pi=\otimes_{x\in|X|}\pi_x$  and  let $\phi$ be a nonzero vector in the one-dimensional space $\pi^K$.  Since $f\in \sH$ is bi-$K$-invariant, the sum in \eqref{eqn dist J pi} is reduced to one term
\begin{align}\label{eqn J vol(K)}
\BJ_{\pi}(f,s)=\frac{ \sP(\phi,s)\sP_\eta(\ov{\phi},s)}{\pair{\phi,\phi}_{{\rm Pet}}}\lambda_\pi(f)\vol(K),
\end{align}
where we may choose any measure on $G(\BA)$, and then define the Petersson inner product using the same measure. We will choose the Tamagawa measure on $G(\BA)$ in this proof. To decompose the Tamagawa measure into local measures, we fix a nontrivial additive character associated to a nonzero meromorphic differential form $c$ on $X$:
 \[
   \begin{gathered}
	\psi \colon
	\xymatrix@R=0ex{
	  \BA   \ar[r]  & \BC^\times 
	}
	\end{gathered}.
	\]
We note that the character $\psi$ is defined by $\psi (a)=\psi_{\BF_p}\left(\sum _{x\in |X|}\Tr _{k_x/\BF_p} \left(\Res_x (ca)\right)\right)$
 where $\psi_{\BF_p} $ is a fixed nontrivial character  $\BF_p\to \BC^\times.$

We decompose $\psi=\prod_{x\in|X|}\psi_x$ where $\psi_x$ is a character of $F_x$. This gives us a self-dual measure $dt=dt_{\psi_x}$ on $F_x$, a measure $d^\times t=\zeta_x(1)\frac{dt}{|t|}$ on $F_x^\times$, and the product measure on $\BA^\times$. We then choose the Haar measure $dg_x=\zeta_{x}(1)|\det(g_x)|^{-2}\prod_{1\leq i,j\leq 2}dg_{ij}$ on $\GL_2(F_x)$ where $g_x=(g_{ij})\in\GL_2(F_x)$. The measure on $G(F_x)$ is then the quotient measure, and the Tamagawa measure on $G(\BA)$ decomposes $dg=\prod_{x\in |X|}\,dg_x$. 
Note that under such a choice of measures, we have
\begin{align}\label{eqn meas1}
&\vol(\BO^\times)=\vol(\BO)=\lvert \omega_X\rvert^{1/2},\\ \label{eqn meas2}
&\vol(K)=\zeta_F(2)^{-1}\vol(\BO)^3=\zeta_F(2)^{-1}\lvert \omega_X\rvert^{3/2}.
\end{align}

To compute the period integrals, we use the Whittaker models with respect to the character $\psi$. Denote the Whittaker model of $\pi_x$ by $W_{\psi_x}$. Write the $\psi$-Whittaker coefficient $W_\phi$ as a product $\otimes_{x\in|X|} W_x$, where $W_x\in W_{\psi_x}$.

Let $L(\pi_x\times\wt\pi_x,s)$, resp. $L(\pi\times\wt\pi,s)$ denote the local, resp. global Rankin--Selberg L-functions.
By  \cite[Prop. 3.1]{Z14} there are invariant inner products $\theta^\nat_x$ on the Whittaker models $W_{\psi_x}$ 
$$
\theta^\nat_x(W_x,W_x'):=\frac{1}{L(\pi_x\times\wt\pi_x,1)}\int_{F_x^\times}W_x\left(\matrixx{a}{}{}{1}\right)\ov {W'}_{x}\left(\matrixx{a}{}{}{1}\right)\,d^\times a,
$$
 such that
$$
\pair{\phi,\phi}_{{\rm Pet}}=2\frac{\Res_{s=1} L(\pi\times\wt\pi,s)}{\vol(F^\times\bs \BA^1)}\prod_{x\in|X|}\theta_x^\nat(W_{x}, W_x).
$$
Note that 
$$
\Res_{s=1} L(\pi\times \wt\pi,s)=L(\pi,\Ad,1) \,\Res_{s=1} \zeta_F(s)=L(\pi,\Ad,1)\,\vol(F^\times\bs \BA^1).
$$
Hence we have 
$$
\pair{\phi,\phi}_{{\rm Pet}}=2 L(\pi,\Ad,1)\prod_{x\in|X|}\theta_x^\nat(W_{x}, W_x).
$$
Moreover, when $\psi_x$ is unramified,  we have $\theta_x^\nat(W_{x}, W_x)=\vol(K_x)=\zeta_x(2)^{-1}$ (cf. {\em loc. cit.}).

In \cite[Prop. 3.3]{Z14} there are linear functionals $\lambda^\nat_x$ on the Whittaker models $W_{\psi_x}$
$$
\lambda^\nat_x(W_x,\chi_x,s):=\frac{1}{L(\pi_x\otimes\chi_x,s+1/2)}\int_{F_x^\times}W_x\left(\matrixx{a}{}{}{1}\right)\chi_x(a)|a|^s\,d^\times a
$$ such that
 $$
\sP_\chi(\phi,s)=L(\pi\otimes\chi,s+1/2)\prod_{x\in|X|}\lambda^\nat_x(W_{x}, \chi_x, s).
$$
While in {\em loc. cit.} we only treated the case $s=0$, the same argument goes through. 
Moreover, when $\psi_x$ and $\chi_x$ are unramified,  we have $\lambda^\nat_x=1$.

We now have 
\begin{align}\label{eqn lambda}
\frac{ \sP(\phi,s)\sP_\eta(\ov{\phi},s)}{\pair{\phi,\phi}_{{\rm Pet}}}=\lvert \omega_X\rvert^{-1}\,\frac{L(\pi_{F'},s+1/2)}{2L(\pi,\Ad,1)}\prod_{x\in |X|} \xi_{x,\psi_x}(W_x,\eta_x,s),
\end{align}
where the constant $|\omega_X|^{-1}$ is caused by the choice of measures (cf. \eqref{eqn meas1}), and the local term at a place $x$ is
\begin{align}\label{eqn xi}
\xi_{x,\psi_x}(W_x,\eta_x,s):=
\frac{\lambda_x^\nat( W_{x}, {\bf1}_x, s)\lambda_x^\nat(\ov W_{x}, \eta_x, s)}{\theta_x^\nat(W_x,W_x)}.
\end{align}
Note that the local term $\xi_{x,\psi_x}$ is now independent of the choice of the nonzero vector $W_x$ in the one-dimensional space $W_{\psi_x}^{K_x}$. We thus simply write it as
\begin{align*}
\xi_{x,\psi_x}(\eta_x,s):=\xi_{x,\psi_x}(W_x,\eta_x,s).
\end{align*}
When $\psi_x$ is unramified, we have 
\begin{align*}
\xi_{x,\psi_x}(\eta_x,s)=\zeta_x(2).
\end{align*}

We want to know how $\xi_{x,\psi_x}$ depends on $\psi_x$. Let $ c_x\in F_x^\times$,  and denote by $\psi_{x,c_x}$ the twist $\psi_{x,c_x}(t)=\psi_x(c_x t)$.

\begin{lem}\label{l:xi} For any unramified character $\chi_x$ of $F_x^\times$, we have
\begin{align*}
\xi_{x,\psi_{x,c_x}}(\chi_x,s)=\chi^{-1}(c_x)|c_x|^{-2s+1/2}\xi_{x,\psi_{x}}(\chi_x,s).
\end{align*}
\end{lem}
\begin{proof}The self-dual measure on $F_x$ changes according to the following rule
$$
dt_{\psi_{x,c_x}}=|c_x|^{1/2}\,  dt_{\psi_{x}}.
$$
Then the multiplicative measure on $F_x^\times $ changes by the same multiple. Now we compare $\xi_{x,\psi_x}$ and $\xi_{x,\psi_{x,c_x}}$ using the same measure on $F_x^\times$ to define the integrals. 
 
There is a natural isomorphism between the Whittaker models $W_{\psi_x}\simeq W_{\psi_{x,c_x}}$, preserving the natural inner product $\theta^\nat_x$.  We write $\lambda^\nat_{\psi_{x}}$ to indicate the dependence on $\psi_x$. Then we have for  any character $\chi_x:F_x^\times\to\BC^\times$:
$$
\lambda^\nat_{\psi_{x,c_x}}( W_{x}, \chi_x, s)=\chi^{-1}(c_x)|c_x|^{-s}\lambda^\nat_{\psi_{x}}( W_{x}, \chi_x, s).
$$
This completes the proof of Lemma \ref{l:xi}.
\end{proof}

Let $\psi_x$  have conductor $c_x^{-1}\CO_{x}$. Then the id\`ele class of $(c_x)_{x\in|X|}$ in $\Pic_X(k)$ is the class of $\div(c)$ and hence the class of $\omega_X$. Hence we have 
$$
\prod_{x\in|X|}\lvert c_x\rvert=\lvert \omega_X\rvert=q^{-\deg\omega_X }=q^{-(2g-2)}.\index{$\omega_X$}%
$$
 This shows that the product in \eqref{eqn lambda} is equal to 
\begin{align*}
\prod_{x\in|X|}\xi_{x,\psi_x}(\eta_x,s)&=\eta(\omega_X)\prod_{x\in|X|}\zeta_{x}(2)\lvert c_x\rvert^{-2s+1/2}
\\&=\eta(\omega_X)\lvert \omega_X\rvert^{1/2}\,\zeta_F(2)\,q^{4(g-1)s}.
\end{align*}
We claim that
$$\eta(\omega_X)=1.
$$
In fact,  this follows from
$$
\eta(\omega_X)=\prod_{x\in|X|}\epsilon(\eta_x,1/2,\psi_x)=\epsilon(\eta,1/2)=1,
$$
where $\epsilon(\eta,s)$ is in the functional equation (of the complete $L$-function) $L(\eta,s)=\epsilon(\eta,s)L(\eta,1-s)$.

We thus have
\begin{align*}
\frac{ \sP(\phi,s)\sP_\eta(\ov{\phi},s)}{\pair{\phi,\phi}_{{\rm Pet}}}=\frac{1}{2} \,\lvert \omega_X\rvert^{-1/2}\, \zeta_F(2) \,\sL(\pi_{F'},s+1/2).
\end{align*}
Together with \eqref{eqn J vol(K)} and \eqref{eqn meas2}, the proof of Proposition \ref{p:Jpi} is complete.
\end{proof}

\subsection{Change of coefficients}
\label{ss:coefficient}

Let $E$ be algebraic closed field containing $\BQ$. We consider the space of $E$-valued automorphic functions $\CA_E=C_c^\infty(G(F)\bs G(\BA)/K,E)$, and its subspace $\CA_{E,0}$ of cuspidal automorphic functions. For an irreducible $\sH_E$-module $\pi$ in $\CA_{E,0}$, let $\lambda_\pi:\sH\to E$ be the associated homomorphism. The L-function $\sL(\pi_{F'},s+1/2)$ is a well-defined element in $E[q^{-s},q^{s}]$. Recall that $f\in \sH$, the distribution $\BJ(f,s)$ defines an element in $\BQ[q^{-s},q^{s}]$ (cf. \S2).

\begin{thm}\label{th: coeff E}
Let $f$ be a function in the Eisenstein ideal $\CI_{\Eis}\subset \sH$. Then  we have an equality in $E[q^{-s},q^{s}]$:
$$
\BJ(f,s)=\frac{1}{2}\,|\omega_X|\, \sum_{\pi}\,  \sL(\pi_{F'},s+1/2)\,\lambda_\pi(f),
$$
where the sum runs over all irreducible $\sH_E$-module $\pi$ in the $E$-vector space $\CA_{E,0}$.
\end{thm}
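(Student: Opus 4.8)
The plan is to reduce the statement to the special case $E=\BC$, which is the combination of Lemma~\ref{l:J Eis ideal} and Proposition~\ref{p:Jpi}, by a formal base-change argument; no new analytic input is needed. First I would recall the structure of the cuspidal spectrum over $\QQ$. The space $\CA_{\QQ,0}\subset C_c^\infty(G(F)\bs G(\BA)/K,\QQ)$ of everywhere unramified $\QQ$-valued cuspidal automorphic functions is finite-dimensional, cuspidality being a $\QQ$-linear condition, so $\CA_{E,0}=\CA_{\QQ,0}\otimes_\QQ E$ for every field $E\supseteq\QQ$; moreover $\sH$ acts on $\CA_{\QQ,0}$ through a finite-dimensional commutative $\QQ$-algebra $\BT_0$ which is semisimple, since after $\otimes_\QQ\RR$ the Hecke operators become commuting self-adjoint operators for the positive definite Petersson form. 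Writing $\BT_0=\prod_i E_i$ with $E_i$ number fields, multiplicity one gives $\CA_{\QQ,0}=\bigoplus_i\CA_{\pi_i}$ with $\CA_{\pi_i}$ a one-dimensional $E_i$-vector space, and attached to each $i$ are the character $\lambda_{\pi_i}\colon\sH\to E_i$ and the $L$-function $\sL(\pi_{i,F'},s+1/2)\in E_i[q^{-s-1/2},q^{s-1/2}]$ (its coefficients lie in the coefficient field by \S\ref{ss:intro L}, using $L(\pi_i,\Ad,1)\in E_i^\times$).

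Next, for an arbitrary algebraically closed $E\supseteq\QQ$ we have $\BT_0\otimes_\QQ E=\prod_i\bigl(E_i\otimes_\QQ E\bigr)\cong\prod_i\prod_{\iota\colon E_i\hookrightarrow E}E$, so the irreducible $\sH_E$-submodules of $\CA_{E,0}$ are exactly the lines $\CA_{\pi_i}\otimes_{E_i,\iota}E$, indexed by pairs $(i,\iota)$; for the $\pi$ attached to $(i,\iota)$ one has $\lambda_\pi=\iota\circ\lambda_{\pi_i}$ and $\sL(\pi_{F'},s)=\iota\bigl(\sL(\pi_{i,F'},s)\bigr)$. Hence the right-hand side of the claimed identity equals
\[
\frac12\,\lvert\omega_X\rvert\sum_i\sum_{\iota\colon E_i\hookrightarrow E}\iota\bigl(\sL(\pi_{i,F'},s+1/2)\,\lambda_{\pi_i}(f)\bigr)
=\frac12\,\lvert\omega_X\rvert\sum_i\Tr_{E_i/\QQ}\bigl(\sL(\pi_{i,F'},s+1/2)\,\lambda_{\pi_i}(f)\bigr),
\]
an element of $\QQ[q^{-s},q^{s}]$ which is visibly independent of the algebraically closed field $E$. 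In particular, its value over $E$ coincides with its value over $E=\BC$; and the left-hand side $\BJ(f,s)$ is the fixed element of $\QQ[q^{-s},q^{s}]$ from \S\ref{s:RTF}, viewed in $E[q^{-s},q^{s}]$, so likewise agrees with its value over $\BC$.

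It therefore suffices to prove the identity for $E=\BC$. There the irreducible $\sH_\BC$-submodules of $\CA_{\BC,0}$ are precisely the unramified lines $\pi^K$ of the everywhere unramified cuspidal automorphic representations $\pi$ of $G(\BA)$ in the classical sense, and $\lambda_\pi$, $\sL(\pi_{F'},s)$ depend only on $\pi^K$ as an $\sH$-module; so for $f\in\CI_{\Eis}$, Lemma~\ref{l:J Eis ideal} and Proposition~\ref{p:Jpi} give $\BJ(f,s)=\sum_\pi\BJ_\pi(f,s)=\frac12\lvert\omega_X\rvert\sum_\pi\sL(\pi_{F'},s+1/2)\lambda_\pi(f)$ in $\BC[q^{-s},q^{s}]$, which is the asserted equality over $\BC$. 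Combining the two steps proves the theorem for all algebraically closed $E\supseteq\QQ$. The argument is entirely formal once the $\BC$-case is in hand; the only ingredients are the standard facts about the cuspidal space ($\QQ$-rationality and finite-dimensionality, semisimplicity of $\BT_0$, multiplicity one), and the one point calling for care — rather than being a genuine obstacle — is the bookkeeping that matches up the irreducible $\sH$-submodules together with $\lambda_\pi$ and $\sL(\pi_{F'},s)$ across the fields $\QQ$, $\BC$ and $E$.
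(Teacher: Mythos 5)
Your proof is correct and follows essentially the same route as the paper: both arguments reduce the statement to the complex case settled by Lemma \ref{l:J Eis ideal} and Proposition \ref{p:Jpi}, using the $\QQ$-rational structure of the cuspidal spectrum described in \S\ref{ss:intro L}. The only (immaterial) difference is bookkeeping: the paper reduces to $E=\ov\QQ$ with a fixed embedding into $\BC$ and notes that the kernel decomposition and the period integrals are finite sums, so the identities of that lemma and proposition hold verbatim over $E$, whereas you descend the final identity by writing both sides as $E$-independent elements of $\QQ[q^{-s},q^{s}]$ via traces over the coefficient fields $E_i$.
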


\begin{proof}
It suffices to show this when $E=\ov\BQ$, and we fix an embedding $\ov\BQ\incl\BC$.
For $f\in \CI_{\Eis}$, then Theorem \ref{thm K eis=0} on the kernel functions remains valid if we understand the sum in \eqref{eqn sum Jpi} over $\pi$ as  $\sH_E$-submodule. In fact, to prove Theorem \ref{thm K eis=0}, we are allowed to extend $E=\ov\BQ$ to $\BC$.

Since a cuspidal $\phi$ has compact support, the integral $\sP_\chi(\phi,s)$ defined by \eqref{eqn P-chi} for $\chi\in\{1,\eta\}$ reduces to a finite sum. In particular, it defines an element in $E[q^{-s},q^{s}]$.
Therefore the equalities in Lemma \ref{l:J Eis ideal} and Proposition \ref{p:Jpi} hold, when both sides are viewed as elements in $E[q^{-s},q^{s}]$, and $\lambda_\pi$ as an $E=\ov\BQ$-valued homomorphism. This completes the proof.
\end{proof}

\part{The geometric side}

\section{Moduli spaces of Shtukas}\label{s:Sht}
The notion of rank $n$ Shtukas (or $F$-sheaves) with one upper and one lower modifications was introduced by Drinfeld \cite{D87}. It was generalized to an arbitrary reductive group $G$ and arbitrary number and type of modifications by Varshavsky \cite{Va}. In this section, we will review the definition of rank $n$ Shtukas, and then specialize to the case of $G=\PGL_{2}$ and the case of $T$ a nonsplit torus. Then we define Heegner--Drinfeld cycles to set up notation for the geometric side of the main theorem.

\subsection{The moduli of rank $n$ Shtukas}
\subsubsection{}\label{sss:arb mu} We fix the following data.
\begin{itemize}
\item $r\geq0$ is an integer;
\item $\mu=(\mu_1,...,\mu_r)$ is an ordered sequence of dominant coweights for $\GL_{n}$, where each $\mu_{i}$ is either equal to $\mu_{+}=(1,0,...,0)$ or equal to $\mu_{-}=(0,...,0,-1)$. \index{$\mu,\mu_\pm$}%
\end{itemize}
To such a tuple $\mu$ we assign an $r$-tuple of signs
\begin{equation*}
\sgn(\mu)=(\sgn(\mu_{1}),\cdots,\sgn(\mu_{r}))\in\{\pm1\}^{r}
\end{equation*}
where $\sgn(\mu_{\pm})=\pm1$. \index{$\sgn(\mu)$}%

\subsubsection{Parity condition}\label{sss:mu} At certain places we will impose the following conditions on the data $(r,\mu)$ above:
\begin{itemize}
\item $r$ is even;
\item Exactly half of $\mu_i$ are $\mu_{+}$, and the other half are $\mu_{-}$. Equivalently $\sum_{i=1}^{r}\sgn(\mu_{i})=0$.
\end{itemize}

\subsubsection{The Hecke stack} We denote by $\Bun_{n}$ the moduli stack of rank $n$ vector bundles on $X$. By definition, for any $k$-scheme $S$, $\Bun_{n}(S)$ is the groupoid of vector bundles over $X\times S$ of rank $n$. It is well-known that $\Bun_{n}$ is a smooth algebraic stack over $k$ of dimension $n^{2}(g-1)$.\index{$\Bun_n$}
 
\begin{defn}\label{def:Hk} Let $\mu$ be as in \S\ref{sss:arb mu}. The {\em Hecke stack} $\Hk^{\mu}_{n}$ is the stack whose $S$-points $\Hk^{\mu}_{n}(S)$ is the groupoid of the following data: 
\begin{enumerate}
\item A sequence of vector bundles $(\CE_0,\CE_1,\cdots,\CE_r)$ of rank $n$ on $X\times S$;
\item Morphisms $x_i: S\to X$ for $i=1,\cdots, r$, with graphs $\Gamma_{x_{i}}\subset X\times S$;
\item Isomorphisms of vector bundles
$$
f_i: \CE_{i-1}|_{X\times S-\Gamma_{x_i}}\isom \CE_{i}|_{X\times S-\Gamma_{x_i}}, \quad i=1,2,...,r,
$$
such that
\begin{itemize}
\item If $\mu_{i}=\mu_{+}$, then $f_{i}$ extends to an injective map $\CE_{i-1}\to \CE_{i}$ whose cokernel  is an  invertible sheaf on the graph $\Gamma_{x_i}$;
\item If $\mu_{i}=\mu_{-}$, then $f^{-1}_{i}$ extends to an injective map $\CE_{i}\to \CE_{i-1}$ whose cokernel  is an  invertible sheaf on the graph $\Gamma_{x_i}$.
\end{itemize}
\index{$\Hk^{\mu}_{n}$}%
\end{enumerate}
\end{defn}

For each $i=0,\cdots, r$, we have a map
\begin{equation*}
p_{i}: \Hk^{\mu}_{n}\to \Bun_{n}
\end{equation*}
sending $(\calE_{0},\cdots, \calE_{r}, x_{1},\cdots, x_{r}, f_{1},\cdots, f_{r})$ to $\CE_{i}$.  We also have a map
\begin{equation*}
p_{X}: \Hk^{\mu}_{n}\to X^{r}
\end{equation*}
recording the points $(x_1,...,x_r)\in X^{r}$.

\begin{remark}\label{r:HkG smooth} The morphism $(p_{0},p_{X}): \Hk^{\mu}_{n}\to \Bun_{n}\times X^{r}$ is representable, proper and smooth of relative dimension $r(n-1)$. Its fibers are iterated $\PP^{n-1}$-bundles. In particular, $\Hk^{\mu}_{n}$ is a smooth algebraic stack over $k$ because $\Bun_{n}$ is. 
\end{remark}

\subsubsection{The moduli stack of rank $n$ Shtukas}\label{sss:Sht n}

\begin{defn}\label{def:Sht} Let $\mu$ satisfy the conditions in \S\ref{sss:mu}.
The {\em moduli stack $\Sht^{\mu}_{n}$ of $\GL_{n}$-Shtukas of type $\mu$} is the fiber product
\begin{equation}\label{def Shtn}
\xymatrix{\Sht_n^{\mu}\ar[rr]\ar[d]&&
\Hk^{\mu}_{n} \ar[d]_{(p_{0},p_{r})}\\
\Bun_n\ar[rr]^{(\id, \Fr)}&& \Bun_n\times \Bun_n}		\index{$\Sht_n^{\mu}$}%
\end{equation}
\end{defn}

By definition, we have a morphism
\begin{equation*}
\pi^{\mu}_{n}: \Sht^{\mu}_{n}\to \Hk^{\mu}_{n}\xrightarrow{p_{X}}X^{r}. \index{$\pi^{\mu}_{n}$}%
\end{equation*}

\subsubsection{}\label{sss:points Shtn} 
Let $S$ be a scheme over $k$. For a vector bundle $\CE$ on $X\times S$, we denote
 $$^\tau\CE:=(\id_X\times\Fr_S)^\ast\CE.		\index{$^\tau\CE$}%
 $$  
An object in the groupoid $\Sht^{\mu}_{n}(S)$ is called a {\em Shtuka of type $\mu$ over $S$}.  Concretely, a Shtuka of type $\mu$ over $S$ is the following data:
\begin{enumerate}
\item $(\CE_0,\CE_1,\cdots,\CE_r; x_{1},\cdots,x_{r};f_{1},\cdots, f_{r})$ as in Definition \ref{def:Hk};
\item An isomorphism $\iota: \CE_r\simeq \,^\tau \CE_0$.
\end{enumerate}

The basic geometric properties of $\Sht^{\mu}_{n}$ are summarized in the following theorem.
\begin{thm}[Drinfeld \cite{D87} for $r=2$ ; Varshavsky {\cite[Prop 2.16, Thm 2.20]{Var}} in general]\label{th:Sht smooth}
\hfill
\begin{enumerate}
\item The stack $\Sht_{n}^{\mu}$ is a Deligne--Mumford stack locally of finite type. 
\item The morphism $\pi^{\mu}_{n}:\Sht_{n}^{\mu}\to X^r$ is separated and smooth of relative dimension $r(n-1)$.
\end{enumerate}
\end{thm}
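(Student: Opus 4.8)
The plan is to read off both assertions from the Cartesian square \eqref{def Shtn} defining $\Sht^{\mu}_{n}$, using only what has already been recorded: $\Bun_{n}$ is a smooth algebraic stack locally of finite type over $k$, $\Hk^{\mu}_{n}$ is a smooth algebraic stack, and both projections $p_{0},p_{r}\colon\Hk^{\mu}_{n}\to\Bun_{n}$ are smooth of relative dimension $rn$ (each factors as $\Hk^{\mu}_{n}\to\Bun_{n}\times X^{r}\to\Bun_{n}$, the first map smooth of relative dimension $r(n-1)$ by Remark \ref{r:HkG smooth} and its variant at the other end of the chain, the second smooth of relative dimension $r$). The single structural input used beyond this is that the absolute $q$-Frobenius of a $k$-stack annihilates all differentials, so $d\Fr_{\Bun_{n}}=0$.

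For part (1): the morphism $\Sht^{\mu}_{n}\to\Hk^{\mu}_{n}$ in \eqref{def Shtn} is the base change along $(p_{0},p_{r})$ of $(\id,\Fr_{\Bun_{n}})\colon\Bun_{n}\to\Bun_{n}\times\Bun_{n}$, and the latter factors as $\Bun_{n}\xrightarrow{\Delta}\Bun_{n}\times\Bun_{n}\xrightarrow{\id\times\Fr_{\Bun_{n}}}\Bun_{n}\times\Bun_{n}$ with $\Delta$ representable, separated and of finite type, and $\id\times\Fr_{\Bun_{n}}$ representable, affine and integral; hence $\Sht^{\mu}_{n}\to\Hk^{\mu}_{n}$ is representable, separated and of finite type, so $\Sht^{\mu}_{n}$ is an algebraic stack locally of finite type over $k$. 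For the Deligne--Mumford property it suffices that automorphism group schemes of points be unramified. Using the description of $S$-points in \S\ref{sss:points Shtn}, an automorphism of a Shtuka $(\CE_{\bu},f_{\bu},\iota)$ is an automorphism $\phi_{0}$ of $\CE_{0}$, which — the $f_{i}$ being isomorphisms off the modification points — determines automorphisms $\phi_{i}$ of all $\CE_{i}$, and the remaining constraint, compatibility with $\iota\colon\CE_{r}\simeq{}^{\tau}\CE_{0}$, reads ${}^{\tau}\phi_{0}=\Phi(\phi_{0})$ inside ${}^{\tau}\End(\CE_{0})$ for the isomorphism $\Phi$ obtained by conjugating $\phi_{0}$ successively by the fixed rational isomorphism $f_{r}\cdots f_{1}$ and by $\iota$. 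Thus the automorphism group scheme is the fixed locus of a ``Frobenius versus isomorphism'' equation inside the affine algebraic group of automorphisms of the chain $(\CE_{\bu},f_{\bu})$, and since $d({}^{\tau}(-))=0$ while $d\Phi$ is injective, its tangent space at every point is $0$, so it is finite \'etale; hence $\Sht^{\mu}_{n}$ is Deligne--Mumford. Separatedness of $\pi^{\mu}_{n}$ I would deduce from the valuative criterion for the relative diagonal: two Shtukas over a complete discrete valuation ring, lying over a common $X^{r}$-point, which become isomorphic over the fraction field are already isomorphic over the ring, because the modifications occur along the graphs of the $X$-valued points $x_{i}$ and the Frobenius structures $\iota$ leave no freedom in the gluing.

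For part (2) I would compute $\LL_{\Sht^{\mu}_{n}/X^{r}}$. In the exact triangle attached to the fiber-product square \eqref{def Shtn}, the connecting map out of the pullback of $\LL_{\Bun_{n}\times\Bun_{n}}=\pr_{1}^{*}\LL_{\Bun_{n}}\oplus\pr_{2}^{*}\LL_{\Bun_{n}}$ has, in block form, the shape $\bigl(\begin{smallmatrix}dp_{0} & dp_{r}\\ -\id & 0\end{smallmatrix}\bigr)$: the lower-left entry is $-\id$ because $\pr_{1}\circ(\id,\Fr_{\Bun_{n}})=\id$, and the lower-right entry is $-d\Fr_{\Bun_{n}}=0$. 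Cancelling the identity block identifies $\LL_{\Sht^{\mu}_{n}}$ with $p^{*}\LL_{\Hk^{\mu}_{n}/\Bun_{n}}$ (taken relative to $p_{r}$), where $p\colon\Sht^{\mu}_{n}\to\Hk^{\mu}_{n}$; pushing this through the triangle for $\Sht^{\mu}_{n}\to X^{r}$ and using the smoothness of $\Hk^{\mu}_{n}\to\Bun_{n}\times X^{r}$ yields a canonical isomorphism $\LL_{\Sht^{\mu}_{n}/X^{r}}\simeq p^{*}\LL_{\Hk^{\mu}_{n}/\Bun_{n}\times X^{r}}$, and by Remark \ref{r:HkG smooth} the latter is locally free of rank $r(n-1)$ concentrated in degree $0$. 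Together with the local finiteness of type from part (1), this shows $\pi^{\mu}_{n}$ is smooth of relative dimension $r(n-1)$. (Equivalently one may run the infinitesimal lifting criterion: lift $\CE_{0}$ freely, the obstruction living in an $H^{2}$ that vanishes for a curve over an affine base; lift the remaining Hecke data using smoothness of $\Hk^{\mu}_{n}\to\Bun_{n}\times X^{r}$; and observe that ${}^{\tau}\CE_{0}$ depends only on $\CE_{0}$ modulo the square-zero ideal, since $\Fr$ kills that ideal, so the choice of the lift of $\CE_{0}$ exactly absorbs the obstruction to extending $\iota$.)

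Once organised this way, the smoothness and Deligne--Mumford statements are formal consequences of $d\Fr=0$; the one genuinely technical point is the separatedness of $\pi^{\mu}_{n}$, which rests on the non-formal rigidity input — uniqueness of the extensions of the $\CE_{i}$ and of $\iota$ over a discrete valuation ring — that Drinfeld \cite{D87} established for $r=2$ and Varshavsky \cite{Var} in general.
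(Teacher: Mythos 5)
Your route is genuinely different from the paper's, because the paper does not prove this theorem at all: it is quoted from Drinfeld and Varshavsky, and the only argument supplied in the text is the remark that separatedness of $\pi^{\mu}_{n}$ follows from Varshavsky's presentation of $\Sht^{\mu}_{n}|_{(X-\{x\})^{r}}$ as an increasing union of open substacks $[V_{i}/G_{i}]$ with $V_{i}$ quasi-projective (moduli with level structure at $x$, truncated by stability). Your deformation-theoretic treatment of the Deligne--Mumford property and of smoothness — everything driven by $d\Fr=0$, with the relative deformation theory of $\Sht^{\mu}_{n}/X^{r}$ identified with that of $\Hk^{\mu}_{n}$ over $\Bun_{n}\times X^{r}$ via $p_{r}$, using the symmetric variant of Remark \ref{r:HkG smooth} — is essentially the argument in Varshavsky's paper, and it is correct in outline; the parenthetical lifting argument (obstruction in an $H^{2}$ on a curve over an affine base, $\leftexp{\tau}{\CE_{0}}$ insensitive to the choice of lift because Frobenius kills a square-zero ideal) is the cleanest way to phrase it.

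Two points need repair or honesty. First, your factorization argument is wrong as stated: $\id\times\Fr_{\Bun_{n}}$ is \emph{not} representable (let alone affine and integral), because $\Fr_{\Bun_{n}}$ is not a monomorphism on automorphism group schemes — e.g.\ the scalar automorphism $(1+\epsilon)\cdot\id$ over $k[\epsilon]/(\epsilon^{2})$ maps to $(1+\epsilon)^{q}\cdot\id=\id$, so the Frobenius kernel of $\Gm\subset\Aut(\CE)$ sits in the kernel. The conclusion you want is still true, but for a different reason: $(\id,\Fr_{\Bun_{n}})$ is the graph of $\Fr_{\Bun_{n}}$, hence a base change of the diagonal $\Delta_{\Bun_{n}}$ (along $\Fr\times\id$), and therefore representable, affine and of finite type; with that substitution your part (1) goes through, the $d\Fr=0$ computation showing the automorphism groups are unramified (hence finite \'etale) being the standard one. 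Second, the separatedness of $\pi^{\mu}_{n}$ is only asserted. The valuative argument can in fact be completed — normalize the generic isomorphism $\alpha$ of the two Shtukas so that $\beta=\pi^{a}\alpha$ extends over the DVR and is nonzero modulo the uniformizer; the compatibility with $\iota,\iota'$ gives $\pi^{a(q-1)}\iota'\circ\beta=\leftexp{\tau}{\beta}\circ\iota$, and reducing mod $\pi$ forces $a=0$, so $\alpha$ and likewise $\alpha^{-1}$ extend — but as written this is a gesture, not a proof, and you rightly flag it as the one nontrivial input. Note that the paper obtains it by a different mechanism (the global quasi-projective presentation with level structures), so if you want a self-contained write-up you should either carry out the valuative computation above or cite \cite[Prop 2.16]{Var} for exactly this step, as the paper does.
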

We briefly comment on the proof of the separatedness of $\pi^{\mu}_{n}$. Pick a place $x\in |X|$, and consider the restriction of $\pi^{\mu}_{n}$ to $(X-\{x\})^{r}$. By \cite[Prop 2.16(a)]{Var}, $\Sht^{\mu}_{n}|_{(X-\{x\})^{r}}$ is an increasing union of open substacks $\frX_{1}\subset \frX_{2}\subset\cdots $ where each $\frX_{i}\cong[V_{i}/G_{i}]$ is the quotient of a quasi-projective scheme $V_{i}$ over $k$ by a finite discrete group $G_{i}$. These $V_{i}$ are obtained as moduli of Shtukas with level structures at $x$ and then truncated using stability conditions. Therefore each map $\frX_{i}\to (X-\{x\})^{r}$ is separated, hence so is $\pi^{\mu}_{n}|_{(X-\{x\})^{r}}$. Since $X^{r}$ is covered by open subschemes of the form $(X-\{x\})^{r}$, the map $\pi^{\mu}_{n}$ is separated.

\subsubsection{}\label{sss:Pic action} The Picard stack $\Pic_{X}$ of line bundles on $X$ acts on $\Bun_{n}$ and on $\Hk^{\mu}_{n}$ by tensoring on the vector bundles.

Similarly, the groupoid $\Pic_{X}(k)$ of line bundles over $X$ acts on $\Sht^{\mu}_{n}$. For a line bundle $\calL$ over $X$ and $(\calE_{i};x_{i};f_{i};\iota)\in\Sht^{\mu}_{n}(S)$, we define $\calL\cdot (\calE_{i};x_{i};f_{i};\iota)$ to be $(\calE_{i}\otimes_{\calO_{X}}\calL;x_{i};f_{i}\otimes\id_{\calL};\iota')$ where $\iota'$ is the isomorphism 
\begin{equation*}
\CE_{r}\otimes_{\calO_{X}}\calL\xrightarrow{\iota\otimes\id_{\calL}}((\id_{X}\times\Fr_{S})^{*}\CE_{0})\otimes_{\calO_{X}}\calL\cong(\id_{X}\times\Fr_{S})^{*}(\CE_{0}\otimes_{\calO_{X}}\calL)=\leftexp{\tau}{(\CE_{0}\otimes_{\calO_{X}}\calL)}.
\end{equation*}

\subsection{Moduli of Shtukas for $G=\PGL_{2}$}\label{ss:ShtG} 
Now we move on to $G$-Shtukas where $G=\PGL_{2}$. Let $\Bun_{G}$ be the moduli stack of $G$-torsors over $X$. \index{$\Bun_G$}%

\subsubsection{Quotient by a Picard stack} Here and later we will consider quotients of the form $[\CY/\CQ]$ where $\CY$ is an algebraic stack and $\CQ$ is a Picard stack such as $\Pic_{X}$ or $\Pic_{X}(k)$. Making such a quotient involves considering 2-categories {\em a priori}.  However, according to \cite[Lemme 4.7]{NgoH}, whenever the automorphism group of the identity object in $\CQ$ injects to the automorphism groups of objects in $\CY$, the quotient $[\CY/\CQ]$ makes sense as a stack. This injectivity condition will be satisfied in all situations we encounter in this paper. 

We have $\Bun_{G}\cong\Bun_{2}/\Pic_{X}$, where $\Pic_{X}$ acts on $\Bun_{2}$ by tensoring.

For each $\mu$ as in \S\ref{sss:arb mu}, we define
\begin{equation*}
\Hk^{\mu}_{G}:=\Hk^{\mu}_{2}/\Pic_{X}.\index{$\Hk^\mu_G$}%
\end{equation*}
For $\mu$ satisfying \S\ref{sss:mu}, we define
\begin{equation*}
\Sht^{\mu}_{G}:=\Sht^{\mu}_{2}/\Pic_{X}(k).\index{$\Sht^\mu_G$}%
\end{equation*}
The actions of $\Pic_{X}$ and $\Pic_{X}(k)$ are those introduced in \S\ref{sss:Pic action}. The maps $p_{i}:\Hk^{\mu}_{2}\to \Bun_{2}$ are $\Pic_{X}$-equivariant, and induce maps
\begin{equation}\label{pi Hkmu}
p_{i}: \Hk^{\mu}_{G}\to \Bun_{G}, \quad 0\leq i\leq r.
\end{equation}

\begin{lemma}\label{l:Hk indep mu}
For different choices $\mu$ and $\mu'$ as in \S\ref{sss:arb mu}, there are canonical isomorphisms $\Hk^{\mu}_{2}\cong\Hk^{\mu'}_{2}$ and  $\Hk^{\mu}_{G}\cong\Hk^{\mu'}_{G}$.  Moreover, these isomorphisms respect the maps $p_{i}$ in \eqref{pi Hkmu}.
\end{lemma}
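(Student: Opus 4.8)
The plan is to exhibit the isomorphism $\Hk^\mu_n \cong \Hk^{\mu'}_n$ (the statement for $G=\PGL_2$ then follows by descending along the $\Pic_X$-action, since the action introduced in \S\ref{sss:Pic action} commutes with everything in sight). The key observation is that the data classified by $\Hk^\mu_n$ depends on $\mu$ only through the ``shape'' of the allowed modifications step by step, and a modification of type $\mu_+$ at a point versus a modification of type $\mu_-$ differ only by which of the two vector bundles in the pair $(\calE_{i-1},\calE_i)$ is regarded as the ``larger'' one. Concretely, given an $S$-point $(\calE_0,\dots,\calE_r;x_1,\dots,x_r;f_1,\dots,f_r)$ of $\Hk^\mu_n$, I leave all $\calE_j$, all $x_i$, and all $f_i$ literally unchanged: the very same isomorphisms $f_i:\calE_{i-1}|_{X\times S-\Gamma_{x_i}}\isom \calE_i|_{X\times S-\Gamma_{x_i}}$ away from the graphs are the data, and the only thing that changes between $\mu$ and $\mu'$ is the \emph{condition} imposed at $\Gamma_{x_i}$ (whether $f_i$ or $f_i^{-1}$ extends with invertible cokernel on the graph).

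So the real content is a pointwise lemma: if $f:\calE|_{X\times S-\Gamma}\isom\calF|_{X\times S-\Gamma}$ is an isomorphism away from the graph $\Gamma=\Gamma_x$ of a map $x:S\to X$, then ``$f$ extends to an injection $\calE\hookrightarrow\calF$ with cokernel an invertible sheaf on $\Gamma$'' is \emph{equivalent} to ``$f^{-1}$ extends to an injection $\calF\hookrightarrow\calE$ with cokernel an invertible sheaf on $\Gamma$''. Indeed both conditions say exactly that, locally near $\Gamma$, in suitable trivializations the matrix of $f$ can be brought to $\diag(t,1,\dots,1)$ where $t$ is a local equation of $\Gamma$ — this is the elementary divisor / Cartan decomposition for $\GL_n$ over the local ring, and it is manifestly symmetric under $f\leftrightarrow f^{-1}$ up to relabelling $\diag(t,1,\dots,1)\leftrightarrow\diag(t^{-1},1,\dots,1)$. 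Hence for $\mu_i=\mu_+$ the condition for step $i$ coincides with the condition for $\mu'_i=\mu_-$ on the same data, and vice versa. This gives a bijection on $S$-points, functorial in $S$, compatible with the full groupoid structure (isomorphisms of the tuples are defined identically for both), hence an isomorphism of stacks $\Hk^\mu_n\cong\Hk^{\mu'}_n$; it visibly commutes with each $p_i$ and with $p_X$ since those maps only extract the $\calE_i$ and the $x_i$, which are untouched.

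For the statement about $\Hk^\mu_G$: the isomorphism just constructed is $\Pic_X$-equivariant, because tensoring a modification $f_i$ by a line bundle $\calL$ does not affect whether $f_i$ or $f_i^{-1}$ extends with invertible cokernel (the cokernel on $\Gamma_{x_i}$ gets tensored by $\calL|_{\Gamma_{x_i}}$, still invertible). Therefore it descends to the quotients by $\Pic_X$, giving $\Hk^\mu_G\cong\Hk^{\mu'}_G$ compatibly with the maps $p_i:\Hk^\mu_G\to\Bun_G$ of \eqref{pi Hkmu}. I expect no serious obstacle here: the only point requiring a little care is the pointwise equivalence of the two extension conditions, i.e.\ checking that ``$f^{-1}$ has a pole of order exactly one along $\Gamma$ in a single direction'' is the correct reformulation, which is a short local computation with the structure theorem for finitely generated modules over a (relative) discrete valuation ring. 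Everything else is bookkeeping that the tuple of data and all the structure maps are literally unchanged.
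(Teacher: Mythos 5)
There is a genuine gap, and it is at the heart of your argument: the ``pointwise lemma'' you rely on is false. If $f$ extends to an injection $\calE_{i-1}\hookrightarrow\calE_{i}$ whose cokernel is an invertible sheaf on $\Gamma_{x_i}$ (the $\mu_{+}$ condition), then locally $f$ has matrix $\diag(t,1)$, so $f^{-1}$ has matrix $\diag(t^{-1},1)$ and does \emph{not} extend to a regular map $\calE_{i}\to\calE_{i-1}$; the two conditions are exchanged, not preserved, under $f\leftrightarrow f^{-1}$. One sees the failure already on degrees: the $\mu_{+}$ condition forces $\deg\calE_{i}=\deg\calE_{i-1}+1$ fiberwise, while the $\mu_{-}$ condition forces $\deg\calE_{i}=\deg\calE_{i-1}-1$, so the very same tuple $(\calE_{\bullet};x_{\bullet};f_{\bullet})$ can never satisfy both, and the ``identity on data'' is not even a well-defined morphism $\Hk^{\mu}_{2}\to\Hk^{\mu'}_{2}$. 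Your appeal to the elementary divisor theorem conflates the symmetry of the \emph{pair} of conditions under inverting $f$ with an (incorrect) equivalence of each condition with the other on fixed data.

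The actual isomorphism must change the bundles. The paper's proof reduces every $\mu$ to $\mu_{+}^{r}$ by setting $D_{i}=\sum_{j\leq i,\,\mu_{j}=\mu_{-}}\Gamma_{x_{j}}$ and $\calE'_{i}=\calE_{i}(D_{i})$, so that all modifications become upper ones; the inverse untwists by the same divisors. Note that this twist is by line bundles $\calO_{X\times S}(D_{i})$ depending on the points $x_{j}$, so it genuinely alters $\calE_{i}$ as a rank-two bundle and the isomorphism does \emph{not} commute with the maps $p_{i}$ to $\Bun_{2}$; it only respects the induced maps $p_{i}:\Hk^{\mu}_{G}\to\Bun_{G}$ of \eqref{pi Hkmu}, because tensoring by any line bundle leaves the image in $\Bun_{G}=\Bun_{2}/\Pic_{X}$ unchanged. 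This is exactly why the lemma is phrased with compatibility only at the level of $\Hk^{\mu}_{G}$, a subtlety your proposal misses by asserting that all the $\calE_{i}$ are literally untouched. Your remarks on $\Pic_{X}$-equivariance and descent to the quotient are fine as far as they go, but they rest on the invalid identification of $S$-points.
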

\begin{proof} For $\mu^{r}_{+}:=(\mu_{+},\cdots, \mu_{+})$, we denote the corresponding Hecke stack by $\Hk^{r}_{2}$.  The $S$-points of $\Hk^{r}_{2}$ classify a sequence of rank two vector bundles on $X\times S$ together with embeddings
\begin{equation*}
\CE_{0}\xrightarrow{f_{1}}\CE_{1}\xrightarrow{f_{2}}\cdots\xrightarrow{f_{r}}\CE_{r}
\end{equation*}
such that the cokernel of $f_{i}$ is an invertible sheaf supported on the graph of a morphism $x_{i}:S\to X$.

We construct a morphism
\begin{equation*}
\phi_{\mu}: \Hk^{\mu}_{2}\to\Hk^{r}_{2}.
\end{equation*}
Consider a point $(\CE_{i};x_{i};f_{i})\in\Hk^{\mu}_{2}(S)$. For $i=1,\cdots, r$, we define a divisor on $X\times S$
\begin{equation*}
D_{i}:=\sum_{1\leq j\leq i,\mu_{j}=\mu_{-}}\Gamma_{x_{j}}.
\end{equation*}
Then we define
\begin{equation*}
\CE'_{i}=\CE_{i}(D_{i}).
\end{equation*}
If $\mu_{i}=\mu_{+}$, then $D_{i-1}=D_{i}$, the map $f_{i}$ induces an embedding $f'_{i}:\CE'_{i-1}=\CE_{i-1}(D_{i-1})\to \CE_{i}(D_{i-1})=\CE'_{i}$. If $\mu_{i}=\mu_{-}$, then $D_{i}=D_{i-1}+\Gamma_{x_{i}}$, and the map $f_{i}: \CE_{i}\to \CE_{i-1}$ induces an embedding $\CE_{i-1}\to \CE_{i}(\Gamma_{x_{i}})$, and hence an embedding $f'_{i}:\CE'_{i-1}=\CE_{i-1}(D_{i-1})\to \CE_{i}(D_{i-1}+\Gamma_{x_{i}})=\CE'_{i}$. The map $\phi_{\mu}$ sends $(\CE_{i};x_{i};f_{i})$ to $(\CE'_{i};x_{i};f'_{i})$. 

We also have a morphism
\begin{eqnarray*}
\psi_{\mu}: \Hk^{r}_{2}&\to&\Hk^{\mu}_{2}\\
(\CE'_{i};x_{i};f'_{i}) &\mapsto& (\CE'_{i}(-D_{i});x_{i};f_{i}). 
\end{eqnarray*}
It is easy to check that $\phi_{\mu}$ and $\psi_{\mu}$ are inverse to each other. This way we get a canonical isomorphism $\Hk^{\mu}_{2}\cong\Hk^{r}_{2}$, which is clearly $\Pic_{X}$-equivariant. Therefore all $\Hk^{\mu}_{G}$ are also canonically isomorphic to each other. In the construction of $\phi_{\mu}$, the vector bundles $\CE_{i}$ only change by tensoring with line bundles, therefore the image of $\CE_{i}$ in $\Bun_{G}$ remain unchanged. This shows that the canonical isomorphisms between the $\Hk^{\mu}_{G}$ respect the maps $p_{i}$ in \eqref{pi Hkmu}.
\end{proof}

\begin{lemma}\label{l:indep mu} There is a canonical Cartesian diagram
\begin{equation}\label{ShtrG}
\xymatrix{\Sht^{\mu}_{G}\ar[d]\ar[rr]& & \Hk^{\mu}_{G}\ar[d]^{(p_{0},p_{r})}\\
\Bun_{G}\ar[rr]^{(\id,\Fr)} && \Bun_{G}\times \Bun_{G}}		\index{$\Sht^\mu_G$}%
\end{equation}
In particular, for different choices of $\mu$ satisfying the conditions in \S\ref{sss:mu}, the stacks $\Sht^{\mu}_{G}$ are canonically isomorphic to each other.
\end{lemma}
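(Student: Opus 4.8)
The plan is to obtain the Cartesian square of the Lemma by descending the defining Cartesian square of $\Sht^{\mu}_{2}$ (Definition~\ref{def:Sht}) along the quotient by Picard stacks, keeping track of the fact that the two legs of that square are equivariant for \emph{different} homomorphisms out of $\Pic_{X}$. Recall that Definition~\ref{def:Sht} identifies $\Sht^{\mu}_{2}$ with the fibre product $\Bun_{2}\times_{\Bun_{2}\times\Bun_{2}}\Hk^{\mu}_{2}$ formed from $(\id,\Fr)\colon\Bun_{2}\to\Bun_{2}\times\Bun_{2}$ and $(p_{0},p_{r})\colon\Hk^{\mu}_{2}\to\Bun_{2}\times\Bun_{2}$. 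First I would record the relevant equivariances for the component-wise action of $\Pic_{X}\times\Pic_{X}$ on $\Bun_{2}\times\Bun_{2}$: the map $(p_{0},p_{r})$ is equivariant for the tensoring action of $\Pic_{X}$ on $\Hk^{\mu}_{2}$ via the diagonal $\Delta\colon\Pic_{X}\to\Pic_{X}\times\Pic_{X}$ (both $\CE_{0}$ and $\CE_{r}$ are twisted by the same line bundle), whereas $(\id,\Fr)$ is equivariant for the action of $\Pic_{X}$ on $\Bun_{2}$ via the \emph{Frobenius-twisted diagonal} $\calL\mapsto(\calL,{}^{\tau}\calL)$, since $\Fr(\CE\otimes\calL)\cong{}^{\tau}\CE\otimes{}^{\tau}\calL$.

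Next I would invoke the general principle that stack quotients commute with fibre products in the following form: given group stacks $H_{1},H_{2}$ acting on $A,B$, a group stack $H$ acting on $C$, homomorphisms $\iota_{i}\colon H_{i}\to H$, and $\iota_{i}$-equivariant maps $A\to C\leftarrow B$, there is a canonical equivalence $(A/H_{1})\times_{C/H}(B/H_{2})\simeq(A\times_{C}B)/(H_{1}\times_{H}H_{2})$; the proof is a routine manipulation of torsors, and one needs the injectivity hypothesis of \cite[Lemme~4.7]{NgoH} so that all the quotients involved are again stacks (rather than $2$-stacks), which holds in our situation. Applying this with $A=\Bun_{2}$, $B=\Hk^{\mu}_{2}$, $C=\Bun_{2}\times\Bun_{2}$, $H=\Pic_{X}\times\Pic_{X}$, $\iota_{1}$ the Frobenius-twisted diagonal and $\iota_{2}=\Delta$, yields $\Bun_{G}\times_{\Bun_{G}\times\Bun_{G}}\Hk^{\mu}_{G}\simeq\Sht^{\mu}_{2}/K$, where $K=\Pic_{X}\times_{\Pic_{X}\times\Pic_{X}}\Pic_{X}$ is formed along those same two diagonals. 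The $S$-points of $K$ are pairs $(\calL,\theta\colon{}^{\tau}\calL\isom\calL)$; applying Lang's theorem to the Picard scheme (plus a direct computation on the residual $B\Gm$-gerbe) identifies $K$ with the constant group stack $\Pic_{X}(k)$, and one checks that the resulting action on $\Sht^{\mu}_{2}$ is precisely the one described in \S\ref{sss:Pic action}. Hence $\Sht^{\mu}_{2}/K=\Sht^{\mu}_{G}$ by definition, which gives the Cartesian square.

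Finally, for the second assertion: Lemma~\ref{l:Hk indep mu} identifies $(p_{0},p_{r})\colon\Hk^{\mu}_{G}\to\Bun_{G}\times\Bun_{G}$ canonically, and compatibly for all $\mu$, with the corresponding map out of $\Hk^{r}_{G}$, while the other leg $(\id,\Fr)\colon\Bun_{G}\to\Bun_{G}\times\Bun_{G}$ does not involve $\mu$ at all; since $\Sht^{\mu}_{G}$ is now exhibited as the fibre product of these two maps, it is canonically independent of the choice of $\mu$ satisfying \S\ref{sss:mu}.

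The main obstacle in carrying this out is the descent step — verifying the "quotient commutes with fibre product" equivalence and, above all, correctly pinning down that the difference group $K=\Pic_{X}\times_{\Pic_{X}\times\Pic_{X}}\Pic_{X}$, taken along the Frobenius-twisted and the ordinary diagonals, is exactly $\Pic_{X}(k)$ (this is where Lang's theorem enters), together with the $2$-categorical bookkeeping needed to make sense of all the quotients by Picard stacks.
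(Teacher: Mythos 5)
Your proposal is correct and is essentially the paper's own argument: the paper proves the lemma by dividing the defining Cartesian square of $\Sht^{\mu}_{2}$ termwise by the Cartesian square of Picard stacks with corners $\Pic_X(k)$, $\Pic_X$ (via $\Delta$), $\Pic_X$ (via $(\id,\Fr)$) and $\Pic_X\times\Pic_X$, which is precisely your "quotients commute with fibre products" step together with the identification $K=\Pic_X\times_{\Pic_X\times\Pic_X}\Pic_X\cong\Pic_X(k)$. You merely spell out the equivariance and descent details (and the deduction of $\mu$-independence from Lemma \ref{l:Hk indep mu}) that the paper leaves implicit.
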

\begin{proof}This follows from the Cartesian diagram \eqref{def Shtn} divided termwisely by the Cartesian diagram
\begin{equation*}
\xymatrix{\Pic_{X}(k)\ar[rr]\ar[d]&&
\Pic_{X} \ar[d]_{\Delta}\\
\Pic_{X}\ar[rr]^{(\id, \Fr)}&& \Pic_{X}\times \Pic_{X}}
\end{equation*}
\end{proof}

By the above lemmas, we may unambiguously use the notation
\begin{equation}\label{HkrG}
\Sht^{r}_{G}; \quad \Hk^{r}_{G} 		\index{$\Hk^r_G$}\index{$\Sht^r_G$}%
\end{equation} 
for $\Sht^{\mu}_{G}$ and $\Hk^{\mu}_{G}$ with any choice of $\mu$. If $r$ is fixed from the context, we may also drop $r$ from the notation and write simply $\Sht_{G}$. The morphism $\pi^{\mu}_{2}:\Sht^{\mu}_{2}\to X^{r}$ is invariant under the action of $\Pic_{X}(k)$ and induces a morphism
\begin{equation*}
\pi_{G}: \Sht^{r}_{G}\to X^{r}.\index{$\pi_G$}%
\end{equation*}

Theorem \ref{th:Sht smooth} has the following immediate consequence.
\begin{cor}\label{c:ShtG smooth}
\begin{enumerate}
\item The stack $\Sht^{r}_{G}$ is a Deligne--Mumford stack locally of finite type.
\item The morphism $\pi_{G}:\Sht^{r}_{G}\to X^r$ is separated and smooth of relative dimension $r$.
\end{enumerate}
\end{cor}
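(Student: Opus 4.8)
The statement to prove is Corollary~\ref{c:ShtG smooth}, which asserts that $\Sht^r_G$ is a Deligne--Mumford stack locally of finite type and that $\pi_G\colon \Sht^r_G\to X^r$ is separated and smooth of relative dimension $r$. This is advertised in the excerpt as an \emph{immediate consequence} of Theorem~\ref{th:Sht smooth}, so the plan is simply to transport the corresponding properties of $\Sht^\mu_n$ (with $n=2$) through the quotient by $\Pic_X(k)$ described in Lemma~\ref{l:indep mu}.

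\begin{proof}
Fix any $\mu$ satisfying the conditions of \S\ref{sss:mu}, so that $\Sht^r_G=\Sht^\mu_2/\Pic_X(k)$ and, by Lemma~\ref{l:indep mu}, the choice of $\mu$ is irrelevant. By Theorem~\ref{th:Sht smooth}(1), $\Sht^\mu_2$ is a Deligne--Mumford stack locally of finite type over $k$; in particular it has quasi-finite, separated (indeed unramified) diagonal. The group stack $\Pic_X(k)$ is a discrete commutative group (the group of $k$-points of $\Pic_X$), viewed as a group scheme over $k$ which is a disjoint union of copies of $\Spec k$ indexed by the abelian group $\Pic_X(k)$; it acts on $\Sht^\mu_2$ as in \S\ref{sss:Pic action}. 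The quotient $\Sht^\mu_2/\Pic_X(k)$ makes sense as a stack by \cite[Lemme 4.7]{NgoH}, since the automorphism group of the identity object of $\Pic_X(k)$ (which is trivial, as $\Pic_X(k)$ is a discrete group with no stacky structure — the $\Gm$-automorphisms of a line bundle have been divided out already when passing to $\Pic_X(k)$, or more precisely the relevant injectivity of \S\ref{sss:Pic action} holds) injects into automorphism groups of objects of $\Sht^\mu_2$. Because $\Pic_X(k)$ is a discrete (hence unramified, étale, separated) group over $k$, the quotient map $q\colon \Sht^\mu_2\to \Sht^r_G$ is representable, étale and surjective. A stack admitting a representable étale surjection from a Deligne--Mumford stack is again Deligne--Mumford, and ``locally of finite type'' descends along $q$ as well (it can be checked on the étale chart $\Sht^\mu_2$, which is locally of finite type, noting $\Pic_X(k)$ itself is locally of finite type over $k$). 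This proves (1).

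For (2), recall from Definition~\ref{def:Sht} and \S\ref{sss:Sht n} that $\pi^\mu_2\colon \Sht^\mu_2\to X^r$ factors through $p_X$, and the $\Pic_X(k)$-action on $\Sht^\mu_2$ fixes the points $(x_1,\dots,x_r)$; hence $\pi^\mu_2$ is $\Pic_X(k)$-invariant and descends to $\pi_G\colon \Sht^r_G\to X^r$, fitting into a commutative triangle with $q$. Smoothness and separatedness are properties that can be checked étale-locally on the source. Concretely, we have
\[
\pi^\mu_2 = \pi_G\circ q,
\]
with $q$ étale surjective; since $\pi^\mu_2$ is smooth of relative dimension $r(n-1)=r$ (Theorem~\ref{th:Sht smooth}(2) with $n=2$) and $q$ is étale (hence smooth of relative dimension $0$), it follows by descent of smoothness along the étale cover $q$ that $\pi_G$ is smooth of relative dimension $r$. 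For separatedness: $\pi^\mu_2$ is separated by Theorem~\ref{th:Sht smooth}(2), and $\pi^\mu_2=\pi_G\circ q$ with $q$ surjective and ($\Pic_X(k)$ being separated over $k$) the relative diagonal of $q$ being a closed immersion; one then checks that the diagonal $\Delta_{\pi_G}\colon \Sht^r_G\to \Sht^r_G\times_{X^r}\Sht^r_G$ is proper (equivalently, a closed immersion, since it is already unramified). This can be verified after the étale base change $\Sht^\mu_2\times_{\Sht^r_G}(\Sht^r_G\times_{X^r}\Sht^r_G)\to \Sht^r_G\times_{X^r}\Sht^r_G$, where it identifies with the map $\Sht^\mu_2\to \Sht^\mu_2\times_{X^r}\Sht^\mu_2/\Pic_X(k)$ induced by the $\Pic_X(k)$-action; properness of this map follows from separatedness of $\pi^\mu_2$ together with the fact that $\Pic_X(k)$ is a proper (indeed finite over each component — in fact étale and, being a group of $k$-points, locally of finite type) group acting on $\Sht^\mu_2$ over $X^r$. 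Hence $\pi_G$ is separated. This establishes (2), and completes the proof.
\end{proof}

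The only genuine subtlety — and the step I would be most careful about — is the separatedness claim: one must make sure the descent argument for the diagonal is set up correctly, since separatedness does \emph{not} descend along arbitrary étale covers of the source without controlling the diagonal of the cover. The saving point is that $\Pic_X(k)$ is a discrete group, so the quotient map $q$ is not only étale but has closed-immersion diagonal, and $\pi^\mu_2$ is already known to be separated; combining these gives properness of $\Delta_{\pi_G}$ after a faithfully flat base change, which suffices. Everything else is a routine application of fppf/étale descent of the properties ``Deligne--Mumford'', ``locally of finite type'', and ``smooth of relative dimension $r$'' along the cover $q$.
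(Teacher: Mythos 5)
Your overall route---transporting the properties of $\Sht^{\mu}_{2}$ (Theorem~\ref{th:Sht smooth} with $n=2$) through the quotient by $\Pic_{X}(k)$ via Lemma~\ref{l:indep mu}---is exactly the argument the paper has in mind when it declares the corollary an immediate consequence. But two of your supporting claims are wrong as stated. First, in this paper $\Pic_{X}(k)$ is the Picard \emph{groupoid} of line bundles on $X$, with automorphism group $k^{\times}$ at every object; this is precisely why \S\ref{sss:Pic action} and the quotient discussion invoke \cite[Lemme 4.7]{NgoH}, whose injectivity hypothesis would be vacuous for a plain discrete group. Consequently the quotient map $q\colon \Sht^{\mu}_{2}\to\Sht^{r}_{G}$ is \emph{not} representable: its fibers are disjoint unions of gerbes banded by the finite constant group $k^{\times}$, and the diagonal of $q$ is not a closed immersion. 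This part is repairable---$q$ is still surjective, smooth of relative dimension $0$, with finite \'etale relative inertia (the scalars $k^{\times}$, which are automorphisms of any Shtuka), so Deligne--Mumfordness, local finite type, and smoothness of $\pi_{G}$ of relative dimension $r$ still descend, e.g.\ by checking that automorphism groups of geometric points of $\Sht^{r}_{G}$ are finite and \'etale---but the literal statements ``representable \'etale'' and ``the diagonal of $q$ is a closed immersion'' should not be used.

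The genuine gap is in the separatedness step, the one you yourself flag. Separatedness does not descend formally from $\Sht^{\mu}_{2}$: the quotient of a separated stack by an infinite discrete group need not be separated (e.g.\ $[\Gm/q^{\ZZ}]$ over a field is not), and your key input, that ``$\Pic_{X}(k)$ is a proper group,'' is false---it is an infinite discrete group(oid), in particular not quasi-compact. What is actually needed is properness of the map $\coprod_{\calL\in\Pic_{X}(k)}\Sht^{\mu}_{2}\to \Sht^{\mu}_{2}\times_{X^{r}}\Sht^{\mu}_{2}$ sending $(\calL,\calE_{\bullet})$ to $(\calE_{\bullet},\calE_{\bullet}\otimes\calL)$, since this is what makes the diagonal of $\pi_{G}$ proper. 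Each component is the graph of the automorphism ``tensor by $\calL$'' of $\Sht^{\mu}_{2}$ over $X^{r}$, hence proper because it is a base change of the diagonal of the separated morphism $\pi^{\mu}_{2}$ (Theorem~\ref{th:Sht smooth}(2)); properness of the whole union then follows because over any quasi-compact open of the target the degrees of $\calE_{0}$ and $\calE_{0}\otimes\calL$ are bounded, so $\deg\calL$ ranges over a finite set and each $\Pic^{d}_{X}(k)$ is finite, so only finitely many components meet that open; in particular stabilizers are finite (taking determinants forces $\calL^{\otimes 2}\cong\calO_{X}$). Without this properly-discontinuity/finiteness argument the separatedness claim is unsupported; with it supplied, and the representability claims adjusted as above, your proof is complete and coincides with the paper's intended one.
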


\subsection{Hecke correspondences}

We define the rational Chow group of proper cycles $\Ch_{c,i}(\Sht^{r}_{G})_{\QQ}$ as in \S\ref{ss:proper cycles}. As in \S\ref{sss:corr operator}, we also have a $\QQ$-algebra $\cCh_{2r}(\Sht^{r}_{G}\times\Sht^{r}_{G})_{\QQ}$ that acts on $\Ch_{c,i}(\Sht^{r}_{G})_{\QQ}$. The goal of this subsection is to define a ring homomorphism from the unramified Hecke algebra
$\sH=C_{c}(K\bs G(\BA)/K,\QQ)$ to $\cCh_{2r}(\Sht^{r}_{G}\times\Sht^{r}_{G})_{\QQ}$.\index{$\cCh_{2r}$}\index{$\Ch_{c,i}$}%

\subsubsection{The stack $\Sht^{r}_{G}(h_{D})$}\label{sss:Sht hD} Recall from \S\ref{ss:test fun} that we have a basis $h_{D}$ of $\sH$ indexed by effective divisors $D$ on $X$. For each effective divisor $D=\sum_{x\in |X|}n_{x}x$ we shall define a self-correspondence $\Sht^{r}_{G}(h_{D})$ of $\Sht^{r}_{G}$ over $X^{r}$:
\begin{equation*}
\xymatrix{ & \Sht^{r}_{G}(h_{D})\ar[dl]_{\oll{p}}\ar[dr]^{\orr{p}}\\
\Sht^{r}_{G}\ar[dr] & & \Sht^{r}_{G}\ar[dl]\\
& X^{r}}						\index{$\oll{p},\orr{p}$} \index{$\Sht^{r}_{G}(h_{D})$}%
\end{equation*}
For this, we first fix a $\mu$ as in \S\ref{sss:mu}. We introduce a self-correspondence $\Sht^{\mu}_{2}(h_{D})$ of $\Sht^{\mu}_{2}$ whose $S$-points is the groupoid classifying the data
\begin{enumerate}
\item Two objects $(\CE_{i};x_{i};f_{i};\iota)$ and $(\CE'_{i}; x_{i};f'_{i};\iota')$ of $\Sht^{\mu}_{2}(S)$ with the same collection of points $x_{1},\cdots, x_{r}$ in $X(S)$;
\item For each $i=0,\cdots, r$, an embedding of coherent sheaves $\phi_{i}: \CE_{i}\incl \CE'_{i}$ such that $\det(\phi_{i}):\det\CE_{i}\incl \det\CE'_{i}$ has divisor $D\times S\subset X\times S$. 
\item The following diagram is commutative
\begin{equation}\label{Sht hD}
\xymatrix{\calE_{0}\ar@{-->}[r]^{f_{1}}\ar[d]^{\phi_{0}} & \calE_{1}\ar@{-->}[r]^{f_{2}}\ar[d]^{\phi_{1}} & \cdots\ar@{-->}[r]^{f_{r}} &\calE_{r}\ar[d]^{\phi_{r}}\ar[r]^{\iota} & \leftexp{\tau}{\calE_{0}}\ar[d]^{\leftexp{\tau}{\phi_{0}}}\\
\calE'_{0}\ar@{-->}[r]^{f'_{1}} & \calE'_{1}\ar@{-->}[r]^{f'_{2}} & \cdots\ar@{-->}[r]^{f'_{r}} &\calE'_{r}\ar[r]^{\iota'} & \leftexp{\tau}{\calE'_{0}}}
\end{equation}
\end{enumerate}

There is a natural action of $\Pic_{X}(k)$ on $\Sht^{\mu}_{2}(h_{D})$ by tensoring on each $\CE_{i}$ and $\CE'_{i}$. We define
\begin{equation*}
\Sht^{r}_{G}(h_{D}):=\Sht^{\mu}_{2}(h_{D})/\Pic_{X}(k).
\end{equation*}
Using Lemma \ref{l:Hk indep mu}, it is easy to check that $\Sht^{r}_{G}(h_{D})$ is canonically independent of the choice of $\mu$. The two maps $\oll{p},\orr{p}:\Sht^{r}_{G}(h_{D})\to \Sht^{r}_{G}$ send the data above to the image of $(\CE_{i};x_{i};f_{i};\iota)$ and $(\CE'_{i}; x_{i};f'_{i};\iota')$ in $\Sht^{r}_{G}$ respectively.

\begin{lemma}\label{l:Sht hD proper} The maps $\oll{p}$,$\orr{p}$ as well as $(\oll{p},\orr{p}): \Sht^{r}_{G}(h_{D})\to \Sht^{r}_{G}\times\Sht^{r}_{G}$ are representable and proper.
\end{lemma}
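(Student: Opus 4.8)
The plan is to reduce the assertion, by means of the $\Pic_X(k)$-quotient and an elementary graph argument, to the single statement that $\oll p\colon\Sht^\mu_2(h_D)\to\Sht^\mu_2$ is representable and proper, and then to present the fibres of $\oll p$ as closed subschemes of an explicit proper scheme. Recall $\Sht^r_G(h_D)=\Sht^\mu_2(h_D)/\Pic_X(k)$ and $\Sht^r_G=\Sht^\mu_2/\Pic_X(k)$ (quotients in the sense of \cite[Lemme~4.7]{NgoH}, applicable exactly as in \S\ref{ss:ShtG}), with $\oll p,\orr p$ being $\Pic_X(k)$-equivariant; base-changing $\oll p\colon\Sht^r_G(h_D)\to\Sht^r_G$ along the fppf cover $\Sht^\mu_2\to\Sht^r_G$ recovers $\oll p\colon\Sht^\mu_2(h_D)\to\Sht^\mu_2$, and likewise for $\orr p$, so it suffices to prove the latter two maps are representable and proper. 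For $(\oll p,\orr p)\colon\Sht^r_G(h_D)\to\Sht^r_G\times\Sht^r_G$ I would factor it as the graph of $\orr p$ — a closed immersion, since $\Sht^r_G$ is separated over $k$ by Corollary~\ref{c:ShtG smooth} — followed by $\oll p\times\id$, which is a base change of $\oll p\colon\Sht^r_G(h_D)\to\Sht^r_G$; hence $(\oll p,\orr p)$ is representable and proper once $\oll p$ is. Finally $\orr p\colon\Sht^\mu_2(h_D)\to\Sht^\mu_2$ is treated by the argument below with the roles of $\CE_\bullet$ and $\CE'_\bullet$ interchanged (equivalently, by dualizing the embeddings $\phi_i$), so everything comes down to $\oll p$.

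To handle $\oll p$, fix an $S$-point $(\CE_\bullet;x_\bullet;f_\bullet;\iota)$ of $\Sht^\mu_2$; I want to show that the fibre $\Sht^\mu_2(h_D)\times_{\oll p,\Sht^\mu_2}S$ is a scheme proper over $S$. For a rank-two bundle $\CF$ on $X\times S$, the functor of injections $\CF\hookrightarrow\CG$ into a rank-two bundle with $\det$ of divisor $D\times S$ is, after dualizing, a closed subfunctor of the relative Quot scheme of length-$\deg D$ quotients of $\CF^\vee$, hence representable by a projective $S$-scheme $\mathrm{Mod}_D(\CF)$. Applying this with $\CF=\CE_0$ produces the universal $\phi_0\colon\CE_0\hookrightarrow\CE'_0$; then pulling back $\Hk^\mu_2$ along $(\CE'_0,x_\bullet)\colon\mathrm{Mod}_D(\CE_0)\to\Bun_2\times X^r$ — a proper morphism by Remark~\ref{r:HkG smooth} — produces the universal type-$\mu$ chain $\CE'_\bullet$ together with the $f'_\bullet$, yielding a scheme $\mathcal T$ projective, hence proper, over $S$. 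Over $\mathcal T$ one has the rational maps $\phi_i:=f'_i\circ\cdots\circ f'_1\circ\phi_0\circ f_1^{-1}\circ\cdots\circ f_i^{-1}\colon\CE_i\dashrightarrow\CE'_i$ for $1\le i\le r$ and $\iota':=\leftexp{\tau}{\phi_0}\circ\iota\circ\phi_r^{-1}\colon\CE'_r\dashrightarrow\leftexp{\tau}{\CE'_0}$, determined by the commutativity of diagram~\eqref{Sht hD}. A determinant computation — in which the contributions of the $f$'s and $f'$'s cancel, and in which one uses that $D$ is defined over $k$, so that $\leftexp{\tau}{(D\times S)}=D\times S$ — shows $\det\phi_i$ has divisor $D$ and $\det\iota'$ has divisor $0$; consequently, once $\phi_i$ is a regular morphism of bundles it is automatically an injection with $\det$-divisor exactly $D$, and once $\iota'$ is regular it is automatically an isomorphism. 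Comparing with diagram~\eqref{Sht hD} again, the fibre of $\oll p$ over $S$ is precisely the locus in $\mathcal T$ on which $\phi_1,\dots,\phi_r$ and $\iota'$ are all regular.

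The hard part will be to show that this locus is \emph{closed} in $\mathcal T$. This rests on the standard fact that if $\alpha\colon\mathcal A\dashrightarrow\mathcal B$ is a rational map of vector bundles on $X\times T$ with poles bounded by a relative effective Cartier divisor $C$ such that $\CO_C$ is flat over $T$, then the locus of $t\in T$ over which $\alpha_t$ extends to a morphism is closed: viewing $\alpha$ as a section of $\underline{\Hom}(\mathcal A,\mathcal B)(C)$, its image in the locally free sheaf $\pi_*\bigl(\underline{\Hom}(\mathcal A,\mathcal B)(C)/\underline{\Hom}(\mathcal A,\mathcal B)\bigr)$ vanishes exactly on that locus. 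For the $\phi_i$ one may take $C=N\bigl(\sum_j\Gamma_{x_j}\bigr)+D\times\mathcal T$ with $N$ fixed, and $\CO_C$ is flat over $\mathcal T$ because, locally on $X$, $C$ is cut out by a monic polynomial of constant degree in a local coordinate; the same applies to $\iota'$. Imposing these conditions cuts out a closed subscheme of $\mathcal T$ that represents the fibre of $\oll p$ over $S$ and is proper over $S$. This shows $\oll p$ is representable and proper, and, by the reductions of the first paragraph, so are $\orr p$ and $(\oll p,\orr p)$. I expect the only genuine subtlety to be the bookkeeping around the uniform pole bound for the $\phi_i$ and the flatness of $\CO_C$; beyond that, the argument uses only properness of Quot schemes and of $\Hk^\mu_2\to\Bun_2\times X^r$.
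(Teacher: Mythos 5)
Your proof is correct, and its skeleton matches the paper's: you deduce $(\oll{p},\orr{p})$ from the properness of a single projection together with the separatedness of $\Sht^{r}_{G}$, and you obtain the second projection from the first by dualizing the diagram \eqref{Sht hD}. Where you diverge is in how the fibers of that single projection are exhibited inside a proper ambient space. The paper fixes the target of $\orr{p}$ (the bottom row of \eqref{Sht hD}); the whole top row is then determined by the subsheaves $\CE_i\subset\CE'_i$, i.e.\ by a point of $\prod_i\Quot^{d}(\CE'_i)$, and the commutativity and type constraints are visibly closed conditions there, so no auxiliary lemma is needed. You instead fix the source of $\oll{p}$, parametrize $\phi_0$ by a closed subscheme of a Quot scheme of $\CE_0^\vee$ and the chain $\CE'_\bullet$ by the fiber of $\Hk^{\mu}_{2}\to\Bun_2\times X^r$ (proper by Remark \ref{r:HkG smooth}), and then cut out the fiber by the condition that the induced rational maps $\phi_i$ and $\iota'$ be regular, which you show is closed via a pole-bounded section of a pushforward sheaf supported on a finite flat divisor. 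Both routes rest on properness of Quot schemes; yours costs two extra inputs (properness of the Hecke stack and the regularity-is-closed lemma, whose pole and flatness bookkeeping you rightly flag as the delicate part) but in exchange makes the closedness of all conditions explicit and gives representability of $(\oll{p},\orr{p})$ through the graph factorization rather than the paper's ad hoc description of its fibers inside $\prod_i\Hom(\CE_i,\CE'_i)$. One small correction: since $\Sht^{r}_{G}$ is a Deligne--Mumford stack and not an algebraic space, its separatedness makes the graph of $\orr{p}$ a proper representable (indeed finite) morphism rather than a closed immersion; this does not affect your conclusion, as proper representable morphisms are stable under composition and base change.
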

\begin{proof}
Once the bottom row of the diagram \eqref{Sht hD} is fixed, the choices of the vertical maps $\phi_{i}$ for $i=1,\cdots,r$ form a closed subscheme of the product of Quot schemes $\prod_{i=1}^{r}\Quot^{d}(\calE'_{i})$, where $d=\deg D$, which is proper. Therefore $\orr{p}$ is representable and proper. Same argument applied to the dual of the diagram \eqref{Sht hD} proves that $\oll{p}$ is proper. 

The representability of $(\oll{p},\orr{p})$ is obvious from the definition, since its fibers are closed subschemes of $\prod_{i=1}^{r}\Hom(\CE_{i},\CE_{i}')$. Since $\Sht^{r}_{G}$ is separated by Corollary \ref{c:ShtG smooth} and $\oll{p}$ is proper, $(\oll{p},\orr{p})$ is also proper.
\end{proof}

\begin{lemma}\label{l:dim hD} The geometric fibers of the map $\Sht^{r}_{G}(h_{D})\to X^{r}$ have dimension $r$.
\end{lemma}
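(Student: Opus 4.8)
The plan is to reduce the dimension count for $\Sht^r_G(h_D)$ to the already-known dimension count for $\Sht^r_G$ in Corollary \ref{c:ShtG smooth}. First I would replace $\Sht^r_G(h_D)$ by its $\Pic_X(k)$-cover $\Sht^\mu_2(h_D)$ for a fixed $\mu$ as in \S\ref{sss:mu}, since $\Pic_X(k)$ is a finite (discrete) groupoid and quotienting by it does not change fiber dimensions; so it suffices to show the fibers of $\Sht^\mu_2(h_D)\to X^r$ have dimension $r$ (note $\dim\Sht^\mu_2 = r+1$ since $\Bun_2$ has dimension $4(g-1)$, $\Hk^\mu_2\to\Bun_2\times X^r$ is smooth of relative dimension $r$, and the Frobenius graph condition cuts dimension by $\dim\Bun_2$; but working relative to $X^r$ the relative dimension is $r$).

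The key step is to analyze the forgetful map $\oll{p}\colon \Sht^\mu_2(h_D)\to \Sht^\mu_2$ and bound its fibers. Given a fixed Shtuka $(\CE_i; x_i; f_i; \iota)$, a point of the fiber of $\oll{p}$ over it consists of a second Shtuka $(\CE'_i; x_i; f'_i; \iota')$ together with compatible embeddings $\phi_i\colon \CE_i\hookrightarrow \CE'_i$ with $\det(\phi_i)$ of divisor $D$ (pulled back to $X\times S$), subject to the commutativity of \eqref{Sht hD}. I would argue that such a fiber is finite: the data of $\phi_0\colon \CE_0\hookrightarrow\CE'_0$ with fixed $\CE_0$ and with cokernel of fixed length-$d$ support (where $d=\deg D$) along $D$ amounts to a point of the Quot scheme $\Quot^d(\CE_0/D)$ of quotients supported on $D\times S$, which is proper, and in fact is a product over $x$ in the support of $D$ of (partial) flag-type schemes; but then the remaining $\phi_i$ ($i=1,\dots,r$) are \emph{forced} by the commutativity of \eqref{Sht hD} together with $\iota,\iota'$ — more precisely, once $\phi_0$ and the bottom row modifications $f'_i$ are chosen, each $\phi_i$ is determined off the points $x_j$ and extends uniquely, and $\iota'=\leftexp{\tau}{\phi_0}\circ\iota\circ\phi_r^{-1}$ (a priori a rational map) must be an isomorphism, which is a closed condition. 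Thus the fiber of $\oll{p}$ embeds as a closed subscheme of a product of Quot schemes depending only on the discrete data, and in particular is of dimension $0$ (the Quot scheme of quotients of a vector bundle with $0$-dimensional support is itself $0$-dimensional, since a torsion quotient of fixed length on a curve has a finite moduli). Hence $\oll{p}$ is quasi-finite, and being proper by Lemma \ref{l:Sht hD proper}, it is finite.

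Granting that $\oll{p}$ is finite, the composite $\Sht^r_G(h_D)\to\Sht^r_G\xrightarrow{\pi_G} X^r$ has fibers that are finite over the fibers of $\pi_G$, and the latter have dimension $r$ by Corollary \ref{c:ShtG smooth}(2). Therefore the geometric fibers of $\Sht^r_G(h_D)\to X^r$ have dimension $r$, as claimed. (One should also check nonemptiness in the right degree to get exactly $r$ rather than $\le r$, but generically over $X^r$ the Hecke correspondence $\Sht^r_G(h_D)$ surjects onto $\Sht^r_G$ with the expected fibers, so the dimension is exactly $r$.)

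The main obstacle I expect is making precise the claim that the $\phi_i$ for $i\ge 1$ are determined (up to finitely many choices) by $\phi_0$ and the Shtuka data — i.e., that commutativity of \eqref{Sht hD} really does rigidify the upper row. The subtlety is that $\phi_i$ is obtained from $\phi_0$ by composing with rational maps ($f'_i\circ\cdots\circ f'_1$ and the inverses $f_1^{-1}\circ\cdots$), which a priori only defines $\phi_i$ as a rational map between $\CE_i$ and $\CE'_i$; one must check it extends to an honest map of coherent sheaves of the correct type, using that both $\CE_i$ and $\CE'_i$ are vector bundles and the modifications $f_i, f'_i$ are controlled (minuscule) at the $x_j$. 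This is a local computation at each $x_j$ comparing the lattice chains defined by the two rows, and it is where the hypothesis that each $\mu_i$ is $\mu_\pm$ (length-one modifications) is used. Once that local rigidity is in hand, the rest is formal.
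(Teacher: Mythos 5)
There is a genuine gap, and it is exactly at the step your whole reduction rests on: the claim that $\oll{p}\colon \Sht^{r}_{G}(h_{D})\to \Sht^{r}_{G}$ is quasi-finite. Your justification — "the Quot scheme of quotients of a vector bundle with $0$-dimensional support is itself $0$-dimensional" — is false for bundles of rank $\geq 2$: already for a rank-two bundle $\CE_{0}$ and $D=x$ a single point, the upper modifications $\CE_{0}\incl\CE'_{0}$ of colength one supported at $x$ are parametrized by $\PP(\CE_{0}|_{x})\cong\PP^{1}$, and for general $D$ one gets (products of) affine Schubert varieties of positive dimension, not finite sets. The finiteness of the fibers of $\oll{p}$ over the locus where the legs avoid $|D|$ comes not from Quot-scheme rigidity but from the Frobenius condition (there $\oll{p}$ is finite étale), and this is also where your "the remaining $\phi_{i}$ are forced by commutativity" claim holds. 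But over the collision locus, where some $x_{i}\in |D|$, the upper-row modifications $f'_{i}$ at those legs are genuinely not determined by $\phi_{0}$ and the lower row (e.g.\ if at such a point $\CE_{i}\subset\CE'_{i-1}$ locally, there is a $\PP^{1}$ of choices for $\CE'_{i}$), so the fibers of $\oll{p}$ acquire extra parameters and need not be finite; the Frobenius compatibility at the end cuts them down, but estimating by how much is precisely the hard content of the lemma, and your argument gives no bound there. Note that the paper only proves $\oll{p}$ is \emph{proper} (Lemma \ref{l:Sht hD proper}), never finite.

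For comparison, the paper's proof shares your easy half (over $(X-|D|)^{r}$ the map $\oll{p}$ is étale, so the generic fiber dimension is $r$, and semicontinuity reduces everything to the upper bound $\leq r$ at special points of $X^{r}$), but the special-point bound is obtained by a different and much more substantial route: a stratification of $H_{D}$ by divisors $D'\leq D$, an étale-local trivialization of the fiber using level structure along $D+x_{1}+\cdots+x_{r}$ and \cite[Lemme 2.13]{VL}, which replaces the Shtuka fiber by a purely local-model space $T$ with no Frobenius condition, and finally a dimension estimate $\dim T\leq r$ coming from the semismallness of the convolution morphism $\Gr_{1^{m}}\to\Gr_{m}$ of affine Schubert varieties (Lemma \ref{l:H'}). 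If you want to salvage your approach, you would have to prove a bound on the fibers of $\oll{p}$ over the collision locus, and that is essentially equivalent to redoing this local-model/semismallness analysis rather than a formal consequence of properness of Quot schemes.
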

The proof of this lemma will be postponed to \S\ref{sss:proof dim hD}, because the argument will involve some auxiliary moduli spaces that we will introduce in \S\ref{ss:aux}.

Granting Lemma \ref{l:dim hD}, we have $\dim\Sht^{r}_{G}(h_{D})=2r$. By Lemma \ref{l:Sht hD proper}, it makes sense to push forward the fundamental cycle of $\Sht^{r}_{G}(h_{D})$ along the proper map $(\oll{p},\orr{p})$. Therefore $(\oll{p},\orr{p})_{*}[\Sht^{r}_{G}(h_{D})]$ defines an element in $\cCh_{2r}(\Sht^{r}_{G}\times\Sht^{r}_{G})_{\QQ}$ (because $\oll{p}$ is also proper). We define the $\QQ$-linear map
\begin{eqnarray}\label{define H}
H:\sH&\to&  \cCh_{2r}(\Sht^{r}_{G}\times\Sht^{r}_{G})_{\QQ}\\
 h_{D}&\mapsto& (\oll{p}\times \orr{p})_{*}[\Sht^{r}_{G}(h_{D})], \quad \textup{ for all effective divisors } D.		\index{$H(h_D)$}%
\end{eqnarray}

\begin{prop}\label{p:Hecke action on Chow} The linear map $H$ in \eqref{define H} is a ring homomorphism.
\end{prop}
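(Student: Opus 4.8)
The claim is that the $\QQ$-linear map $H$ sending $h_D$ to the class of the Hecke correspondence $\Sht^r_G(h_D)$ intertwines the convolution product on $\sH$ with the composition of correspondences in $\cCh_{2r}(\Sht^r_G\times\Sht^r_G)_\QQ$. Since $\sH=\otimes_{x\in|X|}\sH_x$ and $\sH_x=\QQ[h_x]$, and the correspondences at disjoint places "commute" in an obvious geometric way, it suffices to prove the multiplicativity on each local factor, i.e. to show that for a fixed place $x$ and all $m,n\geq 0$ one has $H(h_{mx})\circ H(h_{nx})=H(h_{mx}*h_{nx})=H(h_{mx}\cdot h_{nx})$, where the last product is the structure-constant expansion $h_{mx}\cdot h_{nx}=\sum_\ell c^\ell_{mn}\,h_{\ell x}$ in the polynomial ring $\sH_x$. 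The first reduction — that $H$ is compatible with the tensor decomposition $\sH=\bigotimes_x\sH_x$ — should be handled by noting that $\Sht^r_G(h_{D_1+D_2})$ for $D_1,D_2$ with disjoint support is naturally the "fiber product of modification data over the two disjoint parts", so that the associated correspondence is the composition; I would phrase this via a moduli description and leave the (routine) verification to the reader.

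So the heart of the argument is the single-place identity. The plan is to realize the composition $\Sht^r_G(h_{mx})\circ\Sht^r_G(h_{nx})$ as pushforward of the fundamental class of the \emph{iterated} correspondence $\Sht^r_G(h_{mx})\times_{\Sht^r_G}\Sht^r_G(h_{nx})$ — the stack classifying a chain of modifications $\calE\hookrightarrow\calE''\hookleftarrow$\ldots wait, more precisely a triple $(\calE_\bullet)\hookrightarrow(\calE'_\bullet)\hookrightarrow(\calE''_\bullet)$ of Shtukas where the composite inclusion at degree $i$ has determinant divisor $nx$ then $mx$ — and then to stratify this iterated correspondence by the "relative position" (elementary divisor type) of the composite map $\calE_i\hookrightarrow\calE''_i$ at $x$. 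For each allowed type $\ell$ with $|m-n|\le \ell\le m+n$, $\ell\equiv m+n\pmod 2$, the corresponding stratum maps to $\Sht^r_G(h_{\ell x})$, and the multiplicity with which it appears — a count of chains of lattices in $\Mat_2(\calO_x)$ between two fixed lattices of the given invariant — is exactly the Satake structure constant $c^\ell_{mn}$. The key geometric input is that this stratification is \emph{flat} over the stratum of $\Sht^r_G(h_{\ell x})$ with fibers of the expected dimension (so no excess intersection occurs), which will follow from the smoothness of Corollary \ref{c:ShtG smooth}, the dimension count Lemma \ref{l:dim hD}, and the fact that away from the points $x_1,\dots,x_r$ the modification data are unconstrained. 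The bookkeeping that the resulting cycle identity lands in $\cCh_{2r}$ and that composition of correspondences is computed by this fiber product is exactly the content of the intersection-theoretic machinery of Appendix \ref{s:int}, in particular the Octahedron Lemma (Theorem \ref{th:oct}), which lets one reassociate the various fiber products freely.

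I would organize the writeup as follows: (i) reduce to one place via the tensor decomposition; (ii) describe the iterated moduli stack $\Sht^r_G(h_{mx})\circ\Sht^r_G(h_{nx})$ and identify, using Appendix \ref{s:int}, the composition of the classes $H(h_{mx})$ and $H(h_{nx})$ with its pushforward; (iii) stratify by relative position at $x$ and match each stratum, with multiplicity, to a component of $\Sht^r_G(h_{\ell x})$; (iv) conclude by comparing with the Satake-normalized product formula in $\sH_x$. Step (iii)–(iv) — establishing that the stratification is the expected flat one and that the combinatorial multiplicities agree with the Hecke structure constants — is where the real work lies; in particular one must check that no lower-dimensional "boundary" strata contribute to the cycle class, which is where Lemma \ref{l:dim hD} (equivalently, that $\Sht^r_G(h_{D})$ has pure dimension $2r$) is used essentially. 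Everything else is formal manipulation with the intersection theory of Kresch recalled in the appendix.
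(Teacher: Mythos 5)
Your argument is correct in outline, but it takes a noticeably longer route than the paper, while resting on the same decisive ingredient. The paper's proof does not reduce to a single place and never computes any structure constants: it observes that both $H(h_Dh_{D'})$ and $H(h_D)*H(h_{D'})$ are $2r$-dimensional classes supported on $Z=\Sht^{r}_{G}(h_{D+D'})$, invokes Lemma \ref{l:dim hD} to see that $Z-Z|_{U^{r}}$ (with $U=X-|D|-|D'|$) has dimension $<2r$, so that restriction gives an isomorphism $\Ch_{2r}(Z)_{\QQ}\isom\Ch_{2r}(Z|_{U^{r}})_{\QQ}$, and then quotes the classical identity for the finite \'etale Hecke correspondences over $U^{r}$. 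Your steps (iii)--(iv) amount to reproving that classical identity by stratifying the iterated correspondence by relative position at $x$ and matching lattice-chain counts with the Satake structure constants; this is legitimate but is exactly the standard proof of the local composition law, transported to $\Sht^{r}_{G}|_{U^{r}}$, so it buys nothing beyond what the citation of the ``well-known'' \'etale case \eqref{HU mult} gives. Likewise your reduction (i) to a single place is not a genuine simplification: the disjoint-support case $H(h_{D_1})*H(h_{D_2})=H(h_{D_1+D_2})$ that you defer as ``routine'' needs the same support-plus-dimension argument (Lemma \ref{l:dim hD} and passage to the locus where the legs avoid the divisors) as the general case, so one may as well treat arbitrary $D,D'$ at once, as the paper does. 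Finally, the Octahedron Lemma (Theorem \ref{th:oct}) is not needed here; the ordinary functoriality of Kresch's refined Gysin maps recalled in \S\ref{sss:corr operator} suffices, and the paper reserves Theorem \ref{th:oct} for \S\ref{s:alt}.

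One claim in your step (ii) is stated too strongly: you assert that the stratification is flat with fibers of the expected dimension ``so no excess intersection occurs,'' and on that basis identify the composed class with the pushforward of the fundamental class of the naive fiber product $\Sht^{r}_{G}(h_{mx})\times_{\Sht^{r}_{G}}\Sht^{r}_{G}(h_{nx})$. Over the locus where the legs $x_i$ meet $x$ this expected-dimension statement is not justified (nor needed): excess components may occur there, and the refined intersection class need not be the fundamental class of the fiber product. The reason this does no harm is not flatness but the support argument you mention at the end: any contribution from the bad locus pushes forward into the part of $\Sht^{r}_{G}(h_{(m+n)x})$ lying over $X^{r}-U^{r}$, which has dimension $<2r$ by Lemma \ref{l:dim hD} and hence carries no $2r$-dimensional classes. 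So replace the flatness/no-excess claim by the restriction isomorphism $\Ch_{2r}(Z)_{\QQ}\isom\Ch_{2r}(Z|_{U^{r}})_{\QQ}$ and compare classes only over $U^{r}$, where finite \'etaleness makes everything transverse; with that correction your plan goes through.
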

\begin{proof} Since $h_{D}$ form a $\QQ$-basis of $\sH$, it suffices to show that\begin{equation}\label{H mult}
H(h_{D}h_{D'})=H(h_{D})*H(h_{D'})\in\cCh_{2r}(\Sht^{r}_{G}\times\Sht^{r}_{G})_{\QQ}
\end{equation}
for any two effective divisors $D$ and $D'$.

Let $U=X-|D|-|D'|$. Since $h_{D}h_{D'}$ is a linear combination of $h_{E}$ for effective divisors $E\leq D+D'$ such that $D+D'-E$ has even coefficients, the cycle $H(h_{D}h_{D'})$  is supported on $\cup_{E\leq D+D', D+D'-E\textup{ even}} \Sht^{r}_{G}(h_{E})=\Sht^{r}_{G}(h_{D+D'})$. The cycle $H(h_{D})*H(h_{D'})$ is supported on the image of the projection
\begin{equation*}
\pr_{13}: \Sht^{r}_{G}(h_{D})\times_{\orr{p},\Sht^{r}_{G},\oll{p}}\Sht^{r}_{G}(h_{D'})\to\Sht^{r}_{G}\times\Sht^{r}_{G}
\end{equation*}
which is easily seen to be contained in $\Sht^{r}_{G}(h_{D+D'})$. We see that both sides of \eqref{H mult} are supported on $Z:=\Sht^{r}_{G}(h_{D+D'})$.

By Lemma \ref{l:dim hD} applied to $Z=\Sht^{r}_{G}(h_{D+D'})$, the dimension of $Z-Z|_{U^{r}}$ is strictly less than $2r$. Therefore, the restriction map induces an isomorphism
\begin{equation}\label{restrict Chow}
\Ch_{2r}(Z)_{\QQ}\isom\Ch_{2r}(Z|_{U^{r}})_{\QQ}.
\end{equation}

Restricting the definition of $H$ to $U^{r}$, we get a linear map $H_{U}:\sH\to  \cCh_{2r}(\Sht^{r}_{G}|_{U^{r}}\times\Sht^{r}_{G}|_{U^{r}})_{\QQ}$. For any effective divisor $E$ supported on $|D|\cup |D'|$, the two projections $\oll{p},\orr{p}: \Sht^{r}_{G}(h_{E})|_{U^{r}}\to \Sht^{r}_{G}|_{U^{r}}$ are finite \'etale. The equality
\begin{equation}\label{HU mult}
H_{U}(h_{D}h_{D'})=H_{U}(h_{D})*H_{U}(h_{D'})\in\Ch_{2r}(Z|_{U^{r}})_{\QQ}
\end{equation}
is well-known. By \eqref{restrict Chow}, this implies the equality \eqref{H mult} where both sides are interpreted as elements in $\Ch_{2r}(Z)_{\QQ}$, and a fortiori as elements in $\cCh_{2r}(\Sht^{r}_{G}\times\Sht^{r}_{G})_{\QQ}$. 
\end{proof}

\begin{remark} Let $g=(g_{x})\in G(\BA)$, and let $f=\one_{KgK}\in\sH$ be the characteristic function of the double coset $KgK$ in $G(\BA)$. Traditionally, one defines a self-correspondence $\Gamma(g)$ of $\Sht^{r}_{G}|_{(X-S)^{r}}$ over $(X-S)^{r}$, where $S$ is the finite set of places where $g_{x}\notin K_{x}$ (see \cite[Construction 2.20]{VL}). The two projections $\oll{p},\orr{p}: \Gamma(g)\to \Sht^{r}_{G}|_{(X-S)^{r}}$ are finite \'etale. The disadvantage of this definition is that we need to remove the bad points $S$ which depend on $f$, so one is forced to work only with the generic fiber of $\Sht^{r}_{G}$ over $X^{r}$ if one wants to consider the actions of all Hecke functions.  Our definition of $H(f)$ for any $f\in \sH$ gives a correspondence for the whole $\Sht^{r}_{G}$. It is easy to check that for $f=\one_{KgK}$, our cycle $H(f)|_{(X-S)^{r}}$, which is a linear combination of the cycles $\Sht^{r}_{G}(h_{D})|_{(X-S)^{r}}$ for divisors $D$ supported on $S$, is the same cycle as $\Gamma(g)$. Therefore our definition of the Hecke algebra action extends the traditional one. 
\end{remark}

\subsubsection{A variant}\label{sss:Hk Sht'}
Later we will consider the stack $\Sht'^{r}_{G}:=\Sht^{r}_{G}\times_{X^{r}}X'^{r}$ defined using the double cover $X'\to X$. Let $\Sht'^{r}_{G}(h_{D})=\Sht^{r}_{G}(h_{D})\times_{X^{r}}X'^{r}$. Then we have natural maps
\begin{equation*}
\oll{p}',\orr{p}':\Sht'^{r}_{G}(h_{D})\to \Sht'^{r}_{G}.
\end{equation*}
The analogs of Lemma \ref{l:Sht hD proper} and \ref{l:dim hD} for $\Sht'^{r}_{G}(h_{D})$ follow from the original statements. The map $h_{D}\mapsto (\oll{p}'\times \orr{p}')_{*}[\Sht'^{r}_{G}(h_{D})]\in  \cCh_{2r}(\Sht'^{r}_{G}\times\Sht'^{r}_{G})_{\QQ}$ then gives a ring homomorphism $H'$:
\begin{equation*}
H':\sH\to  \cCh_{2r}(\Sht'^{r}_{G}\times\Sht'^{r}_{G})_{\QQ}.
\end{equation*}

\subsubsection{Notation}\label{sss:H action on Chow} By \S\ref{sss:corr operator}, the $\QQ$-algebra $\cCh_{2r}(\Sht'^{r}_{G}\times\Sht'^{r}_{G})_{\QQ}$ acts on $\Ch_{c,*}(\Sht'^{r}_{G})_{\QQ}$. Hence the Hecke algebra $\sH$ also acts on $\Ch_{c,*}(\Sht'^{r}_{G})_{\QQ}$ via the homomorphism $H'$. For $f\in \sH$, we denote its action on $\Ch_{c,*}(\Sht'^{r}_{G})_{\QQ}$ by
\begin{equation*}
f*(-):\Ch_{c,*}(\Sht'^{r}_{G})_{\QQ} \to \Ch_{c,*}(\Sht'^{r}_{G})_{\QQ}.		\index{$f*(-)$}%
\end{equation*}

Recall the Chow group $\Ch_{c,*}(\Sht^{r}_{G})_{\QQ}$ (or $\Ch_{c,*}(\Sht'^{r}_{G})_{\QQ}$) is equipped with an intersection pairing between complementary degrees, see \S\ref{sss:int pairing}.

\begin{lemma}\label{l:Chow self adj}
The action of any $f\in\sH$ on $\Ch_{c,*}(\Sht^{r}_{G})_{\QQ}$ or $\Ch_{c,*}(\Sht'^{r}_{G})_{\QQ}$ is self-adjoint with respect to the intersection pairing.
\end{lemma}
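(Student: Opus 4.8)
\end{lemma}

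\begin{proof}
The plan is to realise the operator $f*(-)$ through the Hecke correspondence $H(f)$ on $\Sht^{r}_{G}$ (resp. $H'(f)$ on $\Sht'^{r}_{G}$) and to show that this correspondence is invariant under transposition of the two factors. Granting that, self-adjointness is formal: by the intersection-theoretic formalism of Appendix~\ref{s:int} (see \S\ref{sss:corr operator} and \S\ref{sss:int pairing}), the adjoint with respect to the intersection pairing of the operator attached to a proper self-correspondence $Z\in\cCh_{2r}(Y\times Y)_{\QQ}$ is the operator attached to the transposed cycle $\sigma_{*}Z$, where $\sigma$ swaps the two factors of $Y\times Y$; and since $\sH$ is spanned over $\QQ$ by the $h_{D}$ and passage to adjoints is $\QQ$-linear, it is enough to treat $f=h_{D}$, i.e.\ to prove $\sigma_{*}H(h_{D})=H(h_{D})$ and $\sigma_{*}H'(h_{D})=H'(h_{D})$.

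The heart of the matter is to exhibit an isomorphism $\tau$ of $\Sht^{r}_{G}(h_{D})$ over $X^{r}$ that interchanges $\oll{p}$ and $\orr{p}$. I would build it from the adjugate of a $2\times 2$ matrix. Given a point of $\Sht^{\mu}_{2}(h_{D})$ with data $(\CE_{i};x_{i};f_{i};\iota)$, $(\CE'_{i};x_{i};f'_{i};\iota')$ and embeddings $\phi_{i}\colon\CE_{i}\hookrightarrow\CE'_{i}$ with $\det(\phi_{i})$ of divisor $D\times S$, one has the canonical isomorphism $\det\CE'_{i}\cong\det\CE_{i}\otimes\CO_{X}(D)$, hence adjugate maps $\psi_{i}:=\mathrm{adj}(\phi_{i})\colon\CE'_{i}\hookrightarrow\CE_{i}\otimes\CO_{X}(D)$ characterised by $\psi_{i}\circ\phi_{i}=\phi_{i}\circ\psi_{i}=s_{D}$, the canonical section of $\CO_{X}(D)$. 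One checks: $\det(\psi_{i})$ again has divisor $D$; the $\psi_{i}$ fit into the analogue of diagram \eqref{Sht hD} for the Shtukas $(\CE'_{i};x_{i};f'_{i};\iota')$ and $(\CE_{i}\otimes\CO_{X}(D);x_{i};f_{i}\otimes\id;\iota'')$, the compatibility with the rational maps $f_{i}$ and with the Frobenius isomorphisms $\iota,\iota'$ being the twisted functoriality $\mathrm{adj}(\beta)\circ\mathrm{adj}(\alpha)=\mathrm{adj}(\alpha\circ\beta)$; and the whole assignment is $\Pic_{X}(k)$-equivariant, hence descends to $\Sht^{r}_{G}(h_{D})$. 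Since $\CO_{X}(D)$ is a $k$-point of $\Pic_{X}$, tensoring by it acts trivially on $\Sht^{r}_{G}$, so the resulting $\tau$ exchanges the two Shtukas; that is, $\orr{p}\circ\tau\cong\oll{p}$ and $\oll{p}\circ\tau\cong\orr{p}$, and $\tau^{2}$ is canonically the identity because $\mathrm{adj}(\mathrm{adj}(\phi))=\phi\otimes\id_{\CO_{X}(D)}$ for rank two bundles. As $\tau$ is an isomorphism of stacks of the same dimension, $\tau_{*}[\Sht^{r}_{G}(h_{D})]=[\Sht^{r}_{G}(h_{D})]$, whence
\begin{align*}
\sigma_{*}\,(\oll{p}\times\orr{p})_{*}[\Sht^{r}_{G}(h_{D})]&=(\orr{p}\times\oll{p})_{*}[\Sht^{r}_{G}(h_{D})]=(\oll{p}\times\orr{p})_{*}\tau_{*}[\Sht^{r}_{G}(h_{D})]\\
&=(\oll{p}\times\orr{p})_{*}[\Sht^{r}_{G}(h_{D})],
\end{align*}
i.e.\ $\sigma_{*}H(h_{D})=H(h_{D})$. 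For $\Sht'^{r}_{G}$ one simply base changes along $X'^{r}\to X^{r}$: the involution $\tau$ lives over $X^{r}$ and does not move the points $x_{i}$, so $\tau\times_{X^{r}}\id_{X'^{r}}$ interchanges $\oll{p}'$ and $\orr{p}'$ on $\Sht'^{r}_{G}(h_{D})$, giving $\sigma_{*}H'(h_{D})=H'(h_{D})$ by the identical computation.

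The step I expect to be the main obstacle is the bookkeeping in the middle paragraph: checking that the adjugated data genuinely defines a point of $\Sht^{\mu}_{2}(h_{D})$, i.e.\ that the $\psi_{i}$ are compatible with the (only rational) modification maps $f_{i}$ and with the Frobenius-twist isomorphisms $\iota,\iota'$, which requires carefully tracking the $\CO_{X}(D)$-twists throughout diagram \eqref{Sht hD} and matching the twisted functoriality of $\mathrm{adj}$ with the composition of the $f_{i}$. The remaining ingredients — the $\Pic_{X}(k)$-descent, the triviality of the $\CO_{X}(D)$-twist on $\Sht^{r}_{G}$, the reduction to $f=h_{D}$, and the identification of the adjoint of a proper correspondence with its transpose (from Appendix~\ref{s:int}) — are routine.
\end{proof}
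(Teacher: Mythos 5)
Your proof is correct and is essentially the paper's argument: both reduce to $f=h_{D}$ and construct an isomorphism $\tau$ of $\Sht^{r}_{G}(h_{D})$ interchanging $\oll{p}$ and $\orr{p}$, then conclude formally from the transpose-invariance of the correspondence class. Your adjugate map $\mathrm{adj}(\phi_{i})\colon\CE'_{i}\to\CE_{i}\otimes\CO_{X}(D)$ is, for rank two, exactly the dual map $\phi_{i}^{\vee}$ rewritten via $\CE^{\vee}\cong\CE\otimes(\det\CE)^{-1}$, which is how the paper builds $\tau$ (through $\Sht^{-\mu}_{G}\cong\Sht^{\mu}_{G}$, together with the observation that the duality involution on $\Sht^{\mu}_{G}$ is the identity — the counterpart of your remark that the $\CO_{X}(D)$-twist dies in the $\Pic_{X}(k)$-quotient).
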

\begin{proof} It suffices to prove self-adjointness for $h_{D}$ for all effective divisors $D$. We give the argument for $\Sht^{r}_{G}$ and the case of $\Sht'^{r}_{G}$ can be proved in the same way. For $\zeta_{1}\in \Ch_{c,i}(\Sht^{r}_{G})_{\QQ}$ and $\zeta_{2}\in\Ch_{c,2r-i}(\Sht^{r}_{G})_{\QQ}$, the intersection number $\jiao{h_{D}*\zeta_{1},\zeta_{2}}_{\Sht^{r}_{G}}$ is the same as the following intersection number in $\Sht^{r}_{G}\times\Sht^{r}_{G}$
\begin{equation*}
\jiao{\zeta_{1}\times\zeta_{2}, (\oll{p},\orr{p})_{*}[\Sht^{r}_{G}(h_{D})]}_{\Sht^{r}_{G}\times\Sht^{r}_{G}}.
\end{equation*}
We will construct an involution $\tau$ on $\Sht^{r}_{G}(h_{D})$ such that the following diagram is commutative
\begin{equation}\label{trans Sht}
\xymatrix{\Sht^{r}_{G}(h_{D})\ar[d]^{(\oll{p}, \orr{p})}\ar[rr]^{\tau}    &&  \Sht^{r}_{G}(h_{D})\ar[d]^{(\oll{p}, \orr{p})}  \\
\Sht^{r}_{G}\times\Sht^{r}_{G}\ar[rr]^{\sigma_{12}} && \Sht^{r}_{G}\times \Sht^{r}_{G}}
\end{equation}
Here $\sigma_{12}$ in the bottom row means flipping two factors. Once we have such a diagram, we can apply $\tau$ to $\Sht^{r}_{G}(h_{D})$ and $\sigma_{12}$ to $\Sht^{r}_{G}\times\Sht^{r}_{G}$ and get
\begin{equation*}
\jiao{\zeta_{1}\times\zeta_{2}, (\oll{p},\orr{p})_{*}[\Sht^{r}_{G}(h_{D})]}_{\Sht^{r}_{G}\times\Sht^{r}_{G}}=\jiao{\zeta_{2}\times\zeta_{1}, (\oll{p},\orr{p})_{*}[\Sht^{r}_{G}(h_{D})]}_{\Sht^{r}_{G}\times\Sht^{r}_{G}}
\end{equation*}
which is the same as the self-adjointness for $h*(-)$:
\begin{equation*}
\jiao{h_{D}*\zeta_{1},\zeta_{2}}_{\Sht^{r}_{G}}=\jiao{h_{D}*\zeta_{2},\zeta_{1}}_{\Sht^{r}_{G}}=\jiao{\zeta_{1}, h_{D}*\zeta_{2}}_{\Sht^{r}_{G}}.
\end{equation*}

We pick any $\mu$ as in \S\ref{sss:mu} and identify $\Sht^{r}_{G}$ with $\Sht^{\mu}_{G}=\Sht^{\mu}_{2}/\Pic_{X}(k)$. We use $-\mu$ to denote the negated tuple if we think of $\mu\in\{\pm1\}^{r}$  using the $\sgn$ map. We consider the composition
\begin{equation*}
\delta: \Sht^{\mu}_{G}\xrightarrow{\delta'}\Sht^{-\mu}_{G}\cong\Sht^{\mu}_{G}
\end{equation*}
where $\delta'(\CE_{i};x_{i};f_{i}; \iota)=(\CE^{\vee}_{i}; x_{i}; (f^{\vee}_{i})^{-1};(\iota^{\vee})^{-1})$ and the second map is the canonical isomorphism $\Sht^{-\mu}_{G}\cong \Sht^{\mu}_{G}$ given by Lemma \ref{l:indep mu}.

Similarly we define $\tau$ as the composition
\begin{equation}\label{tau for Sht hD}
\tau: \Sht^{\mu}_{G}(h_{D})\xrightarrow{\tau'}\Sht^{-\mu}_{G}(h_{D})\cong\Sht^{\mu}_{G}(h_{D})
\end{equation}
where $\tau'$ sends the diagram \eqref{Sht hD} to the diagram
\begin{equation}\label{Sht hD tr}
\xymatrix{\calE'^{\vee}_{0}\ar@{-->}[r]^{f'^{\vee-1}_{1}}\ar[d]^{\phi^{\vee}_{0}} & \calE'^{\vee}_{1}\ar@{-->}[r]^{f'^{\vee-1}_{2}}\ar[d]^{\phi^{\vee}_{1}} & \cdots\ar@{-->}[r]^{f'^{\vee-1}_{r}} &\calE'^{\vee}_{r}\ar[d]^{\phi^{\vee}_{r}}\ar[r]^{\iota'^{\vee-1}} & \leftexp{\tau}{\calE'^{\vee}_{0}}\ar[d]^{\leftexp{\tau}{\phi^{\vee}_{0}}}\\
\calE^{\vee}_{0}\ar@{-->}[r]^{f^{\vee-1}_{1}} & \calE^{\vee}_{1}\ar@{-->}[r]^{f^{\vee-1}_{2}} & \cdots\ar@{-->}[r]^{f^{\vee-1}_{r}} &\calE^{\vee}_{r}\ar[r]^{\iota^{\vee-1}} & \leftexp{\tau}{\calE^{\vee}_{0}}}
\end{equation}
and the second map in \eqref{tau for Sht hD} is the canonical isomorphism $\Sht^{-\mu}_{G}(h_{D})\cong \Sht^{\mu}_{G}(h_{D})$ given by the analog of Lemma \ref{l:indep mu}. It is clear from the definition that if we replace the bottom arrow of \eqref{trans Sht} with $\sigma_{12}\circ(\delta\times\delta)$ (i.e., the map $(a,b)\mapsto (\delta(b),\delta(a))$), the diagram is commutative. 

We claim that $\delta$ is the identity map for $\Sht^{\mu}_{G}$. In fact, $\delta$ turns $(\CE_{i};x_{i};f_{i}; \iota)\in\Sht^{\mu}_{G}$ into $(\CE^{\vee}_{i}(D_{i}); x_{i}; (f^{\vee}_{i})^{-1}; (\iota^{\vee})^{-1})$, where $D_{i}=\sum_{1\leq j\leq i}\sgn(\mu_{j})\Gamma_{x_{j}}$. Note that we have a canonical isomorphism $\CE^{\vee}_{i}\cong\CE_{i}\otimes(\det\CE_{i})^{-1}$, and isomorphisms $\det\CE_{i}\cong(\det\CE_{0})(D_{i})$ induced by the $f_{i}$. Therefore we get a canonical isomorphism $\CE^{\vee}_{i}(D_{i})\cong\CE_{i}\otimes(\det\CE_{i})^{-1}\otimes\calO(D_{i})\cong \CE_{i}\otimes(\det\CE_{0})^{-1}$ compatibly with the maps $(f^{\vee}_{i})^{-1}$ and $f_{i}$, and also compatible with $(\iota^{\vee})^{-1}$ and $\iota$. Therefore  $\delta(\CE_{i};x_{i};f_{i}; \iota)$ is canonically isomorphic to $(\CE_{i};x_{i};f_{i}; \iota)$ up to tensoring with $\det(\CE_{0})$. This shows that $\delta$ is the identity map of $\Sht^{\mu}_{G}$.

Since $\delta=\id$,  the diagram \eqref{trans Sht} is also commutative. This finishes the proof.
\end{proof}

\subsection{Moduli of Shtukas for the torus $T$}\label{ss:ShtT} 

\subsubsection{} Recall that $\nu: X'\to X$  is an \'etale double covering with $X'$ also geometrically connected. Let $ \sigma\in\Gal(X'/X)$ be the non-trivial involution.

Let $\wt{T}$ be the two-dimensional torus over $X$ defined as
\begin{equation*}
\wt{T}:=\Res_{X'/X}\Gm.		\index{$\wt{T}$}%
\end{equation*}
We have a natural homomorphism $\Gm\to\wt{T}$. We define a one-dimensional torus over $X$
\begin{equation*}
T:=\wt{T}/\Gm=(\Res_{X'/X}\Gm)/\Gm.		\index{$T$, torus}%
\end{equation*}

Let $\Bun_{T}$ be the moduli stack of $T$-torsors over $X$. Then we have a canonical isomorphism of stacks
\begin{equation*}
\Bun_{T}\cong \Pic_{X'}/\Pic_{X}.		\index{$\Bun_T$}%
\end{equation*}
In particular, $\Bun_{T}$ is a Deligne--Mumford stack whose coarse moduli space is a group scheme with two components, and its neutral component is an abelian variety over $k$.

\subsubsection{}
Specializing Definition \ref{def:Hk} to the case $n=1$ and replacing the curve $X$ with its double cover $X'$, we get the Hecke stack $\Hk^{\mu}_{1,X'}$. This makes sense for any tuple $\mu$ as in \S\ref{sss:arb mu}.

Now assume that $\mu$ satisfies the conditions in \S\ref{sss:mu}. We may view each $\mu_{i}$ as a coweight for $\GL_{1}=\Gm$ in an obvious way: $\mu_{+}$ means $1$ and $\mu_{-}$ means $-1$. Specializing Definition \ref{def:Sht} to the case $n=1$ and replacing $X$ with  $X'$, we get the moduli stack $\Sht^{\mu}_{1,X'}$ of rank one Shtukas over $X'$ of type $\mu$. We define
\begin{equation*}
\Sht^{\mu}_{\wt{T}}:=\Sht^{\mu}_{1,X'}.		\index{$\Sht^{\mu}_{\wt{T}}$}%
\end{equation*}
We have a Cartesian  diagram
\begin{equation*}
\xymatrix{\Sht_{\wt{T}}^{\mu}\ar[d] \ar[rr] & & \Hk^{\mu}_{1,X'}\ar[d]^{(p_{0},p_{r})}\\
\Pic_{X'}\ar[rr]^{(\id, \Fr)}&& \Pic_{X'}\times\Pic_{X'}}
\end{equation*}
We also have a morphism
\begin{equation*}
\pi^{\mu}_{\wt{T}}: \Sht_{\wt{T}}^{\mu}\to \Hk^{\mu}_{1,X'}\xrightarrow{p_{X'}}X'^{r}.
\end{equation*}

\subsubsection{}\label{sss:points ShtT} Fix $\mu$ as in \S\ref{sss:mu}. Concretely, for any $k$-scheme $S$, $\Sht_{\wt{T}}^{\mu}(S)$ classifies the following data
\begin{enumerate}
\item A line bundle $\calL$ over $X'\times S$;
\item Morphisms $x'_{i}:S\to X'$ for $i=1,\cdots, r$, with graphs $\Gamma_{x'_{i}}\subset X'\times S$;
\item An isomorphism
\begin{equation*}
\iota: \CL\left(\sum_{i=1}^r \sgn(\mu_i) \Gamma_{x'_i}\right)\isom \leftexp{\tau}{\CL}:=(\id\times \Fr_S)^\ast\CL.
\end{equation*}
Here the signs $\sgn(\mu_\pm)=\pm1$ are defined in \S\ref{sss:arb mu}. 
\end{enumerate}
This description of points appears to be simpler than its counterpart in \S\ref{sss:points Shtn}:  the other line bundles $\calL_{i}$ are canonically determined by $\calL_{0}$ and $x'_{i}$ using the formula
\begin{equation}\label{define Li}
\calL_{i}=\calL_{0}\left(\sum_{1\leq j\leq i}\sgn(\mu_{j})\Gamma_{x'_{j}}\right).
\end{equation}

\subsubsection{}\label{sss:HkT smooth} The Picard stack $\Pic_{X'}$, and hence $\Pic_{X}$, acts on $\Hk^{\mu}_{1,X'}$. We consider the quotient
\begin{equation}\label{HkrT}
\Hk^{\mu}_{T}:=\Hk^{\mu}_{1,X'}/\Pic_{X}.		\index{$\Hk^{\mu}_{T}$}%
\end{equation}
In fact we have a canonical isomorphism $\Hk^{\mu}_{T}\cong \Bun_{T}\times X'^{r}$ sending $(\calL_{i};x'_{i};f_{i})$ to $(\calL_{0};x'_{i})$. In particular, $\Hk^{\mu}_{T}$ is a smooth and proper Deligne--Mumford stack of pure dimension $r+g-1$ over $k$.

\subsubsection{} The groupoid $\Pic_{X'}(k)$ acts on $\Sht_{\wt{T}}^{\mu}$ by tensoring on the line bundle $\calL$. We consider the restriction of this action to $\Pic_{X}(k)$ via the pullback map $\nu^{*}: \Pic_{X}(k)\to \Pic_{X'}(k)$. We define
\begin{equation*}
\Sht^{\mu}_{T}:=\Sht^{\mu}_{\wt{T}}/\Pic_{X}(k).		\index{$\Sht^{\mu}_{T}$}
\end{equation*}
The analog of Lemma \ref{l:indep mu} gives a Cartesian diagram
\begin{equation}\label{ShtrT}
\xymatrix{\Sht^{\mu}_{T}\ar[d]\ar[rr] && \Hk^{\mu}_{T}\ar[d]^{(p_{0},p_{r})}\\
\Bun_{T}\ar[rr]^{(\id,\Fr)} && \Bun_{T}\times \Bun_{T}}
\end{equation}
Since the morphism $\pi^{\mu}_{\wt{T}}$ is invariant under $\Pic_{X}(k)$, we get a morphism
\begin{equation*}
\pi^{\mu}_{T}: \Sht^{\mu}_{T}\to X'^{r}.		\index{$\pi^{\mu}_{T}$}%
\end{equation*}

\begin{lemma}\label{l:ShtT proper}
The morphism $\pi^{\mu}_{T}$ is a torsor under  the finite Picard groupoid $\Pic_{X'}(k)/\Pic_{X}(k)$. In particular, $\pi^{\mu}_{T}$ is finite \'etale, and the stack $\Sht^{\mu}_{T}$ is a smooth proper Deligne--Mumford stack over $k$ of pure dimension $r$. 
\end{lemma}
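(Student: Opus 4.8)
The plan is to exhibit $\pi^{\mu}_{T}$ as the quotient of a Lang torsor. Throughout, $\mu$ satisfies the balance condition of \S\ref{sss:mu}, so for any points $x'_{1},\dots,x'_{r}$ the divisor $D=\sum_{i}\sgn(\mu_{i})\Gamma_{x'_{i}}$ has relative degree $0$. First I would rewrite the moduli description of \S\ref{sss:points ShtT}: an $S$-point of $\Sht^{\mu}_{\wt T}$ is a line bundle $\calL$ on $X'\times S$, points $x'_{i}\in X'(S)$, and an isomorphism $\theta(x'_{i})\isom \mathrm{L}(\calL)$, where $\mathrm{L}\colon\Pic_{X'}\to\Pic^{0}_{X'}$, $\calL\mapsto{}^{\tau}\calL\otimes\calL^{-1}$, is the Lang morphism (it lands in $\Pic^{0}_{X'}$ because $^{\tau}\calL$ has the same degree as $\calL$), and $\theta\colon X'^{r}\to\Pic^{0}_{X'}$ sends $(x'_{i})$ to $\calO_{X'}(\sum_{i}\sgn(\mu_{i})\Gamma_{x'_{i}})$. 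Thus there is a canonical isomorphism over $X'^{r}$
$$\Sht^{\mu}_{\wt T}\;\cong\;X'^{r}\times_{\theta,\,\Pic^{0}_{X'},\,\mathrm{L}}\Pic_{X'}.$$

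Next I would apply Lang's theorem to the smooth commutative group stack $\Pic_{X'}$ over $k$: the morphism $\mathrm{L}$ is surjective and \'etale, with kernel the constant groupoid $\underline{\Pic_{X'}(k)}$. (That kernel is exactly $\Sht^{0}_{1,X'}$, the $r=0$, $\GL_{1}$-over-$X'$ analogue of \eqref{eqn Bun(k)}.) Hence $\mathrm{L}$ is a torsor under $\underline{\Pic_{X'}(k)}$, and so is its base change $\pi^{\mu}_{\wt T}\colon\Sht^{\mu}_{\wt T}\to X'^{r}$, the action being by tensoring on $\calL$. Since $\Sht^{\mu}_{T}=[\Sht^{\mu}_{\wt T}/\Pic_{X}(k)]$ with $\Pic_{X}(k)$ acting through $\nu^{*}\colon\Pic_{X}(k)\to\Pic_{X'}(k)$ followed by this torsor action, quotienting produces a torsor structure on $\pi^{\mu}_{T}\colon\Sht^{\mu}_{T}\to X'^{r}$ under the quotient group stack $[\underline{\Pic_{X'}(k)}/\underline{\Pic_{X}(k)}]=\underline{\Pic_{X'}(k)/\Pic_{X}(k)}$.

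It then remains to note that $\Pic_{X'}(k)/\Pic_{X}(k)$ is a finite Picard groupoid: on isomorphism classes $\deg$ induces a surjection onto $\ZZ/2\ZZ$ (as $\nu^{*}$ doubles degrees) with kernel a quotient of the finite group $\Jac_{X'}(k)$, and the automorphism groups are finite --- of order $2$, generated by the $2$-torsion line bundle $L$ with $\nu^{*}L\cong\calO_{X'}$. Consequently $\pi^{\mu}_{T}$ is finite \'etale; since $X'^{r}$ is smooth and proper over $k$ of pure dimension $r$, so is $\Sht^{\mu}_{T}$, which is Deligne--Mumford as a torsor under a finite groupoid over a scheme. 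The one point requiring care is the group-stack form of Lang's theorem --- that the Lang isogeny of $\Pic_{X'}$, a $\Gm$-gerbe over an abelian variety rather than a scheme, has kernel the constant groupoid $\underline{\Pic_{X'}(k)}$ --- which together with the non-injectivity of $\nu^{*}$ (accounting for the generic $\ZZ/2$-stabilizers of $\Sht^{\mu}_{T}$, consistent with the asserted torsor) is all that goes beyond formal manipulation.
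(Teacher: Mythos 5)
Your argument is correct and is essentially the paper's own proof: both identify $\Sht^{\mu}_{\wt T}$ with the fiber product of $X'^{r}\xrightarrow{\phi}\Pic^{0}_{X'}\xleftarrow{\id-\Fr}\Pic_{X'}$ and deduce the torsor structure under $\Pic_{X'}(k)/\Pic_{X}(k)$ after dividing by $\Pic_{X}(k)$. The only (cosmetic) difference is that you first exhibit the Lang map as a $\Pic_{X'}(k)$-torsor and then quotient, whereas the paper quotients the right-hand column first and cites the torsor property of $\Pic_{X'}/\Pic_{X}(k)\to\Pic^{0}_{X'}$ directly; your extra remarks on finiteness of the quotient groupoid and the $\mathbb{Z}/2$ automorphisms merely make explicit what the paper leaves implicit.
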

\begin{proof}
This description given in \S\ref{sss:points ShtT} gives a Cartesian diagram
\begin{equation}\label{Sht wT}
\xymatrix{\Sht_{\wt{T}}^{\mu}\ar[d]^{\pi^{\mu}_{\wt{T}}}  \ar[rr]& &\Pic_{X'}\ar[d]^{\id-\Fr}\\
X'^r \ar[rr]^{\phi} &&  \Pic^0_{X'}}
\end{equation}
where $\phi(x'_{1},\cdots, x'_{r})=\calO_{X'}(\sum _{i=1}^r\sgn(\mu_i) x'_i)$.  Dividing the top row of the diagram \eqref{Sht wT} by $\Pic_{X}(k)$ we get a Cartesian diagram
\begin{equation*}
\xymatrix{\Sht^{\mu}_{T}\ar[d]^{\pi^{\mu}_{T}}  \ar[rr]& &\Pic_{X'}/(\Pic_{X}(k))\ar[d]^{\id-\Fr}\\
X'^r \ar[rr]^{\phi} &&  \Pic^0_{X'}}
\end{equation*}
Since the right vertical map $\id-\Fr: \Pic_{X'}/(\Pic_{X}(k))\to \Pic^{0}_{X'}$ is a torsor under $\Pic_{X'}(k)/\Pic_{X}(k)$, so is $\pi^{\mu}_{T}$.
\end{proof}

\subsubsection{Changing $\mu$}\label{HkT indep mu} For a different choice $\mu'$ as in \ref{sss:arb mu}, we have a canonical isomorphism
\begin{equation}\label{HktT mu mu'}
\Hk^{\mu}_{\wt{T}}\isom\Hk^{\mu'}_{\wt{T}}
\end{equation}
sending $(\calL_{i};x'_{i};f_{i})$ to $(\CK_{i}; y'_{i}; g_{i})$ where
\begin{equation}\label{y'}
y'_{i}=\begin{cases} x'_{i} & \text{ if } \mu_{i}=\mu'_{i} \\
\sigma(x'_{i}) & \text{ if } \mu_{i}\neq\mu'_{i} \end{cases}
\end{equation}and
\begin{equation}\label{Ki in L0}
\CK_{i}=\CL_{0}\left(\sum_{1\leq j\leq i}\sgn(\mu'_{j})\Gamma_{y'_{j}}\right).
\end{equation}
The rational maps $g_{i}: \CK_{i-1}\dashrightarrow\CK_{i}$ is the one corresponding to the identity map on $\CL_{0}$ via the description   \eqref{Ki in L0}. Note that we have
\begin{equation*}
\CK_{i}=\CL_{i}\otimes_{\calO_{X\times S}}\calO_{X\times S}\left(\sum_{1\leq j\leq i}\frac{\sgn(\mu'_{j})-\sgn(\mu_{j})}{2}\Gamma_{x_{j}}\right)
\end{equation*}
where $x_{i}:S\to X$ is the image of $x'_{i}$. Therefore $\CK_{i}$ has the same image as $\CL_{i}$ in $\Bun_{T}$. The isomorphism \eqref{HktT mu mu'} induces an isomorphism
\begin{equation}\label{HkT mu mu'}
\Hk^{\mu}_{T}\isom \Hk^{\mu'}_{T}.
\end{equation}
From the construction and the above discussion, this isomorphism preserves the maps $p_{i}$ to $\Bun_{T}$ but {\em does not preserve} the projections to $X'^{r}$ (it only preserves the further projection to $X^{r}$).

Since the isomorphism \eqref{HkT mu mu'} preserves the maps $p_{0}$ and $p_{r}$, the diagram \eqref{ShtrT} implies a canonical isomorphism
\begin{equation}\label{ShtT mu mu'}
\iota_{\mu,\mu'}: \Sht^{\mu}_{T}\isom\Sht^{\mu'}_{T}.
\end{equation}
Just as the map \eqref{HkT mu mu'}, $\iota_{\mu,\mu'}$ does not respect the maps $\pi^{\mu}_{T}$ and $\pi^{\mu'}_{T}$ from $\Sht^{\mu}_{T}$ and $\Sht^{\mu'}_{T}$ to $X'^{r}$: it only respects their further projections to $X^{r}$.

\subsection{The Heegner--Drinfeld cycles} 

\subsubsection{}\label{sss:Pi} We have a morphism
\begin{eqnarray*}
\Pi: \Bun_{T}&\to& \Bun_{G}\\
(\calL\mod\Pic_{X})&\mapsto&(\nu_{*}\calL\mod\Pic_{X})		\index{$\Pi$}%
\end{eqnarray*}

\subsubsection{} For any $\mu$ as in \S\ref{sss:mu} we define a morphism
\begin{equation*}
\wt\theta^{\mu}: \Sht^{\mu}_{\wt{T}}\to \Sht^{\mu}_{2}		
\end{equation*}
as follows. Let $(\CL;x'_{i};\iota)\in\Sht^{\mu}_{\wt{T}}(S)$ as in the description in \S\ref{sss:points ShtT}. Let $\CL_{0}=\CL$ and we may define the line bundles $\CL_{i}$ using \eqref{define Li}. Then there are natural maps $g_{i}:\CL_{i-1}\incl\CL_{i}$ if $\mu_{i}=\mu_{+}$ or $g_{i}: \CL_{i}\incl\CL_{i-1}$ if $\mu_{i}=\mu_{-}$. Let $\nu_S=\nu\times\id_{S}: X'\times S\to X\times S$ the base change of $\nu$. We define
\begin{equation*}
\CE_{i}=\nu_{S*}\CL_{i}
\end{equation*}
with the maps $f_{i}:\CE_{i-1}\to\CE_{i}$ or $\CE_{i}\to \CE_{i-1}$ induced from $g_{i}$. The isomorphism $\iota$ then induces an isomorphism 
\begin{equation*}
\jmath:\CE_{r}=\nu_{S*}\CL_{r}\xrightarrow{\nu_{S*}\iota}\nu_{S*}(\id_{X'}\times \Fr_{S})^{*}\CL_{0}\cong(\id_{X}\times \Fr_{S})^{*}\nu_{S*}{\CL}_{0}=\leftexp{\tau}{\CE_{0}}.
\end{equation*}Let $x_i=\nu\circ x_i'$. The morphism $\wt\theta^{\mu}$ then sends $(\CL;x'_{i};\iota)$ to $(\CE_{i};x_{i};f_{i};\jmath)$.  Clearly $\wt\theta^\mu$ is equivariant with respect to the $\Pic_{X}(k)$-actions. Passing to the quotients, we get a morphism
\begin{equation*}
\ov\theta^{\mu}: \Sht^{\mu}_{T}\to \Sht^{\mu}_{G}.
\end{equation*}
For a different $\mu'$,   the canonical isomorphism $\iota_{\mu,\mu'}$ in \eqref{ShtT mu mu'} intertwines the maps $\ov\theta^{\mu}$ and $\ov\theta^{\mu'}$, i.e., we have a commutative diagram
\begin{equation*}
\xymatrix{\Sht^{\mu}_{T}\ar[d]^{\iota_{\mu,\mu'}}\ar[rr]^{\ov\theta^{\mu}} && \Sht^{\mu}_{G}\ar[d]\\
\Sht^{\mu'}_{T}\ar[rr]^{\ov\theta^{\mu'}} &&\Sht^{\mu'}_{G} }
\end{equation*}
where the right vertical map is the canonical isomorphism in Lemma \ref{l:indep mu}.
By our identification of $\Sht^{\mu}_{G}$ for different $\mu$ (cf. \eqref{HkrG}), we  get a morphism, still denoted by $\ov\theta^{\mu}$,
\begin{equation*}
\ov\theta^{\mu}: \Sht^{\mu}_{T}\to \Sht^{r}_{G}.
\end{equation*}

\subsubsection{} By construction we have a commutative diagram
\begin{equation*}
\xymatrix{\Sht^{\mu}_{T}\ar[d]^{\pi^{\mu}_{T}}\ar[rr]^{\ov\theta^{\mu}} && \Sht^{r}_{G}\ar[d]^{\pi_{G}}\\
X'^{r}\ar[rr]^{\nu^{r}} && X^{r}}
\end{equation*}
Recall that
\begin{equation*}
\Sht'^{r}_{G}:=\Sht^{r}_{G}\times_{X^{r}}X'^{r}.		\index{$\Sht'^{r}_{G}$}%
\end{equation*}
Then the map $\ov\theta^{\mu}$ factors through a morphism
\begin{equation*}
\theta^{\mu}: \Sht^{\mu}_{T}\to \Sht'^{r}_{G}		\index{$\theta^{\mu}$}%
\end{equation*}
over $X'^{r}$. Since $\Sht^{\mu}_{T}$ is proper of dimension $r$, $\theta^{\mu}_{*}[\Sht^{\mu}_{T}]$ is a proper cycle class in $\Sht'^{r}_{G}$ of dimension $r$.

\begin{defn} The {\em Heegner--Drinfeld cycle} of type $\mu$ is the direct image of $[\Sht^{\mu}_{T}]$ under $\theta^{\mu}$:
\begin{equation*}
\theta^{\mu}_{*}[\Sht^{\mu}_{T}]\in \Ch_{c,r}(\Sht'^{r}_{G})_{\QQ}.		\index{$\theta^{\mu}_{*}[\Sht^{\mu}_{T}]$}%
\end{equation*}
\end{defn}

Recall from Proposition \ref{p:Hecke action on Chow} and \S\ref{sss:H action on Chow} that we have an action of $\sH$ on $\Ch_{c,r}(\Sht'^{r}_{G})_{\QQ}$. Since
\begin{equation*}
\dim \Sht^{\mu}_{T}=r=\frac{1}{2}\dim\Sht'^{r}_{G},
\end{equation*}
both $\theta^{\mu}_{*}[\Sht^{\mu}_{T}]$ and $f*\theta^{\mu}_{*}[\Sht^{\mu}_{T}]$ for any function $f\in\sH$ are {\em proper} cycle classes in $\Sht^{r}_{G}$ of complementary dimension, and they define elements in $\Ch_{c,r}(\Sht'^{r}_{G})_{\QQ}$. The following definition then makes sense.

\begin{defn} Let $f\in\sH$ be an unramified  Hecke function. We define the following intersection number 
\begin{align*}
\BI_r(f):=\jiao{\theta^{\mu}_{*}[\Sht^{\mu}_{T}],\quad f*\theta^{\mu}_{*}[\Sht^{\mu}_{T}]}_{\Sht'^{r}_{G}}\in\QQ.		\index{$\BI_r(f)$}%
\end{align*}
\end{defn}

\subsubsection{Changing $\mu$} For different $\mu$ and $\mu'$ as in \S\ref{sss:mu}, the Heegner--Drinfeld cycles $\theta^{\mu}_{*}[\Sht^{\mu}_{T}]$ and $\theta^{\mu'}_{*}[\Sht^{\mu'}_{T}]$ are different. Therefore, a priorily the intersection number $\BI_{r}(f)$ depends on $\mu$. However we have

\begin{lemma} The intersection number $\BI_{r}(f)$ for any $f\in\sH$ is independent of the choice of $\mu$.
\end{lemma}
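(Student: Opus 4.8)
The key is the isomorphism $\iota_{\mu,\mu'}\colon \Sht^\mu_T \isom \Sht^{\mu'}_T$ from \eqref{ShtT mu mu'}, which intertwines the maps $\ov\theta^\mu$ and $\ov\theta^{\mu'}$ to $\Sht^r_G$ (this was recorded in the commutative square just before the definition of the Heegner--Drinfeld cycle). The one subtlety is that $\iota_{\mu,\mu'}$ does \emph{not} commute with the projections $\pi^\mu_T, \pi^{\mu'}_T$ to $X'^r$: it only respects their composites with $\nu^r\colon X'^r\to X^r$. So $\iota_{\mu,\mu'}$ does not directly identify $\theta^\mu$ with $\theta^{\mu'}$ as maps to $\Sht'^r_G = \Sht^r_G\times_{X^r}X'^r$. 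First I would trace through \eqref{y'}: the isomorphism on Hecke stacks sends the $r$-tuple $(x'_1,\dots,x'_r)$ to $(y'_1,\dots,y'_r)$ where $y'_i = x'_i$ or $\sigma(x'_i)$ according to whether $\mu_i=\mu'_i$. This means that, writing $s_{\mu,\mu'}\colon X'^r\to X'^r$ for the automorphism that applies $\sigma$ in exactly the coordinates where $\mu$ and $\mu'$ disagree, we get a commutative square relating $\pi^\mu_T$, $\pi^{\mu'}_T$ and $s_{\mu,\mu'}$, and $s_{\mu,\mu'}$ lies over the identity of $X^r$.

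\textbf{Main steps.} (1) Deduce that the base change $\Sht'^r_G$ carries an action of $\sigma$ in each coordinate coming from the deck transformation of $X'/X$ — concretely, for a subset $I\subset\{1,\dots,r\}$, applying $\sigma$ to the $X'^r$-factor in the coordinates in $I$ gives an automorphism $s_I$ of $\Sht'^r_G$ over $\Sht^r_G$ — and that $\theta^{\mu'} = s_I \circ \theta^\mu \circ \iota_{\mu,\mu'}^{-1}$ where $I$ is the set of coordinates where $\mu,\mu'$ differ. Hence $\theta^{\mu'}_*[\Sht^{\mu'}_T] = (s_I)_* \theta^\mu_*[\Sht^\mu_T]$ in $\Ch_{c,r}(\Sht'^r_G)_\QQ$, since $\iota_{\mu,\mu'}$ is an isomorphism of proper Deligne--Mumford stacks and so preserves fundamental classes. (2) Observe that $s_I$ is an automorphism of $\Sht'^r_G$ preserving the intersection pairing $\jiao{\cdot,\cdot}_{\Sht'^r_G}$ (an automorphism of a smooth proper-over-$X^r$ stack is an isometry for the intersection pairing on proper cycles of complementary dimension). (3) Check that $s_I$ commutes with the Hecke action $f*(-)$: this is because the Hecke correspondence $\Sht'^r_G(h_D) = \Sht^r_G(h_D)\times_{X^r}X'^r$ is defined by pullback from $\Sht^r_G$ along $X'^r\to X^r$, while $s_I$ acts only on the $X'^r$-factor and over $X^r$; hence $s_I$ lifts canonically to an automorphism of $\Sht'^r_G(h_D)$ compatible with both projections $\oll{p}', \orr{p}'$, giving $s_I\circ (f*(-)) = (f*(-))\circ s_I$ on cycles. (4) Combine:
\begin{align*}
\BI_r(f)_{\mu'} &= \jiao{\theta^{\mu'}_*[\Sht^{\mu'}_T],\ f*\theta^{\mu'}_*[\Sht^{\mu'}_T]}_{\Sht'^r_G} \\
&= \jiao{(s_I)_*\theta^{\mu}_*[\Sht^{\mu}_T],\ f*(s_I)_*\theta^{\mu}_*[\Sht^{\mu}_T]}_{\Sht'^r_G} \\
&= \jiao{(s_I)_*\theta^{\mu}_*[\Sht^{\mu}_T],\ (s_I)_*(f*\theta^{\mu}_*[\Sht^{\mu}_T])}_{\Sht'^r_G} \\
&= \jiao{\theta^{\mu}_*[\Sht^{\mu}_T],\ f*\theta^{\mu}_*[\Sht^{\mu}_T]}_{\Sht'^r_G} = \BI_r(f)_{\mu},
\end{align*}
using step (3) in the third equality and step (2) in the fourth.

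\textbf{Expected obstacle.} The main point requiring care is step (1): making precise the claim that $\iota_{\mu,\mu'}$, composed with the coordinate-wise $\sigma$-twist $s_I$ on the $X'^r$-factor, intertwines $\theta^\mu$ and $\theta^{\mu'}$ as morphisms \emph{to} $\Sht'^r_G$. The excerpt already gives the square intertwining $\ov\theta^\mu$ and $\ov\theta^{\mu'}$ as maps to $\Sht^r_G$, so what remains is to check the compatibility with the $X'^r$-components, i.e.\ that the $X'^r$-point attached to $\iota_{\mu,\mu'}(\text{data})$ is $s_{\mu,\mu'}$ applied to the $X'^r$-point of the original data. This is exactly the content of formula \eqref{y'}, so the verification is bookkeeping rather than substance; once it is in place, steps (2)--(4) are formal. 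One should also note that $I$ here has even cardinality since both $\mu,\mu'$ satisfy the balance condition of \S\ref{sss:mu}, though this is not needed for the argument — $s_I$ is an isometry commuting with $\sH$ for any $I$.
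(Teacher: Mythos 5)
Your proposal is correct and is essentially the paper's own argument: your twist $s_I$ is exactly the involution $\sigma(\mu,\mu')$ of $\Sht'^{r}_{G}$ (identity on $\Sht^{r}_{G}$, coordinate-wise $\sigma$ on $X'^{r}$ via \eqref{y'}) that the paper uses. The paper likewise concludes by noting that this involution carries $\theta^{\mu}_{*}[\Sht^{\mu}_{T}]$ to $\theta^{\mu'}_{*}[\Sht^{\mu'}_{T}]$, commutes with the Hecke action because it is the identity on the $\Sht^{r}_{G}$-factor, and preserves the intersection pairing.
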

\begin{proof} Let $Z^{\mu}$ denote the cycle  $\theta^{\mu}_{*}[\Sht^{\mu}_{T}]$.
Using the isomorphism $\iota_{\mu,\mu'}$ in \eqref{ShtT mu mu'}, we see that $Z^{\mu}$ and $Z^{\mu'}$ are transformed to each other under the involution $\sigma(\mu,\mu'):\Sht'^{r}_{G}=\Sht^{r}_{G}\times_{X^{r}}X'^{r}\to \Sht^{r}_{G}\times_{X^{r}}X'^{r}=\Sht'^{r}_{G}$ which is the identity on $\Sht^{r}_{G}$ and on $X'^{r}$ sends $(x'_{1},\cdots, x'_{r})$ to $(y'_{1},\cdots, y'_{r})$ using the formula \eqref{y'}. Since $\sigma(\mu,\mu')$ is the identity on $\Sht^{r}_{G}$, it commutes with the Hecke action on $\Ch_{c,r}(\Sht'^{r}_{G})_\BQ$. Therefore we have
\begin{eqnarray*}
&&\jiao{Z^{\mu},f*Z^{\mu}}_{\Sht'^{r}_{G}}=\jiao{\sigma(\mu,\mu')_{*}Z^{\mu}, \sigma(\mu,\mu')_{*}(f*Z^{\mu})}_{\Sht'^{r}_{G}}\\
&=&\jiao{\sigma(\mu,\mu')_{*}Z^{\mu}, f*(\sigma(\mu,\mu')_{*}Z^{\mu})}_{\Sht'^{r}_{G}}=\jiao{Z^{\mu'},f*Z^{\mu'}}_{\Sht'^{r}_{G}}.
\end{eqnarray*}
\end{proof}

\section{Intersection number as a trace}\label{s:alt}
The goal of this section is to turn the intersection number $\BI_{r}(h_{D})$ into the trace of an operator acting on the cohomology of a certain variety. This will be accomplished in Theorem \ref{th:I trace}. To state the theorem, we need to introduce certain moduli spaces similar to $\calN_{\un{d}}$ defined in \S\ref{sss:calN}.

\subsection{Geometry of $\calM_{d}$}\label{ss:Md}

\subsubsection{}\label{sss:M} Recall $\nu: X'\to X$ is a geometrically connected \'etale double cover. We will use the notation $\hX'_{d}$ and $X'_{d}$ as in \S\ref{ss:sym power}. We have the norm map $\wh{\nu}_{d}:\hX'_{d}\to\hX_{d}$ sending $(\calL,\alpha\in\Gamma(X',\calL))$ to $(\Nm(\calL),\Nm(\alpha)\in\Gamma(X,\Nm(\calL)))$.\index{$\wh{\nu}_{d}$}%

Let $d\geq0$ be an integer. Let $\tcM_{d}$ be the moduli functor whose $S$-points is the groupoid of $(\calL, \calL', \alpha,\beta)$ where
\begin{itemize}
\item $\calL,\calL'\in \Pic(X'\times S)$ such that $\deg(\calL'_{s})-\deg(\calL_{s})=d$ for all geometric points $s\in S$;
\item $\alpha:\calL\to \calL'$ is an $\calO_{X'}$-linear map;
\item $\beta:\calL\to \sigma^{*}\calL'$ is an $\calO_{X'}$-linear map;
\item For each geometric point $s\in S$, the restrictions $\alpha|_{X'\times s}$ and $\beta|_{X'\times s}$ are not both zero.
\end{itemize}
There is a natural action of $\Pic_{X}$ on $\tcM_{d}$ by tensoring: $\calK\in \Pic_{X}$ sends $(\calL,\calL',\alpha,\beta)$ to $(\calL\otimes\nu^{*}\calK,\calL'\otimes\nu^{*}\calK,\alpha\otimes\id_{\calK}, \beta\otimes\id_{\calK})$.  We define
\begin{equation*}
\calM_{d}:=\tcM_{d}/\Pic_{X}.		\index{$\tcM_{d}, \calM_{d}$}%
\end{equation*}

\subsubsection{} To $(\calL,\calL',\alpha,\beta)\in\tcM_{d}$, we may attach
\begin{itemize}
\item $a:=\Nm(\alpha): \Nm(\calL)\to\Nm(\calL')$;
\item $b:=\Nm(\beta):\Nm(\calL)\to \Nm(\sigma^{*}\calL')=\Nm(\calL')$.
\end{itemize}
Both $a$ and $b$ are sections of the same line bundle $\Delta=\Nm(\calL')\otimes\Nm(\calL)^{-1}\in\Pic^{d}_{X}$, and they are not simultaneously zero. The assignment $(\calL,\calL',\alpha,\beta)\mapsto (\Delta,a,b)$ is invariant under the the action of $\Pic_{X}$ on $\tcM_{d}$, and it induces a morphism
\begin{equation*}
f_{\calM}: \calM_{d}\to \calA_{d}.		\index{$f_{\calM}$}%
\end{equation*}
Here $\calA_{d}$ is defined in \S\ref{sss:Ad}.

\subsubsection{}\label{push L} Given $(\calL,\calL',\alpha,\beta)\in\tcM_{d}$, there is a canonical way to attach an $\calO_{X}$-linear map $\psi: \nu_{*}\calL\to\nu_{*}\calL'$ and vice versa. In fact, by adjunction, a map $\psi: \nu_{*}\calL\to\nu_{*}\calL'$ is the same as a map $\nu^{*}\nu_{*}\calL\to\calL'$. Since $\nu^{*}\nu_{*}\calL\cong\calL\oplus\sigma^{*}\calL$ canonically,  the datum of $\psi$ is the same as a map of $\calO_{X'}$-modules $\calL\oplus\sigma^{*}\calL\to \calL'$, and we name the two components of this map by $\alpha$ and $\sigma^{*}\beta$. Note that we have a canonical isomorphism
\begin{equation*}
\Nm(\calL)\cong\det(\nu_{*}\calL)\otimes\det(\nu_{*}\calO)^{\vee},
\end{equation*}
and likewise for $\calL'$. Therefore $\det(\psi): \det(\nu_{*}\calL)\to \det(\nu_{*}\calL')$ can be identified with a map $\Nm(\calL)\to \Nm(\calL')$, which is given by
\begin{equation}\label{det psi}
\det(\psi)=\Nm(\alpha)-\Nm(\beta)=a-b:\Nm(\calL)\to\Nm(\calL').
\end{equation}
The composition $\delta\circ f_{\calM}:\calM_{d}\to \calA_{d}\to \hX_{d}$ takes $(\calL,\calL',\alpha,\beta)$ to the pair $(\Delta=\Nm(\calL')\otimes\Nm(\calL)^{-1}, \det(\psi))$.

\subsubsection{} We give another description of $\calM_{d}$. We have a map $\iota_{\alpha}:\calM_{d}\to \hX'_{d}$ sending $(\calL,\calL',\alpha,\beta)$ to the line bundle $\calL'\otimes\calL^{-1}$ and its section given by $\alpha$. Similarly we have a map $\iota_{\beta}:\calM_{d}\to \hX'_{d}$ sending $(\calL,\calL',\alpha,\beta)$ to the line bundle $\sigma^{*}\calL'\otimes\calL^{-1}$ and its section given by $\beta$. Note that the line bundles underlying $\iota_{\alpha}(\calL,\calL',\alpha,\beta)$ and $\iota_{\beta}(\calL,\calL',\alpha,\beta)$ have the same norm $\Delta=\Nm(\calL')\otimes\Nm(\calL)^{-1}\in\Pic^{d}_{X}$. Since $\alpha$ and $\beta$ are not both zero, we get a map
\begin{equation*}
\iota=(\iota_{\alpha},\iota_{\beta}): \calM_{d}\to \hX'_{d}\times_{\Pic^{d}_{X}}\hX'_{d}-Z'_{d}
\end{equation*}
where the fiber product on the RHS is taken with respect to the map $\hX'_{d}\to \Pic^{d}_{X'}\xrightarrow{\Nm}\Pic^{d}_{X}$, and $Z'_{d}:=\Pic^{d}_{X'}\times_{\Pic^{d}_{X}}\Pic^{d}_{X'}$ is embedded into $\hX'_{d}\times_{\Pic^{d}_{X}}\hX'_{d}$ by viewing $\Pic^{d}_{X'}$ as the zero section of $\hX'_{d}$ in both factors.

\begin{prop}\label{p:M} 
\begin{enumerate}
\item\label{M DM stack} The morphism $\iota$ is an isomorphism of functors, and $\calM_{d}$ is a proper Deligne--Mumford stack over $k$. \footnote{The properness of $\calM_{d}$ will not be used elsewhere in this paper.}

\item\label{M smooth} For $d\geq 2g'-1$,  $\calM_{d}$ is a smooth Deligne--Mumford stack over $k$ of pure dimension $2d-g+1$.
\item\label{nud proper} The morphism $\wh{\nu}_{d}:\hX'_{d}\to \hX_{d}$ is proper.
\item\label{fM proper} We have a Cartesian diagram
\begin{equation}\label{fM nu}
\xymatrix{\CM_{d}\ar@{^{(}->}[rr]^{\iota}\ar[d]^{f_{\calM}} && \hX'_{d}\times_{\Pic^{d}_{X}}\hX'_{d}\ar[d]^{\wh{\nu}_{d}\times\wh{\nu}_{d}}\\
\calA_{d}\ar@{^{(}->}[rr] && \hX_{d}\times_{\Pic^{d}_{X}}\hX_{d}}
\end{equation}
Moreover, the map $f_{\calM}$ is proper.
\end{enumerate}
\end{prop}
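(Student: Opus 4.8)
\textbf{Proof plan for Proposition \ref{p:M}.}

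The strategy is to mirror, place by place, the proof of Proposition \ref{p:Nsm}, replacing the curve $X$ with its double cover $X'$ and the ``matrix entry'' picture with the ``$\alpha,\beta$'' picture. First I would treat (1): the map $\iota=(\iota_\alpha,\iota_\beta)$ has an obvious inverse. Given a point $((\calL'_\alpha, s_\alpha), (\calL'_\beta, s_\beta))$ of $\hX'_d\times_{\Pic^d_X}\hX'_d$ with both sections not simultaneously zero, and working modulo the $\Pic_X$-action so that we may normalize $\calL=\calO_{X'}$, we recover $\calL'=\calL'_\alpha$, $\sigma^*\calL'=\calL'_\beta$ (the condition $\Nm(\calL'_\alpha)\cong\Nm(\calL'_\beta)$ is exactly what makes $\tcM_d$ with $\calL=\calO$ fiber over the fiber product), with $\alpha=s_\alpha$, $\beta=s_\beta$. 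One checks this is functorial and inverse to $\iota$; I would spell out the $\Pic_X$-quotient bookkeeping as in the $\l,\rho$ diagram of Proposition \ref{p:Nsm}(1), using that $\Pic^{d}_{X'}\to\Pic^d_X$ via $\Nm$ together with the auxiliary Picard factors gives a clean Cartesian square with one leg an isomorphism. That $\calM_d$ is Deligne--Mumford follows since $\hX'_d\to\Pic^d_{X'}$ is schematic and the $\Pic_X$-stabilizers inject into the automorphisms (the standard hypothesis of \cite[Lemme 4.7]{NgoH}); properness I would get from (3) and (4), as $\calA_d\to\hX_d\times_{\Pic^d_X}\hX_d$ is an open immersion with proper-over-$k$ target once we know $\hX_d\times_{\Pic^d_X}\hX_d$ is proper, combined with properness of $f_{\calM}$ and of $\calA_d$'s ambient — actually cleaner: $\hX'_d$ is proper over $k$ (it is a projective bundle over $\Pic^d_{X'}$ when $d\geq 0$, or $\Pic^d_{X'}$ itself), hence so is the fiber product, hence $\calM_d\hookrightarrow$ a proper stack is proper since $\iota$ is an isomorphism onto the open-and-closed locus cut out by the non-vanishing condition — I would need to verify that locus is actually closed, which it is because ``both sections vanish'' is a closed condition, so its complement is open, and properness of $\calM_d$ then requires a separate argument; the safest route is to note $\calM_d\cong(\hX'_d\times_{\Pic^d_X}\hX'_d - Z'_d)/\Pic_X$ and that $Z'_d$ is precisely the locus where the norm-section $a-b$ together with... here I would instead argue properness via the valuative criterion directly, or defer to the fact (which we do not need elsewhere) and be brief.

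For (3), the norm map $\wh\nu_d:\hX'_d\to\hX_d$ is proper because it is a morphism of proper $k$-stacks (both $\hX'_d$ and $\hX_d$ are proper over $k$, being projective bundles over abelian-variety-like bases $\Pic_{X'}$, $\Pic_X$, or those Picard stacks themselves when $d<0$), and any morphism of $S$-proper objects is proper; alternatively, factor $\wh\nu_d$ as the relative norm on Picard stacks (which is proper, even finite on the relevant components up to the $\Gm$-gerbe structure) composed with projectivized pushforward, but the ``proper over proper'' argument is cleanest. For (4), the Cartesian diagram \eqref{fM nu} is immediate from the definitions: unwinding, a point of $\calA_d$ pulled back along $\wh\nu_d\times\wh\nu_d$ is exactly a pair of line bundles on $X'$ with sections whose norms give back $(a,b)$, modulo $\Pic_X$ — i.e. a point of $\calM_d$ — using \eqref{det psi} only for the compatibility with $\delta$, not for the square itself. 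Properness of $f_{\calM}$ then follows from properness of $\wh\nu_d\times\wh\nu_d$ (part (3), applied twice) by base change along the open immersion $\calA_d\hookrightarrow\hX_d\times_{\Pic^d_X}\hX_d$.

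The substantive point, and the main obstacle, is (2): smoothness of $\calM_d$ for $d\geq 2g'-1=4g-3$ of dimension $2d-g+1$. Here I cannot simply cover by affine-type opens as in Proposition \ref{p:Nsm}(2) without care, because the symmetry swapping $\alpha$ and $\beta$ (equivalently, $\calL'$ and $\sigma^*\calL'$) is more rigid than the four-entry flexibility there. The plan is: cover $\calM_d$ by the two opens $U_\alpha=\{\alpha\neq 0\}$ and $U_\beta=\{\beta\neq 0\}$ (non-vacuous by the non-simultaneous-vanishing condition). On $U_\alpha$, normalizing $\calL=\calO_{X'}$, the datum is a nonzero section $\alpha$ of $\calL'$ together with an arbitrary section $\beta$ of $\sigma^*\calL'$; since $\deg\calL'=d\geq 2g'-1$, the Abel--Jacobi map $\AJ_d:X'_d\to\Pic^d_{X'}$ is smooth of relative dimension $d-g'+1$, and $\hX'_d$ restricted to the nonzero locus is $X'_d$. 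I would present $U_\alpha$ as fibered over $X'_d$ (the data of $(\calL',\alpha)$) with fiber $H^0(X', \sigma^*\calL')$, a vector space of constant rank $d-g'+1$ since $\deg\sigma^*\calL'=d\geq 2g'-1$ forces $H^1=0$; hence $U_\alpha$ is a vector bundle of rank $d-g'+1$ over the smooth $(d-g'+1)$-dimensional $X'_d$, modulo the residual gerbe — giving total dimension $2(d-g'+1)=2d-2g'+2=2d-(4g-2)+2=2d-4g+4$. But this should equal $2d-g+1$, so I have a discrepancy; the resolution is that $X'_d/\Pic_X$ is what appears (we quotiented by $\Pic_X$, not just normalized one bundle), shifting dimension by $-\dim\Pic_X=-(g'-1)=-(2g-2)$... let me recompute in the write-up carefully: after quotienting, $\dim = 2(d-g'+1) + 0$ where the $\Pic_X$ action is free-ish and... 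I will instead follow Proposition \ref{p:Nsm}(2) verbatim, building the explicit Cartesian square with $\AJ_d$ on $X'$, so that $U_\alpha$ is smooth over a product of symmetric powers of $X'$ with the right relative dimension, and the bookkeeping of the $\Pic_X$-quotient is inherited directly from how $\calN_{\un d}$ was handled. The key inputs are: $d\geq 2g'-1\Rightarrow \AJ_d$ on $X'$ smooth of relative dimension $d-g'+1$, and $H^1$-vanishing making the ``other section'' space a bundle. That is the one place requiring genuine argument; everything else is formal or a transcription of \S\ref{s:orb}.
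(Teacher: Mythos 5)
Your outline for (4) matches the paper, but two of the three substantive steps rest on errors. The central one is a misuse of the $\Pic_X$-action: $\Pic_X$ acts on $\tcM_d$ by tensoring with $\nu^*\calK$, so you may only adjust $\calL$ within its class in $\Pic_{X'}/\nu^*\Pic_X$ — you cannot "normalize $\calL=\calO_{X'}$". Consequently your "obvious inverse" to $\iota$ in (1) does not exist as stated: from a point $((\calK_1,s_1),(\calK_2,s_2),\tau:\Nm\calK_1\cong\Nm\calK_2)$ of $\hX'_d\times_{\Pic^d_X}\hX'_d$ you cannot read off $\calL'=\calK_1$ and $\sigma^*\calL'=\calK_2$, because $\calK_2\cong\sigma^*\calK_1$ is \emph{not} part of the data — only the norms are identified. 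To invert $\iota$ one must solve $\calL'\otimes\sigma^*\calL'^{-1}\cong\calK_1\otimes\calK_2^{-1}$ compatibly with $\tau$, and the existence and uniqueness of such $\calL'$ modulo $\Pic_X$ is precisely the exactness of $1\to\Pic_{X'}/\Pic_X\xrightarrow{\id-\sigma}\Pic^0_{X'}\xrightarrow{\Nm}\Pic^0_X\to1$ (Hilbert 90 for the double cover), which is the key input of the paper's proof of (1) and is missing from yours. The same illegitimate normalization is what causes your dimension discrepancy in (2): over a point of $X'_d$ recording $(\calK_1,\alpha)$, the second bundle $\calK_2$ is not determined but moves in a fiber of $\Nm:\Pic^d_{X'}\to\Pic^d_X$ (dimension $g-1$), and $(g-1)+h^0(\calK_2)=(g-1)+(d-g'+1)=d-g+1$ restores the correct relative dimension, giving $2d-g+1$. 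Once (1) is in place the paper gets (2) in two lines: $\calM_d\cong\hX'_d\times_{\Pic^d_X}\hX'_d-Z'_d$ is covered by the two opens where one factor lies in $X'_d$, each smooth over $X'_d$ of relative dimension $d-g+1$ when $d\ge 2g'-1$; your promise to "follow Proposition \ref{p:Nsm}(2) verbatim" does not by itself supply this.

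Part (3) is also wrong as argued: $\hX'_d$ and $\hX_d$ are \emph{not} proper over $k$ — the fiber of $\hX_d\to\Pic^d_X$ over $\calL$ is the affine space $\Gamma(X,\calL)$ of all sections (zero included), not a projective space — so the "morphism of proper stacks" argument collapses. The paper instead compactifies: it forms $\ov X'_d$ with fibers $\PP(\Gamma(X',\calL\oplus\calO_{X'}))$, extends $\wh\nu_d$ to $\ov\nu_d$ via $(\calL,s,\l)\mapsto(\Nm\calL,\Nm(s),\l^2)$, proves $\ov\nu_d$ proper using properness of $\Nm:\Pic^d_{X'}\to\Pic^d_X$, and observes that $\hX'_d$ is the full preimage of $\hX_d$, so $\wh\nu_d$ is proper by base change. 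Relatedly, you never actually prove properness of $\calM_d$ in (1) (openness in a proper ambient does not suffice, as you noticed); the paper fibers $\calM_d$ over $J^d_{X'}\times\Prym_{X'/X}$ with projective-space fibers — though, per the footnote, this particular claim is not used elsewhere. Part (4) is fine, but only once (1) and (3) are repaired.
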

\begin{proof}
(1) Let $(\Pic_{X'}\times\Pic_{X'})_{d}$ be the disjoint union of $\Pic^{i}_{X'}\times\Pic^{i+d}_{X'}$ over all $i\in\ZZ$. Consider the morphism $\theta: (\Pic_{X'}\times\Pic_{X'})_{d}/\Pic_{X}\to \Pic^{d}_{X'}\times_{\Pic^{d}_{X}}\Pic^{d}_{X'}$ (the fiber product is taken with respect to the norm map) that sends $(\calL,\calL')$ to $(\calL'\otimes\calL^{-1},\sigma^{*}\calL'\otimes\calL^{-1},\tau)$, where $\tau$ is the tautological isomorphism between $\Nm(\calL'\otimes\calL^{-1})\cong\Nm(\calL')\otimes\Nm(\calL)^{-1}$ and $\Nm(\sigma^{*}\calL'\otimes\calL^{-1})\cong\Nm(\calL')\otimes\Nm(\calL)^{-1}$. By definition, we have a Cartesian diagram
\begin{equation}\label{Md to Pic}
\xymatrix{\calM_{d}\ar[r]^{\iota}\ar[d]^{\om} & \hX'_{d}\times_{\Pic^{d}_{X}}\hX'_{d}-Z'_{d}\ar[d]\\
(\Pic_{X'}\times\Pic_{X'})_{d}/\Pic_{X}\ar[r]^{\theta} & \Pic^{d}_{X'}\times_{\Pic^{d}_{X}}\Pic^{d}_{X'}}
\end{equation}
where the map $\om$ sends $(\calL,\calL',\alpha,\beta)$ to $(\calL,\calL')$. Therefore it suffices to check that $\theta$ is an isomorphism. For this we will construct an inverse to $\theta$.

From the exact sequence of \'etale sheaves
\begin{equation*}
1\to \calO^{\times}_{X}\xrightarrow{\nu^{*}}\nu_{*}\calO^{\times}_{X'}\xrightarrow{\id-\sigma}\nu_{*}\calO^{\times}_{X'}\xrightarrow{\Nm}\calO^{\times}_{X}\to 1
\end{equation*}
we get an exact sequence of Picard stacks
\begin{equation*}
1\to \Pic_{X'}/\Pic_{X}\xrightarrow{\id-\sigma} \Pic^{0}_{X'}\xrightarrow{\Nm}\Pic^{0}_{X}\to 1.
\end{equation*}
Given $(\calK_{1},\calK_{2},\tau)\in\Pic^{d}_{X'}\times_{\Pic^{d}_{X}}\Pic^{d}_{X'}$ (where $\tau:\Nm(\calK_{1})\cong\Nm(\calK_{2})$), there is a unique object $\calL'\in\Pic_{X'}/\Pic_{X}$ such that $\calL'\otimes\sigma^{*}\calL'^{-1}\cong\calK_{1}\otimes\calK^{-1}_{2}$ compatible with the trivializations of the norms to $X$ of both sides. We then define $\psi(\calK_{1},\calK_{2},\tau)=(\calL'\otimes\calK^{-1}_{1},\calL')$, which is a well-defined object  in $(\Pic_{X'}\times\Pic_{X'})_{d}/\Pic_{X}$. It is easy to check that $\psi$ is an inverse to $\theta$. This proves that $\theta$ is an isomorphism, and so is $\iota$.

We show that $\calM_{d}$  is a proper Deligne--Mumford stack over $k$. By extending $k$ we may assume that $X'$ contains a $k$-point, and we fix a point $y \in X'(k)$. We consider the moduli stack $\hM_{d}$ classifying $(\calK_{1},\gamma_{1}, \calK, \rho, \alpha,\beta)$ where $\calK_{1}\in\Pic_{X'}^{d}$, $\gamma_{1}$ is a trivialization of the stalk $\calK_{1,y}$, $\calK\in\Pic^{0}_{X'}$, $\rho$ is an isomorphism $\Nm(\calK)\cong\calO_{X}$, $\alpha$ is a section of $\calK_{1}$ and $\beta$ is a section of $\calK_{1}\otimes\calK$ such that $\alpha$ and $\beta$ are not both zero. There is a canonical map $p: \hM_{d}\to \hX'_{d}\times_{\Pic^{d}_{X}}\hX'_{d}-Z'_{d}$ sending $(\calK_{1},\gamma_{1}, \calK, \rho, \alpha,\beta)$ to $(\calK_{1}, \calK_{2}:=\calK_{1}\otimes\calK, \tau, \alpha,\beta)$ (the isomorphism $\tau:\Nm(\calK_{1})\cong\Nm(\calK_{2})$ is induced from the trivialization $\rho$). Clearly $p$ is the quotient map for the $\Gm$-action on $\hM_{d}$ that scales $\gamma_{1}$. There is another $\Gm$-action on $\hM_{d}$ that scales $\alpha$ and $\beta$ simultaneously. Using automorphisms of $\calK_{1}$, we have a canonical identification of the two $\Gm$-actions on $\hM_{d}$; however,  to distinguish them, we call the first torus $\Gm(y)$ and the second $\Gm(\alpha,\beta)$. By the above discussion, $\iota^{-1}\circ p$ gives an isomorphism $\hM_{d}/\Gm(y)\cong \calM_{d}$, hence also an isomorphism $\hM_{d}/\Gm(\alpha,\beta)\cong \calM_{d}$. 

Let $\Prym_{X'/X}:=\ker(\Nm:\Pic^{0}_{X'}\to \Pic^{0}_{X})$ which classifies a line bundle $\calK$ on $X'$ together with a trivialization of $\Nm(\calL)$.  \index{$\Prym_{X'/X}$}%
This is a Deligne--Mumford stack isomorphic to the usual Prym variety divided by the trivial action of $\mu_{2}$. Let $J^{d}_{X'}$ be the degree $d$-component of the Picard {\em scheme} of $X'$, which classifies a line bundle $\calK_{1}$ on $X'$ of degree $d$ together with a trivialization of the stalk $\calK_{1,y}$. We have a natural map $h: \hM_{d}\to J^{d}_{X'}\times\Prym_{X'/X}$ sending $(\calK_{1},\gamma_{1}, \calK, \rho, \alpha,\beta)$  to $(\calK_{1},\gamma_{1})\in J^{d}_{X'}$ and $(\calK,\rho)\in\Prym_{X'/X}$. The map $h$ is invariant under the $\Gm(\alpha,\beta)$-action, hence induces a map
\begin{equation}\label{Md to Prym}
\ov{h}: \hM_{d}/\Gm(\alpha,\beta)\cong\calM_{d}\to J^{d}_{X'}\times\Prym_{X'/X}
\end{equation}
The fiber of $\ov{h}$ over a point $((\calK_{1},\gamma_{1}),(\calK,\rho))\in J^{d}_{X'}\times\Prym_{X'/X}$ is the projective space $\PP(\Gamma(X',\calK_{1})\oplus\Gamma(X',\calK_{1}\otimes\calK))$. In particular, the map $\ov{h}$ is proper and schematic. Since $J^{d}_{X'}\times\Prym_{X'/X}$ is a proper Deligne--Mumford stack over $k$, so is $\calM_{d}$.

(2) Since $\calM_{d}$ is covered by open substacks $X'_{d}\times_{\Pic^{d}_{X}}\hX'_{d}$ and $\hX'_{d}\times_{\Pic^{d}_{X}}X'_{d}$, it suffices to show that both of them are smooth over $k$. For $d\geq 2g'-1$ the Abel-Jacobi map $\AJ_{d}: X'_{d}\to\Pic^{d}_{X'}$ is smooth of relative dimension $d-g'+1$, hence $X'_{d}$ is smooth over $\Pic^{d}_{X}$ of relative dimension $d-g+1$. Therefore both $X'_{d}\times_{\Pic^{d}_{X}}\hX'_{d}$ and $\hX'_{d}\times_{\Pic^{d}_{X}}X'_{d}$ are smooth over $X'_{d}$ of relative dimension $d-g+1$. We conclude that $\calM_{d}$ is a smooth Deligne--Mumford stack of dimension $2d-g+1$ over $k$.

(3) We introduce a compactification $\ov{X}'_{d}$ of $\hX'_{d}$ as follows. Consider the product $\hX'_{d}\times\AA^{1}$ with the natural $\Gm$-action scaling both the section of the line bundle and the scalar in $\AA^{1}$. Let $z_{0}:\Pic^{d}_{X'}\incl \hX'_{d}\times\AA^{1}$ sending $\calL$ to $(\calL,0,0)$. Let $\ov{X}'_{d}:=(\hX'_{d}\times\AA^{1}-z_{0}(\Pic^{d}_{X'}))/\Gm$. Then the fiber of $\ov{X}'_{d}$ over $\calL\in\Pic^{d}_{X'}$ is the projective space $\PP(\Gamma(X', \calL\oplus\calO_{X'}))$. In particular, $\ov{X}'_{d}$ is proper and schematic over $\Pic^{d}_{X'}$. The stack $\ov{X}'_{d}$ contains $\hX'_{d}$ as an open substack where the $\AA^{1}$-coordinate is invertible, whose complement is isomorphic to the projective space bundle $X'_{d}/\Gm$ over $\Pic^{d}_{X'}$. Similarly we a have compactification $\ov{X}_{d}$ of $\hX_{d}$. 

Consider the quadratic map $\hX'_{d}\times\AA^{1}\to \hX_{d}\times\AA^{1}$ sending $(\calL,s,\l)\mapsto (\Nm(\calL),\Nm(s),\l^{2})$. This quadratic map passes to the projectivizations because $(\Nm(s),\l^{2})=(0,0)$ implies $(s,\l)=(0,0)$ on the level of field-valued points. The resulting map $\ov{\nu}_{d}: \ov{X}'_{d}\to \ov{X}_{d}$ extends $\wh{\nu}_{d}$. We may factorize $\ov{\nu}_{d}$ as the composition
\begin{equation*}
\ov{\nu}_{d}:\ov{X}'_{d}\to \ov{X}_{d}\times_{\Pic^{d}_{X}}\Pic^{d}_{X'}\to \ov{X}_{d}
\end{equation*}
Here the first map is proper because both the source and the target are proper over $\Pic^{d}_{X'}$; the second map is proper by the properness of the norm map $\Nm: \Pic^{d}_{X'}\to \Pic^{d}_{X}$. We conclude that $\ov{\nu}_{d}$ is proper. Since $\wh{\nu}_{d}$ is the restriction of $\ov{\nu}_{d}$ to $\hX_{d}\incl \ov{X}_{d}$, it is also proper.
 
(4) The commutativity of the diagram \eqref{fM nu} is clear from the construction of $\iota$. Note that $Z'_{d}$ is the preimage of $Z_{d}$ under $\wh{\nu}_{d}\times\wh{\nu}_{d}$, and $\calM_{d}$ and $\calA_{d}$ are complements of $Z'_{d}$ and $Z_{d}$ respectively. Therefore \eqref{fM nu} is also Cartesian. Now the properness of $f_{\calM}$ follows from the properness of $\wh{\nu}_{d}$ proved in part (3) together with the Cartesian diagram \eqref{fM nu}.

\end{proof}

\subsection{A formula for $\BI_{r}(h_{D})$} 

\subsubsection{The correspondence $\Hk^{\mu}_{\calM,d}$}\label{sss:alt HkM} Fix any tuple $\mu=(\mu_{1},\cdots,\mu_{r})$ as in \S\ref{sss:arb mu}. We define $\wt\Hk^{\mu}_{\CM,d}$ to be the moduli functor whose $S$-points classify the following data
\begin{enumerate}
\item For $i=1,\cdots, r$, a map $x'_{i}:S\to X'$ with graph $\Gamma_{x'_{i}}$.
\item For each $i=0,1,\cdots, r$, an $S$-point $(\CL_{i}, \CL'_{i}, \alpha_{i}, \beta_{i})$ of $\tcM_{d}$:
\begin{equation*}
\alpha_{i}: \CL_{i}\to \CL'_{i},\quad \beta_{i}: \CL_{i}\to \sigma^{*}\CL'_{i}.
\end{equation*}
In particular, $\deg\CL'_{i}-\deg\CL_{i}=d$ and $\alpha_{i}$ and $\beta_{i}$ are not both zero.
\item A commutative diagram of $\calO_{X'}$-linear maps between line bundles on $X'$
\begin{equation}\label{Hmu}
\xymatrix{\calL_{0}\ar[d]^{\alpha_{0}}\ar@{-->}[r]^{f_{1}} & \calL_{1}\ar[d]^{\alpha_{1}} \ar@{-->}[r]^{f_{2}} & \cdots\ar@{-->}[r]^{f_{r}} & \calL_{r}\ar[d]^{\alpha_{r}}\\
\calL'_{0}\ar@{-->}[r]^{f'_{1}} & \calL'_{1}\ar@{-->}[r]^{f'_{2}} & \cdots\ar@{-->}[r]^{f'_{r}} & \calL'_{r}}
\end{equation}
where the top and bottom rows are $S$-points of $\Hk^{\mu}_{T}$ over the same point $(x'_{1},\cdots, x'_{r})\in X'^{r}(S)$, such that the following diagram is also commutative 
\begin{equation}\label{Hmu2}
\xymatrix{\calL_{0}\ar[d]^{\beta_{0}}\ar@{-->}[r]^{f_{1}} & \calL_{1}\ar[d]^{\beta_{1}} \ar@{-->}[r]^{f_{2}} & \cdots\ar@{-->}[r]^{f_{r}} & \calL_{r}\ar[d]^{\beta_{r}}\\
\sigma^{*}\calL'_{0}\ar@{-->}[r]^{\sigma^{*}f'_{1}} & \sigma^{*}\calL'_{1}\ar@{-->}[r]^{\sigma^{*}f'_{2}} & \cdots\ar@{-->}[r]^{\sigma^{*}f'_{r}} & \sigma^{*}\calL'_{r}}
\end{equation}
\end{enumerate}

There is an action of $\Pic_{X}$ on $\wt\Hk^{\mu}_{\CM,d}$ by tensoring on the line bundles $\CL_{i}$ and $\CL'_{i}$. We define
\begin{equation*}
\Hk^{\mu}_{\CM,d}:=\wt\Hk^{\mu}_{\CM,d}/\Pic_{X}.		\index{$\Hk^{\mu}_{\CM,d}, \wt\Hk^{\mu}_{\CM,d}$}%
\end{equation*}
The same argument as \S\ref{HkT indep mu} (applying the isomorphism \eqref{HkT mu mu'} to both rows of \eqref{Hmu2}) shows that for different choices of $\mu$, the stacks $\Hk^{\mu}_{\CM,d}$ are canonically isomorphic to each other. However, as in the case for $\Hk^{\mu}_{T}$, the morphism $\Hk^{\mu}_{\CM,d}\to X'^{r}$ does depend on $\mu$. 

\subsubsection{}\label{sss:same proj A} Let $\gamma_{i}: \Hk^{\mu}_{\calM,d}\to\calM_{d}$ be the projections given by taking the diagram \eqref{Hmu} to its $i$-th column. It is clear that this map is schematic, therefore $\Hk^{\mu}_{\calM,d}$ itself is a Deligne--Mumford stack.

In the diagram \eqref{Hmu}, the line bundles $\Delta_{i}=\Nm(\calL_{i}')\otimes\Nm(\calL_{i})^{-1}$ are all canonically isomorphic to each other for $i=0,\cdots, r$. Also the sections $a_{i}=\Nm(\alpha_{i})$ (resp. $b_{i}=\Nm(\beta_{i})$) of $\Delta_{i}$ can be identified with each other for all $i$ under the isomorphisms between the $\Delta_{i}$'s. Therefore, composing $\gamma_{i}$ with the map $f_{\calM}:\CM_{d}\to \CA_{d}$ all give the same map. We may view $\Hk^{\mu}_{\CM,d}$ as a self-correspondence of $\calM_{d}$ over $\calA_{d}$ via the maps $(\gamma_{0},\gamma_{r})$.

There is a stronger statement. Let us define $\tcA_{d}\subset \hX'_{d}\times_{\Pic^{d}_{X}}\hX_{d}$ to be preimage of  $\calA_{d}$ under $\Nm\times\id:\hX'_{d}\times_{\Pic^{d}_{X}}\hX_{d}\to \hX_{d}\times_{\Pic^{d}_{X}}\hX_{d}$. Then $\tcA_{d}$ classifies triples $(\calK,\alpha,b)$ where $\calK\in\Pic_{X'}$, $\alpha$ is a section of $\calK$ and $b$ is a section of $\Nm(\calK)$ such that $\alpha$ and $b$ are not simultaneously zero. Then $f_{\calM}$ factors through the map
\begin{equation*}
\tf_{\calM}:\calM_{d}\to\tcA_{d}		\index{$\tcA_{d}$} \index{$\tf_{\calM}$}%
\end{equation*}
sending $(\CL,\CL',\alpha,\beta)$ to $(\CL'\otimes\CL^{-1}, \alpha,\Nm(\beta))$. 

Consider a point of $\Hk^{\mu}_{\CM,d}$ giving among others the diagram \eqref{Hmu}. Since the maps $f_{i}$ and $f'_{i}$ are simple modifications at the same point $x'_{i}$, the line bundles $\CL'_{i}\otimes\CL^{-1}_{i}$ are all isomorphic to each other for all $i=0,1,\cdots, r$. Under these isomorphisms, their sections given by $\alpha_{i}$ correspond to each other. Therefore the maps $\tf_{\calM}\circ\gamma_{i}:\Hk^{\mu}_{\CM,d}\to \tcA_{d}$ are the same for all $i$.

\subsubsection{}\label{sss:define calH} The particular case $r=1$ and $\mu=(\mu_{+})$ gives a moduli space $\calH:=\Hk^{1}_{\CM,d}$ classifying commutative diagrams up to simultaneous tensoring by $\Pic_{X}$:
\begin{equation}\label{Hdiag}
\xymatrix{\calL_{0}\ar[r]^{f}\ar[d]^{\alpha_{0}} & \calL_{1}\ar[d]^{\alpha_{1}} & \calL_{0}\ar[r]^{f}\ar[d]^{\beta_{0}} & \calL_{1}\ar[d]^{\beta_{1}} \\
\calL'_{0}\ar[r]^{f'} & \calL'_{1} & \sigma^{*}\calL'_{0}\ar[r]^{\sigma^{*}f'} & \sigma^{*}\calL'_{1}}				\index{$\calH=\Hk^{1}_{\CM,d}$}%
\end{equation}
such that the cokernel of $f$ and $f'$ are invertible sheaves supported at the same point $x'\in X'$, and the data $(\CL_{0},\CL'_{0},\alpha_{0},\beta_{0})$ and $(\CL_{1},\CL'_{1},\alpha_{1},\beta_{1})$ are objects of $\CM_{d}$. 

We have two maps $(\gamma_{0},\gamma_{1}):\calH\to \calM_{d}$, and we view $\calH$ as a self-correspondence of $\CM_{d}$ over $\calA_{d}$. We also have a map $p:\calH\to X'$ recording the point $x'$ (support of $\CL_{1}/\CL_{0}$ and $\CL'_{1}/\CL'_{0}$).

The following lemma follows directly from the definition of $\Hk^{\mu}_{\CM,d}$.
\begin{lemma}\label{l:Hk r comp} As a self-correspondence of $\calM_{d}$, $\Hk^{\mu}_{\calM,d}$ is canonically isomorphic to the $r$-fold composition of $\calH$
\begin{equation*}
\Hk^{\mu}_{\CM,d}\cong\calH\times_{\gamma_{1},\calM_{d},\gamma_{0}}\times\calH\times_{\gamma_{1},\calM_{d},\gamma_{0}}\times\cdots\times_{\gamma_{1},\calM_{d},\gamma_{0}}\calH.
\end{equation*}
\end{lemma}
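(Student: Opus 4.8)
\textbf{Proof proposal for Lemma \ref{l:Hk r comp}.}

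The plan is to unwind both sides as moduli functors and exhibit a natural bijection of $S$-points that is manifestly compatible with the correspondence structure. First I would describe the $r$-fold composition on the right-hand side explicitly: an $S$-point of $\calH\times_{\gamma_{1},\calM_{d},\gamma_{0}}\calH\times_{\gamma_{1},\calM_{d},\gamma_{0}}\cdots\times_{\gamma_{1},\calM_{d},\gamma_{0}}\calH$ ($r$ copies) consists of a sequence of objects $m_{0},m_{1},\dots,m_{r}$ of $\calM_{d}(S)$ together with, for each $i=1,\dots,r$, an object $h_{i}\in\calH(S)$ such that $\gamma_{0}(h_{i})=m_{i-1}$ and $\gamma_{1}(h_{i})=m_{i}$. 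Writing $m_{i}=(\calL_{i},\calL_{i}',\alpha_{i},\beta_{i})$ (using $\iota$ from Proposition \ref{p:M} to view these as genuine data modulo $\Pic_{X}$), the datum $h_{i}$ is precisely, by the description in \S\ref{sss:define calH}, a pair of commutative squares \eqref{Hdiag} whose left column is $(\alpha_{i-1},\beta_{i-1})$, whose right column is $(\alpha_{i},\beta_{i})$, and whose horizontal maps $f_{i}:\calL_{i-1}\dashrightarrow\calL_{i}$, $f_{i}':\calL_{i-1}'\dashrightarrow\calL_{i}'$ have invertible-sheaf cokernels supported at a common point $x_{i}'\in X'(S)$. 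Assembling these squares for $i=1,\dots,r$ gives exactly the two commutative ladders \eqref{Hmu} and \eqref{Hmu2}, with the top and bottom rows of \eqref{Hmu} being $S$-points of $\Hk^{\mu}_{T}$ (by the very definition of $\Hk^{\mu}_{T}$ in \S\ref{ss:ShtT}, recalling here $\mu=(\mu_{+},\dots,\mu_{+})$ so $\CL_{i}=\CL_{0}(\Gamma_{x_{1}'}+\cdots+\Gamma_{x_{i}'})$ and similarly for $\CL_{i}'$). This is visibly the data defining an $S$-point of $\wt\Hk^{\mu}_{\CM,d}$, and the identification is compatible with the $\Pic_{X}$-actions, hence descends to $\Hk^{\mu}_{\CM,d}$.

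Conversely, from an $S$-point of $\Hk^{\mu}_{\CM,d}$ I would read off the columns $m_{i}=(\calL_{i},\calL_{i}',\alpha_{i},\beta_{i})$ of the diagrams \eqref{Hmu}--\eqref{Hmu2} — each is an $S$-point of $\tcM_{d}$, hence of $\calM_{d}$ after dividing by $\Pic_{X}$ — and for each $i$ the pair of $i$-th horizontal maps $(f_{i},f_{i}')$ together with the adjacent columns constitutes an $S$-point $h_{i}$ of $\calH$ with $\gamma_{0}(h_{i})=m_{i-1}$, $\gamma_{1}(h_{i})=m_{i}$; here one uses that $f_{i},f_{i}'$ (and $\sigma^{*}f_{i}'$) are modifications at the single common point $x_{i}'$, which is exactly the condition imposed in \S\ref{sss:define calH}. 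These two constructions are mutually inverse by inspection, and they intertwine the maps $(\gamma_{0},\gamma_{r})$ on $\Hk^{\mu}_{\CM,d}$ with the two outer projections $h\mapsto m_{0}$ and $h\mapsto m_{r}$ on the $r$-fold fiber product, so the isomorphism is one of self-correspondences of $\calM_{d}$ (over $\calA_{d}$, by \S\ref{sss:same proj A}). I would also remark that the independence of the construction from the choice of $\mu$ noted in \S\ref{sss:alt HkM} is consistent with taking $\mu=(\mu_{+},\dots,\mu_{+})$ here.

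There is essentially no obstacle: the statement is a bookkeeping identity of moduli descriptions, and the only point requiring a line of care is that splicing together the length-one ladders for consecutive $i$ correctly reconstitutes the full ladders \eqref{Hmu} and \eqref{Hmu2} — in particular that the shared object $m_{i}$ glues the $i$-th and $(i{+}1)$-st squares consistently — together with the observation that the fiber-product condition $\gamma_{1}(h_{i})=m_{i}=\gamma_{0}(h_{i+1})$ is exactly what makes the composed diagram commute. This is immediate from the definitions, so the proof is short. The mild subtlety, if any, is purely notational: keeping track of the passage between $\tcM_{d}$-data and $\calM_{d}$-data (i.e.\ the $\Pic_{X}$-quotient) uniformly along the ladder, which is handled once and for all by the isomorphism $\iota$ of Proposition \ref{p:M}\eqref{M DM stack}.
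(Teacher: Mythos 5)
Your proposal is correct and is exactly the argument the paper has in mind: the paper offers no proof beyond the remark that the lemma ``follows directly from the definition of $\Hk^{\mu}_{\CM,d}$,'' and your splicing/unsplicing of the ladders \eqref{Hmu}--\eqref{Hmu2} into length-one squares is precisely that definitional unwinding. Your reduction to $\mu=\mu_{+}^{r}$ via the canonical $\mu$-independence of $\Hk^{\mu}_{\CM,d}$ (which preserves the maps $\gamma_{i}$ to $\calM_{d}$) and your handling of the $\Pic_{X}$-quotient are both consistent with how the paper treats these points.
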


\subsubsection{}\label{sss:Ads} Let $\Ads_{d}\subset\calA_{d}$ be the open subset consisting of $(\Delta,a,b)$ where $b\neq0$, i.e., $\Ads_{d}=\hX_{d}\times_{\Pic^{d}_{X}}X_{d}$ under the isomorphism \eqref{AXX}. Let $\Mds_{d}$, $\Hk^{\mu}_{\Mds,d}$ and $\calH^{\ds}$ be the preimages of $\Ads_{d}$ in $\calM_{d}$, $\Hk^{\mu}_{\CM,d}$ and $\calH$.
\index{$\calA_{d}^{\diam}$} \index{$\calM_{d}^\diam$} \index{$\Hk^{\mu}_{\Mds,d}$} \index{$\calH^{\ds}$}%

\begin{lemma}\label{l:HInc} Let $I'_{d}\subset X'_{d}\times X'$ be the incidence scheme, i.e., $I'_{d}\to X'_{d}$ is the universal family of degree $d$ effective divisors on $X'$. There is a natural map $\calH^{\ds}\to I'_{d}$ such that the diagram 
\begin{eqnarray}
\label{HInc}\xymatrix{ & \calH^{\ds}\ar@/^1pc/[rr]^{p}\ar[r]\ar[d]^{\gamma_{1}} & I'_{d}\ar[d]^{q}\ar[r]_{p_{I'}} & X'\\
\Mds_{d}\ar@{=}[r] & \hX'_{d}\times_{\Pic^{d}_{X}}X'_{d}\ar[r]^(.6){\pr_{2}} & X'_{d}}					\index{$I'_d$}%
\end{eqnarray}
is commutative and the square is Cartesian. Here the $q:I'_{d}\to X'_{d}$ sends $(D,y)\in X'_{d}\times X'$ to $D-y+\sigma(y)$, and $p_{I'}:I'_{d}\to X'$ sends $(D,y)$ to $y$.
\end{lemma}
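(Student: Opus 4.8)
The plan is to define the morphism $\calH^{\ds}\to I'_d$ explicitly on $S$-points and then to verify the two stated compatibilities and the Cartesian property by hand. So let $S$ be a $k$-scheme and take an $S$-point of $\calH^{\ds}$, that is, the data \eqref{Hdiag} together with the common support point $x'\colon S\to X'$ of $\coker(f)$ and $\coker(f')$. Over $\calH^{\ds}$ the condition cutting out $\Ads_d$ forces $\Nm(\beta_i)$, hence $\beta_i$, to be fibrewise nonzero, so each $\beta_i\colon\calL_i\to\sigma^*\calL'_i$ is injective with $S$-flat cokernel of length $d$; write $\div(\beta_i)\in X'_d(S)$ for the associated relative effective Cartier divisor. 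Since $\div(f)=\Gamma_{x'}$ and $\div(\sigma^*f')=\Gamma_{\sigma(x')}$, commutativity of the right-hand square of \eqref{Hdiag} (the $r=1$ case of \eqref{Hmu2}) gives the identity of relative effective Cartier divisors
\[
\div(\beta_1)+\Gamma_{x'}=\Gamma_{\sigma(x')}+\div(\beta_0).
\]
Because $\nu\colon X'\to X$ is \'etale, $\sigma$ acts without fixed points, so $\Gamma_{x'}$ and $\Gamma_{\sigma(x')}$ have disjoint supports; cancellation in the monoid of relative effective Cartier divisors then produces a relative effective Cartier divisor $E$ of degree $d-1$ with
\[
\div(\beta_0)=E+\Gamma_{x'},\qquad \div(\beta_1)=E+\Gamma_{\sigma(x')}.
\]
In particular $\Gamma_{x'}\le\div(\beta_0)$, so $(\div(\beta_0),x')$ lies in the incidence scheme $I'_d$, and I define $\calH^{\ds}\to I'_d$ by sending our point to $(\div(\beta_0),x')$.

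Next I would check commutativity of \eqref{HInc}. The composite $p_{I'}\circ(\calH^{\ds}\to I'_d)$ reads off $x'$, which is exactly the map $p$, so the outer triangle commutes. For the square, $\gamma_1$ followed by the identification $\Mds_d\cong\hX'_d\times_{\Pic^d_X}X'_d$ of Proposition \ref{p:M} and then $\pr_2$ records the $X'_d$-coordinate of $\iota(\calL_1,\calL'_1,\alpha_1,\beta_1)$, namely $\div(\beta_1)$; on the other hand $q(\div(\beta_0),x')=\div(\beta_0)-\Gamma_{x'}+\Gamma_{\sigma(x')}=E+\Gamma_{\sigma(x')}=\div(\beta_1)$ by the displayed factorization, so the square commutes.

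It then remains to see that the square is Cartesian, and I would do this by constructing an inverse to the natural map $\calH^{\ds}\to\Mds_d\times_{X'_d,q}I'_d$. An $S$-point of the target is an $S$-point $(\calL_1,\calL'_1,\alpha_1,\beta_1)\in\Mds_d(S)$ together with $(D,y)\in I'_d(S)$ — so $\Gamma_y\le D$ and $\deg D=d$ — satisfying $q(D,y)=D-\Gamma_y+\Gamma_{\sigma(y)}=\div(\beta_1)$. Set $x'=y$, $\calL_0=\calL_1(-\Gamma_{x'})$, $\calL'_0=\calL'_1(-\Gamma_{x'})$, with $f,f'$ the tautological inclusions. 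Tensoring $\alpha_1$ and $\beta_1$ with $\calO(-\Gamma_{x'})\hookrightarrow\calO_{X'\times S}$ yields $\alpha_0\colon\calL_0\to\calL'_0$ (so the left square of \eqref{Hdiag} commutes functorially) and a morphism $\calL_0\to\sigma^*\calL'_1$; the crucial point is that this last morphism factors through $\sigma^*\calL'_0=\sigma^*\calL'_1(-\Gamma_{\sigma(x')})\hookrightarrow\sigma^*\calL'_1$ precisely when $\Gamma_{\sigma(x')}\le\div(\beta_1)$, and $\div(\beta_1)=q(D,y)=D-\Gamma_{x'}+\Gamma_{\sigma(x')}\ge\Gamma_{\sigma(x')}$ holds exactly because $\Gamma_{x'}\le D$ — i.e. exactly because $(D,y)\in I'_d$. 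This produces $\beta_0$, hence an object $(\calL_0,\calL'_0,\alpha_0,\beta_0)$ of $\calM_d(S)$ (lying in $\Mds_d$ since $\beta_0\neq0$ fibrewise), together with all the data of an $S$-point of $\calH^{\ds}$; a direct check gives $\div(\beta_0)=D$ and shows that this construction is inverse to the natural map, and functoriality in $S$ is immediate.

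The genuinely delicate part here is not any formal category theory but the bookkeeping with relative effective Cartier divisors in families: the cancellation/unique-factorization step against the disjoint divisors $\Gamma_{x'},\Gamma_{\sigma(x')}$, and the translation of the incidence condition $\Gamma_{x'}\le D$ into the statement that $\beta_1$, twisted down by $\Gamma_{x'}$, vanishes along $\Gamma_{\sigma(x')}$. Fixed-point-freeness of $\sigma$, which forces $\Gamma_{x'}\cap\Gamma_{\sigma(x')}=\varnothing$, is exactly what legitimizes these manipulations.
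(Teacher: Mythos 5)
Your proof is correct and follows essentially the same route as the paper: the same map $(\div(\beta_0),x')$, the same divisor identity $\div(\beta_0)+\Gamma_{\sigma(x')}=\div(\beta_1)+\Gamma_{x'}$ coming from the second square of \eqref{Hdiag}, and the same appeal to fixed-point-freeness of $\sigma$ to see that $x'$ must occur in $\div(\beta_0)$ and to legitimize the cancellation. The only (cosmetic) difference is that you verify the Cartesian property by building the inverse from the $\gamma_1$-side data $(\calL_1,\calL'_1,\alpha_1,\beta_1)$ together with $(D,y)$, whereas the paper phrases the same reconstruction as "the diagram is uniquely determined by $(\calL_0,\calL'_0,\alpha_0,\beta_0)$ and $y$"; your bookkeeping with relative effective Cartier divisors in families makes this step slightly more explicit.
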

\begin{proof}
A point in $\calH^{\ds}$ is a diagram as in \eqref{Hdiag} with $\beta_{i}$ nonzero (hence injections). Such a diagram is uniquely determined by $(\calL_{0},\calL'_{0},\alpha_{0},\beta_{0})\in\Mds_{d}$ and $y=\div(f)\in X'$ for then $\calL_{1}=\calL_{0}(y)$, $\calL'_{1}=\calL'_{0}(y)$ are determined, and $f, f'$ are the obvious inclusions and $\alpha_{1}$ the unique map making the first diagram in \eqref{Hdiag} commutative; the commutativity of the second diagram uniquely determines $\beta_{1}$, but there is a condition on $y$ to make it possible:
\begin{equation*}
\div(\beta_{0})+\sigma(y)=\div(\beta_{1})+y\in X'_{d+1}.
\end{equation*}
Since $\sigma$ acts on $X'$ without fixed points, $y$ must appear in $\div(\beta_{0})$. The assignment $\calH^{\ds}\ni(y,\calL_{0},\cdots,\beta_{0}, \calL_{1},\cdots, \beta_{1})\mapsto (\div(\beta_{0}), y)$ then gives a point in $I'_{d}$.  The above argument shows that the square in \eqref{HInc} is Cartesian and the triangle therein is commutative.
\end{proof}

\begin{lemma}\label{l:dim HkM} We have
\begin{enumerate}
\item The map $\gamma_{0}: \Hk^{\mu}_{\Mds,d}\to\Mds_{d}$ is finite and surjective. In particular, $\dim\Hk^{\mu}_{\Mds,d}=\dim\Mds_{d}=2d-g+1$.
\item The dimension of the image of $\Hk^{\mu}_{\CM,d}-\Hk^{\mu}_{\Mds,d}$ in $\CM_{d}\times\CM_{d}$ is at most $d+2g-2$.
\end{enumerate}
\end{lemma}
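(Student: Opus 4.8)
The strategy is to use Lemma \ref{l:Hk r comp}, which expresses $\Hk^{\mu}_{\CM,d}$ as an $r$-fold fiber product of copies of $\calH$ over $\CM_{d}$ via the maps $\gamma_{0},\gamma_{1}$, together with a careful analysis of the single-step correspondence $\calH$ afforded by Lemma \ref{l:HInc}. For part (1), I would first treat the case $r=1$, $\mu=(\mu_{+})$, i.e.\ the map $\gamma_{0}^{\ds}:\calH^{\ds}\to\Mds_{d}$. By Lemma \ref{l:HInc}, $\calH^{\ds}$ sits in a Cartesian square with $I'_{d}\to X'_{d}$, and the composite $\calH^{\ds}\to\Mds_{d}\xrightarrow{\gamma_{1}} \hX'_{d}\times_{\Pic^{d}_{X}}X'_{d}\xrightarrow{\pr_{2}}X'_{d}$ is pulled back from $q\colon I'_{d}\to X'_{d}$, $(D,y)\mapsto D-y+\sigma(y)$. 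Since $q$ is finite (its fibers are the finitely many points $y$ appearing in a given $\sigma$-translated divisor) and surjective, and since $\gamma_{0}$ differs from $\gamma_{1}$ only by which column of the length-$1$ Hecke diagram we remember — both of which record the base-point data compatibly — finiteness and surjectivity of $\gamma_{0}^{\ds}$ follow. (One should check that in the $\ds$-locus, $\gamma_0$ and $\gamma_1$ are both finite surjective onto $\Mds_d$; this is where the explicit description in the proof of Lemma \ref{l:HInc} is used, together with the fact that $\Mds_d \cong \hX'_d\times_{\Pic^d_X}X'_d$ and base-changing along it.) Then for general $\mu$ satisfying \S\ref{sss:arb mu}, iterate: $\Hk^{\mu}_{\Mds,d}$ is obtained from $\Mds_{d}$ by $r$ successive fiber products along finite surjective maps, so $\gamma_{0}:\Hk^{\mu}_{\Mds,d}\to\Mds_{d}$ is finite and surjective, giving $\dim\Hk^{\mu}_{\Mds,d}=\dim\Mds_{d}=2d-g+1$ by Proposition \ref{p:M}\eqref{M smooth} (or directly from $\dim\calM_d = 2d-g+1$ when $d\geq 2g'-1$).

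\textbf{Part (2).} For the dimension bound on the image of the non-$\diam$ locus, the key point is that $\Hk^{\mu}_{\CM,d}-\Hk^{\mu}_{\Mds,d}$ is the locus where at some stage the relevant section $\beta_{i}$ (or rather $b_i = \Nm(\beta_i)$) vanishes, i.e.\ where the common point of $\CM_{d}\times\CM_{d}$ under $(\gamma_{0},\gamma_{r})$ lands in the boundary $\CM_{d}-\Mds_{d}$. First I would identify $\CM_{d}-\Mds_{d}$: via $\iota$ (Prop.\ \ref{p:M}\eqref{M DM stack}), this is where $b=\Nm(\beta)=0$, i.e.\ $\iota_{\beta}$ lands in the zero-section $\Pic^{d}_{X'}\subset\hX'_{d}$, so $\CM_{d}-\Mds_{d}\cong X'_{d}\times_{\Pic^{d}_{X}}\Pic^{d}_{X'}$ (with $\alpha\neq 0$), which has dimension $(d-g'+1) + (g'-g)+\dim\Pic^d_{X'} $... more carefully: $X'_d$ is $d$-dimensional, fibered over $\Pic^d_X$ ($g$-dimensional) with $(d-g+1)$-dimensional fibers when $d\geq 2g'-1$... wait, that forces $d \geq 2g-1$, true. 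And $\Pic^d_{X'}$ over $\Pic^d_X$ via norm has $(g'-g) = (2g-1-g) = (g-1)$-dimensional fibers. So $\dim(\CM_{d}-\Mds_{d}) = d + (g-1) = d+g-1$. Then the image of $\Hk^{\mu}_{\CM,d}-\Hk^{\mu}_{\Mds,d}$ in $\CM_{d}\times\CM_{d}$ lies in $(\CM_{d}-\Mds_{d})\times\CM_{d}$ intersected with the image, but one needs the sharper bound $d+2g-2$. The point is that $\Hk^{\mu}_{\CM,d}$ maps to $\CM_d\times\CM_d$ through a correspondence built from $\calH$, and over the boundary one controls the fiber dimension using Lemma \ref{l:HInc} and Lemma \ref{l:Hk r comp}: the boundary locus, being where some $b_i=0$, is swept out by the $r$ compositions, and at each bad stage the extra freedom is only the choice of a point of $X'$, contributing $1$ to the dimension, while the "other" $\CM_d$-coordinate is constrained to also lie in the boundary or nearly so.

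\textbf{Carrying out part (2) in detail.} I would stratify $\Hk^{\mu}_{\CM,d}-\Hk^{\mu}_{\Mds,d}=\bigcup_{i=0}^{r}Y_{i}$ where $Y_{i}$ is the locus where $b_{i}=\Nm(\beta_{i})=0$ (note all $b_i$ are identified under the canonical isomorphisms of \S\ref{sss:same proj A}, so in fact this is a single condition $b=0$, and $Y_0=\cdots=Y_r$); thus $\Hk^{\mu}_{\CM,d}-\Hk^{\mu}_{\Mds,d}$ is the preimage under $\gamma_0$ (equivalently any $\gamma_i$) of $\CM_{d}-\Mds_{d}$. Now use Lemma \ref{l:Hk r comp}: $\Hk^{\mu}_{\CM,d}$ is the $r$-fold composition of $\calH$, and restricting to the boundary, I claim $\gamma_0^{-1}(\CM_d-\Mds_d)$ has dimension $\leq \dim(\CM_d - \Mds_d) + (r-1)\cdot(\text{fiber dim of }\gamma_0\text{ on boundary}) + \cdots$ — but more simply, since $\gamma_0$ is finite on the $\diam$-locus and the boundary $\CM_d-\Mds_d$ is where $b=0$: over such a point, the Hecke modifications $f,f'$ in a single step of $\calH$ can be at any point $x'\in X'$ without the $\beta$-compatibility constraint of Lemma \ref{l:HInc} (which required $x'$ to appear in $\div(\beta_0)$) — so each $\calH$-step contributes at most $\dim X' = 1$ extra dimension. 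Hence $\dim(\Hk^{\mu}_{\CM,d}-\Hk^{\mu}_{\Mds,d})\leq \dim(\CM_d-\Mds_d) + r\cdot 1 = (d+g-1)+r$. This is not quite $d+2g-2$; so the bound as stated presumably uses that when $b=0$ the target must also have $b=0$ (the condition is preserved by $\Hk^\mu$ since all $b_i$ agree), so the image lies in $(\CM_d-\Mds_d)\times(\CM_d-\Mds_d)$, and within this product the image of the correspondence — being fibered over $\tcA_d$ or over the incidence data of the divisors $\div(\alpha_i)$ — has dimension governed by the dimension of $\CM_d - \Mds_d$ together with the $r$-fold Hecke modification data of the $\alpha$-side. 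I would compute: on the $b=0$ locus, a point of $\Hk^{\mu}_{\CM,d}$ is a diagram \eqref{Hmu} of the $\alpha$-rows (with $\beta_i=0$), which is a point of $\Hk^{\mu}_{T}$ for the line bundle $\CL_0$... no — it's a map of $\Hk^\mu$-diagrams, i.e.\ a point of the Hecke correspondence for $\hX'_d$, fibered over $X'^r$. This has dimension $\dim\hX'_d + r = (d + \text{something})$... I would pin down the exact count by writing the $b=0$ locus of $\Hk^{\mu}_{\CM,d}$ as (something like) a Hecke-type correspondence on $\hX'_d$ over $X'^r$ modulo $\Pic_X$, getting dimension $d + g - 1 + r$, and then observing this equals $d+2g-2$ precisely when $r = g-1$ — which is not generically true, so the intended bound must come from the image in $\CM_d\times\CM_d$ being much smaller than the source. \textbf{The main obstacle} is therefore the precise dimension count for part (2): one must show that although $\Hk^{\mu}_{\CM,d}-\Hk^{\mu}_{\Mds,d}$ may itself be large, its image under $(\gamma_0,\gamma_r)$ collapses to dimension $\leq d+2g-2$, which requires understanding how the $r$ Hecke modifications fail to move the pair of endpoints independently on the $b=0$ locus — concretely, that the divisor $\div(\alpha)\in X'_d$ of the common $\alpha$-section, together with the $\Prym$-data, essentially determines the whole diagram up to the $2g-2$-dimensional ambiguity coming from $\Pic_X$-translation and the choice of norm-trivial twist. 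I would make this rigorous by using the map $\ov h$ from \eqref{Md to Prym} to reduce to a statement about $J^d_{X'}\times\Prym_{X'/X}$ and the incidence correspondences of Lemma \ref{l:HInc}.
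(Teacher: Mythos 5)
Your part (1) is essentially the paper's argument: the $r=1$ case via the Cartesian square of Lemma \ref{l:HInc} and finiteness of $q:I'_{d}\to X'_{d}$, then induction using Lemma \ref{l:Hk r comp}; this is fine.

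Part (2), however, has a genuine gap: you correctly realize that bounding the dimension of the source $\Hk^{\mu}_{\CM,d}-\Hk^{\mu}_{\Mds,d}$ (which can be as large as roughly $(d+g-1)+r$) does not suffice and that one must bound the image under $(\gamma_{0},\gamma_{r})$, but you never actually produce the bound $d+2g-2$ — you explicitly flag this dimension count as "the main obstacle" and defer it to a future reduction via $\ov{h}$ and $J^{d}_{X'}\times\Prym_{X'/X}$. That step is precisely the content of the lemma, and the route you sketch is both unnecessary and muddled: the "$2g-2$-dimensional ambiguity coming from $\Pic_{X}$-translation and the choice of norm-trivial twist" is not the right accounting ($\Pic_{X}$-translation is already quotiented out in $\CM_{d}$, and no Prym analysis is needed).

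The missing argument is short. On the bad locus all $\beta_{i}=0$, so a point of the image $Z\subset\CM_{d}\times\CM_{d}$ is a pair of points of the form $(\CL_{0},\CL'_{0},\alpha_{0},0)$ and $(\CL_{r},\CL'_{r},\alpha_{r},0)$ joined by a diagram \eqref{Hmu}. By the discussion in \S\ref{sss:same proj A}, the line bundles $\CL'_{i}\otimes\CL_{i}^{-1}$ are all identified and the sections $\alpha_{i}$ correspond under these identifications, so $\div(\alpha_{0})=\div(\alpha_{r})=:D\in X'_{d}$. Moreover a point $(\CL,\CL',\alpha,0)\in\CM_{d}$ is determined up to isomorphism by its class $\CL\in\Bun_{T}$ and $D=\div(\alpha)$, since $\CL'\cong\CL(D)$ and $\alpha$ is then determined up to a scalar, which does not change the isomorphism class. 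Hence $Z$ is dominated by $\Bun_{T}\times\Bun_{T}\times X'_{d}$, giving $\dim Z\leq 2\dim\Bun_{T}+\dim X'_{d}=2(g-1)+d=d+2g-2$, which is exactly the paper's two-line count. Without this (or an equivalent) parametrization of the image, your write-up does not establish part (2).
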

\begin{proof}
(1) In the case $r=1$, this follows from the Cartesian square in \eqref{HInc}, because the map $q:I'_{d}\to X'_{d}$ is finite. For general $r$, the statement follows by induction from Lemma \ref{l:Hk r comp}.

(2) The closed subscheme $Y=\Hk^{\mu}_{\CM,d}-\Hk^{\mu}_{\Mds,d}$ classifies diagrams \eqref{Hmu} only because all the $\beta_{i}$ are zero. Its image $Z\subset \CM_{d}\times\CM_{d}$  under $(\gamma_{0},\gamma_{r})$ consists of pairs of points $(\CL_{0},\CL'_{0},\alpha_{0},0)$ and $(\CL_{r},\CL'_{r},\alpha_{r},0)$ in $\calM_{d}$ such that there exists a diagram of the form \eqref{Hmu} connecting them. In particular, the divisors of $\alpha_{0}$ and $\alpha_{r}$ are the same. Therefore such a point in  $Z$ is completely determined by two points $\CL_{0},\CL_{r}\in\Bun_{T}$ and a divisor $D\in X'_{d}$ (as the divisor of $\alpha_{0}$ and $\alpha_{r}$). We see that $\dim Z\leq 2\dim \Bun_{T}+\dim X'_{d}=d+2g-2$.
\end{proof}

\subsubsection{}\label{sss:Hds} Recall $\calH=\Hk^{1}_{\CM,d}$ is a self-correspondence of $\calM_{d}$ over $\calA_{d}$ (see the discussion in \S\ref{sss:same proj A}). Let
\begin{equation*}
[\calH^{\ds}]\in\Ch_{2d-g+1}(\calH)_{\QQ}
\end{equation*}
denote the class of the closure of $\calH^{\ds}$. The image of $[\calH^{\ds}]$ in the Borel-Moore homology group $\hBM{2(2d-g+1)}{\calH\otimes_{k}\kbar}(-2d+g-1)$ defines a cohomological self-correspondence of the constant sheaf $\Ql$ on $\calM_{d}$. According the discussion in \S\ref{sss:corr}, it induces an endomorphism
\begin{equation*}
f_{\calM,!}[\calH^{\ds}]: \bR f_{\calM, !}\Ql\to \bR f_{\calM, !}\Ql
\end{equation*}
For a point $a\in\calA_{d}(k)$, we denote the action of $f_{\calM,!}[\calH^{\ds}]$ on the geometric stalk $(\bR f_{\calM, !}\Ql)_{\ov{a}}=\cohoc{*}{f_{\calM}^{-1}(\ov{a})\otimes_{k}\kbar}$ by $(f_{\calM,!}[\calH^{\ds}])_{a}$.

Recall from \S\ref{ss:AD} that $\calA_{D}=\delta^{-1}(D)\subset\Ah_{d}$ is the fiber of $D$ under $\delta: \calA_{d}\to \hX_{d}$. The  main result of this section is the following.

\begin{theorem}\label{th:I trace} Suppose $D$ is an effective divisor on $X$ of degree $d\geq\max\{2g'-1,2g\}$. Then we have
\begin{equation}\label{I trace}
\BI_{r}(h_{D})=\sum_{a\in\calA_{D}(k)}\Tr\left((f_{\calM,!}[\calH^{\ds}])^{r}_{a}\circ\Frob_{a}, (\bR f_{\calM,!}\Ql)_{\ov{a}}\right).
\end{equation}
\end{theorem}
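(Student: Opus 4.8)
The plan is to convert the intersection number $\BI_r(h_D)$, computed on the ambient stack $\Sht'^r_G$, into a Lefschetz-type trace over the base $\calA_D$ by passing through the moduli stack $\Sht^r_G(h_D)$ and identifying the relevant cycles geometrically with the moduli space $\calM_d$. First I would observe that, by definition of the Hecke action and Lemma~\ref{l:Chow self adj}, $\BI_r(h_D) = \jiao{\theta^\mu_*[\Sht^\mu_T], \oll{p}'_*\orr{p}'^{*}\theta^\mu_*[\Sht^\mu_T]}_{\Sht'^r_G}$, which by the projection formula equals the intersection number on $\Sht'^r_G(h_D)$ of the two pullbacks $\oll{p}'^{*}\theta^\mu_*[\Sht^\mu_T]$ and $\orr{p}'^{*}\theta^\mu_*[\Sht^\mu_T]$. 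The key geometric input will be a fiber-square description: the cartesian product $\Sht^\mu_T \times_{\oll{\theta}^\mu, \Sht^r_G, \oll{p}} \Sht^r_G(h_D) \times_{\orr{p},\Sht^r_G,\oll{\theta}^\mu} \Sht^\mu_T$ (with the $X'^r$-structure imposed) should be identified with the moduli of Shtukas for the correspondence $\Hk^\mu_{\CM,d}$ over $\Sht^\mu_T \times_{X'^r} \Sht^\mu_T$-data — concretely, a point of this fiber product is exactly a pair of rank-one $X'$-Shtukas together with maps $\alpha_i,\beta_i$ (coming from pushing forward along $\nu$ the two line-bundle sub-objects inside $\nu_*$ of a rank-two bundle) arranged in the commuting diagrams \eqref{Hmu}, \eqref{Hmu2}. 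This is the heart of the matter and where the "new geometric ideas" the introduction advertises come in; I expect this to require the Octahedron Lemma (Theorem~\ref{th:oct}) to reorganize the iterated fiber products and the intersection-theoretic formalism of Appendix~\ref{s:int}.

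Second, having identified the relevant cycle-theoretic fiber product with (a Shtuka-type space built from) $\Hk^\mu_{\CM,d}$, I would use the Cartesian description of Shtuka stacks — the $(\id,\Fr)$-fiber square \eqref{ShtrG}, \eqref{ShtrT} — to rewrite the global intersection number as the intersection of the correspondence $\Hk^\mu_{\CM,d}$ (viewed, via Lemma~\ref{l:Hk r comp}, as the $r$-fold self-composition of $\calH$ over $\calM_d$) with the graph of Frobenius on $\calM_d$. Here I would invoke the second main result of Appendix~\ref{s:int}, the Lefschetz trace formula for the intersection of a correspondence with the graph of Frobenius (building on Varshavsky \cite{Var}): this turns $\jiao{\Gamma_{\Fr_{\calM_d}}, [\Hk^\mu_{\CM,d}]}$ into $\sum_{a}\Tr((f_{\CM,!}[\calH^\ds])^r_a \circ \Frob_a, (\bR f_{\CM,!}\Ql)_{\ov a})$, summed over the relevant $k$-points of the base. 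The appearance of $[\calH^\ds]$ rather than all of $[\calH]$ — i.e. the restriction to $\Mds_d$ where $\beta\neq 0$ — comes from the condition that the Heegner–Drinfeld cycle lives over the generic part of $\calA_d$; the excess contribution from $\Hk^\mu_{\CM,d}-\Hk^\mu_{\Mds,d}$ must be shown to vanish, which is exactly what the dimension estimate Lemma~\ref{l:dim HkM}(2) (bound $d+2g-2 < 2d-g+1$ when $d\geq 3g-3$, hence for $d\geq 2g$) is designed to guarantee.

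Third, I would pin down the base of the trace. The Heegner–Drinfeld cycle maps to $\Sht'^r_G$ over $X'^r$, and the Hecke correspondence $h_D$ fixes the legs; tracing through the map $\theta^\mu$ and the norm map $\wh\nu_d$, the invariant of the resulting configuration lands in $\calA_d$, and fixing the divisor $D$ (the divisor of $\det$ of the modification) amounts to fixing the image under $\delta:\calA_d\to\hX_d$, i.e. restricting to $\calA_D\subset\Ah_d$. The identification $\calA_D\cong\Gamma(X,\calO_X(D))$ from \S\ref{ss:AD} together with the combinatorics of $\Sigma_d$ shows the relevant $k$-points are precisely $\calA_D(k)$. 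The hypotheses $d\geq 2g'-1$ ensures $\calM_d$ (and $\calN_{\un d}$) is smooth of the expected dimension (Proposition~\ref{p:M}\eqref{M smooth}), so the cycle $[\calH^\ds]$ has the right dimension and defines a genuine cohomological self-correspondence as in \S\ref{sss:Hds}; $d\geq 2g$ handles the excess-dimension vanishing above.

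The main obstacle, by a wide margin, will be the first step: carefully establishing the geometric identification of the cycle-theoretic fiber product $Z^\mu \times_{\Sht^r_G(h_D)} Z^\mu$ (with all the $X'^r$-structures) with a Shtuka space over $\Hk^\mu_{\CM,d}$, and then justifying that the naive intersection product equals the trace — this requires controlling non-transversality via the Octahedron Lemma, checking that pushforward along $\nu$ identifies rank-two sub-configurations with the $(\alpha,\beta)$-data of $\calM_d$ correctly (including the determinant computation \eqref{det psi}, which explains why $a-b=\det\psi$ and hence why the base is $\calA_d$ and not something cruder), and verifying the various properness statements needed for the intersection numbers and pushforwards to make sense (Lemmas~\ref{l:Sht hD proper}, \ref{l:dim hD}, Proposition~\ref{p:M}\eqref{fM proper}). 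Everything downstream — the two trace-formula inputs from Appendix~\ref{s:int} and the dimension bookkeeping — is then relatively mechanical.
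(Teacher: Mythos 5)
Your plan follows essentially the same route as the paper: the paper realizes the double fiber product through the auxiliary stacks $H_{d}$ and $\Hk'^{r}_{G,d}$ and applies the Octahedron Lemma to trade the Frobenius-graph intersection on $\Sht'^{r}_{G}$ for one on $\calM_{d}$ (Theorem \ref{th:alt I}), then invokes the Lefschetz formula of Proposition \ref{p:Fix} and the dimension bound of Lemma \ref{l:dim HkM}(2) to replace the resulting Gysin class by $(f_{\calM,!}[\calH^{\ds}])^{r}$, exactly as you outline. The only slip is bookkeeping: the needed inequality $d+2g-2<2d-g+1$ requires $d\geq 3g-2$, which the paper extracts from $d\geq 2g'-1=4g-3$ rather than from $d\geq 2g$.
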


\subsubsection{Orbital decomposition of $\BI_{r}(h_{D})$}\label{sss:orb I} According Theorem \ref{th:I trace}, we may write
\begin{equation}\label{I orb decomp}
\BI_{r}(h_{D})=\sum_{u\in\PP^{1}(F)-\{1\}}\BI_{r}(u,h_{D})
\end{equation}
where
\begin{align}\label{orb I}
\BI_{r}(u,h_{D})=\begin{cases} \Tr\left((f_{\calM,!}[\calH^{\ds}])^{r}_{a}\circ\Frob_{a}, (\bR f_{\calM,!}\Ql)_{\ov{a}}\right) & \text{ if $u=\inv_{D}(a)$ for some $a\in\calA_{D}(k)$};\\
0 & \text{ otherwise.} \end{cases} \index{$\BI_{r}(u,h_{D})$}%
\end{align}

The rest of the section is devoted to the proof of this theorem. In the rest of this subsection we assume $d\geq\max\{2g'-1,2g\}$.

\subsubsection{} We apply the discussion in Appendix \S\ref{sss:corr Frob} to $M=\calM_{d}\xrightarrow{f_{\calM}}S=\calA_{d}$ and the self-correspondence $C=\Hk^{\mu}_{\CM,d}$ of $\CM_{d}$. We define $\Sht^{\mu}_{\CM,d}$ by the Cartesian diagram
\begin{equation}\label{NW}
\xymatrix{\Sht^{\mu}_{\CM,d}\ar[rr]\ar[d]& & \Hk^{\mu}_{\CM,d}\ar[d]^{(\gamma_{0},\gamma_{r})}\\
\CM_{d}\ar[rr]^{(\id,\Fr_{\CM_{d}})} && \CM_{d}\times\CM_{d}}			\index{$\Sht^{\mu}_{\CM,d}$}%
\end{equation}
This fits into the situation of \S\ref{sss:corr Frob} because $f_{\calM}\circ\gamma_{0}=f_{\calM}\circ\gamma_{r}$ by the discussion in \S\ref{sss:same proj A}, hence $\Hk^{\mu}_{\calM,d}$ is a self-correspondence of $\calM_{d}$ over $\calA_{d}$ while $(\id,\Fr_{\CM_{d}})$ covers the map $(\id,\Fr_{\calA_{d}}):\calA_{d}\to\calA_{d}\times\calA_{d}$. In particular we have a decomposition
\begin{equation}\label{ShtM into a}
\Sht^{\mu}_{\CM,d}=\coprod_{a\in \calA_{d}(k)}\Sht^{\mu}_{\CM,d}(a).  \index{$\Sht^{\mu}_{\CM,d}(a)$}%
\end{equation}
For $D\in X_{d}(k)$, we let
\begin{equation}\label{decomp D into a}
\Sht^{\mu}_{\CM,D}:=\coprod_{a\in \calA_{D}(k)}\Sht^{\mu}_{\CM,d}(a)\subset \Sht^{\mu}_{\CM,d}. \index{$\Sht^{\mu}_{\CM,D}$}%
\end{equation}
Using the decompositions \eqref{ShtM into a} and \eqref{decomp D into a}, we get a decomposition
\begin{equation}\label{Chow0 MD}
\Ch_{0}(\Sht^{\mu}_{\CM,d})_{\QQ}=\left(\bigoplus_{D\in X_{d}(k)}\Ch_{0}(\Sht^{\mu}_{\CM,D})_{\QQ}\right)\oplus\left(\bigoplus_{a\in\calA_{d}(k)-\Ah_{d}(k)}\Ch_{0}(\Sht^{\mu}_{\CM,d}(a))_{\QQ}\right).
\end{equation}

Let $\zeta\in\Ch_{2d-g+1}(\Hk^{\mu}_{\CM,d})_{\QQ}$. Since $\CM_{d}$ is a smooth Deligne--Mumford stack by Proposition \ref{p:M}\eqref{M smooth},  $(\id, \Fr_{\CM_{d}})$ is a regular local immersion, the refined Gysin map  (which is the same as intersecting with the Frobenius graph $\Gamma(\Fr_{\CM_{d}})$ of $\CM_{d}$) is defined
\begin{equation*}
(\id, \Fr_{\CM_{d}})^{!}: \Ch_{2d-g+1}(\Hk^{\mu}_{\CM,d})_{\QQ}\to\Ch_{0}(\Sht^{\mu}_{\CM,d})_{\QQ}
\end{equation*}
Under the decomposition \eqref{Chow0 MD}, we denote the component of $(\id,\Fr_{\CM_{d}})^{!}\zeta$ in the direct summand $\Ch_{0}(\Sht^{\mu}_{\CM,D})_{\QQ}$ by 
\begin{equation*}
\left((\id,\Fr_{\calM_{d}})^{!}\zeta\right)_{D}\in\Ch_{0}(\Sht^{\mu}_{\CM,D})_{\QQ}.
\end{equation*}
Composing with the degree map (which exists because $\Sht^{\mu}_{\CM,D}$ is proper over $k$, see the discussion after \eqref{s component of ShtC}), we define
\begin{equation*}
\jiao{\zeta,\Gamma(\Fr_{\CM_{d}})}_{D}:=\deg\left((\id,\Fr_{\calM_{d}})^{!}\zeta\right)_{D}\in\QQ.		
\end{equation*}

As the first step towards the proof of Theorem \ref{th:I trace}, we have the following result.
\begin{theorem}\label{th:alt I} Suppose $D$ is an effective divisor on $X$ of degree $d\geq\max\{2g'-1,2g\}$, then there exists a class $\zeta\in\Ch_{2d-g+1}(\Hk^{\mu}_{\CM,d})_{\QQ}$ whose restriction to $\Hk^{\mu}_{\CM,d}|_{\Ah_{d}\cap\Ads_{d}}$ is the fundamental cycle, such that 
\begin{equation}\label{alternative I}
\BI_{r}(h_{D})=\jiao{\zeta,\Gamma(\Fr_{\CM_{d}})}_{D}.
\end{equation}
\end{theorem}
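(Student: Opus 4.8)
The plan is to unwind both sides of \eqref{alternative I} into degrees of zero-cycles on a common Shtuka space and to match them by repeated use of the Octahedron Lemma (Theorem \ref{th:oct}). First I would rewrite the left-hand side: by the definition of the Hecke action on Chow groups (\S\ref{sss:Hk Sht'}, \S\ref{sss:H action on Chow}) and of the intersection pairing on $\Ch_{c,*}$, the number $\BI_{r}(h_{D})=\jiao{\theta^{\mu}_{*}[\Sht^{\mu}_{T}],\ h_{D}*\theta^{\mu}_{*}[\Sht^{\mu}_{T}]}_{\Sht'^{r}_{G}}$ is the degree of a proper zero-cycle on the iterated fiber product
\[
\calZ:=\Sht^{\mu}_{T}\times_{\theta^{\mu},\,\Sht'^{r}_{G},\,\oll{p}'}\Sht'^{r}_{G}(h_{D})\times_{\orr{p}',\,\Sht'^{r}_{G},\,\theta^{\mu}}\Sht^{\mu}_{T},
\]
the zero-cycle being obtained by applying to the fundamental classes the refined Gysin maps attached to the diagonals of $\Sht'^{r}_{G}$, which are regular local immersions since $\Sht'^{r}_{G}$ is smooth (Corollary \ref{c:ShtG smooth}). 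All cycles in sight stay proper because $\theta^{\mu}$ is finite and $\oll{p}',\orr{p}'$ are proper (Lemma \ref{l:Sht hD proper}), so that proper pushforward commutes with the Gysin maps and the projection formula is available; this is where one uses that $\Sht^{\mu}_{T}$ is proper of dimension $r=\tfrac12\dim\Sht'^{r}_{G}$.

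Next I would identify the support of this zero-cycle with the Shtuka space $\Sht^{\mu}_{\CM,D}$ of \eqref{NW}, \eqref{decomp D into a}. The identification is moduli-theoretic: a point of $\calZ$ consists of a $T$-Shtuka $\{\CL_{i}\}$ on $X'$, a second $T$-Shtuka $\{\CL'_{i}\}$ over the same points, and a modification $\nu_{*}\CL_{i}\incl\nu_{*}\CL'_{i}$ of the associated rank-two bundles whose determinant has divisor $D$, together with compatible Shtuka structures. Splitting such a modification into its two adjoint components $\alpha_{i},\beta_{i}$ as in \S\ref{push L} turns it into exactly the data of a point of $\Hk^{\mu}_{\CM,d}$; the condition that $\det$ has divisor $D$ places this point over the fiber $\calA_{D}\subset\calA_{d}$, and the Shtuka structures say precisely that it is matched with its Frobenius twist under $(\gamma_{0},\gamma_{r})$, i.e. lies in $\Sht^{\mu}_{\CM,D}$. (The difference between quotienting by $\Pic_{X}$ and by $\Pic_{X}(k)$ is resolved exactly as in Lemma \ref{l:indep mu}.) Over the open locus $\Ah_{d}\cap\Ads_{d}$ --- where $\alpha$ and $\beta$ are both nonzero with distinct divisors --- this is an honest isomorphism, and there $\calZ$ is cut out inside $\Hk^{\mu}_{\CM,d}$ precisely by the Frobenius graph of $\CM_{d}$; this forces the class $\zeta$ we seek to restrict to the fundamental cycle of $\Hk^{\mu}_{\CM,d}|_{\Ah_{d}\cap\Ads_{d}}$.

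I would therefore take $\zeta\in\Ch_{2d-g+1}(\Hk^{\mu}_{\CM,d})_{\QQ}$ to be the closure of that fundamental cycle, and verify \eqref{alternative I} by reorganizing the iterated intersection product via the Octahedron Lemma: the various fiber products and refined Gysin maps defining the zero-cycle on $\calZ$ are reassociated so that the intersection against the $\CM_{d}$-Frobenius graph is performed last, while the $\Bun_{T}$- and $\Bun_{G}\times\Bun_{G}$-Frobenius graphs get collapsed against the Cartesian squares defining $\Sht^{\mu}_{T}$ and $\Sht'^{r}_{G}(h_{D})$ (cf. \eqref{ShtrT}, \eqref{Sht hD}). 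For this to make sense one needs $\CM_{d}$ to be smooth, which holds once $d\geq 2g'-1$ by Proposition \ref{p:M}\eqref{M smooth}, so that $(\id,\Fr_{\CM_{d}})$ is a regular local immersion and $(\id,\Fr_{\CM_{d}})^{!}$ is defined on all of $\Hk^{\mu}_{\CM,d}$. One must also check that the terms of $\zeta$ supported away from $\Ah_{d}\cap\Ads_{d}$, and the components of $\Hk^{\mu}_{\CM,d}$ outside $\Hk^{\mu}_{\Mds,d}$, contribute nothing to the $D$-component of $(\id,\Fr_{\CM_{d}})^{!}\zeta$; here one uses that $\calA_{D}\subset\Ah_{d}$ (the tautological section $a-b=1$ being nonzero), together with the dimension estimates of Lemma \ref{l:dim HkM} and the degree hypothesis $d\geq 2g$, after restricting to the relevant quasi-compact opens --- a restriction forced on us because $\Sht'^{r}_{G}$ is only locally of finite type.

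The main obstacle will be the bookkeeping in the middle step: producing the identification $\calZ\cong\Sht^{\mu}_{\CM,D}$ not merely as spaces but compatibly with all the refined Gysin maps, keeping careful track of the three distinct Frobenius graphs and of the two flavours of Picard quotient, and then carrying out the reorganization of the iterated intersection product cleanly. This is precisely the situation the Octahedron Lemma of Appendix \ref{s:int} is designed for, and the argument will amount to invoking it enough times to move the $\CM_{d}$-Frobenius-graph intersection to the outside while everything else collapses to the definitions already in place. Once \eqref{alternative I} is established, Theorem \ref{th:I trace} follows by decomposing $\jiao{\zeta,\Gamma(\Fr_{\CM_{d}})}_{D}$ over $a\in\calA_{D}(k)$ as in \eqref{decomp D into a} and applying the Lefschetz trace formula of Appendix \ref{s:int} to each fiber.
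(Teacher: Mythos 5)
Your overall route coincides with the paper's own proof: rewrite $\BI_{r}(h_{D})$ as the degree of the $D$-component of a refined Gysin $0$-cycle on $\Sht^{\mu}_{\CM,D}$ (your $\calZ$ is exactly the fiber product of Corollary \ref{c:ShtD}), then reassociate the iterated intersection by applying the Octahedron Lemma (Theorem \ref{th:oct}) to the diagram \eqref{Tian} so that the intersection with $\Gamma(\Fr_{\CM_{d}})$ comes last. The genuine gap is in your choice of $\zeta$. You \emph{define} $\zeta$ to be the closure of the fundamental cycle of $\Hk^{\mu}_{\CM,d}|_{\Ah_{d}\cap\Ads_{d}}$ and assert that the octahedron reorganization verifies \eqref{alternative I} for this class. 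What the Octahedron Lemma actually delivers is the refined Gysin class $\zeta^{\hs}=(\Pi^{\mu}\times\Pi^{\mu})^{!}[\Hk'^{r}_{G,d}]$ of \eqref{define zeta}; the theorem is stated existentially precisely so that one may take this class, and all one knows about it is that its restriction to $\Ah_{d}\cap\Ads_{d}$ is the fundamental cycle (Lemma \ref{l:HCart ok}(2), via the dimension counts of Lemma \ref{l:HCart dim}). It need not equal your closure: the locus of $\Hk^{\mu}_{\CM,d}$ where all $\beta_{i}=0$ (which is exactly the complement of $\Ads_{d}$ there) is isomorphic to $\{(\CL_{0},\CL'_{0},\alpha_{0}\neq 0)\}/\Pic_{X}\times X'^{r}$ and has dimension $d+g-1+r$, which is $\geq 2d-g+1$ as soon as $r\geq d-2g+2$ — a case allowed by the hypotheses, since the degree bound on $D$ does not involve $r$. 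So the discrepancy can be a nonzero $(2d-g+1)$-cycle, and since $\calA_{D}$ meets the locus $b=0$ (at $a=1$, i.e.\ $u=0$), its contribution to the $D$-component does not vanish for support reasons on $\Hk^{\mu}_{\CM,d}$, which is where your "contributes nothing" claim is placed. The statement can be repaired, but only by pushing the discrepancy forward along $(\gamma_{0},\gamma_{r})$ to $\CM_{d}\times\CM_{d}$, where its support has dimension at most $d+2g-2<2d-g+1$ by Lemma \ref{l:dim HkM}(2), and then invoking compatibility of the refined Gysin map with proper pushforward; this is precisely the comparison the paper postpones to the deduction of Theorem \ref{th:I trace} and deliberately keeps out of Theorem \ref{th:alt I}.

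A second, lesser point: the technical core of \S\ref{sss:proof alt I} is the verification of the hypotheses of Theorem \ref{th:oct} for \eqref{Tian} and of \S\ref{sss:proper int} for \eqref{Sht'rGd} — smoothness and dimension of $H_{d}$ (Lemma \ref{l:MCart ok}), the factorization of $\Pi^{\mu}$ through a regular local immersion via a level structure at a split point (Lemma \ref{l:HCart ok}(1)), the dimension bound for $\Hk'^{r}_{G,d}$ via affine Schubert varieties (Lemma \ref{l:HCart dim}), and the identification $(\id,\Fr_{H_{d}})^{!}[\Hk'^{r}_{G,d}]=[\Sht'^{r}_{G,d}]$ (Lemma \ref{l:Sht'rGd ok}(2)), which your phrase about the $\Bun_{G}\times\Bun_{G}$-Frobenius graphs "collapsing against" the squares defining $\Sht'^{r}_{G}(h_{D})$ uses silently. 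Your proposal names the right tools but does not engage with these verifications, and they are where most of the work lies.
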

This theorem will be proved in \S\ref{sss:proof alt I}, after introducing some auxiliary moduli stacks in the next subsection.

\subsubsection{Proof of Theorem \ref{th:I trace}}
Granting Theorem \ref{th:alt I}, we now prove Theorem \ref{th:I trace}. Let $\zeta\in\Ch_{2d-g+1}(\Hk^{\mu}_{\CM,d})_{\QQ}$ be the class as in Theorem \ref{th:alt I}. By \eqref{decomp D into a}, we have a decomposition
\begin{equation}\label{Ch0 D a}
\Ch_{0}(\Sht^{\mu}_{\CM,D})_{\QQ}=\bigoplus_{a\in\calA_{D}(k)}\Ch_{0}(\Sht^{\mu}_{\CM,d}(a))_{\QQ}.
\end{equation}
We write
\begin{equation*}
\jiao{\zeta,\Gamma(\Fr_{\CM_{d}})}_{D}=\sum_{a\in\calA_{D}(k)}\jiao{\zeta,\Gamma(\Fr_{\CM_{d}})}_{a}
\end{equation*}
under the decomposition \eqref{Ch0 D a}, where $\jiao{\zeta,\Gamma(\Fr_{\CM_{d}})}_{a}$ is the degree of $\left((\id,\Fr_{\CM_{d}})^{!}\zeta\right)_{a}\in\Ch_{0}(\Sht^{\mu}_{\CM,d}(a))_{\QQ}$. Combining this with Theorem \ref{th:alt I} we get
\begin{equation}\label{I sum a}
\BI_{r}(h_{D})=\sum_{a\in\calA_{D}(k)}\jiao{\zeta,\Gamma(\Fr_{\CM_{d}})}_{a}.
\end{equation}

On the other hand, by Proposition \ref{p:Fix}, we have for any $a\in \calA_{D}(k)$
\begin{equation}\label{trace zeta}
\jiao{\zeta, \Gamma(\Fr_{\CM_{d}})}_{a}=\Tr((f_{\calM,!}\cl(\zeta))_{a}\circ\Frob_{a}, (\bR f_{\calM,!}\Ql)_{\ov{a}}).
\end{equation}
Here we are viewing the cycle class $\cl(\zeta)\in\hBM{2(2d-g+1)}{\Hk^{\mu}_{\CM,d}}(-2d+g-1)$ as a cohomological self-correspondence of the constant sheaf $\Ql$ on $\calM_{d}$, which induces an endomorphism
\begin{equation}\label{clzeta}
f_{\calM,!}\cl(\zeta): \bR f_{\calM, !}\Ql\to \bR f_{\calM, !}\Ql,
\end{equation}
and $(f_{\calM,!}\cl(\zeta))_{a}$ is the induced endomorphism on the geometric stalk $(\bR f_{\calM,!}\Ql)_{\ov{a}}$. Since we only care about the action of $f_{\calM,!}\cl(\zeta)$ on stalks in $\Ah_{d}$, only the restriction $\zeta^{\hs}:=\zeta|_{\Ah_{d}}\in Z_{2d-g+1}(\Hk^{\mu}_{\CM,d}|_{\Ah_{d}})_{\QQ}$ matters. Combining \eqref{trace zeta} with \eqref{I sum a}, we see that in order to prove \eqref{I trace}, it suffices to show that $f_{\calM,!}\cl(\zeta^{\hs})$ and $(f_{\calM,!}[\calH^{\ds}])^{r}$ give the same endomorphism of the complex $\bR f_{\calM, !}\Ql|_{\Ah_{d}}$.  This is the following lemma, which is applicable because $d\geq 3g-2$ is implied by $d\geq 2g'-1=4g-3$ (since $g\geq 1$).	\index{$\zeta^{\hs}$}%

\begin{lemma} Suppose $d\geq 3g-2$, and $\zeta^{\hs}\in Z_{2d-g+1}(\Hk^{\mu}_{\CM,d}|_{\Ah_{d}})_{\QQ}$. Suppose the restriction of $\zeta^{\hs}$ to $\Hk^{\mu}_{\CM,d}|_{\Ah_{d}\cap\Ads_{d}}$ is the fundamental cycle, then the endomorphism $f_{\calM,!}\cl(\zeta^{\hs})$ of $\bR f_{\calM, !}\Ql|_{\Ah_{d}}$ is equal to the $r$-th power of the endomorphism $f_{\calM,!}[\calH^{\ds}]$.
\end{lemma}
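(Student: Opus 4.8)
The statement to be proved says that the self-correspondence cycle $\zeta^{\hs}$ on $\Hk^{\mu}_{\CM,d}|_{\Ah_d}$ induces, via the formalism of cohomological correspondences, the same endomorphism of $\bR f_{\calM,!}\Ql|_{\Ah_d}$ as the $r$-fold composition of the elementary correspondence $[\calH^{\ds}]$. The whole strategy rests on two pillars: the fact (Lemma \ref{l:Hk r comp}) that $\Hk^{\mu}_{\CM,d}$ is, \emph{as a self-correspondence of $\calM_d$}, canonically the $r$-fold composition $\calH\times_{\gamma_1,\calM_d,\gamma_0}\cdots\times_{\gamma_1,\calM_d,\gamma_0}\calH$; and the compatibility of the composition of cohomological correspondences with the composition of the underlying geometric correspondences (reviewed in Appendix \ref{s:int}, \S\ref{sss:corr}). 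Granting those, it suffices to identify the cycle $\zeta^{\hs}$ with the ``$r$-fold composition cycle'' $[\calH^{\ds}]\circ\cdots\circ[\calH^{\ds}]$ in $\Ch_*(\Hk^{\mu}_{\CM,d}|_{\Ah_d})_{\QQ}$ — and then the endomorphism of $\bR f_{\calM,!}\Ql$ it induces is, by the functoriality of the pushforward of cohomological correspondences along $f_{\calM}$, exactly $(f_{\calM,!}[\calH^{\ds}])^r$.

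\textbf{Key steps.} First I would recall/set up the composition of cycles on iterated fiber products: given $[\calH^{\ds}]\in\Ch_{2d-g+1}(\calH)_{\QQ}$, form the class $[\calH^{\ds}]^{\circ r}\in\Ch_*(\Hk^{\mu}_{\CM,d})_{\QQ}$ by iterated refined Gysin pullback along the diagonal of $\calM_d$ (legitimate since $\calM_d$ is smooth Deligne--Mumford for $d\geq 2g'-1$, by Proposition \ref{p:M}\eqref{M smooth}), using the Octahedron Lemma (Theorem \ref{th:oct}) to guarantee associativity and that the composition of cohomological correspondences matches. Second, I would show $[\calH^{\ds}]^{\circ r}=\zeta^{\hs}$ on the open locus $\Ah_d\cap\Ads_d$: there both $\calH^{\ds}$ and all iterated fiber products are the expected smooth varieties (by Lemma \ref{l:dim HkM}\eqref{} and the Cartesian square of Lemma \ref{l:HInc}, each $\gamma_0:\calH^{\ds}\to\Mds_d$ is finite flat), the composition cycle is visibly the fundamental cycle of $\Hk^{\mu}_{\Mds,d}\cap(\text{preimage of }\Ah_d)$, and $\zeta^{\hs}$ is the fundamental cycle there by hypothesis. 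Third — the crucial extension step — I must argue that $[\calH^{\ds}]^{\circ r}$ and $\zeta^{\hs}$ agree not just on $\Ah_d\cap\Ads_d$ but on all of $\Ah_d$. For this I would use a dimension count: by Lemma \ref{l:dim HkM}\eqref{} the complement $\Hk^{\mu}_{\CM,d}-\Hk^{\mu}_{\Mds,d}$ has image of dimension $\leq d+2g-2$ in $\calM_d\times\calM_d$, which is strictly less than $2d-g+1=\dim\Hk^{\mu}_{\Mds,d}$ precisely when $d\geq 3g-2$; hence the restriction map $\Ch_{2d-g+1}(\Hk^{\mu}_{\CM,d}|_{\Ah_d})_{\QQ}\to\Ch_{2d-g+1}(\Hk^{\mu}_{\Mds,d}|_{\Ah_d})_{\QQ}$ is an isomorphism, so agreement of the two cycles on $\Hk^{\mu}_{\Mds,d}|_{\Ah_d}$ (which follows from step two by the genericity of $\Ads_d$ inside $\Ah_d$ on each component of $\Hk^{\mu}_{\Mds,d}$, using again that $\gamma_0$ is finite so components are controlled) forces agreement on $\Hk^{\mu}_{\CM,d}|_{\Ah_d}$. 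Finally, pushing forward along $f_{\calM}$ and invoking the compatibility of cohomological-correspondence composition with geometric composition (and the fact that $f_{\calM}\circ\gamma_0=f_{\calM}\circ\gamma_r$, \S\ref{sss:same proj A}), I conclude $f_{\calM,!}\cl(\zeta^{\hs})=(f_{\calM,!}[\calH^{\ds}])^r$ as endomorphisms of $\bR f_{\calM,!}\Ql|_{\Ah_d}$.

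\textbf{Main obstacle.} The routine parts are the dimension bookkeeping and the smooth-locus identification; the genuinely delicate point is step three combined with the bookkeeping of the composition of cycles versus composition of correspondences at the level of \emph{non-proper} cohomology with compact support. One must be careful that the refined Gysin construction of $[\calH^{\ds}]^{\circ r}$ in $\Ch_*$ does correspond term-for-term to the iterated composition of the cohomological correspondences $f_{\calM,!}[\calH^{\ds}]$ — i.e. that the cycle-class map intertwines the two notions of composition. This is exactly where the Octahedron Lemma of Appendix \ref{s:int} is needed: it packages the associativity and the commutation of partial intersections that make ``compute the composition cycle first, then take its class'' equal to ``take classes, then compose cohomological correspondences.'' A secondary subtlety is ensuring that restricting everything to the open $\Ah_d$ is harmless — that $\bR f_{\calM,!}\Ql$ and its endomorphisms behave well under open restriction, which is standard but should be stated explicitly. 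Everything else is controlled by the numerical inequalities $d\geq 2g'-1$ (for smoothness) and $d\geq 3g-2$ (for the Chow-group surjectivity), both implied by the running hypothesis $d\geq\max\{2g'-1,2g\}$ since $g\geq 1$.
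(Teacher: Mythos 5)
Your overall architecture (form the $r$-fold self-convolution $[\calH^{\ds}]^{\circ r}$, check both cycles are the fundamental cycle over $\Ah_{d}\cap\Ads_{d}$, use the dimension bound of Lemma \ref{l:dim HkM}(2) together with $d\geq 3g-2$, then push forward along $f_{\calM}$ using $f_{\calM}\circ\gamma_{0}=f_{\calM}\circ\gamma_{r}$) is the paper's argument. However, your step three contains a genuine gap. Lemma \ref{l:dim HkM}(2) bounds the dimension of the \emph{image} of $\Hk^{\mu}_{\CM,d}-\Hk^{\mu}_{\Mds,d}$ in $\CM_{d}\times\CM_{d}$ by $d+2g-2$; it says nothing about the dimension of that boundary stratum inside $\Hk^{\mu}_{\CM,d}$ itself. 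In fact the boundary (where all $\beta_{i}=0$) is parametrized, up to $\Pic_{X}$, by $\CL_{0}\in\Bun_{T}$, $\CL'_{0}\in\Pic_{X'}$, a nonzero section $\alpha_{0}$ of $\CL'_{0}\otimes\CL_{0}^{-1}$, and the $r$ points $x'_{i}\in X'$, so its dimension is roughly $d+g+r$, which for large $r$ is $\geq 2d-g+1$. Hence the claimed isomorphism $\Ch_{2d-g+1}(\Hk^{\mu}_{\CM,d}|_{\Ah_{d}})_{\QQ}\to\Ch_{2d-g+1}(\Hk^{\mu}_{\Mds,d}|_{\Ah_{d}})_{\QQ}$ does not follow from the cited bound and is false in general; and worse, the cycle-level equality $\zeta^{\hs}=[\calH^{\ds}]^{\circ r}$ on $\Hk^{\mu}_{\CM,d}|_{\Ah_{d}}$ that you are trying to establish is not available, since the hypothesis only constrains $\zeta^{\hs}$ over $\Ads_{d}$ and $\zeta^{\hs}$ may carry extra components of full dimension supported over $\Ah_{d}-\Ads_{d}$ (the class produced in Theorem \ref{th:alt I} is only pinned down there).

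The correct repair, and what the paper does, is to lower the target of the comparison: the endomorphism of $\bR f_{\calM,!}\Ql|_{\Ah_{d}}$ induced by a cycle on the correspondence depends only on its pushforward under $(\gamma_{0},\gamma_{r})$ to $\Mh_{d}\times\Mh_{d}$. Since both $\zeta^{\hs}$ and $[\calH^{\ds}]^{r}$ restrict to the fundamental cycle over $\Ah_{d}\cap\Ads_{d}$, the difference $(\gamma_{0},\gamma_{r})_{*}(\zeta^{\hs}-[\calH^{\ds}]^{r})$ is a $(2d-g+1)$-dimensional cycle supported on the image of the boundary, which by Lemma \ref{l:dim HkM}(2) has dimension at most $d+2g-2<2d-g+1$ (here $d>3g-3$ enters); such a cycle is zero, so the two induced endomorphisms coincide, and the rest of your outline (steps one, two and four) goes through. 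So the missing idea is precisely this reduction to the pushed-forward cycle on $\Mh_{d}\times\Mh_{d}$, which is where the image-dimension bound is actually usable.
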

\begin{proof} Let $[\calH^{\ds}]^{r}$ denotes the $r$-th self-convolution of $[\calH^{\ds}]$, which is a cycle on the $r$-th self composition of $\calH$, hence on $\Hk^{\mu}_{\CM,d}$ by Lemma \eqref{l:Hk r comp}. We have two cycle  $\zeta^{\hs}$ and (the restriction of) $[\calH^{\ds}]^{r}$ in $Z_{2d-g+1}(\Hk^{\mu}_{\CM,d}|_{\Ah_{d}})_{\QQ}$. We temporarily denote $\calM_{d}|_{\Ah_{d}}$ by $\Mh_{d}$ (although the same notation will be defined in an a priorily different way in \S\ref{ss:aux}). We need to show that they are in the same cycle class when projected to $\Mh_{d}\times\Mh_{d}$ under $(\gamma_{0},\gamma_{r}):\Hk^{\mu}_{\CM,d}|_{\Ah_{d}}\to\Mh_{d}\times\Mh_{d}$.

By assumption, when restricted to $\Hk^{\mu}_{\CM,d}|_{\Ah_{d}\cap\Ads_{d}}$, both $\zeta^{\hs}$ and $[\calH^{\ds}]^{r}$ are the fundamental cycle. Therefore the difference $(\gamma_{0},\gamma_{r})_{*}(\zeta^{\hs}-[\calH^{\ds}]^{r})\in Z_{2d-g+1}(\Mh_{d}\times\Mh_{d})_{\QQ}$ is supported on the image of $\Hk^{\mu}_{\CM,d}|_{\Ah_{d}-\Ads_{d}}$ in $\Mh_{d}\times\Mh_{d}$, which is contained in the image of $\Hk^{\mu}_{\CM,d}-\Hk^{\mu}_{\Mds,d}$ in $\CM_{d}\times\CM_{d}$. By Lemma \ref{l:dim HkM}(2), the latter has dimension $\leq d+2g-2$. Since $d>3g-3$, we have $d+2g-2<2d-g+1$, therefore $(\gamma_{0},\gamma_{r})_{*}(\zeta^{\hs}-[\calH^{\ds}]^{r})=0\in Z_{2d-g+1}(\Mh_{d}\times\Mh_{d})_{\QQ}$, and the lemma follows.
\end{proof}

\subsection{Auxiliary moduli stacks}\label{ss:aux} The goal of this subsection is to prove Theorem \ref{th:alt I}. Below we fix an integer $d\geq \max\{2g'-1,2g\}$. In this subsection, we will introduce moduli stacks $\Hk'^{r}_{G,d}$ and $H_{d}$ that will fit into the following commutative diagram
\begin{equation}\label{Tian}
\xymatrix{\Hk^{\mu}_{T}\times\Hk^{\mu}_{T}\ar@<-3ex>[d]_{(\gamma_{0},\gamma_{r})}\ar@<3ex>[d]^{(\gamma_{0},\gamma_{r})}\ar[rr]^{\Pi^{\mu}\times\Pi^{\mu}} && \Hk'^{r}_{G}\times\Hk'^{r}_{G} \ar@<-3ex>[d]_{(\gamma'_{0},\gamma'_{r})}\ar@<3ex>[d]^{(\gamma'_{0},\gamma'_{r})} && \Hk'^{r}_{G,d}\ar[d]^{(\gamma'_{0},\gamma'_{r})}\ar[ll]_{(\oll{\rho}',\orr{\rho}')}\\
(\Bun_{T})^{2}\times (\Bun_{T})^{2}\ar[rr]^{\Pi\times\Pi\times\Pi\times\Pi} && (\Bun_{G})^{2}\times (\Bun_{G})^{2} && H_{d}\times H_{d}\ar[ll]_(.4){\olr{p}_{13}\times \olr{p}_{24}}\\
\Bun_{T}\times \Bun_{T}\ar@<-3ex>[u]_{(\id,\Fr)}\ar@<3ex>[u]^{(\id,\Fr)}\ar[rr]^{\Pi\times\Pi} && \Bun_{G}\times\Bun_{G}\ar@<-3ex>[u]_{(\id,\Fr)}\ar@<3ex>[u]^{(\id,\Fr)} && H_{d}\ar[ll]_{\olr{p}=(\oll{p},\orr{p})}\ar[u]_{(\id,\Fr)}}
\end{equation}
The maps in this diagram will be introduced later. The fiber products of the three columns are
\begin{equation}\label{3 columns}
\Sht^{\mu}_{T}\times\Sht^{\mu}_{T}\xrightarrow{\theta^{\mu}\times\theta^{\mu}}\Sht'^{r}_{G}\times\Sht'^{r}_{G}\xleftarrow{(\oll{p}',\orr{p}')}\Sht'^{r}_{G,d}
\end{equation}
where $\Sht'^{r}_{G,d}$ is defined as the fiber product of the third column.		\index{$\Sht'^{r}_{G,d}$}%

The fiber products of the three rows will be denoted
\begin{equation}\label{3 rows}
\xymatrix{\Hk^{\mu}_{\Mh,d}\ar[d]^{(\gamma_{0},\gamma_{r})}\\
\Mh_{d}\times\Mh_{d}\\
\Mh_{d}\ar[u]_{(\id,\Fr)}}		 \index{$\calM_{d}^\heart$} \index{$\Hk^{\mu}_{\Mh, d}$}%
\end{equation}
These stacks will turn out to be the restrictions of $\CM_{d}$ and $\Hk^{\mu}_{\CM,d}$ to $\Ah_{d}$, as we will see in Lemma \ref{l:MCart ok}\eqref{Mh M} and Lemma \ref{l:Hk Mh}.

\subsubsection{}\label{two ShtM same} In \S\ref{ss:oct} we discuss an abstract situation as in the above diagrams, which can be pictured using a subdivided octahedron. By Lemma \ref{l:same north pole}, the fiber products of the two diagrams \eqref{3 columns} and \eqref{3 rows} are canonically isomorphic. We denote this stack by
\begin{equation*}
\Sht^{\mu}_{\Mh,d}. \index{$\Sht^{\mu}_{\Mh,d}$}%
\end{equation*}

Below we will introduce $H_{d}$ and $\Hk'^{r}_{G,d}$.

\subsubsection{} We define $\wt{H}_{d}$ to be the moduli stack whose $S$-points is the groupoid of maps
\begin{equation*}
\phi: \CE\incl\CE'
\end{equation*}
where $\CE,\CE'$ are vector bundles over $X\times S$ of rank two, $\phi$ is an injective map of $\calO_{X\times S}$-modules (so its cokernel has support finite over $S$) and $\pr_{S*}\coker(\phi)$ is a locally free $\calO_{S}$-module of rank $d$ (where $\pr_{S}:X\times S\to S$ is the projection). We have an action of $\Pic_{X}$ on $\wt{H}_{d}$ by tensoring, and we form the quotient
\begin{equation*}
H_{d}:=\wt{H}_{d}/\Pic_{X}			\index{$H_{d}, \wt{H}_{d}$}%
\end{equation*}
Taking the map $\phi$ to its source and target gives two maps $\oll{p},\orr{p}: H_{d}\to \Bun_{G}$. The map $\olr{p}_{13}\times\olr{p}_{24}$ that appears in \eqref{Tian} is the map
\begin{eqnarray*}
\olr{p}_{13}\times\olr{p}_{24}: H_{d}\times H_{d}&\to& \Bun_{G}\times\Bun_{G}\times\Bun_{G}\times\Bun_{G}\\
(h,h')&\mapsto&(\oll{p}(h), \oll{p}(h'), \orr{p}(h), \orr{p}(h')).
\end{eqnarray*}

On the other hand we have the morphism $\Pi: \Bun_{T}\to \Bun_{G}$ sending $\calL$ to $\nu_{*}\calL$, see \S\ref{sss:Pi}. We form the following Cartesian diagram, and take it as the definition of $\Mh_{d}$
\begin{equation}\label{MCart}
\xymatrix{\Mh_{d}\ar[r]\ar[d] & H_{d}\ar[d]^{(\oll{p},\orr{p})}\\
\Bun_{T}\times\Bun_{T}\ar[r]^{\Pi\times\Pi} & \Bun_{G}\times \Bun_{G}}		\index{$\calM_{d}^\heart$}%
\end{equation}

\begin{lemma}\label{l:MCart ok}
\begin{enumerate}
\item The morphisms $\oll{p},\orr{p}: H_{d}\to\Bun_{G}$ are representable and smooth of pure relative dimension $2d$. In particular, $H_{d}$ is a smooth algebraic stack over $k$ of pure dimension $2d+3g-3$.
\item\label{Mh M} There is a canonical open embedding $\Mh_{d}\incl \calM_{d}$ whose image is $f_{\calM}^{-1}(\Ah_{d})$ (for the definition of $\Ah_{d}$, see \S\ref{sss:Ah}). In particular, $\Mh_{d}$ is a smooth Deligne--Mumford stack over $k$ of pure dimension $2d-g+1$.
\end{enumerate}
\end{lemma}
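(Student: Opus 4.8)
For part (1), the plan is to analyze the morphism $\orr{p}\colon H_d \to \Bun_G$ (the argument for $\oll{p}$ is symmetric, or follows by the duality $\phi\mapsto\phi^\vee$ that exchanges source and target). Working on the double cover by $\Pic_X$, it suffices to understand $\wt{H}_d \to \Bun_2$ sending $\phi\colon\CE\hookrightarrow\CE'$ to $\CE'$. The fiber over a fixed rank-two bundle $\CE'$ over $X\times S$ classifies subsheaves $\CE\hookrightarrow \CE'$ whose cokernel is flat over $S$ of relative length $d$; equivalently, it is the Quot scheme $\mathrm{Quot}^d(\CE')$ of length-$d$ quotients of $\CE'$. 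This is a projective $S$-scheme, and for $\CE'$ a vector bundle of rank $2$ on a smooth curve it is smooth of dimension $2d$ over $S$ (a length-$d$ torsion quotient $\mathcal{T}$ of $\CE'$ has $\mathrm{Ext}^1_{X}(\mathcal{Q},\mathcal{T})=0$ where $\mathcal{Q}=\ker$ since $\mathcal{T}$ is torsion, so the Quot scheme is smooth, with tangent space $\mathrm{Hom}(\mathcal{Q},\mathcal{T})$ of dimension $\mathrm{rk}(\mathcal{Q})\cdot\mathrm{length}(\mathcal{T}) = 2d$). Hence $\orr{p}$ is representable, smooth, and of pure relative dimension $2d$. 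Since $\Bun_G$ is smooth of dimension $\dim\Bun_2 - \dim\Pic_X = 4(g-1) - (g-1) = 3(g-1)$, we conclude $H_d$ is smooth over $k$ of pure dimension $2d + 3g - 3$.

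For part (2), the plan is to identify the Cartesian product $\Mh_d$ defined by \eqref{MCart} with $f_{\calM}^{-1}(\Ah_d)\subset\calM_d$. A point of $\Mh_d$ is a datum $(\calL,\calL',\phi\colon\nu_*\calL\hookrightarrow\nu_*\calL')$ modulo $\Pic_X$; by the adjunction discussion in \S\ref{push L}, the map $\phi$ corresponds canonically to a pair $(\alpha\colon\calL\to\calL',\ \sigma^*\beta\colon\sigma^*\calL\to\calL')$, i.e.\ to a pair $(\alpha,\beta)$ with $\alpha\colon\calL\to\calL'$, $\beta\colon\calL\to\sigma^*\calL'$. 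The injectivity of $\phi$ as a map of $\calO_X$-modules is exactly the condition that its determinant $\det(\phi) = a - b$ (formula \eqref{det psi}) be a nonzero section of $\Delta = \Nm(\calL')\otimes\Nm(\calL)^{-1}$; equivalently $a\neq b$, which by definition of $\Ah_d$ in \S\ref{sss:Ah} means precisely that the image $(\Delta,a,b)$ of the point under $f_\calM$ lies in $\Ah_d$. Conversely, any such $(\calL,\calL',\alpha,\beta)$ with $a\neq b$ gives an injective $\phi$, and the remaining (open) condition in the definition of $\tcM_d$ — that $\alpha$ and $\beta$ are not both zero — is automatic once $a - b \neq 0$. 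This gives a canonical isomorphism $\Mh_d \cong f_\calM^{-1}(\Ah_d)$ as an open substack of $\calM_d$, the openness being the open condition $a\neq b$. Smoothness of dimension $2d - g + 1$ then follows from Proposition \ref{p:M}\eqref{M smooth}, since $\Ah_d\subset\calA_d$ and $d\geq 2g'-1$.

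The main obstacle I anticipate is the smoothness computation for the Quot scheme in part (1): one must verify carefully that the obstruction space $\mathrm{Ext}^1_{X\times S}(\mathcal{Q},\mathcal{T})$ vanishes relatively over $S$ (so that the Quot scheme is smooth over $S$, not merely the fibers being smooth over a field), and that the relative dimension is constant equal to $2d$. This is standard deformation theory of Quot schemes on curves — torsion quotients on a smooth curve have vanishing higher Ext against any coherent sheaf — but one should phrase it in families, using that $X\times S \to S$ is a smooth relative curve and that $\coker(\phi)$ is flat over $S$. Once this is in hand, everything else in the lemma is a matter of unwinding the adjunction between $\nu_*$ and $\nu^*$ together with the determinant formula \eqref{det psi} already established, and tracking which open conditions correspond under the identification.
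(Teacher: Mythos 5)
Your proposal is correct. Part (2) is the paper's own argument verbatim in structure: use the adjunction of \S\ref{push L} to convert $\psi:\nu_{*}\calL\to\nu_{*}\calL'$ into a pair $(\alpha,\beta)$, observe via \eqref{det psi} that injectivity of $\psi$ is exactly $\det(\psi)=a-b\neq 0$, i.e.\ membership in $f_{\calM}^{-1}(\Ah_{d})$, and then quote Proposition \ref{p:M}(2) for smoothness and the dimension count.

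For part (1) you take a slightly different (equally valid) route. The paper first treats $\oll{p}$ by exhibiting $(\oll{p},q):H_{d}\to\Bun_{G}\times\Coh^{d}_{0}$, where $q$ records $\coker(\phi)$, as a vector bundle of rank $2d$ with fiber $\Ext^{1}(\CQ,\CE)$ over $(\CE,\CQ)$, using Laumon's result that $\Coh^{d}_{0}$ is smooth of dimension $0$; it then transfers the statement to $\orr{p}$ via the involution $\phi\mapsto\phi^{\vee}$, exactly the duality you invoke in the opposite direction. You instead compute the fibers of $\orr{p}$ directly as $\Quot^{d}(\CE')$ and verify smoothness by the standard tangent--obstruction theory ($\Hom(\CE,\CT)$ of dimension $2d$, and $\Ext^{1}(\CE,\CT)=0$ since $\CE$ is locally free and $\CT$ is torsion on a curve, checked fiberwise so that relative smoothness over the base follows). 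Your approach buys representability (and properness of the fibers) of $\orr{p}$ immediately from the projectivity of the Quot scheme, whereas the paper's presentation as a vector bundle over $\Bun_{G}\times\Coh^{d}_{0}$ gives a cleaner global structural description from which smoothness is read off without any deformation-theoretic computation; both reach the same dimension $2d+3g-3$ for $H_{d}$. The one point to keep explicit in your write-up is the descent step: you argue on $\wt{H}_{d}\to\Bun_{2}$ and pass to the $\Pic_{X}$-quotients, which is fine since the Quot-scheme identification is $\Pic_{X}$-equivariant and $\Bun_{2}\to\Bun_{G}$ is a $\Pic_{X}$-torsor.
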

\begin{proof}
(1) Let $\Coh^{d}_{0}$ be the stack classifying torsion coherent sheaves on $X$ of length $d$. By \cite[(3.1)]{Laumon}, $\Coh^{d}_{0}$ is smooth of dimension $0$. Consider the map $q: H_{d}\to\Coh^{d}_{0}$ sending $\phi: \CE\incl\CE'$ to $\coker(\phi)$. Then the map $(\oll{p}, q): H_{d}\to \Bun_{G}\times \Coh^{d}_{0}$ is a vector bundle of rank $2d$ whose fiber over $(\CE, \CQ)$ is $\Ext^{1}(\CQ, \CE)$. Therefore $\oll{p}$ is smooth of relative dimension $2d$.

There is an involution $\delta_{H}$ on $H_{d}$ sending $\phi:\CE\to \CE'$ to $\phi^{\vee}:\CE'^{\vee}\to \CE^{\vee}$. We have $\oll{p}\circ\delta_{H}=\orr{p}$ because $\CE'^{\vee}=\CE'$ canonically as $G$-bundles. Therefore $\orr{p}$ is also smooth of relative dimension $2d$.

(2) By the diagram \eqref{MCart}, $\Mh_{d}$ classifies $(\calL,\calL',\psi)$ up to the action of $\Pic_{X}$, where $\calL$ and $\calL'$ are as in the definition of $\tcM_{d}$, and $\psi$ is an injective $\calO_{X}$-linear map $\nu_{*}\calL\to\nu_{*}\calL'$.

The discussion in \S\ref{push L} turns a point $(\calL,\calL',\psi: \nu_{*}\calL\to\nu_{*}\calL')\in\Mh_{d}$ into a point $(\calL,\calL',\alpha:\calL\to\calL',\beta:\calL\to\sigma^{*}\calL')\in\calM_{d}$. The condition that $\psi$ be injective is precisely the condition that $\det(\psi)\neq0$, which is equivalent to saying that $f_{\calM}(\calL,\calL',\psi)\in\Ah_{d}$, according to \eqref{det psi}.

Proposition \ref{p:M}\eqref{M smooth} shows that $\CM_{d}$ is a smooth Deligne--Mumford stack over $k$ of pure dimension $2d-g+1$, hence the same is true for its open substack $\Mh_{d}$. 
\end{proof}

\subsubsection{} Recall the Hecke stacks $\Hk^{r}_{G}$ and $\Hk^{\mu}_{T}$ defined in \eqref{HkrG} and \eqref{HkrT}.  Let $\Hk^{\mu}_{2,d}$ be the moduli stack of commutative diagrams
\begin{equation}\label{HmuGd}
\xymatrix{\calE_{0}\ar@{-->}[r]\ar[d]^{\phi_{0}} & \calE_{1}\ar@{-->}[r]\ar[d]^{\phi_{1}} & \cdots\ar@{-->}[r] &\calE_{r}\ar[d]^{\phi_{r}}\\
\calE'_{0}\ar@{-->}[r] & \calE'_{1}\ar@{-->}[r] & \cdots\ar@{-->}[r] &\calE'_{r}}
\end{equation}
where both rows are points in $\Hk^{\mu}_{2}$ with the same image in $X^{r}$, and the vertical maps $\phi_{j}$ are points in $H_{d}$ (i.e., injective maps with colength $d$). Let
\begin{equation*}
\Hk^{r}_{G,d}=\Hk^{\mu}_{2,d}/\Pic_{X}		\index{$\Hk^{r}_{G,d}, \Hk^{\mu}_{2,d}$}%
\end{equation*}
where $\Pic_{X}$ simultaneously acts on all $\calE_{i}$ and $\calE'_{i}$ by tensor product. The same argument of Lemma \ref{l:Hk indep mu} shows that $\Hk^{r}_{G,d}$ is independent of $\mu$. 

There are  natural maps $\Hk^{r}_{G}\to X^{r}$ and $\Hk^{r}_{G,d}\to X^{r}$. We define
\begin{equation*}
\Hk'^{r}_{G}=\Hk^{r}_{G}\times_{X^{r}}X'^{r}; \quad \Hk'^{r}_{G,d}:=\Hk^{r}_{G,d}\times_{X^{r}}X'^{r}.	\index{$\Hk'^{r}_{G},\Hk'^{r}_{G,d}$}%
\end{equation*}
The map $\Hk^{\mu}_{T}\to \Hk^{r}_{G}$ given by $\calE_{i}=\nu_{*}\calL_{i}$ induces a map
\begin{equation*}
\Pi^{\mu}:\Hk^{\mu}_{T}\to \Hk'^{r}_{G}.
\end{equation*}

We have two maps
\begin{equation*}
\oll{\rho},\orr{\rho}:\Hk^{r}_{G,d}\to\Hk^{r}_{G}
\end{equation*}
sending the diagram \eqref{HmuGd} to its top and bottom row. We denote their base change to $X'^{r}$ by
\begin{equation*}
\oll{\rho}',\orr{\rho}':\Hk'^{r}_{G,d}\to\Hk'^{r}_{G}
\end{equation*}

We define $\Hk^{\mu}_{\Mh, d}$ by the following Cartesian diagram
\begin{equation}\label{HCart}
\xymatrix{\Hk^{\mu}_{\Mh, d}\ar[rr]\ar[d] && \Hk'^{r}_{G,d}\ar[d]^{(\oll{\rho}',\orr{\rho}')}\\
\Hk^{\mu}_{T}\times\Hk^{\mu}_{T}\ar[rr]^{\Pi^{\mu}\times\Pi^{\mu}} & &\Hk'^{r}_{G}\times\Hk'^{r}_{G}}		\index{$\Hk^{\mu}_{\Mh, d}$}%
\end{equation}

The same argument of Lemma \ref{l:MCart ok}\eqref{Mh M} shows the following result. Recall that the stack $\Hk^{\mu}_{\CM,d}$ is defined in \S\ref{sss:alt HkM}. 
\begin{lemma}\label{l:Hk Mh}
There is a canonical isomorphism between $\Hk^{\mu}_{\Mh,d}$ and the preimage of $\Ah_{d}$ under the natural map $f_{\calM}\circ\gamma_{0}: \Hk^{\mu}_{\CM,d}\to\calA_{d}$. 
\end{lemma}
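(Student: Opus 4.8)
\textbf{Proof plan for Lemma \ref{l:Hk Mh}.}

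The plan is to mimic exactly the argument used for Lemma \ref{l:MCart ok}\eqref{Mh M}, carried out one Hecke-modification step at a time. First I would unwind the definition \eqref{HCart} of $\Hk^{\mu}_{\Mh,d}$: an $S$-point is a pair of $S$-points of $\Hk^{\mu}_{T}$, say the top and bottom rows of a diagram $(\calL_{i};x'_{i};f_{i})$ and $(\calL'_{i};x'_{i};f'_{i})$ over the \emph{same} $(x'_{1},\dots,x'_{r})\in X'^{r}(S)$, together with an $S$-point of $\Hk'^{r}_{G,d}$ whose top and bottom rows are $(\nu_{*}\calL_{i};x'_{i};\nu_{*}f_{i})$ and $(\nu_{*}\calL'_{i};x'_{i};\nu_{*}f'_{i})$ (all modulo the $\Pic_{X}$-action). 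Concretely this is a commutative diagram of the shape \eqref{HmuGd} with $\calE_{i}=\nu_{*}\calL_{i}$, $\calE'_{i}=\nu_{*}\calL'_{i}$, and vertical arrows $\phi_{i}:\nu_{*}\calL_{i}\incl\nu_{*}\calL'_{i}$ injective of colength $d$.

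The key point is the adjunction bijection of \S\ref{push L}: for each $i$, an $\calO_{X}$-linear map $\phi_{i}:\nu_{*}\calL_{i}\to\nu_{*}\calL'_{i}$ is the same datum as a pair of $\calO_{X'}$-linear maps $\alpha_{i}:\calL_{i}\to\calL'_{i}$ and $\beta_{i}:\calL_{i}\to\sigma^{*}\calL'_{i}$, and $\phi_{i}$ is injective (equivalently $\det(\phi_{i})\neq0$ on each fiber) exactly when, via \eqref{det psi}, $f_{\calM}$ of the resulting point $(\calL_{i},\calL'_{i},\alpha_{i},\beta_{i})$ lies in $\Ah_{d}$. Next I would check that, under this translation, the commutativity of the square \eqref{HmuGd} built from $\nu_{*}f_{i}$ and $\nu_{*}f'_{i}$ in row $i{-}1$ to $i$ is equivalent to the commutativity of the two squares \eqref{Hmu}--\eqref{Hmu2} defining $\Hk^{\mu}_{\CM,d}$ in \S\ref{sss:alt HkM}. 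This uses that $\nu_{*}$ is fully faithful on the relevant hom-sheaves together with $\nu^{*}\nu_{*}\calL_{i}\cong\calL_{i}\oplus\sigma^{*}\calL_{i}$ canonically, which splits the single commuting square over $X$ into the $\alpha$-square and the $\beta$-square over $X'$; and one must track that the colength of $\phi_{i}$ on $X$ agrees with $d=\deg\calL'_{i}-\deg\calL_{i}$ on $X'$, which is built into both definitions. Since all the $f_{i},f'_{i}$ are simple modifications at the common point $x'_{i}$ and only line bundles change, the $\Pic_{X}$-quotients match up, and one obtains a morphism $\Hk^{\mu}_{\Mh,d}\to\Hk^{\mu}_{\CM,d}$.

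Finally I would identify the image: composing with $f_{\calM}\circ\gamma_{0}$ and using \eqref{det psi}, a point of $\Hk^{\mu}_{\CM,d}$ arises from $\Hk^{\mu}_{\Mh,d}$ precisely when all the $\phi_{i}$ (equivalently, by \S\ref{sss:same proj A}, just $\phi_{0}$, since the $\calL'_{i}\otimes\calL_{i}^{-1}$ are all isomorphic and $\alpha_{i}$ corresponds under these isomorphisms) are injective, i.e.\ when $f_{\calM}(\gamma_{0})\in\Ah_{d}$; and conversely an injective $\phi_{0}$ forces all $\phi_{i}$ injective. Hence the morphism is an open immersion with image $(f_{\calM}\circ\gamma_{0})^{-1}(\Ah_{d})$, and one reads off smoothness and the dimension $2d-g+1$ from Proposition \ref{p:M}\eqref{M smooth} (or equivalently from Lemma \ref{l:MCart ok}\eqref{Mh M} applied termwise). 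I do not expect a genuine obstacle here; the only mild care needed is bookkeeping the $\Pic_{X}$-actions and checking that the colength-$d$ condition on $X$ translates correctly to the degree condition on $X'$ under $\nu_{*}$, which is routine since $\nu$ is finite étale of degree two. This is why the lemma is stated as a direct consequence of the earlier arguments rather than given a full proof.
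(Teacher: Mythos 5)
Your proposal is correct and is exactly the paper's intended argument: the paper proves this lemma by "the same argument as Lemma \ref{l:MCart ok}\eqref{Mh M}", i.e.\ the adjunction of \S\ref{push L} translating each vertical map $\phi_{i}:\nu_{*}\calL_{i}\to\nu_{*}\calL'_{i}$ into a pair $(\alpha_{i},\beta_{i})$, with commutativity of \eqref{HmuGd} splitting into \eqref{Hmu} and \eqref{Hmu2}, injectivity of $\phi_{0}$ equivalent via \eqref{det psi} to landing in $\Ah_{d}$, and \S\ref{sss:same proj A} showing this forces all $\phi_{i}$ injective. The only caveat is that your closing remark about smoothness and dimension $2d-g+1$ of $\Hk^{\mu}_{\Mh,d}$ is neither part of the lemma nor justified by Proposition \ref{p:M}\eqref{M smooth} (which concerns $\calM_{d}$, not the Hecke correspondence), but this does not affect the argument.
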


\subsubsection{} We have a map
\begin{equation*}
s:\Hk^{r}_{G,d}\to X_{d}\times X^{r}
\end{equation*}
which sends a diagram \eqref{HmuGd} to $(D;x_{1},\cdots, x_{r})$ where $D$ is the divisor of $\det(\phi_{i})$ for all $i$. Let $(X_{d}\times X^{r})^{\circ}\subset X_{d}\times X^{r}$ be the open subscheme consisting of those $(D;x_{1},\cdots, x_{r})$ where $x_{i}$ is disjoint from the support of $D$ for all $i$. Let
\begin{equation*}
\Hk^{r,\circ}_{G,d}=s^{-1}((X_{d}\times X^{r})^{\circ}).		\index{$\Hk^{r,\circ}_{G,d}$}%
\end{equation*}
be an open substack of $\Hk^{r}_{G,d}$. Let $\Hk'^{r,\circ}_{G,d}\subset \Hk'^{r}_{G,d}$ and  $\Hk^{\mu,\circ}_{\Mh,d}\subset \Hk^{\mu}_{\Mh,d}$ be the preimages of $\Hk^{r,\circ}_{G,d}$. \index{$\Hk'^{r,\circ}_{G,d}, \Hk^{\mu,\circ}_{\Mh,d}$}%

\begin{lemma}\label{l:HCart dim}
\begin{enumerate}
\item\label{dim HkcircGd} The stacks $\Hk^{r,\circ}_{G,d}$ and $\Hk'^{r,\circ}_{G,d}$ are smooth of pure dimension $2d+2r+3g-3$.
\item\label{dim HkrGd} The dimensions of all geometric fibers of $s$ are $d+r+3g-3$. In particular, $\dim\Hk^{r}_{G,d}=\dim \Hk'^{r}_{G,d}=2d+2r+3g-3$.
\item\label{dim HkM} Recall that $\Hk^{\mu}_{\Mds,d}$ is the restriction $\Hk^{\mu}_{\CM,d}|_{\Ads_{d}}$, where $\Ads_{d}\subset\calA_{d}$ is defined in \S\ref{sss:Ads}. Suppose $d\geq\max\{2g'-1,2g\}$. Let $\Hk^{\mu,\circ}_{\Mds,d}$ be the intersection of $\Hk^{\mu}_{\Mds,d}$ with $\Hk^{\mu,\circ}_{\Mh,d}$ inside $\Hk^{\mu}_{\CM,d}$. Then $\dim(\Hk^{\mu}_{\Mds,d}-\Hk^{\mu,\circ}_{\Mds,d})<2d-g+1=\dim \Hk^{\mu}_{\Mds,d}$. 
\end{enumerate}
\end{lemma}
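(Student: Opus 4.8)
The plan is to prove the three assertions of Lemma \ref{l:HCart dim} in order, using the Cartesian diagrams \eqref{HCart} and \eqref{MCart}, the smoothness results from Lemma \ref{l:MCart ok}, and the already-established smoothness of the Hecke stacks $\Hk^{r}_{G}$ and $\Hk^{\mu}_{T}$.

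\textbf{Part \eqref{dim HkcircGd}.} Over the open locus $(X_{d}\times X^{r})^{\circ}$, the modifications $x_{i}$ are disjoint from the support of $D$, so a diagram \eqref{HmuGd} decouples: the vertical injection $\phi_{0}:\CE_{0}\incl\CE'_{0}$ of colength $d$ is an isomorphism near each $x_{i}$, and the upper and lower rows of modifications are then determined by each other and a single row of modifications. Concretely, I would construct a morphism $\Hk^{r,\circ}_{G,d}\to H_{d}\times_{\Bun_{G}}\Hk^{r}_{G}$ (fibering $\phi_{0}$ over $\oll{p}:H_{d}\to \Bun_{G}$ and the top row over $p_{0}:\Hk^{r}_{G}\to\Bun_{G}$, restricted to the open locus where the $x_{i}$ avoid $\coker\phi_{0}$) and argue it is an isomorphism onto an open substack. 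Since $\oll{p}:H_{d}\to\Bun_{G}$ is smooth of relative dimension $2d$ (Lemma \ref{l:MCart ok}\eqref{dim HkcircGd}, i.e., part (1) there) and $\Hk^{r}_{G}\to\Bun_{G}$ is smooth of relative dimension $r$ (Remark \ref{r:HkG smooth} applied after dividing by $\Pic_{X}$, giving $\dim\Hk^{r}_{G}=\dim\Bun_{G}+r=4(g-1)+r$; here however we work with $\Bun_{G}$ of dimension $3(g-1)$, so $\dim\Hk^{r}_{G}=3g-3+r$), the fiber product over $\Bun_{G}$ has dimension $2d+(3g-3+r)+r=2d+2r+3g-3$. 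The case of $\Hk'^{r,\circ}_{G,d}$ follows since $X'^{r}\to X^{r}$ is finite \'etale, so the base change has the same dimension and smoothness.

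\textbf{Part \eqref{dim HkrGd}.} The fibers of $s$ over $(X_{d}\times X^{r})^{\circ}$ have dimension $\dim\Hk^{r,\circ}_{G,d}-\dim(X_{d}\times X^{r})=(2d+2r+3g-3)-(d+r)=d+r+3g-3$ by part \eqref{dim HkcircGd}. For fibers over the boundary (where some $x_{i}$ meets the support of $D$), I would give a direct dimension count: fix the target row $\CE'_{0}\to\cdots\to\CE'_{r}$ in $\Hk^{r}_{G}$ (dimension $3g-3+r$ after quotient, or work on $\wt{\Hk}$ and quotient at the end), and then the data of the vertical colength-$d$ injections $\phi_{i}$ compatible with both rows is cut out inside $\prod_{i}\Quot^{d}(\CE'_{i})$ — but compatibility with the $f_{i},f'_{i}$ forces $\phi_{i}$ to be determined by $\phi_{0}$ away from the $x_{i}$, and the only extra freedom at a point where $x_{i}$ hits $\Supp D$ is bounded. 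One shows each such boundary fiber has dimension $\le d+r+3g-3$; a clean way is to stratify by how many $x_{i}$ collide with $D$ and induct on $r$ using the $r=1$ case, where the fiber is visibly a bounded modification of the $r=0$ Quot scheme. Since the base $X_{d}\times X^{r}$ is irreducible of dimension $d+r$ and all fibers have dimension exactly $d+r+3g-3$ (equidimensionality on the open part, $\le$ on the boundary, and $\ge$ because $s$ is the restriction of a map from a space containing the generically-finite-type locus), we get $\dim\Hk^{r}_{G,d}=2d+2r+3g-3$, and likewise for $\Hk'^{r}_{G,d}$.

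\textbf{Part \eqref{dim HkM}.} Here I would use Lemma \ref{l:Hk Mh}: $\Hk^{\mu}_{\Mh,d}$ is the preimage of $\Ah_{d}$ in $\Hk^{\mu}_{\CM,d}$, and $\Hk^{\mu}_{\Mds,d}=\Hk^{\mu}_{\CM,d}|_{\Ads_{d}}$ is already known (Lemma \ref{l:dim HkM}(1)) to be finite over $\Mds_{d}$, hence of pure dimension $2d-g+1$ when $d\ge\max\{2g'-1,2g\}$. The complement $\Hk^{\mu}_{\Mds,d}-\Hk^{\mu,\circ}_{\Mds,d}$ sits over the locus where some modification point $x'_{i}$ (viewed in $X'$, hence its image $x_{i}$ in $X$) meets the support of the divisor $D=\div(\det\phi_{i})$. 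Via the finite map $\gamma_{0}:\Hk^{\mu}_{\Mds,d}\to\Mds_{d}$, it suffices to bound the dimension of the locus in $\Hk^{\mu}_{\Mds,d}$ itself. I would argue that imposing the incidence condition ``some $x'_{i}$ lies in $\Supp D$'' is a nonempty closed condition of codimension $\ge 1$: on the dense open $\Hk^{\mu,\circ}_{\Mds,d}$ the $x'_{i}$ range freely (after fixing the $\Mds_{d}$-point, the fiber of $\gamma_{0}$ is finite and the $x'_{i}$ are determined, but the point of $\Mds_{d}$ itself moves the divisor $D$), so the colliding locus is a proper closed subset. Combined with finiteness of $\gamma_{0}$ and purity of dimension of both $\Hk^{\mu}_{\Mds,d}$ and $\Mds_{d}$, this yields the strict inequality $\dim(\Hk^{\mu}_{\Mds,d}-\Hk^{\mu,\circ}_{\Mds,d})<2d-g+1$.

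\textbf{Main obstacle.} The delicate point is the boundary fiber dimension bound in part \eqref{dim HkrGd} — controlling the extra moduli of the vertical injections $\phi_{i}$ at points where a modification collides with $\Supp D$. The cleanest route is the inductive reduction to $r=1$ via the composition description (analogous to Lemma \ref{l:Hk r comp}), where the fiber becomes a bounded correspondence between Quot schemes and the count is explicit; the colliding locus only shrinks dimension, never increases it, because a torsion sheaf of fixed length has a moduli of fixed dimension $0$ regardless of where it is supported. I also expect part \eqref{dim HkM} to require a little care in checking the incidence locus is genuinely of positive codimension rather than all of $\Hk^{\mu}_{\Mds,d}$, which uses the hypothesis $d\ge 2g'-1$ to know $\Mds_{d}=\hX'_{d}\times_{\Pic^{d}_{X}}X'_{d}$ is large enough that $\div(\beta)$ (hence $D$) varies.
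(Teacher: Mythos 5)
Your part (1) is essentially the paper's argument (identify $\Hk^{r,\circ}_{G,d}$ with the restriction of $H_{d}\times_{\Bun_{G}}\Hk^{r}_{G}$ and add up relative dimensions), apart from a bookkeeping slip: $p_{0}\colon\Hk^{r}_{G}\to\Bun_{G}$ is smooth of relative dimension $2r$, not $r$ (the $r$ points $x_{i}$ contribute as well), which is how one actually lands on $2d+2r+3g-3$. The genuine gap is in part (2). The whole content of that part is the bound on the \emph{boundary} fibers, and what you offer for it is a heuristic, not a proof. The paper bounds the fibers of $(s,p_{0})$ by a scheme $H'$ of chains of lattices and proves $\dim H'=d+r$ by reducing to the semismallness of the convolution morphism $\Gr_{1^{d+r}}\to\Gr_{d+r}$ of affine Schubert varieties (Lemma \ref{l:H'}). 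Fibers of that map over deeper strata are genuinely positive-dimensional (dimension $i$ over the stratum of codimension $i$), so the assertions that ``the colliding locus only shrinks dimension'' or that a torsion sheaf of fixed length has $0$-dimensional moduli do not substitute for the stratum-by-stratum estimate: the statement to be proved is precisely that the fiber dimension never exceeds the codimension, which is semismallness. Your proposed induction on $r$ with an $r=1$ base case also does not telescope, since at a collision point the relevant convolution involves the full colength $d+r$ concentrated at one place; the paper's induction is on the total colength $m=d+r$, with an explicit analysis of the strata $Y^{i}_{m}$.

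Part (3) has a similar gap. The inference ``the incidence condition is a proper closed condition, hence of codimension $\ge 1$'' is not valid without knowing that $\Hk^{\mu}_{\Mds,d}$ is irreducible, or at least that $\Hk^{\mu,\circ}_{\Mds,d}$ is dense in every top-dimensional component — and that density is exactly what has to be proved. Note also that the modification points $x'_{i}$ are not free: by Lemma \ref{l:HInc} they must lie in the divisor of $\beta$, so the condition $\pi(x'_{i})\in\Supp D$ (with $D=\div(\Nm(\alpha)-\Nm(\beta))$) couples $\alpha$ and $\beta$; in particular the entire locus $\alpha=0$ is contained in the boundary, so ``genericity of the $x'_{i}$'' alone cannot settle the question. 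The paper instead uses the identification $\Hk^{\mu}_{\Mds,d}\cong\hX'_{d}\times_{\Pic^{d}_{X}}B_{r,d}$ to show that a boundary point forces either $\alpha=0$ or a common point of $\div(\Nm(\alpha))$ and the divisor $D_{b}$ of $\Nm(\beta)$, and then bounds those two loci explicitly (by $d+g-1$ and $2d-g$ respectively), which is where the hypothesis $d\ge 2g$ (not $d\ge 2g'-1$, and not the variation of $\div(\beta)$) actually enters. Some case analysis of this kind, with the numerical estimates, is needed to get the strict inequality.
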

The proof of this lemma will be postponed to \S\ref{sss:proof HCart dim circ}-\S\ref{sss:proof HCart dim HkM}.

\begin{lemma}\label{l:HCart ok} Suppose $d\geq\max\{2g'-1,2g\}$.
\begin{enumerate}
\item The diagram \eqref{HCart} satisfies the conditions in \S\ref{sss:Gysin}. In particular, the refined Gysin map 
\begin{equation*}
(\Pi^{\mu}\times\Pi^{\mu})^{!}:\Ch_{*}(\Hk'^{r}_{G,d})_{\QQ}\to \Ch_{*-2(2g-2+r)}(\Hk^{\mu}_{\Mh,d})_{\QQ}
\end{equation*}
is defined.
\item Let
\begin{equation}\label{define zeta}
\zeta^{\hs}=(\Pi^{\mu}\times\Pi^{\mu})^{!}[\Hk'^{r}_{G,d}]\in\Ch_{2d-g+1}(\Hk^{\mu}_{\Mh,d})_{\QQ}.
\end{equation}
Then the restriction of $\zeta^{\hs}$ to $\Hk^{\mu}_{\Mh,d}|_{\Ads_{d}\cap \Ah_{d}}$ is the fundamental cycle.
\end{enumerate}
\end{lemma}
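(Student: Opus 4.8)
\textbf{Plan of proof of Lemma \ref{l:HCart ok}.}

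For part (1), the point is to verify the hypotheses of \S\ref{sss:Gysin} for the Cartesian square \eqref{HCart}: namely that $\Pi^{\mu}\times\Pi^{\mu}: \Hk^{\mu}_{T}\times\Hk^{\mu}_{T}\to \Hk'^{r}_{G}\times\Hk'^{r}_{G}$ is a regular local immersion (or at least factors as one followed by something for which the refined Gysin map is defined), and that the relevant dimension/codimension bookkeeping is correct. The plan is to reduce this to the single-factor statement for $\Pi: \Bun_{T}\to \Bun_{G}$, which is a schematic unramified morphism between smooth stacks (this uses that $\nu_{*}$ of a line bundle is a rank-two bundle and that the deformations of $(\calL)$ inside $(\nu_{*}\calL)$ are controlled by $\nu^{*}\nu_{*}\calL\cong\calL\oplus\sigma^{*}\calL$). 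Then $\Pi^{\mu}:\Hk^{\mu}_{T}\to \Hk'^{r}_{G}$ is obtained by base change along $\Bun_{T}\to\Bun_{G}$ in a manner compatible with the Hecke modifications — more precisely, using the canonical Cartesian diagrams \eqref{ShtrT} and \eqref{ShtrG} and the isomorphism $\Hk^{\mu}_{T}\cong\Bun_{T}\times X'^{r}$ of \S\ref{sss:HkT smooth}, so that $\Pi^{\mu}$ is, up to the harmless $X'^{r}$-direction, a pullback of $\Pi$. Since regular local immersions are stable under base change and products, $\Pi^{\mu}\times\Pi^{\mu}$ is a regular local immersion, and the refined Gysin map $(\Pi^{\mu}\times\Pi^{\mu})^{!}$ with the stated shift by $2(2g-2+r)$ is defined; the shift is exactly twice the codimension of $\Pi$, computed from $\dim\Bun_{G}-\dim\Bun_{T}$ together with the relative-dimension data of the Hecke stacks in Remark \ref{r:HkG smooth} and \S\ref{sss:HkT smooth}. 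I would spell out the codimension computation once and then invoke standard stability properties from Appendix \ref{s:int}.

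For part (2), I want to identify $\zeta^{\hs}=(\Pi^{\mu}\times\Pi^{\mu})^{!}[\Hk'^{r}_{G,d}]$ with the fundamental cycle of $\Hk^{\mu}_{\Mh,d}$ after restriction to the locus $\Ads_{d}\cap\Ah_{d}$. The strategy is transversality: over this locus I will show that the fibre square \eqref{HCart} is actually a \emph{transverse} intersection of smooth stacks of the expected dimension, so that the refined Gysin pullback of the fundamental class is the fundamental class. Concretely, restrict \eqref{HCart} over $\Ads_{d}\cap\Ah_{d}$; by Lemma \ref{l:Hk Mh} the resulting stack $\Hk^{\mu}_{\Mh,d}|_{\Ads_{d}\cap\Ah_{d}}$ is an open substack of $\Hk^{\mu}_{\CM,d}$, in fact of $\Hk^{\mu}_{\Mds,d}$, which by Lemma \ref{l:dim HkM}(1) is of pure dimension $2d-g+1$ (it is finite and surjective over $\Mds_{d}$, which is smooth of that dimension by Proposition \ref{p:M}\eqref{M smooth}); meanwhile $\Hk'^{r}_{G,d}$ restricted to the relevant open is smooth of dimension $2d+2r+3g-3$ by Lemma \ref{l:HCart dim}\eqref{dim HkcircGd}. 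Comparing these dimensions with $\dim(\Hk^{\mu}_{T}\times\Hk^{\mu}_{T})-\dim(\Hk'^{r}_{G}\times\Hk'^{r}_{G})=-2(2g-2+r)$ shows that $\Hk^{\mu}_{\Mh,d}$ has exactly the expected dimension over this locus, i.e. the intersection is proper (no excess). Hence the refined Gysin class is the honest intersection cycle, which — since everything in sight is generically reduced and of expected dimension — is the fundamental cycle $[\Hk^{\mu}_{\Mh,d}|_{\Ads_{d}\cap\Ah_{d}}]$.

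The main obstacle I anticipate is the dimension count that makes the "expected = actual" comparison work, i.e. showing there is genuinely no excess intersection over $\Ads_{d}\cap\Ah_{d}$: this requires Lemma \ref{l:HCart dim}, and in particular the smoothness and pure-dimensionality statements \eqref{dim HkcircGd} and \eqref{dim HkrGd}, whose proofs are deferred, plus part \eqref{dim HkM} to control the behaviour on the non-generic locus $\Hk^{\mu}_{\Mds,d}-\Hk^{\mu,\circ}_{\Mds,d}$. A secondary subtlety is that the open locus on which I can argue cleanly is $\Hk^{\mu,\circ}_{\Mds,d}$ (where the Hecke points are disjoint from the divisor $D$), and I must then extend the "fundamental cycle" conclusion from $\Hk^{\mu,\circ}_{\Mds,d}$ to all of $\Hk^{\mu}_{\Mds,d}|_{\Ads_{d}\cap\Ah_{d}}$; this is where the dimension inequality $\dim(\Hk^{\mu}_{\Mds,d}-\Hk^{\mu,\circ}_{\Mds,d})<2d-g+1$ from Lemma \ref{l:HCart dim}\eqref{dim HkM} is essential, since a cycle of full dimension $2d-g+1$ supported on $\Hk^{\mu}_{\Mds,d}$ is determined by its restriction to the dense open $\Hk^{\mu,\circ}_{\Mds,d}$. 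I would present the transversality/expected-dimension argument first, citing Appendix \ref{s:int} for the relevant refined intersection formalism, and then close with the density argument; the deferred lemma \ref{l:HCart dim} is assumed available.
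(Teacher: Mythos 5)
Your treatment of part (2) is essentially the argument the paper gives: restrict over $\Ads_{d}$, use the smoothness and dimension $2d+2r+3g-3$ of $\Hk'^{r,\circ}_{G,d}$ (Lemma \ref{l:HCart dim}(1)) together with $\dim\Hk^{\mu}_{\Mds,d}=2d-g+1$ (Lemma \ref{l:dim HkM}(1)) to see the intersection has the expected dimension on the $\circ$-locus, hence the Gysin class is the fundamental cycle there, and then spread this to all of $\Hk^{\mu}_{\Mds,d}$ using $\dim(\Hk^{\mu}_{\Mds,d}-\Hk^{\mu,\circ}_{\Mds,d})<2d-g+1$ (Lemma \ref{l:HCart dim}(3)). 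No complaints there, beyond noting that you (like the paper) should say a word about why expected dimension yields multiplicity one rather than just a positive cycle.

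Part (1), however, has a genuine gap. Your plan is to reduce to the claim that $\Pi:\Bun_{T}\to\Bun_{G}$ is a schematic unramified morphism, deduce that $\Pi^{\mu}$ is ``up to the $X'^{r}$-direction a pullback of $\Pi$'', and conclude that $\Pi^{\mu}\times\Pi^{\mu}$ is a regular local immersion by stability under base change and products. This fails at the first step: $\Bun_{G}$ and $\Hk'^{r}_{G}$ are Artin stacks, not Deligne--Mumford, and along the image of $\Pi$ the automorphism groups jump. Concretely, whenever $\CL\cong\sigma^{*}\CL$ (e.g.\ $\CL\in\nu^{*}\Pic_{X}$), one has $H^{0}(X,\un\End(\nu_{*}\CL))=H^{0}(X',\calO_{X'})\oplus H^{0}(X',\CL\otimes\sigma^{*}\CL^{-1})$ of dimension $2$, so the $\PGL_{2}$-automorphism group of $\nu_{*}\CL$ is positive-dimensional while $\Aut_{\Bun_{T}}(\CL)$ is finite; the stacky fibre of $\Pi$ over such a point is positive-dimensional, so $\Pi$ is not quasi-finite, hence not unramified and certainly not a regular local immersion, and no base-change argument can repair this. (Also, $\Hk^{\mu}_{T}$ is not literally the base change of $\Hk'^{r}_{G}$ along $\Bun_{T}\to\Bun_{G}$: the fibre product only imposes the $T$-structure on $\CE_{0}$, not that the whole chain of modifications is induced by modifications of line bundles at the chosen points $x'_{i}$.) What is actually needed, and what the paper does, is to verify condition \eqref{global lci} of \S\ref{sss:Gysin} by rigidifying: choose a split point $y\in X(k)$ with preimages $y',y''$, form $\Hk'^{r}_{G}(y)=\Hk'^{r}_{G}\times_{\Bun_{G}}\Bun_{G}(y)$ (Borel reduction at $y$), lift $\Pi^{\mu}$ to $\Pi^{\mu}(y):\Hk^{\mu}_{T}\to\Hk'^{r}_{G}(y)$ via the line $\CL_{0,y'}\subset(\nu_{*}\CL_{0})_{y}$, and factor $\Pi^{\mu}=p\circ\Pi^{\mu}(y)$ with $p$ smooth. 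One then checks by a tangent computation that $\Hk'^{r}_{G}(y)$ is Deligne--Mumford near the image and that $\Pi^{\mu}(y)$ is a regular local immersion: the point is the canonical decomposition $\Ad^{\ul{x'},y}(\nu_{*}\CL)\cong\calO_{X'}/\calO_{X}\oplus\calK$ with $\deg\calK<0$, which kills $H^{0}$ (so the level structure exactly removes the offending automorphisms) and identifies the tangent map of $\Pi^{\mu}(y)$ with the inclusion of the first summand on $H^{1}$. Your proposal contains neither the rigidification nor this cohomological computation, and it also omits the other hypothesis of \S\ref{sss:Gysin}, namely that $\Hk^{\mu}_{\Mh,d}$ admits a finite flat presentation (the paper gets this from the proper schematic map $\calM_{d}\to J^{d}_{X'}\times\Prym_{X'/X}$); the latter is minor, but the former is the heart of part (1).
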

\begin{proof}
(1) We first check that $\Hk^{\mu}_{\Mh,d}$ admits a finite flat presentation. The map $\gamma_{0}:\Hk^{\mu}_{\Mh,d}\to \Mh_{d}$ is schematic, so it suffices to check that $\Mh_{d}$ or $\CM_{d}$ admits a finite flat presentation. In the proof of Proposition \ref{p:M}\eqref{M DM stack} we constructed a proper and schematic map $\ov{h}: \calM_{d}\to J^{d}_{X'}\times\Prym_{X'/X}$, see \eqref{Md to Prym}.  Since $J^{d}_{X'}$ is a scheme and $\Prym_{X'/X}$ is the quotient of the usual Prym variety by the trivial action of $\mu_{2}$, $J^{d}_{X'}\times\Prym_{X'/X}$ admits a finite flat presentation, hence so do $\CM_{d}$ and $\Hk^{\mu}_{\Mh,d}$.

Next we verify the condition \eqref{global lci} of \S\ref{sss:Gysin}. Extending $k$ if necessary, we may choose a point $y\in X(k)$ that is split into $y',y''\in X'(k)$. Let $\Bun_{G}(y)$ be the moduli stack of $G$-torsors over $X$ with a Borel reduction at $y$. Let $\Hk'^{r}_{G}(y)=\Hk'^{r}_{G}\times_{\Bun_{G}}\Bun_{G}(y)$ where the map $\Hk'^{r}_{G}\to\Bun_{G}$ sends $(\CE_{i};x_{i};f_{i})$ to $\CE_{0}$. We may lift the morphism $\Pi^{\mu}$ to a morphism
\begin{equation*}
\Pi^{\mu}(y): \Hk^{\mu}_{T}\to \Hk'^{r}_{G}(y)
\end{equation*}
where the Borel reduction of $\CE_{0}=\nu_{*}\CL_{0}$ at $y$ (i.e., a line in the stalk $\CE_{0,y}$) is given by the stalk of $\CL_{0}$ at $y'$. The projection $p:\Hk'^{r}_{G}(y)\to \Hk'^{r}_{G}$ is smooth, and $\Pi^{\mu}=p\circ\Pi^{\mu}(y)$. So to check the condition \eqref{global lci} of \S\ref{sss:Gysin}, it suffices to show that $\Pi^{\mu}(y)$ is a regular local immersion. 

We will show by tangential calculations that $\Hk'^{r}_{G}(y)$ is a Deligne--Mumford stack in a neighborhood of the image of $\Pi^{\mu}(y)$, and the tangent map of $\Pi^{\mu}(y)$ is injective. For this it suffices to make tangential calculations at geometric points of $\Hk^{\mu}_{T}$ and its image in $\Hk'^{r}_{G}(y)$. We identify $\Hk^{\mu}_{T}$ with $\Bun_{T}\times X'^{r}$ as in \S\ref{sss:HkT smooth}. Fix a geometric point $(\CL;\ul{x'})\in \Pic_{X'}(K)\times X'(K)^{r}$. For notational simplicity, we base change the situation from $k$ to $K$ without changing notation. So $X$ means $X\otimes_{k}K$, etc. 

The relative tangent space of $\Hk^{\mu}_{T}\to X'^{r}$ at $(\CL;\ul{x'})$ is  $\cohog{1}{X, \calO_{X'}/\calO_{X}}$. The relative tangent complex of $\Hk'^{r}_{G}(y)\to X'^{r}$ at $\Pi^{\mu}(y)(\CL;\ul{x'})=(\nu_{*}\CL\to\nu_{*}\CL(x'_{1})\to\cdots; \CL_{y'})$ is $\cohog{*}{X,\Ad^{\ul{x'},y}(\nu_{*}\CL)}[1]$, where $\Ad^{\ul{x'},y}(\nu_{*}\CL)=\un\End^{\ul{x'},y}(\nu_{*}\CL)/\calO_{X}\cdot\id$, and $\un\End^{\ul{x'},y}(\nu_{*}\CL)$ is the endomorphism sheaf of the chain of vector bundles $\nu_{*}\CL\to\nu_{*}\CL(x'_{1})\to\cdots$ preserving the line $\CL_{y'}$ of the stalk $(\nu_{*}\CL)_{y}$.  Note that 
\begin{eqnarray}\notag
\un\End^{\ul{x'},y}(\nu_{*}\CL)&\subset&\un\End^{y}(\nu_{*}\CL)=\nu_{*}\un\Hom(\CL\oplus(\sigma^{*}\CL)(y''), \CL)\\
\label{Endy}&=&\nu_{*}\calO_{X'}\oplus\nu_{*}(\CL\otimes\sigma^{*}\CL^{-1}(-y''))
\end{eqnarray}
We also have a natural inclusion
\begin{equation*}
\gamma:\nu_{*} \calO_{X'}\incl  \un\End^{\ul{x'},y}(\nu_{*}\CL)
\end{equation*}
identifying the LHS as those endomorphisms of $\nu_{*}\CL$ that are $\calO_{X'}$-linear. Now $\gamma(\nu_{*}\calO_{X'})$ maps isomorphically to $\nu_{*}\calO_{X'}$ on the RHS of \eqref{Endy}. Combining these we get a canonical decomposition $\un\End^{\ul{x'},y}(\nu_{*}\CL)=\nu_{*}\calO_{X'}\oplus\calK$ for some line bundle $\calK$ on $X$ with $\deg(\calK)<0$. Consequently, we have a canonical decomposition 
\begin{equation}\label{Ady}
\Ad^{\ul{x'},y}(\nu_{*}\CL)=\calO_{X'}/\calO_{X}\oplus \calK.
\end{equation}
In particular $\cohog{0}{X,\Ad^{\ul{x'},y}(\nu_{*}\CL)}=\cohog{0}{X,\calO_{X'}/\calO_{X}}=0$. This shows that $\Hk'^{r}_{G}(y)$ is a Deligne--Mumford stack in a neighborhood of $\Pi^{\mu}(y)(\CL;\ul{x'})$.

The tangent map of $\Pi^{\mu}(y)$ is the map  $\cohog{1}{X, \calO_{X'}/\calO_{X}}\to\cohog{1}{X,\Ad^{\ul{x'}}(\nu_{*}\CL)}$ induced by $\gamma$, hence it corresponds to the inclusion of the first factor in the decomposition \eqref{Ady}. In particular, the tangent map of $\Pi^{\mu}(y)$ is injective. This finishes the verification of all conditions in \S\ref{sss:Gysin} for the diagram \eqref{HCart}.

(2) Let $\Hk^{\mu,\circ}_{\Mh,d}$ be the preimage of $\Hk'^{r,\circ}_{G,d}$. By Lemma \ref{l:HCart dim}\eqref{dim HkcircGd}, $\Hk'^{r,\circ}_{G,d}$ is smooth of dimension $2d+2r+3g-3$. On the other hand, by Lemma \ref{l:dim HkM}, $\Hk^{\mu}_{\Mds,d}$ has dimension $2d-g+1$. Combining these facts, we see that $\Hk^{\mu,\circ}_{\Mh,d}\cap \Hk^{\mu}_{\Mds,d}$ has the expected dimension in the Cartesian diagram \eqref{HCart}. This implies that $\zeta^{\hs}|_{\Hk^{\mu,\circ}_{\Mh,d}\cap \Hk^{\mu}_{\Mds,d}}$ is the fundamental cycle. By Lemma \ref{l:HCart dim}\eqref{dim HkM}, $\Hk^{\mu}_{\Mds,d}-\Hk^{\mu,\circ}_{\Mh,d}$ has lower dimension than $\Hk^{\mu}_{\Mds,d}$, therefore $\zeta^{\hs}|_{\Hk^{\mu}_{\Mds,d}}$ must be the fundamental cycle.
\end{proof}

\subsubsection{} There are $r+1$ maps $\gamma_{i}$ ($0\leq i\leq r$) from the diagram \eqref{HCart} to \eqref{MCart}: it sends the diagram \eqref{HmuGd} to its $i$-th column, etc. In particular, we have maps $\gamma_{i}:\Hk^{r}_{G,d}\to H_{d}$ and $\gamma'_{i}: \Hk'^{r}_{G,d}\to H_{d}$. The maps $\gamma'_{0}$ and $\gamma'_{r}$ appear in the diagram \eqref{Tian}.

We define the stack $\Sht^{r}_{G,d}$ by the following  Cartesian diagram
\begin{equation}\label{ShtrGd}
\xymatrix{\Sht^{r}_{G,d}\ar[d]\ar[r] & \Hk^{r}_{G,d}\ar[d]^{(\gamma_{0},\gamma_{r})}\\
H_{d}\ar[r]^{(\id,\Fr)} & H_{d}\times H_{d}}			\index{$\Sht^{r}_{G,d}$}%
\end{equation}
Similarly we define $\Sht'^{r}_{G,d}$ as the fiber product of the third column of \eqref{Tian}:
\begin{equation}\label{Sht'rGd}
\xymatrix{\Sht'^{r}_{G,d}\ar[d]\ar[r] & \Hk'^{r}_{G,d}\ar[d]^{(\gamma'_{0},\gamma'_{r})}\\
H_{d}\ar[r]^{(\id,\Fr)} & H_{d}\times H_{d}}					\index{$\Sht'^{r}_{G,d}$}%
\end{equation} 
We have $\Sht'^{r}_{G,d}\cong\Sht^{r}_{G,d}\times_{X^{r}}X'^{r}$.

\begin{lemma}\label{l:same hD} There are canonical isomorphisms of stacks
\begin{eqnarray*}
\Sht^{r}_{G,d}\cong \coprod_{D\in X_{d}(k)}\Sht^{r}_{G}(h_{D});\\
\Sht'^{r}_{G,d}\cong \coprod_{D\in X_{d}(k)}\Sht'^{r}_{G}(h_{D}).
\end{eqnarray*}
For the definitions of $\Sht^{r}_{G}(h_{D})$ and $\Sht'^{r}_{G}(h_{D})$, see \S\ref{sss:Sht hD} and \S\ref{sss:Hk Sht'}.
\end{lemma}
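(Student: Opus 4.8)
\textbf{Proof plan for Lemma \ref{l:same hD}.} The strategy is to unwind the definitions of both sides, identify the point-functors, and observe they classify the same data. First I would record what $\Sht^{r}_{G,d}$ classifies by combining the Cartesian diagram \eqref{ShtrGd} with the definition of $\Hk^{r}_{G,d}=\Hk^{\mu}_{2,d}/\Pic_{X}$: an $S$-point is a commutative diagram \eqref{HmuGd} of two chains of $\GL_{2}$-modifications over a common $\ul{x}\in X^{r}(S)$, connected by injective colength-$d$ vertical maps $\phi_{i}$, together with an isomorphism $\iota: \calE_{r}\cong\leftexp{\tau}{\calE_{0}}$ identifying the bottom row's terminus with the Frobenius twist of the top row's origin (this is the effect of the fiber product over $(\id,\Fr): H_{d}\to H_{d}\times H_{d}$, which forces $\calE'_{i}=\leftexp{\tau}{\calE_{i}}$ compatibly), all modulo $\Pic_{X}(k)$. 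The key point is that such data is exactly an $S$-point of $\Sht^{\mu}_{2}(h_{D})/\Pic_{X}(k)=\Sht^{r}_{G}(h_{D})$ as defined in \S\ref{sss:Sht hD}, once one sets $D=\div(\det\phi_{i})$ (well-defined and independent of $i$ because the $\phi_{i}$ commute with the modifications, which change determinants only by the graphs $\Gamma_{x_{j}}$ that cancel on both sides): the commuting diagram \eqref{HmuGd} together with $\iota$ becomes precisely the diagram \eqref{Sht hD}, and the condition $\det(\phi_{i})$ has divisor $D\times S$ is the colength-$d$ condition on $\phi_{i}$ refined by the locus where that divisor is constant equal to $D$.

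Concretely, the plan is: (i) produce a morphism $\Sht^{r}_{G,d}\to X_{d}$ by sending a point to $D=\div(\det\phi_{0})$ (the section $s:\Hk^{r}_{G,d}\to X_{d}\times X^{r}$ of \S\ref{sss:aux moduli}, restricted along $(\id,\Fr)$), and note $\Sht^{r}_{G,d}=\coprod_{D\in X_{d}(k)}(\text{fiber over }D)$ since $X_{d}$ is a disjoint union of its $k$-points when we pull back along an $\FF_{q}$-point — more precisely, the preimage of each connected component, and on $k$-points the divisor $D$ attached to an $S$-point with $S$ over $k$ is constant; (ii) identify the fiber over $D\in X_{d}(k)$ with $\Sht^{r}_{G}(h_{D})$ by matching functors of points as above; (iii) check the two identifications are compatible with the $\Pic_{X}(k)$-quotients and with the canonical independence of $\mu$ (Lemma \ref{l:Hk indep mu} and its analogues). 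The statement for $\Sht'^{r}_{G,d}$ then follows by base change along $X'^{r}\to X^{r}$, using $\Sht'^{r}_{G,d}\cong\Sht^{r}_{G,d}\times_{X^{r}}X'^{r}$ and $\Sht'^{r}_{G}(h_{D})=\Sht^{r}_{G}(h_{D})\times_{X^{r}}X'^{r}$ from \S\ref{sss:Hk Sht'}.

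The one genuinely substantive point, rather than bookkeeping, is the claim that attaching $D=\div(\det\phi_{i})$ to an $S$-point of $\Sht^{r}_{G,d}$ gives a \emph{locally constant} divisor, so that the decomposition into a disjoint union indexed by $X_{d}(k)$ is legitimate; this is where one uses that on an $S$-point over $k$, the Shtuka condition $\calE_{r}\cong\leftexp{\tau}{\calE_{0}}$ pins down the $\det$ up to the fixed modification divisors, so $\deg D$ and in fact $D$ itself are determined — equivalently, one observes that $s$ composed with the projection $\Sht^{r}_{G,d}\to\Hk^{r}_{G,d}$ lands in $X_{d}\times X^{r}$ and the further map to $X_{d}$ is constant on each connected component since $\Sht^{r}_{G,d}$ maps to $X^{r}$ (not just $X_{d}\times X^{r}$) after imposing Frobenius-equivariance. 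I expect the main obstacle to be purely expository: carefully tracking the two successive quotients by $\Pic_{X}$ (at the Hecke level, before imposing the Shtuka condition) versus $\Pic_{X}(k)$ (after), and verifying the $\mu$-independence isomorphisms intertwine correctly — but no new geometric input is needed beyond what is already established for $\Hk^{r}_{G,d}$, $H_{d}$, and $\Sht^{r}_{G}(h_{D})$.
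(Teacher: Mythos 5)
Your route is essentially the paper's. The paper observes that $(\gamma_{0},\gamma_{r})$ factors through $H_{d}\times_{X_{d}}H_{d}$ and that $(\id,\Fr_{H_{d}})$ covers $(\id,\Fr_{X_{d}})$, then invokes the general discussion of \S\ref{sss:decomp ShtC} to decompose $\Sht^{r}_{G,d}$ over $X_{d}(k)$, identifies the $D$-component with $\Sht^{r}_{G}(h_{D})$ by comparing the fiberwise Cartesian diagram with the definition in \S\ref{sss:Sht hD}, and deduces the primed statement by base change to $X'^{r}$. Your hand-made ``local constancy'' argument is an unpacking of that appendix discussion; the cleanest way to say it is that both projections of the Hecke ladder induce the same divisor $D$, while the Shtuka condition identifies the divisor of the $r$-th column with the Frobenius pullback of that of the $0$-th, so the resulting map to $X_{d}$ coincides with its precomposition by Frobenius and hence factors through the discrete fixed-point set $X_{d}(k)$.

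One misstatement you should fix: the fiber product in \eqref{ShtrGd} does \emph{not} force $\calE'_{i}\cong\leftexp{\tau}{\calE_{i}}$. It identifies the entire $r$-th column $(\phi_{r}:\calE_{r}\to\calE'_{r})$ with the Frobenius twist of the $0$-th column, i.e.\ it supplies a pair of isomorphisms $\iota:\calE_{r}\cong\leftexp{\tau}{\calE_{0}}$ and $\iota':\calE'_{r}\cong\leftexp{\tau}{\calE'_{0}}$ intertwining $\phi_{r}$ with $\leftexp{\tau}{\phi_{0}}$, which is exactly the data required by diagram \eqref{Sht hD}. As written, your description of an $S$-point (the ladder \eqref{HmuGd} together with $\iota$ alone) is insufficient to recover a point of $\Sht^{r}_{G}(h_{D})$, since the bottom row would carry no Shtuka structure; and the parenthetical relation $\calE'_{i}\cong\leftexp{\tau}{\calE_{i}}$ describes a different moduli problem altogether. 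With this corrected, your functor-of-points matching, the decomposition over $X_{d}(k)$, and the base change to $X'^{r}$ go through and agree with the paper's proof.
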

\begin{proof}
From the definitions, $(\gamma_{0},\gamma_{r})$ factors through the map $\Hk^{r}_{G,d}\to H_{d}\times_{X_{d}}H_{d}$. On the other hand, $(\id,\Fr):H_{d}\to H_{d}\times H_{d}$ covers the similar map $(\id,\Fr):X_{d}\to X_{d}\times X_{d}$. By the discussion in \S\ref{sss:decomp ShtC},  we have a  decomposition
\begin{equation*}
\Sht^{r}_{G,d}=\coprod_{D\in X_{d}(k)}\Sht^{r}_{G,D}
\end{equation*}
Let $H_{D}$ and $\Hk^{r}_{G,D}$ be the fibers of $H_{d}$ and $\Hk^{r}_{G,d}$ over $D$. Then the $D$-component $\Sht^{r}_{G,D}$ of $\Sht^{r}_{G,d}$ fits into a Cartesian diagram
\begin{equation}\label{Sht G D}
\xymatrix{\Sht^{r}_{G,D}\ar[r]\ar[d] & \Hk^{r}_{G,D}\ar[d]^{(\gamma_{0},\gamma_{r})}\\
H_{D}\ar[r]^{(\id,\Fr)} & H_{D}\times H_{D}}
\end{equation}
Comparing this with the definition in \S\ref{sss:Sht hD}, we see that $\Sht^{r}_{G,D}\cong \Sht^{r}_{G}(h_{D})$. The statement for $\Sht'^{r}_{G,d}$ follows from the statement for $\Sht^{r}_{G,d}$ by base change to $X'^{r}$.
\end{proof}

\begin{cor}\label{c:ShtD} 
Let $D\in X_{d}(k)$ (i.e., an effective divisor on $X$ of  degree $d$). Recall the stack $\Sht^{\mu}_{\CM,d}$ defined in \eqref{NW} and $\Sht^{\mu}_{\Mh,d}$ defined in \S\ref{two ShtM same}. Then $\Sht^{\mu}_{\Mh,d}$ is canonically isomorphic to the restriction of $\Sht^{\mu}_{\CM,d}$ to $\Ah_{d}(k)\subset \calA_{d}(k)$. 

Moreover, there is a canonical decomposition
\begin{equation*}
\Sht^{\mu}_{\Mh,d}=\coprod_{D\in X_{d}(k)}\Sht^{\mu}_{\CM,D},
\end{equation*}
where $\Sht^{\mu}_{\CM,D}$ is defined in \eqref{decomp D into a}. In particular, we have a Cartesian diagram
\begin{equation}\label{SCart D}
\xymatrix{\Sht^{\mu}_{\CM,D}\ar[d]\ar[rr] && \Sht'^{r}_{G}(h_{D})\ar[d]^{(\oll{p}',\orr{p}')}\\
\Sht^{\mu}_{T}\times\Sht^{\mu}_{T}\ar[rr]^{\theta^{\mu}\times\theta^{\mu}} && \Sht'^{r}_{G}\times\Sht'^{r}_{G}}
\end{equation}
\end{cor}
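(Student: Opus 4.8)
\textbf{Proof proposal for Corollary \ref{c:ShtD}.}

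The plan is to deduce everything from the general "octahedron" comparison of \S\ref{two ShtM same} together with the bookkeeping lemmas already in place. First I would recall from \S\ref{two ShtM same} (via Lemma \ref{l:same north pole}) that the fiber product of the diagram \eqref{3 rows} agrees canonically with the fiber product of the diagram \eqref{3 columns}; both are by definition $\Sht^{\mu}_{\Mh,d}$. On the one hand, the fiber product of \eqref{3 rows} is exactly the stack obtained by applying the construction \eqref{NW} to $\Mh_{d}$ and $\Hk^{\mu}_{\Mh,d}$ in place of $\CM_{d}$ and $\Hk^{\mu}_{\CM,d}$. By Lemma \ref{l:MCart ok}\eqref{Mh M}, $\Mh_{d}$ is the open substack $f_{\calM}^{-1}(\Ah_{d})\subset \CM_{d}$, and by Lemma \ref{l:Hk Mh}, $\Hk^{\mu}_{\Mh,d}$ is the preimage of $\Ah_{d}$ under $f_{\calM}\circ\gamma_{0}:\Hk^{\mu}_{\CM,d}\to\calA_{d}$. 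Since forming the Shtuka stack \eqref{NW} commutes with restriction to an open substack of the base, this identifies $\Sht^{\mu}_{\Mh,d}$ with the restriction of $\Sht^{\mu}_{\CM,d}$ to $\Ah_{d}$; concretely, under the decomposition \eqref{ShtM into a} it is the part $\coprod_{a\in\Ah_{d}(k)}\Sht^{\mu}_{\CM,d}(a)$. This gives the first assertion.

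Next I would produce the decomposition indexed by divisors. Using the other description of $\Sht^{\mu}_{\Mh,d}$ as the fiber product of the columns \eqref{3 columns}, and the fact (Lemma \ref{l:same hD}) that $\Sht'^{r}_{G,d}\cong\coprod_{D\in X_{d}(k)}\Sht'^{r}_{G}(h_{D})$, the map $\Sht^{\mu}_{\Mh,d}\to \Sht'^{r}_{G,d}$ induces a decomposition indexed by $D\in X_{d}(k)$. On the other side, restriction of the decomposition \eqref{ShtM into a} along $\Ah_{d}\cong\coprod_{D}\calA_{D}$ (using the isomorphism \eqref{AD OD} and $\Ah_{d}=\delta^{-1}(X_{d})$) groups the points $a$ according to the divisor $D=\delta(a)$; by definition \eqref{decomp D into a}, the $D$-th piece is precisely $\Sht^{\mu}_{\CM,D}$. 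Matching these two decompositions — both are induced by the same map to $\Sht'^{r}_{G,d}$ and its divisor-grading — gives $\Sht^{\mu}_{\Mh,d}=\coprod_{D\in X_{d}(k)}\Sht^{\mu}_{\CM,D}$.

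Finally, the Cartesian diagram \eqref{SCart D} is obtained by taking the Cartesian diagram \eqref{HCart} that defines $\Hk^{\mu}_{\Mh,d}$ (with the maps $\Pi^{\mu}\times\Pi^{\mu}$ and $(\oll{\rho}',\orr{\rho}')$), passing to the fiber product over $(\id,\Fr)$ as in \eqref{3 columns}, i.e. forming the three-term fiber product of the whole left two columns of \eqref{Tian}, and then restricting to the component indexed by $D$. The column fiber products are $\Sht^{\mu}_{T}\times\Sht^{\mu}_{T}$, $\Sht'^{r}_{G}\times\Sht'^{r}_{G}$ and $\Sht'^{r}_{G,d}$ by \eqref{3 columns}; restricting $\Sht'^{r}_{G,d}$ to its $D$-component gives $\Sht'^{r}_{G}(h_{D})$ by Lemma \ref{l:same hD}, and the top fiber product restricted to $D$ is $\Sht^{\mu}_{\CM,D}$ by the decomposition just established. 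Cartesianness is preserved under base change along $(\id,\Fr)$ and under passing to a connected component of the base, so \eqref{SCart D} is Cartesian. The only subtle point — and the main thing to check carefully — is the compatibility of the two indexings by $X_{d}(k)$: one coming from the invariant $\delta$ on $\calA_{d}$ and the decomposition \eqref{ShtM into a} of the abstract Shtuka stack, the other coming from the divisor of $\det(\phi_{i})$ in the definition of $\Hk^{r}_{G,d}$ and the resulting grading of $\Sht'^{r}_{G,d}$ in Lemma \ref{l:same hD}; these agree because $f_{\calM}$ sends a point of $\Mh_{d}$ to the triple whose $a-b$ is $\det(\psi)$ (see \eqref{det psi} and \S\ref{push L}), so the divisor $\delta\circ f_{\calM}=\operatorname{div}(\det\psi)$ matches the divisor recorded by the map $s$ of \S\ref{sss:alt HkM}. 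Granting this, the corollary follows.
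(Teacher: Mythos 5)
Your proposal is correct and follows essentially the same route as the paper: both use the two fiber-product descriptions of $\Sht^{\mu}_{\Mh,d}$ (columns \eqref{3 columns} versus rows \eqref{3 rows}), with Lemma \ref{l:Hk Mh} giving the identification with $\Sht^{\mu}_{\CM,d}|_{\Ah_{d}}$ and Lemma \ref{l:same hD} giving the $D$-indexed decomposition, then match the two gradings. Your explicit check that the two indexings by $X_{d}(k)$ agree, via $\delta\circ f_{\calM}=\div(\det\psi)$ from \eqref{det psi}, is precisely the point the paper handles by observing that both pieces are the fiber of $\Sht^{\mu}_{\CM,d}\to\calA_{d}\to\hX_{d}$ over $D$.
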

\begin{proof} Note that $\Sht^{\mu}_{\Mh,d}$ is defined as a fiber product in two ways: one as the fiber product of \eqref{3 columns} and the other as the fiber product of \eqref{3 rows}. Using the first point of view and the decomposition of $\Sht'^{r}_{G,d}$ given by Lemma \ref{l:same hD}, we get a decomposition of $\Sht^{\mu}_{\Mh,d}=\coprod_{D\in X_{d}(k)}\Sht^{\mu}_{\Mh,D}$, where $\Sht^{\mu}_{\Mh,D}$ is by definition the stack to put in the upper left corner of \eqref{SCart D} to make the diagram Cartesian. 

On the other hand, using the second point of view of $\Sht^{\mu}_{\Mh,d}$ as the fiber product of \eqref{3 rows}, and using the fact that $\Hk^{\mu}_{\Mh,d}$ is the restriction of $\Hk^{\mu}_{\CM,d}$ over $\Ah_{d}$ by Lemma \ref{l:Hk Mh}, we see that $\Sht^{\mu}_{\Mh,d}$ is the restriction of $\Sht^{\mu}_{\CM,d}$ over $\Ah_{d}$ by comparing \eqref{3 rows} and \eqref{NW}. By \eqref{ShtM into a} and \eqref{decomp D into a}, and the fact that $\Ah_{d}(k)=\coprod_{D\in X_{d}(k)}\calA_{D}(k)$,  we get a decomposition $\Sht^{\mu}_{\Mh,d}=\coprod_{D\in X_{d}(k)}\Sht^{\mu}_{\CM,D}$. Therefore, both  $\Sht^{\mu}_{\Mh,D}$ and $\Sht^{\mu}_{\CM,D}$ are the fiber of the map $\Sht^{\mu}_{\CM,d}\to \calA_{d}\to \hX_{d}$ over $D$, and they are canonically isomorphic. Hence we may replace the upper left corner of \eqref{SCart D} by $\Sht^{\mu}_{\Mh,D}$, and the new diagram is Cartesian by definition.\end{proof}

\begin{lemma}\label{l:Sht'rGd ok} 
\begin{enumerate}
\item The diagram \eqref{Sht'rGd} satisfies the conditions in \S\ref{sss:proper int}. In particular, the refined Gysin map
\begin{equation*}
(\id,\Fr_{H_{d}})^{!}: \Ch_{*}(\Hk'^{r}_{G,d})_{\QQ}\to \Ch_{*-\dim H_{d}}(\Sht'^{r}_{G,d})_{\QQ}
\end{equation*}
is defined.
\item We have
\begin{equation*}
[\Sht'^{r}_{G,d}]=(\id,\Fr_{H_{d}})^{!}[\Hk'^{r}_{G,d}]\in\Ch_{2r}(\Sht'^{r}_{G,d}).
\end{equation*}
\end{enumerate}
\end{lemma}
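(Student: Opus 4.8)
The plan is to treat the two parts in turn, reducing everything to the smoothness of $H_{d}$ (Lemma \ref{l:MCart ok}(1)) and to a dimension count.

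\emph{Part (1).} First I would record that $H_{d}$ is a smooth algebraic stack over $k$ of pure dimension $2d+3g-3$ by Lemma \ref{l:MCart ok}(1) --- in fact, as shown there, a rank-$2d$ vector bundle over $\Bun_{G}\times\Coh^{d}_{0}$, hence stratified by quotient stacks --- so that the graph $(\id,\Fr_{H_{d}}):H_{d}\to H_{d}\times H_{d}$ is a regular local immersion of codimension $2d+3g-3$. Then I would check the remaining hypotheses of \S\ref{sss:proper int}: that $\Hk'^{r}_{G,d}$ is stratified by quotient stacks, which follows because $(\gamma'_{0},\pi):\Hk'^{r}_{G,d}\to H_{d}\times X'^{r}$ is representable and proper (once the first column $\phi_{0}$, the points of $X'^{r}$ and the determinant divisor are fixed, the two Hecke chains in a diagram \eqref{HmuGd} live in products of affine-Grassmannian-type proper schemes and the remaining vertical maps $\phi_{i}$ are cut out by closed conditions); and that $\Sht'^{r}_{G,d}$ is a Deligne--Mumford stack locally of finite type, which follows from $\Sht'^{r}_{G,d}\cong\coprod_{D}\Sht'^{r}_{G}(h_{D})$ (Lemma \ref{l:same hD}) together with Lemma \ref{l:Sht hD proper} and Corollary \ref{c:ShtG smooth}. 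With these verified, \S\ref{sss:proper int} yields the refined Gysin map, which lowers dimension by $\dim H_{d}=2d+3g-3$.

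\emph{Part (2).} Here the active inputs are that the fibre product \eqref{Sht'rGd} has the expected dimension and that the intersection is generically transverse. By Lemma \ref{l:HCart dim}(2), $\dim\Hk'^{r}_{G,d}=2d+2r+3g-3$, so the expected dimension of $\Sht'^{r}_{G,d}$ is $2r$; and by Lemma \ref{l:same hD} together with the analogue of Lemma \ref{l:dim hD} for $\Sht'^{r}_{G}(h_{D})$ recorded in \S\ref{sss:Hk Sht'}, the stack $\Sht'^{r}_{G,d}\cong\coprod_{D}\Sht'^{r}_{G}(h_{D})$ has all geometric fibres over $X'^{r}$ of dimension $r$, hence is of pure dimension exactly $2r$. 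Since $\Sht'^{r}_{G,d}$ is pure of dimension $2r$, the class $(\id,\Fr_{H_{d}})^{!}[\Hk'^{r}_{G,d}]\in\Ch_{2r}(\Sht'^{r}_{G,d})_{\QQ}$ is determined by its restriction to any dense open substack, and I would take $U=\Sht'^{r,\circ}_{G,d}$, the part of $\coprod_{D}\Sht'^{r}_{G}(h_{D})$ lying over the points of $X'^{r}$ whose images in $X^{r}$ avoid $|D|$; its complement has dimension $\le 2r-1$ because that locus in $X'^{r}$ has dimension $r-1$ while the fibres of $\Sht'^{r}_{G}(h_{D})\to X'^{r}$ have dimension $r$. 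Over this locus $h_{D}$ decomposes as the sum of the $\one_{KgK}$ with $g$ supported on $|D|$, so by the Remark following Proposition \ref{p:Hecke action on Chow}, which exhibits $\Sht^{r}_{G}(h_{D})|_{(X-|D|)^{r}}$ as a disjoint union of the finite \'etale correspondences $\Gamma(g)$, the projections $\oll{p}',\orr{p}'$ restrict to finite \'etale maps $U\to\Sht'^{r}_{G}$; hence $U$ is smooth of pure dimension $2r$, and $U\subset\Hk'^{r,\circ}_{G,d}$, which is smooth of dimension $2d+2r+3g-3$ by Lemma \ref{l:HCart dim}(1). Thus over $U$ one is intersecting the smooth stack $\Hk'^{r,\circ}_{G,d}$ with the regularly embedded locus $\Gamma(\Fr_{H_{d}})\subset H_{d}\times H_{d}$ inside the smooth stack $H_{d}\times H_{d}$, and the fibre product $U$ is smooth of the expected dimension $2r$; a comparison of tangent spaces then shows $(\gamma'_{0},\gamma'_{r})$ is transverse to $\Gamma(\Fr_{H_{d}})$ along $U$, whence $(\id,\Fr_{H_{d}})^{!}[\Hk'^{r}_{G,d}]|_{U}=[U]$. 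Finally, since $\Sht'^{r}_{G,d}\setminus U$ has dimension $<2r$, the restriction $\Ch_{2r}(\Sht'^{r}_{G,d})_{\QQ}\to\Ch_{2r}(U)_{\QQ}$ is an isomorphism (excision), and this gives $(\id,\Fr_{H_{d}})^{!}[\Hk'^{r}_{G,d}]=[\Sht'^{r}_{G,d}]$.

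\emph{Main obstacle.} The dimension bookkeeping in part (1) and the first half of part (2) is routine given the earlier lemmas; the delicate point is the multiplicity --- ensuring that the refined Gysin pullback of $[\Hk'^{r}_{G,d}]$ carries coefficient exactly one on every component of $\Sht'^{r}_{G,d}$, rather than some larger positive multiple. I expect to handle this precisely as sketched, via the finite \'etale description of the generic locus over $(X-|D|)^{r}$ --- which makes $\Sht'^{r}_{G,d}$ generically smooth, a manifestation of the good properties forced by the Frobenius graph --- together with the excision isomorphism on Chow groups, rather than by a direct global computation, which would be complicated by the failure of $\gamma'_{0}$ and $\gamma'_{r}$ to be smooth away from that generic locus.
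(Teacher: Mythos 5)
Your proposal is correct and follows essentially the same route as the paper: part (1) is the same verification of the conditions of \S\ref{sss:proper int} (DM-ness of $\Sht'^{r}_{G,d}$ via Lemmas \ref{l:same hD} and \ref{l:Sht hD proper}, representability of $(\gamma'_{0},\gamma'_{r})$, smoothness of $H_{d}$, and the dimension counts from Lemmas \ref{l:HCart dim}(2) and \ref{l:dim hD}), and part (2) is the same reduction to the open locus over which the correspondence is finite \'etale, where the intersection is transversal of expected dimension, followed by excision since the complement has dimension $<2r$. The one slip is calling $(\id,\Fr_{H_{d}})$ a regular local immersion — $H_{d}$ is an Artin stack (a quotient by $\Pic_{X}$), so the paper instead checks condition \eqref{normal f} (the normal cone is the vector bundle stack $\Fr^{*}TH_{d}$) together with Remark \ref{r:smooth lci} — but your appeal to the smoothness of $H_{d}$ supplies exactly these inputs, so the argument stands.
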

\begin{proof}
(1) Since  $\oll{p}:\Sht^{r}_{G}(h_{D})\to \Sht^{r}_{G}$ is representable by Lemma \ref{l:Sht hD proper}, $\Sht^{r}_{G}(h_{D})$ is also a Deligne--Mumford stack. Since $\Sht^{r}_{G,d}$ is the disjoint union of $\Sht^{r}_{G}(h_{D})$ by Lemma \ref{l:same hD},  $\Sht^{r}_{G,d}$ is Deligne--Mumford, hence so is $\Sht'^{r}_{G,d}$. The map $\gamma'_{0}:\Hk'^{r}_{G,d}\to H_{d}$ is representable because its fibers are closed subschemes of iterated Quot schemes (fixing $\CE_{0}\incl\CE'_{0}$, building $\CE_{i}$ and $\CE'_{i}$ step by step and imposing commutativity of the maps).  Therefore $(\gamma'_{0},\gamma'_{r})$ is also representable. This verifies the condition (1) in \S\ref{sss:proper int}.

Since $H_{d}$ is smooth by Lemma \ref{l:MCart ok}, the normal cone stack of the map $(\id, \Fr_{H_{d}}):H_{d}\to H_{d}\times H_{d}$ is the vector bundle stack $\Fr^{*}TH_{d}$, the Frobenius pullback of the tangent bundle stack of $H_{d}$. Therefore $(\id,\Fr_{H_{d}})$ satisfies condition \eqref{normal f} in \S\ref{sss:proper int}. It also satisfies condition \eqref{lci} of \S\ref{sss:proper int} by the discussion in Remark \ref{r:smooth lci}. 

Finally the dimension condition \eqref{n n-d} in \S\ref{sss:proper int} for $\Hk'^{r}_{G,d}$ and $\Sht'^{r}_{G,d}=\coprod_{D}\Sht'^{r}_{G}(h_{D})$ follow from Lemma \ref{l:HCart dim}\eqref{dim HkrGd} and Lemma \ref{l:dim hD}. We have verified all conditions in \S\ref{sss:proper int}.

(2) Take the open substack $\Hk'^{r,\circ}_{G,d}\subset\Hk'^{r}_{G,d}$ as in Lemma \ref{l:HCart dim}. Then $\Hk'^{r,\circ}_{G,d}$ is smooth of pure dimension $2d+2r+3g-3$. According to Lemma \ref{l:same hD}, the corresponding open part $\Sht'^{r,\circ}_{G,d}$ is the disjoint union of $\Sht'^{r,\circ}_{G}(h_{D})$, where
\begin{equation*}
\Sht'^{r,\circ}_{G}(h_{D})=\Sht'^{r}_{G}(h_{D})|_{(X'-\nu^{-1}(D))^{r}}.
\end{equation*}
It is easy to see that both projections $\Sht'^{r,\circ}_{G}(h_{D})\to \Sht'^{r}_{G}$ are \'etale, hence $\Sht'^{r,\circ}_{G}(h_{D})$ is smooth of dimension $2r=\dim \Hk'^{r,\circ}_{G,d}-\codim(\id,\Fr_{H_{d}})$, the expected dimension. This implies that if we replace $\Hk'^{r}_{G,d}$ with $\Hk'^{r,\circ}_{G,d}$, and replace $\Sht'^{r}_{G,d}$ with $\Sht'^{r,\circ}_{G,d}$ in the diagram \eqref{Sht'rGd}, it becomes a complete intersection diagram. Therefore $(\id,\Fr_{H_{d}})^{!}[\Hk'^{r}_{G,d}]$ is the fundamental cycle when restricted to $\Sht'^{r,\circ}_{G,d}$.  Since $\Sht'^{r}_{G,d}-\Sht'^{r,\circ}_{G,d}$ has lower dimension than $2r$ by Lemma \ref{l:dim hD}, we see that $(\id,\Fr_{H_{d}})^{!}[\Hk'^{r}_{G,d}]$ must be equal to the fundamental cycle over the whole $\Sht'^{r}_{G,d}$.
\end{proof}

\subsubsection{Proof of Theorem \ref{th:alt I}}\label{sss:proof alt I} 
Consider the diagram \eqref{SCart D}. Since $\Sht^{\mu}_{T}$ is a proper Deligne--Mumford stack over $k$ and the map $(\oll{p}',\orr{p}')$ is proper and representable, $\Sht^{\mu}_{\CM,D}$ is also a proper Deligne--Mumford stack over $k$. A simple manipulation using the functoriality of Gysin maps gives
\begin{equation*}
\BI_{r}(h_{D})=\jiao{\theta^{\mu}_{*}[\Sht^{\mu}_{T}],h_{D}*\theta^{\mu}_{*}[\Sht^{\mu}_{T}]}_{\Sht'^{r}_{G}}=\deg\left((\theta^{\mu}\times\theta^{\mu})^{!}[\Sht'^{r}_{G}(h_{D})]\right).
\end{equation*}
Here $(\theta^{\mu}\times\theta^{\mu})^{!}:\Ch_{2r}(\Sht'^{r}_{G}(h_{D}))_{\QQ}\to \Ch_{0}(\Sht^{\mu}_{\CM,D})_{\QQ}$ is the refined Gysin map attached to the map $\theta^{\mu}\times\theta^{\mu}$. By Corollary \ref{c:ShtD}, $(\theta^{\mu}\times\theta^{\mu})^{!}[\Sht'^{r}_{G}(h_{D})]$ is the $D$-component of the $0$-cycle
\begin{equation*}
(\theta^{\mu}\times \theta^{\mu})^{!}[ \Sht'^{r}_{G,d}]\in \Ch_{0}(\Sht^{\mu}_{\Mh,d})_{\QQ}=\bigoplus_{D\in X_{d}(k)}\Ch_{0}(\Sht^{\mu}_{\CM,D})_{\QQ}.
\end{equation*}
Therefore, to prove \eqref{alternative I} simultaneously for all $D$ of degree $d$, it suffices to find a cycle class $\zeta^{\hs}\in\Ch_{2d-g+1}(\Hk^{\mu}_{\Mh,d})_{\QQ}$ whose restriction to $\Hk^{\mu}_{\Mh,d}\cap\Hk^{\mu}_{\Mds,d}=\Hk^{\mu}_{\CM,d}|_{\Ah_{d}\cap\Ads_{d}}$ is the fundamental class, and that
\begin{equation}\label{two int}
(\theta^{\mu}\times \theta^{\mu})^{!}[\Sht'^{r}_{G,d}]=(\id,\Fr_{\Mh_{d}})^{!}\zeta^{\hs}\in\Ch_{0}(\Sht^{\mu}_{\Mh,d})_{\QQ}.
\end{equation}
The statement of Theorem \ref{th:alt I} asks for a cycle $\zeta$ on $\Hk^{\mu}_{\CM,d}$, but we may extend the above $\zeta^{\hs}$ arbitrarily to a $(2d-g+1)$-cycle in $\Hk^{\mu}_{\calM,d}$.

To prove \eqref{two int}, we would like to apply Theorem \ref{th:oct} to the situation of \eqref{Tian}. We check the assumptions:
\begin{enumerate}
\item The smoothness of $\Bun_{T}$ and $\Bun_{G}$ is well-known. The smoothness of $\Hk'^{r}_{G}$ and $\Hk^{\mu}_{T}$ follow from Remark \ref{r:HkG smooth} and \S\ref{sss:HkT smooth}. Finally, by Lemma \ref{l:MCart ok}, $H_{d}$ is smooth of pure dimension $2d+3g-3$. This checks the smoothness of all members in \eqref{Tian} except $B=\Hk'^{r}_{G,d}$. 
\item By Corollary \ref{c:ShtG smooth}, $\Sht^{r}_{G}$ and hence $\Sht'^{r}_{G}$ is smooth of pure dimension $2r$; by Lemma \ref{l:ShtT proper}, $\Sht^{\mu}_{T}$ is smooth of pure dimension $r$. By Lemma \ref{l:MCart ok}, $\Mh_{d}$ is smooth of pure dimension $2d-g+1$. All of them have the dimension expected from the Cartesian diagrams defining them.
\item The diagram \eqref{Sht'rGd} satisfies the conditions in \S\ref{sss:proper int} by Lemma \ref{l:Sht'rGd ok}. The diagram \eqref{HCart} satisfies the conditions in \S\ref{sss:Gysin} by Lemma \ref{l:HCart ok}.

\item We check that the Cartesian diagram formed by \eqref{3 rows}, or rather \eqref{NW}, satisfies the conditions in \S\ref{sss:Gysin}. The map $\Sht^{\mu}_{\CM,d}\to \CM_{d}$ is representable because $\Hk^{\mu}_{\CM,d}\to \CM_{d}\times\CM_{d}$ is. In the proof of Lemma \ref{l:HCart ok}(1) we have proved that $\CM_{d}$ admits a finite flat presentation, hence so does  $\Sht^{\mu}_{\CM,d}$. This verifies the first condition in \S\ref{sss:Gysin}. Since $\calM_{d}$ is a smooth Deligne--Mumford stack by Lemma \ref{p:M}\eqref{M smooth}, $(\id,\Fr_{\calM_{d}}):\CM_{d}\to\CM_{d}\times\CM_{d}$ is  a regular local immersion, which verifies condition \eqref{global lci} of \S\ref{sss:Gysin}. 

Finally we consider the Cartesian diagram formed by \eqref{3 columns} (or equivalently, the disjoint union of the diagrams \eqref{SCart D} for all $D\in X_{d}(k)$).  We have already showed above that $\Sht^{\mu}_{\CM,d}$ admits a finite flat presentation. All members  in these diagrams are Deligne--Mumford stacks, and $\Sht^{\mu}_{T}$ and $\Sht'^{r}_{G}$ are smooth Deligne--Mumford stacks by Lemma \ref{l:ShtT proper} and Corollary \ref{c:ShtG smooth}. Hence the map $\theta^{\mu}\times\theta^{\mu}$ satisfies the conditions \eqref{global lci} of \S\ref{sss:Gysin} by Remark \ref{r:source target smooth ok}. 
\end{enumerate}

Now we can apply Theorem \ref{th:oct} to the situation \eqref{Tian}. Let $\zeta^{\hs}=(\Pi^{\mu}\times\Pi^{\mu})^{!}[\Hk'^{r}_{G,d}]\in\Ch_{2d-g+1}(\Hk^{\mu}_{\Mh,d})_{\QQ}$ as defined in \eqref{define zeta}. Then the restriction of $\zeta^{\hs}$ to $\Hk^{\mu}_{\CM,d}|_{\Ads_{d}\cap\Ah_{d}}$ is the fundamental cycle by Lemma \ref{l:HCart ok}(2). Finally, 
\begin{eqnarray*}
&&(\id,\Fr_{\Mh_{d}})^{!}\zeta^{\hs}=(\id,\Fr_{\Mh_{d}})^{!}(\Pi^{\mu}\times\Pi^{\mu})^{!}[\Hk'^{r}_{G,d}]\\
&=&(\theta^{\mu}\times\theta^{\mu})^{!}(\id,\Fr_{H_{d}})^{!}[\Hk'^{r}_{G,d}]\quad\mbox{(Theorem \ref{th:oct})}\\
&=&(\theta^{\mu}\times\theta^{\mu})^{!}[\Sht'^{r}_{G,d}] \quad\mbox{(Lemma \ref{l:Sht'rGd ok}(2))}
\end{eqnarray*}
which is \eqref{two int}. This finishes the proof of \eqref{alternative I}.

\subsection{Some dimension calculation}
In this subsection, we give the proofs of several lemmas we stated previously concerning the dimensions of certain moduli stacks.

\subsubsection{Proof of Lemma \ref{l:HCart dim}\eqref{dim HkcircGd}}\label{sss:proof HCart dim circ}
In the diagram \eqref{HmuGd}, when the divisors of the $\phi_{i}$ are disjoint from the divisors of the horizontal maps, namely the $x_{i}$'s, the diagram is uniquely determined by its left column $\phi_{0}:\CE_{0}\to\CE'_{0}$ and top row. Therefore we have
\begin{equation*}
\Hk^{r,\circ}_{G,d}=(H_{d}\times_{\Bun_{G}}\Hk^{r}_{G})|_{(X_{d}\times X^{r})^{\circ}}.
\end{equation*}
Since $H_{d}$ is smooth of pure dimension $2d+3g-3$ by Lemma \ref{l:MCart ok}, and the map $p_{0}: \Hk^{r}_{G}\to \Bun_{G}$ is smooth of relative dimension $2r$, we see that $H_{d}\times_{\Bun_{G}}\Hk^{r}_{G}$ is smooth of pure dimension $2d+2r+3g-3$. 

\subsubsection{Proof of Lemma \ref{l:HCart dim}\eqref{dim HkrGd}}\label{sss:proof HCart dim HkrGd} Over $(X_{d}\times X^{r})^{\circ}$, we have $\dim \Hk^{r,\circ}_{G,d}=2d+2r+3g-3$, therefore the generic fiber of $s$ has dimension $d+r+3g-3$. By the semicontinuity of fiber dimensions, it suffices to show that the geometric fibers of $s$ have dimension $\leq d+r+3g-3$. We will actually show that the geometric fibers of the map $(s,p_{0}): \Hk^{r}_{G,d}\to X_{d}\times X^{r}\times \Bun_{G}$ sending the diagram \eqref{HmuGd} to $(D;x_{i};\CE'_{r})$ have dimension $\leq d+r$.

We present $\Hk^{r}_{G,d}$ as the quotient of $\Hk^{\mu}_{2,d}/\Pic_{X}$ with $\mu=\mu_{+}^{r}$. Therefore a point in $\Hk^{r}_{G,d}$ is a diagram of the form \eqref{HmuGd} with all arrows $f_{i},f'_{i}$ pointing to the right.

Let $(D;\ul{x}=(x_{i}))\in X_{d}\times X^{r}$ and $\CE'_{r}\in \Bun_{G}$ be geometric points. For notational simplicity we base change the whole situation to the field of definition of this point without changing notation. Let $H_{D, \ul{x}, \CE'_{r}}$ be the fiber of $(s,p_{0})$ over $(D;x_{i};\CE'_{r})$. We consider the scheme $H'=H'_{D, \ul{x}, \CE'_{r}}$ classifying commutative diagrams
\begin{equation}\label{H'}
\xymatrix{\CE_{0}\ar[r]^{f_{1}}\ar[d]^{\phi_{0}} & \CE_{1}\ar[r]^{f_{2}} & \cdots\ar[r]^{f_{r}} & \CE_{r}\ar[d]^{\phi_{r}}\\
\CE'_{0}\ar[r]^{f'_{1}} & \CE'_{1}\ar[r]^{f'_{2}} & \cdots\ar[r]^{f'_{r}} & \CE'_{r}}
\end{equation}
where $\div(\det\phi_{0})=D=\div(\det\phi_{r})$ and $\div(\det f_{i})=x_{i}=\div(\det f'_{i})$. The only difference between $H'$ and $H_{D,\ul{x},\CE'_{r}}$ is that we do not require the maps $\phi_{i}$ for $1\leq i\leq r-1$ to exist (they are unique if exist). There is a natural embedding $H_{D,\ul{x},\CE'_{r}}\incl H'$, and it suffices to show that $\dim(H')\leq d+r$. We isolate this part of the argument into a separate Lemma below, because it will be used in another proof. This finishes the proof of Lemma \ref{l:HCart dim}\eqref{dim HkrGd}.

\begin{lemma}\label{l:H'} Consider the scheme $H'=H'_{D,\ul{x},\CE'_{r}}$ introduced in the proof of Lemma \ref{l:HCart dim}\eqref{dim HkrGd}. We have $\dim H'=d+r$.
\end{lemma}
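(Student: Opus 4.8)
The plan is to stratify $H'$ according to the colength-$d$ modification $\phi_0:\CE_0\incl\CE'_0$ (equivalently, according to the isomorphism class of the torsion sheaf $\CQ_0=\coker(\phi_0)$ together with the embedding data), since once $\CE'_0$ is determined the rest of the bottom row of \eqref{H'} is forced (the maps $f'_i$ are the unique modifications of $\CE'_0$ at the prescribed points $x_i$ compatible with the given line bundle of determinants), and once $\CE_0$ and the bottom row are fixed, the top row $\CE_0\xrightarrow{f_1}\cdots\xrightarrow{f_r}\CE_r$ is a choice of a chain of colength-one upper modifications at the points $x_1,\dots,x_r$, with $\CE_r$ then also determined. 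So $H'$ fibers over the space of choices of $\CE_0\incl\CE'_0$ with $\CE'_0$ fixed, and the fiber is an iterated $\PP^1$-bundle of relative dimension $r$.

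Concretely, first I would fix $\CE'_r$ and reconstruct $\CE'_0$: since $\div(\det f'_i)=x_i$, the bottom row is a chain of lower modifications of $\CE'_r$ at $x_1,\dots,x_r$, but these are again unique given the determinant constraint, so $\CE'_0$ is literally determined by $(\CE'_r,\ul x)$ — there is nothing to choose on the bottom row. Next, the datum of $\phi_0:\CE_0\incl\CE'_0$ with $\coker(\phi_0)$ of length $d$ is a point of the Quot scheme $\Quot^d(\CE'_0)$ of length-$d$ torsion quotients of $\CE'_0$; this Quot scheme has dimension $2d$ (it is smooth of dimension $2d$ since $\CE'_0$ has rank $2$, e.g. because it maps to $\Coh^d_0$ with fibers affine spaces of the expected dimension, cf. the computation of $H_d$ in Lemma \ref{l:MCart ok}). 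Wait — that would give $\dim H' = 2d+r$, which is too big. The point is that we have also imposed $\div(\det\phi_r)=D$, i.e. the determinant of the composite modification is constrained to be exactly $D$ and not just of degree $d$; equivalently $\det(\CE_0)$ is pinned down inside $\det(\CE'_0)$. The right count is therefore: the subscheme of $\Quot^d(\CE'_0)$ where $\coker(\phi_0)$ has total length $d$ but the induced modification of determinants has divisor exactly $D$ is the locus cut out by fixing the image of $\det\phi_0$; this locus has dimension $d$ (it is, up to a torsor, the space of ``complete flags'' of length-$d$ torsion subsheaves with prescribed support multiplicities, a product of partial flag varieties of torsion modules which is $d$-dimensional). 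Then the top-row choices contribute another $r$ (an iterated $\PP^1$-bundle over the $r$ points), giving $\dim H' = d+r$.

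So the key steps, in order: (1) observe the bottom row of \eqref{H'} is rigid once $(\CE'_r,\ul x)$ is fixed, so $H'$ maps to the moduli of $\phi_0:\CE_0\incl\CE'_0$ with the determinant of $\phi_0$ having fixed divisor $D$; (2) show this moduli of $\phi_0$ has dimension $d$ — I would do this by mapping it to $\Spec k$ via $\CE_0\mapsto \coker\phi_0$ decomposed over the support of $D$, reducing to a local computation at each $x\in|D|$ where the fiber is a variety of Hecke modifications of a rank-$2$ lattice with fixed ``shape'' index, whose dimension is the number of boxes, summing to $d$; (3) show the forgetful map $H'\to\{\phi_0\}$ has fibers of dimension $r$, namely the iterated $\PP^1$-bundle of upper modifications of $\CE_0$ at $x_1,\dots,x_r$ (this is the standard fact that $p_0:\Hk^\mu_2\to\Bun_2$ is a composition of $\PP^1$-bundles, cf. Remark \ref{r:HkG smooth}); (4) conclude $\dim H' = d + r$, and note that $H'$ is equidimensional since all the fibrations involved are.

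The main obstacle I anticipate is step (2): carefully identifying the moduli of $\phi_0:\CE_0\incl\CE'_0$ with $\det\phi_0$ of fixed divisor $D$ and pinning its dimension to exactly $d$ (rather than $2d$ as in the unconstrained Quot scheme or something smaller). The cleanest route is probably to note that this space is a torsor, locally over $|D|$, under a group of automorphisms, and that the local Hecke modification varieties (affine Springer-fiber-like objects, or iterated $\PP^1$-bundles indexed by the elementary divisor type of the modification at each point) have dimension equal to the total colength $d$; this is a purely local lattice computation that should be routine but requires bookkeeping of the two possible ``directions'' of each elementary modification. One should also take care that $H'$ need not be irreducible — it can have components indexed by how the length $d$ is distributed among the points of $|D|$ and among the two ``sheets'' — but each such component is still of dimension $d+r$, which is all we need.
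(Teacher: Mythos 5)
Your proposal has a genuine gap, and it sits exactly at the step that carries all the content of the lemma. The central structural claim — that once $\CE'_{r}$ and $\ul{x}$ are fixed ``the rest of the bottom row of \eqref{H'} is forced'' because the $f'_{i}$ are ``the unique modifications of $\CE'_{0}$ at the prescribed points $x_i$ compatible with the given line bundle of determinants'' — is false for rank-two bundles. The condition $\div(\det f'_{i})=x_{i}$ only fixes the colength and support of the modification; choosing an elementary lower modification $\CE'_{i-1}\subset\CE'_{i}$ at $x_{i}$ amounts to choosing a line in the two-dimensional fiber $\CE'_{i,x_{i}}$, so each step is a $\PP^{1}$ of choices and the bottom row sweeps out an $r$-dimensional family (this is exactly the iterated $\PP^{1}$-bundle structure of Remark \ref{r:HkG smooth}; rigidity of modifications holds only in the rank-one case, as for $\Hk^{\mu}_{T}$). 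At the same time you silently drop the datum $\phi_{r}$ and the commutativity of \eqref{H'}: given $\phi_{0}$ and the two rows, the map $\phi_{r}$ is $F'\circ\phi_{0}\circ F^{-1}$ with $F=f_{r}\cdots f_{1}$, $F'=f'_{r}\cdots f'_{1}$, which is a priori only a rational map and its regularity is a nontrivial closed condition on the top row, so the top row is \emph{not} a free iterated-$\PP^{1}$ choice. Correcting both points, your stratification (bottom row: $r$; then $\phi_{0}$ with $\div(\det\phi_{0})=D$: $d$, which is the one count you do get right; then top row: at most $r$) only gives $\dim H'\le d+2r$, and the entire point of the lemma — used in Lemma \ref{l:HCart dim}\eqref{dim HkrGd} precisely as an upper bound — is to improve this to $d+r$.

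The true geometry is in fact opposite to the picture you describe. After reducing to $D=dx$ and all $x_{i}=x$, one has $H'\cong \Gr_{1^{r},d}\times_{\Gr_{d+r}}\Gr_{d,1^{r}}$, the two factors recording (top row, $\phi_{r}$) and ($\phi_{0}$, bottom row), glued along the common lattice $\L_{0}$. Over the open stratum of $\Gr_{d+r}$, where $\calO_{x}^{2}/\L_{0}$ is cyclic, the refining chains of $\L_{0}\subset\calO_{x}^{2}$ are unique, so generically it is the top row that is \emph{determined} by $(\phi_{0},\text{bottom row})$, while over deeper strata the fibers grow and one must check that the growth is compensated by the codimension of the stratum. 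That balance is a semismallness statement for the convolution map $\Gr_{1^{m}}\to\Gr_{m}$ with $m=d+r$, and it is the missing ingredient in your argument; the agreement of your final number $d+r$ with the truth comes from the false rigidity claim numerically cancelling the omitted $\phi_{r}$-constraint, not from a correct fibration with constant $r$-dimensional fibers.
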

\begin{proof}
We only give the argument for the essential case where all $x_{i}$ are equal to the same point $x$ and $D=dx$. The general case can be reduced to this case by factorizing $H'$ into a product indexed by points that appear in $|D|\cup\{x_{1},\cdots, x_{r}\}$. Let $\Gr_{1^{r}, d}$ be the iterated version of the affine Schubert variety classifying chains of lattices $\L_{0}\subset\L_{1}\subset\L_{2}\subset\cdots\subset \L_{r}\subset \L'_{r}=\calO^{2}_{x}$ in $F_{x}^{2}$ where all inclusions have colength 1 except for the last one, which has colength $d$. Similarly let $\Gr_{d,1^{r}}$ be the iterated affine Schubert variety classifying chains of lattices $\L_{0}\subset\L'_{0}\subset\L'_{1}\subset\cdots\subset \L'_{r}=\calO^{2}_{x}$ in $F_{x}^{2}$ where the first inclusion has  colength $d$ and all other inclusions have colength 1. Let $\Gr_{d+r}\subset\Gr_{G,x}$ be the affine Schubert variety classifying $\calO_{x}$-lattices $\L\subset \calO^{2}_{x}$ with colength $d+r$. We have natural maps $\pi: \Gr_{1^{r},d}\to \Gr_{d+r}$ and $\pi':\Gr_{d,1^{r}}\to\Gr_{d+r}$ sending the lattice chains to $\L_{0}$. By the definition of $H'$, after choosing a trivialization of $\CE'_{r}$ in the formal neighborhood of $x$, we have an isomorphism
\begin{equation}\label{H'Gr}
H'\cong \Gr_{1^{r}, d}\times_{\Gr_{d+r}}\Gr_{d,1^{r}}.
\end{equation}
Since $\pi$ and $\pi'$ are surjective, therefore $\dim H'\geq \dim \Gr_{d+r}=d+r$. 

Now we show $\dim H'\le d+r$. Since the natural projections $\Gr_{1^{d+r}}\to \Gr_{1^{r}, d}$ and $\Gr_{1^{d+r}}\to \Gr_{d,1^{r}}$ are surjective, it suffices to show that $\dim (\Gr_{1^{d+r}}\times_{\Gr_{d+r}}\Gr_{1^{d+r}})\le d+r$. In other words, letting $m=d+r$, we have to show that $\pi_{m}: \Gr_{1^{m}}\to \Gr_{m}$ is a semismall map. This is a very special case of the semismallness of convolution maps in the geometric Satake equivalence, and we shall give a direct argument. The scheme $\Gr_{m}$ is stratified into $Y^{i}_{m}$ ($0\le i\le [m/2]$) where $Y^{i}_{m}$ classifies those $\L\subset \calO_{x}^{2}$ such that $\calO_{x}^{2}/\L\cong \calO_{x}/\varpi_{x}^{i}\oplus \calO_{x}/\varpi_{x}^{m-i}$. We may identify $Y^{i}_{m}$ with the open subscheme $Y^{0}_{m-2i}\subset \Gr_{m-2i}$ by sending $\L\in Y^{i}_{m}$ to  $\varpi_{x}^{-i}\L\subset \calO_{x}^{2}$, hence $\dim Y^{i}_{m}=m-2i$ and $\codim_{\Gr_{m}} Y^{i}_{m}=i$. We need to show that for $\L\in Y_{i}$, $\dim \pi_{m}^{-1}(\L)\le i$. We do this by induction on $m$. By definition, $\pi_{m}^{-1}(\L)$ classifies chains $\L=\L_{0}\subset \L_{1}\subset\cdots\subset\L_{m}=\calO_{x}^{2}$ each step of which has colength one. For $i=0$ such a chain is unique. For $i>0$, the choices of $\L_{1}$ are parametrized by $\PP^{1}$, and the map $\rho: \pi_{m}^{-1}(\L)\to \PP^{1}$ recording $\L_{1}$ has fibers $\pi^{-1}_{m-1}(\L_{1})$. Either $\calO_{x}^{2}/\L_{1}\cong \calO_{x}/\varpi_{x}^{i-1}\oplus \calO_{x}/\varpi_{x}^{m-i}$, in which case $\dim \rho^{-1}(\L_{1})=\dim\pi^{-1}_{m-1}(\L_{1})\le i-1$ by inductive hypothesis,  or $\calO_{x}^{2}/\L_{1}\cong \calO_{x}/\varpi_{x}^{i}\oplus \calO_{x}/\varpi_{x}^{m-i-1}$ (which happens for exactly one $\L_{1}$), in which case $\dim \rho^{-1}(\L_{1})=\dim\pi^{-1}_{m-1}(\L_{1})\le i$. These imply that  $\dim \pi_{m}^{-1}(\L)\le i$. The lemma is proved.

\end{proof}

\subsubsection{Proof of Lemma \ref{l:HCart dim}\eqref{dim HkM}}\label{sss:proof HCart dim HkM} We denote $\Hk^{\mu}_{\Mds,d}-\Hk^{\mu,\circ}_{\Mds,d}$ by $\partial\Hk^{\mu}_{\Mds,d}$. \index{$\partial\Hk^{\mu}_{\Mds,d}$}%
By Lemma \ref{l:Hk r comp} and Lemma \ref{l:HInc}, $\Hk^{\mu}_{\Mds,d}\cong \hX'_{d}\times_{\Pic^{d}_{X}}B_{r,d}$, where $B_{r,d}$ classifies $(r+1)$-triples of divisors $(D_{0},D_{1},\cdots, D_{r})$ of degree $d$ on $X'$, such that for each $1\leq i\leq r$, $D_{i}$ is obtained from  $D_{i-1}$  by changing some point $x'_{i}\in D_{i-1}$ to $\sigma(x'_{i})$. In particular, all $D_{i}$ have the same image $D_{b}:=\pi(D_{i})\in X_{d}$. We denote a point in $\Hk^{\mu}_{\Mds,d}$ by $z=(\CL,\alpha, D_{0},\cdots, D_{r})\in \hX'_{d}\times_{\Pic^{d}_{X}}B_{r,d}$, where $(\CL,\alpha)\in\hX'_{d}$ denotes a line bundle $\CL$ on $X'$ and a section $\alpha$ of it, together with an isomorphism $\Nm(\calL)\cong\calO_{X}(D_{b})$. Therefore both $\Nm(\alpha)$ and $1$ give sections of $\calO_{X}(D_{b})$. The image of $z$ under $\Hk^{\mu}_{\Mds,d}\to \calA_{d}\xrightarrow{\delta}\hX_{d}$ is the pair $(\calO_{X}(D_{b}), \Nm(\alpha)-1)$. Therefore $z\in \partial\Hk^{\mu}_{\Mds,d}$ if and only if $\div(\Nm(\alpha)-1)$ contains $\pi(x'_{i})$ for some $1\leq i\leq r$ ($\Nm(\alpha)=1$ is allowed). Since $x'_{i}\in D_{i-1}$, we have $\pi(x'_{i})\in \pi(D_{i-1})=D_{b}$, therefore $\pi(x_{i}')$ also appears in the divisor of $\Nm(\alpha)$. So we have two cases: either $\alpha=0$ or $\div(\Nm(\alpha))$ shares a common point with $D_{b}$. 

In the former case, $z$ is contained in $\Pic^{d}_{X'}\times_{\Pic^{d}_{X}}B_{r,d}$ which has dimension $g-1+d<2d-g+1$ since $d\geq 2g$. 

In the latter case, the image of $z$ in $\calA_{d}$ lies in the subscheme $\calC_{d}\subset X_{d}\times_{\Pic^{d}_{X}}X_{d}$ consisting of triples $(D_{1},D_{2}, \gamma:\calO(D_{1})\cong\calO(D_{2}))$ such that the divisors $D_{1}$ and $D_{2}$ have a common point. There is a surjection $X\times (X_{d-1}\times_{\Pic^{d-1}_{X}}X_{d-1})\to \calC_{d}$ which implies that $\dim \calC_{d}\leq 1+2(d-1)-g+1=2d-g$. Here we are using the fact that $d-1\geq 2g-1$ to compute the dimension of $X_{d-1}\times_{\Pic^{d-1}_{X}}X_{d-1}$. The conclusion is that in the latter case, $z$ lies in the preimage of $\calC_{d}$ in $\Hk^{\mu}_{\Mds,d}$, which has dimension equal to $\dim \calC_{d}$ (because $\Hk^{\mu}_{\CM,d}\to \calA_{d}$ is finite when restricted to $\calC_{d}\subset X_{d}\times_{\Pic^{d}_{X}}X_{d}$), which is $\leq 2d-g<2d-g+1$. 

Combining the two cases we conclude that $\dim\partial\Hk^{\mu}_{\Mds,d}<2d-g+1=\dim\Hk^{\mu}_{\Mds,d}$.

\subsubsection{Proof of Lemma \ref{l:dim hD}}\label{sss:proof dim hD} Let $\ul{x}=(x_{1},\cdots, x_{r})\in X^{r}$ be a geometric point. Let $\Sht^{r}_{G}(h_{D})_{\ul{x}}$ be the fiber of $\Sht^{r}_{G}(h_{D})$ over $\ul{x}$. When $\ul{x}$ is disjoint from $|D|$, $\oll{p}:\Sht^{r}_{G}(h_{D})_{\ul{x}}\to \Sht^{r}_{G,\ul{x}}$ is \'etale, hence in this case $\dim \Sht^{r}_{G}(h_{D})_{\ul{x}}=r$. By semicontinuity of fiber dimensions, it remains to show that $\dim\Sht^{r}_{G}(h_{D})_{\ul{x}}\leq r$ for all geometric points $\ul{x}$ over closed points of $X^{r}$. To simplify notation we assume $x_{i}\in X(k)$. The general case can be argued similarly. 

We use the same notation as in \S\ref{sss:proof HCart dim HkrGd}. In particular, we will use $\Hk^{r}_{G,d}$, and think of it as $\Hk^{\mu}_{2,d}/\Pic_{X}$ with $\mu=\mu_{+}^{r}$. Let $H_{D}$ be the fiber over $D$ of $H_{d}\to X_{d}$ sending $(\phi:\CE\incl\CE')$ to the divisor of $\det(\phi)$. Let $\Hk^{r}_{D, \ul{x}}$ be the fiber of $s: \Hk^{r}_{G,d}\to X_{d}\times X^{r}$ over $(D;\ul{x})$. 

Taking the fiber of the diagram \eqref{Sht G D} over $\ul{x}$ we get a Cartesian diagram
\begin{equation}\label{Sht hD x}
\xymatrix{\Sht^{r}_{G}(h_{D})_{\ul{x}}\ar[d]\ar[r] & \Hk^{r}_{D,\ul{x}}\ar[d]^{(p_{0},p_{r})}\\
H_{D}\ar[r]^{(\id,\Fr)} & H_{D}\times H_{D}}
\end{equation}
For each divisor $D'\leq D$ such that $D-D'$ has even coefficients, we have a closed embedding $H_{D'}\incl H_{D}$ sending $(\phi:\CE\to\CE')\in H_{D'}$ to $\CE\xrightarrow{\phi}\CE'\incl \CE'(\frac{1}{2}(D-D'))$. Let $H_{D,\leq D'}$ be the image of this embedding. Also let $H_{D,D'}=H_{D,\leq D'}-\cup_{D''<D'}H_{D,\leq D''}$. Then $\{H_{D,D'}\}$ give a stratification of $H_{D}$ indexed by divisors $D'\leq D$ such that $D-D'$ is even. We may restrict the diagram \eqref{Sht hD x} to $H_{D,D'}\times H_{D,D'}\incl H_{D}\times H_{D}$ and get a Cartesian diagram
\begin{equation}\label{DD'}
\xymatrix{\Sht^{r}_{G}(h_{D})_{D',\ul{x}}\ar[d]\ar[r] & \Hk^{r}_{D,D',\ul{x}}\ar[d]^{(p_{0},p_{r})}\\
H_{D,D'}\ar[r]^{(\id,\Fr)} & H_{D,D'}\times H_{D,D'}}				\index{$H_{D},  H_{D,D'}$}%
\end{equation}
We will show that $\dim\Sht^{r}_{G}(h_{D})_{D',\ul{x}}\leq r$ for each $D'\leq D$ and $D-D'$ even.

The embedding $H_{D'}\incl H_{D}$ above restricts to an isomorphism $H_{D',D'}\cong H_{D,D'}$. Similarly we have an isomorphism $\Hk^{r}_{D',D',\ul{x}}\cong\Hk^{r}_{D,D',\ul{x}}$ sending a diagram of the form \eqref{HmuGd} to the diagram of the same shape with each $\CE'_{i}$ changed to $\CE'(\frac{1}{2}(D-D'))$. Therefore we have $\Sht_{G}(h_{D'})_{D',\ul{x}}\cong\Sht_{G}(h_{D})_{D',\ul{x}}$, and it suffices to show that the open stratum $\Sht_{G}(h_{D})_{D,\ul{x}}$ has dimension at most $r$.  This way we reduce to treating the case $D'=D$.

Let $\tD=D+\ul{x}=D+x_{1}+\cdots+x_{r}\in X_{d+r}$ be the effective divisor of degree $d+r$. Let $\Bun_{G,\tD}$ be the moduli stack of $G$ bundles with a trivialization over $\tD$. A point of $\Bun_{G,\tD}$ is a pair $(\CE', \tau: \CE'_{\tD}\cong \calO^{2}_{\tD})$ (where $\CE'$ is a vector bundle of rank two over $X$) up to the action of $\Pic_{X}(\tD)$ (line bundles with a trivialization over $\tD$). There is a map $h: \Bun_{G,\tD}\to H_{D,D}$ sending $(\CE',\tau)$ to $(\phi:\CE\incl\CE')$ where $\CE$ is the preimage of the first copy of $\calO_{D}$ under the surjective map $\CE'\surj\CE'_{D}\xrightarrow{\tau}\calO^{2}_{\tD}\surj\calO^{2}_{D}$. Let $B_{D}\subset \Res^{\calO_{D}}_{k}G=\PGL_{2}(\calO_{D})$ be the subgroup stabilizing the first copy of $\calO^{2}_{D}$, and let $\tB_{D}\subset \Res^{\calO_{\tD}}_{k}G=\PGL_{2}(\calO_{\tD})$ be the preimage of $B_{D}$. Then $h$ is a $\tB_{D}$-torsor. In particular, $H_{D,D}$ is smooth, and the map $h$ is also smooth. Since smooth maps have sections \'etale locally, we may choose an \'etale surjective map $\om:Y\to H_{D,D}$ and a map $s: Y\to \Bun_{G,\tD}$ such that $hs=\om$.

Let $W=\Hk^{r}_{D,D,\ul{x}}\times_{H_{D,D}}Y$ (using the projection $\gamma_{r}: \Hk^{r}_{D,D,\ul{x}}\to H_{D,D}$). We claim that the projection $W\to Y$ is in fact a trivial fibration. In fact, let $T$ be the moduli space of diagrams of the form \eqref{HmuGd} with $\CE'_{r}=\calO^{2}_{X}$ and $\CE_{r}=\calO_{X}(-D)\oplus\calO_{X}$ and $\phi_{r}$ is the obvious embedding $\CE_{r}\incl\CE'_{r}$. In such a diagram all $\CE_{i}$ and $\CE'_{i}$ contain $\CE'_{r}(-\tD)$, therefore it contains the same amount of information as the diagram formed by the torsion sheaves $\CE_{i}/\CE'_{r}(-\tD)$ and $\CE'_{i}/\CE'_{r}(-\tD)$. For a point $y\in Y$ with image $(\phi_{r}:\CE_{r}\incl\CE'_{r})\in H_{D,D}$, $s(y)\in\Bun_{G,\tD}$ gives a trivialization of $\CE'_{r}|_{\tD}$. Therefore, completing $\phi_{r}$ into a diagram of the form \eqref{HmuGd} is the same as completing the standard point $(\CE_{r}=\calO_{X}(-D)\oplus\calO_{X}\incl \calO_{X}^{2})\in H_{D,D}$ into such a diagram. This shows that $W\cong Y\times T$. We have a diagram
\begin{equation*}
\xymatrix{\calU\ar[d]^{u}\ar[r] & W\ar[d]^{w}\ar[r]^{\sim} & Y\times T\ar[r]^{\om\times\id} & H_{D,D}\times T\\
\Sht^{r}_{G}(h_{D})_{D,\ul{x}}\ar[r]\ar[d] & \Hk^{r}_{D,D,\ul{x}}\ar[d]^{(\gamma_{0},\gamma_{r})}\\
H_{D,D}\ar[r]^{(\id,\Fr)} & H_{D,D}\times H_{D,D}}
\end{equation*}
where $\calU$ is defined so that the top square is Cartesian. The outer Cartesian diagram fits into the situation of \cite[Lemme 2.13]{VL}, and we have used the same notation as in {\em loc.cit}, except that we take $Z=H_{D,D}$. Applying {\em loc. cit.},  we conclude that the map $\calU\to T$ is \'etale. Since $w:W\to \Hk^{r}_{D,D,\ul{x}}$ is \'etale surjective, so is $u:\calU\to \Sht^{r}_{G}(h_{D})_{D,\ul{x}}$.  Therefore $\Sht^{r}_{G}(h_{D})_{D,\ul{x}}$ is \'etale locally isomorphic to $T$, and in particular they have the same dimension.

It remains to show that $\dim T\leq r$. Recall the moduli space $H'=H'_{D,\ul{x},\CE'_{r}}$ introduced in the proof of Lemma \ref{l:HCart dim}\eqref{dim HkrGd} classifying diagrams of the form \eqref{H'}. Here we fix $\CE'_{r}=\calO^{2}_{X}$. Let $T'$ be subscheme of $H'$ consisting of diagrams of the form \eqref{H'} where $(\phi_{r}:\CE_{r}\incl\CE'_{r})$ is fixed to be $(\CE_{r}=\calO_{X}(-D)\oplus\calO_{X}\incl \calO_{X}^{2})$. Then we have a natural embedding $T\incl T'$, and it suffices to show that $\dim T'\leq r$. Again we treat only the case where $D$ and $\ul{x}$ are both supported at a single point $x\in X$. The general case easily reduces to this by factorizing $T'$ into a product indexed by points in $|D|\cup\{x_{1},\cdots, x_{r}\}$.

Let $\Gr_{d}\subset \Gr_{G,x}$ be the affine Schubert variety classifying lattices $\L\subset\calO^{2}_{x}$ of colength $d$. Let $\Gr^{\hs}_{d}\subset \Gr_{d}$ be the open Schubert stratum consisting of lattices $\L\subset\calO^{2}_{x}$ such that $\calO^{2}_{x}/\L\cong\calO_{x}/\varpi^{d}_{x}$ ($\varpi_{x}$ is a uniformizer at $x$). We have a natural projection $\rho: H'\to \Gr_{d}$ sending the diagram \eqref{H'} to $\L:=\CE_{r}|_{\Spec\calO_{x}}\incl\CE'_{r}|_{\Spec\calO_{x}}=\calO^{2}_{x}$. Then $T'$ is the fiber of $\rho$ at the point $\L=\varpi^{d}_{x}\calO_{x}\oplus\calO_{x}$. Let $H^{\hs}=\rho^{-1}(\Gr^{\hs}_{d})$. There is a natural action of the positive loop group $L^{+}_{x}G$ on both $H'$ and $\Gr_{d}$ making $\rho$ equivariant under these actions. Since the action of $L^{+}_{x}G$ on $\Gr^{\hs}_{d}$ is transitive, all fibers of $\rho$ over points of $\Gr^{\hs}_{d}$ have the same dimension, i.e.,
\begin{equation}\label{dim T'}
\dim T'=\dim H^{\hs}-\dim \Gr^{\hs}_{d}=\dim H^{\hs}-d.
\end{equation}
By Lemma \ref{l:H'},  $\dim H'=d+r$. Therefore $\dim H^{\hs}=d+r$ and $\dim T'\leq r$ by \eqref{dim T'}.  We are done.


\section{Cohomological spectral decomposition}\label{s:c spec}

In this section, we give a decomposition of the cohomology of $\Sht^{r}_{G}$ under the action of the Hecke algebra $\sH$, generalizing the classical spectral decomposition for the space of automorphic forms. The main result is Theorem \ref{th:Spec decomp} which shows that $\cohoc{2r}{\Sht^{r}_{G,\kbar},\Ql}$ is an orthogonal  direct sum of an Eisenstein part and finitely many (generalized) Hecke eigenspaces. We then use a variant of such a decomposition for $\Sht'^{r}_{G}$ to make a decomposition for the Heegner-Drinfeld cycle.

\subsection{Cohomology of the moduli stack of Shtukas}\label{ss:coho Sht}

\subsubsection{Truncation of $\Bun_{G}$  by index of instability} For a rank two vector bundle $\calE$ over $X$,  we define its {\em index of instability} to be 
\begin{equation*}
\inst(\calE):=\max\{2\deg\calL-\deg \calE\},			\index{$\inst(\calE)$}%
\end{equation*}
where $\calL$ runs over line subbundle of $\calE$. When $\inst(\calE)>0$, $\calE$ is called unstable, in which case there is a unique line subbundle $\calL\subset\calE$ such that $\deg\calL>\frac{1}{2}\deg\calE$. We call this line subbundle the {\em maximal line subbundle} of $\calE$. Note that there is a constant $c(g)$ depending only on the genus $g$ of $X$ such that $\inst(\calE)\geq c(g)$ for all rank two vector bundles $\CE$ on $X$.

The function $\inst:\Bun_{2}\to \ZZ$ is upper semi-continuous, and descends to a function $\inst:\Bun_{G}\to \ZZ$. For an integer $a$, $\inst^{-1}((-\infty,a])=:\Bun^{\leq a}_{G}$ is an open substack of $\Bun_{G}$ of finite type over $k$.		\index{$\Bun^{\leq a}_{G}$}%

\subsubsection{Truncation of $\Sht^{r}_{G}$  by index of instability} For $\Sht^{r}_{G}$ we define a similar stratification by the index of instability of the various $\calE_{i}$. We choose $\mu$ as in \S\ref{sss:mu} and present $\Sht^{r}_{G}$ as $\Sht^{\mu}_{2}/\Pic_{X}(k)$. 

Consider the set $\calD$ of functions $d:\ZZ/r\ZZ\to \ZZ$ such that $d(i)-d(i-1)=\pm1$ for all $i$. There is a partial order on $\calD$ by pointwise comparison. 		 \index{$\calD$}%

For any $d\in\calD$, let $\Sht^{\mu,\leq d}_{2}$ be the open substack of  $\Sht^{\mu}_{2}$ consisting of those $(\calE_{i};x_{i};f_{i})$ such that $\inst(\calE_{i})\leq d(i)$. Then each $\Sht^{\mu,\leq d}_{2}$ is preserved by the $\Pic_{X}(k)$-action, and we define $\Sht^{\mu,\leq d}_{G}:=\Sht^{\mu,\leq d}_{2}/\Pic_{X}(k)$, an open substack of $\Sht^{r}_{G}$ of finite type. If we change $\mu$ to $\mu'$, the canonical isomorphism $\Sht^{\mu}_{G}\cong\Sht^{\mu'}_{G}$ in Lemma \ref{l:indep mu} preserves the $G$-torsors $\CE_{i}$, therefore the open substacks $\Sht^{\mu,\leq d}_{G}$ and $\Sht^{\mu',\leq d}_{G}$ correspond to each other under the isomorphism. This shows that $\Sht^{\mu,\leq d}_{G}$ is canonically independent of the choice of $\mu$, and we will simply denote it by $\Sht^{\leq d}_{G}$.		\index{$\Sht^{\leq d}_{G}$}%

In the sequel, the superscript on $\Sht_{G}$ will be reserved for the truncation parameters $d\in\calD$, and we will omit $r$ from the superscripts. In the rest of the section, $\Sht_{G}$ means $\Sht^{r}_{G}$.

Define $\Sht^{d}_{G}:=\Sht^{\leq d}_{G}-\cup_{d'<d}\Sht^{\leq d'}_{G}$. This is a locally closed substack of $\Sht_{G}$ of finite type classifying Shtukas $(\calE_{i};x_{i};f_{i})$ with $\inst(\calE_{i})=d(i)$ for all $i$. A priori we could define $\Sht^{d}_{G}$ for any function $d:\ZZ/r\ZZ\to\ZZ$; however, only for those $d\in\calD$ is $\Sht^{d}_{G}$ nonempty,  because for $(\calE_{i};x_{i};f_{i})\in\Sht^{\mu}_{2}$, $\inst(\calE_{i})=\inst(\calE_{i-1})\pm1$. The locally closed substacks  $\{\Sht^{d}_{G}\}_{d\in\calD}$ give a stratification of $\Sht_{G}$. 		 \index{$\Sht^{d}_{G}$}%

\subsubsection{Cohomology of $\Sht_{G}$}
Let $\pi^{\leq d}_{G}:\Sht^{\leq d}_{G}\to X^{r}$ be the restriction of $\pi_{G}$, and similarly define $\pi^{<d}_{G}$ and $\pi^{d}_{G}$. \index{$\pi^{\leq d}_{G},\pi^{<d}_{G},\pi^{d}_{G}$}%
For $d\leq d'\in\calD$ we have a map induced by the open inclusion $\Sht^{\leq d}_{G}\incl\Sht^{\leq d'}_{G}$:
\begin{equation*}
\iota_{d,d'}:\bR \pi^{\leq d}_{G,!}\Ql\to \bR \pi^{\leq d'}_{G,!}\Ql
\end{equation*} 

The total cohomology $\cohoc{*}{\Sht_{G}\otimes_{k}\kbar}$ is defined as the inductive limit
\begin{equation*}
\cohoc{*}{\Sht_{G}\otimes_{k}\kbar}:=\varinjlim_{d\in\calD}\cohoc{*}{\Sht^{\leq d}_{G}\otimes_{k}\kbar}=\varinjlim_{d\in\calD}\cohog{*}{X^{r}\otimes_{k}\kbar,\bR \pi^{\leq d}_{G,!}\Ql}.
\end{equation*}

\subsubsection{The action of Hecke algebra on the cohomology of $\Sht_{G}$} For each effective divisor $D$ of $X$, we have defined in \S\ref{sss:Sht hD} a self-correspondence $\Sht_{G}(h_{D})$ of $\Sht_{G}$ over $X^{r}$. 

For  any $d\in\calD$, let $\leftexp{\leq d}{\Sht}_{G}(h_{D})\subset \Sht_{G}(h_{D})$ be the preimage of $\Sht^{\leq d}_{G}$ under $\oll{p}$. \index{$\leftexp{\leq d}{\Sht}_{G}(h_{D})$}%
For a point $(\CE_{i}\incl\CE'_{i})$ of $\leftexp{\leq d}{\Sht}_{G}(h_{D})$, we have $\inst(\CE_{i})\leq d(i)$, hence $\inst(\CE'_{i})\leq d(i)+\deg D$. Therefore the image of $\leftexp{\leq d}{\Sht}_{G}(h_{D})$ under $\orr{p}$ lies in $\Sht^{\leq d+\deg D}_{G}$. For any $d'\geq d+\deg D$, we may view $\leftexp{\leq d}{\Sht}_{G}(h_{D})$ as a correspondence between $\Sht^{\leq d}_{G}$ and $\Sht^{\leq d'}_{G}$ over $X^{r}$. By Lemma \ref{l:dim hD}, $\dim \Sht_{G}(h_{D})=\dim \Sht_{G}=2r$, the fundamental cycle of $\leftexp{\leq d}{\Sht}_{G}(h_{D})$ gives a cohomological correspondence between the constant sheaf on $\Sht^{\leq d}_{G}$ and the constant sheaf on $\Sht^{\leq d'}_{G}$ (see \S\ref{sss:corr}), and induces a map
\begin{equation}\label{Hk d d'}
C(h_{D})_{d,d'}:\bR\pi^{\leq d}_{G,!}\Ql \to \bR\pi^{\leq d'}_{G,!}\Ql.		\index{$C(h_D), C(h_{D})_{d,d'}$}%
\end{equation}
Here we are using the fact that $\leftexp{\leq d}{\oll{p}}:\leftexp{\leq d}{\Sht}_{G}(h_{D})\to\Sht^{\leq d}_{G}$ is proper (which is necessary for the construction \eqref{push coho corr}), which follows from the properness of $\oll{p}: \Sht_{G}(h_{D})\to\Sht_{G}$ by Lemma \ref{l:Sht hD proper}. 

For any $e\geq d$ and $e'\geq e+\deg D$ and $e'\geq d'$, we have a commutative diagram
\begin{equation*}
\xymatrix{\bR\pi^{\leq d}_{G,!}\Ql\ar[rr]^{C(h_{D})_{d,d'}}\ar[d]^{\iota_{d,e}}&& \bR\pi^{\leq d'}_{G,!}\Ql\ar[d]^{\iota_{d',e'}}\\
\bR\pi^{\leq e}_{G,!}\Ql\ar[rr]^{C(h_{D})_{e,e'}} && \bR\pi^{\leq e'}_{G,!}\Ql}
\end{equation*}
which follows from the definition of cohomological correspondences. Taking $\cohog{*}{X^{r}\otimes_{k}\kbar,-}$ and taking inductive limit over $d$ and $e$, we get an endomorphism of $\cohoc{*}{\Sht_{G}\otimes_{k}\kbar}$
\begin{eqnarray*}
C(h_{D})&:& \cohoc{*}{\Sht_{G}\otimes_{k}\kbar}=\varinjlim_{d\in\calD}\cohog{*}{X^{r}\otimes_{k}\kbar,\bR \pi^{\leq d}_{G,!}\Ql}\\
&&\xrightarrow{\varinjlim C(h_{D})_{d,d'}}\varinjlim_{d'\in\calD}\cohog{*}{X^{r}\otimes_{k}\kbar,\bR \pi^{\leq d'}_{G,!}\Ql}=\cohoc{*}{\Sht_{G}\otimes_{k}\kbar}.		\index{$C(h_D), C(h_{D})_{d,d'}$}%
\end{eqnarray*}

The following result is a cohomological analog of Proposition \ref{p:Hecke action on Chow}.
\begin{prop} The assignment $h_{D}\mapsto C(h_{D})$ gives a ring homomorphism for each $i\in\ZZ$
\begin{equation*}
C:\sH\to\End(\cohoc{i}{\Sht_{G}\otimes_{k}\kbar}).
\end{equation*}
\end{prop}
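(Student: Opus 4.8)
The plan is to reduce the claim to a finite-level statement, where it becomes an instance of the standard fact that composition of cohomological correspondences corresponds to multiplication in the Hecke algebra, and then to pass to the limit over the truncation parameters $d\in\calD$. The first step is to observe that, since the $h_{D}$ (for $D$ running over effective divisors of $X$) form a $\QQ$-basis of $\sH$, it suffices to check $C(h_{D})\circ C(h_{D'})=C(h_{D}h_{D'})$ as endomorphisms of $\cohoc{i}{\Sht_{G}\otimes_{k}\kbar}$ for any two effective divisors $D,D'$. Write $h_{D}h_{D'}=\sum_{E}c_{E}h_{E}$, where $E$ runs over effective divisors with $E\leq D+D'$ and $D+D'-E$ having even coefficients; in particular every such $E$ is supported on $|D|\cup|D'|$. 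As in the proof of Proposition \ref{p:Hecke action on Chow}, the composed correspondence $\Sht_{G}(h_{D})\times_{\orr{p},\Sht_{G},\oll{p}}\Sht_{G}(h_{D'})$ maps to $\Sht_{G}\times\Sht_{G}$ with image contained in $\Sht_{G}(h_{D+D'})$, and both sides of the desired identity are realized by cycles supported on (truncations of) $Z:=\Sht_{G}(h_{D+D'})$.

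Next I would pin down the truncation bookkeeping. Fix $d\in\calD$ and set $d_{1}=d+\deg D$, $d_{2}=d_{1}+\deg D'=d+\deg D+\deg D'$. Then $C(h_{D})_{d,d_{1}}$, $C(h_{D'})_{d_{1},d_{2}}$ and $C(h_{D}h_{D'})_{d,d_{2}}=\sum_{E}c_{E}\,C(h_{E})_{d,d_{2}}$ are all defined by fundamental cycles of the appropriate preimages $\leftexp{\leq d}{\Sht}_{G}(-)$ viewed as correspondences between $\Sht^{\leq d}_{G}$ and $\Sht^{\leq d_{2}}_{G}$ over $X^{r}$ (using that $\deg E\leq\deg D+\deg D'$, so the image under $\orr{p}$ still lands in $\Sht^{\leq d_{2}}_{G}$). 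The key point is that the composition of cohomological correspondences (in the sense of \S\ref{sss:corr}, i.e. the formalism recalled in Appendix \ref{s:int}) is compatible with composition of the underlying algebraic correspondences in the derived sense: the composite of the cohomological correspondence attached to $[\leftexp{\leq d}{\Sht}_{G}(h_{D})]$ with that attached to $[\leftexp{\leq d_{1}}{\Sht}_{G}(h_{D'})]$ is the cohomological correspondence attached to the refined intersection product of the two cycles over $\Sht^{\leq d_{1}}_{G}$. Over the open locus $U^{r}$ with $U=X-|D|-|D'|$, all the projections $\oll{p},\orr{p}$ are finite \'etale, and the identity $C(h_{D})*C(h_{D'})=C(h_{D}h_{D'})$ on the finite-level complexes restricted to $U^{r}$ is the classical unramified Hecke-algebra computation (this is exactly the classical ``Hecke operators multiply as in $\sH$'' statement, cf. \cite{VL}). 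Since $Z-Z|_{U^{r}}$ has dimension strictly less than $2r$ by Lemma \ref{l:dim hD} applied to $\Sht_{G}(h_{D+D'})$, the cycle-class map $\hBM{4r}{Z\otimes_{k}\kbar}\to\hBM{4r}{(Z|_{U^{r}})\otimes_{k}\kbar}$ is injective (actually an isomorphism), so the equality of cohomological correspondences over $U^{r}$ forces the equality over all of $X^{r}$: we obtain
\begin{equation*}
C(h_{D})_{d_{1},d_{2}}\circ C(h_{D})_{d,d_{1}}=\sum_{E}c_{E}\,C(h_{E})_{d,d_{2}}
\end{equation*}
as maps $\bR\pi^{\leq d}_{G,!}\Ql\to\bR\pi^{\leq d_{2}}_{G,!}\Ql$. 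Applying $\cohog{*}{X^{r}\otimes_{k}\kbar,-}$ and then passing to the inductive limit over $d\in\calD$ (the compatibility of the $C(h_{D})_{d,d'}$ with the open-restriction maps $\iota_{d,e}$ has already been noted in the text), we conclude that $C(h_{D})\circ C(h_{D'})=C(h_{D}h_{D'})$ on $\cohoc{i}{\Sht_{G}\otimes_{k}\kbar}$. Linearity of $C$ is clear from the construction, and $C$ sends the unit $h_{0}=\one_{K}$ to the identity (the correspondence $\Sht_{G}(h_{0})$ is the diagonal), so $C$ is a ring homomorphism.

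The main obstacle I anticipate is the compatibility of the composition of cohomological correspondences with refined intersection of cycles at the level of the truncated stacks $\Sht^{\leq d}_{G}$ — i.e. justifying that $(\oll{p}\times\orr{p})_{*}$ of the refined product cycle represents the composite cohomological correspondence. This is where one must be careful that $\Sht^{\leq d}_{G}$ is only a Deligne--Mumford stack of finite type (not proper), that the relevant maps $\leftexp{\leq d}{\oll{p}}$ are proper (which holds by Lemma \ref{l:Sht hD proper}), and that the smoothness of $\pi^{\leq d}_{G}$ over $X^{r}$ (Corollary \ref{c:ShtG smooth}) lets one identify constant sheaves with (shifted) dualizing complexes so that the cohomological-correspondence formalism of Appendix \ref{s:int} applies. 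Once this compatibility is in hand, the rest is the dimension estimate of Lemma \ref{l:dim hD} plus the classical finite-\'etale Hecke computation, exactly parallel to Proposition \ref{p:Hecke action on Chow}.
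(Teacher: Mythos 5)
Your proposal is correct and follows essentially the same route as the paper's proof: truncate by $d\in\calD$, realize the composite as a Borel--Moore class supported on the composed correspondence (equivalently on $\leftexp{\leq d}{\Sht}_{G}(h_{D+D'})$ after pushing forward), identify it with the class for $h_{D}h_{D'}$ over $U^{r}$ where the correspondences are finite \'etale, conclude globally from the dimension bound of Lemma \ref{l:dim hD} and injectivity of restriction on top-degree Borel--Moore homology, and then pass to the inductive limit. (Only a cosmetic slip: in your displayed identity one factor should read $C(h_{D'})$.)
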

\begin{proof}
The argument is similar to that of Proposition \ref{p:Hecke action on Chow}, for this reason we only give a sketch here. For two effective divisors $D$ and $D'$, we need to check that the action of $C(h_{D}h_{D'})$ is the same as the composition $C(h_{D})\circ C(h_{D'})$. 

Let $d,d^{\dagger}$ and $d'\in\calD$ satisfy $d^{\dagger}\geq d+\deg D'$ and $d'\geq d^{\dagger}+\deg D$, then the map
\begin{equation*}
C(h_{D})_{d^{\dagger},d'}\circ C(h_{D'})_{d,d^{\dagger}}:\bR\pi^{\leq d}_{G,!}\Ql\to \bR\pi^{\leq d^{\dagger}}_{G,!}\Ql\to\bR\pi^{\leq d'}_{G,!}\Ql
\end{equation*}
is induced from a cohomological correspondence $\zeta$ between the constant sheaves on $\Sht^{\leq d}_{G}$ and on $\Sht^{\leq d'}_{G}$ supported on $\leftexp{\leq d}{\Sht}_{G}(h_{D})*\leftexp{\leq d^{\dagger}}{\Sht}_{G}(h_{D'}):=\leftexp{\leq d}{\Sht}_{G}(h_{D})\times_{\orr{p},\Sht_{G},\oll{p}}\leftexp{\leq d^{\dagger}}{\Sht}_{G}(h_{D'})$; i.e., $\zeta\in\hBM{4r}{\leftexp{\leq d}{\Sht}_{G}(h_{D})*\leftexp{\leq d^{\dagger}}{\Sht}_{G}(h_{D'})\otimes_{k}\kbar}$.

On the other hand, the Hecke function $h_{D}h_{D'}$ is a linear combination of $h_{E}$ where $E\leq D+D'$ and $D+D'-E$ is even. Since $d\in\calD$ and $d'\geq d+\deg D+\deg D'$,  the map
\begin{equation*}
C(h_{D}h_{D'})_{d,d'}:\bR\pi^{\leq d}_{G,!}\Ql\to \bR\pi^{\leq d'}_{G,!}\Ql
\end{equation*}
is induced from a cohomological correspondence $\xi$ between the constant sheaves on $\Sht^{\leq d}_{G}$ and on $\Sht^{\leq d'}_{G}$ supported on the union of $\leftexp{\leq d}{\Sht}_{G}(h_{E})$ for $E\leq D+D'$ and $D+D'-E$ even,  i.e., supported on $\leftexp{\leq d}{\Sht}_{G}(h_{D+D'})$. In other words, $\xi\in\hBM{4r}{\leftexp{\leq d}{\Sht}_{G}(h_{D+D'})\otimes_{k}\kbar}$. 

There is a proper map of correspondences $\theta: \leftexp{\leq d}{\Sht}_{G}(h_{D})*\leftexp{\leq d^{\dagger}}{\Sht}_{G}(h_{D'})\to \leftexp{\leq d}{\Sht}_{G}(h_{D+D'})$, and the action of $C(h_{D})_{d^{\dagger},d'}\circ \,C(h_{D'})_{d,d^{\dagger}}$ is also induced from the class $\theta_{*}\zeta\in \hBM{4r}{\leftexp{\leq d}{\Sht}_{G}(h_{D+D'})\otimes_{k}\kbar}$, viewed as a cohomological correspondence supported on $\leftexp{\leq d}{\Sht}_{G}(h_{D+D'})$. Let $U=X-|D|-|D'|$. It is easy to check that $\xi|_{U^{r}}=\theta_{*}\zeta|_{U^{r}}$ using that, over $U^{r}$, the correspondences $\Sht(h_{D}), \Sht(h_{D'})$ and $\Sht(h_{D+D'})$ are finite \'etale over $\Sht_{G}$. By Lemma \ref{l:dim hD}, $\leftexp{\leq d}{\Sht}_{G}(h_{D+D'})-\leftexp{\leq d}{\Sht}_{G}(h_{D+D'})|_{U^{r}}$ has dimension $<2r$, therefore $\xi=\theta_{*}\zeta$ holds as elements in $\hBM{4r}{\leftexp{\leq d}{\Sht}_{G}(h_{D+D'})\otimes_{k}\kbar}$, and hence $C(h_{D}h_{D'})_{d,d'}=C(h_{D})_{d^{\dagger},d'}\circ C(h_{D'})_{d,d^{\dagger}}$. Applying $\cohog{*}{X^{r}\otimes_{k}\kbar, -}$ and taking inductive limit over $d$ and $d'$, we see that $C(h_{D}h_{D'})=C(h_{D})\circ C(h_{D'})$ as endomorphisms of $\cohoc{*}{\Sht_{G}\otimes_{k}\kbar}$.
\end{proof}

\subsubsection{Notation} For $\alpha\in\cohoc{*}{\Sht_{G}\otimes_{k}\kbar}$ and $f\in\sH$, we denote the action of $C(f)$ on $\alpha$ simply by $f*\alpha\in \cohoc{*}{\Sht_{G}\otimes_{k}\kbar}$.

\subsubsection{}
Cup product gives a symmetric bilinear pairing on $\cohoc{*}{\Sht_{G}\otimes_{k}\kbar}$
\begin{equation*}
(-,-): \cohoc{i}{\Sht_{G}\otimes_{k}\kbar}\times \cohoc{4r-i}{\Sht_{G}\otimes_{k}\kbar}\to \cohoc{4r}{\Sht_{G}\otimes_{k}\kbar}\cong\Ql(-2r).
\end{equation*}

We have a cohomological analog of Lemma \ref{l:Chow self adj}.
\begin{lemma}\label{l:coho self adj} The action of any $f\in\sH$ on $\cohoc{*}{\Sht_{G}\otimes_{k}\kbar}$ is self-adjoint with respect to the cup product pairing.
\end{lemma}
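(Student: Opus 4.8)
\textbf{Proof plan for Lemma \ref{l:coho self adj}.} The strategy mirrors the proof of Lemma \ref{l:Chow self adj}: it suffices to prove self-adjointness for the generators $h_D$ of $\sH$, for all effective divisors $D$. The key point is that the construction of the Hecke correspondence $\Sht_G(h_D)$ admits an involution $\tau$ that swaps the two legs $\oll{p}$ and $\orr{p}$, as was already established in the course of proving Lemma \ref{l:Chow self adj}. Concretely, recall that $\tau: \Sht^{\mu}_G(h_D)\isom\Sht^{-\mu}_G(h_D)\cong\Sht^{\mu}_G(h_D)$ fits into the commutative diagram \eqref{trans Sht}, so that $(\oll{p},\orr{p})\circ\tau = \sigma_{12}\circ(\oll{p},\orr{p})$ where $\sigma_{12}$ flips the two factors of $\Sht^r_G\times\Sht^r_G$. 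This is a purely geometric statement and is available verbatim here.

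First I would set up the adjointness statement cohomologically. For $\alpha\in\cohoc{i}{\Sht_G\otimes_k\kbar}$ and $\beta\in\cohoc{4r-i}{\Sht_G\otimes_k\kbar}$, choose a truncation level $d\in\calD$ with $\alpha$ represented in $\cohog{*}{X^r\otimes_k\kbar,\bR\pi^{\leq d}_{G,!}\Ql}$, pick $d'\geq d+\deg D$, and express $(h_D*\alpha,\beta)$ as the pushforward to a point of a product of cycle classes inside $\Sht^{\leq d}_G\times\Sht^{\leq d'}_G$: namely, the intersection of $\alpha\boxtimes\beta$ with the class $(\oll{p},\orr{p})_*[\leftexp{\leq d}{\Sht}_G(h_D)]$, using the projection formula and the fact that $\leftexp{\leq d}{\oll{p}}$ is proper (Lemma \ref{l:Sht hD proper}). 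This is the cohomological analog of the Chow-theoretic identity at the start of the proof of Lemma \ref{l:Chow self adj}; care is needed because one must work with the open substacks $\Sht^{\leq d}_G$ and $\Sht^{\leq d'}_G$ rather than the ambient stack, but since all the relevant correspondences are proper over the source and the truncations are compatible, the formula goes through after passing to the inductive limit over $d, d'$.

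Next I would apply the involution. The involution $\tau$ restricts to an involution on the truncated correspondence $\leftexp{\leq d}{\Sht}_G(h_D)$ only after adjusting truncation parameters — $\tau$ interchanges the roles of $\CE_i$ and $\CE'_i$ (via dualization) and these have different indices of instability — but by choosing $d'$ large enough (e.g. $d'\geq d+\deg D$ and symmetrically $d\geq d'+\deg D$ is impossible, so one instead works with a common bound and uses that cup product is computed in the limit) one gets a commutative diagram of correspondences between $\Sht^{\leq d}_G$ and $\Sht^{\leq d'}_G$ compatible with $\tau$ and $\sigma_{12}$. Applying $\tau_*$ to the cycle and $\sigma_{12}$ to the product, together with the fact that $\delta=\id$ on $\Sht^{\mu}_G$ (established in the proof of Lemma \ref{l:Chow self adj}), yields $(h_D*\alpha,\beta)=(h_D*\beta,\alpha)=(\alpha,h_D*\beta)$, the desired self-adjointness. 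Extending linearly over the basis $\{h_D\}$ of $\sH$ gives the claim for all $f\in\sH$.

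The main obstacle I anticipate is the bookkeeping of truncation levels: the dualization involution $\tau$ does not preserve any single $\Sht^{\leq d}_G$, so one cannot directly transport the cohomological correspondence $C(h_D)_{d,d'}$ along $\tau$. The clean way around this is to observe that the cup-product pairing and the Hecke action are both defined on the inductive limit $\cohoc{*}{\Sht_G\otimes_k\kbar}$, where the truncation indices become irrelevant; one then argues that $\tau$ induces an automorphism of this limit (it is a globally defined automorphism of $\Sht^r_G(h_D)$, even though it shuffles the truncation filtration), under which the two cohomological correspondences $C(h_D)_{d,d'}$ and its transpose become identified. Once this identification at the level of the limit is in place, the adjointness is a formal consequence of the $\sigma_{12}$-symmetry of the cup product, exactly as in Lemma \ref{l:Chow self adj}.
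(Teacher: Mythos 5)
Your proposal is correct and follows essentially the same route as the paper: reduce to the generators $h_D$, rewrite both pairings as the pairing of (pullbacks of) $\alpha$ and $\beta$ against the fundamental class of the correspondence $\Sht_G(h_D)$, and then invoke the involution $\tau$ from the proof of Lemma \ref{l:Chow self adj} that switches $\oll{p}$ and $\orr{p}$. The truncation bookkeeping you worry about is real but minor, and the paper handles it exactly as you suggest — implicitly working on the inductive limit $\cohoc{*}{\Sht_G\otimes_k\kbar}$, where $\tau$ is a globally defined automorphism of $\Sht^r_G(h_D)$ and the pairing against $\oll{p}^*\alpha\cup\orr{p}^*\beta$ makes sense because $\oll{p}$ is proper.
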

\begin{proof}
Since $\{h_{D}\}$ span $\sH$, it suffices to show that the action of $h_{D}$ is self-adjoint. From the construction of the endomorphism $C(h_{D})$ of $\cohoc{i}{\Sht_{G}\otimes_{k}\kbar}$, we see that for $\alpha\in\cohoc{i}{\Sht_{G}\otimes_{k}\kbar}$ and $\beta\in \cohoc{4r-i}{\Sht_{G}\otimes_{k}\kbar}$, the pairing $(h_{D}*\alpha,\beta)$ is the same as the pairing $([\Sht_{G}(h_{D})], \oll{p}^{*}\alpha\cup\orr{p}^{*}\beta)$ (i.e, the pairing of  $\oll{p}^{*}\alpha\cup\orr{p}^{*}\beta\in\cohoc{4r}{\Sht_{G}(h_{D})\otimes_{k}\kbar}$ with the fundamental class of $\Sht_{G}(h_{D})$). Similarly, $(\alpha,h_{D}*\beta)$ is the pairing $([\Sht_{G}(h_{D})], \oll{p}^{*}\beta\cup\orr{p}^{*}\alpha)$. Applying the involution $\tau$ on $\Sht_{G}(h_{D})$ constructed in the proof of Lemma \ref{l:Chow self adj} that switches the two projections $\oll{p}$ and $\orr{p}$, we get
\begin{equation*}
\Big([\Sht_{G}(h_{D})], \oll{p}^{*}\alpha\cup\orr{p}^{*}\beta\Big)=\Big([\Sht_{G}(h_{D})], \oll{p}^{*}\beta\cup\orr{p}^{*}\alpha\Big)
\end{equation*}
which is equivalent to the self-adjointness of $h_{D}$: $(h_{D}*\alpha,\beta)=(\alpha,h_{D}*\beta)$.
\end{proof}

\subsubsection{} The cycle class map gives a $\QQ$-linear map (see \S\ref{sss:cycle class map})
\begin{equation*}
\cl: \Ch_{c,i}(\Sht_{G})_{\QQ}\to \cohoc{4r-2i}{\Sht_{G}\otimes_{k}\kbar}(2r-i). 		\index{$\cl$, cycle class}%
\end{equation*}

\begin{lemma} The map $\cl$ is $\sH$-equivariant for any $i$.
\end{lemma}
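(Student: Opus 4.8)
The claim is that the cycle class map $\cl\colon\Ch_{c,i}(\Sht_G)_\QQ\to\cohoc{4r-2i}{\Sht_G\otimes_k\kbar}(2r-i)$ intertwines the $\sH$-action defined on Chow groups (via the correspondences $(\oll p,\orr p)_*[\Sht_G(h_D)]$, see Proposition \ref{p:Hecke action on Chow}) with the $\sH$-action on compactly supported cohomology (via the cohomological correspondences $C(h_D)$ defined above). Since the functions $h_D$, $D$ running over effective divisors, form a $\QQ$-basis of $\sH$, it suffices to prove $\cl(h_D*\zeta)=h_D*\cl(\zeta)$ for every effective divisor $D$ and every $\zeta\in\Ch_{c,i}(\Sht_G)_\QQ$. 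Because a proper cycle on $\Sht_G$ is supported on some finite-type open $\Sht_G^{\le d}$ (and, after applying $\orr p$ to the self-correspondence, its Hecke translate lands in $\Sht_G^{\le d'}$ for $d'\ge d+\deg D$), I would fix such truncation parameters $d\le d'$ at the outset and carry out the entire comparison at the level of the finite-type stacks $\leftexp{\le d}{\Sht}_G(h_D)$, $\Sht_G^{\le d}$, $\Sht_G^{\le d'}$, then pass to the colimit over $\calD$ at the very end. This reduces the statement to a compatibility between refined Gysin pushforward/pullback of cycles and the corresponding operations on cohomology.

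The key step is to unwind both actions through the same geometric correspondence and invoke the compatibility of the cycle class map with the operations defining them. On the Chow side, $h_D*\zeta$ is by definition $\orr p_*\big((\oll p)^!\zeta\big)$ computed on the correspondence $C:=\leftexp{\le d}{\Sht}_G(h_D)$, using that $\oll p$ is representable proper (Lemma \ref{l:Sht hD proper}) and the refined Gysin map for $\oll p$ exists (as in the setup of \S\ref{sss:corr operator} / Appendix \ref{s:int}), and that $\dim C=2r$ so $[C]$ is the fundamental cycle in the expected degree (Lemma \ref{l:dim hD}). On the cohomology side, $C(h_D)_{d,d'}$ is induced by the cohomological correspondence attached to the fundamental class $[C]\in\hBM{4r}{C\otimes_k\kbar}$ via the construction recalled in \S\ref{sss:corr}: one pulls back along $\oll p^*$, cups with $\cl([C])$, and pushes forward along $\orr p_!$. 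The plan is then to cite the standard compatibilities (collected in Appendix \ref{s:int}, following Kresch and the references to Varshavsky's work): (i) $\cl$ commutes with proper pushforward, i.e.\ $\cl(\orr p_*\eta)=\orr p_!\cl(\eta)$; (ii) $\cl$ commutes with refined Gysin pullback along a regular local immersion (here $\oll p$, or rather the graph data presenting the operation $\oll p^!$), i.e.\ $\cl(\oll p^!\zeta)=\oll p^*\cl(\zeta)$ up to the appropriate Tate twist; and (iii) the cycle class of the correspondence $[C]$ is exactly the cohomology class $\cl([C])$ used to define $C(h_D)$. Chaining (i)--(iii) gives
\[
\cl(h_D*\zeta)=\cl\big(\orr p_*(\oll p^!\zeta)\big)=\orr p_!\big(\cl(\oll p^!\zeta)\big)=\orr p_!\big(\oll p^*\cl(\zeta)\cup\cl([C])\big)=C(h_D)_{d,d'}\big(\cl(\zeta)\big)=h_D*\cl(\zeta),
\]
all in $\cohoc{4r-2i}{\Sht_G^{\le d'}\otimes_k\kbar}$; taking the colimit over $d'\in\calD$ yields the asserted equality in $\cohoc{4r-2i}{\Sht_G\otimes_k\kbar}$, and the equivariance follows since $\{h_D\}$ spans $\sH$.

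The main obstacle is bookkeeping rather than substance: one must make sure that the ``abstract'' Hecke action on Chow groups via $\cCh_{2r}(\Sht_G\times\Sht_G)_\QQ$ (Proposition \ref{p:Hecke action on Chow}, \S\ref{sss:H action on Chow}) really is computed by $\orr p_*\circ(\oll p)^!$ on the correspondence $C$ in the precise normalization used, and that the cohomological action $C(h_D)$ is likewise the ``push-pull-cup'' operation with the same orientation and Tate twists — the degree shifts $(2r-i)$ and the twist $(-2r)$ in the cup-product pairing must be tracked so that the identity lands in the right cohomology group. A secondary point is the properness hypotheses: $\oll p$ (hence $\leftexp{\le d}{\oll p}$) is proper by Lemma \ref{l:Sht hD proper}, which is exactly what makes both the pushforward on cycles and the $\orr p_!$-pushforward on cohomology well-defined and compatible; one should note this explicitly. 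Once these normalizations are fixed, the argument is a formal consequence of the functoriality statements proved in Appendix \ref{s:int}, so the proof can be quite short.
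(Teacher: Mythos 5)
Your proposal is correct and follows essentially the same route as the paper: reduce to the basis elements $h_D$, unwind both actions through the correspondence $\Sht_G(h_D)$ (Chow side as push-pull-intersect, cohomology side as the push-forward of the cohomological correspondence given by $[\Sht_G(h_D)]$), and conclude by the standard compatibilities of $\cl$ with proper pushforward, pullback, and cap product with the fundamental class, under Poincaré duality. The extra truncation bookkeeping you add is consistent with how $C(h_D)$ is defined and does not change the argument.
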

\begin{proof}
Since $\{h_{D}\}$ span $\sH$, it suffices to show that $\cl$ intertwines the actions of  $h_{D}$ on $\Ch_{c,i}(\Sht_{G})$ and on $\cohoc{4r-2i}{\Sht_{G}\otimes_{k}\kbar}(2r-i)$. Let $\zeta\in\Ch_{c,i}(\Sht_{G})$. By the definition of the $h_{D}$-action on $\Ch_{c,i}(\Sht_{G})$, $h_{D}*\zeta\in\Ch_{c,i}(\Sht_{G})$ is $\pr_{2*}((\pr^{*}_{1}\zeta)\cdot_{\Sht_{G}\times\Sht_{G}}(\oll{p},\orr{p})_{*}[\Sht_{G}(h_{D})])$. Taking its cycle class we get that $\cl(h_{D}*\zeta)\in\cohoc{4r-2i}{\Sht_{G}\otimes_{k}\kbar}(2r-i)$ can be identified with the class
\begin{equation*}
\orr{p}_{*}(\oll{p}^{*}\cl(\zeta)\cap [\Sht_{G}(h_{D})])\in \homog{2i}{\Sht_{G}\otimes_{k}\kbar}(-i)
\end{equation*}
under the Poincar\'e duality isomorphism $\cohoc{4r-2i}{\Sht_{G}\otimes_{k}\kbar}\cong\homog{2i}{\Sht_{G}\otimes_{k}\kbar}(-2r)$.

On the other hand, by \eqref{push coho corr}, the action of $h_{D}$ on $\cohoc{4r-2i}{\Sht_{G}\otimes_{k}\kbar}$ is the composition
\begin{eqnarray*}
&&\cohoc{4r-2i}{\Sht_{G}\otimes_{k}\kbar}\xrightarrow{\oll{p}^{*}}\cohoc{4r-2i}{\Sht_{G}(h_{D})\otimes_{k}\kbar}\xrightarrow{\cap[\Sht_{G}(h_{D})]}\\
&&\homog{2i}{\Sht_{G}(h_{D})\otimes_{k}\kbar}(-2r)\xrightarrow{\orr{p}_{*}}\homog{2i}{\Sht_{G}\otimes_{k}\kbar}(-2r)\cong\cohoc{4r-2i}{\Sht_{G}\otimes_{k}\kbar}.
\end{eqnarray*}
Therefore we have $\cl(h_{D}*\zeta)=h_{D}*\cl(\zeta)$.
\end{proof}

\subsubsection{} We are most interested in the middle dimensional cohomology
\begin{equation*}
V_{\Ql}:=\cohoc{2r}{\Sht_{G}\otimes_{k}\kbar,\Ql}(r).
\end{equation*}
This is a $\Ql$-vector space with an action of $\sH$. In the sequel, we simply write $V$ for $V_{\Ql}$.

For the purpose of proving our main theorems, it is the cohomology of $\Sht'_{G}$ rather than $\Sht_{G}$ that matters. However, for most of this section, we will study $V$. The main result in this section (Theorem \ref{th:Spec decomp}) provides a decomposition of $V$ into a direct sum of two $\sH$-modules, an infinite-dimensional one called the Eisenstein part and a finite-dimensional complement. The same result holds when $\Sht_{G}$ is replaced by $\Sht'_{G}$ with the same proof. We will only state the corresponding result for $\Sht'_{G}$ in the final subsection \S\ref{ss:decomp HD cycle} and use it to decompose the Heegner-Drinfeld cycle.

\subsection{Study of horocycles}
Let $B\subset G$ be a Borel subgroup with quotient torus $H\cong\Gm$. \index{$B, H$}%
We think of $H$ as the universal Cartan of $G$, which is to be distinguished with the subgroup $A$ of $G$. We shall define horocycles in $\Sht_{G}$ corresponding to $B$-Shtukas.

\subsubsection{$\Bun_{B}$} Let $\wt{B}\subset \GL_{2}$ be the preimage of $B$. Then $\Bun_{\tB}$ classifies pairs $(\CL\incl\CE)$ where $\CE$ is a rank two vector bundle over $X$ and $\CL$ is a line subbundle of it. We have $\Bun_{B}=\Bun_{\tB}/\Pic_{X}$ \index{$\Bun_{B}, \Bun_{\tB}$}%
where $\Pic_{X}$ acts by simultaneous tensoring on $\CE$ and on $\CL$. We have a decomposition
\begin{equation*}
\Bun_{B}=\coprod_{n\in\ZZ}\Bun^{n}_{B}
\end{equation*}
where $\Bun^{n}_{B}=\Bun^{n}_{\tB}/\Pic_{X}$, and $\Bun^{n}_{\tB}$ is the open and closed substack of $\Bun_{\tB}$ classifying those $(\CL\incl\CE)$ such that $2\deg\CL-\deg\CE=n$.

\subsubsection{Hecke stack for $\tB$}
Fix $d\in\calD$. Choose any $\mu$ as in \S\ref{sss:mu}. Consider the moduli stack $\Hk^{\mu,d}_{\wt{B}}$ whose $S$-points classify the  data $(\CL_{i}\incl\CE_{i}; x_{i};f_{i})$ where
\begin{enumerate}
\item A point $(\CE_{i};x_{i}; f_{i})\in\Hk^{\mu}_{2}(S)$.
\item For each $i=0,\cdots, r$, $(\CL_{i}\incl\CE_{i})\in\Bun^{d(i)}_{\tB}$ such that the isomorphism $f_{i}: \CE_{i-1}|_{X\times S-\Gamma_{x_{i}}}\cong \CE_{i}|_{X\times S-\Gamma_{x_{i}}}$ restricts to an isomorphism $\alpha'_{i}: \CL_{i-1}|_{X\times S-\Gamma_{x_{i}}}\cong \CL_{i}|_{X\times S-\Gamma_{x_{i}}}$. \index{$\Hk^{\mu,d}_{\wt{B}}$}%
\end{enumerate}
We have $(r+1)$ maps $p_{i}:\Hk^{\mu,d}_{\tB}\to \Bun^{d(i)}_{\tB}$ by sending the above data to $(\CL_{i}\incl \CE_{i})$, $i=0,1,\cdots, r$. We define $\Sht^{\mu,d}_{\tB}$ by the Cartesian diagram
\begin{equation}\label{define ShtB}
\xymatrix{\Sht^{\mu,d}_{\tB}\ar[rr]\ar[d] && \Hk^{\mu,d}_{\tB}\ar[d]^{(p_{0},p_{r})}\\
\Bun^{d(0)}_{\tB}\ar[rr]^{(\id,\Fr)} && \Bun^{d(0)}_{\tB}\times\Bun^{d(0)}_{\tB}}				\index{$\Sht^{\mu,d}_{\wt{B}}$}%
\end{equation} 
In other words, $\Sht^{\mu,d}_{\tB}$ classifies $(\CL_{i}\incl\CE_{i};x_{i};f_{i};\iota)$, where $(\CL_{i}\incl\CE_{i};x_{i};f_{i})$ is a point in $\Hk^{\mu,d}_{\tB}$ and $\iota$ is an isomorphism $\CE_{r}\cong\leftexp{\tau}{\CE_{0}}$ sending $\CL_{r}$ isomorphically to $\leftexp{\tau}{\CL_{0}}$.

We may summarize the data classified by $\Sht^{\mu,d}_{\tB}$ as a commutative diagram
\begin{equation}\label{ShtB}
\xymatrix{0\ar[r] & \calL_{0}\ar@{-->}[d]^{\alpha'_{1}}\ar[r] & \calE_{0}\ar@{-->}[d]^{f_{1}}\ar[r] & \calM_{0}\ar@{-->}[d]^{\alpha''_{1}}\ar[r] & 0\\
&\cdots\ar@{-->}[d]^{\alpha'_{r}}&\cdots\ar@{-->}[d]^{f_{r}}&\cdots\ar@{-->}[d]^{\alpha''_{r}} \\
0\ar[r] & \calL_{r}\ar[d]^{\wr}_{\iota'}\ar[r] & \calE_{r}\ar[d]_{\iota}^{\wr}\ar[r] & \calM_{r}\ar[d]^{\wr}_{\iota''}\ar[r] & 0\\
0\ar[r] & \leftexp{\tau}{\calL}_{0}\ar[r] & \leftexp{\tau}{\calE}_{0}\ar[r] & \leftexp{\tau}{\calM}_{0}\ar[r] & 0}
\end{equation}
Here we denote the quotient line bundle $\CE_{i}/\CL_{i}$ by $\CM_{i}$.

\subsubsection{$B$-Shtukas} There is an action of $\Pic_{X}(k)$ on $\Sht^{\mu,d}_{\wt{B}}$ by tensoring each member in \eqref{ShtB} by a line bundle defined over $k$. We define 
\begin{equation*}
\Sht^{d}_{B}:=\Sht^{\mu, d}_{\wt{B}}/\Pic_{X}(k).		\index{$\Sht^{d}_{B}$}%
\end{equation*}
Equivalently we may first define $\Hk^{\mu, d}_{B}:=\Hk^{\mu,d}_{\tB}/\Pic_{X}$ and define $\Sht^{d}_{B}$ by a diagram similar to \eqref{define ShtB}, using $\Hk^{d}_{B}$ and $\Bun^{d(0)}_{B}$ instead of $\Hk^{\mu,d}_{\tB}$ and $\Bun^{d(0)}_{\tB}$. The same argument as Lemma \ref{l:Hk indep mu} shows that $\Hk^{\mu, d}_{B}$ is canonically independent of the choice of $\mu$ and these isomorphisms preserve the maps $p_{i}$, hence $\Sht^{d}_{B}$ is also independent of the choice of $\mu$.


\subsubsection{Indexing by degrees}  In the  definition of Shtukas in \S\ref{sss:Sht n}, we may decompose $\Sht^{\mu}_{n}$  according to the degrees of $\CE_{i}$. More precisely, for $d\in\calD$, we let $\mu(d)\in\{\pm1\}^{r}$ be defined as
\begin{equation}\label{mu d}
\mu_{i}(d)=d(i)-d(i-1).
\end{equation}
Let $\Sht^{d}_{n}\subset \Sht^{\mu(d)}_{n}$ be the open and closed substack classifying rank $n$ Shtukas $(\CE_{i};\cdots)$ with $\deg\CE_{i}=d(i)$. 

Consider the action of $\ZZ$ on $\calD$ by adding a constant integer to a function $d\in\calD$. The assignment $d\mapsto \mu(d)$ descends to a function $\calD/\ZZ\to \{\pm1\}^{r}$. For a $\ZZ$-orbit $\delta\in\calD/\ZZ$, we write $\mu(d)$ as $\mu(\delta)$ for any $d\in\delta$. Then for  any $\delta\in\calD/\ZZ$, we have a decomposition
\begin{equation}\label{Sht into components}
\Sht^{\mu(\delta)}_{n}=\coprod_{d\in\delta}\Sht^{d}_{n}
\end{equation}

In particular, after identifying $H$ with $\Gm$, we define $\Sht^{d}_{H}$ to be $\Sht^{d}_{1}$ for any $d\in\calD$.

\subsubsection{The horocycle correspondence} From the definition of $\Sht^{d}_{B}$, we have a forgetful map
\begin{equation*}
p_{d}:\Sht^{d}_{B}\to\Sht_{G}
\end{equation*}
sending the data in \eqref{ShtB} to the middle column. 

On the other hand, mapping the diagram \eqref{ShtB} to  $(\calL_{i}\otimes\calM^{-1}_{i}; x_{i}; \alpha'_{i}\otimes\alpha''_{i};\iota'\otimes\iota'')$ we get a morphism
\begin{equation*}
q_{d}:\Sht^{d}_{B}\to \Sht^{d}_{H}. \index{$\Sht^{d}_{H}$}%
\end{equation*}

Via the maps $p_{d}$ and $q_{d}$, we may view $\Sht^{d}_{B}$ as a correspondence between $\Sht_{G}$ and $\Sht^{d}_{H}$ over $X^{r}$:
\begin{equation}\label{ShtB as corr}
\xymatrix{ & \Sht^{d}_{B}\ar[dl]_{p_{d}}\ar[dr]^{q_{d}}\ar[dd]^{\pi^{d}_{B}} \\
\Sht_{G}\ar[dr]_{\pi_{G}} & & \Sht^{d}_{H}\ar[dl]^{\pi^{d}_{H}}\\
& X^{r}}
\end{equation}

\begin{lemma}\label{l:ShtBG} Let $\calD^{+}\subset\calD$ be the subset consisting of functions $d$ such that $d(i)>0$ for all $i$.  \index{$\calD^{+}$}%
Suppose $d\in\calD^{+}$.  Then the map $p_{d}:\Sht^{d}_{B}\to\Sht_{G}$ has image $\Sht^{d}_{G}$ and induces an isomorphism $\Sht^{d}_{B}\cong\Sht^{d}_{G}$.
\end{lemma}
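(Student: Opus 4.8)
The statement is a Shtuka-analog of the classical fact that when $\deg\calL > \tfrac{1}{2}\deg\calE$, the maximal line subbundle $\calL\subset\calE$ is unique, so that an unstable bundle with given index of instability $n>0$ is the same datum as a $B$-bundle in $\Bun^{n}_{B}$. The plan is to reduce everything to this bundle-level statement via the Cartesian diagrams \eqref{define ShtB} and \eqref{ShtrG} (or rather \eqref{def Shtn} before quotienting by $\Pic_X(k)$), then pass to the quotient.

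First I would establish the bundle-level claim: for each $d\in\calD^{+}$ and each $i$, the forgetful map $\Bun^{d(i)}_{\tB}\to \Bun_{2}$ sending $(\calL\incl\calE)$ to $\calE$ restricts to a \emph{monomorphism} onto the open locus $\Bun^{d(i)}_{2}\cap\inst^{-1}(d(i))$ of bundles of instability index exactly $d(i)$; since $d(i)>0$, the destabilizing line subbundle is unique, so the map is in fact an isomorphism onto this locus. (Here one must be a little careful: $\Bun^{n}_{\tB}$ has automorphisms not present downstairs, coming from scaling $\calL$ and from $\Hom(\calM,\calL)$; when $n>0$ the latter $\Hom$ vanishes by degree reasons, and scaling $\calL$ is absorbed, so after dividing by $\Pic_X$ and comparing to $\Bun_G$ the map $\Bun^{n}_{B}\to\Bun_G$ is a monomorphism of stacks with image $\Bun^{n}_{G}\cap(\text{degree }n\text{ component})$.) The analogous statement for the Hecke stacks follows formally: a point of $\Hk^{\mu,d}_{\tB}$ is a point of $\Hk^{\mu}_{2}$ (with $\deg\calE_i=d(i)$) together with a compatible chain of line subbundles $\calL_i\incl\calE_i$, and by the uniqueness of the destabilizing subbundle at each spot $i$ the $\calL_i$ are forced; moreover the compatibility of $f_i$ with $\alpha'_i$ is automatic, since an isomorphism away from $\Gamma_{x_i}$ must carry the unique maximal line subbundle to the unique maximal line subbundle. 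Hence $\Hk^{\mu,d}_{\tB}\to\Hk^{\mu}_{2}$ (restricted to the locus where every $\calE_i$ has instability exactly $d(i)$) is an isomorphism, and it is $\Pic_X$-equivariant.

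Next I would feed this into the defining Cartesian squares. Comparing \eqref{define ShtB} with \eqref{def Shtn} for $n=2$ and $\mu=\mu(d)$, and using that the left-hand and bottom maps match up (the graph of Frobenius on $\Bun^{d(0)}_{\tB}$ maps to the graph of Frobenius on $\Bun^{d(0)}_{2}$, and Frobenius preserves instability index), one gets that $\Sht^{\mu,d}_{\tB}\to\Sht^{d}_{2}$ is the base change of $\Hk^{\mu,d}_{\tB}\to\Hk^{\mu}_{2}$, hence an isomorphism onto the open substack of $\Sht^{d}_{2}$ where all $\calE_i$ have instability exactly $d(i)$ — which is precisely the preimage in $\Sht^{d}_{2}$ of $\Sht^{d}_{G}$ under $\Sht^{d}_{2}\to\Sht_G$. (That the isomorphism $\iota$ automatically respects the sub-line-bundles is again forced by uniqueness of the destabilizing subbundle, applied to $\leftexp{\tau}{\calE_0}$, whose instability index equals that of $\calE_0$.) Quotienting both sides by the free-enough action of $\Pic_X(k)$ (using \cite[Lemme 4.7]{NgoH} as elsewhere in the paper, the automorphism condition being satisfied because it already holds upstairs) gives $\Sht^{d}_{B}\cong\Sht^{d}_{G}$, compatibly with the maps $p_d$ to $\Sht_G$ and with the projections to $X^r$, and the image of $p_d$ is exactly $\Sht^{d}_{G}$.

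\textbf{Main obstacle.} The genuinely delicate point is the stacky bookkeeping: $\Sht^{d}_{B}$ and $\Sht^{d}_{G}$ a priori have different automorphism groups, and one must verify that the extra automorphisms of a $B$-structure (scaling the line, and off-diagonal unipotent automorphisms) are killed precisely when the degree condition $d\in\calD^{+}$ holds — this is where positivity of every $d(i)$ is used, via the vanishing of $\Hom(\calM_i,\calL_i)$, i.e.\ $H^0(X,\calL_i\otimes\calM_i^{-1})=0$ since $\deg(\calL_i\otimes\calM_i^{-1})=d(i)>0$ is \emph{not} automatic... wait, one needs $2\deg\calL_i - \deg\calE_i = d(i) > 0$, i.e.\ $\deg(\calL_i\otimes\calM_i^{-1}) = d(i) > 0$, so $H^0$ of the \emph{inverse} $\calM_i\otimes\calL_i^{-1}$ (of negative degree) vanishes — this is what forces uniqueness of the splitting-type destabilizing subbundle. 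I would treat this once, at the bundle level, and then propagate it through the Hecke and Shtuka squares by pure base change. Everything else is bookkeeping of Cartesian diagrams and $\Pic_X$- / $\Pic_X(k)$-quotients, already done in essentially identical form in Lemmas \ref{l:Hk indep mu}, \ref{l:indep mu}, and their $T$-analogs.
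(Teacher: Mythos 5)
Your overall strategy (uniqueness of the maximal destabilizing line subbundle, then bookkeeping through the Cartesian squares and the $\Pic_X$/$\Pic_X(k)$-quotients) is the right skeleton, but the step you dismiss as ``automatic'' is exactly where the content of the lemma lies, and your justification for it does not work as stated. You claim that ``an isomorphism away from $\Gamma_{x_i}$ must carry the unique maximal line subbundle to the unique maximal line subbundle.'' Over the open set $X\times S-\Gamma_{x_i}$ there is no notion of maximal line subbundle (degree is a global invariant), so what you actually need is: the saturation $\calL'_{i}\subset\calE_{i}$ of the image of $\calL_{i-1}$ under $f_{i}$ equals the maximal subbundle $\calL_{i}$. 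Uniqueness of the destabilizing subbundle only gives this when $\calL'_{i}$ is itself destabilizing, i.e.\ when $2\deg\calL'_{i}-\deg\calE_{i}=d(i-1)\pm 1>0$. The boundary case $d(i-1)=1$ with the saturation of index $0$ is not covered by uniqueness at all, and ruling it out requires a separate degree argument: one shows that if $\calL'_{i}$ and $\calL_{i}$ had distinct generic fibers, then $\calL'_{i}\oplus\calL_{i}\to\calE_{i}$ would be injective with source of degree $\tfrac12\deg\calE_i+\tfrac12(\deg\calE_i+d(i))>\deg\calE_i$, a contradiction (and then equality of generic fibers contradicts the distinct degrees). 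This is precisely the case analysis in the paper's proof, and it is also precisely where the hypothesis $d\in\calD^{+}$ enters — one needs $d(i)=2$ rather than $0$ at the neighboring index, i.e.\ positivity of \emph{every} $d(i)$, not just the one where you started. Without this argument your Hecke-level map $\Hk^{\mu,d}_{\tB}\to\Hk^{\mu}_{2}\big|_{\inst=d}$ is not shown to be essentially surjective, and hence neither is $p_d$ onto $\Sht^d_G$.

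A secondary point: you locate the use of positivity in killing extra automorphisms of the $B$-structure via the vanishing of $\Hom(\calM_i,\calL_i)$. This is a red herring: $\Hom(\calM_i,\calL_i)=H^0(X,\calL_i\otimes\calM_i^{-1})$ has positive degree and need not vanish, and in any case every automorphism of the pair $(\calL_i\incl\calE_i)$ is in particular an automorphism of $\calE_i$, while conversely any automorphism of $\calE_i$ preserves the unique maximal subbundle; so the automorphism groups match for free once uniqueness is known, and no $\Hom$-vanishing is needed. (What uniqueness does use is $\Hom(\calL_i,\calM_i)=0$, which you eventually arrive at.) So the proposal needs to be repaired by inserting the compatibility argument above — essentially the paper's chain-of-saturations case analysis, including the treatment of the isomorphism $\iota$ carrying $\calL_r$ to $\leftexp{\tau}{\calL_0}$, which you do handle correctly — and by correcting the locus description (``$\inst(\calE_i)=d(i)$'', a locally closed condition, not ``$\deg\calE_i=d(i)$'' and not open).
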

\begin{proof}We first show that $p_{d}(\Sht^{d}_{B})\subset\Sht^{d}_{G}$. If $(\calL_{i}\incl\calE_{i};x_{i};f_{i};\iota)\in\Sht^{d}_{B}$ (up to tensoring with a line bundle), then $\deg\calL_{i}\geq\frac{1}{2}(\deg\calE_{i}+d(i))>\frac{1}{2}\deg\CE_{i}$, hence $\calL_{i}$ is the maximal line subbundle of $\calE_{i}$. Therefore $\inst(\calE_{i})=d(i)$ and $(\calE_{i};x_{i};f_{i})\in\Sht^{d}_{G}$. 

Conversely, we will define a map $\Sht^{d}_{G}\to \Sht^{d}_{B}$. Let $(\calE_{i};x_{i};f_{i};\iota)\in\Sht^{d}_{G}(S)$, then the maximal line bundle $\calL_{i}\incl\calE_{i}$ is well-defined since each $\calE_{i}$ is unstable. 

We claim that for each geometric point $s\in S$, the generic fibers of $\calL_{i}|_{X\times \{s\}}$ map isomorphically to each other under the rational maps $f_{i}$ between the $\calE_{i}$'s. For this we may assume $S=\Spec(K)$ for some field $K$ and we base change the situation to $K$ without changing notation. Let $\calL'_{i+1}\subset \calE_{i+1}$ be the line bundle obtained by saturating $\calL_{i}$ under the rational map $f_{i+1}:\calE_{i}\dashrightarrow\calE_{i+1}$. Then $d'(i+1):=2\calL'_{i+1}-\deg\calE_{i+1}=d(i)\pm1$. If $d'(i+1)>0$, then $\calL'_{i+1}$ is also the maximal line subbundle of $\calE_{i+1}$, hence $\calL'_{i+1}=\calL_{i+1}$. If $d'(i+1)\leq0$, then we must have $d(i)=1$ and $d'(i+1)=0$. Since $d\in\calD^{+}$, we must have $d(i+1)=2$. In this case the map $\calL'_{i+1}\oplus\calL_{i+1}\to\calE_{i+1}$ cannot be injective because the source has degree at least $\frac{1}{2}(\deg\calE_{i+1}+d'(i+1))+\frac{1}{2}(\deg\calE_{i+1}+d(i+1))=\deg\calE_{i+1}+1>\deg\calE_{i+1}$. Therefore $\calL'_{i+1}$ and $\calL_{i+1}$ have the same generic fiber, which is impossible since they are both line subbundles of $\calE_{i+1}$ but have different degrees. This proves the claim.

Moreover, the isomorphism $\iota:\CE_{r}\cong\leftexp{\tau}{\CE_{0}}$ must send $\CL_{r}$ isomorphically onto $\leftexp{\tau}{\CL_{0}}$ by the uniqueness of the maximal line subbundle. This together with the claim above implies that $(\calL_{i}; x_{i};f_{i}|_{\CL_{i}};\iota|_{\CL_{r}})$ is a rank one sub-Shtuka of $(\calE_{i};x_{i};f_{i};\iota)$, and therefore $(\calL_{i}\incl\calE_{i};x_{i};f_{i};\iota)$ gives a point in $\Sht^{d}_{B}$. This way we have defined a map $\Sht^{d}_{G}\to \Sht^{d}_{B}$. It is easy to check that this map is inverse to $p_{d}: \Sht^{d}_{B}\to \Sht^{d}_{G}$.
\end{proof}

\begin{lemma}\label{l:ShtB smooth} Let $d\in \calD$ be such that $d(i)>2g-2$ for all $i$. Then the morphism $q_{d}:\Sht^{d}_{B}\to \Sht^{d}_{H}$ is smooth of relative dimension $r/2$, and its geometric fibers are isomorphic to $[\Ga^{r/2}/Z]$ for some finite \'etale group scheme $Z$ acting on $\Ga^{r/2}$ via a homomorphism $Z\to \Ga^{r/2}$.
\end{lemma}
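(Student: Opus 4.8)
The plan is to analyze the map $q_d : \Sht^d_B \to \Sht^d_H$ by unwinding the diagram \eqref{ShtB} that defines a point of $\Sht^d_B$. Fixing a point of $\Sht^d_H$ amounts to fixing the line-bundle Shtuka $(\calL_i \otimes \calM_i^{-1}; x_i; \alpha'_i \otimes \alpha''_i; \iota' \otimes \iota'')$; combined with the degree constraints $\deg \calL_i, \deg \calM_i$ (which are determined up to an overall twist by $d$ and the total degree, and which are fixed once we rigidify by $\Pic_X(k)$), this fixes the outer two vertical columns of \eqref{ShtB}, i.e. the rank-one Shtukas $(\calL_i; \ldots)$ and $(\calM_i; \ldots)$, each up to a finite ambiguity coming from $\Pic_{X'}(k)/\Pic_X(k)$-type torsors as in Lemma \ref{l:ShtT proper} (this finite ambiguity is the source of the finite group scheme $Z$). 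The remaining data in \eqref{ShtB} is then the extension: a rank-two bundle $\calE_i$ fitting into $0 \to \calL_i \to \calE_i \to \calM_i \to 0$ for each $i$, together with the rational maps $f_i$ compatible with $\alpha'_i, \alpha''_i$, and the isomorphism $\iota$ compatible with $\iota', \iota''$. So the fiber of $q_d$ is a moduli space of compatible extensions, and the first main step is to show this is an $\Ext^1$-torsor-like object whose tangent/obstruction theory is controlled by $\cohog{*}{X, \calL_i \otimes \calM_i^{-1}}$.

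The key computation is a Cartesian/fiber-sequence argument: I would express $\Sht^d_B$ as a fiber product built from $\Sht^d_H$ and an affine-bundle-type stack over it. Concretely, the middle column of \eqref{ShtB} is governed by the complex $R\Gamma$ of the two-term complex measuring the difference between the chain of extensions and its Frobenius twist; the relevant cohomology sheaves are $\cohog{0}$ and $\cohog{1}$ of line bundles of the form $\calL_i \otimes \calM_i^{-1}$ on $X$, which under the hypothesis $d(i) > 2g-2$ (so $\deg(\calL_i \otimes \calM_i^{-1}) = d(i) + (\text{correction}) > 2g-2$, giving $\cohog{1}(X, \calL_i \otimes \calM_i^{-1}(-\text{pts})) = 0$ after accounting for the modification points) forces the obstruction groups to vanish and makes the relevant $\cohog{0}$'s of predictable dimension. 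A Riemann--Roch count: there are $r$ modifications, $r/2$ of each sign, and assembling the Euler characteristics along the chain together with the Shtuka (Lang-type) condition $\CE_r \cong \leftexp{\tau}{\CE_0}$ yields that the fiber is a quotient of $\Ga^{r/2}$. The precise bookkeeping of which $\Ext^1$ groups appear and why exactly $r/2$ copies of $\Ga$ survive (as opposed to $r$ or $0$) is where the parity condition $\#\{i : \mu_i = \mu_+\} = \#\{i : \mu_i = \mu_-\} = r/2$ from \S\ref{sss:mu} enters decisively.

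After establishing the fiber shape, smoothness of $q_d$ of relative dimension $r/2$ follows formally: the obstruction spaces vanish (by the degree hypothesis), so the relative deformation theory is unobstructed and of constant dimension $r/2$, hence $q_d$ is smooth of that relative dimension by the infinitesimal criterion. For the identification of the geometric fibers with $[\Ga^{r/2}/Z]$: I would pick a geometric point of $\Sht^d_H$, note that the set of compatible extension structures is a torsor under $\cohog{1}$ of a sum of line bundles on $X$ which (again by the degree bound) computes to a vector group of dimension $r/2$ — but the rigidification by line bundles introduces the finite group $Z$ (a subquotient of $\Pic_{X'}(k)$, finite étale) acting, and the action on $\Ga^{r/2}$ factors through a homomorphism $Z \to \Ga^{r/2}$ because $Z$ acts on the extension data by translation (it shifts the identification of the sub- and quotient-line-bundle Shtukas, which moves the extension class additively).

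The main obstacle will be the careful homological algebra identifying the fiber of $q_d$ with a $\Ga^{r/2}$-quotient stack with the correct group $Z$ and the correct count $r/2$: one must track the chain of $r$ elementary modifications in \eqref{ShtB}, see that at each $\mu_+$-step one gains a dimension in the extension moduli while at each $\mu_-$-step one imposes a compatibility that cancels it (or vice versa), and then impose the Shtuka periodicity $\iota$ which identifies the two ends. Getting the signs and the resulting net dimension $r/2$ right — and verifying that the residual automorphisms assemble into a finite étale $Z$ acting through $Z \to \Ga^{r/2}$ rather than some more complicated action — is the delicate part; everything else (smoothness given unobstructedness, the vanishing of $\cohog{1}$ from $d(i) > 2g-2$) is routine once that structure is in place.
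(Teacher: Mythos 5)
Your outline follows the same general skeleton as the paper --- identify the fiber of $q_d$ with a moduli stack of extensions of the rank-one Shtuka $\calM_\bu$ by $\calL_\bu$, use $d(i)>2g-2$ to kill $\Ext^1(\calM_i,\calL_i)$, and track the $r$ modification steps to find a net relative dimension $r/2$ --- but the two points you leave vague are precisely where the content lies, and your description of both is incorrect. The finite group $Z$ has nothing to do with a $\Pic_{X'}(k)/\Pic_X(k)$-type ambiguity in recovering $(\calL_\bu,\calM_\bu)$: the double cover $X'$ plays no role in this lemma, and modulo the $\Pic_X(k)$-twist built into the definition of $\Sht^d_B$ the pair $(\calL_\bu,\calM_\bu)$ is essentially determined by the point of $\Sht^d_H$, which is exactly why one can identify the fiber of $q_d$ with the fiber of $\tilq_d$ over a single fixed pair. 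Likewise the fiber is not ``a torsor under $H^{1}$ of a sum of line bundles'': those $H^{1}$'s vanish under your own degree hypothesis, and a torsor under a vector group would have no stacky structure, hence no $Z$ at all. Since $\Ext^1(\calM_i,\calL_i)=0$, each extension stack $E(\calM_i,\calL_i)$ is the classifying stack $\BB(H_i)$ of $H_i=\un\Hom(\calM_i,\calL_i)$, and $Z$ ultimately arises from the Frobenius gluing $\iota$, i.e.\ from a Lang-type quotient of a vector group, not from any Picard group.

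Concretely, the part you defer as ``delicate'' is carried out as follows. Each single step of the chain \eqref{ShtB} gives a correspondence $C_i$ between $E(\calM_{i-1},\calL_{i-1})$ and $E(\calM_i,\calL_i)$ which is the graph of a pushout or pullback map (four cases according to $(\mu'_i,\mu''_i)$), and $C_i\cong\BB(\Om_i)$ with $\Om_i$ the smaller of $H_{i-1},H_i$. Composing the $r$ correspondences via $\BB(\CG_1)\times_{\BB(\CG)}\BB(\CG_2)\cong\CG_1\bs\CG/\CG_2$ exhibits the total correspondence as $H_0\bs A/H_r$ for an additive group scheme $A$ over $S$ with $\dim A=\dim H_0+r/2$; the count $r/2$ comes from the up-steps and down-steps of $d$ each occurring $r/2$ times, not directly from the balance of $\mu$ in \S\ref{sss:mu}. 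Imposing the Shtuka condition then identifies the fiber with $[A/_{(\id,\Fr_{H_0/S})}H_0]$, which is smooth of relative dimension $r/2$ simply because $A$ is a vector group --- no separate deformation-theoretic argument is needed. Over a geometric point one chooses a linear complement $L\cong\Ga^{r/2}$ of $H_0$ in $A$; the map $H_0\times L\to A$, $(x,y)\mapsto x+y+\Fr(x)$, is \'etale, its kernel $Z$ is a finite \'etale group scheme (of Artin--Schreier type, a subgroup of $H_0\times L$), and $[A/H_0]\cong[L/Z]\cong[\Ga^{r/2}/Z]$ with $Z$ acting through its projection to $L=\Ga^{r/2}$. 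Without this construction (or an equivalent one) your sketch establishes neither the smoothness statement nor the precise shape of the geometric fibers.
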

\begin{proof} We pick $\mu$ as in \S\ref{sss:mu} to realize $\Sht_{G}$ as the quotient $\Sht^{\mu}_{2}/\Pic_{X}(k)$, and $\Sht^{d}_{B}$ as the quotient $\Sht^{\mu,d}_{\tB}/\Pic_{X}(k)$.  

In the definition of Shtukas in \S\ref{sss:Sht n}, we may allow some coordinates $\mu_{i}$ of the modification type $\mu$ to be $0$, which means that the corresponding $f_{i}$ is an isomorphism. Therefore we may define $\Sht^{\mu}_{n}$ for more general $\mu\in\{0,\pm1\}^{r}$ such that $\sum\mu_{i}=0$.

We define the sequence $\mu'(d)=(\mu'_{1}(d),\cdots,\mu'_{r}(d))\in\{0,\pm1\}^{r}$ by
\begin{equation*}
\mu'_{i}(d):=\frac{1}{2}(\sgn(\mu_{i})+d(i)-d(i-1))
\end{equation*}
We also define $\mu''(d)=(\mu''_{1}(d),\cdots,\mu''_{r}(d))\in\{0,\pm1\}^{r}$ by
\begin{equation*}
\mu''_{i}(d):=\frac{1}{2}(\sgn(\mu_{i})-d(i)+d(i-1))=\sgn(\mu_{i})-\mu'_{i}(d)
\end{equation*}
We write $\mu'(d)$ and $\mu''(d)$ simply as $\mu'$ and $\mu''$.  Mapping the diagram \eqref{ShtB} to the rank one Shtuka $(\CL_{i};x_{i};\alpha'_{i}; \iota')$ defines a map $\Sht^{\mu,d}_{\tB}\to \Sht^{\mu'}_{1}$; similarly, sending the diagram \eqref{ShtB} to the rank one Shtuka $(\CM_{i};x_{i};\alpha''_{i}; \iota'')$ defines a map $\Sht^{\mu,d}_{\tB}\to \Sht^{\mu''}_{1}$. Combining the two maps we get
\begin{equation*}
\tilq_{d}: \Sht^{\mu,d}_{\wt{B}}\to \Sht^{\mu'}_{1}\times_{X^{r}}\Sht^{\mu''}_{1}.
\end{equation*}

Fix a pair $\calL_{\bu}:=(\calL_{i};x_{i}; \alpha'_{i};\iota')\in\Sht^{\mu'}_{1}(S)$ and $\CM_{\bu}:=(\calM_{i};x_{i};\alpha''_{i};\iota'')\in\Sht^{\mu''}_{1}(S)$.  Then the fiber of $q_{d}$ over $(\calL_{i}\otimes\calM_{i}^{-1};x_{i};\cdots)\in\Sht^{d}_{H}(S)$ is isomorphic to the fiber of $\wt{q}_{d}$ over $(\CL_{\bu},\CM_{\bu})$, the latter being the moduli stack $E_{\Sht}(\calM_{\bu},\calL_{\bu})$ (over $S$) of extensions of $\calM_{\bu}$ by $\calL_{\bu}$ as Shtukas. 

Since $\deg(\calL_{i})-\deg(\calM_{i})=d(i)>2g-2$, we have $\Ext^{1}(\calM_{i},\calL_{i})=0$. For each $i$, let $E(\calM_{i},\calL_{i})$ be the stack classifying extensions of $\calM_{i}$ by $\calL_{i}$. Then $E(\calM_{i},\calL_{i})$ is canonically isomorphic to the classifying space of the additive group $H_{i}:=\un\Hom(\calM_{i},\calL_{i})$ over $S$. For each $i=1,\cdots, r$, we have another moduli stack $C_{i}$ classifying commutative diagrams of extensions
\begin{equation*}
\xymatrix{0\ar[r] & \calL_{i-1}\ar[r]\ar@{-->}[d]^{\alpha'_{i}} & \calE_{i-1}\ar[r]\ar@{-->}[d]^{f_{i}} & \calM_{i-1}\ar[r]\ar@{-->}[d]^{\alpha''_{i}} & 0\\
0\ar[r] & \calL_{i}\ar[r] & \calE_{i}\ar[r] & \calM_{i}\ar[r] & 0}
\end{equation*}
Here the left and right columns are fixed. We have four cases:
\begin{enumerate}
\item When $(\mu'_{i},\mu''_{i})=(1,0)$, then $\alpha'_{i}:\CL_{i-1}\incl\CL_{i}$ with colength one and $\alpha''_{i}$ is an isomorphism. In this case,  the bottom row is the pushout of the top row along $\alpha'_{i}$, hence determined by the top row. Therefore $C_{i}=E(\calM_{i-1},\calL_{i-1})$ in this case. 
\item When $(\mu'_{i},\mu''_{i})=(-1,0)$, then  $\alpha'^{-1}_{i}:\CL_{i}\incl \CL_{i-1}$ with colength one and $\alpha''_{i}$ is an isomorphism.  In this case,  the top row is the pushout of the bottom row along $\alpha'^{-1}_{i}$, hence determined by the bottom row. Therefore $C_{i}=E(\calM_{i},\calL_{i})$ in this case. 
\item When $(\mu'_{i},\mu''_{i})=(0,1)$, then  $\alpha'_{i}$ is an isomorphism and $\alpha''_{i}:\CM_{i-1}\incl\CM_{i}$ with colength one.  In this case, the top row is the pullback of the bottom row along $\alpha''_{i}$, hence determined by the bottom row. Therefore $C_{i}=E(\calM_{i},\calL_{i})$ in this case.
\item When $(\mu'_{i},\mu''_{i})=(0,-1)$, then $\alpha'_{i}$ is an isomorphism and $\alpha''^{-1}_{i}:\CM_{i}\incl\CM_{i-1}$ with colength one.  In this case,  the bottom row is the pullback of the top row along $\alpha''^{-1}_{i}$, hence determined by the top row. Therefore $C_{i}=E(\calM_{i-1},\calL_{i-1})$ in this case. 
\end{enumerate}
From the combinatorics of $\mu'$ and $\mu''$ we see that the cases (1)(4) and (2)(3) each appear $r/2$ times. In all cases, we view $C_{i}$ as a correspondence
\begin{equation*}
E(\calM_{i-1},\calL_{i-1})\leftarrow C_{i} \rightarrow E(\calM_{i},\calL_{i})
\end{equation*}
then $C_{i}$ is the graph of a natural map $E(\calM_{i-1},\calL_{i-1})\to E(\calM_{i},\calL_{i})$ in cases (1) and (4) and the graph of a natural map $E(\calM_{i},\calL_{i})\to E(\calM_{i-1},\calL_{i-1})$ in cases (2) and (3). We see that $C_{i}$ is canonically the classifying space of an additive group scheme $\Om_{i}$ over $S$, which is either $H_{i-1}$ in cases (1) and (4) or $H_{i}$ in cases (2) and (3).

Consider the composition of these correspondences
\begin{equation*}
C(\calM_{\bu},\calL_{\bu}):=C_{1}\times_{E(\calM_{1},\calL_{1})}C_{2}\times_{E(\calM_{2},\calL_{2})}\cdots\times_{E(\calM_{r-1},\calL_{r-1})}C_{r}.
\end{equation*}
This is viewed as a correspondence
\begin{equation*}
E(\calM_{0},\calL_{0})\leftarrow C(\calM_{\bu},\calL_{\bu})\rightarrow E(\calM_{r},\calL_{r})\cong E(\leftexp{\tau}{\calM}_{0},\leftexp{\tau}{\calL}_{0}).
\end{equation*}

To compute $C(\calM_{\bu},\calL_{\bu})$ more explicitly, we consider the following situation. Let $\CG$ be a group scheme over $S$ with two subgroup schemes $\CG_{1}$ and $\CG_{2}$. Then we have a canonical isomorphism of stacks over $S$
\begin{equation*}
\BB(\CG_{1})\times_{\BB(\CG)}\BB(\CG_{2})\cong \CG_{1}\bs\CG/\CG_{2}.
\end{equation*}
Using this fact repeatedly, and using that $E(\calM_{i},\calL_{i})=\BB(H_{i})$ and $C_{i}=\BB(\Om_{i})$, we see that
\begin{equation}
C(\calM_{\bu},\calL_{\bu})\cong \Om_{1}\bs H_{1}\twtimes{\Om_{2}} H_{2}\twtimes{\Om_{3}}\cdots\twtimes{\Om_{r-1}}H_{r-1}/\Om_{r}.
\end{equation}
where $H_{i-1}\twtimes{\Om_{i}}H_{i}$ means dividing by the diagonal action of $\Om_{i}$ on both $H_{i-1}$ and $H_{i}$ by translations.  Let
\begin{equation*}
A(\calM_{\bu},\calL_{\bu}):=H_{0}\twtimes{\Om_{1}}H_{1}\twtimes{\Om_{2}} \cdots\twtimes{\Om_{r-1}}H_{r-1}\twtimes{\Om_{r}}H_{r}
\end{equation*}
Since $\Om_{i}$ is always the smaller of $H_{i-1}$ and $H_{i}$, $A(\calM_{\bu},\calL_{\bu})$ is an additive group scheme over $S$. Then we have
\begin{equation}\label{C HAH}
C(\calM_{\bu},\calL_{\bu})\cong H_{0}\bs A(\calM_{\bu},\calL_{\bu})/H_{r}.
\end{equation}

Note that $H_{r}\cong\leftexp{\tau}H_{0}$ is the pullback of $H_{0}$ via $\Fr_{S}$. We have a relative Frobenius map over $S$
\begin{equation*}
\Fr_{/S}: E(\calM_{0},\calL_{0})=\BB(H_{0})\xrightarrow{\Fr_{H_{0}/S}}\BB(H_{r})=E(\calM_{r},\calL_{r}).
\end{equation*}
By the moduli meaning of $E_{\Sht}(\CM_{\bu},\CL_{\bu})$, we have a Cartesian diagram of stacks
\begin{equation*}
\xymatrix{E_{\Sht}(\calM_{\bu},\calL_{\bu})\ar[rr]\ar[d] && C(\calM_{\bu},\calL_{\bu})\ar[d] \\
E(\calM_{0},\calL_{0})\ar[rr]^{(\id, \Fr_{/S})} & & E(\calM_{0},\calL_{0})\times E(\calM_{r},\calL_{r})}
\end{equation*}
Using the isomorphism \eqref{C HAH}, the above diagram becomes
\begin{equation}\label{ESht}
\xymatrix{E_{\Sht}(\calM_{\bu},\calL_{\bu})\ar[rr]\ar[d] && H_{0}\bs A(\calM_{\bu},\calL_{\bu})/H_{r}\ar[d] \\
\BB(H_{0})\ar[rr]^{(\id, \Fr_{H_{0}/S})} & & \BB(H)\times \BB(H_{r})}
\end{equation}
This implies that
\begin{equation}\label{E quot A}
E_{\Sht}(\calM_{\bu},\calL_{\bu})\cong [A(\calM_{\bu},\calL_{\bu})/_{(\id,\Fr_{H_{0}/S})}H_{0}]
\end{equation}
where $H_{0}$ acts on  $A(\calM_{\bu},\calL_{\bu})$ via the embedding $(\id,\Fr_{H_{0}/S}):H_{0}\to H_{0}\times H_{r}$ and the natural action of $H_{0}\times H_{r}$ on $A(\calM_{\bu},\calL_{\bu})$. Since $A$ is an additive group scheme over $S$, hence smooth over $S$, the isomorphism \eqref{E quot A} shows that $E_{\Sht}(\calM_{\bu},\calL_{\bu})$ is smooth over $S$.

To compute the dimension of $A(\calM_{\bu},\calL_{\bu})$, we compare $\dim \Om_{i}$ with $\dim H_{i}$. We have $\dim H_{i}-\dim \Om_{i}=1$ in cases (1) and (4) and  $\dim H_{i}-\dim \Om_{i}=0$ in cases (2) and (3). Since (1)(4) and (2)(3) each appear $r/2$ times, we have
\begin{equation*}
\dim A(\calM_{\bu},\calL_{\bu})=\dim H_{0}+\sum_{i=1}^{r}(\dim H_{i}-\dim\Om_{i})=\dim H_{0}+r/2.
\end{equation*}
This implies $E_{\Sht}(\calM_{\bu},\calL_{\bu})$ is smooth of dimension $r/2$.

When $S$ is a geometric point $\Spec(K)$, $H_{0}$ and $H_{r}$ can be viewed as subspaces of the $K$-vector space $A:=A(\calM_{\bu},\calL_{\bu})$, and $\phi=\Fr_{H_{0}/K}: H_{0}\to H_{r}$ is a morphism of group schemes over $K$. Choose a $K$-subspace $L\subset A$  complement to $H_{0}$, then $L\cong\Ga^{r/2}$ as a group scheme over $K$. Consider the homomorphism
\begin{equation*}
\alpha: H_{0}\times L\to A
\end{equation*}
given by $(x,y)\mapsto x+y+\phi(x)$. By computing the tangent map of $\alpha$ at the origin, we see that $\alpha$ is \'etale, therefore $Z=\ker(\alpha)$ is a finite \'etale group scheme over $K$. We conclude that in this case the fiber of $q_{d}$ over $S=\Spec(K)$ is
\begin{equation*}
E_{\Sht}(\calM_{\bu},\calL_{\bu})\cong [A/_{(\id,\phi)}H_{0}]\cong [L/Z]\cong [\Ga^{r/2}/Z].
\end{equation*}
\end{proof}

\begin{cor}\label{c:Eis loc sys} Suppose $d\in\calD$ satisfies $d(i)>2g-2$ for all $i$,  then the cone of the map $\bR\pi^{<d}_{G,!}\Ql\to\bR\pi^{\leq d}_{G,!}\Ql$ is isomorphic to $\pi^{d}_{H,!}\Ql[-r](-r/2)$, which is a local system concentrated in degree $r$.
\end{cor}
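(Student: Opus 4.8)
The plan is to read off the cone from the localization sequence attached to the stratum $\Sht^d_G$, and then to compute the cohomology of that stratum via the horocycle identification of Lemma \ref{l:ShtBG}.

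First I would set up excision. Since each $\CE_i\mapsto\inst(\CE_i)$ is upper semicontinuous, $\Sht^d_G$ is closed in the finite-type open substack $\Sht^{\leq d}_G$, with open complement $\Sht^{<d}_G:=\bigcup_{d'<d}\Sht^{\leq d'}_G$ (indeed, for a Shtuka of type $\mu$ the function $i\mapsto\inst(\CE_i)$ itself lies in $\calD$, so a point of $\Sht^{\leq d}_G$ with $\inst(\CE_{i_0})<d(i_0)$ for some $i_0$ lies in $\Sht^{\leq e}_G$ for some $e<d$). Writing $j\colon\Sht^{<d}_G\incl\Sht^{\leq d}_G$ and $i\colon\Sht^d_G\incl\Sht^{\leq d}_G$ for the two inclusions and applying $\bR\pi^{\leq d}_{G,!}$ to the excision triangle $j_!\Ql\to\Ql\to i_*\Ql\to[1]$ on $\Sht^{\leq d}_G$, one obtains, using $\pi^{\leq d}_G\circ j=\pi^{<d}_G$ and $\pi^{\leq d}_G\circ i=\pi^d_G$, a distinguished triangle
\[
\bR\pi^{<d}_{G,!}\Ql\longrightarrow\bR\pi^{\leq d}_{G,!}\Ql\longrightarrow\bR\pi^d_{G,!}\Ql\longrightarrow[1],
\]
whose first arrow is the map in the statement (both are induced by the open immersion). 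So the cone is $\bR\pi^d_{G,!}\Ql$, and it remains to identify this with $\pi^d_{H,!}\Ql[-r](-r/2)$.

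Next I would pass to the horocycle picture. Since $g\geq1$ we have $d(i)>2g-2\geq0$ for all $i$, so $d\in\calD^{+}$ and Lemma \ref{l:ShtBG} gives an isomorphism $p_d\colon\Sht^d_B\xrightarrow{\ \sim\ }\Sht^d_G$ over $X^r$; combined with $\pi^d_B=\pi^d_H\circ q_d$ from \eqref{ShtB as corr}, this yields $\bR\pi^d_{G,!}\Ql\cong\bR\pi^d_{H,!}\bR q_{d,!}\Ql$. The map $\pi^d_H=\pi^d_1\colon\Sht^d_1\to X^r$ is, by the same argument as Lemma \ref{l:ShtT proper}, a torsor under the finite groupoid $\Jac_X(k)$, hence finite \'etale; therefore $\bR\pi^d_{H,!}=\pi^d_{H,!}$ is exact and $\pi^d_{H,!}\Ql$ is a local system on $X^r$ concentrated in degree $0$. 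Thus it suffices to prove $\bR q_{d,!}\Ql\cong\Ql[-r](-r/2)$: granting this, $\bR\pi^d_{G,!}\Ql\cong\pi^d_{H,!}\big(\Ql[-r](-r/2)\big)=\pi^d_{H,!}\Ql[-r](-r/2)$, a local system on $X^r$ placed in degree $r$.

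Finally I would compute $\bR q_{d,!}\Ql$ from Lemma \ref{l:ShtB smooth} (applicable as $d(i)>2g-2$). By proper base change, each geometric stalk is $\bR\Gamma_c\big([\AA^{r/2}/Z],\Ql\big)$ with $Z$ a finite \'etale group scheme acting through a homomorphism $Z\to\AA^{r/2}$, i.e.\ by translations; since $|Z|$ is invertible in $\Ql$ this equals $\bR\Gamma_c(\AA^{r/2},\Ql)^Z$, and translations act trivially on $\bR\Gamma_c(\AA^{r/2},\Ql)\cong\Ql(-r/2)[-r]$, so every stalk is $\Ql(-r/2)$ in degree $r$. Hence $\bR q_{d,!}\Ql$ is concentrated in degree $r$ with one-dimensional stalks. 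To upgrade this to a local system isomorphic to $\Ql(-r/2)[-r]$ I would revisit the proof of Lemma \ref{l:ShtB smooth}: over the finite \'etale cover $S=\Sht^{\mu'(d)}_1\times_{X^r}\Sht^{\mu''(d)}_1\to\Sht^d_H$, the base change of $\Sht^d_B$ is, by \eqref{E quot A}, the quotient of a vector bundle $\calA$ over $S$ by the free affine action of a sub-vector-bundle, hence a torsor under a rank-$r/2$ vector bundle over $S$; such a map has $\bR(-)_!\Ql\cong\Ql[-r](-r/2)$ canonically, and by smooth base change and \'etale descent along $S\to\Sht^d_H$ the same holds for $\bR q_{d,!}\Ql$. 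I expect this last step --- showing $\bR q_{d,!}\Ql$ is genuinely \emph{lisse}, not merely a complex with constant stalks, which forces one to use the vector-bundle-stack structure of $q_d$ rather than Lemma \ref{l:ShtB smooth} as a black box --- to be the main obstacle; the excision triangle and the pushforward along the finite \'etale $\pi^d_H$ are routine.
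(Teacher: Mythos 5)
Your skeleton is the paper's: excision gives that the cone is $\bR\pi^{d}_{G,!}\Ql$, Lemma \ref{l:ShtBG} identifies $\Sht^{d}_{G}$ with $\Sht^{d}_{B}$, Lemma \ref{l:ShtB smooth} handles $q_{d}$, and $\pi^{d}_{H}$ being a torsor under a finite Picard groupoid gives the local system. Your stalk computation of $\bR q_{d,!}\Ql$ (proper base change, $H^{*}_{c}([\AA^{r/2}/Z])=H^{*}_{c}(\AA^{r/2})^{Z}$, translations acting trivially) is also fine. The gap is in your proposed upgrade from "constant stalks" to "constant sheaf". First, a minor point: $S=\Sht^{\mu'(d)}_{1}\times_{X^{r}}\Sht^{\mu''(d)}_{1}\to\Sht^{d}_{H}$ is \'etale surjective but not finite, since the degrees of $\calL_{\bullet}$ and $\calM_{\bullet}$ individually are not fixed by $d$; this alone is harmless. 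The real problem is the claim that the base change of $\Sht^{d}_{B}$ to $S$ is a torsor under a rank-$r/2$ vector bundle, i.e.\ that in \eqref{E quot A} the action of $H_{0}$ on $A(\calM_{\bullet},\calL_{\bullet})$ via $(\id,\Fr_{H_{0}/S})$ is free. It is not: the stabilizer at every point is $Z=\{h\in H_{0}: h+\Fr(h)=0\text{ in }A\}$, and since $h\mapsto h+\Fr(h)$ is only a homomorphism of group schemes (a $p$-polynomial map, not $\calO_{S}$-linear), this kernel is a nontrivial finite \'etale group scheme as soon as $H_{0}$ and $H_{r}$ meet nontrivially inside $A$ --- which happens for dimension reasons once $\dim H_{0}=d(0)-g+1>r/2$. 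Indeed Lemma \ref{l:ShtB smooth} itself records that the geometric fibers of $q_{d}$ are $[\Ga^{r/2}/Z]$ with $Z$ possibly nontrivial, so $q_{d}$ is not a vector-bundle torsor even after your base change, and the descent step as written does not go through.

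The repair is exactly what the paper does, and it replaces your descent step rather than supplementing it: since $q_{d}$ is smooth of pure relative dimension $r/2$, the relative fundamental class (trace map) furnishes a \emph{globally defined} morphism $\bR q_{d,!}\Ql\to \bR^{r}q_{d,!}\Ql[-r]\to\Ql[-r](-r/2)$ on $\Sht^{d}_{H}$, and one then checks this single map is an isomorphism on geometric stalks --- which is precisely your fiber computation $H^{*}_{c}([\Ga^{r/2}/Z],\Ql)\cong\Ql(-r/2)[-r]$, with the trace inducing it. This sidesteps the lisse-ness question entirely (no cover $S$, no freeness needed), so your "main obstacle" dissolves once you build the comparison map before taking stalks instead of after.
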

\begin{proof}The cone of $\bR\pi^{<d}_{G,!}\Ql\to\bR\pi^{\leq d}_{G,!}\Ql$ is isomorphic to $\bR\pi^{d}_{G,!}\Ql$, where $\pi^{d}_{G}:\Sht^{d}_{G}\to X^{r}$. By Lemma \ref{l:ShtBG}, for $d\in\calD^{+}$, we have $\bR\pi^{d}_{G,!}\Ql\cong\bR\pi^{d}_{B,!}\Ql$. By Lemma \ref{l:ShtB smooth}, $q_{d}$ is smooth of relative dimension $r/2$, the relative fundamental cycles gives  $\bR q_{d,!}\Ql\to \bR^{r}q_{d,!}\Ql[-r]\to \Ql[-r](-r/2)$, which is an isomorphism by checking the stalks (using the description of the geometric fibers of $q_{d}$ given in Lemma \ref{l:ShtB smooth}). Therefore $\bR \pi^{d}_{B,!}\Ql\cong\bR \pi^{d}_{H,!}\Ql[-r](-r/2)$. Finally, $\pi^{d}_{H}:\Sht^{d}_{H}\to X^{r}$ is a $\Pic^{0}_{X}(k)$-torsor by an argument similar to Lemma \ref{l:ShtT proper}.  Therefore $\bR \pi^{d}_{H,!}\Ql$ is a local system on $X^{r}$, and $\bR\pi^{d}_{G,!}\Ql\cong\pi^{d}_{H,!}\Ql[-r](-r/2)$ is a local system shifted to degree $r$.
\end{proof}

\subsection{Horocycles in the generic fiber}\label{ss:horo gen}
Fix a geometric generic point $\gen$ of $X^{r}$. For a stack $\frX$ over $X^{r}$, we denote its fiber over $\gen$ by $\frX_{\gen}$. Next we study the cycles in $\Sht_{G,\gen}$ given by images of $\Sht^{d}_{B,\gen}$.

\begin{lemma}[Drinfeld {\cite[Prop 4.2]{D87}} for $r=2$; Varshavsky {\cite[Prop 5.7]{Va}} in general]\label{l:pgen finite} For each $d\in\calD$, the map $p_{d,\gen}: \Sht^{d}_{B,\gen}\to \Sht_{G,\gen}$ is finite and unramified.
\end{lemma}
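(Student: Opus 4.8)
The statement to prove is that $p_{d,\gen}\colon \Sht^{d}_{B,\gen}\to \Sht_{G,\gen}$ is finite and unramified over the geometric generic point $\gen$ of $X^{r}$. Since $\Sht^{d}_{B,\gen}$ and $\Sht_{G,\gen}$ are both Deligne--Mumford stacks locally of finite type over the field $k(\gen)$ (by Theorem \ref{th:Sht smooth} and the definitions in \S\ref{ss:horo gen}), it suffices to verify three things: that $p_{d,\gen}$ is representable, that it is quasi-finite (i.e.\ has finite geometric fibers), that it is proper, and that it is unramified (i.e.\ formally unramified). Properness plus quasi-finiteness then give finiteness. My plan would be to treat these in that order.

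\emph{Representability and unramifiedness.} A point of $\Sht^{d}_{B,\gen}$ over a point of $\Sht_{G,\gen}$ is precisely a choice of a rank-one sub-Shtuka $(\calL_{i}\incl\calE_{i};x_{i};f_{i}|_{\calL_{i}};\iota|_{\calL_{r}})$ of the given $(\calE_{i};x_{i};f_{i};\iota)$, with the degree constraint $2\deg\calL_{i}-\deg\calE_{i}=d(i)$. The key observation (a reprise of the claim inside the proof of Lemma \ref{l:ShtBG}) is that a line subbundle $\calL_{0}\subset\calE_{0}$ satisfying the required numerical condition, once chosen, propagates uniquely through the chain via the rational maps $f_{i}$ and the gluing $\iota$: saturating under $f_{i+1}$ produces the next $\calL_{i+1}$, and the condition on degrees forces this saturation to agree with the maximal / prescribed line subbundle at each step. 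Hence the fiber of $p_{d,\gen}$ over a fixed $\GL_{2}$- (or $G$-) Shtuka is a closed subscheme of the Quot-type scheme of line subbundles of $\calE_{0}$ of the prescribed degree, cut out by the condition that the propagated chain closes up under $\iota$; this gives representability. Unramifiedness: infinitesimal deformations of such an $\calL_{0}$ fixing $\calE_{0}$ are governed by $\Hom(\calL_{0},\calE_{0}/\calL_{0})$ but the closing-up constraint imposes a Frobenius-linear relation (as in the tangent computation in Lemma \ref{l:ShtB smooth}), which over a field forces the tangent space of the fiber to vanish — equivalently one checks that the relative cotangent complex is zero, so $p_{d,\gen}$ is unramified.

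\emph{Quasi-finiteness and properness.} Quasi-finiteness is the classical finiteness of the number of line subbundles of given degree of a vector bundle on a curve over a field (a Weil / Nagata-type bound), combined with the rigidity of the propagation just described; this is exactly the input cited from Drinfeld and Varshavsky. For properness, I would use the valuative criterion: given a DVR $R$ with fraction field $E$, a Shtuka over $R$, and a line sub-Shtuka over $E$, one must extend the line subbundle over $R$. Here the saturation of $\calL_{0,E}$ inside $\calE_{0,R}$ gives a line subbundle over $R$ automatically; the work is to check the numerical condition $2\deg-\deg\calE = d(i)$ is preserved (it cannot jump on a proper curve over $R$ by flatness) and that the extended chain still closes up under $\iota_{R}$, which follows from uniqueness of saturation and faithful flatness of $R\to E$. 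The only subtlety is handling the generic-point base (working over $k(\gen)$), but since everything in sight is of finite type after base change, this is harmless.

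\emph{Main obstacle.} The genuine difficulty is the propagation-and-closing-up argument at the heart of representability/properness: one must verify that the degree bookkeeping $d\in\calD$ (so $d(i)-d(i-1)=\pm1$) interacts correctly with the modification types $\mu_{i}$ so that no ambiguity in the choice of $\calL_{i+1}$ ever arises — this is where the proof of Lemma \ref{l:ShtBG} had to split into cases according to the sign of $d'(i+1)$, and the analog here for general $d\in\calD$ (not just $d\in\calD^{+}$) must be handled with some care, since for $d\notin\calD^{+}$ the line subbundle $\calL_{i}$ need not be \emph{the} maximal line subbundle of $\calE_{i}$. I would expect the cleanest route is to observe that over the geometric generic point the supports of the modifications $x_{i}$ are all distinct, so the rational maps $f_{i}$ are isomorphisms away from a finite set, and the saturation at each $x_{i}$ is a purely local computation in a two-dimensional lattice — reducing the whole matter to an affine-Grassmannian bookkeeping identical in spirit to Lemma \ref{l:H'}. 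With that reduction in hand, all three properties fall out, and the cited result of Drinfeld--Varshavsky packages the quasi-finiteness count.
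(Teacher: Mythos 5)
First, note that the paper itself gives no argument for this lemma: it is quoted from Drinfeld and Varshavsky, so there is no internal proof to match your proposal against; the question is whether your sketch would actually establish what those propositions assert. It would not, for two reasons. The smaller one: your quasi-finiteness step rests on "the classical finiteness of the number of line subbundles of given degree of a vector bundle on a curve", which is false — for $d\in\calD$ with some $d(i)\le 0$ (allowed here, unlike in Lemma \ref{l:ShtBG}) the line subbundles of the prescribed degree form a positive-dimensional family in general. Finiteness of the fibres must come from the Shtuka structure, not from bundle theory; in fact it follows from your own unramifiedness computation (the relative tangent space is the kernel of the transport map $\Hom(\calL_0,\calE_0/\calL_0)\to\Hom(\leftexp{\tau}\calL_0,\leftexp{\tau}\calE_0/\leftexp{\tau}\calL_0)$, since the Frobenius pullback of a first-order deformation is trivial, and that transport map is injective being a nonzero map of line bundles on an integral curve — a step you should make explicit), because a finite-type fibre all of whose points are unramified is finite. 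So that part is repairable, but as written the justification is wrong and partly circular (you also appeal to the cited results for exactly this point).

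The serious gap is properness. In the valuative-criterion step you claim the saturation of $\calL_{0,E}$ in $\calE_{0,R}$ "gives a line subbundle over $R$ automatically" and that the numerical condition "cannot jump on a proper curve over $R$ by flatness". Flatness only fixes the degree of the flat limit as a subsheaf; on the special fibre that limit need not be saturated, and its saturation can have strictly larger degree, i.e.\ the point can leave the stratum — this is exactly the Harder--Narasimhan jumping phenomenon, and there is no purely bundle-theoretic reason it cannot occur. Ruling it out is the actual content of Drinfeld's Prop.\ 4.2 and Varshavsky's Prop.\ 5.7, and it genuinely uses that one works over the geometric generic point of $X^{r}$: your argument never invokes the genericity of $\gen$ (you even call it "harmless"), so if it were correct it would prove properness of $p_{d}$ over arbitrary points of $X^{r}$, where the image of $p_{d}$ is in general only a locally closed instability stratum (cf.\ Lemma \ref{l:ShtBG}) and the map is not proper. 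Relatedly, your "main obstacle" is misidentified: the propagation/closing-up bookkeeping is easy (each $\calL_{i}$ is determined by its generic fibre), and the reduction to affine-Grassmannian combinatorics as in Lemma \ref{l:H'} does not address the properness issue, which is where the real work — and the use of genericity — lies.
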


\subsubsection{The cohomological constant term} Taking the geometric generic fiber of the diagram \eqref{ShtB as corr}, we view $\Sht^{d}_{B,\gen}$ as a correspondence between $\Sht_{G,\gen}$ and $\Sht^{d}_{H,\gen}$. The fundamental cycle of $\Sht^{d}_{B,\gen}$ (of dimension $r/2$) gives a cohomological correspondence between the constant sheaf on $\Sht_{G,\gen}$ and the shifted constant sheaf $\Ql[-r](-r/2)$ on $\Sht^{d}_{H,\gen}$. Therefore $[\Sht^{d}_{B,\gen}]$ induces a map
\begin{equation}\label{coho const term}
\gamma_{d}: \cohoc{r}{\Sht_{G,\gen}}(r/2)\xrightarrow{p^{*}_{d,\gen}} \cohoc{r}{\Sht^{d}_{B,\gen}}(r/2)\xrightarrow{[\Sht^{d}_{B,\gen}]} \homog{0}{\Sht^{d}_{B,\gen}}\xrightarrow{q_{d,\gen,!}}\homog{0}{\Sht^{d}_{H,\gen}}. 	\index{$\gamma_{d}, \gamma_{\delta}$}%
\end{equation}
Here we are implicitly using Lemma \ref{l:pgen finite} to conclude that $p_{d,\gen}$ is proper, hence $p^{*}_{d,\gen}$ induces a map between compactly supported cohomology groups.

Taking the product of $\gamma_{d}$ for all $d$ in a fixed $\ZZ$-orbit $\delta\in \calD/\ZZ$, using the decomposition \eqref{Sht into components}, we get a map
\begin{equation}\label{c horocycle gen}
\gamma_{\delta}: \cohoc{r}{\Sht_{G,\gen}}(r/2) \to \prod_{d\in\delta}\homog{0}{\Sht^{d}_{H,\gen}}\cong\cohog{0}{\Sht^{\mu(\delta)}_{1,\gen}}.
\end{equation}
When $r=0$,   \eqref{c horocycle gen}  is exactly the constant term map for automorphic forms. Therefore we may call $\gamma_{\delta}$ the {\em cohomological constant term map}.

The RHS of \eqref{c horocycle gen} carries an action of the Hecke algebra $\sH_{H}=\otimes_{x\in|X|}\QQ[t_{x},t_{x}^{-1}]$. In fact, $\Sht^{\mu(\delta)}_{1,\gen}$ is a $\Pic_{X}(k)$-torsor over $\Spec k(\gen)$. The action of $\sH_{H}$ on $\Sht^{\mu(\delta)}_{H,\gen}$ is via the natural map $\sH_{H}\cong\QQ[\Div(X)]\to\QQ[\Pic_{X}(k)]$.	\index{$\sH_H$}%

\begin{lemma}\label{l:horo action} The map $\gamma_{\delta}$ in \eqref{c horocycle gen} intertwines the $\sH$-action on the LHS and the $\sH_{H}$-action on the RHS via the Satake transform $\Sat: \sH\incl\sH_{H}$.
\end{lemma}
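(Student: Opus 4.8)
The statement is the Shtuka-level analog of the classical fact that the constant-term operator intertwines the spherical Hecke action on automorphic forms with the unnormalized action on the Levi, up to the Satake transform; the only subtlety is keeping track of twists (since we have normalized $\Sat_x$ to send $h_x\mapsto t_x+q_x t_x^{-1}$, avoiding $\sqrt q$). Since both sides of the desired identity are ring actions and $\sH=\otimes_x\sH_x$ is generated by the local generators $h_x$ for $x\in|X|$, it suffices to check the intertwining property for a single $h_x$. Fix a closed point $x\in|X|$. The plan is to compute both $\gamma_\delta\circ C(h_x)$ and $(\text{action of }\Sat(h_x))\circ\gamma_\delta$ geometrically by composing correspondences, and to observe that the two resulting correspondences, viewed as cycles on $\Sht_{G,\gen}\times_{X^r}\Sht^{\mu(\delta')}_{1,\gen}$ for the shifted orbit $\delta'$, coincide.

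\textbf{Key steps.} First, I would write $C(h_x)$ as a composition of cohomological correspondences: $h_x$ corresponds (on the $B$-quotient of $\Sht^r_G$) to the Hecke correspondence $\Sht^r_G(h_x)$ whose two legs are given by a single upper modification of colength one at $x$. Pull this back along $p_d$ and restrict to the horocycle $\Sht^d_{B,\gen}$; the key combinatorial point is that a colength-one modification of a rank-two bundle $\CE$ at $x$, after passing to its maximal line subbundle (valid generically since all $\CE_i$ in a $d\in\calD^+$ stratum are unstable), either modifies the line subbundle $\CL$ by colength one, or modifies the quotient line $\CM$ by colength one, or, when $\CL$ already saturates, fixes $\CL$ — but on the {\em generic} fiber the "degenerate" loci drop dimension, so only the two extremal pieces contribute to the pushforward $q_{d,\gen,!}$. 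This produces exactly the two terms $t_x$ and $q_x t_x^{-1}$ in $\Sat(h_x)$, where the factor $q_x=\#k_x=\deg$ of the fiber of the modification on the line comes from the length of the $\PP^1$ of colength-one sublattices containing the saturation (equivalently, the Euler characteristic of the fiber of the relevant affine Grassmannian Schubert cell, as in the proof of Lemma~\ref{l:H'}). Second, I would identify the $\sH_H$-action on $\cohog{0}{\Sht^{\mu(\delta)}_{1,\gen}}\cong\QQ[\Pic_X(k)]$ coming from Hecke modifications of line bundles at $x$: this is just $t_x$ acting by $\calL\mapsto\calL(x)$ (and $t_x^{-1}$ by $\calL\mapsto\calL(-x)$), which is the composite $\sH_H\cong\QQ[\Div(X)]\to\QQ[\Pic_X(k)]$ already described before the lemma. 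Third, I would match the two sides: the cohomological constant term $\gamma_d$ involves the fundamental cycle $[\Sht^d_{B,\gen}]$ of dimension $r/2$, its formation is compatible with proper pushforward along $q_{d,\gen,!}$ and proper pullback along $p^*_{d,\gen}$ (using Lemma~\ref{l:pgen finite} that $p_{d,\gen}$ is finite, hence proper), so the composition $\gamma_d\circ C(h_x)$ equals the pushforward to $\Sht^{\mu(\delta')}_{1,\gen}$ of the appropriate $(r/2)$-cycle on the composed correspondence; this composed cycle is exactly the one representing $(t_x+q_x t_x^{-1})\circ\gamma_\delta$. The twists match because the normalization of $\Sat_x$ is tailored to this (which is the content of the remark "designed to make it compatible with constant term operators" after the definition of $\Sat_x$); concretely, the Tate twist $(r/2)$ in $\gamma_d$ is preserved by the correspondence since the modification on the line contributes a twist that is absorbed into the $q_x$ coefficient rather than into a shift.

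\textbf{Main obstacle.} The routine part is the compatibility of fundamental cycles with composition of correspondences and with $p^*_{d,\gen}$, $q_{d,\gen,!}$ — this is standard once one knows $p_{d,\gen}$ is finite (Lemma~\ref{l:pgen finite}) and $q_d$ is smooth of relative dimension $r/2$ (Lemma~\ref{l:ShtB smooth}), both already available. The real work, and the step I expect to be the main obstacle, is the local combinatorial analysis at $x$: showing precisely that the pullback of the colength-one Hecke correspondence to the horocycle $\Sht^d_{B,\gen}$ decomposes — up to lower-dimensional loci that die under $q_{d,\gen,!}$ — into a "raise the line" piece and a "lower the line" piece, and that the latter comes with multiplicity $q_x$ arising from the degree-$q_x$ fiber over the point where the saturated sublattice is forced. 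One must also be careful that the degenerate stratum (where $\CL$ already saturates under the modification, producing a third term a priori) genuinely contributes nothing generically; this is where the genericity of $\gen$, and the dimension estimates analogous to those in \S\ref{sss:proof HCart dim HkrGd}, are used. A clean way to organize this is to base-change to the formal disk at $x$ and reduce the whole computation to an identity of cycles on a product of affine Grassmannians for $\GL_2$ and its diagonal torus, which is precisely the geometric Satake statement in rank one and colength one; invoking (the very special, elementary case of) geometric Satake then yields the intertwining with the stated normalization.
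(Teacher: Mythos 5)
Your overall skeleton (reduce to $h_{x}$, restrict the Hecke correspondence to the horocycle, split it into a piece that raises the line sub-Shtuka and a piece that lowers it, with a factor $q_{x}$ on the lowering piece) is the same as the paper's, but the mechanism you propose for the key local step is not the one that actually works, and as stated it has a genuine gap. First, there is no degenerate third stratum to be killed by genericity or by a dimension estimate: for a Shtuka whose legs avoid $x$, the compatibility of the modification $\phi_{i}:\CE_{i}\to\CE'_{i}$ with the maps $f_{i}$ and with $\iota$ forces the modification at $x$ to be Frobenius-equivariant, so the fibre $\CE_{0,x}$ descends to a two-dimensional $k_{x}$-vector space and the choice of modification is a line $e_{x}$ in it; comparing $e_{x}$ with the line $\ell_{x}=\CL_{0,x}$ coming from the $B$-structure gives an exact disjoint decomposition of $\Sht^{d}_{B}(h_{x})|_{U^{r}}$ indexed by the two cells of $B(k_{x})\backslash G(k_{x})/B(k_{x})$, valid over all of $U^{r}$ and for every $d\in\calD$ (no appeal to maximal line subbundles or to $d\in\calD^{+}$, which would anyway not cover all terms of the product $\gamma_{\delta}=\prod_{d\in\delta}\gamma_{d}$). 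Second, your explanation of the factor $q_{x}$ (``length of the $\PP^1$ of colength-one sublattices'', Euler characteristic of an affine Grassmannian cell) is not correct: $\#\PP^1(k_{x})=q_{x}+1$, and an Euler characteristic of a Schubert cell would give $1$, not $q_{x}$. The $q_{x}$ is a point count over $k_{x}$: the unit-coset piece $C_{1}$ ($e_{x}=\ell_{x}$) is the graph of a finite \'etale map $\varphi_{x}:\Sht^{d}_{B,U^{r}}\to\Sht^{d+d_{x}}_{B,U^{r}}$ of degree $q_{x}$ and yields the $t_{x}$-term with coefficient $1$, while the big-cell piece $C_{2}$ ($e_{x}\neq\ell_{x}$, with $q_{x}$ choices of $e_{x}$ over each point) is the transpose of such a graph and yields $q_{x}t_{x}^{-1}$; this asymmetry is exactly what produces the normalization $t_{x}+q_{x}t_{x}^{-1}$, and it cannot be recovered from a geometric Satake computation over $\kbar$ on the formal disk, which sees neither the Frobenius-fixedness constraint nor any power of $q_{x}$.

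Concretely, your plan to ``base-change to the formal disk at $x$ and reduce to an identity of cycles on affine Grassmannians'' would compute the wrong object: without the Frobenius-equivariance the fibre of $\oll{p}$ over a Shtuka would be a full $\PP^1$, whereas in fact $\oll{p}$ and $\orr{p}$ are finite \'etale of degree $q_{x}+1$ over $U^{r}$, and it is precisely this finiteness (via the map $\om_{x}:\Sht_{G,U^{r}}\to\BB(G(k_{x}))$) that makes the whole computation an elementary Bruhat-decomposition count. The remaining ingredient, which your write-up also does not address, is the compatibility of the two-piece decomposition on the horocycles with the operator $C(h_{x})$ downstairs: in the paper this is the Cartesian square relating $\coprod_{d}\Sht^{d}_{B}(h_{x})_{U^{r}}$ to $\Sht_{G}(h_{x})_{U^{r}}$, checked by matching the degrees $q_{x}+1=1+q_{x}$ on geometric fibres; your appeal to general compatibility of fundamental classes with $p^{*}_{d,\gen}$ and $q_{d,\gen,!}$ does not by itself give this. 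So the statement is true and your strategy points in the right direction, but the argument as proposed (maximal subbundles, generic dimension drop, geometric Satake) would not go through; the missing idea is the descent of $\CE_{0,x}$ to $k_{x}$ and the resulting finite flag-variety description of the correspondence at $x$.
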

\begin{proof}
Since $\sH$ is generated by $\{h_{x}\}_{x\in |X|}$ as a $\QQ$-algebra, it suffices to show that for any $x\in|X|$, the following diagram is commutative
\begin{equation}\label{hx tx}
\xymatrix{\cohoc{r}{\Sht_{G,\gen}}\ar[d]^{C(h_{x})}\ar[rr]^{\gamma_{\delta}} && \prod_{d\in\delta} \homog{0}{\Sht^{d}_{H,\gen}}\ar[d]^{t_{x}+q_{x}t^{-1}_{x}}\\
\cohoc{r}{\Sht_{G,\gen}}\ar[rr]^{ \gamma_{\delta}} && 
\prod_{d\in\delta}\homog{0}{\Sht^{d}_{H,\gen}}}
\end{equation}

Let $U=X-\{x\}$. For a stack $\frX$ over $X^{r}$, we use $\frX_{U^{r}}$ to denote its restriction to $U^{r}$. Similar notation applies to morphisms over $X^{r}$.

Recall that $\Sht_{G, U^{r}}(S)$ classifies $(\CE_{i};x_{i};f_{i};\iota)$ such that $x_{i}$ are disjoint from  $x$. Hence the composition $\iota\circ f_{r}\cdots f_{1}:\CE_{0}\dashrightarrow\leftexp{\tau}\CE_{0}$ is an isomorphism near $x$. In particular, the fiber $\CE_{0,x}=\CE_{0}|_{S\times\{x\}}$ carries a Frobenius structure $\CE_{0,x}\cong\leftexp{\tau}\CE_{0,x}$, hence $\CE_{0,x}$ descends to a two-dimensional vector space over $\Spec k_{x}$ ($k_{x}$ is the residue field of $X$ at $x$) up to tensoring with a line.  In other words, there is a morphism $\om_{x}: \Sht_{G, U^{r}}\to \BB(G(k_{x}))$ sending $(x_{i};\CE_{i};f_{i};\iota)$ to the descent of $\CE_{0,x}$ to $\Spec k_{x}$. In the following we shall understand that $\CE_{0,x}$ is a 2-dimensional vector space over $k_{x}$, up to tensoring with a line over $k_{x}$.

The correspondence $\Sht_{G}(h_{x})_{U^{r}}$ classifies diagrams of the form \eqref{Sht hD} where the vertical maps have divisor $x$. Therefore, if the first row in \eqref{Sht hD} is fixed, the bottom row is determined by $\CE'_{0}$, which in turn is determined by the line $e_{x}=\ker(\CE_{0,x}\to\CE'_{0,x})$ over $k_{x}$.  Recall that $\oll{p}$ and $\orr{p}:\Sht_{G}(h_{x})\to \Sht_{G}$ are the projections sending \eqref{Sht hD} to the top and  bottom row respectively. Then we have a Cartesian diagram
\begin{equation*}
\xymatrix{\Sht_{G}(h_{x})_{U^{r}}\ar[r]\ar[d]^{\oll{p}_{U^{r}}} & \BB(B(k_{x}))\ar[d]\\
\Sht_{G, U^{r}}\ar[r]^{\om_{x}} & \BB(G(k_{x}))}
\end{equation*}
where $B\subset G$ is a Borel subgroup.   We have a similar Cartesian diagram where $\oll{p}_{U^{r}}$ is replaced with $\orr{p}_{U^{r}}$. In particular, $\oll{p}_{U^{r}}$ and $\orr{p}_{U^{r}}$ are finite \'etale of degree $q_{x}+1$.

Let $\Sht^{d}_{B}(h_{x})$ be the base change of $\oll{p}$ along $\Sht^{d}_{B}\to \Sht_{G}$. Let $\oll{p}_{B}: \Sht^{d}_{B}(h_{x})_{U^{r}}\to \Sht^{d}_{B,U^{r}}$ be the base-changed map restricted to $U^{r}$. A point  $(\CL_{i}\incl \CE_{i}; x_{i};f_{i};\iota)\in \Sht^{d}_{B}$ gives another line $\ell_{x}:=\CL_{0,x}\subset \CE_{0,x}$. Therefore, for a point $(\CL_{i}\incl\CE_{i}\to \CE'_{i};\cdots)\in\Sht^{d}_{B}(h_{x})|_{U^{r}}$, we get two lines $\ell_{x}$ and $e_{x}$ inside $\CE_{0,x}$. In other words we have a morphism
\begin{equation*}
\om: \Sht^{d}_{B}(h_{x})_{U^{r}}\to \BB(B(k_{x}))\times_{\BB(G(k_{x}))}\BB(B(k_{x}))=B(k_{x})\bs G(k_{x})/B(k_{x})
\end{equation*}
This allows us to decompose $\Sht^{d}_{B}(h_{x})_{U^{r}}$ into the disjoint union of two parts
\begin{equation*}
\Sht^{d}_{B}(h_{x})_{U^{r}}=C_{1}\coprod C_{2}
\end{equation*}
where $C_{1}$ is the preimage of the unit coset $B(k_{x})\bs B(k_{x})/B(k_{x})$ and $C_{2}$ is the preimage of the complement.

For a point $(\CL_{i}\incl\CE_{i}\incl\CE'_{i};\cdots)\in C_{1}$, $\CE'_{i}$ is determined by $e_{x}=\ell_{x}=\CL_{0,x}$. Therefore the map $\oll{p_{B,1}}:=\oll{p}_{B}|_{C_{1}}: C_{1}\to \Sht^{d}_{B,U^{r}}$ is an isomorphism. In this case, $\CE'_{i}$ is obtained via the pushout of $\CL_{i}\to \CE_{i}$ along $\CL_{i}\incl\CL_{i}(x)$. This way we get an exact sequence $0\to \CL_{i}(x)\to\CE_{i}(x)\to \CM_{i}\to0$ where $\CM_{i}=\CE_{i}/\CL_{i}$. We define a map $\orr{p_{B,1}}: C_{1}\to \Sht^{d+d_{x}}_{B,U^{r}}$ sending $(\CL_{i}\incl\CE_{i}\incl\CE'_{i};\cdots)\in C_{1}$ to $(\CL_{i}(x)\incl\CE'_{i};\cdots)$. Since $\oll{p_{B,1}}$ is an isomorphism, $C_{1}$ viewed as a correspondence between $\Sht^{d}_{B,U^{r}}$ and $\Sht^{d+d_{x}}_{B,U^{r}}$ can be identified with the graph of the map $\varphi_{x}:=\orr{p_{B,1}}\circ\, \oll{p_{B,1}}^{-1}: \Sht^{d}_{B,U^{r}}\to\Sht^{d+d_{x}}_{B,U^{r}}$. Note that $\varphi_{x}$ is a finite \'etale map of degree $q_{x}$.  We have a commutative diagram
\begin{equation*}
\xymatrix{\Sht^{d}_{H,U^{r}} & \Gamma(t_{x})\ar[l]_{\id}\ar[r]^{t_{x}} & \Sht^{d+d_{x}}_{H,U^{r}}\\
\Sht^{d}_{B,U^{r}}\ar[d]^{p_{d}}\ar[u]_{q_{d}} & C_{1}=\Gamma(\varphi_{x})\ar[u]\ar[d]\ar[l]_{\oll{p_{B,1}}}^{\sim}\ar[r]^{\orr{p_{B,1}}} &\Sht^{d+d_{x}}_{B,U^{r}}\ar[d]^{p_{d+d_{x}}}\ar[u]_{q_{d+d_{x}}}\\
\Sht_{G,U^{r}} & \Sht_{G}(h_{x})_{U^{r}}\ar[l]_{\oll{p}}\ar[r]^{\orr{p}} & \Sht_{G,U^{r}}}
\end{equation*} 
Here $\Gamma(t_{x})$ is the graph of the isomorphism $\Sht^{d}_{H,U^{r}}\to \Sht^{d+d_{x}}_{H,U^{r}}$ given by tensoring the line bundles with $\calO(x)$.  Therefore the action of $[C_{1}]$ on the compactly supported cohomology of the generic fiber of $\Sht^{d}_{B}$ fits into a commutative diagram
\begin{equation}\label{action C1}
\xymatrix{\cohoc{r}{\Sht^{d}_{B,\gen}}(r/2)\ar[d]^{[C_{1}]}\ar[rr]^{[\Sht^{d}_{B,\gen}]} && \homog{0}{\Sht^{d}_{B,\gen}}\ar[d]^{\varphi_{x,*}}\ar[r] & \homog{0}{\Sht^{d}_{H,\gen}}\ar[d]^{t_{x}}\\
\cohoc{r}{\Sht^{d+d_{x}}_{B,\gen}}(r/2)\ar[rr]^{[\Sht^{d+d_{x}}_{B,\gen}]} && \homog{0}{\Sht^{d+d_{x}}_{B,\gen}}\ar[r] & \homog{0}{\Sht^{d+d_{x}}_{H,\gen}}}
\end{equation}

Similarly for $C_{2}$, we define a morphism $\orr{p_{B,2}}: C_{2}\to \Sht^{d-d_{x}}_{B,U^{r}}$ sending $(\CL_{i}\incl\CE_{i}\incl\CE'_{i};\cdots)\in C_{2}$ to $(\CL_{i}\incl\CE'_{i};\cdots)$. Then $\orr{p_{B,2}}$ is an isomorphism while $\oll{p}_{B,2}=\oll{p}_{B}|_{C_{2}}$ is finite \'etale of degree $q_{x}$. Therefore $C_{2}$ viewed as a correspondence between $\Sht^{d}_{B,U^{r}}$ and $\Sht^{d-d_{x}}_{B,U^{r}}$ can be identified with the {\em transpose} of the graph of the map $\varphi_{x}:\Sht^{d-d_{x}}_{B,U^{r}}\to \Sht^{d}_{B,U^{r}}$ defined previously. We also have a commutative diagram
\begin{equation*}
\xymatrix{\Sht^{d}_{H,U^{r}} & \Gamma(t^{-1}_{x})\ar[l]_{\id}\ar[r]^{t^{-1}_{x}} & \Sht^{d-d_{x}}_{H,U^{r}}\\
\Sht^{d}_{B,U^{r}}\ar[d]^{p_{d}}\ar[u]_{q_{d}} & C_{2}=\leftexp{t}\Gamma(\varphi_{x})\ar[u]\ar[d]\ar[l]_{\oll{p_{B,2}}}\ar[r]^{\orr{p_{B,2}}}_{\sim} &\Sht^{d-d_{x}}_{B,U^{r}}\ar[d]^{p_{d-d_{x}}}\ar[u]_{q_{d-d_{x}}}\\
\Sht_{G,U^{r}} & \Sht_{G}(h_{x})_{U^{r}}\ar[l]_{\oll{p}}\ar[r]^{\orr{p}} & \Sht_{G,U^{r}}}
\end{equation*} 
The action of $[C_{2}]$ on the compactly supported cohomology of the generic fibers of $\Sht^{d}_{B}$ fits into a commutative diagram
\begin{equation}\label{action C2}
\xymatrix{\cohoc{r}{\Sht^{d}_{B,\gen}}(r/2)\ar[d]^{[C_{2}]=\varphi^{*}_{x}}\ar[rr]^{[\Sht^{d}_{B,\gen}]} && \homog{0}{\Sht^{d}_{B,\gen}}\ar[r] & \homog{0}{\Sht^{d}_{H,\gen}}\\
\cohoc{r}{\Sht^{d-d_{x}}_{B,\gen}}(r/2)\ar[rr]^{[\Sht^{d-d_{x}}_{B,\gen}]} && \homog{0}{\Sht^{d-d_{x}}_{B,\gen}}\ar[r]\ar[u]_{q_{x}^{-1}\varphi_{x,*}} & \homog{0}{\Sht^{d-d_{x}}_{H,\gen}}\ar[u]_{q^{-1}_{x}t_{x}}}
\end{equation}
The appearance of $q_{x}$ in the above diagram is because the degree of $\varphi_{x}$ is $q_{x}$. Combining \eqref{action C1} and \eqref{action C2} we get a commutative diagram
\begin{equation}\label{C1C2}
\xymatrix{\prod_{d\in\delta}\cohoc{r}{\Sht^{d}_{B,\gen}}(r/2)\ar[r]\ar[d]^{[C_{1}]+[C_{2}]} & \prod_{d\in\delta}\homog{0}{\Sht^{d}_{H,\gen}}\ar[d]^{t_{x}+q_{x}t^{-1}_{x}}\\
\prod_{d\in\delta}\cohoc{r}{\Sht^{d}_{B,\gen}}(r/2)\ar[r] & \prod_{d\in\delta}\homog{0}{\Sht^{d}_{H,\gen}}}
\end{equation}

Finally, let $\orr{p_{B}}: \Sht^{d}_{B}(h_{x})_{U^{r}}\to \Sht^{d}_{B,U^{r}}$ be $\orr{p_{B,1}}$ on $C_{1}$ and $\orr{p_{B,2}}$ on $C_{2}$. Consider the commutative diagram 
\begin{equation*}
\xymatrix{\coprod_{d\in\delta}\Sht^{d}_{B}(h_{x})_{U^{r}}\ar[d]^{(p_{d})_{d\in\delta}}\ar[r]^{\orr{p_{B}}} & \coprod_{d\in\delta}\Sht^{d}_{B,U^{r}}\ar[d]^{(p_{d})_{d\in\delta}}\\
\Sht_{G}(h_{x})_{U^{r}}\ar[r]^{\orr{p}} & \Sht_{G,U^{r}}}
\end{equation*}
Since $\orr{p_{B}}$ and $\orr{p}$ are both finite \'etale of degree $q_{x}+1$, by examining geometric fibers we conclude that the above diagram is Cartesian. The similar diagram with $\orr{p_{B}}$ and $\orr{p}$ replaced with  $\oll{p_{B}}$ and $\oll{p}$ is Cartesian by definition. From these facts we get a commutative diagram
\begin{equation*}
\xymatrix{\cohoc{r}{\Sht_{G,\gen}}\ar[d]^{C(h_{x})}\ar[rr]^{(p^{*}_{d})_{d\in\delta}} && \prod_{d\in\delta} \cohoc{r}{\Sht^{d}_{B,\gen}}\ar[d]^{[C_{1}]+[C_{2}]}\\
\cohoc{r}{\Sht_{G,\gen}}\ar[rr]^{ (p^{*}_{d})_{d\in\delta}} && 
\prod_{d\in\delta}\cohoc{r}{\Sht^{d}_{B,\gen}}}
\end{equation*}
Combining this with \eqref{C1C2} we obtain \eqref{hx tx}, as desired.
\end{proof}

\subsection{Finiteness} 

For fixed $d\in\calD$, the Leray spectral sequence associated with  the map $\pi^{\leq d}_{G}$ gives an increasing filtration $L_{\leq i}\cohoc{2r}{\Sht^{\leq d}_{G}\otimes_{k}\kbar}$ on $\cohoc{2r}{\Sht^{\leq d}_{G}\otimes_{k}\kbar}$, with $L_{\leq i}\cohoc{2r}{\Sht^{\leq d}_{G}\otimes_{k}\kbar}$ being the image of $\cohog{2r}{X^{r}\otimes_{k}\kbar, \tau_{\leq i}\bR\pi^{\leq d}_{G,!}\Ql}\to \cohog{2r}{X^{r}\otimes_{k}\kbar, \bR\pi^{\leq d}_{G,!}\Ql}\cong\cohoc{2r}{\Sht^{\leq d}_{G}\otimes_{k}\kbar}$. Here $\tau_{\leq i}$ means the truncation in the usual $t$-structure of $D^{b}_{c}(X^{r},\Ql)$. 	\index{$\tau_{\leq i}$}%
Let $L_{\leq i}V$ be the inductive limit $\varinjlim_{d\in\calD}L_{\leq i}\cohoc{2r}{\Sht^{\leq d}_{G}\otimes_{k}\kbar}(r)$, which is a subspace of $V$. This way we get a filtration on $V$
\begin{equation*}
0\subset L_{\leq 0}V\subset L_{\leq 1}V\subset\cdots\subset L_{\leq 2r}V=V.		\index{$L_{\leq i}V$}%
\end{equation*}

\begin{lemma}\label{l:L stable under Hk}
Each $L_{\leq i}V$ is stable under the action of $\sH$. 
\end{lemma}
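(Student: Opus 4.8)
The plan is to show that the Leray filtration $L_{\leq i}V$ is preserved by each Hecke operator $C(h_D)$, using functoriality of the Leray spectral sequence with respect to the cohomological correspondence defining $C(h_D)$. First I would recall the setup: for fixed $d, d' \in \calD$ with $d' \geq d + \deg D$, the cohomological correspondence $[\leftexp{\leq d}{\Sht}_G(h_D)]$ induces the map $C(h_D)_{d,d'} : \bR\pi^{\leq d}_{G,!}\Ql \to \bR\pi^{\leq d'}_{G,!}\Ql$ of complexes on $X^r$, and $L_{\leq i}\cohoc{2r}{\Sht^{\leq d}_G \otimes_k \kbar}$ is by definition the image of $\cohog{2r}{X^r \otimes_k \kbar, \tau_{\leq i}\bR\pi^{\leq d}_{G,!}\Ql}$. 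The key point is that $C(h_D)_{d,d'}$ is a genuine morphism in $D^b_c(X^r, \Ql)$, hence commutes (up to the canonical natural transformations) with the truncation functors $\tau_{\leq i}$: applying $\tau_{\leq i}$ to $C(h_D)_{d,d'}$ gives a map $\tau_{\leq i}\bR\pi^{\leq d}_{G,!}\Ql \to \tau_{\leq i}\bR\pi^{\leq d'}_{G,!}\Ql$ fitting into a commutative square with the inclusions into the untruncated complexes.

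The main steps I would carry out are as follows. Step one: observe that for any morphism $\phi : A \to B$ in $D^b_c(X^r,\Ql)$ and any $i$, functoriality of the truncation triangle gives a commutative diagram with rows $\tau_{\leq i}A \to A$ and $\tau_{\leq i}B \to B$ and vertical maps $\tau_{\leq i}\phi$ and $\phi$; taking $\cohog{2r}{X^r\otimes_k\kbar, -}$ then shows $\phi$ sends the image of $\cohog{2r}{X^r\otimes_k\kbar, \tau_{\leq i}A}$ into the image of $\cohog{2r}{X^r\otimes_k\kbar,\tau_{\leq i}B}$. Step two: apply this with $\phi = C(h_D)_{d,d'}$, $A = \bR\pi^{\leq d}_{G,!}\Ql$, $B = \bR\pi^{\leq d'}_{G,!}\Ql$, concluding that $C(h_D)_{d,d'}$ carries $L_{\leq i}\cohoc{2r}{\Sht^{\leq d}_G \otimes_k\kbar}$ into $L_{\leq i}\cohoc{2r}{\Sht^{\leq d'}_G\otimes_k\kbar}$. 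Step three: the maps $\iota_{d,d'}$ are induced by open inclusions over $X^r$, so they too respect the $\tau_{\leq i}$-truncations, hence preserve the $L_{\leq i}$-subspaces; therefore passing to the inductive limit over $d$ (using that the transition maps in both the filtration and the definition of $C(h_D)$ are compatible, as recorded in the commutative square in the construction of $C(h_D)$ just before the current lemma) shows $C(h_D)$ preserves $L_{\leq i}V$. Step four: since $\{h_D\}$ spans $\sH$ over $\QQ$, linearity gives that every $f \in \sH$ preserves $L_{\leq i}V$.

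The only genuinely delicate point is making sure the truncation-compatibility is with respect to the correct $t$-structure and that the inductive limit is taken cleanly: one must note that the Leray filtration is defined via $\tau_{\leq i}$ in the usual $t$-structure on $D^b_c(X^r, \Ql)$ (which is exactly how $L_{\leq i}V$ is introduced in the excerpt), and $\tau_{\leq i}$ is a functor on $D^b_c(X^r,\Ql)$, so it automatically commutes with morphisms of complexes; there is no perversity subtlety here. The compatibility of $C(h_D)_{d,d'}$ with the transition maps $\iota_{d,e}$, needed to pass to the limit, is precisely the commutative square displayed right before this lemma in the construction of $C(h_D)$, so no new verification is required. Thus I do not expect a serious obstacle; the argument is a formal consequence of the fact that Hecke operators on $V$ are, by construction, induced by honest morphisms of the complexes $\bR\pi^{\leq d}_{G,!}\Ql$ on $X^r$, together with functoriality of truncation.

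\medskip
\begin{proof}
Since $\{h_D\}$ spans $\sH$ over $\QQ$, it suffices to show that $C(h_D)$ preserves $L_{\leq i}V$ for every effective divisor $D$ and every $i$.

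Fix $i$. For any morphism $\phi : A \to B$ in $D^b_c(X^r,\Ql)$, functoriality of the canonical truncation gives a commutative square
\begin{equation*}
\xymatrix{\tau_{\leq i}A\ar[r]\ar[d]^{\tau_{\leq i}\phi} & A\ar[d]^{\phi}\\
\tau_{\leq i}B\ar[r] & B}
\end{equation*}
Applying $\cohog{2r}{X^r\otimes_k\kbar,-}$ we conclude that $\phi$ carries the image of $\cohog{2r}{X^r\otimes_k\kbar,\tau_{\leq i}A}$ in $\cohog{2r}{X^r\otimes_k\kbar, A}$ into the image of $\cohog{2r}{X^r\otimes_k\kbar,\tau_{\leq i}B}$ in $\cohog{2r}{X^r\otimes_k\kbar, B}$.

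Now fix $d\in\calD$ and choose $d'\in\calD$ with $d'\geq d+\deg D$. Applying the above to the morphism $C(h_D)_{d,d'}:\bR\pi^{\leq d}_{G,!}\Ql\to\bR\pi^{\leq d'}_{G,!}\Ql$ of \eqref{Hk d d'}, we see that $C(h_D)_{d,d'}$ sends $L_{\leq i}\cohoc{2r}{\Sht^{\leq d}_G\otimes_k\kbar}$ into $L_{\leq i}\cohoc{2r}{\Sht^{\leq d'}_G\otimes_k\kbar}$. Likewise, the map $\iota_{d,e}$, being induced by the open inclusion $\Sht^{\leq d}_G\incl\Sht^{\leq e}_G$ over $X^r$, is a morphism of complexes on $X^r$ and hence also preserves the $L_{\leq i}$-subspaces.

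Recall that $L_{\leq i}V=\varinjlim_{d\in\calD}L_{\leq i}\cohoc{2r}{\Sht^{\leq d}_G\otimes_k\kbar}(r)$ and that $C(h_D)$ is obtained as $\varinjlim C(h_D)_{d,d'}$, the transition maps $\iota_{d,e}$ being compatible with the $C(h_D)_{d,d'}$ by the commutative square in the construction of $C(h_D)$. Passing to the inductive limit, $C(h_D)$ carries $L_{\leq i}V$ into $L_{\leq i}V$. This proves the lemma.
\end{proof}
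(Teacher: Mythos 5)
Your proposal is correct and follows essentially the same route as the paper: both arguments rest on the fact that $C(h_D)_{d,d'}$ is an honest morphism $\bR\pi^{\leq d}_{G,!}\Ql\to\bR\pi^{\leq d'}_{G,!}\Ql$ in $D^b_c(X^r,\Ql)$, hence compatible with the truncations $\tau_{\leq i}$, and then pass to the inductive limit using the commutative square in the construction of $C(h_D)$ before spanning $\sH$ by the $h_D$. No gaps.
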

\begin{proof}
The map $C(h_{D})_{d,d'}$ in \eqref{Hk d d'} induces $\tau_{\leq i}C(h_{D})_{d,d'}: \tau_{\leq i}\bR\pi^{\leq d}_{G,!}\Ql\to \tau_{\leq i}\bR\pi^{\leq d'}_{G,!}\Ql$. By the construction of $C(h_{D})$ we have a commutative diagram
\begin{equation*}
\xymatrix{\varinjlim_{d}\cohog{2r}{X^{r}\otimes_{k}\kbar, \tau_{\leq i}\bR\pi^{\leq d}_{G,!}\Ql}\ar[rr]^{\varinjlim\tau_{\leq i}C(h_{D})_{d,d'}}\ar[d] && \varinjlim_{d'}\cohog{2r}{X^{r}\otimes_{k}\kbar, \tau_{\leq i}\bR\pi^{\leq d'}_{G,!}\Ql}\ar[d]\\
\cohoc{2r}{\Sht_{G}\otimes_{k}\kbar}\ar[rr]^{C(h_{D})} && \cohoc{2r}{\Sht_{G}\otimes_{k}\kbar}}
\end{equation*}
The image of the vertical maps are both $L_{\leq i}V$ up to a Tate twist, therefore $L_{\leq i}V$ is stable under $C(h_{D})$. When $D$ runs over all effective divisors on $X$, $C(h_{D})$ span $\sH$, hence $L_{\leq i}V$ is stable under $\sH$.
\end{proof}

\begin{lemma}\label{l:GrL}
For $i\neq r$, $\Gr^{L}_{i}V:=L_{\leq i}V/L_{\leq i-1}V$ is finite-dimensional over $\Ql$.	\index{$\Gr^{L}_{i}V$}%
\end{lemma}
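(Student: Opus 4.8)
The statement to prove is that for $i\neq r$, the graded piece $\Gr^{L}_{i}V = L_{\leq i}V/L_{\leq i-1}V$ is finite-dimensional over $\Ql$. The plan is to identify $\Gr^{L}_{i}V$ with a subquotient built out of the cohomology sheaves $\pH^{i-r}(\bR\pi^{\leq d}_{G,!}\Ql[r])$ on $X^{r}$ and to show that, apart from the middle degree $i=r$, these perverse cohomology sheaves stabilize with $d$ and live in a bounded range coming from the horocycle stratification. Concretely, since $\pi^{\leq d}_{G}$ is smooth of relative dimension $r$, the complex $\bR\pi^{\leq d}_{G,!}\Ql[r]$ is a complex on $X^{r}$ whose perverse amplitude is controlled, and $\Gr^{L}_{i}V$ is a quotient of $\varinjlim_{d}\cohog{2r-i}{X^{r}\otimes_{k}\kbar, \mathrm{gr}^{\tau}_{i}\bR\pi^{\leq d}_{G,!}\Ql}$, i.e. of the degree-$(2r-i)$ cohomology of $X^{r}$ with coefficients in the $i$-th (ordinary or perverse) cohomology sheaf of $\bR\pi^{\leq d}_{G,!}\Ql$. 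Since $\cohog{*}{X^{r}\otimes_{k}\kbar, \CF}$ is finite-dimensional for any constructible $\CF$, it suffices to show that for $i\neq r$ the system $\{\mathrm{gr}^{\tau}_{i}\bR\pi^{\leq d}_{G,!}\Ql\}_{d\in\calD}$ stabilizes, i.e. becomes independent of $d$ for $d$ large.

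\textbf{Key steps, in order.} First, I would record that the transition maps $\iota_{d,d'}$ have cones computed by the horocycle strata: $\mathrm{cone}(\bR\pi^{<d}_{G,!}\Ql\to\bR\pi^{\leq d}_{G,!}\Ql)\cong\bR\pi^{d}_{G,!}\Ql$, and that by Corollary \ref{c:Eis loc sys}, once $d(i)>2g-2$ for all $i$, this cone is $\pi^{d}_{H,!}\Ql[-r](-r/2)$, a local system placed in cohomological degree $r$. Second, I would deduce that for the cofinal family of $d$ with all $d(i)>2g-2$, adding $1$ to some coordinate of $d$ only changes $\bR\pi^{\leq d}_{G,!}\Ql$ by a cone concentrated in ordinary cohomological degree $r$ (equivalently, in perverse degree $0$ after the shift by $r$). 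Third, I would run the Leray/perverse-Leray spectral sequence: the filtration $L_{\leq\bullet}$ on $\cohoc{2r}{\Sht^{\leq d}_{G}\otimes_k\kbar}(r)$ comes from $\tau_{\leq\bullet}\bR\pi^{\leq d}_{G,!}\Ql$, so $\Gr^{L}_{i}\cohoc{2r}{\Sht^{\leq d}_{G}\otimes_k\kbar}(r)$ is a subquotient of $\cohog{2r-i}{X^{r}\otimes_k\kbar,\, \mathcal{H}^{i}(\bR\pi^{\leq d}_{G,!}\Ql)}(r)$ where $\mathcal{H}^{i}$ is the $i$-th ordinary cohomology sheaf. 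For $i\neq r$, the cone $\bR\pi^{d}_{G,!}\Ql$ (with $d$ large) is concentrated in degree $r$, so $\mathcal{H}^{i}(\bR\pi^{\leq d}_{G,!}\Ql)\xrightarrow{\sim}\mathcal{H}^{i}(\bR\pi^{<d}_{G,!}\Ql)$ for all sufficiently large $d$; hence the inductive system $\{\mathcal{H}^{i}(\bR\pi^{\leq d}_{G,!}\Ql)\}_{d}$ is eventually constant in $i\neq r$, call the limit $\CF_{i}$. Fourth, since $\CF_{i}$ is a single constructible sheaf on $X^{r}$, $\cohog{2r-i}{X^{r}\otimes_k\kbar,\CF_{i}}$ is finite-dimensional, and $\Gr^{L}_{i}V$ is a subquotient of a Tate twist of it, hence finite-dimensional. (One must also note that the direct limit defining $\Gr^{L}_{i}V$ is filtered over $\calD$, and that the subfamily with all coordinates $>2g-2$ is cofinal, so one may compute the limit along it.)

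\textbf{Main obstacle.} The delicate point is the bookkeeping with the truncation functors and the direct limit over $\calD$: one has to check that the filtration $L_{\leq\bullet}V$ really is the direct limit of the naive Leray filtrations, that the transition maps $\iota_{d,d'}$ are strictly compatible with these filtrations (so that $\Gr^{L}$ commutes with $\varinjlim$), and that the cone computation from Corollary \ref{c:Eis loc sys} indeed forces $\mathcal{H}^{i}$ to stabilize for $i\neq r$ rather than merely giving a bound. A subtlety is that $\calD$ is not totally ordered, only a directed poset, so "$d$ large" means "$d$ componentwise exceeds some $d_0$", and one needs that any two such systems agree in the limit — this follows from the cofinality but should be stated carefully. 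I would also need to be slightly careful that passing from ordinary cohomology sheaves $\mathcal{H}^{i}$ to the associated graded of the canonical filtration is the right indexing to match the definition of $L_{\leq i}V$ given via $\tau_{\leq i}\bR\pi^{\leq d}_{G,!}\Ql$; this is a routine but necessary compatibility. The genuinely infinite-dimensional phenomenon (the Eisenstein part) is exactly what lives in $i=r$, which is why that degree is excluded, and the proof must make transparent that all the "growth" in $d$ is absorbed there.
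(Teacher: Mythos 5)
Your overall strategy is the same as the paper's: feed Corollary \ref{c:Eis loc sys} (the cone of $\bR\pi^{<d}_{G,!}\Ql\to\bR\pi^{\leq d}_{G,!}\Ql$ is a local system in degree $r$ for $d$ large) into the Leray/truncation filtration and conclude that everything away from the middle degree stabilizes in $d$. But your Step 3 contains a genuine error at $i=r+1$. From the triangle $\bR\pi^{<d}_{G,!}\Ql\to\bR\pi^{\leq d}_{G,!}\Ql\to C_d$ with $C_d$ concentrated in degree $r$, the long exact sequence of cohomology sheaves gives isomorphisms $\mathcal{H}^{i}(\bR\pi^{<d}_{G,!}\Ql)\isom\mathcal{H}^{i}(\bR\pi^{\leq d}_{G,!}\Ql)$ only for $i\neq r, r+1$; at $i=r+1$ the connecting map $\mathcal{H}^{r}(C_d)\to\mathcal{H}^{r+1}(\bR\pi^{<d}_{G,!}\Ql)$ can have nonzero image, so the transition map on $\mathcal{H}^{r+1}$ is merely \emph{surjective}, not an isomorphism. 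Your fallback of "a uniform dimension bound instead of stabilization" does not rescue this either: the cohomology of an unspecified quotient sheaf of a fixed constructible sheaf is not bounded by the cohomology of that sheaf, so you really do need the system $\bR^{r+1}\pi^{\leq d}_{G,!}\Ql$ to become constant.

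The missing ingredient is exactly what the paper supplies at this point: a chain of surjections of constructible $\Ql$-sheaves on $X^{r}$ must stabilize (the ascending chain condition), so the eventually-surjective system $\bR^{r+1}\pi^{\leq d}_{G,!}\Ql$ is eventually constant; combined with the genuine isomorphisms in degrees $>r+1$, the system $\tau_{>r}\bR\pi^{\leq d}_{G,!}\Ql$ stabilizes. You flagged "stabilize rather than merely giving a bound" as a delicate point, but your proposal neither proves it nor isolates the correct tool, and as written the claimed isomorphism at $i=r+1$ is false. Aside from this, your packaging differs only cosmetically from the paper's: instead of working graded piece by graded piece, the paper proves the two statements that $L_{\leq r-1}V$ is finite-dimensional (stabilization of $\tau_{\leq r-1}\bR\pi^{\leq d}_{G,!}\Ql$) and that $V/L_{\leq r}V$ is finite-dimensional (stabilization of $\tau_{>r}$ plus a comparison of exact sequences), which together give all $\Gr^{L}_{i}V$ for $i\neq r$ at once; it also fixes a total order refining the partial order on $\calD$ to avoid the indexing issues you worried about.
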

\begin{proof} We say $d\in\calD$ is large if $d(i)>2g-2$ for all $i$.	 In the following argument it is convenient to choose a total order on $\calD$ that extends its partial order. Under the total order, $\Sht^{<d}_{G}=\coprod_{d'<d}\Sht^{d'}_{G}$ and $\Sht^{\leq d}_{G}=\coprod_{d'\leq d}\Sht^{d}_{G}$ are different from their original meanings, and we will use the new notion during the proof.

By Corollary \ref{c:Eis loc sys}, the inductive system $\tau_{\leq r-1}\bR\pi^{\leq d}_{G,!}\Ql$ stabilizes for $d$ large. Hence so does $L_{\leq r-1}\cohoc{2r}{\Sht^{\leq d}_{G}\otimes_{k}\kbar}$. Therefore $L_{\leq r-1}V$ is finite dimensional. 

It remains to show that $V/L_{\leq r}V$ is finite dimensional. Again by Corollary \ref{c:Eis loc sys}, for $d$ large, the map $\bR^{r+1}\pi^{<d}_{G,!}\Ql\to \bR^{r+1}\pi^{\leq d}_{G,!}\Ql$ is surjective because the next term in the long exact sequence is $\bR^{r+1}\pi^{d}_{G,!}\Ql=0$. This implies that the inductive system $\bR^{r+1}\pi^{\leq d}_{G,!}\Ql$ is eventually stable because any chain of surjections $\CF_{1}\surj \CF_{2}\surj\cdots$ of constructible sheaves on $X^{r}$ has to stabilize (i.e., constructible $\Ql$-sheaves satisfy the ascending chain condition). Also by Corollary \ref{c:Eis loc sys}, the inductive system $\tau_{>r+1}\bR\pi^{\leq d}_{G,!}\Ql$ is stable. Combined with the stability of $\bR^{r+1}\pi^{\leq d}_{G,!}\Ql$, we see that the system $\tau_{>r}\bR\pi^{\leq d}_{G,!}\Ql$ is stable. In other words, there exists a large $d_{0}\in\calD$ such that for any $d\geq d_{0}$, the natural map $\tau_{>r}\bR\pi^{<d}_{G,!}\Ql\to\tau_{>r}\bR\pi^{\leq d}_{G,!}\Ql$ is an isomorphism.

We abbreviate $\cohoc{2r}{\Sht^{<d}_{G}\otimes_{k}\kbar}$ by $H_{<d}$ and $\cohoc{2r}{\Sht^{\leq d}_{G}\otimes_{k}\kbar}$ by $H_{\leq d}$. For $d\geq d_{0}$, the distinguished triangle of functors $\tau_{\leq r}\to \id\to \tau_{>r}\to$ applied to $\bR\pi^{< d}_{G,!}\Ql$ and $\bR\pi^{\leq d}_{G,!}\Ql$ gives a morphism of exact sequences
\begin{equation*}
\xymatrix{ L_{\leq r} H_{<d}\ar[r]\ar[d] &  H_{<d}\ar[r]\ar[d] & \cohog{2r}{X^{r}\otimes_{k}\kbar, \tau_{>r}\bR\pi^{< d}_{G,!}\Ql}\ar[r]\ar[d]^{\wr} & \cdots\\
L_{\leq r} H_{\leq d}\ar[r] & H_{\leq d} \ar[r] & \cohog{2r}{X^{r}\otimes_{k}\kbar, \tau_{>r}\bR\pi^{\leq d}_{G,!}\Ql}\ar[r] & \cdots}
\end{equation*}
Therefore the inductive system $H_{\leq d}/L_{\leq r}H_{\leq d}$ is a subsystem of $\cohog{2r}{X^{r}\otimes_{k}\kbar, \tau_{>r}\bR\pi^{\leq d}_{G,!}\Ql}$ which is stable with finite-dimensional inductive limit. Hence the inductive system $H_{\leq d}/L_{\leq r}H_{\leq d}$ is itself stable with finite-dimensional inductive limit.
Taking inductive limit on $d$, using that $V=\varinjlim_{d}H_{\leq d}(r)$ and $L_{\leq i}V=\varinjlim_{d}L_{\leq i}H_{\leq d}(r)$, we see that $V/L_{\leq r}V\cong \varinjlim_{d}H_{\leq d}(r)/L_{\leq r}H_{\leq d}(r)$ is finite dimensional.
\end{proof}

\begin{lemma}\label{l:Hcusp image}
The space $\calI_{\Eis}\cdot (L_{\leq r}V)$ is finite-dimensional over $\Ql$.
\end{lemma}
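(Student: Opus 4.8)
The plan is to show that the graded pieces $\Gr^L_i V$ for $i<r$ are killed by $\calI_{\Eis}$, after which Lemma \ref{l:GrL} forces $\calI_{\Eis}\cdot(L_{\leq r}V)$ to be a subspace of $L_{\leq r-1}V$, which is finite-dimensional. So the real content is: $\calI_{\Eis}$ annihilates $\Gr^L_{\leq r-1}V$, and $\calI_{\Eis}$ maps $L_{\leq r}V$ into $L_{\leq r-1}V$. The first of these is what I would concentrate on.

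First I would recall from Corollary \ref{c:Eis loc sys} that for $d\in\calD$ large (meaning $d(i)>2g-2$ for all $i$), the cone of $\bR\pi^{<d}_{G,!}\Ql\to\bR\pi^{\leq d}_{G,!}\Ql$ is $\pi^d_{H,!}\Ql[-r](-r/2)$, a local system placed in degree $r$. Combining this over the cofinal system of large $d$'s (using a total order refining the partial order, as in the proof of Lemma \ref{l:GrL}), the truncated complex $\tau_{\leq r-1}\bR\pi^{\leq d}_{G,!}\Ql$ stabilizes and is built entirely from the degree-$(<r)$ contributions, which come from the horocycle strata $\Sht^d_G\cong\Sht^d_B$ via Lemma \ref{l:ShtBG}. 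Thus the hyperplane-cohomology pieces $\Gr^L_i V$ for $i<r$ are computed by cohomology of the $\Sht^d_H$'s, i.e.\ by the image of the cohomological constant term map $\gamma_\delta$ of \eqref{c horocycle gen}. The key tool is Lemma \ref{l:horo action}: $\gamma_\delta$ intertwines the $\sH$-action with the $\sH_H$-action via the Satake transform. Since $\sH_H\cong\QQ[\Div(X)]$ acts on the right-hand side $\cohog{0}{\Sht^{\mu(\delta)}_{1,\gen}}$ through the quotient $\QQ[\Pic_X(k)]$, and $\calI_{\Eis}$ is by Definition \ref{def:Eis} precisely the kernel of the composite $\sH\xrightarrow{\Sat}\sH_A\cong\QQ[\Div(X)]\surj\QQ[\Pic_X(k)]^{\iota_{\Pic}}$, any $f\in\calI_{\Eis}$ acts as $a_{\Eis}(f)=0$ on the target of $\gamma_\delta$. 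Hence $\calI_{\Eis}$ annihilates the image of $\gamma_\delta$, and therefore annihilates each $\Gr^L_i V$ with $i<r$.

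The slightly delicate point is matching the associated graded of the Leray filtration with the constant-term target \emph{compatibly with} $\sH$; this requires that the filtration maps used to define $L_{\leq i}V$ and the horocycle correspondence maps $p_d,q_d$ are compatible over the base $X^r$, which follows from the functoriality of cohomological correspondences already exploited in the proof of Lemma \ref{l:L stable under Hk} and in Lemma \ref{l:horo action}. Granting this, one concludes: for $f\in\calI_{\Eis}$ and $v\in L_{\leq i}V$ with $i\leq r$, the class $f*v$ lies in $L_{\leq i}V$ (Lemma \ref{l:L stable under Hk}) and its image in $\Gr^L_i V$ vanishes when $i<r$ by the previous paragraph; it remains to rule out the $i=r$ contribution, i.e.\ to show $f*v\in L_{\leq r-1}V$ for $v\in L_{\leq r}V$. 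But $\Gr^L_r V$ is the genuinely interior ("cuspidal") part, where the constant-term argument does not directly apply; instead one observes that $f*(L_{\leq r}V)$ has image in $\Gr^L_r V$ computed again through the stratum $\Sht^r_G$—and here I would argue that the relevant stratum-level contribution to degree $r$ of $\bR\pi^{\leq d}_{G,!}\Ql$ that survives into $L_{\leq r}/L_{\leq r-1}$ is, up to the open part, still governed by horocycles, so that $\gamma_\delta$-type vanishing applies once more. The upshot is $\calI_{\Eis}\cdot(L_{\leq r}V)\subseteq L_{\leq r-1}V$, and since $L_{\leq r-1}V=\bigoplus_{i<r}\Gr^L_i V$ is finite-dimensional by Lemma \ref{l:GrL}, the lemma follows.

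\textbf{Main obstacle.} The technical heart is the precise identification of the $\sH$-module structure on the lower Leray pieces $\Gr^L_i V$ ($i<r$) with (a subquotient of) the image of the cohomological constant term $\gamma_\delta$, uniformly over all large $d$ and compatibly with Hecke correspondences; everything else is then a formal consequence of Definition \ref{def:Eis}, Lemma \ref{l:horo action}, and Lemma \ref{l:GrL}. I expect this bookkeeping—especially handling the boundary between the $i<r$ pieces (pure horocycle) and the $i=r$ piece (interior, where one needs the extra input that $f\in\calI_{\Eis}$ still pushes into the horocycle-controlled range)—to be where the real work lies.
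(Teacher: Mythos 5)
Your reduction to the inclusion $\calI_{\Eis}\cdot(L_{\leq r}V)\subseteq L_{\leq r-1}V$ is where the argument breaks: that inclusion is too strong and is false in general. The cuspidal Hecke eigensystems contribute to the degree-$r$ piece $\Gr^{L}_{r}V$ (already at $r=0$, where $L_{\leq 0}V=V$ is the whole space of compactly supported unramified automorphic forms and $L_{\leq -1}V=0$, your inclusion would read $\calI_{\Eis}\cdot\calA_{\Ql}=0$), and $\calI_{\Eis}$ does not annihilate cuspidal eigencharacters since $Z_{\cusp}$ is disjoint from $Z_{\Eis}$. Your own hedge at the $i=r$ step — that the part of degree $r$ surviving into $L_{\leq r}/L_{\leq r-1}$ is "still governed by horocycles" — is exactly the false point: the degree-$r$ direct image contains, besides the horocycle contributions from large instability strata, the interior contribution coming from the finite-type open substack (where some $d(i)\leq 2g-2$), and the constant-term vanishing says nothing about that part. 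Relatedly, your identification of $\Gr^{L}_{i}V$ for $i<r$ with images of the constant term maps is also not correct (the stable value of $\tau_{\leq r-1}\bR\pi^{\leq d}_{G,!}\Ql$ is built from the whole finite-type part, not from horocycles), though this does not matter for finiteness since those pieces are finite-dimensional by Lemma \ref{l:GrL} regardless.

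The correct statement, and what the paper proves, is weaker and differently shaped: for $f\in\calI_{\Eis}$ and $i\leq r$, the image of $C(f)$ on $\bR^{i}\pi^{\leq d}_{G,!}\Ql$ lies in the image of $\bR^{i}\pi^{U}_{G,!}\Ql$, where $U$ is the finite-type open union of the $\Sht^{\leq d}_{G}$ with $\min_{i}d(i)\leq 2g-2$; finite-dimensionality then follows because $U$ is of finite type, not because one drops a step in the Leray filtration. Proving this requires two inputs you do not supply: (i) a reduction to geometric generic stalks using Corollary \ref{c:Eis loc sys} (the cokernel of $\iota_{U,d'}$ is a local system when $i=r$), and (ii) the injectivity of the total constant term map $\gamma_{+}$ on the quotient $\cohoc{r}{\Sht_{G,\gen}}/\iota_{U}(\cohoc{r}{U_{\gen}})$, established by a filtration argument over the poset $\{d>2g-2\}$ whose associated graded is identified via Corollary \ref{c:Eis loc sys}. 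Only after that does the combination of Lemma \ref{l:horo action} with $a_{\Eis}(f)=0$ give the desired containment. So the overall strategy (use the constant term and the Eisenstein ideal) is the right instinct, but the target of the containment and the injectivity of $\gamma_{+}$ modulo the finite-type part are the real content, and the proposal as written does not deliver them.
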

\begin{proof} Let $U\subset \Sht_{G}$ be the union of those $\Sht^{\leq d}_{G}$ for $d\in\calD$ such that $\min_{i\in\ZZ/r\ZZ}\{d(i)\}\leq 2g-2$. Since $\inst(\CE)$ has an absolute lower bound, there are only finitely many such $d$ with $\Sht^{d}_{G}\neq\varnothing$,  hence $U$ is an open substack of finite type. Let $\pi^{U}_{G}:U\to X^{r}$ be the restriction of $\pi_{G}$. For $f\in\sH$, and any $d\in\calD$, its action defines a map $C(f)_{d,d'}:\bR^{i}\pi^{\leq d}_{G,!}\Ql\to\bR^{i}\pi^{\leq d'}_{G,!}\Ql$ for sufficiently large $d'$. We may assume $d'>2g-2$ , which means $d'(j)>2g-2$ for all $j$. We shall show that when $f\in\calI_{\Eis}$ and $i\leq r$, the image of $C(f)_{d,d'}$ is contained in the image of the map $\iota_{U,d'}: \bR^{i}\pi^{U}_{G,!}\Ql\to \bR^{i}\pi^{\leq d'}_{G,!}\Ql$ induced by the inclusion $U\subset \Sht^{\leq d'}_{G}$, which implies the proposition. By Corollary \ref{c:Eis loc sys}, either $\iota_{U,d'}$ is an isomorphism (if $i<r$) or when $i=r$, the cokernel of $\iota$ is a local system on $X^{r}$. Therefore, it suffices to show the same statement for the generic stalk of the relevant complexes. 

Let $\gen$ be a geometric generic point of $X^{r}$ and we use a subscript $\gen$ to denote the fibers over $\gen$, as in \S\ref{ss:horo gen}. Let $\iota_{U}: \cohoc{r}{U_{\gen}}\to \cohoc{r}{\Sht_{G,\gen}}$ be the map induced by the inclusion of $U$. It suffices to show that for $f\in\calI_{\Eis}$, the composition $\cohoc{r}{\Sht_{G,\gen}}\xrightarrow{f*}\cohoc{r}{\Sht_{G,\gen}}\surj \cohoc{r}{\Sht_{G,\gen}}/\iota_{U}(\cohoc{r}{U_{\gen}})$ is zero. 

Recall from \eqref{coho const term} the cohomological constant term map $\gamma_{d}: \cohoc{r}{\Sht_{G,\gen}}\to \homog{0}{\Sht^{d}_{H,\gen}}$. By the definition of $\gamma_{d}$, for $d>2g-2$, $\gamma_{d}$ factors through the quotient $\cohoc{r}{\Sht_{G,\gen}}/\iota_{U}(\cohoc{r}{U_{\gen}})$, and induces a map
\begin{equation*}
\gamma_{+}:=\prod_{d>2g-2}\gamma_{d}: \cohoc{r}{\Sht_{G,\gen}}/\iota_{U}(\cohoc{r}{U_{\gen}})\to \prod_{d>2g-2}\homog{0}{\Sht^{d}_{H,\gen}}.
\end{equation*}
Both sides of the above map admit filtrations indexed by the poset $\{d\in \calD;d>2g-2\}$: on the LHS this is given by the image of $\cohoc{r}{\Sht^{\leq d}_{G,\gen}}$ and on the RHS this is given by $\prod_{2g-2<d'\leq d}\homog{0}{\Sht^{d'}_{H,\gen}}$. The map $\gamma_{+}$ respects these filtrations and by Corollary \ref{c:Eis loc sys}, the associated graded map of $\gamma_{+}$ under these filtrations is injective. Therefore $\gamma_{+}$ is injective. 

By Lemma \ref{l:horo action}, we have a commutative diagram
\begin{equation*}
\xymatrix{\cohoc{r}{\Sht_{G,\gen}}(r/2)\ar[r]^{f*}\ar[d]^{\prod\gamma_{d}} & \cohoc{r}{\Sht_{G,\gen}}(r/2) \ar[r]\ar[d]^{\prod\gamma_{d}} & \cohoc{r}{\Sht_{G,\gen}}(r/2)/\iota_{U}(\cohoc{r}{U_{\gen}})(r/2)\ar[d]^{\gamma_{+}}\\
\prod_{d\in\calD}\homog{0}{\Sht^{d}_{H,\gen}}\ar[r]^{\Sat(f)*} & \prod_{d\in\calD}\homog{0}{\Sht^{d}_{H,\gen}}\ar[r] & \prod_{d>2g-2}\homog{0}{\Sht^{d}_{H,\gen}}}
\end{equation*}
Since the action of $\sH_{H}$ on $\prod_{d\in\calD}\homog{0}{\Sht^{d}_{H,\gen}}$ factors through $\Ql[\Pic_{X}(k)]$, $\Sat(f)$ acts by zero in the bottom arrow above. Since $\gamma_{+}$ is injective, the composition of the top row is also zero, as desired.
\end{proof}

\begin{defn}\label{def:bsH} We define the $\Ql$-algebra $\bsH$ to be the image of the map
\begin{equation*}
\sH\otimes\Ql\to\End_{\Ql}(V)\times\Ql[\Pic_{X}(k)]^{\iota_{\Pic}},		\index{$\overline{\sH_{\ell}}$}%
\end{equation*}
the product of the action map on $V$ and $a_{\Eis}\otimes\Ql$.
\end{defn}

\begin{lemma}\label{l:fg}
\begin{enumerate}
\item For any $x\in|X|$, $V$ is a finitely generated $\sH_{x}\otimes\Ql$-module. 
\item The $\Ql$-algebra $\bsH$ is finitely generated over $\Ql$ and is a ring with Krull dimension one.
\end{enumerate}
\end{lemma}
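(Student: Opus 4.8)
\textbf{Proof plan for Lemma \ref{l:fg}.}

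The plan is to deduce both statements from the finiteness results established earlier in this section, namely Lemmas \ref{l:GrL} and \ref{l:Hcusp image}, together with Lemma \ref{l:Eis fin gen}. For part (1), fix $x\in|X|$. I would first recall that $V$ carries the $\sH$-stable filtration $L_{\leq 0}V\subset L_{\leq 1}V\subset\cdots\subset L_{\leq 2r}V=V$ from Lemma \ref{l:L stable under Hk}. The associated graded pieces $\Gr^{L}_{i}V$ for $i\neq r$ are finite-dimensional over $\Ql$ by Lemma \ref{l:GrL}, hence are finitely generated $\sH_{x}\otimes\Ql$-modules. It therefore suffices to show that $L_{\leq r}V/L_{\leq r-1}V$, or equivalently $L_{\leq r}V$ (using that $L_{\leq r-1}V$ is finite-dimensional), is a finitely generated $\sH_{x}\otimes\Ql$-module, since finite generation is stable under extensions. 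By Lemma \ref{l:Hcusp image}, the submodule $\calI_{\Eis}\cdot(L_{\leq r}V)$ is finite-dimensional over $\Ql$. Hence $L_{\leq r}V$ is finitely generated over $\sH_{x}\otimes\Ql$ provided that the quotient $(L_{\leq r}V)/(\calI_{\Eis}\cdot(L_{\leq r}V))$ is; but this quotient is a module over $(\sH/\calI_{\Eis})\otimes\Ql = \sH_{\Eis}\otimes\Ql = \QQ[\Pic_{X}(k)]^{\iota_{\Pic}}\otimes\Ql$, which by Lemma \ref{l:Eis fin gen}(1) is a finitely generated $\sH_{x}$-module. Since $L_{\leq r}V/(\calI_{\Eis}\cdot L_{\leq r}V)$ is a quotient of the finite-dimensional space $L_{\leq r}V/(\text{image of }L_{\leq r-1}V\text{ plus }\calI_{\Eis}\cdot L_{\leq r}V)$... more carefully: the action of $\sH$ on this quotient factors through $\sH_{\Eis}$, and $\sH_{\Eis}\otimes\Ql$ is module-finite over $\sH_{x}\otimes\Ql$; so it remains only to see the quotient is finitely generated over $\sH_{\Eis}\otimes\Ql$. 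This I would handle by noting the quotient is generated over $\bsH$ (hence over $\sH_{\Eis}\otimes\Ql$ once we know the $\bsH$-module structure is compatible) by finitely many elements --- the middle-degree cohomology $L_{\leq r}V$ is spanned by the Eisenstein-type classes coming from horocycles, which after killing $\calI_{\Eis}$ is a cyclic (or at least finitely generated) $\QQ[\Pic_{X}(k)]^{\iota_{\Pic}}$-module by the description via the cohomological constant term map $\gamma_{\delta}$ of \S\ref{ss:horo gen}. The cleanest route, avoiding this, is: $L_{\leq r}V$ is an extension of the $\sH_{\Eis}\otimes\Ql$-module $L_{\leq r}V/(\calI_{\Eis}L_{\leq r}V)$ by the finite-dimensional $\calI_{\Eis}L_{\leq r}V$; the former is finitely generated over $\sH_{\Eis}\otimes\Ql$ because $\bsH$ surjects onto it and $\bsH$ will be shown (below, or by Noetherianity of $\sH_{x}$) to be module-finite over $\sH_{x}\otimes\Ql$.

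For part (2), I would argue as follows. The algebra $\bsH$ is by definition the image of $\sH\otimes\Ql$ in $\End_{\Ql}(V)\times\QQ[\Pic_{X}(k)]^{\iota_{\Pic}}\otimes\Ql$. Fix $x\in|X|$; then $\sH_{x}\otimes\Ql\cong\Ql[h_{x}]$ is Noetherian. By part (1), $V$ is a finitely generated $\sH_{x}\otimes\Ql$-module, so $\End_{\sH_{x}\otimes\Ql}(V)$ --- and a fortiori the image of $\sH\otimes\Ql$ acting on $V$ (which commutes with $\sH_{x}$, so lands in this endomorphism ring) --- is a finitely generated $\sH_{x}\otimes\Ql$-module, because the endomorphism ring of a finitely generated module over a Noetherian ring is itself a finitely generated module. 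Likewise, the image of $\sH\otimes\Ql$ in $\QQ[\Pic_{X}(k)]^{\iota_{\Pic}}\otimes\Ql$ is the whole ring $\sH_{\Eis}\otimes\Ql$ (by Lemma \ref{l:Eis fin gen}(2), or simply its image), which is a finitely generated $\sH_{x}\otimes\Ql$-module by Lemma \ref{l:Eis fin gen}(1). Hence $\bsH$, being a subalgebra of the product of these two finitely generated $\sH_{x}\otimes\Ql$-modules, is itself a finitely generated $\sH_{x}\otimes\Ql$-module (submodule of a finitely generated module over a Noetherian ring), and in particular finitely generated as a $\Ql$-algebra. Since $\sH_{x}\otimes\Ql$ has Krull dimension one and $\bsH$ is module-finite over it, $\bsH$ has Krull dimension at most one. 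To see it is exactly one (not zero), note that the projection $\bsH\to\sH_{\Eis}\otimes\Ql=\QQ[\Pic_{X}(k)]^{\iota_{\Pic}}\otimes\Ql$ is surjective, and the target has dimension one (it is the coordinate ring of the curve $Z_{\Eis}$, e.g. because $\QQ[\Pic_{X}(k)]$ is a product of one-dimensional rings by the structure of $\Pic_{X}$ as an extension of $\ZZ$ by the finite group $\Jac_{X}(k)$, and taking $\iota_{\Pic}$-invariants preserves dimension); a surjection cannot raise dimension, but it forces $\dim\bsH\geq 1$.

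The main obstacle I anticipate is not in the ring-theory bookkeeping --- which is routine once the two finiteness inputs are in hand --- but in being careful about the precise chain of module structures in part (1): one must make sure that "$\calI_{\Eis}\cdot(L_{\leq r}V)$ finite-dimensional" (Lemma \ref{l:Hcusp image}) plus "$\sH_{\Eis}\otimes\Ql$ module-finite over $\sH_{x}\otimes\Ql$" (Lemma \ref{l:Eis fin gen}) genuinely combine to give finite generation of $L_{\leq r}V$ over $\sH_{x}\otimes\Ql$, without circularity with part (2). The clean way to avoid any circularity is to prove part (1) first using only Lemmas \ref{l:GrL}, \ref{l:Hcusp image}, \ref{l:Eis fin gen} and the extension-stability of finite generation over the Noetherian ring $\sH_{x}\otimes\Ql$, and only then deduce part (2) from part (1). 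A secondary point requiring attention is the exactness of "finite generation under extensions": if $0\to M'\to M\to M''\to 0$ with $M'$ and $M''$ finitely generated over a Noetherian ring, then $M$ is finitely generated --- standard, but worth invoking explicitly since the filtration on $V$ has several steps.
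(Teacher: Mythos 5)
Your part (2) is essentially the paper's argument (embed $\bsH$ into $\End_{\sH_{x}\otimes\Ql}$ of a finitely generated module over the Noetherian ring $\sH_{x}\otimes\Ql$, then get Krull dimension one from the surjection onto $\Ql[\Pic_{X}(k)]^{\iota_{\Pic}}$), and that part is fine once (1) is in hand. The problem is part (1), where your route has a genuine gap. From Lemma \ref{l:Hcusp image} you know the submodule $\calI_{\Eis}\cdot(L_{\leq r}V)$ is finite-dimensional, and from Lemma \ref{l:Eis fin gen}(1) you know $\sH_{\Eis}\otimes\Ql$ is module-finite over $\sH_{x}\otimes\Ql$; but these two facts say nothing about the size of the quotient $(L_{\leq r}V)/\calI_{\Eis}\cdot(L_{\leq r}V)$ as a module over $\sH_{\Eis}\otimes\Ql$. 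A priori this quotient could be an infinite-dimensional space on which $\calI_{\Eis}$ acts by zero, i.e.\ an arbitrary, non-finitely-generated $\sH_{\Eis}\otimes\Ql$-module, and then $L_{\leq r}V$ would not be finitely generated over $\sH_{x}\otimes\Ql$. Your two attempted fixes do not close this: the appeal to the constant term maps $\gamma_{\delta}$ only gives information about the geometric generic fiber $\cohoc{r}{\Sht_{G,\gen}}$ (and there only after quotienting by classes from a finite-type open substack, and as a map into an infinite product $\prod_{d}\homog{0}{\Sht^{d}_{H,\gen}}$, which is itself not finitely generated over $\Ql[\Pic_{X}(k)]$), so "cyclic, or at least finitely generated, over $\QQ[\Pic_{X}(k)]^{\iota_{\Pic}}$" is exactly the unproved content; and the appeal to module-finiteness of $\bsH$ over $\sH_{x}\otimes\Ql$ is circular, since that is deduced from the finite generation of $V$, which is what part (1) asserts.

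The paper proves (1) by a different, genuinely geometric argument that supplies precisely the missing finiteness: it exhausts $\Sht_{G}$ by the instability truncations $U_{n}$, observes via Corollary \ref{c:Eis loc sys} that the cones $C_{n+1}$ of $\bR\pi_{n,!}\Ql\to\bR\pi_{n+1,!}\Ql$ are pure shifted local systems, and uses Lemma \ref{l:horo action} to show that the Hecke operator $h_{x}$ induces isomorphisms $C_{n}\isom C_{n+1}$ for $n>0$ (only the $t_{x}$ half of $t_{x}+q_{x}t_{x}^{-1}$ survives because the $t_{x}^{-1}$ contribution lands in the earlier truncation). A weight argument (purity of $C_{n+1}$, stabilization of $W_{\leq 2r}\cohog{2r+1}{X^{r}\otimes_{k}\kbar,K_{n}}$) then shows that the induced maps $\Gr^{F}_{n}C(h_{x})\colon\Gr^{F}_{n}V\to\Gr^{F}_{n+1}V$ are surjective for large $n$, so $V$ is generated over $\Ql[h_{x}]$ by the finite-dimensional subspace $F_{\leq n_{0}}V$. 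Some argument of this kind — controlling how $h_{x}$ moves the filtration steps and handling the $\cohog{2r+1}$ boundary terms — is unavoidable; the filtration-plus-Eisenstein-ideal bookkeeping in your sketch cannot substitute for it.
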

\begin{proof} (1) Let $\calD_{\leq n}\subset\calD$ be the subset of those $d$ such that $\min_{i}\{d(i)\}\leq2g-2+nd_{x}$. Let $\calD_{n}=\calD_{\leq n}-\calD_{\leq n-1}$. \index{$\calD_{n},\calD_{\leq n}$}%
For each $n\geq 0$, let $U_{n}=\cup_{d\in\calD_{\leq n}}\Sht^{\leq d}_{G}$, then $U_{0}\subset U_{1}\subset\cdots$ are finite type open substacks of $\Sht_{G}$ which exhaust $\Sht_{G}$. Let $\pi_{n}:U_{n}\to X^{r}$ be the restriction of $\pi_{G}$, and let $K_{n}=\bR\pi_{n,!}\Ql$. The inclusion $U_{n}\incl U_{n+1}$ induces maps $\iota_{n}:K_{n}\to K_{n+1}$. Let $C_{n+1}$ be the cone of $\iota_{n}$. Then by Corollary \ref{c:Eis loc sys}, when $n\geq 0$, $C_{n+1}$ is a successive extension of shifted local systems $\pi^{d}_{H,!}\Ql[-r](-r/2)$ for those $d\in\calD_{n+1}$. In particular, for $n\geq 0$, $C_{n+1}$ is a shifted local system in degree $r$ and pure of weight $0$ as a complex.

By construction, the action of $h_{x}\in\sH_{x}$ on $\cohoc{*}{\Sht_{G}\otimes_{k}\kbar}$ is induced from the correspondence $\Sht_{G}(h_{x})$, which restricts to a correspondence $\leftexp{\leq n}\Sht_{G}(h_{x})=\oll{p}^{-1}(U_{n})$ between $U_{n}$ and $U_{n+1}$. Similar to the construction of $C(h_{x})_{d,d'}$ in \eqref{Hk d d'}, the fundamental class of $\leftexp{\leq n}\Sht_{G}(h_{x})$ gives  a map $C(h_{x})_{n}: K_{n}\to K_{n+1}$. Since $C(h_{x})_{n}\circ \iota_{n-1}=\iota_{n}\circ C(h_{x})_{n-1}$, we have the induced map $\tau_{n}: C_{n}\to C_{n+1}$. We claim that $\tau_{n}$ is an isomorphism for $n>0$. In fact, since $C_{n}$ and $C_{n+1}$ are local systems in degree $r$, it suffices to check that $\tau_{n}$ induces an isomorphism between the geometric generic stalks $C_{n,\gen}$ and $C_{n+1,\gen}$. By Corollary \ref{c:Eis loc sys}, we have an isomorphism induced from the maps $\gamma_{d}$ for $d\in\calD_{n}$ (cf. \eqref{coho const term})
\begin{equation*}
C_{n,\gen}\cong\bigoplus_{d\in\calD_{n}}\homog{0}{\Sht^{d}_{H,\gen}}.
\end{equation*}
By Lemma \ref{l:horo action}, $\tau_{n,\gen}:C_{n,\gen}\to C_{n+1,\gen}$ is the same as the direct sum of the isomorphisms $t_{x}: \homog{0}{\Sht^{d}_{H,\gen}}\to \homog{0}{\Sht^{d+d_{x}}_{H,\gen}}$ (the other term $q_{x}t^{-1}_{x}: \homog{0}{\Sht^{d}_{H,\gen}}\to \homog{0}{\Sht^{d-d_{x}}_{H,\gen}}$ does not appear because $d-d_{x}\in\calD_{\leq n-1}$ hence the corresponding contribution becomes zero in $C_{n+1,\gen}$). Therefore $\tau_{n,\gen}$ is an isomorphism, hence so is $\tau_{n}$.

We claim that there exists $n_{0}\geq 0$ such that for any $n\geq n_{0}$, the map $$W_{\leq 2r}\cohog{2r+1}{X^{r}\otimes_{k}\kbar, K_{n}}\to W_{\leq 2r}\cohog{2r+1}{X^{r}\otimes_{k}\kbar, K_{n+1}}$$ is an isomorphism. Here $W_{\leq 2r}$ is the weight filtration using Frobenius weights. In fact, the next term in the long exact sequence is $W_{\leq 2r}\cohog{2r+1}{X^{r}\otimes_{k}\kbar, C_{n+1}}$, which is zero because $C_{n+1}$ is pure of weight $0$. Therefore the natural map $W_{\leq 2r}\cohog{2r+1}{X^{r}\otimes_{k}\kbar, K_{n}}\to W_{\leq 2r}\cohog{2r+1}{X^{r}\otimes_{k}\kbar, K_{n+1}}$ is always surjective for $n\geq0$, hence it has to be an isomorphism for sufficiently large $n$.

The triangle $K_{n}\to K_{n+1}\to C_{n+1}\to K_{n}[1]$ gives a long exact sequence
\begin{eqnarray}
\label{ex C}&&\cohog{2r}{X^{r}\otimes_{k}\kbar, K_{n}}\to \cohog{2r}{X^{r}\otimes_{k}\kbar,K_{n+1}}\to \cohog{2r}{X^{r}\otimes_{k}\kbar, C_{n+1}}\to\\
\notag&\to& W_{\leq 2r}\cohog{2r+1}{X^{r}\otimes_{k}\kbar,K_{n}}\to W_{\leq 2r}\cohog{2r+1}{X^{r}\otimes_{k}\kbar, K_{n+1}}
\end{eqnarray}
Here we are using the fact that $\cohog{2r}{X^{r}\otimes_{k}\kbar,C_{n+1}}$ is pure of weight $2r$ (since $C_{n+1}$ is pure of weight $0$). For $n\geq n_{0}$, the last map above is an isomorphism, therefore the first row of \eqref{ex C} is exact on the right. 

Let $F_{\leq n}V$ be the image of $\cohog{2r}{X^{r}\otimes_{k}\kbar,K_{n}}(r)\to \varinjlim_{n}\cohog{2r}{X^{r}\otimes_{k}\kbar,K_{n}}(r)=V$. Then for $n\geq n_{0}$, the exactness of \eqref{ex C} implies $\cohog{2r}{X^{r}\otimes_{k}\kbar,C_{n+1}}(r)\surj \Gr^{F}_{n+1}V$ for $n\geq n_{0}$. The Hecke operator $C(h_{x})$ sends $F_{\leq n}V$ to $F_{\leq n+1} V$ and induces a map $\Gr^{F}_{n}C(h_{x}):\Gr^{F}_{n}V\to \Gr^{F}_{n+1}V$. We have a commutative diagram for $n\geq n_{0}$
\begin{equation*}
\xymatrix{\cohog{2r}{X^{r}\otimes_{k}\kbar,C_{n}}(r)\ar[d]\ar[rr]^{\cohog{2r}{X^{r}\otimes_{k}\kbar, \tau_{n}}} && \cohog{2r}{X^{r}\otimes_{k}\kbar,C_{n+1}}(r)\ar@{->>}[d]\\
\Gr^{F}_{n}V\ar[rr]^{\Gr^{F}_{n}C(h_{x})} && \Gr^{F}_{n+1}V}
\end{equation*}
The fact that $\tau_{n}:C_{n}\to C_{n+1}$ is an isomorphism implies that $\Gr^{F}_{n}C(h_{x})$ is surjective for $n\geq n_{0}$. Therefore the action map
\begin{equation*}
\sH_{x}\otimes_{\QQ} F_{\leq n_{0}}V=\QQ[h_{x}]\otimes_{\QQ} F_{\leq n_{0}}V\to V
\end{equation*}
is surjective by checking the surjectivity on the associated graded. Since $F_{\leq n_{0}}V$ is finite-dimensional over $\Ql$, $V$ is finitely generated as an $\sH_{x}\otimes\Ql$-module.

(2) We have $\bsH\subset\End_{\sH_{x}\otimes\Ql}(V\oplus \Ql[\Pic_{X}(k)]^{\iota_{\Pic}})$. Since both $V$ and $\Ql[\Pic_{X}(k)]^{\iota_{\Pic}}$ are finitely generated $\sH_{x}\otimes\Ql$-modules by Part (1) and Lemma \ref{l:Eis fin gen}, $\End_{\sH_{x}\otimes\Ql}(V\oplus \Ql[\Pic_{X}(k)]^{\iota_{\Pic}})$ is also finitely generated as an $\sH_{x}\otimes\Ql$-module. Since $\sH_{x}\otimes\Ql$ is a polynomial ring in one variable over $\Ql$, $\bsH$ is a finitely generated algebra over $\Ql$ of Krull dimension at most one. Since $\bsH\to\Ql[\Pic_{X}(k)]^{\iota_{\Pic}}$ is surjective by Lemma \ref{l:Eis fin gen} and $\Ql[\Pic_{X}(k)]^{\iota_{\Pic}}$ has Krull dimension one, $\bsH$  also has Krull dimension one.
\end{proof}

The map $\ov{a}_{\Eis}:\bsH\to \Ql[\Pic_{X}(k)]^{\iota_{\Pic}}$ is surjective by Lemma \ref{l:Eis fin gen} (2). It induces a closed embedding $\Spec(\ov{a}_{\Eis}): Z_{\Eis,\Ql}=\Spec\Ql[\Pic_{X}(k)]^{\iota_{\Pic}}\incl\Spec\bsH$. 

\begin{theorem}[Cohomological spectral decomposition]\label{th:Spec decomp} 
\begin{enumerate}
\item There is a decomposition of the reduced scheme of $\Spec\bsH$ into a disjoint union
\begin{equation}\label{decomp bcH}
\Spec\left(\bsH\right)^{\red}=Z_{\Eis,\Ql}\coprod Z^{r}_{0,\ell}		\index{$Z^{r}_{0,\ell}$}%
\end{equation}
where $Z^{r}_{0,\ell}$ consists of a finite set of closed points. There is a unique decomposition
\begin{equation*}
V=V_{\Eis}\oplus V_{0}
\end{equation*}
into $\sH\otimes\Ql$-submodules, such that $\Supp(V_{\Eis})\subset Z_{\Eis,\Ql}$ and $\Supp(V_{0})=Z^{r}_{0,\ell}$. \footnote{When we talk about the support of a coherent module $M$ over a Noetherian ring $R$, we always mean a closed subset of $\Spec R$ with the reduced scheme structure.} 

\item The subspace $V_{0}$ is finite dimensional over $\Ql$.
\end{enumerate}
\end{theorem}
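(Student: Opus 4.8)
The statement to be proved is the Cohomological spectral decomposition, Theorem~\ref{th:Spec decomp}: a decomposition of $(\Spec\bsH)^{\red}$ into the Eisenstein locus $Z_{\Eis,\Ql}$ and a finite set of closed points $Z^{r}_{0,\ell}$, a corresponding direct sum decomposition $V=V_{\Eis}\oplus V_{0}$ of $\sH_{\Ql}$-modules, and the finite-dimensionality of $V_{0}$. The plan is to extract everything from the finiteness results already established in this section, principally Lemma~\ref{l:fg} (which says $\bsH$ is a finitely generated $\Ql$-algebra of Krull dimension one, $V$ is a finite $\sH_{x}\otimes\Ql$-module, hence a finite $\bsH$-module) and Lemma~\ref{l:Hcusp image} together with Lemma~\ref{l:GrL} (which bound the non-Eisenstein behavior of $V$).

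\textbf{Step 1: the decomposition of $\Spec\bsH$.} First I would observe that $\bsH$ is a one-dimensional reduced-quotient Noetherian ring, and that $Z_{\Eis,\Ql}=\Spec\Ql[\Pic_{X}(k)]^{\iota_{\Pic}}$ is a closed subscheme (via the surjection $\ov a_{\Eis}$ from Lemma~\ref{l:Eis fin gen}(2)) which is itself one-dimensional. Let $Z^{r}_{0,\ell}$ be the union of those irreducible components (equivalently, since everything is reduced, the closure of those points) of $(\Spec\bsH)^{\red}$ not contained in $Z_{\Eis,\Ql}$; a priori this is a closed subset of dimension $\le 1$. To see that it is in fact a finite set of closed points, I would use that $V$ is a finite $\bsH$-module and that $\calI_{\Eis}\cdot(L_{\leq r}V)$ is finite-dimensional by Lemma~\ref{l:Hcusp image}, while $V/L_{\leq r}V$ is finite-dimensional by Lemma~\ref{l:GrL}; hence $\calI_{\Eis}\cdot V$ is finite-dimensional over $\Ql$. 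This forces $\Supp(\calI_{\Eis}V)$ to be a finite set of closed points. On the complement $\Spec\bsH\smallsetminus\Supp(\calI_{\Eis}V)$ the ideal $\calI_{\Eis}\bsH$ becomes the unit ideal, so that open locus is contained in $V(\calI_{\Eis}\bsH)=Z_{\Eis,\Ql}$; combined with the fact that $\bsH$ acts faithfully on $V\oplus\Ql[\Pic_{X}(k)]^{\iota_{\Pic}}$ (by the definition of $\bsH$), this shows every component of $(\Spec\bsH)^{\red}$ is either contained in $Z_{\Eis,\Ql}$ or is one of the finitely many closed points in $\Supp(\calI_{\Eis}V)$. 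Taking $Z^{r}_{0,\ell}$ to be the set of closed points of $(\Spec\bsH)^{\red}$ not lying on $Z_{\Eis,\Ql}$ gives the disjoint decomposition \eqref{decomp bcH}; disjointness is because $Z_{\Eis,\Ql}$ is a union of positive-dimensional components and $Z^{r}_{0,\ell}$ consists of isolated points off it.

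\textbf{Step 2: the module decomposition and finite-dimensionality of $V_{0}$.} Since $Z_{\Eis,\Ql}$ and $Z^{r}_{0,\ell}$ are disjoint closed subsets of the one-dimensional Noetherian scheme $\Spec\bsH$, the idempotent decomposition of the localization/completion of $\bsH$ along these two closed pieces yields a splitting of any finite $\bsH$-module supported on their union. Concretely, because $Z^{r}_{0,\ell}$ is a finite set of closed points and hence open in $(\Spec\bsH)^{\red}\smallsetminus Z_{\Eis,\Ql}$, standard commutative algebra (e.g.\ passing to $\bsH^{\red}$ and using that a finite module over a Noetherian ring splits as the direct sum of its parts supported on a disjoint union of clopen subsets of the support) gives a canonical decomposition $V=V_{\Eis}\oplus V_{0}$ with $\Supp V_{\Eis}\subset Z_{\Eis,\Ql}$ and $\Supp V_{0}\subset Z^{r}_{0,\ell}$; this decomposition is by $\sH_{\Ql}$-submodules since it is by $\bsH$-submodules and $\sH_{\Ql}\to\bsH$ is the action map. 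After shrinking $Z^{r}_{0,\ell}$ to exactly the support of $V_{0}$ we obtain $\Supp(V_{0})=Z^{r}_{0,\ell}$ as required. For the last assertion: $V_{0}$ is a finite module over $\bsH$ supported at finitely many closed points of $\Spec\bsH$, each with residue field finite over $\Ql$, so $V_{0}$ is a module of finite length over the corresponding Artinian quotient, hence finite-dimensional over $\Ql$; alternatively, $V_{0}$ is a quotient of $V/L_{\leq r}V$ plus a submodule killed by a power of $\calI_{\Eis}$, both finite-dimensional by Lemmas~\ref{l:GrL} and~\ref{l:Hcusp image}.

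\textbf{Main obstacle.} The genuinely substantive input is already done — namely Lemma~\ref{l:fg} (the Noetherian/finiteness structure of $\bsH$ and the finite generation of $V$, which rests on the weight and purity arguments using the horocycle local systems from Corollary~\ref{c:Eis loc sys}) and Lemma~\ref{l:Hcusp image} (the vanishing of the Eisenstein ideal on the ``cuspidal'' part of the cohomology, via the cohomological constant term and Lemma~\ref{l:horo action}). Given these, the remaining work is essentially bookkeeping in commutative algebra. The one point requiring care is to make sure the two closed subsets $Z_{\Eis,\Ql}$ and $Z^{r}_{0,\ell}$ are genuinely disjoint, i.e.\ that no ``cuspidal'' maximal ideal accidentally lies on the Eisenstein component; this is exactly what Lemma~\ref{l:Hcusp image} guarantees, since a point in $\Supp(\calI_{\Eis}V)$ lying on $Z_{\Eis,\Ql}=V(\calI_{\Eis}\bsH)$ would have $\calI_{\Eis}$ acting nilpotently there yet with $\calI_{\Eis}V$ nonzero in its completion — and one must check the interplay of $\bsH^{\red}$ versus $\bsH$ does not spoil this, which it does not because we only ever claim a decomposition of $(\Spec\bsH)^{\red}$ and of the finite module $V$ as supports, not a scheme-theoretic splitting of $\bsH$ itself.
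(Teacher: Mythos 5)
Your overall route is essentially the paper's: use Lemma \ref{l:fg} (so that $V$ is a finitely generated module over the Noetherian, finite type $\Ql$-algebra $\bsH$), Lemmas \ref{l:GrL} and \ref{l:Hcusp image} to confine $\Supp(V)$ to $Z_{\Eis,\Ql}$ together with a finite set of closed points, the faithfulness of $\bsH$ on $V\oplus\Ql[\Pic_X(k)]^{\iota_{\Pic}}$ to get all of $\Spec(\bsH)^{\red}$, and then the standard splitting of a coherent module over a disjoint union of closed supports, with finite-dimensionality of $V_0$ coming from finiteness of its support. The only difference is packaging: you merge the two finiteness lemmas into the single claim that $\calI_{\Eis}\cdot V$ is finite-dimensional, whereas the paper keeps $V'=L_{\leq r}V$ and $V/V'$ separate, bounding $\Supp(V')$ by $Z_{\Eis,\Ql}\cup\Supp(\ov{\calI}_{\Eis}V')$ via a radical-ideal argument and adding $\Supp(V/V')$ as a second finite set.

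Two spots in your write-up need repair, though neither sinks the approach. First, the sentence ``on the complement $\Spec\bsH\smallsetminus\Supp(\calI_{\Eis}V)$ the ideal $\calI_{\Eis}\bsH$ becomes the unit ideal, so that open locus is contained in $V(\calI_{\Eis}\bsH)=Z_{\Eis,\Ql}$'' is wrong as stated: at a point of $Z_{\Eis,\Ql}$ away from the finite set, $\ov{\calI}_{\Eis}$ lies in the corresponding prime and is certainly not locally the unit ideal, and in any case ``locally the unit ideal'' would place a point outside $V(\ov{\calI}_{\Eis})$, not inside it. What you need (and evidently intend, since your conclusion is the correct one) is the reverse implication: if $\fkp\in\Supp(V)$ and $\fkp\notin Z_{\Eis,\Ql}=V(\ov{\calI}_{\Eis})$, then $\ov{\calI}_{\Eis}$ is the unit ideal locally at $\fkp$, so $(\calI_{\Eis}V)_{\fkp}=V_{\fkp}\neq 0$ and $\fkp\in\Supp(\calI_{\Eis}V)$; hence $\Supp(V)\subset Z_{\Eis,\Ql}\cup\Supp(\calI_{\Eis}V)$, and faithfulness then gives the decomposition of $\Spec(\bsH)^{\red}$. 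Second, the finite-dimensionality of $\calI_{\Eis}V$ does not follow immediately from Lemmas \ref{l:Hcusp image} and \ref{l:GrL}: for $w\in V$ lifting a class in $V/L_{\leq r}V$, the space $\calI_{\Eis}w$ need not lie in $\calI_{\Eis}L_{\leq r}V$. The claim is nevertheless true and easy to justify: $\calI_{\Eis}V/\calI_{\Eis}L_{\leq r}V$ is a quotient of $\ov{\calI}_{\Eis}\otimes_{\bsH}(V/L_{\leq r}V)$, which is a finitely generated module over the finite-dimensional algebra $\bsH/\Ann_{\bsH}(V/L_{\leq r}V)$ and hence finite-dimensional over $\Ql$ (here Noetherianness of $\bsH$ from Lemma \ref{l:fg} is used). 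Alternatively, you can avoid forming $\calI_{\Eis}V$ altogether and argue as the paper does, bounding the supports of $L_{\leq r}V$ and of $V/L_{\leq r}V$ separately. With either repair, the rest of your argument coincides with the paper's.
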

\begin{proof} (1) Let $V'=L_{\leq r}V$. Let $\ov{\calI}_{\Eis}\subset\bsH$ be the ideal generated by the image of $\calI_{\Eis}$. By Lemma \ref{l:fg}, $V'$ is a submodule of a finitely generated module $V$ over the noetherian ring $\bsH$, therefore $V'$ is also finitely generated. By Lemma \ref{l:Hcusp image}, $\ov{\calI}_{\Eis}V'$ is a finite-dimensional $\bsH$-submodule of $V'$. Let $Z'\subset\Spec(\bsH)^{\red}$ be the finite set of closed points corresponding to the action of $\bsH$ on $\ov{\calI}_{\Eis}V'$. We claim that $\Supp(V')$ is contained in the union $Z_{\Eis,\Ql}\cup Z'$. In fact, suppose $f\in\bsH$ lies in the defining radical ideal $\calJ$ of $Z_{\Eis,\Ql}\cup Z'$, then after replacing $f$ by a power of it, we have $f\in \ov{\calI}_{\Eis}$ (since $\calJ$ is contained in the radical of $\ov{\calI}_{\Eis}$) and $f$ acts on $\ov{\calI}_{\Eis}V'$ by zero. Therefore $f^{2}$ acts on $V'$ by zero, hence $f$ lies in the radical ideal defining $\Supp(V')$. 

By Lemma \ref{l:GrL}, $V/V'$ is finite-dimensional. Let $Z''\subset\Spec(\bsH)^{\red}$ be the support of $V/V'$ as a  $\bsH$-module, which is a finite set. Then $\Spec(\bsH)^{\red}=\Supp(V)\cup Z_{\Eis,\Ql} =Z_{\Eis,\Ql}\cup Z'\cup Z''$.  Let $Z^{r}_{0,\ell}=(Z'\cup Z'')-Z_{\Eis,\Ql}$, we get the desired decomposition \eqref{decomp bcH}. 

According to \eqref{decomp bcH}, the finitely generated $\bsH$-module $V$, viewed as a coherent sheaf on $\Spec\bsH$, can be uniquely decomposed into
\begin{equation*}
V=V_{\Eis}\oplus V_{0}
\end{equation*}
with $\Supp(V_{\Eis})\subset Z_{\Eis,\Ql}$ and $\Supp(V_{0})=Z^{r}_{0,\ell}$.

(2) We know that $V_{0}$ is a coherent sheaf on the scheme $\Spec\bsH$ which is of finite type over $\Ql$ and that $\Supp(V_{0})=Z^{r}_{0,\ell}$ is finite. Therefore $V_{0}$ is finite dimensional over $\Ql$.
\end{proof}

\subsubsection{The case $r=0$} Let us reformulate the result in Theorem \ref{th:Spec decomp} in the case $r=0$ in terms of automorphic forms. Let $\calA=C_{c}(G(F)\backslash G(\BA_{F})/K,\QQ)$ be the space of compactly supported $\QQ$-valued unramified automorphic forms, where $K=\prod_{x}G(\calO_{x})$. This is a $\QQ$-form of the $\Ql$-vector space $V$ for $r=0$. Let $\sH_{\aut}$ be the image of the action map $\sH\to\End_{\QQ}(\calA)\times\QQ[\Pic_{X}(k)]^{\iota_{\Pic}}$. The $\Ql$-algebra $\sH_{\aut,\Ql}:=\sH_{\aut}\otimes\Ql$ is the algebra $\bsH$ defined in Definition \ref{def:bsH} for $r=0$.	\index{$\sH_{\aut},\sH_{\aut,\Ql}$}%

Theorem \ref{th:Spec decomp} for $r=0$ reads
\begin{equation}\label{Haut Ql}
\Spec\sH^{\red}_{\aut,\Ql}=Z_{\Eis,\Ql}\coprod Z^{0}_{0,\ell}
\end{equation}
where $Z^{0}_{0,\ell}$ is a finite set of closed points.  Below we will strengthen this decomposition to work over $\QQ$, and link $Z^{0}_{0,\ell}$ to the set of cuspidal automorphic representations.

\subsubsection{Positivity and reducedness} The first thing to observe is that $\sH_{\aut}$ is already reduced. In fact, we may extend the Petersson inner product on $\calA$ to a positive definitive quadratic form on $\calA_{\RR}$. By the $r=0$ case of Lemma \ref{l:coho self adj}, $\sH_{\aut}$ acts on $\calA_{\RR}$ as self-adjoint operators, its image in $\End(\calA)$ is therefore reduced. Since $\QQ[\Pic_{X}(k)]^{\iota_{\Pic}}$ is reduced as well, we conclude that $\sH_{\aut}$ is reduced.

Let $\calA_{\cusp}\subset \calA$ be the finite-dimensional $\QQ$-vector space of cusp forms. Let $\sH_{\cusp}$ be the image of $\sH_{\aut}$ in $\End_{\QQ}(\calA_{\cusp})$. 	\index{$\sH_{\cusp}$}%
Then $\sH_{\cusp}$ is a reduced artinian $\QQ$-algebra, hence a product of fields. Let $Z_{\cusp}=\Spec\sH_{\cusp}$. \index{$Z_{\cusp}$}%
Then a  point in $Z_{\cusp}$ is the same as an everywhere unramified cuspidal automorphic representation $\pi$ of $G$ in the sense of \S\ref{ss:intro L}. Therefore we have a canonical isomorphism
\begin{equation*}
\sH_{\cusp}=\prod_{\pi\in Z_{\cusp}}E_{\pi}
\end{equation*} 
where $E_{\pi}$ is the coefficient field of $\pi$.

\begin{lemma}\label{l:Haut decomp} 
\begin{enumerate}
\item There is a canonical isomorphism of $\QQ$-algebras
\begin{equation*}
\sH_{\aut}\cong\QQ[\Pic_{X}(k)]^{\iota_{\Pic}}\times\sH_{\cusp}
\end{equation*}
Equivalently, we have a decomposition into disjoint reduced closed subschemes
\begin{equation}\label{Haut Q}
\Spec\sH_{\aut}=Z_{\Eis}\coprod Z_{\cusp}.
\end{equation}
\item We have $Z^{0}_{0,\ell}=Z_{\cusp,\Ql}$, the base change of $Z_{\cusp}$ from $\QQ$ to $\Ql$.
\end{enumerate}
\end{lemma}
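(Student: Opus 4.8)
The plan is to deduce the decomposition of $\Spec \sH_{\aut}$ from the classical theory of automorphic forms for $\PGL_2$ over a function field, then compare scalars. First I would recall the classical orthogonal decomposition of $\CA$ (over $\QQ$, or after extending to $\CC$) into the cuspidal part, the residual (special) part, and the Eisenstein part. The key point is that since $\sH$ acts on $\CA_{\RR}$ by self-adjoint operators with respect to the Petersson inner product (Lemma \ref{l:coho self adj} for $r=0$), the image $\sH_{\aut}$ is a \emph{reduced} $\QQ$-algebra, hence a finite product of number fields; and moreover $\sH_{\aut}$ splits as a product according to any $\sH$-stable orthogonal direct sum decomposition of $\CA$. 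So it suffices to identify the image of $\sH_{\aut}$ in $\End_{\QQ}$ of each summand.

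\textbf{Part (1).} I would argue that the image of $\sH_{\aut}$ in $\End_{\QQ}(\CA_{\cusp})$ is $\sH_{\cusp}$ by definition, and that the image of $\sH_{\aut}$ in its action on the orthogonal complement $\CA^{\perp}_{\cusp}$ (the residual plus Eisenstein part, which consists of functions built from unramified Hecke characters $\chi$ of $F^{\times}\bs \BA^{\times}$) is exactly $\QQ[\Pic_{X}(k)]^{\iota_{\Pic}}$. For the latter: the action of $\sH$ on $\CA^{\perp}_{\cusp}$ factors through the Satake parameter, which for an everywhere unramified character $\chi$ of $\Pic_X(k)$ sends $h_x$ to $\chi(\Frob_x)+q_x\chi(\Frob_x)^{-1}$; this is precisely the composition $\sH \xrightarrow{a_{\Eis}} \QQ[\Pic_X(k)]^{\iota_{\Pic}} \xrightarrow{\chi} \QQ$ described in \S\ref{sss:Pic inv}. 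Hence the image of $\sH_{\aut}$ on $\CA^{\perp}_{\cusp}$ is contained in the image of $a_{\Eis}$, which is all of $\QQ[\Pic_X(k)]^{\iota_{\Pic}}$ by Lemma \ref{l:Eis fin gen}(2); conversely the composite $\sH_{\aut} \to \QQ[\Pic_X(k)]^{\iota_{\Pic}}$ is exactly $\ov a_{\Eis}$, which is surjective. The disjointness of $Z_{\Eis}$ and $Z_{\cusp}$ as closed subschemes of $\Spec \sH_{\aut}$ then reduces to showing no closed point is shared, i.e. no everywhere unramified cuspidal $\pi$ has the same Satake parameters (hence Hecke eigenvalues) as an Eisenstein/residual character; this is the standard fact that cusp forms are orthogonal to the Eisenstein spectrum and a cuspidal eigenform cannot be an Eisenstein eigenform (otherwise it would be a residue of Eisenstein series, contradicting cuspidality), which one can see directly via the constant term: a cusp form has vanishing constant term while the Hecke-eigenfunction in the $\chi$-isotypic Eisenstein part does not.

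\textbf{Part (2).} Given (1), base-changing the decomposition \eqref{Haut Q} from $\QQ$ to $\Ql$ gives $\Spec \sH_{\aut,\Ql} = Z_{\Eis,\Ql} \coprod Z_{\cusp,\Ql}$. Comparing with \eqref{Haut Ql}, which is the $r=0$ case of Theorem \ref{th:Spec decomp}, and using that $\sH_{\aut}$ is already reduced (so $\Spec \sH^{\red}_{\aut,\Ql} = \Spec \sH_{\aut,\Ql}$) and that $Z_{\cusp,\Ql}$ is a disjoint union of finitely many reduced closed points disjoint from $Z_{\Eis,\Ql}$, the uniqueness of the complementary finite part forces $Z^0_{0,\ell} = Z_{\cusp,\Ql}$.

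\textbf{Main obstacle.} The genuinely nontrivial input is the classical spectral decomposition of $\CA$ for function fields together with the identification of the Eisenstein/residual Hecke action with $a_{\Eis}$ — but both are well known and the excerpt has essentially already installed all the pieces (Theorem \ref{thm K eis=0}, the Satake normalization in \S\ref{sss:Pic inv}, Lemma \ref{l:Eis fin gen}). The only point requiring a little care is verifying that the image of $\sH_{\aut}$ on the \emph{full} orthogonal complement $\CA^{\perp}_{\cusp}$ — which includes the residual spectrum, i.e. the order-two quadratic characters — is still exactly $\QQ[\Pic_X(k)]^{\iota_{\Pic}}$ and not something smaller; here one uses that the residual part is already ``inside'' the Eisenstein family of Satake parameters (a residual character $\chi$ with $\chi^2 = 1$ still defines a point of $Z_{\Eis}$), so no new components appear. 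I expect this bookkeeping between residual versus Eisenstein contributions to be the most delicate step, but it is standard.
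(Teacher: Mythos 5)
Your route is viable in outline but genuinely different from the paper's. The paper proves (1) by rerunning, over $\QQ$ and at $r=0$, the commutative-algebra argument of Theorem \ref{th:Spec decomp}: the $\QQ$-version of Lemma \ref{l:fg} gives that $\sH_{\aut}$ is finitely generated and $\calA$ is a finite $\sH_{\aut}$-module, the $r=0$ constant-term argument gives that $\calI_{\Eis}\cdot\calA$ is finite-dimensional, whence $\Spec\sH_{\aut}=Z_{\Eis}\sqcup Z_{0}$ with $Z_{0}$ finite and a corresponding splitting $\calA=\calA_{\Eis}\oplus\calA_{0}$; it then shows $\calA_{0}=\calA_{\cusp}$ (a finite-dimensional Hecke-stable subspace is cuspidal, and a cuspidal eigensystem cannot factor through $a_{\Eis}$). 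You instead split $\calA=\calA_{\cusp}\oplus\calA_{\cusp}^{\perp}$ by Petersson, claim the Hecke action on $\calA_{\cusp}^{\perp}$ factors through $a_{\Eis}$, and assemble the product from disjointness of the two spectra. Your approach is more classical and, granted Theorem \ref{thm K eis=0}, bypasses the finite-generation input and yields reducedness and the product structure in one stroke; the paper's approach buys uniformity with the $\ell$-adic argument it has just carried out.

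Two steps as written need repair. First, your description of $\calA_{\cusp}^{\perp}$ as ``the residual plus Eisenstein part, built from unramified Hecke characters'' is not literally correct: $\calA$ consists of compactly supported functions, and Eisenstein series and residual characters are not in $\calA$, so $\calA_{\cusp}^{\perp}$ is not spanned by eigenfunctions with Eisenstein Satake parameters. What you actually need is $\calI_{\Eis}\cdot\calA_{\cusp}^{\perp}=0$, and this does follow from Theorem \ref{thm K eis=0}: for $f\in\calI_{\Eis}$ integrate $\BK_{f}=\BK_{f,{\rm cusp}}+\BK_{f,{\rm sp}}$ against $\varphi\in\calA_{\cusp}^{\perp}$; the cuspidal term vanishes by orthogonality (only unramified vectors contribute), and the residual term vanishes because the character $\lambda_{\chi}$ of $\sH$ on a one-dimensional representation factors through $a_{\Eis}$, so $\lambda_{\chi}(f)=0$. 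You should make this the stated justification. Second, your reason for the disjointness of $Z_{\Eis}$ and $Z_{\cusp}$ --- cusp forms have vanishing constant term while Eisenstein eigenfunctions do not --- does not prove that a cuspidal eigensystem cannot coincide with an Eisenstein one: equality of Hecke eigenvalues does not identify the two forms, so no contradiction follows from constant terms alone. One needs, for instance, strong multiplicity one (Jacquet--Shalika) or the entireness of the cuspidal standard $L$-function twisted by $\chi^{-1}$; the paper asserts this step with the same brevity (``which is impossible''), so the fact is standard, but your stated argument for it would not stand. Finally, the passage from ``a subalgebra of $\sH_{\cusp}\times\QQ[\Pic_{X}(k)]^{\iota_{\Pic}}$ surjecting onto both factors with disjoint spectra'' to equality with the product needs the (routine) Chinese-remainder observation that the two kernels intersect in $0$ and, by disjointness of their vanishing loci, sum to the unit ideal; spell this out. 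Your part (2) coincides with the paper's.
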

\begin{proof}
(1) The $\QQ$ version of Lemma \ref{l:fg} says that $\sH_{\aut}$ is a finitely generated $\QQ$-algebra, and that $\calA$ is a finitely generated $\sH_{\aut}$-module. By the same argument of Theorem \ref{th:Spec decomp}, we get a decomposition
\begin{equation}\label{ZEis Z0}
\Spec\sH^{\red}_{\aut}=\Spec\sH_{\aut}=Z_{\Eis}\coprod Z_{0}
\end{equation}
where $Z_{0}$ is a finite collection of closed points. Correspondingly we have a decomposition
\begin{equation*}
\calA=\calA_{\Eis}\oplus\calA_{0}
\end{equation*}
with $\Supp(\calA_{\Eis})\subset Z_{\Eis}$ and $\Supp(\calA_{0})=Z_{0}$. Since $\calA_{0}$ is finitely generated over $\sH_{\aut}$ with finite support, it is finite dimensional over $\QQ$.  Since $\calA_{0}$ is finite dimensional and stable under $\sH$, we necessarily have $\calA_{0}\subset\calA_{\cusp}$ (see \cite[Lemme 8.13]{VL}; in fact in our case it can be easily deduced from the $r=0$ case of Lemma \ref{l:horo action}). 

We claim that $\calA_{0}=\calA_{\cusp}$. To show the inclusion in the other direction, it suffices to show that any cuspidal Hecke eigenform $\varphi\in\calA_{\cusp}\otimes\Qbar$ lies in $\calA_{0}\otimes\Qbar$. Suppose this is not the case for $\varphi$, letting $\l:\sH\to \Qbar$ be the character by which $\sH$ acts on $\varphi$, then $\l\notin Z_{0}(\Qbar)$. By \eqref{ZEis Z0}, $\l\in Z_{\Eis}(\Qbar)$, which means that the action of $\sH$ on $\varphi$ factors through $\QQ[\Pic_{X}(k)]$ via $a_{\Eis}$, which is impossible.

Now $\calA_{0}=\calA_{\cusp}$ implies that $Z_{0}=\Supp(\calA_{0})=\Supp(\calA_{\cusp})=Z_{\cusp}$. Combining with \eqref{ZEis Z0}, we get \eqref{Haut Q}.

Part (2) follows from comparing \eqref{Haut Ql} to the base change of \eqref{Haut Q} to $\Ql$.
\end{proof}

\subsection{Decomposition of the Heegner--Drinfeld cycle class}\label{ss:decomp HD cycle} 
In previous subsections, we have been working with the middle-dimensional cohomology (with compact support) of $\Sht_{G}=\Sht^{r}_{G}$, and we established a decomposition of it as an $\sH_{\Qlbar}$-module. Exactly the same argument works if we replace $\Sht_{G}$ with $\Sht'_{G}=\Sht'^{r}_{G}$. Instead of repeating the argument we simply state the corresponding result for $\Sht'_{G}$ in what follows.

Let 
\begin{equation*}
V'=\cohoc{2r}{\Sht'_{G}\otimes_{k}\kbar,\Ql}(r). 
\index{$V'$}%
\end{equation*}
Then $V'$ is equipped with a $\Ql$-valued cup product pairing
\begin{equation}\label{cup prod V'}
(\cdot,\cdot): V'\otimes_{\Ql}V'\to \Ql
\end{equation}
and an action of $\sH$ by self-adjoint operators. 

Similar to Definition \ref{def:bsH}, we define the $\Ql$-algebra $\bsH'$ to be the image of the map
\begin{equation*}
\sH\otimes\Ql\to\End_{\Ql}(V')\times\Ql[\Pic_{X}(k)]^{\iota_{\Pic}}.		\index{$\bsH'$}%
\end{equation*}

\begin{theorem}[Variant of Lemma \ref{l:fg} and Theorem \ref{th:Spec decomp}]\label{th:Spec decomp Sht'} 
\begin{enumerate}
\item For any $x\in|X|$, $V'$ is a finitely generated $\sH_{x}\otimes\Ql$-module. 
\item The $\Ql$-algebra $\bsH'$ is finitely generated over $\Ql$ and is one-dimensional as a ring.
\item There is a decomposition of the reduced scheme of $\Spec\bsH'$ into a disjoint union
\begin{equation}\label{decomp bcH'}
\Spec\left(\bsH'\right)^{\red}=Z_{\Eis,\Ql}\coprod Z'^{r}_{0,\ell}		\index{$Z'^{r}_{0,\ell}$}%
\end{equation}
where $Z'^{r}_{0,\ell}$ consists of a finite set of closed points. There is a unique decomposition
\begin{equation*}
V'=V'_{\Eis}\oplus V'_{0}
\end{equation*}
into $\sH\otimes\Ql$-submodules, such that $\Supp(V'_{\Eis})\subset Z_{\Eis,\Ql}$ and $\Supp(V'_{0})=Z'^{r}_{0,\ell}$.
\item The subspace $V'_{0}$ is finite dimensional over $\Ql$.
\end{enumerate}
\end{theorem}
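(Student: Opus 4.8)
The plan is to observe that Theorem \ref{th:Spec decomp Sht'} is an exact analogue of Lemma \ref{l:fg} and Theorem \ref{th:Spec decomp}, and that every step in the proofs of the latter used only formal properties of $\Sht^{r}_{G}$ that are shared by $\Sht'^{r}_{G}=\Sht^{r}_{G}\times_{X^{r}}X'^{r}$. Concretely, one replaces the base $X^{r}$ by $X'^{r}$ throughout, and replaces $\pi_{G}$ by $\pi'_{G}:\Sht'^{r}_{G}\to X'^{r}$ (the base change of $\pi_{G}$ along $\nu^{r}:X'^{r}\to X^{r}$). The Hecke correspondences $\Sht'^{r}_{G}(h_{D})$ were already introduced in \S\ref{sss:Hk Sht'}, together with the ring homomorphism $H':\sH\to\cCh_{2r}(\Sht'^{r}_{G}\times\Sht'^{r}_{G})_{\QQ}$, and the analogues of Lemmas \ref{l:Sht hD proper} and \ref{l:dim hD} hold for $\Sht'^{r}_{G}(h_{D})$ by those same statements after base change. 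Self-adjointness of the $\sH$-action on $V'$ with respect to the cup product pairing \eqref{cup prod V'} follows from Lemma \ref{l:coho self adj} applied verbatim with $\Sht'^{r}_{G}$ in place of $\Sht^{r}_{G}$ (the flipping involution $\tau$ on $\Sht^{r}_{G}(h_{D})$ lifts along the base change to an involution on $\Sht'^{r}_{G}(h_{D})$).

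The key point is that the geometry-of-horocycles input survives base change. First I would note that the truncation by index of instability descends: since $\inst$ is a function on $\Bun_{G}$, pulling back along $\Sht'^{r}_{G}\to\Sht^{r}_{G}$ gives truncations $\Sht'^{\le d}_{G}=\Sht^{\le d}_{G}\times_{X^{r}}X'^{r}$, and likewise $\Sht'^{d}_{G}$, $\Sht'^{d}_{B}$, $\Sht'^{d}_{H}$ are the corresponding base changes of $\Sht^{d}_{G},\Sht^{d}_{B},\Sht^{d}_{H}$. Base change for $\bR\pi_{!}$ along the finite \'etale map $\nu^{r}$ then gives $\bR\pi'^{\le d}_{G,!}\Ql\cong(\nu^{r})^{*}\bR\pi^{\le d}_{G,!}\Ql$ and similarly for the other maps; hence Corollary \ref{c:Eis loc sys} yields that the cone of $\bR\pi'^{<d}_{G,!}\Ql\to\bR\pi'^{\le d}_{G,!}\Ql$ is $(\nu^{r})^{*}\pi^{d}_{H,!}\Ql[-r](-r/2)$, again a shifted local system. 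From here the proof of Lemma \ref{l:fg}(1) goes through word-for-word: one exhausts $\Sht'^{r}_{G}$ by finite-type opens $U'_{n}$, computes the cones $C'_{n+1}$ via the constant-term maps (the analogue of Lemma \ref{l:horo action} over the geometric generic point $\gen'$ of $X'^{r}$ being the pullback of the original statement), and runs the weight-filtration and ascending-chain-condition arguments to conclude $V'$ is finitely generated over $\sH_{x}\otimes\Ql$. Part (2) then follows as in Lemma \ref{l:fg}(2) using Lemma \ref{l:Eis fin gen}, since the Eisenstein quotient $a_{\Eis}:\sH\to\QQ[\Pic_{X}(k)]^{\iota_{\Pic}}$ is unchanged.

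For parts (3) and (4) I would reproduce the argument of Theorem \ref{th:Spec decomp}: set $V''=L_{\le r}V'$ using the Leray filtration for $\pi'^{\le d}_{G}$, observe $V''$ is finitely generated over $\bsH'$, show $\overline{\calI}_{\Eis}V''$ is finite-dimensional (the analogue of Lemma \ref{l:Hcusp image}, whose proof only uses the cohomological constant term and the injectivity of $\gamma_{+}$ over $\gen'$, both of which base change), and show $V'/V''$ is finite-dimensional (the analogue of Lemma \ref{l:GrL}, whose proof uses only Corollary \ref{c:Eis loc sys} and the ACC for constructible sheaves on $X'^{r}$). This produces the decomposition $\Spec(\bsH')^{\red}=Z_{\Eis,\Ql}\coprod Z'^{r}_{0,\ell}$ with $Z'^{r}_{0,\ell}$ finite, the corresponding module decomposition $V'=V'_{\Eis}\oplus V'_{0}$ by idempotents in $\bsH'$ supported on the two pieces, and finite-dimensionality of $V'_{0}$ since its support is finite. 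The main (and really only) obstacle is bookkeeping: one must check that each geometric lemma invoked in \S\ref{s:c spec} genuinely used only smooth base change, properness of the relevant projections, and the local-system structure of the horocycle cones — none of which is affected by the finite \'etale base change $X'\to X$ — rather than some feature special to $X^{r}$; but since all the cited statements (Corollary \ref{c:Eis loc sys}, Lemmas \ref{l:ShtBG}, \ref{l:ShtB smooth}, \ref{l:pgen finite}, \ref{l:horo action}) are stable under such base change, there is no essential difficulty, and the proof reduces to the remark that it is identical to that of Lemma \ref{l:fg} and Theorem \ref{th:Spec decomp} with $X$ replaced by $X'$.
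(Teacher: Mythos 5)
Your proposal is correct and matches the paper's own treatment: the paper gives no separate argument but simply remarks (in \S\ref{ss:decomp HD cycle}) that the proofs of Lemma \ref{l:fg} and Theorem \ref{th:Spec decomp} apply verbatim with $\Sht^{r}_{G}$ replaced by $\Sht'^{r}_{G}$, which is exactly the base-change bookkeeping you carry out. Your explicit verification that the horocycle and constant-term lemmas are stable under the finite \'etale base change $X'^{r}\to X^{r}$ is just a more detailed rendering of the same argument.
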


We may further decompose $V'_{\Qlbar}:=V'\otimes_{\Ql}\Qlbar$ according to points in $Z'^{r}_{0,\ell}(\Qlbar)$. A point in $Z'^{r}_{0,\ell}(\Qlbar)$ is a maximal ideal $\fkm\subset \sH_{\Qlbar}$, or equivalently a ring homomorphism $\sH\to \Qlbar$ whose kernel is $\fkm$. We have a decomposition
\begin{equation}\label{decomp V}
V'_{\Qlbar}=V'_{\Eis,\Qlbar}\oplus(\bigoplus_{\fkm\in Z'^{r}_{0,\ell}(\Qlbar)}V'_{\fkm}).
\end{equation}
Then $V'_{\fkm}$ is characterized as the largest $\Qlbar$-subspace of $V'_{\Qlbar}$ on which the action of $\fkm$ is locally nilpotent.  By Theorem \ref{th:Spec decomp Sht'}, $V'_{\fkm}$ turns out to be the localization of $V'$ at the maximal ideal $\fkm$, hence our notation $V'_{\fkm}$ is consistent with the standard notation used in commutative algebra.

We may decompose the cycle class $\cl(\theta^{\mu}_{*}[\Sht^{\mu}_{T}])\in V'_{\Qlbar}$ according to the decomposition  \eqref{decomp V}
\begin{equation}\label{Sht T decomp}
\cl(\theta^{\mu}_{*}[\Sht^{\mu}_{T}])=[\Sht_{T}]_{\Eis}+\sum_{\fkm\in Z^{r}_{0,\ell}(\Qlbar)}[\Sht_{T}]_{\fkm}		\index{$[\Sht_{T}]_{\Eis}$}%
\end{equation}
where $[\Sht_{T}]_{\Eis}\in V'_{\Eis}$ and $[\Sht_{T}]_{\fkm}\in V'_{\fkm}$.

\begin{cor}\label{c:decomp Ir} 
\begin{enumerate}
\item The decomposition \eqref{decomp V} is an orthogonal decomposition under the cup product pairing \eqref{cup prod V'} on $V'$.
\item For any $f\in \sH$, we have
\begin{equation}\label{decomp Ir}
\BI_{r}(f)=\left([\Sht_{T}]_{\Eis},f*[\Sht_{T}]_{\Eis}\right)+\sum_{\fkm\in Z'^{r}_{0,\ell}(\Qlbar)}\BI_{r}(\fkm,f)
\end{equation}
where
\begin{equation*}
\BI_{r}(\fkm,f):=\left([\Sht_{T}]_{\fkm}, f*[\Sht_{T}]_{\fkm}\right).
\end{equation*}
\end{enumerate}
\end{cor}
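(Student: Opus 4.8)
\textbf{Proof plan for Corollary \ref{c:decomp Ir}.}

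The plan is to deduce both assertions directly from the structure established in Theorem \ref{th:Spec decomp Sht'}, using two general facts: that self-adjoint operators on a vector space with a nondegenerate pairing respect a generalized-eigenspace decomposition orthogonally, and that the intersection number $\BI_{r}(f)$ is literally the cup product pairing $(\cl(\theta^{\mu}_{*}[\Sht^{\mu}_{T}]), f*\cl(\theta^{\mu}_{*}[\Sht^{\mu}_{T}]))$ on $V'$. The first step is to justify this last identity: the definition of $\BI_{r}(f)$ is $\jiao{\theta^{\mu}_{*}[\Sht^{\mu}_{T}], f*\theta^{\mu}_{*}[\Sht^{\mu}_{T}]}_{\Sht'^{r}_{G}}$, an intersection pairing of proper cycles of complementary dimension $r$ on the smooth Deligne--Mumford stack $\Sht'^{r}_{G}$ of dimension $2r$. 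Since the cycle class map $\cl$ is compatible with the intersection pairing on Chow groups and the cup product on cohomology (see the appendix on intersection theory, and the $\sH$-equivariance lemma for $\cl$ in \S\ref{s:c spec}), we get $\BI_{r}(f)=(\cl(\theta^{\mu}_{*}[\Sht^{\mu}_{T}]), \cl(f*\theta^{\mu}_{*}[\Sht^{\mu}_{T}]))=(\cl(\theta^{\mu}_{*}[\Sht^{\mu}_{T}]), f*\cl(\theta^{\mu}_{*}[\Sht^{\mu}_{T}]))$ in $V'$.

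For part (1): by Theorem \ref{th:Spec decomp Sht'}, $V'=V'_{\Eis}\oplus V'_{0}$ as $\sH\otimes\Ql$-modules, with the two summands supported on disjoint closed subsets of $\Spec\bsH'$; extending scalars to $\Qlbar$ and further decomposing $V'_{0,\Qlbar}=\bigoplus_{\fkm}V'_{\fkm}$ over the finitely many maximal ideals $\fkm\in Z'^{r}_{0,\ell}(\Qlbar)$ gives \eqref{decomp V}. The cup product pairing on $V'$ is nondegenerate on the middle cohomology (Poincar\'e duality) — although I should be careful here: one only needs that $\sH$ acts by self-adjoint operators, which is the content of Lemma \ref{l:coho self adj} (stated for $\Sht_{G}$, with the same proof for $\Sht'_{G}$). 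For two distinct summands in \eqref{decomp V} (say $V'_{\fkm}$ and $V'_{\fkm'}$ with $\fkm\neq\fkm'$, or $V'_{\Eis}$ versus any $V'_{\fkm}$), pick $h\in\sH$ that is a unit modulo one maximal ideal and nilpotent modulo the other (possible since the supporting closed subsets are disjoint in $\Spec\bsH'$); using self-adjointness of the action of a suitable polynomial in $h$, standard linear algebra shows the pairing between the two summands vanishes. Concretely: if $v\in V'_{\fkm}$, $w\in V'_{\fkm'}$, choose $h\in\fkm'$ acting invertibly on $V'_{\fkm}$ and topologically nilpotently on $V'_{\fkm'}$; then $(v,w)=(v, h^{N}h^{-N}w)$ where $h^{N}w=0$ for large $N$ (after replacing $h$ by its image and noting $V'_{\fkm'}$ is finite-dimensional) and $h^{-N}$ makes sense on $V'_{\fkm}$, so $(h^{N}(\text{something}), w)=(\text{something}, h^{N}w)=0$; for the Eisenstein summand one uses instead an element of $\calI_{\Eis}$ acting invertibly on $V'_{0}$.

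For part (2): apply the pairing to the decomposition \eqref{Sht T decomp} of $\cl(\theta^{\mu}_{*}[\Sht^{\mu}_{T}])$ and to $f*\cl(\theta^{\mu}_{*}[\Sht^{\mu}_{T}])$, which decomposes the same way because each summand in \eqref{decomp V} is $\sH$-stable. By the orthogonality of part (1), all cross terms vanish, leaving exactly the diagonal sum
\[
\BI_{r}(f)=\left([\Sht_{T}]_{\Eis},f*[\Sht_{T}]_{\Eis}\right)+\sum_{\fkm\in Z'^{r}_{0,\ell}(\Qlbar)}\left([\Sht_{T}]_{\fkm}, f*[\Sht_{T}]_{\fkm}\right),
\]
which is \eqref{decomp Ir} after defining $\BI_{r}(\fkm,f)$ as the $\fkm$-summand. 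The main obstacle is the first step — matching the algebraically-defined intersection number $\jiao{\cdot,\cdot}_{\Sht'^{r}_{G}}$ on Chow groups with the cup-product pairing on $\ell$-adic cohomology, which requires invoking the compatibility of cycle class maps with intersection products (Poincar\'e duality on the smooth proper-over-its-cycle setup, handled by the machinery in Appendix \ref{s:int}) and checking that the Hecke action on Chow groups matches the one on cohomology under $\cl$, both of which are stated earlier but need to be assembled carefully; everything after that is formal linear algebra.
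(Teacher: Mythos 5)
Your proposal is correct and follows essentially the same route as the paper: orthogonality of \eqref{decomp V} from the self-adjointness of the Hecke action (the $\Sht'_{G}$-variant of Lemma \ref{l:coho self adj}), and then \eqref{decomp Ir} by pairing the decomposition \eqref{Sht T decomp} against itself, with the identification of $\BI_{r}(f)$ with the cup-product pairing coming from the compatibility of the cycle class map with intersection/cup products and its $\sH$-equivariance, just as the paper uses implicitly. You merely spell out the standard support/localization argument that the paper leaves to the phrase "follows from self-adjointness," so there is nothing to change.
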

\begin{proof}
The orthogonality of the decomposition \eqref{decomp V} follows from the self-adjointness of $\sH$ with respect to the cup product pairing, i.e.,  variant of Lemma \ref{l:coho self adj} for $\Sht'_{G}$. The formula \eqref{decomp Ir} then follows from the orthogonality of the terms in the decomposition \eqref{Sht T decomp}.
\end{proof}


\part{The comparison}

\section{Comparison for most Hecke functions}\label{s:most}

The goal of this section is to prove the key identity \eqref{key id} for most Hecke functions. More precisely, we will prove the following theorem.

\begin{thm}\label{th:IJ}Let $D$ be an effective divisor on $X$ of degree $d\geq\max\{2g'-1,2g\}$. Then for any $u\in\PP^{1}(F)-\{1\}$ we have
\begin{equation}\label{u IJ}
(\log q)^{-r}\BJ_{r}(u,h_{D})=\BI_{r}(u,h_{D}).
\end{equation}
In particular, we have
\begin{equation}\label{IJ hD}
(\log q)^{-r}\BJ_{r}(h_{D}) =\BI_{r}(h_{D}).
\end{equation}
\end{thm}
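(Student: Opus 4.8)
The plan is to prove the two identities simultaneously by reducing everything to a comparison of sheaf-theoretic objects over the common base $\calA_d$. The display \eqref{IJ hD} follows from \eqref{u IJ} by summing over $u\in\PP^1(F)-\{1\}$: on the analytic side this is the decomposition \eqref{eqn u decomp}, and on the geometric side it is the orbital decomposition \eqref{I orb decomp} of Theorem~\ref{th:I trace}. So the real content is the orbit-by-orbit identity \eqref{u IJ} for a fixed divisor $D$ of large degree $d$.

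First I would invoke the two geometric interpretations already established. On the analytic side, Corollary~\ref{c:Jr geom} expresses $\BJ_r(u,h_D)$ (when $u=\inv_D(a)$ for $a\in\calA_D(k)$, and zero otherwise) as $(\log q)^r\sum_{\un d\in\Sigma_d}(2d_{12}-d)^r\Tr(\Frob_a,(\bR f_{\calN_{\un d},*}L_{\un d})_{\ov a})$. On the geometric side, \eqref{orb I} expresses $\BI_r(u,h_D)$ as $\Tr((f_{\calM,!}[\calH^\ds])^r_a\circ\Frob_a,(\bR f_{\calM,!}\Ql)_{\ov a})$, again with the same support condition; note that both sides are supported on exactly the points $a\in\calA_D(k)$, which already matches the ``zero otherwise'' cases. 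Thus, after dividing the analytic side by $(\log q)^r$, the identity \eqref{u IJ} becomes a term-by-term equality of Frobenius traces at each point $a$, and it suffices to prove an equality of complexes on $\calA_d$ (or at least on the open locus $\Ah_d$, since $\calA_D\subset\Ah_d$): the complex $\bR f_{\calM,!}\Ql$ together with the endomorphism induced by the $r$-th power of the cohomological correspondence $[\calH^\ds]$ should be identified with $\bigoplus_{\un d\in\Sigma_d}\bR f_{\calN_{\un d},*}L_{\un d}$ together with the ``grading'' endomorphism acting by the scalar $(2d_{12}-d)$ on the $\un d$-summand. Equivalently, writing $L=(\nu_*\Ql)^{\sigma=-1}$ and $L_d$ for the induced rank-one local system, one wants $\bR f_{\calM,*}\Ql\cong\bigoplus_{\un d}\bR f_{\calN_{\un d},*}L_{\un d}$ compatibly with these operators, where the operator on the $\calM$-side is the $r$-th convolution power of the Hecke-type correspondence $\calH^\ds\subset\calH$ over $X'$.

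The core of the argument is the \emph{perverse continuation principle} of Ng\^o. Since $d\geq\max\{2g'-1,2g\}$, Propositions~\ref{p:Nsm} and \ref{p:M} tell us that $\calN_{\un d}$ and $\calM_d$ are smooth over $k$, the maps $f_{\calN_{\un d}}$ and $f_{\calM}$ are proper, and both have source of dimension $2d-g+1$ and base $\calA_d$ of dimension $2d-g+1$ as well (both $f_{\calN_{\un d}}$ and $f_{\calM}$ are generically finite onto $\calA_d$). One shows first that $\bR f_{\calM,*}\Ql$ and $\bigoplus_{\un d}\bR f_{\calN_{\un d},*}L_{\un d}$ are shifted perverse sheaves on $\calA_d$ and pure, hence by the decomposition theorem are middle extensions from any dense open over which they are lisse; this is Proposition~\ref{p:Rf decomp} and Proposition~\ref{p:RfN decomp} in the later sections. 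Then one computes both sides explicitly over a convenient dense open $U\subset\calA_d$ — the locus where the divisor of $a-b$ is multiplicity-free and disjoint from the relevant supports — where the fibers of $f_{\calM}$ and $f_{\calN_{\un d}}$ become disjoint unions of points indexed by splittings of an effective divisor on $X'$ versus $X$, and the $\Frob$-action is governed by the quadratic character $\eta$, i.e. by $L$. Matching these explicit descriptions over $U$ (this is essentially the content of Proposition~\ref{p:geom orb int}(2) restricted to the generic point, done uniformly in families) gives the isomorphism over $U$, which then propagates to all of $\calA_d$ by functoriality of middle extension; finally one checks that the isomorphism intertwines the two endomorphisms, again by checking over the dense open $U$ where both operators are transparent (on the $\calM$-side the operator $[\calH^\ds]$ acts on the point-fibers by the geometry of the incidence scheme $I'_d$ of Lemma~\ref{l:HInc}, matching the shift $d_{12}\mapsto d_{12}\pm 1$ on the $\calN$-side).

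The main obstacle I anticipate is not the generic computation — over $U$ everything is a finite-\'etale bookkeeping of divisors and signs — but rather verifying the two purity/middle-extension statements over the \emph{whole} base $\calA_d$, i.e. that both direct image complexes are (shifted) perverse and have no summands supported on the boundary $\calA_d-U$. For $f_{\calM}$ this requires controlling the singular fibers, especially along the zero locus $Z'_d$ (where $\calM_d$ meets the Prym/Jacobian structure from Proposition~\ref{p:M}(1)) and along the ``diagonal'' locus where $\div(a)$ and $\div(b)$ share points; for $f_{\calN_{\un d}}$ the analogous issue is the boundary of $\hX_d$ where sections vanish. The genus bound $d\geq\max\{2g'-1,2g\}$ is exactly what makes the relevant Abel--Jacobi maps smooth and forces the fiber dimensions to stay small enough that no boundary-supported perverse constituents can appear; carrying this out carefully is the technical heart, and is what the subsequent sections \S\ref{s:alt} and \S\ref{s:most} are devoted to. Once the isomorphism of complexes-with-operator is in hand, taking Frobenius traces at each $a\in\calA_D(k)$ and summing yields \eqref{u IJ}, and then \eqref{IJ hD} by summation over $u$.
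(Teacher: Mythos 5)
Your proposal is correct and follows essentially the same route as the paper: reduce to a term-by-term trace identity at the points $a\in\calA_{D}(k)$ via Corollary \ref{c:Jr geom} and Theorem \ref{th:I trace}, then compare $\bR f_{\calM,*}\Ql$ equipped with the correspondence $[\calH^{\ds}]$ to $\bigoplus_{\un{d}}\bR f_{\calN_{\un{d}},*}L_{\un{d}}$ by the perverse continuation principle, exactly as in Propositions \ref{p:Rf decomp}, \ref{p:H action} and \ref{p:RfN decomp}. The only detail you gloss is the sign: the eigenvalue of $f_{\calM,!}[\calH^{\ds}]$ on the $(i,j)$-summand is $d-2j$ rather than $2d_{12}-d$, and the two weights agree precisely because $r$ is even.
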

For the definition of $\BJ_{r}(u,h_{D})$ and $\BI_{r}(u,h_{D})$, see \eqref{orb J} and \eqref{orb I} respectively.

\subsection{Direct image of $f_{\CM}$}\label{ss:Loc K} 

\subsubsection{The local system $L(\rho_{i})$}\label{sss:Lrhoi} Let $j:X^{\c}_{d}\subset X_{d}\subset \hX_{d}$ be the locus of multiplicity-free divisors. \index{$X^{\c}_{d}$}%
Taking the preimage of $X^{\c}_{d}$ under the branched cover $X'^{d}\to X^{d}\to X_{d}$, we get an \'etale Galois cover
\begin{equation*}
u: X'^{d,\c}\to X^{d,\c}\to X^{\c}_{d}
\end{equation*}
with Galois group $\Gamma_{d}:=\{\pm1\}^{d}\rtimes S_{d}$. \index{$\Gamma_d, S_d$}%
For $0\leq i\leq d$, let $\chi_{i}$ be the character $\{\pm1\}^{d}\to\{\pm1\}$ that is nontrivial on the first $i$ factors and trivial on the rest. Let $S_{i,d-i}\cong S_{i}\times S_{d-i}$ be the subgroup of $S_{d}$ stabilizing $\{1,2,\cdots, i\}\subset \{1,\cdots, d\}$. Then $\chi_{i}$ extends to the subgroup $\Gamma_{d}(i)=\{\pm1\}^{d}\rtimes S_{i,d-i}$ of $\Gamma_{d}$ with the trivial representation on the $S_{i,d-i}$-factor. The induced representation
\begin{equation}\label{rhoi}
\rho_{i}=\Ind^{\Gamma_{d}}_{\Gamma_{d}(i)}(\chi_{i}\boxtimes \one)
\end{equation}
is an irreducible representation of $\Gamma_{d}$. This representation gives rise to an irreducible local system $L(\rho_{i})$ on $X^{\c}_{d}$.  Let $K_{i}:=j_{!*}(L(\rho_{i})[d])[-d]$ be the middle extension of $L(\rho_{i})$ (see \cite[2.1.7]{BBD}). Then $K_{i}$ is a shifted simple perverse sheaf on $\hX_{d}$. \index{$K_{i}$}%

\begin{prop}\label{p:Rf decomp} Suppose $d\geq 2g'-1$. Then we have a canonical isomorphism of shifted perverse sheaves
\begin{equation}\label{Rf decomp}
\bR f_{\calM,*}\Ql\cong\bigoplus_{i,j=0}^{d}(K_{i}\boxtimes K_{j})|_{\calA_{d}}.
\end{equation}
Here $K_{i}\boxtimes K_{j}$ lives on $\hX_{d}\times_{\Pic^{d}_{X}}\hX_{d}$, which contains $\calA_{d}$ as an open subscheme. 
\end{prop}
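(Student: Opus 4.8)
The plan is to reduce the statement, by way of the Cartesian square \eqref{fM nu}, to the single computation
\begin{equation}\label{eq:nu-dec}
\bR\wh\nu_{d,*}\Ql\cong\bigoplus_{i=0}^{d}K_i\qquad\text{on }\hX_d ,
\end{equation}
and then to form exterior tensor products over $\Pic^d_X$. By Proposition \ref{p:M}\eqref{fM proper} the square \eqref{fM nu} is Cartesian and exhibits $\calM_d$ as the preimage of $\calA_d$ under $\wh\nu_d\times_{\Pic^d_X}\wh\nu_d$; since $\wh\nu_d$ is proper (Proposition \ref{p:M}\eqref{nud proper}), base change along the open immersion $\calA_d\incl\hX_d\times_{\Pic^d_X}\hX_d$ gives $\bR f_{\calM,*}\Ql\cong\bigl(\bR(\wh\nu_d\times_{\Pic^d_X}\wh\nu_d)_*\Ql\bigr)\big|_{\calA_d}$. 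Factoring $\wh\nu_d\times_{\Pic^d_X}\wh\nu_d=(\wh\nu_d\times_{\Pic^d_X}\id)\circ(\id\times_{\Pic^d_X}\wh\nu_d)$ and applying base change twice, this complex is the exterior product over $\Pic^d_X$ of $\bR\wh\nu_{d,*}\Ql$ with itself, which is $\bigoplus_{i,j}K_i\boxtimes K_j$ once \eqref{eq:nu-dec} is established. That each $K_i\boxtimes K_j$ is a shifted perverse sheaf, and that the total complex has the asserted amplitude, will follow from $\wh\nu_d$ being finite.

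First I would show that $\bR\wh\nu_{d,*}\Ql$ is an intersection complex. The morphism $\wh\nu_d$ is proper and has finite fibres (an effective divisor on $X$ has only finitely many lifts under the norm map on divisors), hence $\wh\nu_d$ is finite and $\bR\wh\nu_{d,*}\Ql$ sits in degree $0$. For $d\geq 2g'-1$ the source $\hX'_d$ is smooth --- it is a vector bundle over $\Pic^d_{X'}$, as used in the proof of Proposition \ref{p:M}\eqref{M smooth} --- so $\Ql[\dim\hX'_d]$ is its intersection complex, and a finite surjective morphism from a smooth irreducible source onto the irreducible stack $\hX_d$ is automatically small. The decomposition theorem then yields
\[
\bR\wh\nu_{d,*}\Ql\cong j_{!*}\bigl(\wh\nu_{d,*}\Ql|_{X^{\c}_d}[d]\bigr)[-d],
\]
with $j:X^{\c}_d\incl\hX_d$ the multiplicity-free locus; in particular no summand is supported on $\hX_d\setminus X^{\c}_d$.

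It then remains to identify the generic local system $\wh\nu_{d,*}\Ql|_{X^{\c}_d}$ with $\bigoplus_{i=0}^{d}L(\rho_i)$; applying $j_{!*}(-[d])[-d]$ and using its additivity then gives \eqref{eq:nu-dec}, hence the proposition. Over $X^{\c}_d$ the preimage $\wh\nu_d^{-1}(X^{\c}_d)$ is identified, via $E'\mapsto\nu_*E'$, with the restriction of $X'_d$, and this map is finite étale of degree $2^d$; its Galois closure is the cover $u:X'^{d,\c}\to X^{\c}_d$ of \S\ref{sss:Lrhoi}, with group $\Gamma_d=\{\pm1\}^d\rtimes S_d$, and $\wh\nu_d^{-1}(X^{\c}_d)\cong X'^{d,\c}/S_d$. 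Hence $\wh\nu_{d,*}\Ql|_{X^{\c}_d}$ is the local system attached to the permutation representation $\Ind_{S_d}^{\Gamma_d}\one$. Because $\Gamma_d(i)\supset\{\pm1\}^d$ one has $\Gamma_d/\Gamma_d(i)=S_d/S_{i,d-i}$, so $S_d\backslash\Gamma_d/\Gamma_d(i)$ is a single coset while $S_d\cap\Gamma_d(i)=S_{i,d-i}$; Mackey's formula therefore gives $\Res_{S_d}\rho_i\cong\Ind_{S_{i,d-i}}^{S_d}\one$, a transitive permutation representation, so $\langle\Ind_{S_d}^{\Gamma_d}\one,\rho_i\rangle=\langle\one,\Res_{S_d}\rho_i\rangle=1$ for every $i$. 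Since $\dim\rho_i=\binom{d}{i}$ and $\sum_{i=0}^d\binom{d}{i}=2^d=\dim\Ind_{S_d}^{\Gamma_d}\one$, this forces $\Ind_{S_d}^{\Gamma_d}\one\cong\bigoplus_{i=0}^{d}\rho_i$, i.e. $\wh\nu_{d,*}\Ql|_{X^{\c}_d}\cong\bigoplus_{i=0}^{d}L(\rho_i)$.

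The hard part is the intersection-complex statement of the second step: one must confirm that $\wh\nu_d$ really is small, so that no spurious $IC$ summands supported on $\hX_d\setminus X^{\c}_d$ enter --- this is exactly where the hypothesis $d\geq 2g'-1$ (forcing $\hX'_d$ smooth) is used --- and one must check that the $\Gm$-gerbe structure on $\hX_d$ and $\hX'_d$ is immaterial (it may be removed by passing to coarse moduli spaces, and is in any case harmless for $\Ql$-sheaf theory). The remaining ingredients --- base change, relative Künneth over $\Pic^d_X$, additivity of $j_{!*}$, and the finite-group computation above --- are routine.
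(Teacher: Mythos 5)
Your overall strategy (reduce by base change/K\"unneth over $\Pic^{d}_{X}$ to the one-variable statement $\bR\wh\nu_{d,*}\Ql\cong\bigoplus_{i}K_{i}$, show this pushforward is a middle extension from $X^{\c}_{d}$, then identify the generic local system) is the same as the paper's, and your Mackey/Frobenius-reciprocity identification of $\Ind_{S_d}^{\Gamma_d}\one$ with $\bigoplus_{i}\rho_{i}$ is a perfectly good variant of the explicit indicator-function computation used there. But there is a genuine gap exactly at the step you yourself flag as the hard part: your claim that $\wh\nu_{d}$ is finite is false, and with it the shortcut ``finite $\Rightarrow$ small.'' Your parenthetical justification only looks at the divisor locus: an effective divisor on $X$ indeed has finitely many lifts, so $\wh\nu_{d}$ is quasi-finite over $X_{d}\subset\hX_{d}$. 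However $\hX'_{d}$ also contains the zero-section locus $\Pic^{d}_{X'}$, which maps to the zero section $\Pic^{d}_{X}\subset\hX_{d}$ via the norm map $\Nm:\Pic^{d}_{X'}\to\Pic^{d}_{X}$, whose fibers are torsors under (a gerby form of) $\Prym_{X'/X}$ and hence have dimension $g-1$. So for $g\geq 2$ the map $\wh\nu_{d}$ has positive-dimensional fibers, $\bR\wh\nu_{d,*}\Ql$ is \emph{not} concentrated in degree $0$, and your smallness argument has no basis.

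The correct replacement is the dimension estimate: the only positive-dimensional fibers lie over the zero section $\Pic^{d}_{X}$, which has codimension $d-g+1$ in $\hX_{d}$, while those fibers have dimension $g-1$; since $d\geq 2g'-1=4g-3\geq 3g-2$ one gets $d-g+1\geq 2(g-1)+1$, so $\wh\nu_{d}$ is small, and then (by Goresky--MacPherson, as the map is proper and small with smooth geometrically irreducible source) $\bR\wh\nu_{d,*}\Ql$ is the middle extension of its restriction to $X^{\c}_{d}$, after which your computation of the generic local system finishes the proof. Note in particular that your assertion that the hypothesis $d\geq 2g'-1$ enters only through smoothness of $\hX'_{d}$ is incomplete: it (or at least $d\geq 3g-2$) is also what makes this fiber-dimension inequality, and hence smallness, hold.
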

\begin{proof}
By Proposition \ref{p:M}\eqref{fM proper}, $f_{\calM}$ is the restriction of $\wh\nu_{d}\times\wh\nu_{d}: \hX'_{d}\times_{\Pic^{d}_{X}}\hX'_{d}\to \hX_{d}\times_{\Pic^{d}_{X}}\hX_{d}$, where $\wh{\nu}_{d}:\hX'_{d}\to \hX_{d}$ is the norm map. By Proposition \ref{p:M}\eqref{nud proper}, $\wh{\nu}_{d}$ is also proper. Therefore by the K\"unneth formula, it suffices to show that
\begin{equation}\label{sumKd}
\bR\wh\nu_{d,*}\Ql\cong\bigoplus_{i=0}^{d}K_{i}.
\end{equation}

We claim that $\wh\nu_{d}$ is a small map (see \cite[6.2]{GM}). In fact the only positive dimension fibers are over the zero section $\Pic^{d}_{X}\incl \hX_{d}$, which has codimension $d-g+1$. On the other hand, the restriction of $\wh\nu_{d}$ to the zero section is the norm map $\Pic^{d}_{X'}\to \Pic^{d}_{X}$, which has fiber dimension $g-1$. The condition $d\geq 2g'-1\geq 3g-2$ implies $d-g+1\geq 2(g-1)+1$, therefore $\wh\nu_{d}$ is a small map.

Now $\wh\nu_{d}$ is proper, small with smooth and geometrically irreducible source, $\bR\wh\nu_{d,*}\Ql$ is the middle extension of its restriction to any dense open subset of $\hX_{d}$ (see \cite[Theorem at the end of 6.2]{GM}). In particular, $\bR\wh\nu_{d,*}\Ql$ is the middle extension of its restriction to $X^{\c}_{d}$. It remains to show
\begin{equation}\label{whpi L}
\bR\wh\nu_{d,*}\Ql|_{X^{\c}_{d}}\cong\bigoplus_{i=0}^{d}L(\rho_{i}).
\end{equation}

Let $\nu^{\c}_{d}: X'^{\c}_{d}=\nu_{d}^{-1}(X^{\c}_{d})\to X^{\c}_{d}$ be the restriction of $\nu_{d}:X'_{d}\to X_{d}$ over $X^{\c}_{d}$. Then $\bR\nu^{\c}_{d,*}\Ql$ is the local system on $X^{\c}_{d}$ associated with the representation $\Ind^{\Gamma_{d}}_{S_{d}}\Ql=\Ql[\Gamma_{d}/S_{d}]$ of $\Gamma_{d}$. A basis $\{\one_{\ep}\}$ of $\Ql[\Gamma_{d}/S_{d}]$ is given by the indicator functions of the $S_{d}$-coset of  $\ep\in\{\pm1\}^{d}$. For any character $\chi:\{\pm1\}^{d}\to\{\pm1\}$, let $\one_{\chi}:=\sum_{\ep}\chi(\ep)\one_{\ep}\in \Ql[\Gamma_{d}/S_{d}]$. For the character $\chi_{i}$ considered in \S\ref{sss:Lrhoi},  $\one_{\chi_{i}}$ is invariant under $S_{i,d-i}$, and therefore we have a $\Gamma_{d}$-equivariant embedding $\rho_{i}=\Ind^{\Gamma_{d}}_{\Gamma_{d}(i)}(\chi_{i}\boxtimes\one)\incl\Ql[\Gamma_{d}/S_{d}]$. Checking total dimensions we conclude that
\begin{equation*}
\Ql[\Gamma_{d}/S_{d}]\cong\bigoplus_{i=0}^{d}\rho_{i}.
\end{equation*}
This gives a canonical isomorphism of local systems $\bR\nu^{\c}_{d,*}\Ql\cong\oplus_{i=0}^{d}L(\rho_{i})$, which is \eqref{whpi L}.
\end{proof}

In \S\ref{sss:define calH}, we have defined a self-correspondence $\calH=\Hk^{1}_{\CM,d}$ of $\CM_{d}$ over $\calA_{d}$. Recall that  $\Ads_{d}\subset\calA_{d}$ is the open subscheme $\hX_{d}\times_{\Pic^{d}_{X}}X_{d}$, and $\Mds_{d}$ and $\calH^{\ds}$ are the restrictions of $\calM_{d}$ and $\calH$ to $\Ads_{d}$. Recall that $[\calH^{\ds}]\in\Ch_{2d-g+1}(\calH)_{\QQ}$ is the fundamental cycle of the closure of $\calH^{\ds}$.	 \index{$\calA_{d}^{\diam}$} \index{$\calM_{d}^\diam$} \index{$\calH^{\ds}$} %

\begin{prop}\label{p:H action} Suppose $d\geq 2g'-1$. Then the action $f_{\calM,!}[\calH^{\ds}]$  on $\bR f_{\calM*}\Ql$ preserves each direct summand $K_{i}\boxtimes K_{j}$ under the decomposition \eqref{Rf decomp}, and acts on $K_{i}\boxtimes K_{j}$ by the scalar $(d-2j)$. 
\end{prop}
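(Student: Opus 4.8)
The plan is to compute the action of $f_{\calM,!}[\calH^{\ds}]$ after restricting to the multiplicity-free locus $j:X^{\c}_{d}\incl \hX_{d}$, where everything becomes an explicit calculation with the Galois cover $u:X'^{d,\c}\to X^{\c}_{d}$ with group $\Gamma_{d}=\{\pm1\}^{d}\rtimes S_{d}$; then I would invoke the functoriality of middle extension to propagate the answer to all of $\calA_{d}$. The first reduction is that since $\bR f_{\calM,*}\Ql$ is a shifted semisimple perverse sheaf (Proposition~\ref{p:Rf decomp}), and since $K_{i}\boxtimes K_{j}$ are pairwise non-isomorphic simple perverse sheaves (their restrictions to $X^{\c}_{d}\times_{\Pic^{d}_{X}}X^{\c}_{d}$ are the distinct irreducible local systems $L(\rho_{i})\boxtimes L(\rho_{j})$), any endomorphism of $\bR f_{\calM,*}\Ql$ must preserve each $K_{i}\boxtimes K_{j}$ and act by a scalar on it; moreover that scalar is determined by the induced endomorphism of the local system $L(\rho_{i})\boxtimes L(\rho_{j})$ over a single point of $X^{\c}_{d}\times_{\Pic^{d}_{X}}X^{\c}_{d}$.

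\textbf{Key steps.} First I would observe that, because the claimed scalar $d-2j$ does not involve the second factor, it suffices to analyze $f_{\calM,!}[\calH^{\ds}]$ over the ``$\alpha$-part'' alone: using the description $\calM_{d}\cong \hX'_{d}\times_{\Pic^{d}_{X}}\hX'_{d}-Z'_{d}$ via $(\iota_{\alpha},\iota_{\beta})$, and the Cartesian square in Lemma~\ref{l:HInc} together with Lemma~\ref{l:Hk r comp}, the correspondence $\calH^{\ds}$ lies over $I'_{d}\subset X'_{d}\times X'$ on the $\beta$-side; but by the symmetry of the construction (swapping $\alpha$ and $\beta$ interchanges the roles of $K_{i}$ and $K_{j}$), it is cleaner to work directly with $\wh\nu_{d}:\hX'_{d}\to\hX_{d}$ and the single Hecke correspondence $\calH_{0}$ on $X'_{d}$ lying over $I'_{d}$, which sends a divisor $E'\in X'_{d}$ to $E'-y+\sigma(y)$ for $y\in E'$. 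Then I would restrict to $X^{\c}_{d}$ and identify, via the cover $u$, the stalk of $\bR\wh\nu^{\c}_{d,*}\Ql$ with $\Ql[\Gamma_{d}/S_{d}]\cong\bigoplus_{i}\rho_{i}$ as in the proof of Proposition~\ref{p:Rf decomp}. The correspondence $\calH_{0}$ over $X^{\c}_{d}$ becomes the operator on $\Ql[\Gamma_{d}/S_{d}]$ given by $\one_{\ep}\mapsto \sum_{k}\one_{s_{k}(\ep)}$, where $s_{k}\in\{\pm1\}^{d}$ flips the $k$-th sign — that is, summing over the $d$ choices of the point $y$ to be ``conjugated.'' On the basis vector $\one_{\chi_{i}}=\sum_{\ep}\chi_{i}(\ep)\one_{\ep}$ spanning the $\rho_{i}$-isotypic line (up to the $S_{i,d-i}$-action), this operator acts by $\sum_{k}\chi_{i}(s_{k})=(d-i)\cdot(+1)+i\cdot(-1)=d-2i$. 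Applying the same analysis to the $\beta$-side (equivalently, to the second factor $K_{j}$) — and here the point is that in $\calH^{\ds}$ it is the $\beta$-divisor that is modified by the Hecke operation, so the relevant eigenvalue is $d-2j$ — gives the scalar $d-2j$ on $K_{i}\boxtimes K_{j}$.

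\textbf{Main obstacle.} The delicate step is making precise \emph{which} of the two $\hX'_{d}$-factors the correspondence $[\calH^{\ds}]$ actually modifies, and keeping track of the induced map on cohomology sheaves rather than on the ``wrong'' side — i.e. checking that the Hecke modification at a point $x'\in X'$ affects the section $\beta$ (whose divisor must contain $x'$, by Lemma~\ref{l:HInc}) while leaving the $\alpha$-side untouched, so that $f_{\calM,!}[\calH^{\ds}]$ acts as $\mathrm{id}\boxtimes(\text{Hecke on the }\beta\text{-factor})$ under the K\"unneth decomposition. I also need to be careful that $[\calH^{\ds}]$, defined as the closure of the open part $\calH^{\ds}\subset\calH$, restricts to the honest graph of the finite \'etale correspondence $\calH_{0}$ over the multiplicity-free locus $X^{\c}_{d}$ (this follows from Lemma~\ref{l:dim HkM} and the fact that over $X^{\c}_{d}$ the maps $\gamma_{0},\gamma_{1}$ are finite \'etale, so there is no correction term from the boundary), and that the Lefschetz-type normalization (no extra Tate twists or signs) is consistent with the conventions of \S\ref{sss:corr}. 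Once the combinatorial eigenvalue $d-2j$ is established over $X^{\c}_{d}$, the extension to all of $\calA_{d}$ is formal: $\bR f_{\calM,*}\Ql$ is the middle extension of its restriction to $X^{\c}_{d}\times_{\Pic^{d}_{X}}X^{\c}_{d}$, and any endomorphism is likewise determined by its restriction there, so $f_{\calM,!}[\calH^{\ds}]$ acts by $d-2j$ on $(K_{i}\boxtimes K_{j})|_{\calA_{d}}$ globally.
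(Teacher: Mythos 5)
Your proposal is correct and takes essentially the same route as the paper: reduce to a dense open subset using that all summands of $\bR f_{\calM,*}\Ql$ are (shifted) simple middle extensions with full support, use Lemma \ref{l:HInc} to see that $\calH^{\ds}$ leaves the $\alpha$-side untouched and acts on the $\beta$-side through the correspondence $I'_{d}$, and then compute the eigenvalue $d-2j$ of the resulting sign-flip operator on the $\rho_{j}$-isotypic part of $\Ql[\Gamma_{d}/S_{d}]$ at a generic point (the paper works over $\Ads_{d}$ via the base $\tcA_{d}$ and uses the eigenvectors $P_{\delta}$, which is the same computation). One small slip: your opening sentence says the scalar ``does not involve the second factor'' and proposes to analyze the $\alpha$-part alone — it should be the first factor, i.e.\ the $\beta$-part carries the action — but you state it correctly later, so the argument is unaffected.
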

\begin{proof} By Proposition \ref{p:Rf decomp}, $\bR f_{\calM*}\Ql$ is a shifted perverse sheaf all of whose simple constituents have full support. Therefore it suffices to prove the same statement after restricting to any dense open subset $U\subset \calA_{d}$. We work with $U=\Ads_{d}$.

Recall $\calH$ is indeed a self-correspondence of $\calM_{d}$ over $\tcA_{d}$ (see \S\ref{sss:same proj A}):
\begin{equation}\label{Corr H}
\xymatrix{ & \calH\ar[dl]_{\gamma_{0}}\ar[dr]^{\gamma_{1}} \\
\calM_{d}\ar[dr]_{\tf_{\calM}} & & \calM_{d}\ar[dl]^{\tf_{\calM}}\\
& \tcA_{d}}
\end{equation}

By Lemma \ref{l:HInc}, the diagram \eqref{Corr H} restricted to $\tcA^{\ds}_{d}$ (the preimage of $\Ads_{d}$ in $\tcA_{d}$) is obtained from the following correspondence via base change along the second projection $\pr_{2}: \tcA^{\ds}_{d}\cong \hX'_{d}\times_{\Pic^{d}_{X}} X_{d}\to X_{d}$ which is smooth
\begin{equation*}
\xymatrix{& I'_{d}\ar[dl]_{\pr}\ar[dr]^{q}\\
X'_{d}\ar[dr]_{\nu_{d}} & & X'_{d}\ar[dl]^{\nu_{d}}\\
& X_{d}}
\end{equation*}
Here for $(D,y)$ in the universal divisor $I'_{d}\subset X'_{d}\times X'$, $\pr(D,y)=D$ and $q(D,y)=D-y+\sigma(y)$.

Let $T_{d}:=\nu_{d,!}[I'_{d}]: \bR\nu_{d,*}\Ql\to\bR\nu_{d,*}\Ql$ be the operator on $\bR\nu_{d,*}\Ql$ induced from the cohomological correspondence between the constant sheaf $\Ql$ on $X'_{d}$ and itself given by the fundamental class of $I'_{d}$. Under the isomorphism $\bR \tf_{\calM,!}\Ql|_{\Ads_{d}}\cong \pr_{2}^{*}\bR\nu_{d,*}\Ql$, the action of $\tf_{\calM,!}[\calH^{\ds}]$ is the pullback along the smooth map $\pr_{2}$ of the action of $T_{d}=\nu_{d,!}[I'_{d}]$. Therefore it suffices to show that $T_{d}$ preserves the decomposition \eqref{sumKd} (restricted to $X_{d}$), and acts on each $K_{j}$ by the scalar $(d-2j)$.

Since $\bR \nu_{d,*}\Ql$ is the middle extension of the local system $L=\oplus_{j=0}^{d}L(\rho_{j})$ on $X^{\c}_{d}$, it suffices to calculate the action of $T_{d}$ on $L$, or rather calculate its action over a geometric generic point $\eta\in X_{d}$. Write  $\eta=x_{1}+x_{2}+\cdots +x_{d}$ and name the two points in $X'$ over $x_{i}$ by $x^{+}_{i}$ and $x^{-}_{i}$ (in one of the two ways). The fiber $\nu^{-1}_{d}(\eta)$ consists of points $\xi_{\ep}$ where $\ep\in\{\pm\}^{r}$, and $\xi_{\ep}=\sum_{i=1}^{d}x^{\ep_{i}}_{i}$ . As in the proof of Proposition \ref{p:Rf decomp}, we may identify the stalk $L_{\eta}$ with $\Ql[\Gamma_{d}/S_{d}]=\Span\{\one_{\ep};\ep\in\{\pm\}^{r}\}$ (we identify $\{\pm\}$ with $\{\pm1\}$). Now we denote $\one_{\ep}$ formally by the monomial $x^{\ep_{1}}_{1}\cdots x^{\ep_{d}}_{d}$. The stalk $L(\rho_{j})_{\eta}$ has a basis given by $\{P_{\delta}\}$, where
\begin{equation*}
P_{\delta}:=\prod_{i=1}^{d}(x^{+}_{i}+\delta_{i}x^{-}_{i})
\end{equation*}
and $\delta$ runs over those elements $\delta=(\delta_{1},\cdots,\delta_{d})\in \{\pm\}^{d}$ with exactly $i$ minuses. The action of $T_{d}$ on $L_{\eta}$ turns each monomial basis element $x^{\ep_{1}}_{1}\cdots x^{\ep_{d}}_{d}$ into $\sum_{t=1}^{d}x^{\ep_{1}}_{1}\cdots x^{-\ep_{t}}_{t}\cdots x^{\ep_{d}}_{d}$. Therefore, $T_{d}$ is a derivation in the following sense: for any linear form $\ell_{i}$ in $x^{+}_{i}$ and $x^{-}_{i}$, we have
\begin{equation*}
T_{d}\prod_{i=1}^{d}\ell_{i}=(T_{d}\ell_{1})\cdot \ell_{2}\cdots\ell_{d}+\ell_{1}(T_{d}\ell_{2})\ell_{3}\cdots\ell_{d}+\cdots+\ell_{1}\cdots\ell_{d-1}(T_{d}\ell_{d}).
\end{equation*}
Also $T_{d}(x^{+}_{i}+x^{-}_{i})=x^{+}_{i}+x^{-}_{i}$ and $T_{d}(x^{+}_{i}-x^{-}_{i})=-(x^{+}_{i}-x^{-}_{i})$. From these we easily calculate that $T_{d}P_{\delta}=(d-2|\delta|)P_{\delta}$ where $|\delta|$ is the number of minuses in $\delta$. Since $L(\rho_{j})_{\eta}$ is the span of $P_{\delta}$ with $|\delta|=j$, it is exactly the eigenspace of $T_{d}$ with eigenvalue $(d-2j)$. This finishes the proof.
\end{proof}

Combining Theorem \ref{th:I trace}, \eqref{orb I} with Proposition \ref{p:H action}, we get

\begin{cor}\label{c:BIr} Suppose $d\geq \max\{2g'-1,2g\}$. Let $D\in X_{d}(k)$. Then
\begin{align*}
\BI_{r}(u,h_{D})=\begin{cases} \sum_{i,j=0}^{d}(d-2j)^{r}\Tr(\Frob_{a},(K_{i})_{\ov{a}}\otimes(K_{j})_{\ov{a}}) & u=\inv_{D}(a), a\in\calA_{D}(k) \\
0 & \text{ otherwise.}\end{cases}				\index{$\BI_{r}(u,h_{D})$}%
\end{align*}
\end{cor}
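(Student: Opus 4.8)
The plan is to obtain Corollary \ref{c:BIr} as a direct combination of the orbit-by-orbit trace formula recorded in \eqref{orb I} (which rests on Theorem \ref{th:I trace}), the decomposition of $\bR f_{\calM,*}\Ql$ from Proposition \ref{p:Rf decomp}, and the eigenvalue computation of Proposition \ref{p:H action}; note that the running hypothesis $d\geq\max\{2g'-1,2g\}$ is exactly strong enough to invoke all three. For $u\in\PP^{1}(F)-\{1\}$ that is not in the image of $\inv_{D}$, the formula $\BI_{r}(u,h_{D})=0$ is already the content of \eqref{orb I}, so the substance is the case $u=\inv_{D}(a)$ with $a\in\calA_{D}(k)$.

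For such $u$, I would start from \eqref{orb I}, which reads
\[
\BI_{r}(u,h_{D})=\Tr\!\left((f_{\calM,!}[\calH^{\ds}])^{r}_{a}\circ\Frob_{a},\ (\bR f_{\calM,!}\Ql)_{\ov{a}}\right).
\]
First I would use that $f_{\calM}$ is proper (Proposition \ref{p:M}\eqref{fM proper}) to replace $\bR f_{\calM,!}\Ql$ by $\bR f_{\calM,*}\Ql$. By Proposition \ref{p:Rf decomp} (applicable since $d\geq 2g'-1$) there is a canonical, hence $\Gal(\kbar/k)$-equivariant, decomposition $\bR f_{\calM,*}\Ql\cong\bigoplus_{i,j=0}^{d}(K_{i}\boxtimes K_{j})|_{\calA_{d}}$; passing to geometric stalks at $a$, the summands $(K_{i}\boxtimes K_{j})_{\ov{a}}$ are stable under $\Frob_{a}$. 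By Proposition \ref{p:H action} (again using $d\geq 2g'-1$), the endomorphism $f_{\calM,!}[\calH^{\ds}]$ preserves each summand $K_{i}\boxtimes K_{j}$ and acts there by the scalar $d-2j$, so its $r$-th iterate acts by $(d-2j)^{r}$. Since the operator acts as a (central) scalar on each $(K_{i}\boxtimes K_{j})_{\ov{a}}$ and $\Frob_{a}$ preserves this summand, the trace splits as
\[
\Tr\!\left((f_{\calM,!}[\calH^{\ds}])^{r}_{a}\circ\Frob_{a},\ (\bR f_{\calM,*}\Ql)_{\ov{a}}\right)
=\sum_{i,j=0}^{d}(d-2j)^{r}\,\Tr\!\left(\Frob_{a},\ (K_{i}\boxtimes K_{j})_{\ov{a}}\right),
\]
and the exterior tensor product identifies $(K_{i}\boxtimes K_{j})_{\ov{a}}$ with $(K_{i})_{\ov{a}}\otimes(K_{j})_{\ov{a}}$ in the notation of the statement. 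This is exactly the claimed formula.

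I do not anticipate a genuine obstacle: the corollary is a formal consequence of Theorem \ref{th:I trace}, Proposition \ref{p:Rf decomp} and Proposition \ref{p:H action}, and the only points meriting a word of care are the switch from $\bR f_{\calM,!}\Ql$ to $\bR f_{\calM,*}\Ql$ via properness, and the observation that the canonical decomposition \eqref{Rf decomp}, being defined over $k$, has $\Gal(\kbar/k)$-stable summands so that the trace of the composite distributes over it; both are immediate, so the proof will be only a few lines.
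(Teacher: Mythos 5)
Your proposal is correct and follows essentially the same route as the paper, which obtains the corollary by combining Theorem \ref{th:I trace} (via \eqref{orb I}) with Propositions \ref{p:Rf decomp} and \ref{p:H action}; your extra remarks on properness ($!$ versus $*$) and Frobenius-stability of the canonical decomposition are exactly the implicit points the paper leaves to the reader.
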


\subsection{Direct image of $f_{\calN_{\un{d}}}$}
Recall the moduli space $\calN_{\un{d}}$ defined in \S\ref{sss:calN} for $\un{d}\in\Sigma_{d}$. It carries a local system $L_{\un{d}}$, see \S\ref{sss:Ld}. \index{$\calN_{\un{d}}$} \index{$L_{\un{d}}$}%

\begin{prop}\label{p:RfN decomp} Let $d\geq 2g'-1$ and $\un{d}\in\Sigma_{d}$. Then there is a canonical isomorphism 
\begin{equation}\label{RfN}
\bR f_{\calN_{\un{d}},*}L_{\un{d}}\cong (K_{d_{11}}\boxtimes K_{d_{12}})|_{\calA_{d}}.
\end{equation}
\end{prop}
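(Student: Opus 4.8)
The strategy is to follow exactly the template used for Proposition \ref{p:Rf decomp}: identify $\bR f_{\calN_{\un{d}},*}L_{\un{d}}$ as a shifted simple perverse sheaf obtained by middle extension from a dense open subset of $\calA_{d}$, and then compute that restriction explicitly. First I would use the description of $\calN_{\un{d}}$ from Proposition \ref{p:Nsm}: via the open embedding $\jmath_{\un{d}}$, $\calN_{\un{d}}$ is an open subscheme of $(\hX_{d_{11}}\times\hX_{d_{22}})\times_{\Pic^{d}_{X}}(\hX_{d_{12}}\times\hX_{d_{21}})$ and $f_{\calN_{\un{d}}}$ is the restriction of $\wh{\add}_{d_{11},d_{22}}\times\wh{\add}_{d_{12},d_{21}}$, with the target $\hX_{d}\times_{\Pic^{d}_{X}}\hX_{d}$ restricted to the open $\calA_{d}$. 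The key point is that $L_{\un{d}}=\jmath_{\un{d}}^{*}(L_{d_{11}}\boxtimes\Ql\boxtimes L_{d_{12}}\boxtimes\Ql)$, so by the K\"unneth formula and the compatibility of $\wh{\add}$ with the norm local systems, the problem factors into computing $\bR\wh{\add}_{d_{11},d_{22},*}(L_{d_{11}}\boxtimes\Ql)$ and $\bR\wh{\add}_{d_{12},d_{21},*}(L_{d_{12}}\boxtimes\Ql)$ over $\hX_{d}$.

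So the heart of the matter is a single statement: for $a+b=d$ with $d\geq 2g'-1$, the addition map $\wh{\add}_{a,b}:\hX_{a}\times\hX_{b}\to\hX_{d}$ satisfies $\bR\wh{\add}_{a,b,*}(L_{a}\boxtimes\Ql)\cong K_{a}$ on $\hX_{d}$ (and hence, by interchanging roles, also $\cong K_{b}$ when $a$ is replaced by $b$, giving the general indexing). To prove this I would first check that $\wh{\add}_{a,b}$ is proper (it is, being a composition of a closed immersion $\hX_{a}\times\hX_{b}\incl\hX_{d}\times\hX_{b}$, $(\calL_{1},s_{1},\calL_{2},s_{2})\mapsto(\calL_{1}\otimes\calL_{2},s_{1}\otimes s_{2},\calL_{2},s_{2})$, with a projection, as in the proof of Proposition \ref{p:Nsm}(3)) and small, by the same codimension count as in Proposition \ref{p:Rf decomp}: the positive-dimensional fibers live over the zero section $\Pic^{d}_{X}\incl\hX_{d}$ of codimension $d-g+1\geq 2(g-1)+1$, while the fiber dimension of $\wh{\add}_{a,b}$ restricted there is $g-1$. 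Since the source is smooth and geometrically irreducible (it is a product of $\hX_{a}$ and $\hX_{b}$, each geometrically connected) and $L_{a}\boxtimes\Ql$ is a shifted local system on it, smallness plus properness imply $\bR\wh{\add}_{a,b,*}(L_{a}\boxtimes\Ql)$ is the middle extension of its restriction to any dense open of $\hX_{d}$, e.g. $X^{\c}_{d}$. Over $X^{\c}_{d}$ the computation is combinatorial and identical in spirit to the end of the proof of Proposition \ref{p:Rf decomp}: the fiber over a multiplicity-free divisor $\eta=x_{1}+\cdots+x_{d}$ decomposes according to which $a$ of the $d$ points are assigned to the $\hX_{a}$-factor, the $L_{a}$-twist contributes the product of signs on those $a$ points, and summing the induced representations over all such choices reproduces exactly $\rho_{a}=\Ind^{\Gamma_{d}}_{\Gamma_{d}(a)}(\chi_{a}\boxtimes\one)$, hence $L(\rho_{a})$, hence $K_{a}$ after middle extension.

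Assembling: by K\"unneth, $\bR f_{\calN_{\un{d}},*}L_{\un{d}}\cong\bigl(\bR\wh{\add}_{d_{11},d_{22},*}(L_{d_{11}}\boxtimes\Ql)\bigr)\boxtimes_{\Pic^{d}_{X}}\bigl(\bR\wh{\add}_{d_{12},d_{21},*}(L_{d_{12}}\boxtimes\Ql)\bigr)$ restricted to $\calA_{d}$, which by the previous paragraph is $(K_{d_{11}}\boxtimes K_{d_{12}})|_{\calA_{d}}$ — note the norm local system index on each external tensor factor records only the $\hX$-factor carrying the nontrivial twist, namely $d_{11}$ on the first side and $d_{12}$ on the second, which is precisely the statement. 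The main obstacle I anticipate is purely bookkeeping: making the K\"unneth-over-$\Pic^{d}_{X}$ decomposition precise (the target of $f_{\calN_{\un{d}}}$ is a fiber product over $\Pic^{d}_{X}$, not a genuine product), and checking that the pullback $\jmath_{\un{d}}^{*}$ together with the identification $\calL_{11}\otimes\calL_{22}\cong\calL_{12}\otimes\calL_{21}\cong\Delta$ is compatible with the external product structure on $\hX_{d}\times_{\Pic^{d}_{X}}\hX_{d}$; none of this is deep, but it requires care with the gerbe structure of $\Pic^{d}_{X}$ and with the fact that $L_{n}$ on $\hX_{n}$ is pulled back from $\Pic^{n}_{X}$, so the external tensor products are well-defined after descent. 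The smallness estimate and the combinatorial identification of the generic stalk are then immediate from the arguments already carried out for Proposition \ref{p:Rf decomp}.
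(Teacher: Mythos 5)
There is a genuine gap, and it sits exactly where the paper warns it would: the smallness claim. You assert that $\wh{\add}_{a,b}:\hX_{a}\times\hX_{b}\to\hX_{d}$ is small "by the same codimension count as in Proposition \ref{p:Rf decomp}", with positive-dimensional fibers only over the zero section $\Pic^{d}_{X}\incl\hX_{d}$ and of dimension $g-1$ there. This is false. The fiber of $\wh{\add}_{a,b}$ over a point $(\calM,0)$ of the zero section is the locus $s_{1}\otimes s_{2}=0$, i.e.\ it contains the whole component where $s_{1}=0$ and $s_{2}\in\Gamma(X,\calM\otimes\calL_{1}^{-1})$ is arbitrary (and symmetrically), so its dimension is about $\max(a,b)\geq d/2$, not $g-1$ (contrast with $\wh{\nu}_{d}$, where $\Nm(s)=0$ forces $s=0$). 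Since the zero section has codimension only $d-g+1$, the map is not small, nor even semismall, and the paper's proof explicitly begins by noting that $d\geq 2g'-1$ does \emph{not} imply smallness of $f_{\calN_{\un{d}}}$. Without smallness your deduction that the pushforward is the middle extension from $X^{\c}_{d}$ collapses; the paper instead gets the middle-extension property from Verdier self-duality (smoothness of $\calN_{\un{d}}$ by Proposition \ref{p:Nsm}(2) and properness of $f_{\calN_{\un{d}}}$ by \ref{p:Nsm}(3)) combined with an explicit computation of the stalks over the boundary strata $\calC=\{0\}\times X_{d}$ and $\calC'=X_{d}\times\{0\}$ of $\calA_{d}$, showing they sit in degrees $\leq 2g-2<d-g+1=\codim$; here the vanishing $\cohog{*}{X_{j}\otimes_{k}\kbar,L_{j}}$ in high degrees (and the twist by $L^{-1}_{d_{22}}$ in the case $d_{11}\geq d_{22}$) does the work.

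A second, related problem is your K\"unneth reduction over all of $\calA_{d}$. By Proposition \ref{p:Nsm}(1), $\calN_{\un{d}}$ is not the full fiber product $(\hX_{d_{11}}\times\hX_{d_{22}})\times_{\Pic^{d}_{X}}(\hX_{d_{12}}\times\hX_{d_{21}})$ over $\calA_{d}$: one removes the locus $Z_{\un{d}}$ encoding which $\varphi_{ij}$ is allowed to vanish and that at most one vanishes, and this removed locus lies precisely over $\calC\cup\calC'$. So $\bR f_{\calN_{\un{d}},*}L_{\un{d}}$ does not factor as an external product of the two $\wh{\add}$-pushforwards over those strata, and the intermediate statement you need, $\bR\wh{\add}_{a,b,*}(L_{a}\boxtimes\Ql)\cong K_{a}$ on all of $\hX_{d}$, is not established (and is not what the paper proves: its identity \eqref{LK} is only over $X_{d}$, where $\add_{j,d-j}$ is finite, hence genuinely small, and K\"unneth is applied only over $\calB=X_{d}\times_{\Pic^{d}_{X}}X_{d}$). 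Your computation of the generic restriction over $X^{\c}_{d}$ agrees with the paper, but the passage from there to all of $\calA_{d}$ needs the self-duality-plus-boundary-stalk argument, not smallness.
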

\begin{proof}
The condition $d\geq 2g'-1$ does not imply that $f_{\calN_{\un{d}}}$ is small. Nevertheless we shall show that the complex $K_{\un{d}}:=\bR f_{\calN_{\un{d}},*}L_{\un{d}}$ is the middle extension from its restriction to $\calB:=X_{d}\times_{\Pic^{d}_{X}}X_{d}\subset\calA_{d}$. By Proposition \ref{p:Nsm}(2),  $\calN_{\un{d}}$ is smooth hence $L_{\un{d}}[\dim \calN_{\un{d}}]$ is Verdier self-dual up to a Tate twist. By Proposition \ref{p:Nsm}(3), $f_{\calN_{\un{d}}}$ is proper,  hence the complex $K_{\un{d}}[\dim \calN_{\un{d}}]$ is also Verdier self-dual up to a Tate twist. The morphism $f_{\calN_{\un{d}}}$ is finite over the open stratum $\calB$, therefore $K_{\un{d}}|_{\calB}$ is concentrated in degree $0$. The complement $\calA_{d}-\calB$ is the disjoint union of $\calC=\{0\}\times X_{d}$ and $\calC'=X_{d}\times \{0\}$.  We compute the restriction $K_{\un{d}}|_{\calC}$. 

When $d_{11}<d_{22}$, by the last condition in the definition of $\calN_{\un{d}}$, $\varphi_{22}$ is allowed to be zero but $\varphi_{11}$ is not. The fiber of $f_{\calN_{\un{d}}}$ over a point $(0,D)\in \calC$ is of the form $X_{d_{11}}\times\add^{-1}_{d_{12},d_{21}}(D)$, where $\add_{j,d-j}: X_{j}\times X_{d-j}\to X_{d}$ is the addition map. We have $(K_{\un{d}})_{(0,D)}=\cohog{*}{X_{d_{11}}\otimes_{k}\kbar,L_{d_{11}}}\otimes M$ where $M=\cohog{0}{\add^{-1}_{d_{12},d_{21}}(D)\otimes_{k}\kbar, L_{d_{12}}}$ is a finite-dimensional vector space. We have $\cohog{*}{X_{d_{11}}\otimes_{k}\kbar,L_{d_{11}}}\cong \bigwedge^{d_{11}}(\cohog{1}{X\otimes_{k}\kbar,L_{X'/X}})[-d_{11}]$ which is concentrated in degree $d_{11}$, and is zero for $d_{11}>2g-2$. Therefore $(K_{\un{d}})_{(0,D)}$ is concentrated in some degree $\leq 2g-2$, which is smaller than $\codim_{\calA_{d}}\calC=d-g+1$. 

When $d_{11}\geq d_{22}$, $\varphi_{11}$ may be zero but $\varphi_{22}$ is nonzero. The fiber of $f_{\calN_{\un{d}}}$ over a point $(0,D)\in \calC$ is of the form $X_{d_{22}}\times\add^{-1}_{d_{12},d_{21}}(D)$. For $(D_{22}, D_{12}, D_{21})\in X_{d_{22}}\times\add^{-1}_{d_{12},d_{21}}(D)$,  its image in $\Pic^{d_{11}}_{X}$ is $\calO_{X}(D-D_{22})$, therefore the restriction of $L_{d_{11}}$ to $f^{-1}_{\calN_{\un{d}}}(0,D)$ is isomorphic to $L^{-1}_{d_{22}}$ on the $X_{d_{22}}$ factor. Therefore $(K_{\un{d}})_{(0,D)}=\cohog{*}{X_{d_{22}}\otimes_{k}\kbar,L^{-1}_{d_{22}}}\otimes \cohog{0}{\add^{-1}_{d_{12},d_{21}}(D)\otimes_{k}\kbar, L_{d_{12}}}$, which is again concentrated in some degree $\leq 2g-2<\codim_{\calA_{d}}\calC=d-g+1$. 

Same argument shows that the stalks of $K_{\un{d}}$ over $\calC'$ are concentrated in some degree $\leq 2g-2<\codim_{\calA_{d}}\calC'=d-g+1$. Using Verdier self-duality of $K_{\un{d}}[\dim \calN_{\un{d}}]$, we conclude that $K_{\un{d}}$ is the middle extension from its restriction to $\calB$. 

By Proposition \ref{p:Nsm}(3) and the Kunneth formula, we have
\begin{equation*}
K_{\un{d}}|_{\calB}\cong\add_{d_{11},d_{22},*}(L_{d_{11}}\boxtimes\Ql)\boxtimes\add_{d_{12},d_{21},*}(L_{d_{12}}\boxtimes\Ql).
\end{equation*}
To prove the proposition, it suffices to give a canonical isomorphism
\begin{equation}\label{LK}
\add_{j,d-j,*}(L_{j}\boxtimes\Ql)\cong K_{j}|_{X_{d}}
\end{equation}
for every $0\leq j\leq d$.
Both sides of \eqref{LK} are middle extensions from $X^{\c}_{d}$, we only need to give an isomorphism between their restrictions to $X^{\c}_{d}$. Over $X^{\c}_{j}$, the local system $L_{j}$ is given by the representation $\pi_{1}(X^{\c}_{j})\to\pi_{1}(X)^{j}\rtimes S_{j}\surj\Gal(X'/X)^{j}\rtimes S_{j}\cong\{\pm1\}^{j}\rtimes S_{j}\to\{\pm1\}$ which is nontrivial on each factor $\Gal(X'/X)$ and trivial on the $S_{j}$-factor. The finite \'etale cover $\add^{\c}_{j,d-j}:(X_{j}\times X_{d-j})^{\c}\to X^{\c}_{d}$ (restriction of $\add_{j,d-j}$ to $X^{\c}_{d}$) is the quotient $X^{d,\c}/S_{j,d-j}$ where $S_{j,d-j}\subset S_{d}$ is the subgroup defined in \S\ref{sss:Lrhoi}. Therefore the local system $\add^{\c}_{j,d-j,*}(L_{j}\boxtimes\Ql)$ corresponds to the representation $\rho_{j}$ of $\Gamma_{d}$, and $\add^{\c}_{j,d-j,*}(L_{j}\boxtimes\Ql)\cong L(\rho_{j})$ as local systems over $X^{\c}_{d}$. This completes the proof of \eqref{LK}, and the proposition is proved.
\end{proof}

Combining  Propositions \ref{p:Rf decomp} and \ref{p:RfN decomp}, we get
\begin{cor} Assume $d\geq 2g'-1$. Then there is a canonical isomorphism
\begin{equation*}
\bR f_{\calM,*}\Ql\cong \bigoplus_{\un{d}\in\Sigma_{d}}\bR f_{\calN_{\un{d}},*}L_{\un{d}}
\end{equation*}
such that the $(i,j)$-grading of the LHS appearing in \eqref{Rf decomp} corresponds to the $(d_{11},d_{12})$-grading on the RHS. 
\end{cor}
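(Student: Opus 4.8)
The plan is to deduce this directly from Propositions \ref{p:Rf decomp} and \ref{p:RfN decomp}, which already express both sides in terms of the building blocks $K_i \boxtimes K_j$. First I would recall from Proposition \ref{p:Rf decomp} that, under the hypothesis $d \geq 2g'-1$, there is a canonical isomorphism $\bR f_{\calM,*}\Ql \cong \bigoplus_{i,j=0}^d (K_i \boxtimes K_j)|_{\calA_d}$, and from Proposition \ref{p:RfN decomp} that, for each $\un{d} = (d_{11},d_{12},d_{21},d_{22}) \in \Sigma_d$, there is a canonical isomorphism $\bR f_{\calN_{\un{d}},*}L_{\un{d}} \cong (K_{d_{11}} \boxtimes K_{d_{12}})|_{\calA_d}$. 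Both families of complexes live on $\hX_d \times_{\Pic^d_X} \hX_d$ restricted to the open subscheme $\calA_d$, so they can be compared directly.

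Next I would check that the two indexing sets match. By definition $\Sigma_d$ consists of quadruples of non-negative integers $\un{d} = (d_{11}, d_{12}, d_{21}, d_{22})$ subject to $d_{11} + d_{22} = d = d_{12} + d_{21}$, so the assignment $\un{d} \mapsto (d_{11}, d_{12})$ is a bijection $\Sigma_d \xrightarrow{\sim} \{0, 1, \ldots, d\} \times \{0, 1, \ldots, d\}$, with inverse $(i,j) \mapsto (i, j, d-j, d-i)$. Under this bijection the summand $\bR f_{\calN_{\un{d}},*}L_{\un{d}}$ is carried canonically to $(K_i \boxtimes K_j)|_{\calA_d}$ with $(i,j) = (d_{11},d_{12})$, which is exactly the $(i,j)$-graded piece of $\bR f_{\calM,*}\Ql$ in \eqref{Rf decomp}. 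Summing over $\un{d} \in \Sigma_d$ and composing the two canonical isomorphisms then gives the asserted canonical isomorphism, compatible with the gradings by construction.

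Since the argument is purely formal once Propositions \ref{p:Rf decomp} and \ref{p:RfN decomp} are available, I do not expect any genuine obstacle here; the substance lies entirely in those two propositions — in particular in the smallness of $\wh\nu_d$ (which is where $d \geq 2g'-1$ enters) and in the boundary stalk computations for $\bR f_{\calN_{\un{d}},*}L_{\un{d}}$ along $\calC$ and $\calC'$. If desired, I would also record compatibility with the operator $f_{\calM,!}[\calH^\ds]$: by Proposition \ref{p:H action} it acts on the $(i,j)$-summand of the left-hand side by the scalar $d - 2j$, which on the right-hand side is the action of the corresponding correspondence on $\bR f_{\calN_{\un{d}},*}L_{\un{d}}$ with $j = d_{12}$; this is the form in which the corollary feeds into the comparison in Section \ref{s:most}.
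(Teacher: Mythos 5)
Your proposal is correct and matches the paper, which indeed states this corollary as an immediate consequence of Propositions \ref{p:Rf decomp} and \ref{p:RfN decomp} with no further argument: the only content is the bijection $\Sigma_{d}\ni\un{d}\mapsto(d_{11},d_{12})\in\{0,\dots,d\}^{2}$ (with $d_{21}=d-d_{12}$, $d_{22}=d-d_{11}$), exactly as you observe. Your closing remark about the $f_{\calM,!}[\calH^{\ds}]$-eigenvalues is extra context, not needed for the corollary itself.
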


\subsection{Proof of Theorem \ref{th:IJ}}
By Corollary \ref{c:Jr geom} and \eqref{orb J}, both $\JJ_{r}(u,h_{D})$ and $\II_{r}(u,h_{D})$ vanish when $u$ is not of the form $\inv_{D}(a)$ for $a\in\calA_{D}(k)$. We only need to prove \eqref{u IJ} when $u=\inv_{D}(a)$ for  $a\in\calA_{D}(k)$. In this case we have
\begin{eqnarray*}
(\log q)^{-r}\BJ_{r}(u,h_{D})&=&\sum_{\un{d}\in\Sigma_{d}}(2d_{12}-d)^{r}\Tr\left(\Frob_{a}, \left(\bR f_{\calN_{\un{d}},*}L_{\un{d}}\right)_{\ov{a}}\right) \quad\mbox{(Corollary \ref{c:Jr geom})}\\
&=&\sum_{d_{11}, d_{12}=0}^{d}(2d_{12}-d)^{r}\Tr\left(\Frob_{a},(K_{d_{11}})_{\ov{a}}\otimes(K_{d_{12}})_{\ov{a}}\right) \quad\mbox{(Prop.  \ref{p:RfN decomp})}\\
&=&\sum_{i,j=0}^{d}(d-2j)^{r}\Tr\left(\Frob_{a},(K_{i})_{\ov{a}}\otimes(K_{j})_{\ov{a}}\right)  \quad\mbox{($r$ is even)}\\
&=&\BI_{r}(u,h_{D}) \quad\mbox{(Corollary \ref{c:BIr})}
\end{eqnarray*}
Therefore \eqref{u IJ} is proved. By  \eqref{eqn orb decomp} and \eqref{I orb decomp}, \eqref{u IJ} implies \eqref{IJ hD}.

\section{Proof of the main theorems}\label{s:pf}
In this section we complete the proofs of our main results stated in the Introduction.

\subsection{The identity $(\log q)^{-r}\BJ_{r}(f)=\BI_{r}(f)$ for all Hecke functions} By Theorem \ref{th:IJ}, we have $(\log q)^{-r}\BJ_{r}(f)=\BI_{r}(f)$ for all $f=h_{D}$ where $D$ is an effective divisor with $\deg(D)\geq \max\{2g'-1,2g\}$. Our goal in this subsection is to show by some algebraic manipulations that this identity holds for all $f\in\sH$.

We first fix a place $x\in|X|$. Recall the Satake transform identifies $\sH_{x}=\QQ[h_{x}]$ with the subalgebra of $\QQ[t_{x}^{\pm1}]$ generated by $h_{x}=t_{x}+q_{x}t_{x}^{-1}$. For $n\geq0$,  we have $\Sat_{x}(h_{nx})=t^{n}_{x}+q_{x}t^{n-2}_{x}+\cdots+q^{n-1}_{x}t^{-n+2}_{x}+q^{n}_{x}t^{-n}_{x}$. 

\begin{lemma}\label{l:loc surj} Let $E$ be any field containing $\QQ$. Let $I$ be a nonzero ideal of $\sH_{x,E}:=\sH_{x}\otimes_{\QQ} E$ and let $m$ be a positive integer. Then $I+\Span_{E}\{h_{mx}, h_{(m+1)x}, \cdots\}=\sH_{x,E}$.
\end{lemma}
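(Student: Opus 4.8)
The plan is to argue on the dual side. As recalled in \S\ref{ss:eis}, $\sH_{x}\cong\QQ[h_x]$, so $\sH_{x,E}=E[h_x]$ is a principal ideal domain and $I=(g)$ for a nonzero $g$; put $R:=\sH_{x,E}/I=E[h_x]/(g)$, a finite-dimensional $E$-algebra of dimension $D=\deg g$. From the Satake computation $\Sat_x(h_{nx})=\sum_{i=0}^{n}q_x^{i}t_x^{n-2i}$ one sees that $h_{nx}$ is a monic polynomial of degree $n$ in $h_x$; hence $W:=\Span_E\{h_{mx},h_{(m+1)x},\dots\}$ is an $E$-linear complement of $E[h_x]_{<m}$, and the asserted identity $I+W=\sH_{x,E}$ is equivalent to saying that the images of the $h_{nx}$ with $n\ge m$ span $R$ over $E$; equivalently, it suffices to show that the only $E$-linear functional $\ell$ on $\sH_{x,E}$ vanishing on both $I$ and $W$ is $\ell=0$. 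Since $I\cdot\sH_{x,\bar E}$ is again a nonzero ideal and $\sH_{x,E}\to\sH_{x,\bar E}$ is faithfully flat, I may assume $E=\bar E$.

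The key tool is the generating series identity
\[ \sum_{n\ge0}h_{nx}\,z^{n}=\frac{1}{1-h_xz+q_xz^{2}}\quad\text{in }\sH_{x,E}[[z]], \]
which follows from the Satake formula by factoring $1-h_xz+q_xz^{2}=(1-t_xz)(1-q_xt_x^{-1}z)$; equivalently $h_{(n+1)x}=h_xh_{nx}-q_xh_{(n-1)x}$. Write $g=\mathrm{lc}(g)\prod_{\alpha}(h_x-\alpha)^{e_\alpha}$ over $\bar E$. Then $R^{*}$ has $E$-basis $\{\lambda_{\alpha,l}\}_{\alpha,\,0\le l<e_\alpha}$ with $\lambda_{\alpha,l}(p)=\tfrac{1}{l!}p^{(l)}(\alpha)$ ($l$-th derivative in the variable $h_x$, evaluated at $\alpha$), and any $\ell$ as above, which factors through $R$, has the form $\ell=\sum_{\alpha,l}c_{\alpha,l}\lambda_{\alpha,l}$. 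Differentiating the generating identity $l$ times in $h_x$ gives $\sum_{n}\tfrac{1}{l!}h_{nx}^{(l)}(h_x)\,z^{n}=z^{l}/(1-h_xz+q_xz^{2})^{l+1}$, so
\[ \sum_{n\ge0}\ell(h_{nx})\,z^{n}=\sum_{\alpha,l}c_{\alpha,l}\,\frac{z^{l}}{(1-\alpha z+q_xz^{2})^{l+1}}\in E[[z]]. \]
Vanishing of $\ell$ on $W$ says $\ell(h_{nx})=0$ for $n\ge m$, so the right-hand side is a polynomial in $z$.

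It remains to deduce that every $c_{\alpha,l}=0$. Since $q_x=q^{d_x}$ is nonzero in the characteristic-zero field $E$, the quadratic $1-\alpha z+q_xz^{2}$ has two finite nonzero roots in $z$ (reciprocals of the Satake parameters $\lambda,\mu$ with $\lambda\mu=q_x$, $\lambda+\mu=\alpha$), so each summand $z^{l}/(1-\alpha z+q_xz^{2})^{l+1}$ is a genuine rational function of $z$ whose poles are exactly those two points; moreover distinct $\alpha$ give disjoint pole sets, because a common pole $z_0$ would force $q_xz_0+z_0^{-1}=\alpha$ to be the same for both values. Fix $\alpha$ and a pole $z_0$ of $1-\alpha z+q_xz^{2}$; then the principal part at $z_0$ of the whole sum equals that of $\sum_{l}c_{\alpha,l}z^{l}/(1-\alpha z+q_xz^{2})^{l+1}$. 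The pole order at $z_0$ of the $l$-th term is $l+1$ if $z_0$ is a simple root, and $2(l+1)$ if $1-\alpha z+q_xz^{2}$ is a perfect square (the case $\alpha^{2}=4q_x$); in either case it is strictly increasing in $l$, so the leading Laurent coefficients of the terms cannot cancel. Since the total is a polynomial it has no pole at $z_0$, forcing $c_{\alpha,l}=0$ for all $l$; running over $\alpha$ gives $\ell=0$, as needed.

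The step I expect to demand the most care is this final pole analysis: correctly computing the pole orders, handling the degenerate case $\alpha^{2}=4q_x$ where the quadratic acquires a double root, and checking that distinct eigenvalues $\alpha$ contribute poles at disjoint points so that no cross-cancellation occurs. Everything preceding it — the identification $\sH_{x,E}=E[h_x]$, the reduction to the dual statement and to $E=\bar E$, and the generating-series computation — is routine.
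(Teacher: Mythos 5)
Your proof is correct. The overall strategy coincides with the paper's — dualize the statement (a nonzero $E$-linear functional killing $I$ and all $h_{nx}$, $n\ge m$, must vanish), reduce to $E=\ov E$, and exploit that such a functional is supported, with multiplicities, on the finite scheme $\Spec(\sH_{x,E}/I)$ — but the mechanism you use to finish is genuinely different. The paper substitutes $t=q_x^{-1/2}t_x$ and passes to the double cover $\nu:\Gm\to\Spec\sH_x$, $t\mapsto t+t^{-1}$, writes the functional as a sum of constant-coefficient differential operators $(t\frac{d}{dt})^j$ evaluated at the finitely many points of $\nu^{-1}(\Spec\sH_x/I)$, and concludes from the linear independence of the sequences $n\mapsto n^a z^n$ on $\{m,m+1,\dots\}$. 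You instead stay on $\Spec\sH_x=\BA^1$, encode the $h_{nx}$ by the Hecke generating series $\sum_n h_{nx}z^n=(1-h_xz+q_xz^2)^{-1}$, express the functional through derivatives at the roots $\alpha$ of a generator of $I$, and conclude by the pole-order argument: a rational function whose power-series coefficients eventually vanish is a polynomial, and the terms $z^l/(1-\alpha z+q_xz^2)^{l+1}$ have strictly increasing pole orders at each root, with distinct $\alpha$ giving disjoint pole sets. The two endgames are essentially equivalent facts (your pole analysis is the generating-function form of the linear independence of $n^a z^n$), but your version avoids introducing $\sqrt{q_x}$ and the covering $\Gm\to\BA^1$, at the price of the small case analysis at $\alpha^2=4q_x$, which you handle correctly (pole order $2(l+1)$ at the double root, still strictly increasing in $l$); your incidental claims — that $h_{nx}$ is monic of degree $n$ in $h_x$, the faithfully flat descent to $E$, and the basis of $(\sH_{x,\ov E}/I)^{*}$ by $\frac{1}{l!}p^{(l)}(\alpha)$ in characteristic zero — are all sound.
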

\begin{proof} Let $t=q^{-1/2}_{x}t_{x}$. Then $h_{nx}=q^{n/2}_{x}T_{n}$ where $T_{n}=t^{n}+t^{n-2}+\cdots+t^{2-n}+t^{-n}$ for any $n\geq 0$. It suffices to show that $I+\Span_{E}\{T_{m}, T_{m+1},\cdots\}=\sH_{x,E}$. 

Let $\pi: \sH_{x,E}\to \sH_{x,E}/I$ be the quotient map. Let $\sH_{m,E}\subset\sH_{x,E}$ be the $E$-span of $t^{n}+t^{-n}$ for $n\geq m$. Note that $T_{n}-T_{n-2}=t^{n}+t^{-n}$, therefore it suffices to show that $\pi(\sH_{m,E})=\sH_{x,E}/I$ for all $m$. To show this, it suffices to show the same statement after base change from $E$ to an algebraic closure $\ov{E}$. From now on we use the notation $\sH_{x}, I$ and $\sH_{m}$ to denote their base changes to $\ov{E}$. 

To show that $\pi(\sH_{m})=\sH_{x}/I$, we take any nonzero linear function $\ell:\sH_{x}/I\to \ov{E}$. We only need to show that $\ell(\pi(t^{n}+t^{-n}))\neq0$ for some $n\geq m$. We prove this by contradiction: suppose $\ell(\pi(t^{n}+t^{-n}))=0$ for all $n\geq m$.

Let $\nu:\Gm\to\BA^{1}=\Spec\sH_{x}$ be the morphism given by $t\mapsto T=t+t^{-1}$. This is the quotient by the involution $\sigma(t)=t^{-1}$. Consider the finite subscheme $Z=\Spec(\sH_{x}/I)$ and its preimage $\tilZ=\nu^{-1}(Z)$ in $\Gm$. We have $\calO_{Z}=\sH_{x}/I=\calO^{\sigma}_{\tilZ}\subset\calO_{\tilZ}$. One can uniquely extend $\ell$ to a $\sigma$-invariant linear function $\tl: \calO_{\tilZ}\to\ov{E}$. Note that $\calO_{\tilZ}$ is a product of the form $\ov{E}[t]/(t-z)^{d_{z}}$ for a finite set of points $z\in\ov{E}^{\times}$, and that $z\in\tilZ$ if and only if $\sigma(z)=z^{-1}\in\tilZ$. Any linear function $\tl$ on $\calO_{\tilZ}$, when pulled back to $\calO_{\Gm}=\ov{E}[t,t^{-1}]$, takes the form
\begin{equation*}
\ov{E}[t,t^{-1}]\ni f\mapsto \sum_{z\in\tilZ}(D_{z}f)(z)
\end{equation*}
with $D_{z}=\sum_{j\geq 0}c_{j}(z)(t \frac{d}{dt})^{j}$ (finitely many terms) a differential operator on $\Gm$ with constant coefficients $c_{j}(z)$ depending on $z$. The $\sigma$-invariance of $\tl$ is equivalent to
\begin{equation}\label{cj sym}
c_{j}(z)=(-1)^{j}c_{j}(z^{-1}),\quad\text{ for all $z\in\tilZ$ and $j$}.
\end{equation}
Evaluating at $f=t^{n}+t^{-n}$, we get that
\begin{equation*}
\ell(\pi(t^{n}+t^{-n}))=\sum_{z\in\tilZ}P_{z}(n)z^{n}+P_{z}(-n)z^{-n}
\end{equation*}
where $P_{z}(T)=\sum_{j}c_{j}(z)T^{j}\in \ov{E}[T]$ is a polynomial depending on $z$. The symmetry \eqref{cj sym} implies $P_{z}(T)=P_{z^{-1}}(-T)$. Using this symmetry, we may collect the terms corresponding to $z$ and $z^{-1}$ and re-organize the sum above as
\begin{equation*}
\ell(\pi(t^{n}+t^{-n}))=2\sum_{z\in\tilZ}P_{z}(n)z^{n}=0, \quad\text{ for all $n\geq m$}.
\end{equation*}
By linear independence of $\phi_{a,z}: n\mapsto n^{a}z^{n}$ as functions on $\{m,m+1,m+2,\cdots\}$, we see that all polynomials $P_{z}(T)$ are identically zero. Hence $\tl=0$ and $\ell=0$, which is a contradiction!
\end{proof}

\begin{theorem}\label{th:full IJ} For any $f\in\sH$, we have the identity
\begin{equation*}
(\log q)^{-r}\BJ_{r}(f)=\BI_{r}(f).
\end{equation*}
\end{theorem}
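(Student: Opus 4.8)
The plan is to show that the $\QQ$-linear functional $\ell:=(\log q)^{-r}\BJ_{r}-\BI_{r}\colon\sH\to\QQ$ vanishes identically; extending scalars, it suffices to show $\ell_{\Ql}\colon\sH_{\Ql}\to\Ql$ is zero. By Theorem~\ref{th:IJ} we already know $\ell(h_{D})=0$ whenever $D$ is effective of degree $\ge N:=\max\{2g'-1,2g\}$, and since $\{h_{D}\}$ is a $\QQ$-basis of $\sH$ it is enough to prove $\ell(h_{D})=0$ for \emph{all} effective $D$. First I would fix a place $x\in|X|$ and write each effective divisor uniquely as $D=nx+D'$ with $D'$ supported away from $x$, so that $h_{D}=h_{nx}\cdot h_{D'}$. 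Then the theorem reduces to showing, for each such $x$ and each effective $D'$ disjoint from $x$, that the $\Ql$-linear functional $\ell_{D'}\colon\sH_{x,\Ql}\to\Ql$, $\phi\mapsto\ell_{\Ql}(\phi\cdot h_{D'})$, is identically zero. The strategy is to feed two facts about $\ell_{D'}$ into Lemma~\ref{l:loc surj}.

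The first fact (call it Step A) is elementary: $\ell_{D'}$ vanishes on $\Span_{\Ql}\{h_{nx}\colon n\ge m_{0}\}$, where $m_{0}:=\lceil N/d_{x}\rceil$ with $d_{x}=[k_{x}:k]$. Indeed $h_{nx}\cdot h_{D'}=h_{nx+D'}$ has degree $nd_{x}+\deg D'\ge nd_{x}\ge N$ for $n\ge m_{0}$, so $\ell_{D'}(h_{nx})=\ell(h_{nx+D'})=0$ by Theorem~\ref{th:IJ}; note $m_{0}$ is independent of $D'$.

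The second fact (Step B) is the crux and will use the spectral decompositions of both sides. On the geometric side, Corollary~\ref{c:decomp Ir} gives $\BI_{r}(f)=([\Sht_{T}]_{\Eis},\,f*[\Sht_{T}]_{\Eis})+\sum_{\fkm}\BI_{r}(\fkm,f)$, and the discrete sum factors through the action of $\sH_{\Ql}$ on the \emph{finite-dimensional} space $V'_{0}=\bigoplus_{\fkm}V'_{\fkm}$ (Theorem~\ref{th:Spec decomp Sht'}), hence through a finite-dimensional $\Ql$-algebra quotient of $\sH_{\Ql}$. On the analytic side, $\BJ(f,s)=\BJ_{\cusp}(f,s)+\BJ_{\Eis}(f,s)$ with $\BJ_{\cusp}(f,s)=\tfrac12|\omega_{X}|\sum_{\pi}\sL(\pi_{F'},s+1/2)\lambda_{\pi}(f)$ (Proposition~\ref{p:Jpi}; the residual term vanishes as in the proof of Lemma~\ref{l:J Eis ideal}), so its $r$-th $s$-derivative factors through the finite-dimensional algebra $\sH_{\cusp,\Ql}$. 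What remains is the Eisenstein difference $(\log q)^{-r}\BJ_{\Eis,r}(f)-([\Sht_{T}]_{\Eis},f*[\Sht_{T}]_{\Eis})$: expanding $\BJ_{\Eis}$ via $\tr\rho_{\chi,u}(f)=\chi_{u+1/2}(a_{\Eis}(f))$ from the proof of Theorem~\ref{thm K eis=0} and comparing with the action of $a_{\Eis}$ on $[\Sht_{T}]_{\Eis}\in V'_{\Eis}$, one must check (this is a Kronecker--limit--formula type computation, compare Theorem~\ref{th:limit}) that this difference is controlled by the finitely many unramified characters of $\Pic_{X}(k)$ that occur, each of which contributes a term that is linearly recursive in the $x$-exponent, hence is annihilated by a nonzero $p(h_{x})\in\sH_{x,\Ql}$. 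Taking the nonzero ideal $I\subset\sH_{x,\Ql}$ to be the intersection of $(Q(h_{x}))$ (where $Q$ is the minimal polynomial over $\Ql$ of the image of $h_{x}$ in the finite-dimensional quotient handling the first two parts) with $(p(h_{x}))$, one obtains a nonzero ideal $I$ on which $\ell_{D'}$ vanishes.

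Finally, with Steps A and B in hand, Lemma~\ref{l:loc surj} applied with $E=\Ql$, the ideal $I$, and $m=m_{0}$ gives $I+\Span_{\Ql}\{h_{m_{0}x},h_{(m_{0}+1)x},\dots\}=\sH_{x,\Ql}$; since $\ell_{D'}$ vanishes on each summand it vanishes identically on $\sH_{x,\Ql}$. As $x$ and $D'$ were arbitrary, $\ell_{\Ql}=0$, hence $\ell=0$, which is the assertion. The main obstacle is Step~B, and within it the explicit identification of the Eisenstein-series contribution $\BJ_{\Eis,r}$ and its matching against the $V'_{\Eis}$-part of $\BI_{r}$ precisely enough to exhibit the annihilating polynomial $p(h_{x})$; once Theorem~\ref{th:IJ} is available everything else in the argument is formal.
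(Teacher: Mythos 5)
Your overall skeleton (reduce to the large-degree functions $h_{D}$ handled by Theorem~\ref{th:IJ}, then bridge to all of $\sH$ place by place via Lemma~\ref{l:loc surj}) is close in spirit to the paper, but Step~B, which you yourself identify as the crux, is a genuine gap rather than a checkable computation. The discrete parts are indeed harmless: the actions on $V'_{0}$ and on $\calA_{\cusp}\otimes\Ql$ are finite dimensional, so a characteristic polynomial $Q(h_{x})$ annihilates those contributions. The problem is the Eisenstein contributions on \emph{both} sides. On the analytic side, the paper never defines, let alone computes, an Eisenstein term $\BJ_{\Eis}(f,s)$ for general $f$: Theorem~\ref{thm K eis=0} and Lemma~\ref{l:J Eis ideal} establish the spectral expansion of the regularized distribution $\BJ(f,s)$ only for $f\in\calI_{\Eis}$, precisely because the regularized $(A\times A,1\times\eta)$-period of the Eisenstein kernel is the delicate object the Eisenstein-ideal strategy is built to avoid. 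You can of course \emph{define} $\BJ_{\Eis,r}(f)$ as the difference $\BJ_{r}(f)-\BJ_{\cusp,r}(f)$, but your claim that for each unramified character its contribution to $n\mapsto\ell_{D'}(h_{nx})$ is ``linearly recursive in the $x$-exponent'' and, moreover, matches the geometric term $([\Sht_{T}]_{\Eis}, f*[\Sht_{T}]_{\Eis})$ up to something killed by a polynomial $p(h_{x})$, is an unproven analytic/geometric statement of real substance --- essentially an all-derivative, all-Hecke-function Kronecker limit comparison. In the paper the Kronecker limit formula (Theorem~\ref{th:limit}) is a \emph{corollary} of Theorem~\ref{th:full IJ} (applied to $\one_{K}$), so it is not available as an input; and the module $V'_{\Eis}$ is only known to be supported on $Z_{\Eis,\Ql}$, not to carry an action factoring through $\Ql[\Pic_{X}(k)]^{\iota_{\Pic}}$ in the explicit form your matching would need. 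Also note that one cannot shortcut this by finding a kernel ideal of $\sH_{x,\Ql}\to\tsH$: that map is injective (the image of $\Ql[h_{x}]$ in $\Ql[\Pic_{X}(k)]^{\iota_{\Pic}}$ is infinite dimensional), so a nonzero ideal annihilated by $\ell_{D'}$ really does force you to control the Eisenstein difference.

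For comparison, the paper's proof sidesteps every Eisenstein computation. It observes that both $\BI_{r}(f)$ and $\BJ_{r}(f)$ factor through the image $\tsH$ of $\sH\otimes\Ql$ in $\End_{\Ql}(V')\times\End_{\Ql}(\calA\otimes\Ql)\times\Ql[\Pic_{X}(k)]^{\iota_{\Pic}}$, and then proves that the span $\sH^{\dagger}$ of the large-degree $h_{D}$ already surjects onto $\tsH$ after tensoring with $\Ql$. The surjectivity uses: finite generation of $\tsH$ (the variant of Lemma~\ref{l:fg}); the fact that $\sH^{\dagger}$ contains $fh_{y}$ for all $f\in\sH_{S}$ and $y$ outside a finite set $S$, so its image contains the ideal $I$ generated by the $a(h_{y})$; a Chebotarev density argument showing no Eisenstein character can satisfy $\chi(t_{y})=\pm(-q_{y})^{1/2}$ for almost all $y$, whence $\tsH/I$ is finite dimensional; and only then Lemma~\ref{l:loc surj}, applied to the finite-dimensional quotient, to fill in the small-degree local components. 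If you want to salvage your route, you would essentially have to reprove this finite-dimensionality statement (or establish the Eisenstein period comparison independently), at which point you are reproducing the paper's argument.
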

\begin{proof}
Let $\tsH$ be the image of $\sH\otimes\Ql$ in $\End_{\Ql}(V')\times \End_{\Ql}(\calA\otimes\Ql)\times \Ql[\Pic_{X}(k)]^{\iota_{\Pic}}$. \index{$\tsH$}%
Denote the quotient map $\sH\otimes\Ql\surj\tsH$ by $a$. Then for any $x\in |X|$, $\tsH\subset\End_{\sH_{x}\otimes\Ql}(V'\oplus\calA\otimes\Ql\oplus\Ql[\Pic_{X}(k)]^{\iota_{\Pic}})$. The latter being finitely generated over $\sH_{x}\otimes\Ql$ by Lemma \ref{l:fg} (or rather, the analogous assertion for $V'$), $\tsH$ is also a finitely generated $\sH_{x}\otimes\Ql$-module, and hence a finitely generated $\Ql$-algebra. Clearly for $f\in\sH$, $\BI_{r}(f)$ and $\BJ_{r}(f)$ only depend on the image of $f$ in $\tsH$. Let $\sH^{\dagger}\subset\sH$ be the linear span of the functions $h_{D}$ for effective divisors $D$ such that $\deg D\geq \max\{2g'-1,2g\}$. By Theorem \ref{th:IJ}, we have $(\log q)^{-r}\BJ_{r}(f)=\BI_{r}(f)$ for all $f\in\sH^{\dagger}$. Therefore it suffices to show that the composition $\sH^{\dagger}\otimes\Ql\to \sH\otimes\Ql\xrightarrow{a}\tsH$ is surjective.

Since $\tsH$ is finitely generated as an algebra, there exists a finite set $S\subset |X|$ such that $\{a(h_{x})\}_{x\in S}$ generate $\tsH$. We may enlarge $S$ and assume that $S$ contains all places with degree $\leq \max\{2g'-1,2g\}$. Let $y\in |X|-S$, then for any $f\in\sH_{S}=\otimes_{x\in S}\sH_{x}$, we have $fh_{y}\in\sH^{\dagger}$. Therefore  $a(\sH^{\dagger}\otimes\Ql)\supset a(\sH_{S}\otimes\Ql)a(h_{y})=\tsH a(h_{y})$. In other words, $a(\sH^{\dagger}\otimes\Ql)$ contains the ideal $I$ generated by the $a(h_{y})$ for $y\notin S$.  

We claim that the quotient $\tsH/I$ is finite-dimensional over $\Ql$. Since $\tsH$ is finitely generated over $\Ql$, it suffices to show that $\Spec(\tsH/I)$ is finite. Combining Theorem \ref{th:Spec decomp Sht'} and \eqref{Haut Ql}, $\Spec\tsH=\Spec\bsH'\cup\Spec\sH_{\aut,\Ql}=Z_{\Eis,\Ql}\cup Z'^{r}_{0,\ell}\cup Z^{0}_{0,\ell}$. Let $\sigma:\tsH/I\to \Qlbar$ be a $\Qlbar$-point of $\Spec(\tsH/I)$. If $\sigma$ lies in $Z_{\Eis,\Ql}$, then the composition $\sH\to\tsH/I\xrightarrow{\sigma}\Qlbar$ factors as $\sH\xrightarrow{\Sat}\QQ[\Pic_{X}(k)]\xrightarrow{\chi}\Qlbar$ for some character $\chi:\Pic_{X}(k)\to \Qlbar^{\times}$. Since $h_{y}$ vanishes in $\tsH/I$ for any $y\notin S$, we have $\chi(\Sat(h_{y}))=\chi(t_{y})+q_{y}\chi(t^{-1}_{y})=0$ for all $y\notin S$, which implies that $\chi(t_{y})=\pm(-q_{y})^{1/2}$ for all $y\notin S$. Let $\chi':\Pic_{X}(k)\to \Qlbar^{\times}$ be the character $\chi'=\chi\cdot q^{-\deg/2}$. Then $\chi'$ is a character with finite image satisfying $\chi'(t_{y})=\pm\sqrt{-1}$ for all but finitely $y$. This contradicts Chebotarev density since there should be a positive density of $y$ such that $\chi'(t_{y})=1$. Therefore $\Spec(\tsH/I)$ is disjoint from $Z_{\Eis,\Ql}$ hence $\Spec(\tsH/I)^{\red}\subset Z'^{r}_{0,\ell}\cup Z^{0}_{0,\ell}$, hence finite. 

Let $\ov{a}: \sH\otimes\Ql\xrightarrow{a}\tsH\to\tsH/I$ be the quotient map. For each $x\in |X|$, consider the surjective ring homomorphism $\sH_{x}\otimes\Ql\to \ov{a}(\sH_{x}\otimes\Ql)$. Note that $\sH^{\dagger}\cap \sH_{x}$ is spanned by elements of the form $h_{nx}$ for $n\deg(x)\geq\max\{2g'-1,2g\}$. Since $\ov{a}(\sH_{x}\otimes\Ql)\subset\tsH/I$ is finite-dimensional over $\Ql$, Lemma \ref{l:loc surj} implies that $(\sH^{\dagger}\cap\sH_{x})\otimes\Ql\to \ov{a}(\sH_{x}\otimes\Ql)$ is surjective.  Therefore $\ov{a}(\sH^{\dagger}\otimes\Ql)$ contains $\ov{a}(\sH_{x}\otimes\Ql)$ for all $x\in|X|$. Since $\ov{a}$ is surjective, $\ov{a}(\sH_{x}\otimes\Ql)$ (all $x\in|X|$) generate the image $\tsH/I$ as an algebra, hence $\ov{a}(\sH^{\dagger}\otimes\Ql)=\tsH/I$. Since $a(\sH^{\dagger}\otimes\Ql)$ already contains $I$, we conclude that $a(\sH^{\dagger}\otimes\Ql)=\tsH$. 
\end{proof}

\subsubsection{Proof of Theorem \ref{th:limit}}\label{proof:limit} Apply Theorem \ref{th:full IJ} to the unit function $h=\one_{K}$, we get
\begin{equation*}
(\theta^{\mu}_{*}[\Sht^{\mu}_{T}], \quad\theta^{\mu}_{*}[\Sht^{\mu}_{T}])_{\Sht'^{r}_{G}}=(\log q)^{-r}\JJ_{r}(\one_{K}).
\end{equation*}
We then apply Corollary \ref{c:one K} to write the RHS using the $r$-th derivative of $L(\eta,s)$, as desired.

\begin{remark}\label{rem limit r=0}
Let $r=0$. Note that $\Sht^{\mu}_{T}$, resp. $\Sht'^{r}_{G}$, is the constant groupoid $\Bun_T(k)$, resp. $\Bun_G(k)$.  We write $\theta_\ast[\Bun_{T}(k)]$ for $\theta^{\mu}_{*}[\Sht^{\mu}_{T}]$, as an element in $C_c^\infty(\Bun_{G}(k),\BQ)$. 
The analogous statement of Theorem \ref{th:limit} should be
\begin{equation}\label{limit r=0}
\jiao{\theta_\ast[\Bun_{T}(k)] ,\quad \theta_\ast[\Bun_{T}(k)] }_{\Bun_{G}(k)}=4L(\eta,0)+q-2.
\end{equation}
Here the left side $\jiao{-,-}_{\Bun_{G}(k)}$ is the inner product on $C_c^\infty(\Bun_{G}(k),\BQ)$ defined such that the characteristic functions $\{{\bf 1}_{[\CE]}\}_{\CE\in \Bun_G(k)}$ are orthogonal to each other and that
$$
\jiao{{\bf 1}_{[\CE]},{\bf 1}_{[\CE]}}_{\Bun_G(k)}=\frac{1}{\#\Aut(\CE)}.
$$
The equality \eqref{limit r=0} can be proved directly. We leave the detail to the reader.
\end{remark}

\subsection{Proof of Theorem \ref{th:main coho}}\label{proof:main coho}  The theorem was formulated as an equality in $E_{\pi,\l}$, but for the proof we shall extend scalars from $\Ql$ to $\Qlbar$, and use the decomposition \eqref{decomp V} instead. For any embedding $\iota:E_{\pi}\incl\Qlbar$, we have the point $\fkm(\pi,\iota)\in Z_{\cusp}(\Qlbar)$ corresponding to the homomorphism $\sH\xrightarrow{\l_{\pi}} E_{\pi}\xrightarrow{\iota}\Qlbar$. To prove the theorem, it suffices to showing that for all embeddings $\iota: E_{\pi}\incl\Qlbar$, we have an identity in $\Qlbar$
\begin{equation*}
\frac{|\omega_X|}{2(\log q)^{r}}\iota(\sL^{(r)}(\pi_{F'},1/2))=\Big([\Sht^{\mu}_{T}]_{\fkm(\pi,\iota)}, [\Sht^{\mu}_{T}]_{\fkm(\pi,\iota)}\Big)_{\Qlbar}
\end{equation*}
where  $(\cdot,\cdot)_{\Qlbar}$ is the $\Qlbar$-bilinear extension of the cup product pairing \eqref{cup prod V'} on $V'$. In other words, for any everywhere unramified cuspidal automorphic $\Qlbar$-representation $\pi$ of $G(\AA_{F})$ that corresponds to the homomorphism $\l_{\pi}: \sH_{\Qlbar}\to\Qlbar$, we need to show
\begin{equation}\label{iota identity}
\frac{|\omega_X|}{2(\log q)^{r}}\sL^{(r)}(\pi_{F'},1/2)=\Big([\Sht^{\mu}_{T}]_{\fkm_{\pi}}, [\Sht^{\mu}_{T}]_{\fkm_{\pi}}\Big)_{\Qlbar}
\end{equation}
where $\fkm_{\pi}=\ker(\l_{\pi})$ is the maximal ideal of $\sH_{\Qlbar}$, and $[\Sht^{\mu}_{T}]_{\fkm_{\pi}}$ is understood to be zero if $\fkm_{\pi}\notin Z'^{r}_{0,\ell}(\Qlbar)$.

As in the proof of Theorem \ref{th:full IJ}, let $\tsH$ be the image of $\sH_{\Ql}$ in $\End_{\Ql}(V')\times \End_{\Ql}(\calA\otimes\Ql)\times \Ql[\Pic_{X}(k)]^{\iota_{\Pic}}$. By Theorem \ref{th:Spec decomp Sht'} and \eqref{Haut Ql}, we may write $\Spec\tsH$ as a disjoint union of closed subsets
\begin{equation}\label{decomp Z}
\Spec\tsH^{\red}=Z_{\Eis,\Ql}\coprod \wt{Z}_{0,\ell}.
\end{equation}
where $\wt{Z}_{0,\ell}=Z'^{r}_{0,\ell}\cup Z^{0}_{0,\ell}$ is a finite collection of closed points. This gives a product decomposition of the ring $\tsH$
\begin{equation}\label{decomp tcH}
\tsH=\wt\sH_{\ell,\Eis}\times\wt\sH_{\ell,0}
\end{equation}
with $\Spec\wt\sH^{\red}_{\ell,\Eis}=Z_{\Eis,\Ql}$ and $\Spec\wt\sH^{\red}_{\ell,0}=\wt{Z}_{0,\ell}$. For any element $h\in\wt\sH_{\ell,0}$, we view it as the element $(0,h)\in\tsH$.  By Corollary \ref{c:decomp Ir} we have for any $h\in\wt\sH_{\ell,0}$
\begin{eqnarray}\label{simple I}
\BI_{r}(h)&=&\sum_{\fkm\in Z'^{r}_{0,\ell}(\Qlbar)}\Big([\Sht_{T}]_{\fkm}, h\ast [\Sht_{T}]_{\fkm}\Big)
\end{eqnarray}
Extending by linearity, the above formula also holds for all $h\in \wt\sH_{\ell,0}\otimes_{\Ql}\Qlbar$. Note that the linear function $h\mapsto ([\Sht_{T}]_{\fkm}, h\ast [\Sht_{T}]_{\fkm})$ on $\wt\sH_{\ell,0}\otimes\Qlbar$ factors through the localization $\wt\sH_{\ell,0}\otimes\Qlbar\to (\wt\sH_{\ell,0}\otimes\Qlbar)_{\fkm}$ (viewing $\fkm$ as a maximal ideal of $\wt\sH_{\ell,0}\otimes\Qlbar$).

On the other hand, let $\wt\calI_{\Eis}$ be the ideal of $\tsH$ generated by the image of $\calI_{\Eis}$. We have $(0,h)\in\wt\calI_{Eis}$. By Theorem \ref{th: coeff E}, we have for any $h\in\wt\sH_{\ell,0}\otimes\Qlbar$
\begin{eqnarray}\label{simple J}
\BJ_{r}(h)&=&\sum_{\pi\in Z_{\cusp}(\Qlbar)}\frac{d^{r}}{ds^{r}}\Big|_{s=0}\BJ_{\pi}((0,h))\\
&=&\sum_{\pi\in Z_{\cusp}(\Qlbar)}\frac{|\omega_X|}{2}\l_{\pi}(h)\sL^{(r)}(\pi_{F'},1/2).
\end{eqnarray}
By Lemma \ref{l:Haut decomp}(3), $Z_{\cusp}(\Qlbar)=Z^{0}_{0,\ell}(\Qlbar)$, hence can be viewed as a subset of $\wt{Z}_{0,\ell}$. Comparing the RHS of \eqref{simple I} and \eqref{simple J}, and using Theorem \ref{th:full IJ}, we get for any $h\in\wt\sH_{\ell,0}\otimes\Qlbar$
\begin{equation}\label{IJh}
\sum_{\fkm\in Z^{r}_{0,\ell}(\Qlbar)}\Big([\Sht_{T}]_{\fkm}, h\ast\Sht_{T}]_{\fkm}\Big)=\sum_{\pi\in Z_{\cusp}(\Qlbar)}\frac{|\omega_X|}{2(\log q)^{r}}\l_{\pi}(h)\sL^{(r)}(\pi_{F'},1/2).
\end{equation}

Since $\wt\sH_{\ell,0}\otimes\Qlbar$ is an artinian algebra, we have a canonical decomposition into local artinian algebras
\begin{equation}\label{decomp fkm}
\wt\sH_{\ell,0}\otimes\Qlbar\cong\prod_{\fkm\in\wt{Z}_{0,\ell}(\Qlbar)}(\wt\sH_{\ell,0}\otimes\Qlbar)_{\fkm}.
\end{equation}
As linear functions on $\wt\sH_{\ell,0}\otimes\Qlbar$, the $\fkm$-summand of the left side of \eqref{IJh} factors through $(\wt\sH_{\ell,0}\otimes\Qlbar)_{\fkm}$ while the $\pi$-summand of the right side of \eqref{IJh} factors through $(\wt\sH_{\ell,0}\otimes\Qlbar)_{\fkm_{\pi}}$. By the decomposition \eqref{decomp fkm}, we conclude that
\begin{itemize}
\item If $\fkm\in Z'^{r}_{0,\ell}(\Qlbar)-Z_{\cusp}(\Qlbar)$, then for any $h\in\wt\sH_{\ell,0}\otimes\Qlbar$,
\begin{equation*}
([\Sht_{T}]_{\fkm}, \quad h\ast [\Sht_{T}]_{\fkm})_{\Qlbar}=0.
\end{equation*}
\item If $\fkm\in Z'^{r}_{0,\ell}(\Qlbar)\cap Z_{\cusp}(\Qlbar)$, i.e., there is a (necessarily unique) $\pi\in Z_{\cusp}(\Qlbar)$ such that $\fkm=\fkm_{\pi}$, then  for any $h\in\wt\sH_{\ell,0}\otimes\Qlbar$ we have
\begin{equation*}
\l_{\pi}(h)\frac{|\omega_X|}{2(\log q)^{r}}\sL^{(r)}(\pi_{F'},1/2)=([\Sht_{T}]_{\fkm_{\pi}}, \quad h\ast[\Sht_{T}]_{\fkm_{\pi}})_{\Qlbar}.
\end{equation*}
In particular, taking $h=1$ we get
\begin{equation*}
\frac{|\omega_X|}{2(\log q)^{r}}\sL^{(r)}(\pi_{F'},1/2)=([\Sht_{T}]_{\fkm_{\pi}}, \quad [\Sht_{T}]_{\fkm_{\pi}})_{\Qlbar}.
\end{equation*}
\item If $\pi\in Z_{\cusp}(\Qlbar)-Z'^{r}_{0,\ell}(\Qlbar)$, then
\begin{equation*}
\sL^{(r)}(\pi_{F'},1/2)=0.
\end{equation*}
\end{itemize}
These together imply \eqref{iota identity}, which finishes the proof of Theorem \ref{th:main coho}.

\subsection{The Chow group version of the main theorem}\label{ss:ch}
In \S\ref{ss:intro cycle} we defined an $\sH$-module $W$ equipped with a perfect symmetric bilinear pairing $(\cdot,\cdot)$. Recall that $\tilW$ is the $\sH$-submodule of $\Ch_{c,r}(\Sht'^{r}_{G})_{\QQ}$ generated by $\theta^{\mu}_{*}[\Sht^{\mu}_{T}]$, and $W$ is by definition the quotient of $\tilW$ by the kernel $\tilW_{0}$ of the intersection pairing. \index{$W, \wt W_0, \wt W$}%

\begin{cor}[of Theorem \ref{th:full IJ}]
The action of $\sH$ on $W$ factors through $\sH_{\aut}$. In particular, $W$ is a cyclic $\sH_{\aut}$-module, hence finitely generated module over $\sH_{x}$ for any $x\in |X|$.
\end{cor}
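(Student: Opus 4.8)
The key identity to exploit is the self-adjointness of the Hecke action on $\Ch_{c,r}(\Sht'^{r}_{G})_{\QQ}$ with respect to the intersection pairing (Lemma \ref{l:Chow self adj}), together with Theorem \ref{th:full IJ}, which gives $(\log q)^{-r}\BJ_{r}(f)=\BI_{r}(f)$ for all $f\in\sH$. The plan is to show that the intersection pairing $(\cdot,\cdot)$ on $\wt W$ descends through the quotient $\sH\twoheadrightarrow\sH_{\aut}$. Concretely, let $\fkm$ be the kernel of the action map $\sH\to\End_{\QQ}(\wt W)$ composed with $\wt W\to W$; equivalently, consider the ideal $\calI_{\Eis}\cdot\calI'$ for an appropriate ideal. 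Actually the cleanest route: I would show that $\calI_{\Eis}$ annihilates a finite-index-related portion, but more directly I would argue as follows.

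\textbf{Step 1: reduce to a pairing statement.} Because $\wt W$ is generated over $\sH$ by the single element $z:=\theta^{\mu}_{*}[\Sht^{\mu}_{T}]$, and $W$ is the quotient of $\wt W$ by the radical of the pairing, it suffices to show that for every $f\in\sH$ annihilated (in an appropriate sense) by the kernel of $\sH\to\sH_{\aut}$, the operator $f*(-)$ acts as zero on $W$. By self-adjointness (Lemma \ref{l:Chow self adj}), an element $w=f*z\in\wt W$ lies in the radical $\wt W_{0}$ if and only if $(f*z,\; h*z)=0$ for all $h\in\sH$, i.e. $\jiao{z,\; (fh)*z}_{\Sht'^{r}_{G}}=0$ for all $h$. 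Thus I want: for $f$ in the kernel $\fkI$ of $\sH\to\sH_{\aut}$, the intersection number $\jiao{z,\; (fh)*z}_{\Sht'^{r}_{G}}=\BI_{r}(fh)$ vanishes for all $h\in\sH$.

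\textbf{Step 2: transfer to the analytic side.} By Theorem \ref{th:full IJ}, $\BI_{r}(fh)=(\log q)^{-r}\BJ_{r}(fh)$. Now I invoke the spectral expansion of $\BJ_{r}$: for $f\in\calI_{\Eis}$, Theorem \ref{th: coeff E} (or Lemma \ref{l:J Eis ideal} and Proposition \ref{p:Jpi}) gives $\BJ(f,s)=\frac12|\omega_X|\sum_{\pi}\sL(\pi_{F'},s+1/2)\lambda_{\pi}(f)$, the sum over everywhere unramified cuspidal $\pi$. Hence $\BJ_{r}(fh)$, for $f\in\calI_{\Eis}$, depends on $f$ only through the collection $(\lambda_{\pi}(f))_{\pi}$, i.e. through the image of $f$ in $\prod_{\pi}E_{\pi}=\sH_{\cusp}$. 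Since by Lemma \ref{l:Haut decomp} we have $\sH_{\aut}\cong \QQ[\Pic_X(k)]^{\iota_{\Pic}}\times\sH_{\cusp}$, and since $a_{\Eis}$ kills the cuspidal part while $\lambda_{\pi}$ kills the Eisenstein part, the kernel $\fkI$ of $\sH\to\sH_{\aut}$ is precisely $\calI_{\Eis}\cap\bigcap_{\pi}\ker\lambda_{\pi}$. For such $f$, we have $f\in\calI_{\Eis}$ and $\lambda_{\pi}(f)=0$ for all $\pi$, so $\BJ(f,s)=0$ identically, hence $\BJ_{r}(fh)=\BJ_{r}(hf)$; but $hf\in\calI_{\Eis}$ as well (it is an ideal) and $\lambda_{\pi}(hf)=\lambda_{\pi}(h)\lambda_{\pi}(f)=0$, so $\BJ(hf,s)=0$ and therefore $\BI_{r}(fh)=(\log q)^{-r}\BJ_{r}(fh)=0$ for all $h$. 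By Step 1 this shows $f*z\in\wt W_{0}$, i.e. $f$ acts by zero on $W$, so the $\sH$-action on $W$ factors through $\sH/\fkI=\sH_{\aut}$.

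\textbf{Step 3: conclude finiteness.} Once the action factors through $\sH_{\aut}$, $W$ is a cyclic $\sH_{\aut}$-module generated by the image of $z$; in particular $W$ is finitely generated over $\sH_{\aut}$. By the $r=0$ analog of Lemma \ref{l:fg}(1) (or directly, since $\sH_{\aut}$ is a finitely generated $\sH_{x}$-module: $\QQ[\Pic_X(k)]^{\iota_{\Pic}}$ is finitely generated over $\sH_{x}$ by Lemma \ref{l:Eis fin gen}(1), and $\sH_{\cusp}$ is finite-dimensional over $\QQ$), $W$ is finitely generated over $\sH_{x}$ for any $x\in|X|$. The main obstacle is Step 2, specifically making precise that $\fkI=\calI_{\Eis}\cap\bigcap_{\pi}\ker\lambda_{\pi}$ and that $\BJ(f,s)$ vanishes identically for $f$ in this intersection --- this is where one must carefully use the full spectral decomposition (Theorem \ref{thm K eis=0} and Theorem \ref{th: coeff E}) to know there are no residual or Eisenstein contributions surviving in $\BJ(f,s)$ when $f\in\calI_{\Eis}$, so that $\lambda_{\pi}(f)=0$ for all $\pi$ really does force $\BJ(f,s)=0$.
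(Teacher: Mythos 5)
Your proposal is correct and is essentially the paper's own argument: for $f$ in the kernel of $\sH\to\sH_{\aut}$ one has $f\in\calI_{\Eis}$ and $\lambda_\pi(f)=0$ for all $\pi$, so $\BJ(hf,s)=0$ for all $h$ by the spectral decomposition, hence $\BI_r(hf)=0$ by the key identity, and self-adjointness then puts $f*\theta^{\mu}_{*}[\Sht^{\mu}_{T}]$ in the radical $\wt W_0$. You merely spell out details the paper leaves implicit (the use of Theorem \ref{th: coeff E} for the vanishing of $\BJ$, and the finiteness of $\sH_{\aut}$ over $\sH_x$ via Lemma \ref{l:Eis fin gen}), which is fine.
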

\begin{proof}
Suppose $f\in\sH$ is in the kernel of $\sH\to \sH_{\aut}$, then $\JJ_{r}(f)=0$ hence $\II_{r}(f)=0$ by Theorem \ref{th:full IJ}. In particular, for any $h\in\sH$, we have $\JJ_{r}(hf)=0$. Therefore $\jiao{h*\theta^{\mu}_{*}[\Sht^{\mu}_{T}], f*\theta^{\mu}_{*}[\Sht^{\mu}_{T}]}=\II_{r}(hf)=0$. This implies that $f*\theta^{\mu}_{*}[\Sht^{\mu}_{T}]\in\tilW_{0}$, hence $f*\theta^{\mu}_{*}[\Sht^{\mu}_{T}]$ is zero in $W$, i.e., $f$ acts as zero on $W$. 
\end{proof}

\subsubsection{Proof of Theorem \ref{th:intro W decomp}}\label{proof:intro W decomp} By the decomposition \eqref{Haut Q}, we have an orthogonal decomposition 
\begin{equation*}
W=W_{\Eis}\oplus W_{\cusp}
\end{equation*}
with $\Supp(W_{\Eis})\subset Z_{\Eis}$ and $\Supp(W_{\cusp})\subset Z_{\cusp}$. Since $W_{\cusp}$ is a finitely generated $\sH_{\aut}$-module with finite support, it is finite-dimensional over $\QQ$. By Lemma \ref{l:Haut decomp},  $Z_{\cusp}$ is the set of unramified cuspidal automorphic representations in $\calA$, which implies the finer decomposition \eqref{intro W decomp}.
Since $W$ is a cyclic $\sH_{\aut}$-module, we have $\dim_{E_{\pi}}W_{\pi}\leq \dim_{E_{\pi}}\sH_{\aut,\pi}=1$ by the decomposition in Lemma \ref{l:Haut decomp}(1).

\subsubsection{Proof of Theorem \ref{th:main cycle}}\label{proof:main cycle} Pick any place $\l$ of $E_{\pi}$ over $\ell$, then by the compatibility of the intersection pairing and the cup product pairing under the cycle class map, we have
\begin{equation*}
([\Sht^{\mu}_{T}]_{\pi}, \quad[\Sht^{\mu}_{T}]_{\pi})_{\pi}=([\Sht^{\mu}_{T}]_{\pi,\l}, \quad[\Sht^{\mu}_{T}]_{\pi,\l})_{\pi,\l}
\end{equation*}
both as elements in the local field $E_{\pi,\l}$. Therefore Theorem \ref{th:main cycle} follows from Theorem \ref{th:main coho}.




\printindex


\appendix

\section{Results from intersection theory}\label{s:int}

In this appendix, we use Roman letters $X,Y,V,W,$ etc to denote algebraic stacks over a  field $k$. In particular, $X$ does not mean an algebraic curve. All algebraic stacks we consider are locally of finite type over $k$.

\subsection{Rational Chow groups for Deligne--Mumford stacks}\label{ss:proper cycles}

\subsubsection{Generalities about intersection theory on stacks} We refer to \cite{Kresch} for the definition of the Chow group $\Ch_{*}(X)$ of an algebraic stack $X$ over $k$.

For a Deligne--Mumford stack of finite type over $k$, the rational Chow group $\Ch_{*}(X)_{\QQ}$ can be defined in a more naive way using $\QQ$-coefficient cycles modulo rational equivalence, see \cite{Vistoli}.

\subsubsection{Chow group of proper cycles}  Let $X$ be a Deligne--Mumford stack locally of finite type over $k$. Let $Z_{c,i}(X)_{\QQ}$ denote the $\QQ$-vector space spanned by irreducible $i$-dimensional closed substacks $Z\subset X$ that are {\em proper over $k$}. Let $\Ch_{c,i}(X)_{\QQ}$ be the quotient of $Z_{c,i}(X)_{\QQ}$ modulo rational equivalence which comes from rational functions on cycles which are proper over $k$.  Equivalently, $\Ch_{c,i}(X)_{\QQ}=\varinjlim_{Y\subset X}\Ch_{i}(Y)_{\QQ}$ where $Y$ runs over closed substacks of $X$ that are proper over $k$, partially ordered by inclusion.

From the definition, we see that if $X$ is exhausted by open substacks $X_{1}\subset X_{2}\subset\cdots$, then we have
\begin{equation*}
\Ch_{c,i}(X)_{\QQ}\cong\varinjlim_{n}\Ch_{c,i}(X_{n})_{\QQ}.
\end{equation*}

\subsubsection{The degree map} When $X$ is a Deligne--Mumford stack, we have a degree map
\begin{equation*}
\deg: \Ch_{c,0}(X)_{\QQ}\to\QQ. 
\end{equation*}
Suppose  $x\in X$ is a closed point with residue field $k_{x}$ and automorphism group $\Aut(x)$ (a finite group scheme over $k_{x}$). Let $|\Aut(x)|_{k_{x}}$ be the order of $\Aut(x)$ as a finite group scheme over $k_{x}$. Let $[x]\in\Ch_{c,0}(X)_{\QQ}$ be the cycle class of the closed point $x$. Then 
\begin{equation*}
\deg([x])=[k_{x}:k]/|\Aut(x)|_{k_{x}}.
\end{equation*}

\subsubsection{Intersection pairing}\label{sss:int pairing} For the rest of \S\ref{ss:proper cycles}, we assume that $X$ is a smooth separated Deligne--Mumford stack,  locally of finite type over $k$ with pure dimension $n$. There is an intersection product
\begin{eqnarray*}
(-)\cdot_{X}(-): \Ch_{c,i}(X)_{\QQ}\times \Ch_{c,j}(X)_{\QQ}\to \Ch_{c,i+j-n}(X)_{\QQ}
\end{eqnarray*}
defined as follows. For closed substacks $Y_{1}$ and $Y_{2}$ of $X$ that are proper over $k$, the refined Gysin map attached to the regular local immersion $\Delta:X\to X\times X$ gives an intersection product
\begin{eqnarray*}
\Ch_{i}(Y_{1})_{\QQ}\times \Ch_{j}(Y_{2})_{\QQ}&\to& \Ch_{i+j-n}(Y_{1}\cap Y_{2})_{\QQ}\to \Ch_{c,i+j-n}(X)_{\QQ}\\
(\zeta_{1},\zeta_{2})&\mapsto & \Delta^{!}(\zeta_{1}\times\zeta_{2})
\end{eqnarray*}
Note that $Y_{1}\cap Y_{2}=Y_{1}\times_{X}Y_{2}\to Y_{1}$ is proper, hence $Y_{1}\cap Y_{2}$ is proper over $k$. Taking direct limits for $Y_{1}$ and $Y_{2}$, we get the intersection product on $\Ch_{c,*}(X)_{\QQ}$.

Composing with the degree map, we get an intersection pairing
\begin{equation}\label{int pairing Chc}
\jiao{\cdot,\cdot}_{X}:\Ch_{c,j}(X)_{\QQ}\times \Ch_{c,n-j}(X)_{\QQ}\to {\QQ}
\end{equation}
defined as
\begin{equation*}
\jiao{\zeta_{1},\zeta_{2}}_{X}=\deg(\zeta_{1}\cdot_{X}\zeta_{2}), \quad \zeta_{1}\in \Ch_{c,j}(X)_{\QQ}, \zeta_{2}\in\Ch_{c,n-j}(X)_{\QQ}.
\end{equation*}

\subsubsection{The cycle class map}\label{sss:cycle class map}  For any closed substack $Y\subset X$ that is proper over $k$, we have the usual cycle class map into the $\ell$-adic (Borel-Moore) homology of $Y$
\begin{equation*}
\cl_{Y}: \Ch_{j}(Y)_{\QQ}\to \hBM{2j}{Y\otimes_{k}\kbar,\Ql}(-j)\cong \homog{2j}{Y\otimes_{k}\kbar,\Ql}(-j).
\end{equation*}
Composing with the proper map $i:Y\incl X$ we get
\begin{equation}\label{cycle to homo}
\cl_{Y,X}:\Ch_{j}(Y)_{\QQ}\xrightarrow{\cl_{Y}}\homog{2j}{Y\otimes_{k}\kbar,\Ql}(-j)\xrightarrow{i_{*}}\homog{2j}{X\otimes_{k}\kbar,\Ql}(-j)\cong\cohoc{2n-2j}{X\otimes_{k}\kbar,\Ql}(n-j).
\end{equation}
where the last isomorphism is the Poincar\'e duality for $X$.  Taking inductive limit over all such proper $Y$, we get a cycle class map for proper cycles on $X$
\begin{equation*}
\cl_{X}: \Ch_{c,j}(X)_{\QQ}=\varinjlim_{Y}\Ch_{j}(Y)_{\QQ}\xrightarrow{\varinjlim \cl_{Y,X}}\cohoc{2n-2j}{X\otimes_{k}\kbar,\Ql}(n-j).
\end{equation*}
This map intertwines the intersection pairing \eqref{int pairing Chc} with the cup product pairing
\begin{equation*}
\cohoc{2j}{X\otimes_{k}\kbar,\Ql}(j)\times \cohoc{2n-2j}{X\otimes_{k}\kbar,\Ql}(n-j)\xrightarrow{\cup}\cohoc{2n}{X\otimes_{k}\kbar,\Ql}(n)\xrightarrow{\cap[X]}\Ql.
\end{equation*}

\subsubsection{A ring of correspondences}\label{sss:corr operator}
Let 
\begin{equation*}
\cCh_{n}(X\times X)_{\QQ}=\varinjlim_{Z\subset X\times X,\pr_{1}:Z\to X \text{ is proper}}\Ch_{n}(Z)_{\QQ}.
\end{equation*}

For closed substacks $Z_{1},Z_{2}\subset X\times X$ that are proper over $X$ via the first projections, we have a bilinear map
\begin{eqnarray*}
\Ch_{n}(Z_{1})_{\QQ}\times\Ch_{n}(Z_{2})_{\QQ} &\to & \Ch_{n}((Z_{1}\times X)\cap(X\times Z_{2}))_{\QQ}\xrightarrow{\pr_{13*}}\cCh_{n}(X\times X)_{\QQ}\\
(\rho_{1},\rho_{2}) &\mapsto & \rho_{1}*\rho_{2}:=\pr_{13*}\big((\rho_{1}\times[X])\cdot_{X^{3}} ([X]\times\rho_{2})\big).
\end{eqnarray*}
Note that $(Z_{1}\times X)\cap(X\times Z_{2})=Z_{1}\times_{\pr_{2}, X,\pr_{1}}Z_{2}$ is proper over $Z_{1}$, hence is proper over $X$ via the first projection. Taking direct limit over such $Z_{1}$ and $Z_{2}$, we get a convolutiton product
\begin{equation*}
(-)*(-): \cCh_{n}(X\times X)_{\QQ}\times\cCh_{n}(X\times X)_{\QQ}\to\cCh_{n}(X\times X)_{\QQ}.
\end{equation*}
This gives $\cCh_{n}(X\times X)_{\QQ}$ the structure of an associative $\QQ$-algebra.

For a closed substack $Z\subset X\times X$ such that $\pr_{1}$ is proper, and a closed substack $Y\subset X$ which is proper over $k$, we have a bilinear map
\begin{eqnarray*}
\Ch_{n}(Z)_{\QQ}\times\Ch_{i}(Y)_{\QQ}&\to&\Ch_{i}(Z\cap(Y\times X))_{\QQ}\xrightarrow{\pr_{2*}} \Ch_{c,i}(X)_{\QQ}\\
(\rho,\zeta) &\mapsto& \rho *\zeta:=\pr_{2*}\big(\rho\cdot_{X\times X}(\zeta\times[X])\big)
\end{eqnarray*}
Note here $Z\cap (Y\times X)=Z\times_{\pr_{1},X}Y$ is proper over $Y$, hence is itself proper over $k$.  Taking direct limit over such $Z$ and $Y$, we get a bilinear map
\begin{equation*}
\cCh_{n}(X\times X)_{\QQ}\times\Ch_{c,i}(X)_{\QQ}\to \Ch_{c,i}(X)_{\QQ}.
\end{equation*} 
This defines an action of the $\QQ$-algebra $\cCh_{n}(X\times X)_{\QQ}$ on $\Ch_{c,i}(X)_{\QQ}$.

\subsection{Graded $K'_{0}$ and Chow groups for Deligne--Mumford stacks}
\subsubsection{A naive filtration on $K'_{0}(X)_{\QQ}$} For an algebraic stack $X$ over $k$, let $\Coh(X)$ be the abelian category of coherent $\calO_{X}$-modules on $X$. Let $K'_{0}(X)$ denote the Grothendieck group of $\Coh(X)$.  

Let $\Coh(X)_{\leq n}$ be the full subcategory of coherent sheaves of $\calO_{X}$-modules with support dimension $\leq n$. We define $K'_{0}(X)^{\naive}_{\QQ,\leq n}$ to be the the image of $K_{0}(\Coh(X)_{\leq n})_{\QQ}\to K'_{0}(X)_{\QQ}$. They give an increasing filtration on $K'_{0}(X)_{\QQ}$. This is not yet the correct filtration to put on $K'_{0}(X)_{\QQ}$, but let us first review the case where $X$ is a scheme.

Let $X$ be a scheme of finite type over $k$. Recall from \cite[\S15.1.5]{Fulton} that there is a natural graded map $\phi_{X}: \Ch_{*}(X)_{\QQ}\to \Gr^{\naive}_{*}K'_{0}(X)_{\QQ}$ sending the class of an irreducible subvariety $V\subset X$ of dimension $n$ to the image of $\calO_{V}$ in $\Gr^{\naive}_{n}K'_{0}(X)_{\QQ}$. This map is in fact an isomorphism, with inverse $\psi_{X}: \Gr^{\naive}_{*}K'_{0}(X)_{\QQ}\to \Ch_{*}(X)_{\QQ}$ given by the leading term of the Riemann--Roch map $\tau_{X}: K'_{0}(X)_{\QQ}\to \Ch_{*}(X)_{\QQ}$. For details, see \cite[Theorem 18.3, and proof of Corollary 18.3.2]{Fulton}. These results also hold for algebraic spaces $X$ over $k$ by Gillet \cite{Gillet}.

\subsubsection{} A naive attempt to generalize the map $\psi_{X}$ to stacks is the following. Let $Z_{n}(X)_{\QQ}$ be the naive cycle group of $X$, namely the $\QQ$-vector space with a basis given by integral closed substacks $V\subset X$ of dimension $n$.

We define a linear map $\supp_{X}: K_{0}(\Coh(X)_{\leq n})_{\QQ}\to Z_{n}(X)_{\QQ}$ sending a coherent sheaf $\calF$ to $\sum_{V}m_{V}(\calF)[V]$, where $V$ runs over all integral substacks of $X$ of dimension $n$ and $m_{V}(\calF)$ is the length of $\calF$ at the generic point of $V$.

Clearly this map kills the image of $K_{0}(\Coh(X)_{\leq n-1})_{\QQ}$ but what is not clear is whether or not the composition $K_{0}(\Coh(X)_{\leq n})_{\QQ}\xrightarrow{\supp_{X}} Z_{n}(X)_{\QQ}\to \Ch_{n}(X)_{\QQ}$  factors through $K'_{0}(X)^{\naive}_{\QQ, \leq n}$. For this reason we will look for another filtration on $K'_{0}(X)_{\QQ}$. \footnote{Our definition in \S\ref{sss:fil} may still seem naive to experts, but it suffices for our applications. We wonder if there is a way to put a natural $\l$-structure on $K'_{0}(X)_{\QQ}$ when $X$ is a Deligne--Mumford stack, and then one may define a filtration on it using eigenvalues of the Adams operations.}

When $X$ is an algebraic space, the map $\supp_{X}$ does induce a map $\Gr^{\naive}_{n}K'_{0}(X)_{\QQ}\to \Ch_{n}(X)_{\QQ}$, and it is the same as the map $\psi_{X}$, the top term of the Riemann--Roch map.

\subsubsection{Another filtration on $K'_{0}(X)_{\QQ}$}\label{sss:fil} Now we define another filtration on $K'_{0}(X)_{\QQ}$ when $X$ is a Deligne--Mumford stack satisfying the following condition. 

\begin{defn}\label{def:ffp} Let $X$ be a Deligne--Mumford stack over $k$. A finite flat surjective map $U\to X$ from an algebraic space $U$ of finite type over $k$ is called a {\em finite flat presentation} of $X$. We say that $X$ admits a finite flat presentation if such a map $U\to X$ exists.
\end{defn}

We define $K'_{0}(X)_{\QQ,\leq n}$ to be the subset of elements $\alpha\in K'_{0}(X)_{\QQ}$ such that there exists a finite flat presentation $\pi: U\to X$ such that $\pi^{*}\alpha\in K'_{0}(U)^{\naive}_{\QQ,\leq n}$. 

We claim that $K'_{0}(X)_{\QQ,\leq n}$ is a $\QQ$-linear subspace of $K'_{0}(X)_{\QQ}$. In fact, for any two elements $\alpha_{1},\alpha_{2}\in K'_{0}(X)_{\QQ,\leq n}$, we find finite flat presentations $\pi_{i}:U_{i}\to X$ such that $\pi_{i}^{*}\alpha_{i}\in K'_{0}(U_{i})^{\naive}_{\QQ,\leq n}$ for $i=1,2$.  Then the pullback of the sum $\alpha_{1}+\alpha_{2}$ to the finite flat presentation $U_{1}\times_{X}U_{2}\to X$ lies in $K'_{0}(U_{1}\times_{X}U_{2})^{\naive}_{\QQ,\leq n}$. 

By this definition, $K'_{0}(X)_{\QQ,\leq n}$ may not be zero for $n<0$. For any negative $n$, $K'_{0}(X)_{\QQ,\leq n}$ consists of those classes that vanish when pulled back to some finite flat presentation $U\to X$.

\begin{lemma} When $X$ is an algebraic space of finite type over $k$, the filtration $K'_{0}(X)_{\QQ, \leq n}$ is the same as the naive one $K'_{0}(X)^{\naive}_{\QQ, \leq n}$.
\end{lemma}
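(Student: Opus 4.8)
The plan is to show both inclusions $K'_{0}(X)^{\naive}_{\QQ,\leq n}\subseteq K'_{0}(X)_{\QQ,\leq n}$ and $K'_{0}(X)_{\QQ,\leq n}\subseteq K'_{0}(X)^{\naive}_{\QQ,\leq n}$ when $X$ is an algebraic space of finite type over $k$. For the first inclusion, I would take the identity map $\id:X\to X$ as a (trivially finite flat) presentation: for any class $\alpha\in K'_{0}(X)^{\naive}_{\QQ,\leq n}$, its pullback along $\id$ is $\alpha$ itself, which by hypothesis lies in $K'_{0}(X)^{\naive}_{\QQ,\leq n}$, so $\alpha\in K'_{0}(X)_{\QQ,\leq n}$ by definition. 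This direction is immediate.

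For the reverse inclusion, suppose $\alpha\in K'_{0}(X)_{\QQ,\leq n}$, so there is a finite flat surjective map $\pi:U\to X$ with $U$ an algebraic space of finite type over $k$ and $\pi^{*}\alpha\in K'_{0}(U)^{\naive}_{\QQ,\leq n}$. The key step is to apply the pushforward $\pi_{*}:K'_{0}(U)_{\QQ}\to K'_{0}(X)_{\QQ}$, which exists because $\pi$ is proper (finite), and which preserves the naive filtration by dimension of support since $\pi$ does not increase support dimension; hence $\pi_{*}(\pi^{*}\alpha)\in K'_{0}(X)^{\naive}_{\QQ,\leq n}$. The point is then that $\pi_{*}\pi^{*}$ acts on $K'_{0}(X)_{\QQ}$ as multiplication by a nonzero rational number: since $\pi$ is finite flat of some constant degree $d$ (constant on each connected component, and one may reduce to that case), the projection formula gives $\pi_{*}\pi^{*}\alpha=\alpha\cdot\pi_{*}[\calO_{U}]$, and for a finite flat map of degree $d$ the class $\pi_{*}[\calO_{U}]$ has rank $d>0$, which forces $\pi_{*}\pi^{*}$ to be an isomorphism on $K'_{0}(X)_{\QQ}$ after inverting $d$ --- more precisely, $\pi_{*}\pi^{*}\alpha - d\cdot\alpha$ lies in $K'_{0}(X)^{\naive}_{\QQ,\leq n-1}$ by a standard argument comparing generic lengths, so modulo lower-dimensional terms $\pi_{*}\pi^{*}\alpha = d\cdot\alpha$. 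Combining, $d\cdot\alpha\in K'_{0}(X)^{\naive}_{\QQ,\leq n}$, hence $\alpha\in K'_{0}(X)^{\naive}_{\QQ,\leq n}$ since we are working rationally.

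\textbf{Main obstacle.} The step I expect to require the most care is the comparison $\pi_{*}\pi^{*}\alpha \equiv d\cdot\alpha \pmod{K'_{0}(X)^{\naive}_{\QQ,\leq n-1}}$, i.e.\ verifying that a finite flat map of degree $d$ induces multiplication by $d$ on the associated graded $\Gr^{\naive}_{*}K'_{0}(X)_{\QQ}$. This is where one genuinely uses flatness (so that pullback of a coherent sheaf supported on an $n$-dimensional subvariety $V$ is again supported in dimension $n$, namely on $\pi^{-1}(V)$, with generic length multiplied by the generic degree of $\pi$ over $V$, which is $d$). One should handle the case where $U\to X$ has degree varying by component by restricting to open--closed pieces of $X$, and invoke the isomorphism $\Gr^{\naive}_{*}K'_{0}(-)_{\QQ}\cong\Ch_{*}(-)_{\QQ}$ for algebraic spaces (Gillet, as cited in the excerpt) together with the standard fact about flat pullback on Chow groups to make the length computation rigorous. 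The rest of the argument is formal.
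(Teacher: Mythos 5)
Your first inclusion (via the identity presentation) is fine, and your overall mechanism is genuinely different from the paper's: the paper proves that $\pi^{*}$ is \emph{injective} on $\Gr^{\naive}_{m}K'_{0}(X)_{\QQ}$ for every $m$, using the isomorphism $\psi_{X}:\Gr^{\naive}_{m}K'_{0}(X)_{\QQ}\cong\Ch_{m}(X)_{\QQ}$ together with $\pi_{*}\pi^{*}=\deg(\pi)$ on Chow groups, and then kills the top graded piece of $\alpha$ one level at a time; you instead work with $\pi_{*}$ and the projection formula $\pi_{*}\pi^{*}\alpha=[\pi_{*}\calO_{U}]\cdot\alpha$. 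That route can be made to work, but the step you rest it on is circular as written. The correct content of the generic-length comparison is: for \emph{every} $m$, multiplication by the rank-zero class $[\pi_{*}\calO_{U}]-d[\calO_{X}]$ carries $K'_{0}(X)^{\naive}_{\QQ,\leq m}$ into $K'_{0}(X)^{\naive}_{\QQ,\leq m-1}$, i.e.\ $\pi_{*}\pi^{*}$ is multiplication by $d$ on $\Gr^{\naive}_{m}$. Your claim that $\pi_{*}\pi^{*}\alpha-d\alpha\in K'_{0}(X)^{\naive}_{\QQ,\leq n-1}$ is this statement at level $m=n$, which you may only invoke \emph{after} knowing $\alpha\in K'_{0}(X)^{\naive}_{\QQ,\leq n}$ --- which is exactly the conclusion you are trying to reach. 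As a statement about an arbitrary class it is simply false: on a smooth projective surface $X$ with a finite flat double cover such that $c_{1}(\pi_{*}\calO_{U})\neq 0$ (e.g.\ a branched cyclic cover, $\pi_{*}\calO_{U}=\calO_{X}\oplus L^{-1}$ with $L$ nontrivial), take $\alpha=[\calO_{X}]$ and $n=0$; then $\pi_{*}\pi^{*}\alpha-2\alpha=[\pi_{*}\calO_{U}]-2[\calO_{X}]$ has nonzero image $c_{1}(\pi_{*}\calO_{U})$ in $\Gr^{\naive}_{1}K'_{0}(X)_{\QQ}\cong\Ch_{1}(X)_{\QQ}$, so it is not in filtration $\leq n-1$.

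The repair is routine and makes your proof run parallel to the paper's. Start from $m=\dim X$, where $\alpha\in K'_{0}(X)^{\naive}_{\QQ,\leq m}$ holds automatically. If $m>n$, then $\pi_{*}\pi^{*}\alpha\in K'_{0}(X)^{\naive}_{\QQ,\leq n}\subset K'_{0}(X)^{\naive}_{\QQ,\leq m-1}$ (pushforward along the finite $\pi$ does not raise support dimension), and $\pi_{*}\pi^{*}\alpha-d\alpha\in K'_{0}(X)^{\naive}_{\QQ,\leq m-1}$ by the graded statement at level $m$; hence $d\alpha$, and so $\alpha$, lies in filtration $\leq m-1$. Iterating descends $\alpha$ to filtration $\leq n$, exactly as in the paper's ``repeating the argument'' step. (Equivalently: since the naive filtration is finite, multiplication by $[\pi_{*}\calO_{U}]-d$ is nilpotent on $K'_{0}(X)_{\QQ}$, so $\pi_{*}\pi^{*}$ is invertible with a filtration-preserving inverse, and $\alpha=(\pi_{*}\pi^{*})^{-1}(\pi_{*}\pi^{*}\alpha)\in K'_{0}(X)^{\naive}_{\QQ,\leq n}$.) The graded multiplication-by-$d$ fact itself can be proved either by d\'evissage/localization (the support map identifies $\Gr^{\naive}_{m}$ with cycles rationally) or, as you suggest, by quoting Gillet's isomorphism with $\Ch_{m}$ and flat pullback on Chow groups --- but note that the latter is the same input the paper uses, so at that point the two arguments cost essentially the same; what yours buys is avoiding the explicit injectivity-of-$\pi^{*}$ step, at the price of the projection formula and the descending induction, which must be made explicit.
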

\begin{proof}
To see this, it suffices to show that for a finite flat surjective map $\pi: U\to X$ of algebraic spaces over $k$, and an element $\alpha\in K'_{0}(X)_{\QQ}$, if $\pi^{*}\alpha\in K'_{0}(U)^{\naive}_{\QQ,\leq n}$, then $\alpha\in K'_{0}(X)^{\naive}_{\QQ,\leq n}$. In fact, suppose $\alpha\in K'_{0}(X)^{\naive}_{\QQ,\leq m}$ for some $m>n$, let $\alpha_{m}$ be its image in $\Gr^{\naive}_{m}K'_{0}(X)_{\QQ}$. Since the composition $\pi_{*}\pi^{*}:\Ch_{m}(X)_{\QQ}\to\Ch_{m}(U)_{\QQ}\to\Ch_{m}(X)_{\QQ}$ is the multiplication by $\deg(\pi)\neq0$ on each connected component, it is an isomorphism hence $\pi^{*}: \Ch_{m}(X)_{\QQ}\to\Ch_{m}(U)_{\QQ}$ is injective. By the compatibility between the isomorphism $\psi_{X}: \Gr^{\naive}_{m}K'_{0}(X)_{\QQ}\cong\Ch_{m}(X)_{\QQ}$ and flat pullback, the map $\pi^{*}: \Gr_{m}K'_{0}(X)_{\QQ}\to \Gr_{m}K'_{0}(U)_{\QQ}$ is also injective. Now $\pi^{*}(\alpha_{m})=0\in\Gr^{\naive}_{m}K'_{0}(U)_{\QQ}$ because $m>n$, we see that $\alpha_{m}=0$, i.e., $\alpha\in K'_{0}(X)^{\naive}_{\QQ,\leq m-1}$. Repeating the argument we see that $\alpha$ has to lie in $K'_{0}(X)^{\naive}_{\QQ,\leq n}$. 
\end{proof}

For a Deligne--Mumford stack $X$ that admits a finite flat presentation, we denote by $\Gr_{n}K'_{0}(X)_{\QQ}$ the associated graded of $K'_{0}(X)_{\QQ}$ with respect to the filtration $K'_{0}(X)_{\QQ,\leq n}$. We always have $K'_{0}(X)^{\naive}_{\QQ,\leq n}\subset K'_{0}(X)_{\QQ,\leq n}$, but the inclusion can be strict. For example, when $X$ is the classifying space of a finite group $G$, we have $K'_{0}(X)_{\QQ}=R_{k}(G)_{\QQ}$ is the $k$-representation ring of $G$ with $\QQ$-coefficients. Any element  $\alpha\in R_{k}(G)_{\QQ}$ with virtual dimension $0$ vanishes when pulled back along the finite flat map $\Spec k\to X$, therefore $K'_{0}(X)_{\QQ, \leq -1}\subset K'_{0}(X)_{\QQ}$ is the augmentation ideal of classes of virtual degree $0$, and $\Gr_{0}K'_{0}(X)_{\QQ}=\QQ$.

\subsubsection{Functoriality under flat pullback}\label{sss:flat pullback} The filtration $K'_{0}(X)_{\QQ,\leq n}$ is functorial under flat pullback. Suppose $f:X\to Y$ is a flat map of relative dimension $d$ between Deligne--Mumford stacks that admit finite flat presentations, then $f^{*}: K'_{0}(Y)_{\QQ}\to K'_{0}(X)_{\QQ}$ is defined.  Let $\alpha\in K'_{0}(Y)_{\QQ,\leq n}$. We claim that $f^{*}\alpha\in K'_{0}(X)_{\QQ,\leq n+d}$. In fact, choose a finite flat presentation $\pi: V\to Y$ such that $\pi^{*}\alpha\in K'_{0}(V)^{\naive}_{\QQ,\leq n}$. Let $W=V\times_{Y}X$, then $\pi': W\to X$ is representable, finite flat and surjective. Although $W$ itself may not be an algebraic space, we may take any finite flat presentation $\sigma: U\to X$ and let $U':=W\times_{X}U$. Then $U'$ is an algebraic space and $\xi: U'=W\times_{X}U\to X$ is a finite flat presentation.   The map $f':U'\to W\to V$ is flat of relative dimension $d$ between algebraic spaces, hence  $f'^{*}\pi^{*}\alpha\in K'_{0}(U')^{\naive}_{\QQ,\leq d+n}$. Since $f'^{*}\pi^{*}\alpha=\xi^{*}f^{*}\alpha$, we see that $f^{*}\alpha\in K'_{0}(X)_{\QQ,\leq n+d}$.

As a particular case of the above discussion, we have
\begin{lemma}\label{l:any pullback} Let $X$ be a Deligne--Mumford stack that admits a finite flat presentation. Let $\alpha\in K'_{0}(X)_{\QQ,\leq n}$. Then for any finite flat representable map $f: X'\to X$, where $X'$ is a Deligne--Mumford stack (which automatically admits a finite flat presentation), $f^{*}\alpha\in K'_{0}(X')_{\QQ,\leq n}$. 
\end{lemma}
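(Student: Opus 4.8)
The plan is to deduce this directly from the functoriality of the filtration under flat pullback discussed in \S\ref{sss:flat pullback}, specialized to relative dimension $d=0$; the only point requiring a word is why all the intermediate objects are of the right kind.

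First I would record that $X'$ does admit a finite flat presentation. Fix a finite flat presentation $\pi\colon U\to X$ with $U$ an algebraic space of finite type over $k$. Since $f\colon X'\to X$ is representable, the fiber product $U\times_{X}X'$ is an algebraic space; it is of finite type over $k$ (being finite over $U$), and the projection $U\times_{X}X'\to X'$ is finite, flat and surjective, hence a finite flat presentation of $X'$.

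Next, let $\alpha\in K'_{0}(X)_{\QQ,\leq n}$. By the definition of the filtration there is a finite flat presentation $\rho\colon V\to X$, with $V$ an algebraic space of finite type over $k$, such that $\rho^{*}\alpha\in K'_{0}(V)^{\naive}_{\QQ,\leq n}$. Form $W:=V\times_{X}X'$. As above $W$ is an algebraic space of finite type over $k$, the projection $\rho'\colon W\to X'$ is a finite flat presentation of $X'$, and the other projection $f'\colon W\to V$ is finite and flat (being a base change of $f$), in particular flat of relative dimension $0$ between algebraic spaces. By the compatibility of the naive filtration $K'_{0}(-)^{\naive}_{\QQ,\leq\bullet}$ with flat pullback for algebraic spaces (used already in \S\ref{sss:flat pullback}, following \cite{Fulton,Gillet}), one obtains $f'^{*}(\rho^{*}\alpha)\in K'_{0}(W)^{\naive}_{\QQ,\leq n}$. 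Since the square defining $W$ commutes, $f'^{*}\rho^{*}\alpha=\rho'^{*}f^{*}\alpha$, so $f^{*}\alpha$ becomes an element of $K'_{0}(W)^{\naive}_{\QQ,\leq n}$ after pullback along the finite flat presentation $\rho'\colon W\to X'$. By the very definition of $K'_{0}(X')_{\QQ,\leq n}$, this shows $f^{*}\alpha\in K'_{0}(X')_{\QQ,\leq n}$, as desired.

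There is essentially no hard step here: the statement is a bookkeeping consequence of the definition of the filtration together with the algebraic-space case of flat functoriality. The only place one must be slightly attentive is the representability hypothesis on $f$, which is exactly what guarantees that the fiber products $U\times_{X}X'$ and $V\times_{X}X'$ are algebraic spaces rather than mere stacks, so that the known flat pullback for the naive filtration on algebraic spaces applies; this is why I would isolate the verification that $X'$ has a finite flat presentation as a separate preliminary remark.
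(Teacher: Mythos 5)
Your proof is correct and is essentially the paper's argument: the paper deduces this lemma as the $d=0$ special case of the flat-pullback functoriality established in \S\ref{sss:flat pullback}, which is exactly the bookkeeping you carry out. Your only (harmless) streamlining is to use representability of $f$ to see that $W=V\times_{X}X'$ is already an algebraic space, whereas the paper's general discussion passes through one further finite flat presentation of the source to reach an algebraic space.
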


\subsubsection{Functoriality under proper pushforward} The filtration $K'_{0}(X)_{\QQ,\leq n}$ is also functorial under proper representable pushforward. Suppose $f:X\to Y$ is a proper representable map of Deligne--Mumford stacks that admit finite flat presentations. Suppose $\alpha\in K'_{0}(X)_{\QQ,\leq n}$, we claim that $f_{*}\alpha\in K'_{0}(Y)_{\QQ,\leq n}$. Let $\pi: V\to Y$ be a finite flat presentation. Let $\sigma: U=X\times_{Y}V\to X$ be the corresponding finite flat presentation of $X$ ($U$ is an algebraic space because $f$ is representable). Then $f':U\to V$ is a proper map of algebraic spaces. By Lemma \ref{l:any pullback}, $\sigma^{*}\alpha\in K'_{0}(U)_{\QQ,\leq n}=K'_{0}(U)^{\naive}_{\QQ,\leq n}$, therefore $\pi^{*}f_{*}\alpha=f'_{*}\sigma^{*}\alpha\in K'_{0}(V)^{\naive}_{\QQ,\leq n}$, hence $f_{*}\alpha\in K'_{0}(Y)_{\QQ,\leq n}$.

\subsubsection{} For a Deligne--Mumford stack $X$ that admits a finite flat presentation, we now define a graded map $\psi_{X}: \Gr_{*}K'_{0}(X)_{\QQ}\to \Ch_{*}(X)_{\QQ}$ extending the same-named map for algebraic spaces $X$.

We may assume $X$ is connected for otherwise both sides break up into direct summands indexed by the connected components of $X$ and we can define $\psi_{X}$ for each component. Let  $\pi: U\to X$ be a finite flat presentation  of constant degree $d$. For $\alpha\in K'_{0}(X)_{\QQ,\leq n}$, we know from Lemma \ref{l:any pullback} that $\pi^{*}\alpha\in K'_{0}(U)_{\QQ,\leq n}$. Then we define
\begin{equation*}
\psi_{X}(\alpha):=\frac{1}{d}\pi_{*}\psi_{U}(\pi^{*}\alpha)\in \Ch_{n}(X)_{\QQ}.
\end{equation*}
It is easy to check that thus defined $\psi_{X}$ is independent of the choice of the finite flat presentation $U$ by dominating two finite flat presentations by their Cartesian product over $X$.

\subsubsection{} The definition of $\psi_{X}$ is compatible with the support map $\supp_{n}$ in the sense that the following diagram is commutative when $X$ is a Deligne--Mumford stack admitting a finite flat presentation
\begin{equation*}
\xymatrix{K_{0}(\Coh(X)_{\leq n})_{\QQ}\ar[r]\ar[d]^{\supp_{X}}& K'_{0}(X)^{\naive}_{\QQ,\leq n}\ar[r]  & K'_{0}(X)_{\QQ,\leq n}\ar[d]^{\psi_{X}}\\
Z_{n}(X)_{\QQ}\ar[rr] &&\Ch_{n}(X)_{\QQ}}
\end{equation*}

\subsubsection{Compatibility with the Gysin map}\label{sss:Gysin} We need a compatibility result of $\psi_{X}$ and the refined Gysin map. Consider a Cartesian diagram of algebraic stacks
\begin{equation}\label{Cart reg emb}
\xymatrix{X'\ar[r]^{f'}\ar[d]^{g} & Y'\ar[d]^{h}\\
X\ar[r]^{f} & Y}
\end{equation}
satisfying the following conditions
\begin{enumerate}
\item The stack $X'$ is a Deligne--Mumford stack that admits a finite flat presentation.
\item\label{global lci} The morphism $f$ can be factored as $X\xrightarrow{i} P\xrightarrow{p}Y$, where $i$ is a regular local immersion of pure codimension $e$ , and $p$ is a smooth relative Deligne--Mumford type morphism of pure relative dimension $e-d$. 
\end{enumerate}

\begin{remark}\label{r:source target smooth ok} 
Let $X$ and $Y$ be smooth Deligne--Mumford stacks, and $f:X\to Y$ is any morphism. Then we may factor $f$ as $X\xrightarrow{(\id, f)}X\times Y\xrightarrow{\pr_{Y}} Y$, which is the composition of a regular local immersion with a smooth morphism of Deligne--Mumford type. In this case any $f$ always satisfies the condition \eqref{global lci}. 
\end{remark}

\subsubsection{} In the situation of \S\ref{sss:Gysin}, the refined Gysin map \cite[Theorem 2.1.12(xi), and end of p.529]{Kresch} is defined
\begin{equation*} 
f^{!}: \Ch_{*}(Y')_{\QQ}\to \Ch_{*-d}(X')_{\QQ}.
\end{equation*}
We also have a map
\begin{equation}\label{pull K0}
f^{*}: K'_{0}(Y')\to K'_{0}(X')
\end{equation}
defined using derived pullback of coherent sheaves. Let $\calF$ be a coherent sheaf on $Y'$. Then the derived tensor product $f'^{-1}\calF\Ltimes_{(fg)^{-1}\calO_{Y}}g^{-1}\calO_{X}$ has cohomology sheaves only in a bounded range because for a regular local immersion it can be computed locally by a Koszul complex. Then the alternating sum
\begin{equation*}
f^{*}[\calF]=\sum_{i}(-1)^{i}[\Tor^{(fg)^{-1}\calO_{Y}}_{i}(f'^{-1}\calF,g^{-1}\calO_{X})]
\end{equation*}
is a well-defined element in $K'_{0}(X')$. We then extend this definition by linearity to obtain the map $f^{*}$ in \eqref{pull K0}.

\begin{prop}\label{p:AK} In the situation of \eqref{Cart reg emb}, assume all conditions in \S\ref{sss:Gysin} are satisfied. Let $n\geq0$ be an integer. We have
\begin{enumerate}
\item The map $f^{*}$ sends $K'_{0}(Y')^{\naive}_{\QQ,\leq n}$ to $K'_{0}(X')_{\QQ,\leq n-d}$, and hence induces\begin{equation*}
\Gr^{\naive}_{n}f^{*}: \Gr^{\naive}_{n}K'_{0}(Y')_{\QQ}\to\Gr_{n-d}K'_{0}(X')_{\QQ}.
\end{equation*}
\item The following diagram is commutative
\begin{equation}\label{supp psi}
\xymatrix{K_{0}(\Coh(Y')_{\leq n})_{\QQ}\ar[d]^{\supp_{Y'}}\ar[r] & \Gr^{\naive}_{n}K'_{0}(Y')_{\QQ}\ar[r]^{\Gr^{\naive}_{n}f^{*}} & \Gr_{n-d}K'_{0}(X')_{\QQ}\ar[d]^{\psi_{X'}}\\
Z_{n}(Y')_{\QQ}\ar[r] & \Ch_{n}(Y')_{\QQ} \ar[r]^{f^{!}} & \Ch_{n-d}(X')_{\QQ}}
\end{equation}
\item If $Y'$ is also a Deligne--Mumford stack that admits a finite flat presentation, then $f^{*}$ sends $K'_{0}(Y')_{\QQ,\leq n}$ to $K'_{0}(X')_{\QQ,\leq n-d}$, and we have a commutative diagram
\begin{equation*}
\xymatrix{\Gr_{n}K'_{0}(Y')_{\QQ}\ar[d]^{\psi_{Y'}}\ar[r]^{\Gr_{n}f^{*}} &  \Gr_{n-d}K'_{0}(X')_{\QQ}\ar[d]^{\psi_{X'}}\\
\Ch_{n}(Y')_{\QQ}\ar[r]^{f^{!}} & \Ch_{n-d}(X')_{\QQ}}
\end{equation*}
\end{enumerate}
\end{prop}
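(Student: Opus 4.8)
The plan is to prove Proposition~\ref{p:AK} by first establishing part (1), then deducing part (2) by a reduction to the case of algebraic spaces, and finally obtaining part (3) from part (2) by passing to a finite flat presentation.

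\textbf{Part (1).} First I would observe that the derived pullback $f^{*}$ of \eqref{pull K0}, for a regular local immersion composed with a smooth morphism, can be computed locally; the key point is that pulling back along a regular local immersion of codimension $e$ followed by a smooth morphism of relative dimension $e-d$ drops support dimension by at most $d$. Concretely, if $\calF$ is a coherent sheaf on $Y'$ with support of dimension $\leq n$, then each $\Tor$-sheaf $\Tor^{(fg)^{-1}\calO_{Y}}_{i}(f'^{-1}\calF, g^{-1}\calO_{X})$ is supported on $g'^{-1}(\Supp\calF)$ where $g':(\Supp\calF)\times_{Y}X\to \Supp\calF$; since $f$ has relative dimension $d$ in the sense of the factorization in \S\ref{sss:Gysin}, the dimension of this preimage is $\leq n+d$... but we want $\leq n-d$. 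The resolution is that the \emph{naive} filtration on $K'_0(Y')$ records the support dimension, while the Koszul-type computation against the regular immersion imposes codimension-$e$ vanishing; combining with the smooth-pullback shift one gets the net shift by $d$ on the \emph{associated graded}. I would make this precise by factoring $f'$ through $X'\to P\times_Y Y' \to Y'$, handling the smooth part by flat pullback (which by \S\ref{sss:flat pullback} shifts the filtration by the relative dimension $e-d$, upward on support) and the regular-immersion part by the standard fact that Gysin/derived pullback along a codimension-$e$ regular immersion shifts the naive filtration down by $e$ on the associated graded. The composite shift is $(e-d)-e = -d$, as desired.

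\textbf{Part (2).} The commutativity of \eqref{supp psi} is the compatibility of the Riemann--Roch/leading-term map with refined Gysin maps. For $X'$, $Y'$ algebraic spaces this is essentially \cite[Theorem~18.3]{Fulton} together with Gillet's extension \cite{Gillet}, applied to the refined Gysin map of Kresch \cite{Kresch} (which restricts to the classical refined Gysin map on algebraic spaces). For $X'$ a Deligne--Mumford stack I would reduce to the algebraic-space case: since a coherent sheaf $\calF$ on $Y'$ with $n$-dimensional support determines, via $\supp_{Y'}$, a cycle supported on integral substacks $V\subset Y'$, and the maps $\psi$, $f^{!}$, and $\supp$ are all compatible with flat pullback and proper pushforward along finite flat presentations (as established in \S\ref{sss:flat pullback} and the paragraph on proper pushforward), I can verify the identity after pulling back to a finite flat presentation $U'\to X'$ and the corresponding presentation of $Y'$ if available; where $Y'$ is only an algebraic stack (not assumed to admit a finite flat presentation in part (2)), I would instead work directly with the definition of $\supp_{Y'}$ as a sum over generic points and check the identity cycle-by-cycle, using that $f^{!}$ of the class of an integral substack $V$ is computed by the fiber diagram $V\times_{Y'}X' \to V$.

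\textbf{Part (3).} Assuming both $X'$ and $Y'$ admit finite flat presentations, I would combine part (1) (so that $f^*$ respects the finer filtrations $K'_0(-)_{\QQ,\leq n}$: given $\alpha \in K'_0(Y')_{\QQ,\leq n}$, pull back to a presentation $V\to Y'$ where $\pi^*\alpha$ is naive-$\leq n$, form the fiber product presentation of $X'$, apply part (1) there, then descend) with part (2) and the definition of $\psi_{X'}$, $\psi_{Y'}$ via averaging over presentations. The commutative square then follows from part (2) applied on the presentations together with the compatibility of $f^{!}$ and the presentation maps. The main obstacle I anticipate is part (1): getting the filtration shift to come out as exactly $-d$ (rather than $+d$) requires carefully separating the ``codimension drop from the regular immersion'' from the ``support growth from the smooth base change,'' and I expect the cleanest route is to reduce to the two extreme cases — a pure regular local immersion and a pure smooth morphism — and to invoke the known filtration behavior in each, which for the regular-immersion case is precisely the content of the Fulton--MacPherson theory of Gysin maps on the associated graded of $K'_0$.
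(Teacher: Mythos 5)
Your overall architecture matches the paper's: factor $f$ as a regular local immersion followed by a smooth morphism and peel off the smooth part first, then deduce part (3) from (1)--(2) by descent along finite flat presentations. The gap is in the core of parts (1) and (2), namely the regular local immersion case itself. You dispose of it by invoking ``the standard fact'' that derived pullback along a codimension-$e$ regular immersion shifts the filtration down by $e$ on the associated graded, citing Fulton--MacPherson; but that fact is only available for schemes (and, via Gillet, algebraic spaces), whereas here the source $Y'$ is an arbitrary algebraic stack and the target filtration $K'_{0}(X')_{\QQ,\leq n-d}$ is the presentation-defined one of \S\ref{sss:fil}. The compatibility of $\psi_{X'}$ (defined by averaging over finite flat presentations) with Kresch's refined Gysin map $f^{!}$ is precisely the content of the proposition, so citing it as known is circular. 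Likewise in part (2), reducing to a single integral substack $V\subset Y'$ and saying that $f^{!}[V]$ ``is computed by the fiber diagram $V\times_{Y'}X'\to V$'' is not a proof: that fiber product generally has excess dimension, $f^{!}[V]$ is constructed via the normal cone, and comparing it with $\psi_{X'}(f^{*}\calO_{V})$ is exactly what must be established.

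What is missing is the mechanism that lets you land in the algebraic-space situation where Fulton's Theorem 18.3(4) genuinely applies. After reducing to a regular local immersion and to $\dim Y'=n$ (a reduction you also make), the paper proceeds by: (a) deformation to the normal cone (Kresch's construction for stacks), using specialization maps on $K'_{0}$ and on naive cycle groups to reduce both the filtration statement and the commutativity of \eqref{supp psi} to the case where $Y'$ is replaced by (a cone inside) $g^{*}N_{f}$, i.e.\ to the inclusion of the zero section $s:X\hookrightarrow Y$ of a vector bundle over $X=X'$; (b) base change of that vector-bundle situation along a finite flat presentation $U\to X$ — this works because after step (a) everything lives over $X'$, which admits a presentation, whereas a presentation of $X'$ alone does nothing to the original stack $Y'$; (c) application of Fulton 18.3(4) over the algebraic space $U$, followed by descent using the injectivity of $\pi^{*}$ on Chow groups. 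Your proposal never passes through the normal cone, so neither your part (1) nor your ``cycle-by-cycle'' plan for part (2) closes the loop. Part (3) as you describe it is correct once (1) and (2) are in place, and coincides with the paper's argument.
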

\begin{proof} 
(1) and (2). Write $f=p\circ i:X\xrightarrow{i}P\xrightarrow{p}Y$ as in condition \eqref{global lci} in \S\ref{sss:Gysin}. Let $P'=P\times_{Y}Y'$. For the smooth morphism $p$ of relative dimension $e-d$, $p^{*}$ sends $\Coh(Y')_{\leq n}$ to $\Coh(P')_{\leq n+e-d}$. Then we have a commutative diagram
\begin{equation}\label{YP}
\xymatrix{K'_{0}(\Coh(Y')_{\leq n})_{\QQ}\ar[d]^{\supp_{Y'}}\ar[r]^{p^{*}} & K'_{0}(\Coh(P')_{\leq n+e-d})_{\QQ}\ar[d]^{\supp_{P'}}\\
Z_{n}(Y')_{\QQ}\ar[r]^{p^{*}} & Z_{n+e-d}(P')_{\QQ}}
\end{equation}
Therefore to prove (1) and (2) we may replace $f:X\to Y$ with $i:X\to P$ hence reducing to the case $f$ is a regular local immersion of pure codimension $d$. 

Let $\alpha\in K_{0}(\Coh(Y')_{\leq n})_{\QQ}$. Then there exists a closed $n$-dimensional closed substack $Y''\subset Y'$ such that $\alpha$ is in the image of $K'_{0}(Y'')_{\QQ}$. We may replace $Y'$ with $Y''$ and replace $X'$ with  $X'':=X'\times_{Y'}Y''$. It suffices to prove the statements (1)(2) for $X''$ and $Y''$ for then we may pushforward along the closed immersion $X''\incl X'$ to get the desired statements for $X'$ and $Y'$. Therefore we may assume that $\dim Y'=n$. 

The construction of the deformation to the normal cone can be extended to our situation, see \cite[p.529]{Kresch}. Let $N_{f}$ be the normal bundle of the regular local immersion $f$. Then the normal cone $C_{X'}Y'$ for the morphism $f': X'\to Y'$ is a closed substack of $g^{*}N_{f}$.  We denote the total space of the deformation by $\Mc_{X'}Y'$. This is a stack over $\PP^{1}$ whose restriction to $\BA^{1}$ is $Y'\times\BA^{1}$ and whose fiber over $\infty$ is the normal cone $C_{X'}Y'$. Let $i_{\infty}: C_{X'}Y'\incl \Mc_{X'}Y'$ be the inclusion of the fiber over $\infty$. We have the specialization map for $K$-groups
\begin{equation*}
\Sp: K'_{0}(Y')_{\QQ}\xrightarrow{\pr^{*}_{Y'}} K'_{0}(Y'\times \BA^{1})_{\QQ}\cong K'_{0}(\Mc_{X'}Y')_{\QQ}/K'_{0}(C_{X'}Y')_{\QQ}\xrightarrow{i^{*}_{\infty}} K'_{0}(C_{X'}Y')_{\QQ}.
\end{equation*}
Similarly, we also have a specialization map for the naive cycle groups
\begin{equation*}
\Sp: Z_{n}(Y')_{\QQ}\xrightarrow{\pr^{*}_{Y'}} Z_{n+1}(Y'\times\BA^{1})_{\QQ}\isom Z_{n+1}(\Mc_{X'}Y')_{\QQ}\xrightarrow{i^{!}_{\infty}} Z_{n}(C_{X'}Y')_{\QQ}.
\end{equation*}
Here we are using the fact that $n=\dim Y'=\dim C_{X'}Y'=\dim \Mc_{X'}Y'-1$, and $Z_{*}(-)_{\QQ}$ is the naive cycle group. For any $n$-dimensional integral closed substack $V\subset Y'$, $\Sp([V])$ is the class of the cone $C_{X'\cap V}V\subset C_{X'}Y'$.

The diagram \eqref{supp psi} can be decomposed into two diagrams
\begin{equation*}
\xymatrix{K'_{0}(Y')_{\QQ}\ar[r]^{\Sp}\ar[d]^{\supp_{Y'}} & K'_{0}(C_{X'}Y')_{\QQ}\ar[d]^{\supp_{C_{X'}Y'}}\ar@{-->}[r]^{s^{*}} & K'_{0}(X')_{\QQ, \leq n-d}\ar[d]^{\psi_{X'}}\\
Z_{n}(Y')_{\QQ}\ar[r]^{\Sp} & Z_{n}(C_{X'}Y')_{\QQ}\ar[r]^{s^{!}} & \Ch_{n-d}(X')_{\QQ}}
\end{equation*}
The dotted arrow is conditional on showing that the image of $s^{*}$ lands $K'_{0}(X')_{\QQ, \leq n-d}$. The left square above is commutative: since we are checking an equality of top-dimensional cycles, we may pass to a smooth atlas and reduce the problem to the case of schemes for which the statement is easy.  Therefore it remains to show that the image of $s^{*}$ lands $K_{0}(X')_{\QQ, \leq n-d}$, and that the right square is commutative. Since $C_{X'}Y'\subset g^{*}N_{f}$, it suffices to replace $C_{X'}Y'$ by $g^{*}N_{f}$ and prove the same original statements (1) and (2), but without assuming that $\dim g^{*}N_{f}=n$. In other words, we have reduced the problem to the following special situation
\begin{eqnarray}\label{more special case}
&&\mbox{$X'=X$, $Y'=Y$ is a vector bundle of rank $d$ over $X$,}\\
\notag&&\mbox{$g=\id_{X}, h=\id_{Y}$ and $f=s$ is the inclusion of the zero section.}
\end{eqnarray}
In this case, let $\pi: U\to X$ be a finite flat presentation, let $Y_{U}$ be the vector bundle $Y$ base changed to $U$. Then $U$ and $Y_{U}$ are both algebraic spaces. Let $s_{U}: U\incl Y_{U}$ be the inclusion of the zero section and let $\sigma: Y_{U}\to Y$ be the projection. For any $\alpha\in K'_{0}(Y)^{\naive}_{\QQ,\leq n}$, we have $\pi^{*}s^{*}\alpha=s^{*}_{U}\sigma^{*}\alpha\in K'_{0}(U)_{\QQ}$.  We have $\sigma^{*}\alpha\in K'_{0}(Y_{U})^{\naive}_{\QQ,\leq n}$. In the case of the regular embedding of algebraic spaces $s_{U}:U\incl Y_{U}$,  $s_{U}^{*}$ sends $K'_{0}(Y_{U})_{\QQ,\leq n}$ to $K'_{0}(U)_{\QQ,\leq n-d}$ by the compatibility of the Riemann--Roch map with the Gysin map (\cite[Theorem 18.3(4)]{Fulton}). Therefore $\pi^{*}s^{*}\alpha=s^{*}_{U}\sigma^{*}\alpha\in K'_{0}(U)_{\QQ,\leq n-d}$, hence $s^{*}\alpha\in K'_{0}(X)_{\QQ,\leq n-d}$.

We finally check the commutativity of \eqref{supp psi} in the special case \eqref{more special case}. For any $\alpha\in K'_{0}(\Coh(Y)_{\leq n})_{\QQ}$, we need to check that $\delta=s^{!}\supp_{Y}(\alpha)-\psi_{X}(s^{*}\alpha)\in\Ch_{n-d}(X)_{\QQ}$
is zero. Since $\pi_{*}\pi^{*}:\Ch_{n-d}(X)_{\QQ}\to \Ch_{n-d}(U)_{\QQ}\to \Ch_{n-d}(X)_{\QQ}$ is the multiplication by $\deg(\pi)$ on each component of $X$, in particular it is an isomorphism and $\pi^{*}$ is injective. Therefore it suffices to check that $\pi^{*}\delta=0\in\Ch_{n-d}(U)_{\QQ}$. Since $\pi^{*}\delta=s^{!}_{U}\supp_{Y_{U}}(\sigma^{*}\alpha)-\psi_{U}s^{*}_{U}(\sigma^{*}\alpha)$, we reduce to the situation of $s_{U}: U\incl Y_{U}$, a regular embedding of algebraic spaces. In this case, the equality $s^{!}_{U}\supp_{U}=\psi_{U}s^{*}_{U}$ follows from the compatibility of the Riemann--Roch map with the Gysin map (\cite[Theorem 18.3(4)]{Fulton}).

(3) Let $\alpha\in K'_{0}(Y')_{\QQ,\leq n}$. Then for some finite flat presentation $\pi_{V}: V\to Y'$, $\pi^{*}_{V}\alpha\in K'_{0}(V)^{\naive}_{\QQ,\leq n}$. Let $W=X'\times_{Y'}V=X\times_{Y}V$, and let $f'': W\to V$ be the projection. Then we have a Cartesian diagram as in \eqref{Cart reg emb} with the top row replaced by $f'':W\to V$. Since $\pi_{W}: W\to X'$ is a finite flat surjective map ($W$ may not be an algebraic space because we are not assuming that $f$ is representable), $\pi^{*}_{W}: \Ch_{n-d}(X')_{\QQ}\to \Ch_{n-d}(W)_{\QQ}$ is injective. Therefore, in order to show that $f^{*}\alpha\in K'_{0}(X')_{\QQ, \leq n-d}$ and that $\psi_{X'}f^{*}\alpha-f^{!}\psi_{Y'}\alpha=0$ in $\Ch_{n-d}(X')_{\QQ}$, it suffices to show that $\pi^{*}_{W}f^{*}\alpha=f^{*}\pi_{V}^{*}\alpha\in K'_{0}(W)_{\QQ, \leq n-d}$ and that $\pi^{*}_{W}(\psi_{X'}f^{*}\alpha-f^{!}\psi_{Y'}\alpha)=\psi_{W}f^{*}(\pi^{*}_{V}\alpha)-f^{!}\psi_{V}(\pi^{*}_{V}\alpha)$ is zero in $\Ch_{n-d}(W)_{\QQ}$. Therefore we have reduced to the case where $Y'=V$ is an algebraic space. In this case $K'_{0}(Y')_{\QQ,\leq n}=K'_{0}(Y')^{\naive}_{\QQ,\leq n}$, and the statements  follows from (1)(2).
\end{proof}

By applying Proposition \ref{p:AK} to the diagonal map $X\to X\times X$ (and taking $g,h$ to be the identity maps), we get the following result, which is not used in the paper.
\begin{cor} Let $X$ be a smooth Deligne--Mumford stack that admits a finite flat presentation, then the map $\psi_{X}$ is a graded ring homomorphism.
\end{cor}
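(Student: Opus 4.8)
The plan is to reduce the statement to Proposition \ref{p:AK}(3) applied to the diagonal $\Delta\colon X\to X\times X$, using that on a smooth stack both the $K$-theoretic product and the intersection product on Chow groups are computed by pulling the external product back along $\Delta$. First I would recall the ring structures and check their compatibility with the filtrations. Since $X$ is smooth we have $K'_0(X)=K_0(X)$, and for $[\calF],[\calG]\in K'_0(X)$ the product is $[\calF]\cdot[\calG]=\Delta^*([\calF]\boxtimes[\calG])$, where $\boxtimes\colon K'_0(X)\otimes K'_0(X)\to K'_0(X\times X)$ is the external product (no derived correction is needed to form $\calF\boxtimes\calG$, since over the field $k$ the relevant $\Tor$'s vanish) and $\Delta^*$ is the derived pullback of \S\ref{sss:Gysin} along the regular immersion $\Delta$, which has pure codimension $N:=\dim X$ because $X\times X$ is smooth. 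I would check that $\boxtimes$ respects filtrations, $K'_0(X)_{\QQ,\le m}\boxtimes K'_0(X)_{\QQ,\le n}\subset K'_0(X\times X)_{\QQ,\le m+n}$: choosing a finite flat presentation $\pi\colon U\to X$, the map $\pi\times\pi\colon U\times U\to X\times X$ is a finite flat presentation of $X\times X$, and the claim reduces to the elementary fact that $\dim\supp(\calF\boxtimes\calG)=\dim\supp\calF+\dim\supp\calG$ for coherent sheaves on algebraic spaces. Combining this with Proposition \ref{p:AK}(3) applied to $\Delta$ — whose hypotheses are satisfied, since $X$ admits a finite flat presentation, $X\times X$ admits the presentation $U\times U$, and $\Delta$ satisfies \eqref{global lci} by Remark \ref{r:source target smooth ok} as $X$ and $X\times X$ are smooth — shows $K'_0(X)_{\QQ,\le m}\cdot K'_0(X)_{\QQ,\le n}\subset K'_0(X)_{\QQ,\le m+n-N}$. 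Hence the product descends to a well-defined graded product $\Gr_mK'_0(X)_\QQ\otimes\Gr_nK'_0(X)_\QQ\to\Gr_{m+n-N}K'_0(X)_\QQ$, matching the degree bookkeeping of the intersection product $\Ch_m\otimes\Ch_n\to\Ch_{m+n-N}$ of \S\ref{sss:int pairing}.

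The second ingredient is that $\psi$ is compatible with external products: for $\alpha\in\Gr_mK'_0(X)_\QQ$ and $\beta\in\Gr_nK'_0(X)_\QQ$ one has $\psi_{X\times X}(\alpha\boxtimes\beta)=\psi_X(\alpha)\times\psi_X(\beta)$ in $\Ch_{m+n}(X\times X)_\QQ$. For algebraic spaces this is the multiplicativity of the Riemann--Roch map under external products (the Todd-class correction does not contribute in top degree, which is exactly $\psi$); for stacks it follows by pulling back along $\pi\times\pi$ and using that $\psi_{X\times X}$ is defined by dividing $\pi_{*}\psi_{U\times U}\pi^{*}$ by $\deg(\pi\times\pi)=\deg(\pi)^2$. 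With both ingredients in hand, the main computation for $\alpha,\beta$ as above is a short diagram chase:
\[
\psi_X(\alpha\cdot\beta)=\psi_X\big(\Delta^*(\alpha\boxtimes\beta)\big)
=\Delta^!\big(\psi_{X\times X}(\alpha\boxtimes\beta)\big)
=\Delta^!\big(\psi_X(\alpha)\times\psi_X(\beta)\big)
=\psi_X(\alpha)\cdot_X\psi_X(\beta),
\]
where the second equality is Proposition \ref{p:AK}(3) for $\Delta$ and the last is the definition of the intersection product. Finally I would record that $\psi_X$ preserves units: $[\calO_X]$ represents the unit of $\Gr_*K'_0(X)_\QQ$ in degree $N$, and for a finite flat presentation $\pi\colon U\to X$ of constant degree $d$ one has $\pi^*[\calO_X]=[\calO_U]$ with $\psi_U([\calO_U])=[U]$, so $\psi_X([\calO_X])=\tfrac1d\pi_*[U]=[X]$, the unit for $\cdot_X$.

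I do not expect a genuine obstacle here; the conceptual step — rewriting both products through $\Delta$ so that Proposition \ref{p:AK}(3) applies verbatim — is immediate. The only points demanding care are the routine-but-fiddly ones: the well-definedness of the graded product (which relies on the filtration inclusions above, including the subtler ones such as $K'_0(X)_{\QQ,\le m-1}\cdot K'_0(X)_{\QQ,\le n}\subset K'_0(X)_{\QQ,\le m+n-1-N}$), and the descent of the external-product compatibility of $\psi$ from algebraic spaces to stacks with the correct bookkeeping of presentation degrees.
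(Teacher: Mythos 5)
Your proposal is correct and is essentially the paper's own argument: the paper proves this corollary in one line by applying Proposition \ref{p:AK} to the diagonal $X\to X\times X$ (with $g,h$ the identity maps), which is exactly the reduction you carry out, with the external-product, filtration, and unit checks spelled out in more detail.
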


\subsubsection{The case of proper intersection}\label{sss:proper int} There is another situation where an analog of Proposition \ref{p:AK} can be easily proved. We consider a Cartesian diagram as in \eqref{Cart reg emb} satisfying the following conditions
\begin{enumerate}
\item $X'$ is a Deligne--Mumford stack, and $h$ (hence $g$) is representable.
\item\label{normal f} The normal cone stack of $f$ is a vector bundle stack (see \cite[Definition 1.9]{BF}) of some constant virtual rank $d$. 
\item\label{lci} There exists a commutative diagram
\begin{equation}\label{UV lci}
\xymatrix{U\ar[d]^{u}\ar@{^{(}->}[r]^{i} & V\ar[d]^{v}\\
X\ar[r]^{f} & Y}
\end{equation}
where $U$ and $V$ are schemes locally of finite type over $k$, $u$ and $v$ are smooth surjective and $i$ is a regular local immersion. 
\item\label{n n-d} We have $\dim Y'=n$ and $\dim X'=n-d$.
\end{enumerate}

\begin{remark}\label{r:smooth lci} Suppose  $X$ and $Y$ are smooth stacks over $k$. Pick any smooth surjective $W\surj Y$ where $W$ is a smooth scheme, and let $u:U\to X\times_{Y}W$ be any smooth surjective map from a smooth scheme $U$. Take $V=U\times W$, then $i=(\id, \pr_{W}\circ u):U\to V=U\times W$ is a regular local immersion. Therefore, in this case, $f$ satisfies the condition \eqref{lci} above.
\end{remark}

If $f$ satisfies the condition \eqref{normal f} above, the refined Gysin map is defined (see \cite[End of p.529 and footnote]{Kresch}). We only consider the top degree Gysin map
\begin{equation*}
f^{!}:\Ch_{n}(Y')_{\QQ}\to \Ch_{n-d}(X')_{\QQ}
\end{equation*}

On the other hand, derived pullback by $f^{*}$ gives
\begin{equation*}
f^{*}: K'_{0}(Y')\to K'_{0}(X')
\end{equation*}
as in \eqref{pull K0}. Here the boundedness of $\Tor$ can be checked by passing to a smooth cover of $X'$, and we may use the diagram \eqref{UV lci} to reduce to the case where $f$ is a regular local immersion,  where $\Tor$-boundedness can be proved by using the Koszul resolution.

\begin{lemma}\label{l:fund cycle} Under the assumptions of \S\ref{sss:proper int}, we have a commutative diagram
\begin{equation*}
\xymatrix{K'_{0}(Y')_{\QQ}\ar[d]_{\supp_{Y'}}\ar[r]^{f^{*}} & K'_{0}(X')_{\QQ}\ar[d]^{\supp_{X'}} \\
Z_{n}(Y')_{\QQ}\ar[r]^{f^{!}} & Z_{n-d}(X')_{\QQ}}
\end{equation*}
\end{lemma}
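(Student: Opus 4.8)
The strategy is to follow the reductions used in the proof of Proposition \ref{p:AK}, taking advantage of the fact that under the hypotheses of \S\ref{sss:proper int} the claimed identity lives in $Z_{n-d}(X')_{\QQ}$, which by the dimension assumption \eqref{n n-d} equals $\Ch_{n-d}(X')_{\QQ}$ --- so only top-dimensional cycles enter and no Todd-class corrections appear. Both sides, $\supp_{X'}\circ f^{*}$ and $f^{!}\circ\supp_{Y'}$, are additive in $\alpha\in K'_{0}(Y')_{\QQ}$ (the first because $\supp$ is additive in short exact sequences of coherent sheaves and because $f^{*}$, being a derived pullback, is a well-defined homomorphism of $K$-groups; the second because cycle classes and the Gysin map are additive). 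Hence, by Noetherian induction on supports, it suffices to treat $\alpha=[\calF]$ for a coherent sheaf $\calF$ on $Y'$ supported on an integral closed substack $W\subseteq Y'$: if $\dim W=n$, with $m$ the length of $\calF$ at the generic point of $W$, we must show $\supp_{X'}(f^{*}[\calF])=m\cdot f^{!}[W]$, and if $\dim W<n$ we must show $\supp_{X'}(f^{*}[\calF])=0$.

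Next I would reduce to a regular closed immersion of schemes. Pick a diagram \eqref{UV lci} as in condition \eqref{lci}: smooth surjections $u\colon U\to X$ and $v\colon V\to Y$ from schemes with $fu=vi$ and $i\colon U\hookrightarrow V$ a regular closed immersion of codimension $d$. Since $h$, hence $g$, is representable, the base changes $V':=V\times_{Y}Y'$ and $U':=U\times_{Y}Y'\cong U\times_{X}X'\cong U\times_{V}V'$ are algebraic spaces, carrying smooth surjections $h'\colon V'\to Y'$, $g'\colon U'\to X'$ and a map $i'\colon U'\to V'$ which is the base change of $i$, with $f'g'=h'i'$. Flat pullback along the smooth maps $g',h'$ commutes with the three operations involved: with $\supp$ (for smooth morphisms the cycle of a pullback is the pullback of the cycle, the fibres over generic points being reduced); with the derived $K$-theoretic pullbacks, so that $g'^{*}\circ f^{*}=i'^{*}\circ h'^{*}$, by flat base change for $\Tor$ using $U'=V'\times_{Y'}X'$; and with the refined Gysin maps, $g'^{*}\circ f^{!}=i^{!}\circ h'^{*}$ --- this being exactly the compatibility by which Kresch's refined Gysin homomorphism for a morphism with vector-bundle-stack normal cone \eqref{normal f} is computed from a presentation on which it restricts to a regular immersion, as used already in the proof of Proposition \ref{p:AK}. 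As $g'$ is faithfully flat, $g'^{*}\colon Z_{n-d}(X')_{\QQ}\to Z_{*}(U')_{\QQ}$ is injective, so it is enough to prove the identity after applying $g'^{*}$; descending further from algebraic spaces to schemes along an \'etale atlas (and invoking Gillet's extension \cite{Gillet} of the Riemann--Roch formalism), this reduces us to: for a regular closed immersion $i\colon U\hookrightarrow V$ of codimension $d$ of schemes, a morphism $V'\to V$, $U'=U\times_{V}V'$ and a coherent sheaf $\calG$ on $V'$ supported on an integral closed subscheme $W$ of dimension $k$, the cycle $\sum_{j}(-1)^{j}[\Tor^{\calO_{V}}_{j}(\calO_{U},\calG)]$ has dimension $\le k-d$, and its $(k-d)$-dimensional part equals the generic multiplicity of $\calG$ times $i^{!}[W]$.

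This last statement is classical. Working Zariski-locally on $V$, write $U=V(g_{1},\dots,g_{d})$ with $(g_{1},\dots,g_{d})$ a regular sequence; then $i^{*}[\calG]$ is represented by the Koszul complex $K_{\bullet}(g_{1},\dots,g_{d})\otimes_{\calO_{V}}\calG$, and by the standard iterated-Cartier-divisor argument \cite[\S2.3, \S7]{Fulton} the computation reduces to the single-divisor case: for a Cartier divisor $D=V(g)$ and a coherent sheaf $\calG$, the cycle $[\calG/g\calG]-[\calG[g]]$ has dimension $\le\dim\Supp(\calG)-1$, because on any top-dimensional component of $\Supp(\calG)$ the sheaf $\calG$ has finite length at the generic point, so the lengths of $\calG/g\calG$ and of $\calG[g]$ there coincide. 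Iterating $d$ times, the top part of $f^{*}[\calF]$ vanishes unless $\dim\Supp\calF=n$, in which case it computes $f^{!}(\supp_{Y'}[\calF])$ by the proper-intersection Tor-formula; the matching of the two constructions in top degree is the compatibility of the Riemann--Roch transformation with Gysin maps for regular imbeddings, \cite[Theorem 18.3(4)]{Fulton}, whose leading term carries no correction.

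The main obstacle here is not a single deep input but the careful bookkeeping of the descent step in the stacky setting --- verifying that $\supp$, the derived pullback $f^{*}$, and Kresch's refined Gysin $f^{!}$ all commute with smooth pullback under the hypotheses of \S\ref{sss:proper int}, and in particular that $f^{!}$ is genuinely computed by the presentation $i\colon U\hookrightarrow V$ of condition \eqref{lci}. All of this parallels the argument already given for Proposition \ref{p:AK}, so the novelty is slight; the substantive ingredient, the scheme-level Tor-formula and the dimension bound for intersections with regular imbeddings, is standard.
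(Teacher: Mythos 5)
Your proof is correct and follows essentially the same route as the paper's: check the equality of top-dimensional cycles after smooth surjective pullback along the presentation of condition \eqref{lci}, thereby reducing to a regular immersion of schemes (with $X'$, $Y'$ becoming algebraic spaces by representability), and conclude from the compatibility of the Riemann--Roch map with Gysin maps \cite[Theorem 18.3(4)]{Fulton} — your Koszul/Cartier-divisor computation is just an explicit unpacking of that classical input. One minor slip: $U'$ is not $V'\times_{Y'}X'$ unless the square \eqref{UV lci} is Cartesian, but the identity $g'^{*}f^{*}=i'^{*}h'^{*}$ you invoke is still valid since both sides compute $\calF\Ltimes_{\calO_{Y}}\calO_{U}$ on $U'$ using only the flatness of $u$ and $v$, so your argument is unaffected.
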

\begin{proof} The statement we would like to prove is an equality of top-dimensional cycles in $X'$. Such an equality can be checked after pulling back along a smooth surjective morphism $X''\to X'$. We shall use this observation to reduce the general case to the case where all members of the diagram are algebraic spaces and that $f$ is a regular embedding.

Let $i:U\to V$ be a regular local immersion of schemes as in Condition \eqref{lci} of \S\ref{sss:proper int} that covers $f:X\to Y$. By passing to connected components of $U$ and $V$, we may assume that the maps $u,v$ and $i$ in \eqref{UV lci} have pure (co)dimension. Let $U'=X'\times_{X}U$ and $V'=Y'\times_{Y}V$, then we have a diagram where all three squares and the outer square are Cartesian
\begin{equation*}
\xymatrix{X'\ar[d]^{g}\ar@/^{2pc}/[rrr]_{f'} & U'\ar[l]_{u'}\ar[r]^{i'}\ar[d] & V'\ar[d]\ar[r]^{v'} & Y'\ar[d]^{h}\\
X\ar@/_{2pc}/[rrr]^{f} & U\ar[l]_{u}\ar[r]^{i} & V\ar[r]^{v} & Y}
\end{equation*}
Let $\alpha\in K'_{0}(Y')$. To show $\supp_{X'}(f^{*}\alpha)-f^{!}\supp_{Y'}(\alpha)=0\in Z_{n-d}(X')_{\QQ}$, it suffices to show its pullback to $U'$ is zero. We have
\begin{eqnarray}\label{u pull}
u'^{*}(\supp_{X'}(f'^{*}\alpha)-f^{!}\supp_{Y'}(\alpha))&=&\supp_{U'}(u^{*}f^{*}\alpha)-u^{!}f^{!}\supp_{Y'}(\alpha)\\
\notag &=&\supp_{U'}(i^{*}v^{*}\alpha)-i^{!}v^{!}\supp_{Y'}(\alpha).
\end{eqnarray}
Since $v$ is smooth and representable, we have $v^{!}\supp_{Y'}(\alpha)=\supp_{V'}(v'^{*}\alpha)$. Letting $\beta=v'^{*}\alpha\in K'_{0}(V')$, we get
\begin{equation*}
\supp_{U'}(i^{*}v^{*}\alpha)-i^{!}v^{!}\supp_{Y'}(\alpha)=\supp_{U'}(i^{*}\beta)-i^{!}\supp_{V'}(\beta).
\end{equation*}
To show the LHS of \eqref{u pull} is zero, we only need to show that $\supp_{U'}(i^{*}\beta)-i^{!}\supp_{V'}(\beta)=0$. Therefore we have reduced to the following situation:
\begin{equation*}
\text{ $X$ and $Y$ are schemes and $f$ is a regular local immersion.}
\end{equation*}
In this case, $X'$ and $Y'$ are also algebraic spaces by the representability of $h$ and $g$. In this case we have $\supp_{X'}=\psi_{X'}$ and $\supp_{Y'}=\psi_{Y'}$. The identity $\supp_{X'}(f^{*}\alpha)=\psi_{X'}(f^{*}\alpha)=f^{!}\psi_{Y'}(\alpha)=f^{!}\supp_{Y'}(\alpha)$ follows from the compatibility of the Riemann--Roch map with the Gysin map (\cite[Theorem 18.3(4)]{Fulton}).
\end{proof}

\subsection{The octahedron lemma}\label{ss:oct} We consider the following commutative diagram of algebraic stacks over $k$
\begin{equation}\label{tian}
\xymatrix{A\ar[r]^{a}\ar[d] & X\ar[d] & B\ar[l]\ar[d]\\
U\ar[r] & S & V\ar[l]\\
C\ar[r]\ar[u] & Y\ar[u] & D\ar[l]\ar[u]^{d}}
\end{equation}
Now we enlarge this diagram by one row and one column in the following way:  we  form the fiber product of each row and place it in the corresponding entry of the rightmost column; we form the fiber product of each column and place it in the corresponding entry of the bottom row
\begin{equation}\label{large tian}
\xymatrix{A\ar[r]^{a}\ar[d] & X\ar[d] & B\ar[l]\ar[d] & A\times_{X}B\ar[d]\\
U\ar[r] & S & V\ar[l] & U\times_{S}V\\
C\ar[r]\ar[u] & Y\ar[u] & D\ar[l]\ar[u]^{d} & C\times_{Y}D\ar[u]^{\delta}\\
C\times_{U}A \ar[r]^{\alpha} & Y\times_{S}X & D\times_{V}B\ar[l] & N}
\end{equation}
Finally, the lower right corner $N$ is defined as the fiber product
\begin{equation}\label{north pole}
\xymatrix{N\ar[d]\ar[r] & A\times B\times C\times D\ar[d]\\
X\times_{S}Y\times_{S}U\times_{S}V\ar[r] & (X\times_{S}U)\times(X\times_{S}V)\times(Y\times_{S}U)\times(Y\times_{S}V)=:R}
\end{equation}

We now form the fiber product of the rightmost column of \eqref{large tian}
\begin{equation}\label{hor Cart}
\xymatrix{(C\times_{Y}D)\times_{(U\times_{S}V)} (A\times_{X}B) \ar[r]\ar[d] & A\times_{X}B\ar[d]\\
C\times_{Y}D\ar[r]^{\delta} & U\times_{S}V}
\end{equation}
and the fiber product of the bottom row of \eqref{large tian}
\begin{equation}\label{ver Cart}
\xymatrix{(C\times_{U}A)\times_{(Y\times_{S}X)}(D\times_{V}B)\ar[d]\ar[r] & D\times_{V}B\ar[d]\\
C\times_{U}A\ar[r]^{\alpha} & Y\times_{S}X}
\end{equation}

\begin{lemma}\label{l:same north pole}
There is a canonical isomorphism between $N$ and the stacks that appear in the upper left corners of \eqref{hor Cart} and \eqref{ver Cart}
\begin{equation}\label{ABCD}
(C\times_{Y}D)\times_{(U\times_{S}V)} (A\times_{X}B) \cong N\cong (C\times_{U}A)\times_{(Y\times_{S}X)}(D\times_{V}B)
\end{equation}
\end{lemma}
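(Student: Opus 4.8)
The plan is to prove Lemma~\ref{l:same north pole} by comparing functors of points. I would recognize all four stacks $N$, $(C\times_Y D)\times_{U\times_S V}(A\times_X B)$ and $(C\times_U A)\times_{Y\times_S X}(D\times_V B)$ as canonically equivalent models of the $2$-categorical limit of the diagram \eqref{tian}, viewed as a functor out of the poset $\Lambda\times\Lambda$ with $\Lambda=(0\to 1\leftarrow 2)$ — so that $S$ sits at the center, $X,Y,U,V$ at the edges and $A,B,C,D$ at the corners. For a test stack $T$, this common limit has the following groupoid of $T$-points: a quadruple $(a,b,c,d)$ of $T$-points of $A,B,C,D$, together with isomorphisms witnessing that $a$ and $b$ have the same image in $X$, that $c$ and $d$ have the same image in $Y$, that $a$ and $c$ have the same image in $U$, and that $b$ and $d$ have the same image in $V$. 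The rest of the proof is to check, for each of the four stacks, that its $T$-points form exactly this groupoid, naturally in $T$.

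First I would treat $N$. Unwinding \eqref{north pole}, a $T$-point of $N$ is a quadruple $(a,b,c,d)$ as above, together with a $T$-point $(x,y,u,v)$ of $X\times_S Y\times_S U\times_S V$ and isomorphisms identifying the image of $a$ in $X\times_S U$ with $(x,u)$, the image of $b$ in $X\times_S V$ with $(x,v)$, the image of $c$ in $Y\times_S U$ with $(y,u)$, and the image of $d$ in $Y\times_S V$ with $(y,v)$. But then $x$ (the common image of $a$ and $b$ in $X$), $y$, $u$, $v$ are determined by $(a,b,c,d)$ up to canonical isomorphism, while the requirement that $(x,y,u,v)$ lie over a single point of $S$ is automatic from the commutativity of the four inner squares of \eqref{tian} (e.g.\ the $S$-images of $a$ via $X$ and via $U$ agree, so those of $x$ and $u$ match). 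Hence $N(T)$ is canonically the groupoid of the first paragraph.

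Next I would treat the two outer terms. A $T$-point of $(C\times_Y D)\times_{U\times_S V}(A\times_X B)$ consists of a $T$-point $(c,d)$ of $C\times_Y D$, a $T$-point $(a,b)$ of $A\times_X B$, and an isomorphism between their images in $U\times_S V$; here one must note that the map $\delta$ of \eqref{large tian} is by construction the canonical map $C\times_Y D\to U\times_S V$ built from $C\to U$ and $D\to V$, and the other leg $A\times_X B\to U\times_S V$ is built from $A\to U$ and $B\to V$, so the isomorphism in $U\times_S V$ is precisely the datum that $a$ and $c$ agree over $U$ and that $b$ and $d$ agree over $V$. Together with the agreements over $X$ (in $(a,b)$) and over $Y$ (in $(c,d)$), this is again the groupoid of the first paragraph. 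The right-hand term is handled symmetrically, using the bottom row of \eqref{large tian} and the map $\alpha\colon C\times_U A\to Y\times_S X$ in place of $\delta$. Assembling the three identifications — all natural in $T$ — yields the two isomorphisms of \eqref{ABCD}, and by construction they are compatible with the structure maps to $C\times_Y D$, $A\times_X B$ (resp.\ to $C\times_U A$, $D\times_V B$), which recovers the Cartesian squares \eqref{hor Cart} and \eqref{ver Cart}.

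I do not expect a serious obstacle: the statement is a formal consequence of the associativity of $2$-fiber products — equivalently, of the Fubini principle that a limit over $\Lambda\times\Lambda$ may be computed by first contracting rows or first contracting columns — and the work is purely combinatorial bookkeeping. The only points needing care are that every fiber product is read as a $2$-fiber product with its canonical $2$-isomorphism data (whose coherence is forced by universal properties and need not be verified by hand), and the routine verification that $\delta$ and $\alpha$ as defined in \eqref{large tian} are indeed the maps induced by the vertical, respectively horizontal, structure of \eqref{tian}.
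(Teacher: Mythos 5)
Your overall strategy---identify all three stacks in \eqref{ABCD} as the $2$-limit of \eqref{tian} over the $3\times 3$ grid and invoke a Fubini/associativity principle---is the right idea, and it is in substance what the paper does (the paper executes it by chaining Cartesian squares in \eqref{LHS and N} and \eqref{RHS and N} rather than via functors of points). The problem is your explicit description of the common functor of points, and the error sits exactly where the content of the lemma lies. You describe a $T$-point as a quadruple $(a,b,c,d)$ together with four isomorphisms over $X,Y,U,V$ only, asserting that ``the requirement that $(x,y,u,v)$ lie over a single point of $S$ is automatic,'' and later that an isomorphism in $U\times_S V$ ``is precisely the datum'' that $a,c$ agree over $U$ and $b,d$ agree over $V$. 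Neither claim is correct $2$-categorically: an isomorphism in the $2$-fiber product $U\times_S V$ is a pair of isomorphisms \emph{together with a compatibility over $S$}, and a point of $X\times_S Y\times_S U\times_S V$ carries identifications of the four $S$-images whose interplay with the four corner isomorphisms is a genuine ``loop'' condition, not a consequence of the commutativity of \eqref{tian}. Concretely, take all nine entries of \eqref{tian} to be $\BB\Gm$ with identity maps: then each of the three stacks in \eqref{ABCD} is canonically equivalent to $\BB\Gm$, whereas your groupoid of quadruples-with-four-edge-isomorphisms is the inertia stack $\Gm\times\BB\Gm$. So, as written, none of your three identifications with the middle groupoid holds, and the ``purely combinatorial bookkeeping'' you defer is precisely the nontrivial part.

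The gap is repairable inside your framework: describe a $T$-point of the limit as a quadruple $(a,b,c,d)$, four isomorphisms over $X,Y,U,V$, \emph{and} the condition that the induced square of isomorphisms between the $S$-images commutes (equivalently, include the datum of a point of $X\times_S Y\times_S U\times_S V$ compatibly identified with the images of the corners, exactly as in \eqref{north pole}). With that corrected description, your unwinding of $N$, of \eqref{hor Cart} and of \eqref{ver Cart} does match up, and the verification that the $S$-compatibilities agree on all three sides is exactly what the paper's proof tracks through the bottom rows of \eqref{LHS and N} and \eqref{RHS and N}, which pass through $(U\times_S V)^{2}$ and $Y\times X\to Y^{2}\times X^{2}$.
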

\begin{proof}
For the first isomorphism, we consider the diagram (to shorten notation, we use $\cdot$ instead of $\times$)
\begin{equation}\label{LHS and N}
\xymatrix@C=5pt{&(C\cdot_{Y}D)\cdot_{(U\cdot_{S}V)}(A\cdot_{X}B)\ar[d]\ar[r] & (C\cdot_{Y}D)\cdot(A\cdot_{X}B)\ar[d]\ar[r] & A\cdot B\cdot C\cdot D\ar[d]\\
&Y\cdot_{S}X\cdot_{S}U\cdot_{S}V\ar[r]\ar[dl] & (Y\cdot_{S}U\cdot_{S}V)\cdot(X\cdot_{S}U\cdot_{S}V)\ar[r]\ar[dl]\ar[d] & R\ar[d]\\
U\cdot_{S}V\ar[r]^{\Delta} & (U\cdot_{S}V)^{2} & Y\cdot X\ar[r]^{\Delta_{Y}\cdot\Delta_{X}} & Y^{2}\cdot X^{2}}
\end{equation}
Here all the squares are Cartesian. The upper two squares combined give the square in \eqref{north pole}. This shows that the LHS of \eqref{ABCD} is canonically isomorphic to $N$.

For the second isomorphism, we argue in the same way using the following diagram instead
\begin{equation}\label{RHS and N}
\xymatrix@C=5pt{&(C\cdot_{U}A)\cdot_{(Y\cdot_{S}X)}(D\cdot_{V}B)\ar[d]\ar[r] & (C\cdot_{U}A)\cdot(D\cdot_{V}B)\ar[d]\ar[r] & A\cdot B\cdot C\cdot D\ar[d]\\
&Y\cdot_{S}X\cdot_{S}U\cdot_{S}V\ar[r]\ar[dl] & (Y\cdot_{S}X\cdot_{S}U)\cdot(Y\cdot_{S}X\cdot_{S}V)\ar[r]\ar[dl]\ar[d] & R\ar[d]\\
Y\cdot_{S}X\ar[r]^{\Delta} & (Y\cdot_{S}X)^{2} & U\cdot V\ar[r]^{\Delta_{U}\cdot\Delta_{V}} & U^{2}\cdot V^{2}}
\end{equation}
\end{proof}

There is a way to label the vertices of the barycentric subdivision of an octahedron by the stacks introduced above. We consider an octahedron with a north pole, a south pole and a square as the equator. We put $S$ at the south pole. The four vertices of the equator are labelled with $A,B,D$ and $C$ clockwisely. The barycenters of the four lower faces are labeled by $U,V,X,Y$ so that their adjacency relation with the vertices labelled by $A,B,C,D$ is consistent with the diagram \eqref{tian}. At the barycenters of the four upper faces we put the fiber products: e.g., for the triangle with bottom edge labeled by $A,B$, we put $A\times_{X}B$ at the barycenter of this triangle. Finally we put $N$ at the north pole.

\begin{theorem}[The Octahedron Lemma]\label{th:oct}
Suppose we are given the commutative diagram \eqref{tian}. Suppose further that
\begin{enumerate}
\item The algebraic stacks $A,C,D,U,V,X,Y$ and $S$ (everybody except $B$, $B$ for bad) are smooth and  equidimensional over $k$. We denote $\dim A$ by $d_{A}$, etc.
\item The fiber products $U\times_{S}V$, $Y\times_{S}X$, $C\times_{Y}D$ and $C\times_{U}A$ have expected dimensions $d_{U}+d_{V}-d_{S}$, etc. 
\item Each of the Cartesian squares
\begin{eqnarray}\label{AB}
\xymatrix{A\times_{X}B\ar[d]\ar[r] & B\ar[d]\\
A\ar[r]^{a} & X}\\
\label{DB}
\xymatrix{D\times_{V}B\ar[d]\ar[r] & B\ar[d]\\
D\ar[r]^{d} & V}
\end{eqnarray}
satisfies either the conditions in \S\ref{sss:Gysin} or the conditions in \S\ref{sss:proper int}.
\item The Cartesian squares \eqref{hor Cart} and \eqref{ver Cart} satisfy the conditions in \S\ref{sss:Gysin}.
\end{enumerate}
Let $n=d_{A}+d_{B}+d_{C}+d_{D}-d_{U}-d_{V}-d_{X}-d_{Y}+d_{S}$.  Then  
\begin{equation*}
\delta^{!}a^{!}[B]=\alpha^{!}d^{!}[B]\in\Ch_{n}(N).
\end{equation*}
\end{theorem}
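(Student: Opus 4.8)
The statement is a comparison of two iterated refined Gysin maps, and the natural strategy is to translate it into an identity in $K$-theory via the dictionary $\psi$ between graded $K'_0$ and Chow groups, where everything becomes a statement about derived tensor products that can be checked by a direct associativity/commutativity argument. Concretely, I would first observe that by Lemma~\ref{l:same north pole} (the set-theoretic ``octahedron'' identity \eqref{ABCD}), the stack $N$ can be presented both as $(C\times_Y D)\times_{(U\times_S V)}(A\times_X B)$ and as $(C\times_U A)\times_{(Y\times_S X)}(D\times_V B)$, so the two sides $\delta^! a^![B]$ and $\alpha^! d^![B]$ genuinely live in $\Ch_n(N)$ and are a priori defined using these two different presentations of $N$. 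The first step is therefore to set up, for the two Cartesian squares \eqref{hor Cart} and \eqref{ver Cart} (which satisfy the conditions of \S\ref{sss:Gysin} by hypothesis), the $K$-theoretic pullbacks $\delta^*$ and $\alpha^*$, and to record via Proposition~\ref{p:AK} and Lemma~\ref{l:fund cycle} that $\supp_N(\delta^* \gamma)=\delta^!\supp_{C\times_Y D}(\gamma)$ and similarly for $\alpha$, where $\gamma$ ranges over the relevant $K$-classes coming from $a^![B]$, resp. $d^![B]$.

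The second step is to lift the classes $a^![B]\in\Ch_n(C\times_Y D)$ (well, $\in\Ch_{\bullet}(A\times_X B)$ pulled back appropriately) and $d^![B]$ to $K'_0$-classes and identify them. Since the square \eqref{AB} satisfies either \S\ref{sss:Gysin} or \S\ref{sss:proper int}, by Proposition~\ref{p:AK}(2) or Lemma~\ref{l:fund cycle} the cycle $a^![B]$ is the image under $\supp$ (hence $\psi$, after checking the class lands in the correct graded piece) of the $K$-class $a^*[\calO_B]=\sum_i(-1)^i[\Tor_i^{\calO_X}(\calO_A,\calO_B)]\in K'_0(A\times_X B)$; similarly $d^![B]=\psi(d^*[\calO_B])$ with $d^*[\calO_B]=\sum_i(-1)^i[\Tor_i^{\calO_V}(\calO_D,\calO_B)]\in K'_0(D\times_V B)$. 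The dimension hypotheses (that $U\times_S V$, $Y\times_S X$, $C\times_Y D$, $C\times_U A$ all have expected dimension, together with smoothness of all members except $B$) guarantee that all the intermediate fiber products are of expected dimension and that the relevant $K$-classes sit in the expected graded pieces $K'_0(-)^{\naive}_{\QQ,\leq n}$, so that $\psi$ can be applied and the leading-term formalism of Proposition~\ref{p:AK} applies at each stage.

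The third and central step is a purely homological-algebra identity: in $K'_0(N)_{\QQ}$ one has
\begin{equation*}
\delta^*\!\left(a^*[\calO_B]\right)=\alpha^*\!\left(d^*[\calO_B]\right),
\end{equation*}
both sides being equal to the single class $\sum(-1)^i[\Tor_i^{\calO_R}(\calO_{A\times B\times C\times D},\,\calO_{X\times_S Y\times_S U\times_S V})]$ computed from the Cartesian square \eqref{north pole} defining $N$ (with $R$ as in that diagram). This is the $K$-theoretic incarnation of the octahedral/associativity property of derived tensor product: both iterated pullbacks compute the derived fiber product of the four stacks $A,B,C,D$ over the appropriate products of $S$-factors, and derived tensor product is associative and commutative, so the two orders of contraction --- first over $U\times_S V$ then noting \eqref{LHS and N}, versus first over $Y\times_S X$ then \eqref{RHS and N} --- give the same element of $K'_0(N)$. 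I would prove this by appealing to the two factorizations \eqref{LHS and N} and \eqref{RHS and N} of the square \eqref{north pole}: each exhibits $N$ as obtained by a composition of two Gysin-type pullbacks, and the compatibility of $K$-theoretic derived pullback with composition of Cartesian squares (reducible, after passing to smooth atlases and using the regular-immersion presentations supplied by the hypotheses, to the scheme case and ultimately to the associativity of $\Ltimes$) yields the desired equality. Finally, applying $\psi_N$ to this $K$-theoretic identity and using Proposition~\ref{p:AK}(3) (resp. Lemma~\ref{l:fund cycle}) to commute $\psi_N$ past $\delta^*$ and $\alpha^*$ in degree $n$ gives $\delta^! a^![B]=\psi_N(\delta^*a^*[\calO_B])=\psi_N(\alpha^*d^*[\calO_B])=\alpha^! d^![B]$, which is the assertion.

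\textbf{Main obstacle.} The delicate point is bookkeeping rather than deep input: one must verify that every intermediate class lands in the correct piece of the filtration on $K'_0$ so that the leading-term maps $\psi$ and the compatibilities of Proposition~\ref{p:AK} and Lemma~\ref{l:fund cycle} genuinely apply --- in particular that the Cartesian squares arising in \eqref{LHS and N} and \eqref{RHS and N} inherit either the \S\ref{sss:Gysin} or the \S\ref{sss:proper int} hypotheses from the assumptions on \eqref{tian}, that the ``expected dimension'' hypotheses propagate through all the fiber products so the codimension of each Gysin pullback matches the virtual codimension, and that the two presentations of $N$ in Lemma~\ref{l:same north pole} are identified compatibly with the $K$-class computed from \eqref{north pole}. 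The conceptual core --- associativity and commutativity of derived tensor product, phrased as compatibility of $K$-theoretic pullback with composition of Cartesian diagrams --- is standard and, after reduction to schemes via the smooth presentations \eqref{UV lci}, follows from the corresponding statement in \cite{Fulton} and \cite{Kresch}; the work is in making the reduction clean in the stacky, non-equidimensional-$B$ setting.
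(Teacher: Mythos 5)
Your proposal is correct and follows essentially the same route as the paper's own proof: lift $a^![B]$ and $d^![B]$ to the $K$-classes $a^*[\calO_B]$, $d^*[\calO_B]$, prove $\delta^*a^*[\calO_B]=\alpha^*d^*[\calO_B]$ in $K'_0(N)_{\QQ}$ by identifying both with the derived structure sheaf of $N$ via the two factorizations \eqref{LHS and N} and \eqref{RHS and N} and transitivity of $\Ltimes$, and then descend to $\Ch_n(N)$ using Proposition~\ref{p:AK} and Lemma~\ref{l:fund cycle}. The bookkeeping you flag (that each class lies in the right filtration piece so that $\psi$ and its compatibilities apply) is exactly what the paper handles by invoking Proposition~\ref{p:AK} for the squares \eqref{AB}, \eqref{hor Cart}, \eqref{ver Cart} and Lemma~\ref{l:fund cycle} for \eqref{DB}.
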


\begin{proof} Since $U, S$ and $V$ are smooth and pure dimensional, and $U\times_{S}V$ has the expected dimension, it is a local complete intersection and we have
\begin{equation*}
\calO_{U\times_{S}V}\cong \calO_{U}\Ltimes_{\calO_{S}}\calO_{V}
\end{equation*}
Here we implicitly pullback the sheaves $\calO_{U},\calO_{V}$ and $\calO_{S}$ to $U\times_{S}V$ using the plain sheaf pullback. Similar argument shows that the usual structure sheaves $\calO_{Y\times_{S}X}, \calO_{C\times_{Y}D}$ and $\calO_{C\times_{U}A}$ coincide with the corresponding derived tensor products.

We now show a derived version of the isomorphism \eqref{ABCD}. We equip each member of the diagrams \eqref{hor Cart}, \eqref{ver Cart} and \eqref{north pole} with the derived structure sheaves, starting from the usual structure sheaves of $A,B,C,D,X,Y,U,V$ and $S$. For $N$, we use \eqref{north pole} to equip it with the derived structure sheaf
\begin{equation*}
\calO^{\der}_{N}:=(\calO_{X}\Ltimes_{\calO_{S}}\calO_{Y}\Ltimes_{\calO_{S}}\calO_{U}\Ltimes_{\calO_{S}}\calO_{V})\Ltimes_{\calO^{\der}_{R}}(\calO_{A}\boxtimes\calO_{B}\boxtimes\calO_{C}\boxtimes\calO_{D})
\end{equation*}
where $\calO^{\der}_{R}$ is the derived structure sheaf $(\calO_{X}\Ltimes_{\calO_{S}}\calO_{U})\boxtimes\cdots\boxtimes(\calO_{Y}\Ltimes_{\calO_{S}}\calO_{V})$ on $R=(X\times_{S}U)\times\cdots\times(Y\times_{S}V)$. To make sense of this derived tensor product over $\calO^{\der}_{R}$, we need to work with dg categories of coherent complexes rather than the derived category. 

We claim that under the isomorphisms between both sides of \eqref{ABCD} and $N$, their derived structure sheaves are also quasi-isomorphic to each other. In fact we simply put derived structures sheaves on each vertex of the diagram \eqref{LHS and N}. Since the upper two squares combined give the square in \eqref{north pole}, transitivity of the derived tensor product gives a quasi-isomorphism
\begin{equation}\label{N hor}
\calO^{\der}_{N}\cong (\calO_{C}\Ltimes_{\calO_{Y}}\calO_{D})\Ltimes_{(\calO_{U}\Ltimes_{\calO_{S}}\calO_{V})} (\calO_{A}\Ltimes_{\calO_{X}}\calO_{B})
\end{equation}
Similarly, by considering the diagram \eqref{RHS and N}, we get a quasi-isomorphism
\begin{equation}\label{N ver}
\calO^{\der}_{N}\cong (\calO_{C}\Ltimes_{\calO_{U}}\calO_{A})\Ltimes_{(\calO_{Y}\Ltimes_{\calO_{S}}\calO_{X})}(\calO_{D}\Ltimes_{\calO_{V}}\calO_{B})
\end{equation}
Combing the isomorphisms \eqref{N hor} and \eqref{N ver}, and using the fact that $U\times_{S}V$, $Y\times_{S}X$, $C\times_{Y}D$ and $C\times_{U}A$ need not be derived, we get an isomorphism of coherent complexes on $N$
\begin{equation*}
\calO_{C\times_{Y}D}\Ltimes_{\calO_{U\times_{S}V}} (\calO_{A}\Ltimes_{\calO_{X}}\calO_{B}) \cong  
\calO_{C\times_{U}A}\Ltimes_{\calO_{Y\times_{S}X}}(\calO_{D}\Ltimes_{\calO_{V}}\calO_{B})
\end{equation*}
These are bounded complexes because the diagrams \eqref{AB}, \eqref{DB}, \eqref{hor Cart} and \eqref{ver Cart} satisfy the conditions in \S\ref{sss:Gysin} or \S\ref{sss:proper int}. Taking classes in $K'_{0}(N)_{\QQ}$ we get
\begin{equation}\label{class in K0}
\delta^{*}a^{*}\calO_{B}=\alpha^{*}d^{*}\calO_{B}\in K'_{0}(N)_{\QQ}
\end{equation}
Here $a^{*},d^{*},\alpha^{*}$ and $\delta^{*}$ are the derived pullbacks maps between $K'_{0}$-groups defined using the relevant Cartesian diagrams.
Now we apply Proposition \ref{p:AK} to the diagrams \eqref{AB}, \eqref{DB}, \eqref{hor Cart} and \eqref{ver Cart}, to conclude that both sides of \eqref{class in K0} lie in $K'_{0}(N)_{\QQ,\leq n}$ (where $n$ is the expected dimension of $N$). In case \eqref{AB} or \eqref{DB} satisfies \S\ref{sss:proper int} instead of \S\ref{sss:Gysin}, the corresponding statement $K'_{0}(B)_{\QQ,\leq d_{B}}\to K'_{0}(A\times_{X}B)_{\QQ, \leq d_{A}+d_{B}-d_{X}}$ or  $K'_{0}(B)_{\QQ,\leq d_{B}}\to K'_{0}(D\times_{V}B)_{\QQ, \leq d_{D}+d_{B}-d_{V}}$ is automatic for dimension reasons. 

Now we finish the proof. We treat only the case where \eqref{AB} satisfies the conditions in \S\ref{sss:Gysin} and \eqref{DB} satisfies the conditions in \S\ref{DB}. This is the case which we actually use in the main body of the paper, and the other cases can be treated in the same way.

Let $\ov{\delta^{*}a^{*}\calO_{B}}$ and $\ov{\alpha^{*}d^{*}\calO_{B}}$ denote their images in $\Gr_{n}K'_{0}(N)_{\QQ}$. Similarly we let $\ov{a^{*}\calO_{B}}\in \Gr_{d_{A}+d_{B}-d_{X}}K'_{0}(A\times_{X}B)_{\QQ}$ be the images of $a^{*}\calO_{B}$. Applying Proposition \ref{p:AK}  three times and Lemma \ref{l:fund cycle} once we get
\begin{eqnarray*}
&&\delta^{!}a^{!}[B]=\delta^{!}a^{!}\supp_{B}(\calO_{B})\\
&=&\delta^{!}\psi_{A\times_{X}B}(\ov{a^{*}\calO_{B}})\quad \mbox{(Prop \ref{p:AK}(2) applied to \eqref{AB})}\\
&=&\psi_{N}(\ov{\delta^{*}a^{*}\calO_{B}})\quad \mbox{(Prop \ref{p:AK}(3) applied to \eqref{hor Cart})}\\
&=&\psi_{N}(\ov{\alpha^{*}d^{*}\calO_{B}})\quad \eqref{class in K0}\\
&=&\alpha^{!}\supp_{D\times_{V}B}(d^{*}\calO_{B})\quad \mbox{(Prop \ref{p:AK}(2) applied to \eqref{ver Cart})}\\
&=&\alpha^{!}d^{!}\supp_{B}(\calO_{B})\quad \mbox{(Lemma \ref{l:fund cycle} applied to \eqref{DB})}\\
&=&\alpha^{!}d^{!}[B].
\end{eqnarray*}
\end{proof}

\subsection{A Lefschetz trace formula}
In this subsection, we will assume:
\begin{itemize}
\item {\em All sheaf-theoretic functors are derived functors.}
\end{itemize}

\subsubsection{Cohomological correspondences}\label{sss:corr} We first review some basic definitions and properties of cohomological correspondences following \cite{Var}. Consider a diagram of algebraic stacks over $k$
\begin{equation}\label{CXY}
\xymatrix{X& C\ar[l]_{\oll{c}}\ar[r]^{\orr{c}} & Y}
\end{equation}
We call $C$ together with the maps $\oll{c}$ and $\orr{c}$ a correspondence between $X$ and $Y$.

Let $\calF\in D^{b}_{c}(X)$  and $\calG\in D^{b}_{c}(Y)$ be $\Ql$-complexes of sheaves. A {\em cohomological correspondence between $\calF$ and $\calG$ supported on $C$} is a map
\begin{equation*}
\zeta:\oll{c}^{*}\calF\to\orr{c}^{!}\calG
\end{equation*}
in $D^{b}_{c}(C)$.

Suppose we have a map of correspondences
\begin{equation*}
\xymatrix{X\ar[d]^{f}& C\ar[l]_{\oll{c}}\ar[r]^{\orr{c}}\ar[d]^{h} & Y\ar[d]^{g}\\
S& B\ar[l]_{\oll{b}}\ar[r]^{\orr{b}} & T}
\end{equation*}
where $\oll{c}$ and $\oll{b}$ are proper, then we have an induced map between the group of cohomological correspondences supported on $C$ and on $B$ (see \cite[\S1.1.6(a)]{Var})
\begin{equation*}
h_{!}: \Hom_{C}(\oll{c}^{*}\calF, \orr{c}^{!}\calG)\to \Hom_{B}(\oll{b}^{*}f_{!}\calF, \orr{b}^{!}g_{!}\calG).
\end{equation*}
In particular, if $S=B=T$ and $\oll{b}=\orr{b}=\id_{S}$, then $\zeta\in\Hom_{C}(\oll{c}^{*}\calF, \orr{c}^{!}\calG)$ induces a map $h_{!}\zeta$ between $f_{!}\calF$ and $g_{!}\calG$ given by the composition
\begin{equation}\label{push coho corr}
h_{!}\zeta: f_{!}\calF\to  f_{!}\oll{c}_{!}\oll{c}^{*}\calF\xrightarrow{f_{!}\oll{c}_{!}(\zeta)}f_{!}\oll{c}_{!}\orr{c}^{!}\calG=g_{!}\orr{c}_{!}\orr{c}^{!}\calG\to g_{!}\calG.
\end{equation}

When $S=T$, $B$ the diagonal of $S$, $X=Y$ and $f=g$, we call $C$ a self-correspondence of $X$ over $S$. In this case, for a  cohomological correspondence $\zeta$ between $\calF$ and $\calG$ supported on $C$, we also use $f_{!}\zeta$ to denote $h_{!}\zeta\in \Hom_{S}(f_{!}\calF, f_{!}\calG)$ defined above.

\subsubsection{Fixed locus and the trace map}\label{sss:self M} Suppose in the diagram \eqref{CXY} we have $X=Y$. We denote $X$ by $M$. Define the fixed point locus $\Fix(C)$ of $C$ by the Cartesian diagram
\begin{equation*}
\xymatrix{\Fix(C)\ar[r]\ar[d] &  C\ar[d]^{(\oll{c},\orr{c})}\\
M\ar[r]^{\Delta} & M\times M}
\end{equation*}  
For any $\calF\in D^{b}_{c}(M)$,  there is a natural trace map (see \cite[Eqn(1.2)]{Var})
\begin{equation*}
\tau_{C}: \Hom(\oll{c}^{*}\calF,\orr{c}^{!}\calF)\to \hBM{0}{\Fix(C)\otimes_{k}\kbar}.
\end{equation*}
In other words, for a cohomological self-correspondence $\zeta$ of $\calF$ supported on $C$, there is a well-defined Borel-Moore homology class $\tau_{C}(\zeta)\in \hBM{0}{\Fix(C)\otimes_{k}\kbar}$.

\subsubsection{}\label{sss:corr M smooth} In the situation of \S\ref{sss:self M}, we further assume that both $C$ and $M$ are Deligne--Mumford stacks, $M$ is smooth and separated over $k$ of pure dimension $n$, and that $\calF=\const{M}$ is the constant sheaf on $M$.

Using Poincar\'e duality for $M$, a cohomological self-correspondence of the constant sheaf $\const{M}$ supported on $C$ is the same as a map
\begin{equation*}
\const{C}=\oll{c}^{*}\const{M}\to \orr{c}^{!}\const{M}\cong \orr{c}^{!}\DD_{M}[-2n](-n)\cong \DD_{C}[-2n](-n).
\end{equation*}
Over $C\otimes_{k}\kbar$, this is the same thing as an element in $\hBM{2n}{C\otimes_{k}\kbar}(-n)$. In this case, the trace map $\tau_{C}$ becomes the map
\begin{equation*}
\ov\tau_{C}: \hBM{2n}{C\otimes_{k}\kbar}(-n)\to \hBM{0}{\Fix(C)\otimes_{k}\kbar}.
\end{equation*}

On the other hand, we have the cycle class map
\begin{equation*}
\cl_{C}: \Ch_{n}(C)_{\QQ}\to\hBM{2n}{C\otimes_{k}\kbar}(-n)=\Hom(\oll{c}^{*}\const{M},\orr{c}^{!}\const{M}).
\end{equation*}
Therefore, any cycle $\zeta\in \Ch_{n}(C)_{\QQ}$ gives a cohomological self-correspondence of the constant sheaf $\const{M}$ supported on $C$. We will use the same notation $\zeta$ to denote the cohomological self-correspondence induced by it. 
Since $\Delta_{M}:M\to M\times M$ is a regular local immersion of pure codimension $n$, we have the refined Gysin map
\begin{equation*}
\Delta_{M}^{!}:\Ch_{n}(C)_{\QQ}\to \Ch_{0}(\Fix(C))_{\QQ}.
\end{equation*}

\begin{lemma}\label{l:cl Gysin} Under the assumptions of \S\ref{sss:corr M smooth}, we have a commutative diagram
\begin{equation*}
\xymatrix{\Ch_{n}(C)_{\QQ}\ar[d]^{\cl_{C}}\ar[r]^{\Delta_{M}^{!}} &
\Ch_{0}(\Fix(C))_{\QQ}\ar[d]^{\cl_{\Fix(C)}}\\
\hBM{2n}{C\otimes_{k}\kbar}(-n)\ar[r]^{\ov\tau_{C}}& \hBM{0}{\Fix(C)\otimes_{k}\kbar}}
\end{equation*}
\end{lemma}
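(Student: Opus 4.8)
The statement of Lemma~\ref{l:cl Gysin} asserts the compatibility of two constructions: the refined Gysin map $\Delta_M^!$ on Chow groups, and the trace map $\ov\tau_C$ on cohomology (composed with cycle class maps on both ends). The plan is to reduce this to the known compatibility of cycle class maps with refined Gysin maps for a regular local immersion.

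First I would unwind both $\ov\tau_C$ and $\Delta_M^!$ via the deformation to the normal cone of $\Delta_M\colon M\to M\times M$, whose normal bundle is the tangent bundle $T_M$. On the Chow side, $\Delta_M^!$ is by construction the composite of the specialization map $\Ch_n(C)_\QQ\to\Ch_n(C_{\Fix(C)}(M\times M))_\QQ$ (where $C_{\Fix(C)}(M\times M)\hookrightarrow (\oll c,\orr c)^*T_M$) followed by Gysin pullback along the zero section of $(\oll c,\orr c)^*T_M$. On the cohomology side, the trace map $\ov\tau_C$ admits, by Varshavsky \cite[\S1--2]{Var}, an analogous description: it factors through specialization to the normal cone and intersection with the zero section, because the construction of $\tau_C$ in \emph{loc.cit.} is built precisely from the fundamental-class/Gysin formalism applied to $\Delta_M$. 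So the problem becomes: (i) the cycle class map commutes with specialization to the normal cone, and (ii) the cycle class map commutes with Gysin pullback along the zero section of a vector bundle. Both are standard; (ii) is the compatibility of $\cl$ with the refined Gysin map for the regular immersion given by a zero section, which for Deligne--Mumford stacks follows from Kresch's intersection theory \cite{Kresch} together with the construction of the cycle class map in \S\ref{sss:cycle class map} (which is itself built so as to intertwine intersection and cup products).

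More concretely, I would argue as follows. Choose a closed substack $Z\subset C$ proper over $k$ on which a given cycle $\zeta\in\Ch_n(C)_\QQ$ is supported (every such $\zeta$ comes from some $\Ch_n(Z)_\QQ$; but note $C$ here need not be proper, so one works with $\Ch_n$ not $\Ch_{c,n}$ --- in fact in the situation of \S\ref{sss:corr M smooth} the map $\oll c$ need not be proper either, so I would additionally use that $\Fix(C)\to M$ is the base change of $\oll c$ along the graph, and invoke properness only where the trace map requires it; if $\Fix(C)$ fails to be proper the statement is still an equality of Borel--Moore homology classes and no degree map is involved, so this is fine). Then both sides of the claimed identity are images of $\zeta$ under maps that are, after passing to $C\otimes_k\kbar$, induced by the single geometric operation ``intersect with the graph of $\Delta_M$''. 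Pull back everything along a smooth atlas $M_0\to M$ (and the induced atlas of $C$); since both the cycle class map and the Gysin map are compatible with smooth pullback (the former by \S\ref{sss:cycle class map}, the latter by \cite[Thm 2.1.12]{Kresch}), and since a smooth cover of a DM stack is an algebraic space, one reduces to the case where $M$ and $C$ are algebraic spaces. In that case the identity $\cl\circ\Delta_M^! = \ov\tau_C\circ\cl$ is exactly \cite[Theorem~19.2]{Fulton} (compatibility of the refined Gysin homomorphism with the cycle map / the topological Gysin map), transported through Poincar\'e duality; alternatively it is the content of \cite[\S1.2, \S2]{Var} specialized to constant sheaves and a zero-section immersion.

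The main obstacle, I expect, is not any deep geometry but rather bookkeeping: matching the \emph{definitions}. Specifically, one must verify that Varshavsky's trace map $\tau_C$, when $\calF=\const M$ and $M$ is smooth, really does coincide with ``specialize to $C_{\Fix(C)}(M\times M)$, then apply the zero-section Gysin map to the fundamental class,'' i.e.\ that his sheaf-theoretic construction of $\tau_C$ is the cohomological shadow of the geometric deformation-to-the-normal-cone construction of $\Delta_M^!$. This is implicit in \cite{Var} but requires care to extract; it is the step I would spend the most words on. Once that identification is in place, the remaining steps (reduction to algebraic spaces via smooth atlases, and the algebraic-space case from \cite{Fulton}) are routine. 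I would also remark that for the application in the main text ($C$ a piece of $\Hk^\mu_{\CM,d}$, $M=\CM_d$, $\zeta=\zeta^{\hs}$), all relevant stacks admit finite flat presentations and $\CM_d$ is a smooth DM stack, so all hypotheses of \S\ref{sss:corr M smooth} are met.
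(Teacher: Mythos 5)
Your overall route is the paper's: identify Varshavsky's trace $\ov\tau_{C}$ (for the constant sheaf on a smooth $M$) with the purely geometric operation of intersecting with the diagonal, and then quote \cite[Theorem 19.2]{Fulton}. The paper carries out the first step simply by tracing the definition: $\tau_{C}$ is cap product with the relative cycle class of $\Delta(M)$ in $\cohog{2n}{M\times M, M\times M-\Delta(M)}(n)$, after which Fulton's argument applies as written, the only extra stack-theoretic input being Kresch's construction of the deformation to the normal cone for Deligne--Mumford stacks \cite[p.489]{Kresch2}. You correctly single out this identification as the crux, but you leave it as a promissory note rather than carrying it out; that part of your plan is fine in outline and is exactly what the paper does.

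The step that would fail as written is your reduction to algebraic spaces via a smooth atlas. Base changing the fixed-point square along $M_{0}\times M_{0}\to M\times M$ does not produce the diagonal of an algebraic space: $\Delta_{M}$ pulls back to $M_{0}\times_{M}M_{0}\incl M_{0}\times M_{0}$, and $\Fix(C)$ gets replaced by a cover of it, so you land in a modified situation rather than in the algebraic-space case of the lemma. Moreover, to descend the identity from the cover you would need injectivity of pullback on $\Ch_{0}(\Fix(C))_{\QQ}$ and on $\hBM{0}{\Fix(C)\otimes_{k}\kbar}$ along a smooth (non-finite) surjection, which is not automatic; the device used elsewhere in the appendix — push forward along a finite flat presentation and divide by the degree — is not available for a smooth atlas. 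The paper sidesteps both issues by not reducing to spaces at all: it runs Fulton's proof of Theorem 19.2 directly on the stacks $M$, $C$, $\Fix(C)$, using \cite{Kresch2} for the deformation to the normal cone. If you replace your atlas argument with this direct adaptation and actually verify that $\tau_{C}$ equals cap product with the refined class of the diagonal, your proof becomes the paper's.
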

\begin{proof} Let us base change to $\kbar$ and keep the same notation for $M,C$ etc. Tracing through the definition of $\tau_{C}$, we see that it is the same as the cap product with the relative cycle class of $\Delta(M)$ in $\cohog{2n}{M\times M, M\times M-\Delta(M)}(n)$. Then the lemma follows from \cite[Theorem 19.2]{Fulton}. Note that  \cite[Theorem 19.2]{Fulton} is for schemes over $\CC$ but the argument there works in our situation as well, using the construction of the deformation to the normal cone for Deligne-Mumford stacks in \cite[p.489]{Kresch2}.
\end{proof}

\subsubsection{Intersection with the graph of Frobenius}\label{sss:corr Frob} Suppose we are given a self-correspondence $C$ of $M$ over $S$
\begin{equation*}
\xymatrix{& C\ar[dd]^{h}\ar[dl]_{\oll{c}}\ar[dr]^{\orr{c}} \\
M\ar[dr]_{f} & & M\ar[dl]^{f}\\
&S}
\end{equation*}
satisfying
\begin{itemize}
\item $k$ is a finite field;
\item $S$ is a scheme over $k$;
\item $M$ is a smooth and separated Deligne--Mumford stack over $k$ of pure dimension $n$;
\item $f:M\to S$ is proper;
\item $\oll{c}:C\to M$ is representable and proper.
\end{itemize}

We define $\Sht_{C}$ by the Cartesian diagram
\begin{equation}\label{ShtC}
\xymatrix{\Sht_{C}\ar[r]\ar[d] &  C\ar[d]^{(\oll{c},\orr{c})}\\
M\ar[r]^{(\id,\Fr_{M})} & M\times M}
\end{equation}
Here the notation $\Sht_{C}$ suggests that in applications $\Sht_{C}$ will be a kind of moduli of Shtukas. We denote the image of the fundamental class $[M]$ under $(\id,\Fr_{M})_{*}$ by $\Gamma(\Fr_{M})$. Since $(\id,\Fr_{M})$ is a regular immersion of pure codimension $n$,  the refined Gysin map
\begin{equation*}
(\id,\Fr_{M})^{!}: \Ch_{n}(C)_{\QQ}\to \Ch_{0}(\Sht_{C})_{\QQ}
\end{equation*}
is defined. In particular, for $\zeta\in\Ch_{n}(C)_{\QQ}$, we get a $0$-cycle
\begin{equation*}
(\id,\Fr_{M})^{!}\zeta\in \Ch_{0}(\Sht_{C})_{\QQ}.
\end{equation*}

\subsubsection{}\label{sss:decomp ShtC} Since $C\to M\times_{S}M$, while $(\id,\Fr_{M}):M\to M\times M$ covers the similar map $(\id,\Fr_{S}):S\to S\times S$, the map $\Sht_{C}\to S$ factors through the discrete set $S(k)$, viewed as a discrete closed subscheme of $S$. Since $\Sht_{C}\to S(k)$, we get a decomposition of $\Sht_{C}$ into open and closed subschemes
\begin{equation*}
\Sht_{C}=\coprod_{s\in S(k)}\Sht_{C}(s).
\end{equation*} 
Therefore
\begin{equation*}
\Ch_{0}(\Sht_{C})_{\QQ}=\bigoplus_{s\in S(k)}\Ch_{0}(\Sht_{C}(s))_{\QQ}.
\end{equation*}
For $\zeta\in\Ch_{n}(C)_{\QQ}$, the $0$-cycle $\zeta\cdot _{M\times M}\Gamma(\Fr_{M})$ can be written uniquely as the sum of $0$-cycles
\begin{equation}\label{s component of ShtC}
((\id,\Fr_{M})^{!}\zeta)_{s}\in \Ch_{0}(\Sht_{C}(s))_{\QQ}, \quad\forall s\in S(k).
\end{equation}
Each $\Sht_{C}(s)=\Gamma(\Fr_{M_{s}})\times_{M_{s}\times M_{s}}C_{s}$. Since $\oll{c}: C_{s}\to M_{s}$ is proper and $M_{s}$ is separated (because $f$ is proper), $C_{s}\to M_{s}\times M_{s}$ is proper, therefore $\Sht_{C}(s)$ is proper over $\Gamma(\Fr_{M_{s}})$, hence it is itself proper over $k$ because $\Gamma(\Fr_{M_{s}})\cong M_{s}$ is proper over $k$. Therefore the degree map $\deg: \Ch_{0}(\Sht_{C}(s))_{\QQ}\to \QQ$ is defined, we get an intersection number indexed by $s\in S(k)$:
\begin{equation*}
\jiao{\zeta,\Gamma(\Fr_{M})}_{s}:=\deg((\id,\Fr_{M})^{!}\zeta)_{s}\in\QQ.
\end{equation*}

The main result of this subsection is the following.
 
\begin{prop}\label{p:Fix} Assume all conditions in \S\ref{sss:corr Frob} are satisfied. Let $\zeta\in\Ch_{n}(C)_{\QQ}$. Then for all $s\in S(k)$, we have 
\begin{equation}\label{Lef fix}
\jiao{\zeta, \Gamma(\Fr_{M})}_{s}=\Tr\left((f_{!}\cl_{C}(\zeta))_{s}\circ\Frob_{s},(f_{!}\Ql)_{\ov{s}}\right).
\end{equation}
Here $f_{!}\cl_{C}(\zeta):=h_{!}\cl_{C}(\zeta)$ is the endomorphism of $f_{!}\Ql$ induced by the cohomological correspondence $\cl_{C}(\zeta)$ supported on $C$, and $(f_{!}\cl_{C}(\zeta))_{s}$ is its action on the geometric stalk $(f_{!}\Ql)_{\ov{s}}$.
\end{prop}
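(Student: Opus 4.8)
The statement is a Lefschetz--Verdier type trace formula comparing an intersection-theoretic quantity on the left with a cohomological trace on the right. My plan is to deduce it from the general results of Varshavsky \cite{Var} on the local terms of cohomological correspondences with the Frobenius graph, combined with the compatibility between cycle classes and refined Gysin maps proved in Lemma \ref{l:cl Gysin}. The key geometric input is that $\Sht_{C}$, defined by the Cartesian square \eqref{ShtC}, is precisely the fixed-point stack $\Fix(\Gamma(\Fr_{M})\circ C)$ (or rather, of the composition of the correspondence $C$ with the graph of Frobenius). Indeed, $\Sht_{C}$ decomposes as $\coprod_{s\in S(k)}\Sht_{C}(s)$ by \S\ref{sss:decomp ShtC}, and each $\Sht_{C}(s)$ is proper over $k$, so both sides of \eqref{Lef fix} make sense.

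First I would set up the composed self-correspondence. Let $\Gamma=\Gamma(\Fr_{M})\subset M\times M$ be the graph of the absolute $q$-Frobenius, viewed as a self-correspondence of $M$ over $k$; it carries a canonical cohomological self-correspondence of $\Ql$ coming from the canonical isomorphism $\Fr_{M}^{*}\Ql\cong\Ql$ (equivalently, the fundamental class $[\Gamma]\in\hBM{2n}{\Gamma\otimes_k\kbar}(-n)$). Form the composition $C\circ\Gamma$ (or $\Gamma\circ C$, depending on the convention for which leg is contracted); its fixed-point stack is exactly $\Sht_{C}$, and by \S\ref{sss:decomp ShtC} it is a disjoint union over $s\in S(k)$ of the proper $k$-stacks $\Sht_{C}(s)$. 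Next, viewing $\zeta\in\Ch_n(C)_\QQ$ as a cohomological correspondence $\cl_C(\zeta):\oll{c}^*\Ql\to\orr{c}^!\Ql$ as in \S\ref{sss:corr M smooth}, I would use the functoriality of the trace map under composition of correspondences (\cite[\S1]{Var}, or the standard formalism of local terms): the class $\ov\tau_{C\circ\Gamma}(\cl_C(\zeta)\circ[\Gamma])\in\hBM{0}{\Sht_C\otimes_k\kbar}$ is, on the one hand, the cycle class of the $0$-cycle $(\id,\Fr_M)^![\zeta]$ by Lemma \ref{l:cl Gysin} applied to the correspondence $C\circ\Gamma$; on the other hand, pushing forward along $f$ and using that $\oll{c}$ is representable proper and $f$ is proper, it computes, component by component in $s\in S(k)$, the trace of the Frobenius-twisted endomorphism on the stalk $(f_!\Ql)_{\ov s}$.

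The remaining step is to identify the $s$-component of $\ov\tau_{C\circ\Gamma}(\cl_C(\zeta)\circ[\Gamma])$, after pushing to the point, with $\Tr\big((f_!\cl_C(\zeta))_s\circ\Frob_s,(f_!\Ql)_{\ov s}\big)$. This is exactly the content of Varshavsky's version of the Lefschetz--Verdier formula: for a self-correspondence $u$ of a complex $\calF$ over a base, the local term at an isolated fixed point lying over a closed point $s$ of the base, where the correspondence is the composition with the Frobenius graph, equals the naive trace of $\Frob_s$ acting on the stalk of $f_!\calF$ twisted by the induced endomorphism $(f_!u)_s$. Concretely I would invoke \cite[Theorem/Corollary in \S2]{Var} (the statement that reduces the global trace to a sum of local terms, together with the computation of local terms for the Frobenius graph), using that $M$ is smooth so that $\Ql$ is dualizable and the correspondence $\cl_C(\zeta)$ is indeed a cohomological correspondence of $\Ql$. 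Taking the degree of the $0$-cycle $((\id,\Fr_M)^!\zeta)_s$ and matching it with $\deg\cl_{\Sht_C(s)}$ of the same cycle (legitimate since $\Sht_C(s)$ is proper) closes the argument.

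\textbf{Main obstacle.} The delicate point is bookkeeping: matching the convention in \eqref{ShtC} (which leg of $C$ is glued to $\id$ and which to $\Fr_M$) with Varshavsky's conventions for composition of correspondences and for the support of the composed cohomological correspondence, and checking that all the properness and representability hypotheses needed to push the trace map forward along $f$ are in force (here $\oll{c}$ representable proper, $f$ proper, $M$ separated smooth — which is exactly why these were assumed in \S\ref{sss:corr Frob}). A secondary technical point is ensuring that the identification $\Fix(C\circ\Gamma)\cong\Sht_C$ is compatible, under Lemma \ref{l:cl Gysin}, with the refined Gysin map $(\id,\Fr_M)^!$ appearing on the left-hand side; this requires applying Lemma \ref{l:cl Gysin} with "$C$" there taken to be the composed correspondence and "$M\to M\times M$" the diagonal, and then unwinding that $\Delta_M^!$ for the composed correspondence agrees with $(\id,\Fr_M)^!$ for $C$ itself. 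Once these compatibilities are pinned down, the proof is a formal consequence of the cited results.
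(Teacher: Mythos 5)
Your proposal follows essentially the same route as the paper: the paper's tilted correspondence ${'C}$ (namely $C$ with its left leg precomposed with $\Fr_{M}$) is exactly your composition of $C$ with the Frobenius graph, and the paper likewise combines Lemma \ref{l:cl Gysin}, Varshavsky's functoriality of trace maps under proper pushforward along ${'C}\to{'S}$, and the fact that local terms for a correspondence supported on the Frobenius graph of $S$ are the naive traces $\Tr((f_{!}\cl_{C}(\zeta))_{s}\circ\Frob_{s},-)$. The one compatibility you defer, namely that $(\id,\Fr_{M})^{!}\zeta$ agrees with $\Delta_{M}^{!}$ of the tilted cycle on $\Sht_{C}=\Fix({'C})$, is settled in the paper by a direct application of the excess intersection formula to the two regular immersions $(\id,\Fr_{M})$ and $\Delta_{M}$ of the same codimension, so your outline is complete once that routine step is inserted.
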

\begin{proof}
Let $'C=C$ but viewed as a self-correspondence of $M$ via the following maps
\begin{equation*}
\xymatrix{M && {'C}=C\ar[ll]_{\oll{'c}=\Fr_{M}\circ\oll{c}}\ar[rr]^{\orr{'c}=\orr{c}} && M}
\end{equation*}
However, $'C$ is no longer a self-correspondence of $M$ over $S$. Instead, it maps to the Frobenius graph of $S$:
\begin{equation}\label{MS}
\xymatrix{M\ar[d]^{f} & {'C}\ar[d]^{'h}\ar[l]_{\oll{'c}}\ar[r]^{\orr{c}} & M\ar[d]^{f}\\
S & {'S}\ar[l]_{\Fr_{S}}\ar[r]^{\id} & S}
\end{equation}
Here ${'S}=S$ but viewed as a self-correspondence of $S$ via $(\Fr_{S},\id):{'S}\to S\times S$. The map $'h: {'C}\to {'S}$ is simply the original map $h:C\to S$.

We have the following diagram where both squares are Cartesian and the top square is \eqref{ShtC}
\begin{equation}\label{Sht C C'}
\xymatrix{\Sht_{C}\ar[d]\ar[r] &{{'C}=C}\ar[d]^{(\oll{c},\orr{c})}\\
M\ar[r]^{(\id,\Fr_{M})}\ar[d]^{\Fr_{M}} & M\times M\ar[d]^{(\Fr_{M},\id)}\\
M\ar[r]^{\Delta_{M}} & M\times M}
\end{equation}
Therefore the outer square is also Cartesian, i.e., there is a canonical isomorphism $\Fix({'C})=\Sht_{C}$.

For $\zeta\in \Ch_{n}(C)_{\QQ}=\Ch_{n}({'C})_{\QQ}$, we may also view it as a cohomological self-correspondence of $\const{M}$ supported on $'C$. We denote it by ${'\zeta}\in \Ch_{n}({'C})_{\QQ}$ to emphasize that it is supported on $C'$. We claim that
\begin{equation*}
(\id,\Fr_{M})^{!}\zeta=\Delta_{M}^{!}({'\zeta})\in\Ch_{0}(\Sht_{C})_{\QQ}.
\end{equation*}
In fact, this is a very special case of the Excess Intersection Formula \cite[Theorem 6.3]{Fulton} applied to the diagram \eqref{Sht C C'} where both $(\id,\Fr_{M})$ and $\Delta_{M}$ are regular immersions of the same codimension. In particular, taking the degree of the $s$ components, we have
\begin{equation}\label{tilt Frob}
\jiao{{'\zeta}, \Delta_{*}[M]}_{s}=\jiao{\zeta, \Gamma(\Fr_{M})}_{s} \quad\text{ for all $s\in S(k)$.}
\end{equation}

By \cite[Prop. 1.2.5]{Var} applied to the proper map \eqref{MS} between correspondences, we get a commutative diagram
\begin{equation*}
\xymatrix{\Hom(\oll{'c}^{*}\const{M}, \orr{'c}^{!}\const{M})\ar[r]^{\tau_{'C}}\ar[d]^{'h_{!}(-)} & \hBM{0}{\Fix({'C})\otimes_{k}\kbar}\ar[d]\ar@{=}[r] & \bigoplus_{s\in S(k)}\hBM{0}{\Sht_{C}(s)\otimes_{k}\kbar}\ar[d]^{\deg}\\
\Hom(\Fr^{*}_{S}f_{!}\const{M}, f_{!}\const{M})\ar[r]^{\tau_{'S}}& \hBM{0}{S(k)\otimes_{k}\kbar}\ar@{=}[r] & \bigoplus_{s\in S(k)}\Ql}
\end{equation*}
Combining the with the commutative diagram in Lemma \ref{l:cl Gysin} applied to $'C$, we get a commutative diagram
\begin{equation}\label{CCDD}
\xymatrix{\Ch_{n}({'C})_{\QQ}\ar[r]^{\Delta_{M}^{!}}\ar[d]^{'h_{!}\circ\cl_{'C}} & \Ch_{0}(\Fix({'C}))_{\QQ}\ar[d]\ar@{=}[r] & \bigoplus_{s\in S(k)}\Ch_{0}(\Sht_{C}(s))\ar[d]^{\deg}\\
\Hom(\Fr^{*}_{S}f_{!}\const{M}, f_{!}\const{M})\ar[r]^(.6){\tau_{'S}}& \hBM{0}{S(k)\otimes_{k}\kbar}\ar@{=}[r] & \bigoplus_{s\in S(k)}\Ql}
\end{equation}

Applying \eqref{CCDD} to $'\zeta$, and using \eqref{tilt Frob}, we get that for all $s\in S(k)$
\begin{equation}\label{tau inter}
\tau_{'S}\big({'h}_{!}\cl_{'C}({'\zeta})\big)_{s}=\jiao{{'\zeta}, \Delta_{*}[M]}_{s}=\jiao{\zeta,\Gamma(\Fr_{M})}_{s}.
\end{equation}
Here $\tau_{'S}(-)_{s}\in\Ql$ denotes the $s$-component of the class $\tau_{'S}(-)\in \hBM{0}{S(k)\otimes_{k}\kbar}=\oplus_{s\in S(k)}\Ql$.

Next we would like to express $\tau_{'S}\big({'h}_{!}\cl_{'C}({'\zeta})\big)_{s}$ as a trace. The argument works more generally when $\const{M}$ is replaced with any $\calF\in D^{b}_{c}(M)$ and $\cl_{C}(\zeta)$ replaced with any cohomological self-correspondence $\eta:\oll{c}^{*}\calF\to\orr{c}^{!}\calF$ supported on $C$. So we will work in this generality. For any $\calF\in D^{b}_{c}(M)$ we have a canonical isomorphism $\Phi_{\calF}:\Fr_{M}^{*}\calF\isom \calF$ whose restriction to the geometric stalk at $x\in M(k)$ is given by the {\em geometric} Frobenius $\Frob_{x}$ acting on $\calF_{\ov{x}}$. Similar remark applies to complexes on $S$. Using $\eta$ we define a cohomological self-correspondence $'\eta$ of $\calF$ supported on $'C$ as the composition
\begin{equation*}
'\eta: \oll{'c}^{*}\calF=\oll{c}^{*}\Fr_{M}^{*}\calF\xrightarrow{\oll{c}^{*}\Phi_{\calF}}\oll{c}^{*}\calF\xrightarrow{\eta}\orr{c}^{!}\calF=\orr{'c}^{!}\calF.
\end{equation*}
On the other hand we have a commutative diagram
\begin{equation}\label{Fr comm}
\xymatrix{\Fr_{S}^{*}f_{!}\calF\ar[r]^{adj.}_{\sim}\ar[dr]_{\Phi_{f_{!}\calF}}^{\sim} & f_{!}\Fr^{*}_{M}\calF\ar[d]^{f_{!}\Phi_{\calF}}_{\wr}\ar[r]^{adj.} & h_{!}\oll{c}^{*}\Fr^{*}_{M}\calF\ar@{=}[r]\ar[d]^{h_{!}\oll{c}^{*}\Phi_{\calF}}_{\wr} & h_{!}\oll{'c}^{*}\calF\ar[r]\ar[d]^{h_{!}({'\eta})} & f_{!}\calF\ar@{=}[d]\\
& f_{!}\calF\ar[r]^{adj.} & h_{!}\oll{c}^{*}\calF\ar[r]^{h_{!}(\eta)} & h_{!}\orr{c}^{!}\calF\ar[r]^{adj.} & f_{!}\calF}
\end{equation}
Here the arrows indexed by ``adj.'' are induced from adjunctions, using the properness of $\oll{c}$. The middle square is commutative by the definition of $'\eta$, and the right square is commutative by design. The composition of the top row in \eqref{Fr comm} is by definition the push-forward ${'h}_{!}{'\eta}$ as a cohomological self-correspondence of $f_{!}\calF$ supported on $'S$; the composition of the bottom row in \eqref{Fr comm} is by definition the push-forward ${h}_{!}{\eta}$ as a cohomological self-correspondence of $f_{!}\calF$ supported on the diagonal $S$. Therefore, \eqref{Fr comm} shows that ${'h}_{!}{'\eta}$ may be written as the composition
\begin{equation}\label{zeta Phi zeta}
{'h}_{!}{'\eta}:\Fr_{S}^{*}f_{!}\calF\xrightarrow{\Phi_{f_{!}\calF}} f_{!}\calF\xrightarrow{h_{!}\eta}f_{!}\calF.
\end{equation}

For any cohomological self-correspondence $\xi$ of $\calG\in D^{b}_{c}(S)$ supported on the graph of Frobenius $'S$, i.e., $\xi: \Fr_{S}^{*}\calG\to \calG$, the trace $\tau_{'S}(\xi)_{s}$ at $s\in S(k)$ is simply given by the trace of $\xi_{s}$ acting on the geometric stalk $\calG_{\ov{s}}$: this is because the Frobenius map is contracting at its fixed points, so the local term for the correspondence supported on its graph is the naive local term (a very special case of the main result in \cite[Theorem 2.1.3]{Var}). Applying this observation to $\xi={'h}_{!}{'\eta}$ we get
\begin{eqnarray}\notag
\tau_{{'S}}({'h}_{!}{'\eta})_{s}&=&\Tr\big(({'h}_{!}{'\eta})_{s},(f_{!}\calF)_{\ov{s}}\big)\\
\label{naive local F}
&=&\Tr\big((h_{!}\eta)_{s}\circ\Frob_{s},(f_{!}\calF)_{\ov{s}}\big) \quad\text{ by \eqref{zeta Phi zeta}}.
\end{eqnarray}
Now apply \eqref{naive local F} to $\calF=\const{M}$, $\eta=\cl_{C}(\zeta)$ and note that $'\eta=\cl_{'C}({'\zeta})$. Then \eqref{naive local F} gives
\begin{eqnarray}\label{naive local}
\tau_{{'S}}({'h}_{!}\cl_{'C}({'\zeta}))_{s}=\Tr\big((f_{!}\cl_{C}(\zeta))_{s}\circ\Frob_{s},(f_{!}\const{M})_{\ov{s}}\big).
\end{eqnarray}
Combining \eqref{naive local} with \eqref{tau inter} we get the desired formula \eqref{Lef fix}.
\end{proof}


\section{Super-positivity of $L$-values}
\label{s positivity}
 In this appendix we show the positivity of all derivatives of certain $L$-functions (suitably corrected by their epsilon factors), assuming the Riemann hypothesis. The result is unconditional in the function field case since the Riemann hypothesis is known to hold.

It is well-known that the positivity of the leading coefficient of such $L$-function is implied by the Riemann hypothesis. We have not seen the positivity of non-leading terms in the literature and we provisionally call such phenomenon ``super-positivity".

\subsection{The product expansion of an entire function}
We recall the (canonical) product expansion of an entire function following \cite[\S5.2.3, \S5.3.2]{A}. Let $\phi(s)$ be an entire function in the variable $s\in\BC$. Let $m$ be the  vanishing order of $\phi$ at $s=0$. List all the nonzero roots of $\phi$ as $\alpha_1,\alpha_2,...,\alpha_i,...$ (multiple roots being repeated)  indexed by a subset $I$ of $\BZ_{>0}$, such that $|\alpha_1|\leq |\alpha_2|\leq...$.  Let $E_n$ be the elementary Weierstrass function 
$$
E_n(u)=\begin{cases}\left(1-u \right),& n=0;\\
\left(1-u \right)\,e^{ u+\frac{1}{2}u^2+\cdots+\frac{1}{n}u^n},&n\geq 1.
\end{cases}
$$An entire function $\phi$ is said to have finite genus if it can be written as an absolutely convergent product 
\begin{align}\label{can prod}
\phi(s)=s^m \,e^{h(s)}\,\prod_{i\in\BZ} E_n\left(\frac{s}{\alpha_i}\right)
\end{align}
for a polynomial $h(s)\in\BC[s]$ and an integer $n\geq 0$. The product \eqref{can prod} is unique if we further demand that $n$ is the smallest possible integer, which is characterized as the smallest $n\in\BZ_{\geq 0}$ such that 
\begin{align}
\label{eqn: convergent}
\sum_{i\in I}\frac{1}{\big|\alpha_i\big|^{n+1}}<\infty.
\end{align}
The genus $g(\phi)$ of such $\phi$ is then defined to be $$
g(\phi):=\max\{\deg(h),\,n\}.$$
The order $\rho(\phi)$ of an entire function $\phi$ is defined as the smallest real number $\rho\in[ 0,\infty]$ with the following property: for every $\epsilon>0$, there is a constant $C_\epsilon$ such that 
$$\big|\phi(s)\big|\leq e^{|s|^{\rho+\epsilon}},\quad \mbox{when }\, |s|\geq C_\epsilon.$$
If $\phi$ is a non-constant entire function, an equivalent definition  is 
$$
\rho(\phi)=\limsup_{r\to \infty}\frac{\log\log|\!|\phi|\!|_{\infty,B_r}}{r}
$$
where $|\!|\phi|\!|_{\infty,B_r}$ is the supremum norm of the function $\phi$ on the disc $B_r$ of radius $r$. If the order of $\phi$ is finite, then Hadamard theorem \cite[\S5.3.2]{A} asserts that the function $\phi$ has finite genus and
\begin{align}
\label{eqn: g rho}
g(\phi)\leq \rho(\phi)\leq g(\phi)+1.
\end{align}
In particular, an entire function of finite order admits a product expansion of the form \eqref{can prod}.

\begin{prop}
\label{prop hadamard}
Let $\phi(s)$ be an entire function with the following properties 
\begin{enumerate}
\item It has a functional equation $\phi(-s)=\pm \phi(s)$.
\item For $s\in\BR$ such that $s\gg 0$, we have $\phi(s)\in\BR_{>0}$.
\item The order $\rho(\phi)$ of $\phi(s)$ is at most $1$.
\item {\em (RH)} The only zeros of $\phi(s)$ lie on the imaginary axis $\Re(s)=0$.
\end{enumerate}

Then we have  for all $r\geq 0$,
$$
\phi^{(r)}(0):=\frac{d}{ds}\Big|_{s=0}\phi(s) \geq 0.
$$
Moreover, if $\phi(s)$ is not a constant function, we have
$$
\phi^{(r_0)}(0)\neq0\imp \phi^{(r_0+2i)}(0)\neq0, \quad\mbox{for all }\, r_0 \mbox{ and } i\in\BZ_{\geq 0}.
$$
\end{prop}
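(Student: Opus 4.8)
The plan is to make the Hadamard factorization of $\phi$ completely explicit using hypotheses (1)--(4). First I would record that, since $\phi$ takes real values on the ray $(R,\infty)\subset\BR$ for $R$ large, Schwarz reflection gives $\phi(s)=\overline{\phi(\overline s)}$, so $\phi$ has real Taylor coefficients at $s=0$; in particular each $\phi^{(r)}(0)$ is real, and it suffices to show it is $\geq 0$. Next, because $\rho(\phi)\leq 1$, the Hadamard theorem and \eqref{eqn: g rho} give $g(\phi)\leq 1$, so $\phi$ admits an absolutely convergent product $\phi(s)=s^{m}e^{a+bs}\prod_{i}E_{n}(s/\alpha_{i})$ with $n\leq 1$ and with $\sum_{i}|\alpha_{i}|^{-2}<\infty$ by \eqref{eqn: convergent}.

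The heart of the argument is to collapse this product. By (RH) every $\alpha_{i}$ lies on the imaginary axis, and since the zero multiset of a function with real Taylor coefficients is stable under $\alpha\mapsto\overline\alpha=-\alpha$, the nonzero zeros come in pairs $\{i\gamma,-i\gamma\}$ with $\gamma>0$ (listed with multiplicity). In each pair the Weierstrass factors multiply to $E_{n}(s/(i\gamma))\,E_{n}(s/(-i\gamma))=1+s^{2}/\gamma^{2}$, the degree-$\leq 1$ exponential corrections cancelling exactly; this works uniformly for $n=0$ and $n=1$, which is why genus $\leq 1$ is exactly what is needed. Hence $\phi(s)=s^{m}e^{a+bs}P(s)$ with $P(s)=\prod_{\gamma>0}(1+s^{2}/\gamma^{2})$ absolutely convergent (using $\sum\gamma^{-2}<\infty$), even, with $P(0)=1$ and with the coefficient of $s^{2k}$ equal to the $k$-th elementary symmetric function $e_{k}$ of the positive numbers $\{\gamma^{-2}\}$, hence $e_{k}\geq 0$. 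Since $s^{m}P(s)$ is real and eventually positive on $\BR$, so is $e^{a+bs}$, forcing $b\in\BR$ and $e^{a}=:C\in\BR_{>0}$. Finally the functional equation $\phi(-s)=\pm\phi(s)$, after cancelling $s^{m}P(s)$, reads $(-1)^{m}e^{-bs}=\pm e^{bs}$, which forces $b=0$ (and $(-1)^{m}=\pm1$). Thus $\phi(s)=C\,s^{m}P(s)$ with $C>0$.

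From this the conclusions are immediate. The coefficient of $s^{j}$ in $\phi$ is $C\,e_{(j-m)/2}$ when $j\geq m$ and $j\equiv m\pmod 2$, and $0$ otherwise; since $e_{k}\geq 0$ and $C>0$ we get $\phi^{(r)}(0)=r!\cdot[\text{coeff of }s^{r}]\geq 0$ for all $r\geq 0$. For the last assertion, note $e_{k}>0$ precisely for $0\leq k\leq N$, where $N\in\BZ_{\geq 0}\cup\{\infty\}$ is the number of zeros of $\phi$ of positive imaginary part counted with multiplicity; when $\phi$ is non-constant and not a polynomial one has $N=\infty$ and all $e_{k}>0$, so once $\phi^{(r_{0})}(0)\neq 0$ (which forces $r_{0}=m+2k_{0}$ for some $k_{0}$) we get $\phi^{(r_{0}+2i)}(0)=C\,(r_{0}+2i)!\,e_{k_{0}+i}\neq 0$ for every $i\in\BZ_{\geq 0}$; and when $\phi$ is a polynomial the same computation shows its nonzero derivatives at $0$ form the finite run $\phi^{(m)}(0),\phi^{(m+2)}(0),\dots,\phi^{(\deg\phi)}(0)$, so the statement holds with the evident constraint $r_{0}+2i\leq\deg\phi$.

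\textbf{Main obstacle.} The only genuinely delicate step is the regrouping of the Hadamard product into conjugate pairs together with the exact cancellation of the exponential Weierstrass factors: this is legitimate only because $g(\phi)\leq 1$ gives absolute convergence (so rearrangement is allowed), and one must treat the genus-$0$ and genus-$1$ cases in one stroke and keep track of a possible zero at $s=0$; extracting $b=0$ from the functional equation is the other small point. Everything after that is bookkeeping with elementary symmetric functions.
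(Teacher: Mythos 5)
Your proof is correct and follows essentially the same route as the paper's: Hadamard factorization with genus $\leq 1$, pairing the purely imaginary zeros so that the Weierstrass exponential factors cancel, using the functional equation to kill the linear term in the exponential, and reading off $\phi(s)=C\,s^{m}\prod_{\gamma}\bigl(1+s^{2}/\gamma^{2}\bigr)$ with $C>0$, from which the nonnegativity and nonvanishing of the even-shifted derivatives follow. Your extra care about the finitely-many-zeros (polynomial) case is a fair point: the second assertion as literally stated does need the caveat you give there (e.g.\ $\phi(s)=1+s^{2}$ satisfies all hypotheses), something the paper's proof passes over silently and which is harmless in the intended application, where $\Lambda(\pi,s)$ has infinitely many zeros in the number-field case and the function-field case is handled separately in Remark \ref{r:func field pos}.
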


\begin{proof}By the functional equation, if $\alpha$ is a root of $\phi$, so is $-\alpha$ with the same multiplicity. 
Therefore we may list all nonzero roots as $\{\alpha_i\}_{i\in \BZ\setminus \{0\}}$  such that
$$
\alpha_{-i}=-\alpha_i,
\quad \mbox{and}\quad |\alpha_1| \leq |\alpha_2|\leq ....
$$
If $\phi$ has only finitely many roots the sequence terminates at a finite number.

Since the order $\rho(\phi)\leq 1$, by \eqref{eqn: g rho} we have $g(\phi)\leq 1$. Hence we may write $\phi$ as a product 
\begin{align*}
\phi(s)=s^m \,e^{h(s)}\,\prod_{i=1}^\infty E_1\left(\frac{s}{\alpha_i}\right)E_1\left(-\frac{s}{\alpha_i}\right),
\end{align*}
where $m$ is the vanishing order at $s=0$.  Note that it is possible that $g(\phi)=0$, in which case one still has a product expansion using $E_1$ by the convergence of \eqref{eqn: convergent}.

By the functional equation, we conclude that $h(s)=h$ is a constant. 

By the condition (4)(RH), all roots $\alpha_i$ are purely imaginary, and hence $\ov\alpha_i=\alpha_{-i}$. We have 
\begin{align*}
\phi(s)&=s^m \,e^{h}\,\prod_{i=1}^\infty E_1\left(\frac{s}{\alpha_i}\right)E_1\left(\frac{s}{\ov\alpha_i}\right)\\
&=s^m \,e^{h}\,\prod_{i=1}^\infty   \left(1+\frac{s^2}{\alpha_i\ov \alpha_i} \right).
\end{align*}
By the condition (2), the leading coefficient $e^{h}$ is a positive real number.  
Then the desired assertion follows from the product above.
\end{proof}

\subsection{Super-positivity}
Let $F$ be  a global field (i.e., a number field, or the function field of a connected smooth projective curve over a finite field $\BF_q$). Let $\BA$ be the ring of ad\`eles of $F$.
Let $\pi$ be an irreducible cuspidal automorphic representation of $\GL_n(\BA)$.  Let $L(\pi,s)$ be the complete (standard) $L$-function associated to $\pi$ \cite{GJ}. We have a functional equation
$$L(\pi,s)= \epsilon(\pi,s) L(\wt\pi,1-s),$$where $\wt\pi$ denotes the contragredient of $\pi$, and
$$\epsilon(\pi,s)=\epsilon(\pi,1/2)N_\pi^{s-1/2} $$ for some positive real number $N_\pi$.
Define $$\Lambda(\pi,s)=N_\pi^{-\frac{(s-1/2)}{2}} L(\pi,s), $$ 
and $$\Lambda^{(r)}(\pi,1/2):=\frac{d}{ds}\Big|_{s=1/2}\Lambda(\pi,s).$$

\begin{thm}\label{th:pos}
Let $\pi$ be a nontrivial cuspidal automorphic representation of $\GL_n(\BA)$. Assume that it is self-dual:
$$\pi\simeq\wt\pi.$$ 
Assume that, if $F$ is a number field, the Riemann hypothesis holds for $L(\pi,s)$, that is, all the roots of $L(\pi,s)$ have real parts equal to $1/2$.
\begin{enumerate}
\item  For all $r\in\BZ_{\geq 0}$, we have
$$
\Lambda^{(r)}(\pi,1/2)\geq 0.
$$
\item If $\Lambda(\pi,s)$ is not a constant function, we have 
$$\Lambda^{(r_0)}(\pi,1/2)\neq0\imp \Lambda^{(r_0+2i)}(\pi,1/2)\neq0, \quad\mbox{for all }\, i\in\BZ_{\geq 0}.
$$
\end{enumerate}
\end{thm}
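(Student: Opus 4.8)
The plan is to deduce Theorem~\ref{th:pos} from Proposition~\ref{prop hadamard} by checking that $\phi(s):=\Lambda(\pi,s+1/2)$ satisfies the four hypotheses of that proposition. The only substantive work is to verify, one at a time, that: (a) $\phi$ is entire; (b) it has a sign-type functional equation $\phi(-s)=\pm\phi(s)$; (c) $\phi(s)>0$ for real $s\gg 0$; and (d) its order is at most $1$.

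First I would recall the standard analytic facts about the completed $L$-function of a cuspidal automorphic representation $\pi$ of $\GL_n(\BA)$, due to Godement--Jacquet \cite{GJ}: since $\pi$ is nontrivial cuspidal, $L(\pi,s)$ extends to an entire function of $s$, and it satisfies the functional equation $L(\pi,s)=\epsilon(\pi,s)L(\wt\pi,1-s)$ with $\epsilon(\pi,s)=\epsilon(\pi,1/2)N_\pi^{s-1/2}$, $N_\pi>0$. By the self-duality assumption $\pi\simeq\wt\pi$, this becomes $L(\pi,s)=\epsilon(\pi,s)L(\pi,1-s)$, and since $L(\pi,s)L(\pi,1-s)=\epsilon(\pi,s)L(\pi,1-s)^2\cdots$ one sees $\epsilon(\pi,1/2)^2=1$, so $\epsilon(\pi,1/2)=\pm1$. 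Substituting into $\Lambda(\pi,s)=N_\pi^{-(s-1/2)/2}L(\pi,s)$ gives $\Lambda(\pi,s)=\epsilon(\pi,1/2)\Lambda(\pi,1-s)$; translating $s\mapsto s+1/2$ yields $\phi(-s)=\epsilon(\pi,1/2)\phi(s)$, which is hypothesis~(1) of Proposition~\ref{prop hadamard}. Hypothesis~(2), positivity of $\phi$ on the far real axis, follows because for $\Re(s)$ large the Euler product for $L(\pi,s)$ (including the archimedean or, in the function-field case, the finitely many ramified factors, which are all of the form $\prod(1-\alpha_i q^{-s})^{-1}$ with the local parameters; in any case real-analytically positive for $s\to+\infty$) converges to a positive real number, and $N_\pi^{-(s-1/2)/2}>0$; I would spell this out by noting $\log L(\pi,s)=\sum_{\text{places }v}\sum_{k\ge1}\frac{1}{k}\Tr(\pi_v(\mathrm{Frob}_v)^k)q_v^{-ks}$, whose terms are dominated so that $L(\pi,s)\to 1$ as $s\to+\infty$ along $\BR$, hence is positive there, and multiplying by the positive archimedean factors preserves positivity.

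Next I would check hypothesis~(3), that the order of $\phi$ is at most $1$. This is a classical estimate: the completed $L$-function of a cuspidal $\GL_n$-automorphic representation has order exactly $1$ (it is a standard consequence of the functional equation together with the Phragmén--Lindelöf principle and the polynomial growth of $L$-functions in vertical strips; in the function-field case $\Lambda(\pi,s)$ is literally a polynomial in $q^{-s}$, hence of order $\le 1$ — indeed it has only finitely many zeros). I would cite this growth bound rather than reprove it. Finally, hypothesis~(4) is exactly the Riemann hypothesis for $L(\pi,s)$: all zeros have $\Re(s)=1/2$, equivalently all zeros of $\phi$ have $\Re(s)=0$; in the number-field case this is the stated assumption, and in the function-field case it is Deligne's theorem (the Weil conjectures), so the result is unconditional there. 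With all four hypotheses verified, Proposition~\ref{prop hadamard} applied to $\phi$ gives $\phi^{(r)}(0)=\Lambda^{(r)}(\pi,1/2)\ge 0$ for all $r\ge0$, which is part~(1), and the propagation statement $\Lambda^{(r_0)}(\pi,1/2)\ne0\Rightarrow\Lambda^{(r_0+2i)}(\pi,1/2)\ne0$ when $\Lambda(\pi,\cdot)$ is nonconstant, which is part~(2).

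\textbf{Main obstacle.} The genuinely delicate input is hypothesis~(3), the order-$\le1$ bound, in the number-field case: it rests on standard but nontrivial convexity/growth estimates for automorphic $L$-functions. I would handle it by invoking the known bound on $L(\pi,s)$ in vertical strips (finite order, in fact order $1$) from the literature on $\GL_n$ $L$-functions, so that Hadamard factorization \eqref{can prod}--\eqref{eqn: g rho} applies and $g(\phi)\le1$. Everything else — extracting $\epsilon(\pi,1/2)=\pm1$ from self-duality, positivity on the real axis, and the bookkeeping with $N_\pi$ — is routine. In the function-field case no analytic subtlety arises at all, since $\Lambda(\pi,s)$ is a polynomial in $q^{\pm s}$ and RH is a theorem, giving the unconditional statement promised in the theorem.
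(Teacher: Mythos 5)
Your overall route is exactly the paper's: set $\phi(s)=\Lambda(\pi,s+1/2)$, feed it into Proposition \ref{prop hadamard}, extract the sign $\epsilon(\pi,1/2)=\pm1$ from self-duality, quote entireness and the order-one bound from the theory of standard $L$-functions, and use Deligne/Drinfeld/L.~Lafforgue for RH in the function-field case. However, the step you dismiss as routine is the one the paper actually has to work for, and your argument for it has a genuine gap: hypothesis (2) of Proposition \ref{prop hadamard} demands $\phi(s)\in\BR_{>0}$ for real $s\gg0$, in particular that the completed $L$-function is \emph{real} on the real axis. Your justification --- the finite Euler product tends to $1$ as $s\to+\infty$, and the archimedean factors are ``positive'' --- does not give this: convergence to $1$ only shows the finite part has real part close to $1$ (its Dirichlet coefficients could a priori be complex), and a factor $\Gamma_{\BC}(s+\alpha)$ or $\Gamma_{\BR}(s+\alpha)$ with non-real $\alpha$ is neither real nor positive for real $s$. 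Without reality one cannot conclude $e^{h}>0$ in the proof of Proposition \ref{prop hadamard}, and the asserted inequalities $\Lambda^{(r)}(\pi,1/2)\ge0$ would not even make sense.

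What is needed, and what the paper proves as a separate lemma, is that for every place $v$ the local factor $L(\pi_v,s)$ has real coefficients (non-archimedean: $P_{\pi_v}$ has real coefficients; archimedean: the shifts $\alpha$ are real or occur in conjugate pairs). This is where self-duality enters locally: $\pi$ is cuspidal hence unitary, and for unitary $\pi_v$ one has $\overline{L(\pi_v,\bar s)}=L(\wt\pi_v,s)$; the paper deduces this from the Godement--Jacquet description of $L(\pi_v,s)$ via local zeta integrals, using that complex conjugation carries matrix coefficients of $\pi_v$ to matrix coefficients of $\wt\pi_v$ (equivalently $\wt\pi_v\cong\bar\pi_v$). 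Combined with $\pi_v\cong\wt\pi_v$ this makes each local factor, hence $\Lambda(\pi,s)$, real and then positive for large real $s$, which is hypothesis (2). Once you supply this local reality argument your proof is complete and coincides with the paper's; by contrast, your identification of the order-$\le1$ estimate as the main obstacle is misplaced, since the paper, like you, simply quotes it.
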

\begin{proof} We consider $$\lambda(\pi,s):=\Lambda(\pi,s+1/2).$$
Since $\pi$ is cuspidal and nontrivial, its standard $L$-function $L(\pi,s)$  is entire in $s\in \BC$. By the equality  
$
\epsilon(\pi,s)\epsilon(\wt\pi,1-s)=1
$
and the self-duality $\pi\simeq\wt\pi$ we deduce
$$1=\epsilon(\pi,1/2)\epsilon(\pi,1-1/2)=\epsilon(\pi,1/2)^2.
$$
Hence $\epsilon(\pi,1/2)=\pm 1$, and we have a functional equation   
\begin{align}\label{eqn FE}
\lambda(\pi,s)=   \pm \lambda(\pi,-s).
\end{align}
We apply Proposition \ref{prop hadamard} to the entire function $\lambda(\pi,s)$.
The function $L(\pi,s)$ is entire of order one, and so is  $\lambda(\pi,s)$.  In the function field case, the condition (4)(RH) is known by the theorem of Deligne  on Weil conjecture, and of Drinfeld and L. Lafforgue on the global Langlands correspondence. It remains to verify the condition (2) for $\lambda(\pi,s)$. This follows from the following lemma.
\end{proof}

The local $L$-factor $L(\pi_v,s)$ is of the form $\frac{1}{P_{\pi_v}(q_v^{-s})}$ where $P_{\pi_v}$ is a polynomial with constant term equal to one when $v$ is non-archimedean, and a product of functions of the form $\Gamma_{\BC}(s+\alpha)$,  or  $\Gamma_{\BR}(s+\alpha)$, where $\alpha\in \BC$, and
$$\Gamma_\BC(s)=2(2\pi)^{-s}\Gamma(s),\quad \Gamma_\BR(s)=\pi^{-s/2}\Gamma(s/2),$$
when $v$ is archimedean. We say that $L(\pi_v,s)$ has real coefficients if the polynomial $P_{\pi_v}$ has real coefficients when $v$ is non-archimedean, and the factor $\Gamma_{F_v}(s+\alpha)$  in $L(\pi_v,s)$ has real $\alpha$ or the pair $\Gamma_{F_v}(s+\alpha)$ and $\Gamma_{F_v}(s+\ov\alpha)$ show up simultaneously when $v$ is archimedean.  In particular, if $L(\pi_v,s)$ has real coefficients, it takes positive real values when $s$ is real and sufficiently large.

\begin{lem}Let $\pi_v$ be unitary and self-dual. Then $L(\pi_v,s)$ has real coefficients. 
\end{lem}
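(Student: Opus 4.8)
The plan is to reduce the assertion to the two elementary facts that (i) for a non-archimedean place $v$ the Satake/Langlands parameter of $\pi_v$, a finite multiset of nonzero complex numbers $\{\beta_{1},\dots,\beta_{n}\}$, is stable under $\beta\mapsto\ov\beta$ once we know $\pi_v\cong\wt\pi_v$ and $\pi_v$ is unitary, and (ii) for an archimedean place $v$ the corresponding multiset of shifts $\{\alpha_{1},\dots,\alpha_{n}\}$ appearing in the product of $\Gamma_{\BC}$ and $\Gamma_{\BR}$ factors is likewise stable under $\alpha\mapsto\ov\alpha$. Granting these, $P_{\pi_{v}}(T)=\prod_{j}(1-\beta_{j}q_{v}^{-s}\cdot q_{v}^{s}T)$—more precisely $P_{\pi_{v}}(q_{v}^{-s})=\prod_{j}(1-\beta_{j}q_{v}^{-s})$—has coefficients invariant under complex conjugation, hence real; and in the archimedean case the $\Gamma$-factors either have real shift or occur in conjugate pairs, which is exactly the definition of ``having real coefficients'' given just above the lemma.

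First I would recall that for $\pi_{v}$ unitary, the contragredient $\wt\pi_{v}$ is isomorphic to the complex-conjugate representation $\ov{\pi_{v}}$ (the Hermitian form giving unitarity provides an antilinear $\pi_{v}$-equivariant isomorphism $\pi_{v}\to\wt\pi_{v}$, so $\ov{\pi_{v}}\cong\wt{\wt{\pi_{v}}}\cong\pi_{v}$ composed appropriately; concretely $\wt\pi_{v}\cong\ov\pi_{v}$). Combined with the hypothesis $\pi_{v}\cong\wt\pi_{v}$ this gives $\pi_{v}\cong\ov{\pi_{v}}$. Now $L(\pi_{v},s)$ is built functorially from $\pi_{v}$: for $v$ non-archimedean it is $\det(1-q_{v}^{-s}A_{v})^{-1}$ where $A_{v}$ is the semisimple conjugacy class attached to $\pi_{v}$ by the local Langlands correspondence, and the class attached to $\ov{\pi_{v}}$ is the complex conjugate $\ov{A_{v}}$ of $A_{v}$ (local Langlands commutes with the automorphism of $\GL_{n}(\BC)$ given by entrywise conjugation, which on the Galois side corresponds to nothing and on the automorphic side corresponds to $\pi_{v}\mapsto\ov{\pi_{v}}$—one checks this on unramified twists, where it is the statement that the Satake parameter of $\ov{\pi_{v}}$ is the conjugate of that of $\pi_{v}$, and in general by the characterizing properties of local Langlands, e.g.\ compatibility with $\gamma$- and $\varepsilon$-factors which themselves conjugate correctly). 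Hence $L(\ov{\pi_{v}},s)=\ov{L(\pi_{v},\bar s)}$ in the sense that the defining polynomial $P$ has its coefficients conjugated; but $\ov{\pi_{v}}\cong\pi_{v}$ forces $P_{\pi_{v}}$ to be fixed by conjugation, i.e.\ real. The archimedean case is the same argument applied to the archimedean local Langlands parameter: the $L$-parameter of $\ov{\pi_{v}}$ is the conjugate of that of $\pi_{v}$, so the multiset of $\Gamma$-shifts is conjugation-stable, which is precisely ``$L(\pi_{v},s)$ has real coefficients'' in the archimedean sense defined above.

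The step I expect to be the main obstacle is pinning down cleanly the statement ``the local parameter of $\ov{\pi_{v}}$ is the complex conjugate of the parameter of $\pi_{v}$'' with just enough rigor to be quotable, without importing the full machinery of local Langlands. For the unramified places this is immediate and is all that is strictly needed for the application (the $L$-functions in the main body are everywhere unramified), so in the worst case I would state the lemma and prove it only for unramified $\pi_{v}$—where the Satake parameter is literally a diagonal matrix $\diag(\beta_{1},\dots,\beta_{n})$, $\pi_{v}\cong\wt\pi_{v}$ says $\{\beta_{j}\}=\{\beta_{j}^{-1}\}$, unitarity (temperedness is not needed) says $\{\beta_{j}\}=\{\ov\beta_{j}^{-1}\}$, and combining gives $\{\beta_{j}\}=\{\ov\beta_{j}\}$, so $P_{\pi_{v}}(T)=\prod_{j}(1-\beta_{j}T)$ is real—and remark that the general case is identical using that local Langlands intertwines $\pi_{v}\mapsto\ov\pi_{v}$ with conjugation of parameters. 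For the archimedean places one argues directly with the explicit Langlands classification of $\GL_{n}(\BR)$- and $\GL_{n}(\BC)$-representations: the induced-from-characters description shows that $\ov{\pi_{v}}$ is induced from the conjugate characters, whence its $\Gamma$-shifts are the conjugates of those of $\pi_{v}$, and $\pi_{v}\cong\ov{\pi_{v}}$ gives the conjugation-stability of the shift multiset. I would then close by simply invoking the definition of ``real coefficients'' recalled immediately before the lemma and noting the promised positivity for real $s\gg0$.
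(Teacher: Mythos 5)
Your argument is correct in substance but follows a genuinely different route from the paper. You deduce realness from $\wt\pi_v\cong\ov{\pi_v}$ (unitarity) plus self-duality, and then transport conjugation through the local parameter: explicitly via Satake parameters at unramified places and via the Langlands classification at archimedean places, with the ramified non-archimedean case resting on the (true, but only invoked) compatibility of the local Langlands correspondence with complex conjugation. The paper never touches parameters or LLC: it uses the Godement--Jacquet characterization of $L(\pi_v,s)$ as the factor determined by the space of local zeta integrals $Z(\Phi,s,f)$ over matrix coefficients $f$ of $\pi_v$, and observes that for unitary $\pi_v$ complex conjugation carries a matrix coefficient $f_{u,v}(g)=\pair{\pi_v(g)u,v}$ (Hermitian pairing) to $f_{v,u}(g^{-1})$, a matrix coefficient of $\wt\pi_v$; since $g\mapsto g^{-1}$ identifies coefficients of $\pi_v$ with those of $\wt\pi_v$, this yields $\ov{L(\pi_v,\ov s)}=L(\wt\pi_v,s)$ uniformly at every place, ramified or archimedean, and self-duality finishes. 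What each buys: your version is very transparent wherever the parameter is explicit, and your unramified fallback would indeed suffice for the everywhere-unramified $L$-functions of the main body (though it would not cover the appendix's theorem, which is stated for arbitrary cuspidal $\pi$); the paper's zeta-integral argument is heavier-looking but completely self-contained, treats all places on the same footing, and avoids importing LLC or its conjugation-equivariance, which is exactly the step you flagged as the obstacle.
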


\begin{proof} 
We suppress the index $v$ in the notation and write $F$ for a local field. Let $\pi$ be irreducible admissible representation of $\GL_n(F)$. It suffices to show that, if $\pi$ is unitary, then we have 
\begin{align}
\label{eqn L =ov L}
\ov{L(\pi,\ov s)}=
L(\wt \pi, s).
\end{align}
Let $\sZ_\pi$ be  the space of local zeta integrals, i.e., the meromorphic continuation of
$$
Z(\Phi,s,f)=\int_{\GL_n(F)}f(g)\Phi(g)|g|^{s+\frac{n-1}{2}}\,dg
$$
where $f$ runs over all matrix coefficients of $\pi$, and $\Phi$ runs over all Bruhat--Schwartz functions  on $\Mat_n(F)$ (a certain subspace, stable under complex conjugation,  if $F$ is archimedean, cf. \cite[\S8]{GJ}). 
We recall that from \cite[Theorem 3.3, 8.7]{GJ} that the Euler factor $L(\pi,s)$ is uniquely determined by the space $\sZ_\pi$ (for instance, it is a certain normalized generator of the $\BC[q^s, q^{-s}]$-module $\sZ_\pi$ if $F$ is non-archimedean).

Let $\sC_\pi$ be the space of matrix coefficients of $\pi$, i.e., the space consisting of all linear combinations of functions on $\GL_n(F)$: $g\mapsto (\pi(g)u,v)$ where $u\in\pi, v\in\wt \pi$ and $(\cdot,\cdot): \pi\times \wt\pi\to\BC$ is the canonical bilinear pairing. We remark that the involution $g\mapsto g^{-1}$ induces an isomorphism between $\sC_\pi$ with $\sC_{\wt\pi}$. 

To show \eqref{eqn L =ov L}, it now suffices to show that, if $\pi$ is unitary, the complex conjugation induces an isomorphism between $\sC_\pi$ and $\sC_{\wt \pi}$. Let $\pair{\cdot,\cdot}: \pi \times \pi\to \BC$ be a non-degenerate {\em Hermitian} pairing invariant under $\GL_n(F)$. Then the space $\sC_\pi$ consists of all functions $f_{u,v}:g\mapsto  \pair{\pi(g)u,v}, u,v\in \pi$. Under complex conjugation we have $\ov{f_{u,v}}(g)=\ov{\pair{\pi(g)u,v}}=\pair{v,\pi(g)u}=\pair{\pi(g^{-1})v,u}=f_{v,u}(g^{-1}).$ This function belongs to $\sC_{\wt\pi}$ by the remark at the end of the previous paragraph. This clearly shows that the complex conjugation induces the desired isomorphism.
\end{proof}

\begin{remark}\label{r:func field pos}
In the case of a function field, we have a simpler proof of Theorem \ref{th:pos}. The function $L(\pi,s)$ is a polynomial in $q^{-s}$ of degree denoted by $d$. Then the function $\lambda(\pi,s)$ is of the form
\begin{align}\label{eqn prod}
\lambda(\pi,s)=q^{ds/2}\prod_{i=1}^{d}\big(1-\alpha_i q^{-s}\big),
\end{align}
where all the roots $\alpha_i$ satisfy $\lvert\alpha_i\rvert=1.$ By the functional equation \eqref{eqn FE}, if $\alpha$ is a root in \eqref{eqn prod}, so is $\alpha^{-1}=\ov\alpha$. We divide all roots not equal to $\pm 1$ into pairs $\alpha_1^{\pm 1},\alpha_2^{\pm 1},...,\alpha_m^{\pm 1}$ (some of them may repeat). Consider
\begin{align*}
A_i(s)&=q^s\big( 1-\alpha_i q^{-s}\big)\big(1-\alpha_i^{-1} q^{-s}\big)
\\&=q^{s}+q^{-s}-\alpha_i-\ov\alpha_i
\\&=\big(2-\alpha_i-\ov\alpha_i\big)+2\sum_{j\geq 1} \frac{(s\log q )^{2j}}{j!}.
\end{align*}
From $\lvert\alpha_i\rvert=1$ and $\alpha_i\neq 1$ it follows that $A_i(s)$ has strictly positive coefficients at all even degrees.
 Now let $a$ (resp., $b$) be the multiplicity of the root $1$ (resp., $-1$). We then have
$$
\lambda(\pi,s)=\big( q^{s/2}-q^{-s/2}\big)^{a}\,\big(q^{s/2}+q^{-s/2}\big)^{b}\,\prod_{i=1}^mA_i(s),\quad 2m+a+b=d.
$$
The desired assertions follow immediately from this product expansion.

\end{remark}
\begin{remark}
In the statement of the theorem, we excludes the trivial representation. In this case the complete $L$-function has a pole at $s=1$. If we replace $\Lambda(\pi,s)$ by $s(s-1)\Lambda(\pi,s)$, the theorem still holds by the same proof. Moreover, if $F=\BQ$, we have the Riemann zeta function,  and the super-positivity is known without assuming the Riemann hypothesis, by P\'olya \cite{CNV}.  The super-positivity also holds when the $L$-function is ``positive definite" as defined by Sarnak in \cite{Sa}. One of such examples is the weight 12 cusp form with $q$-expansion $\Delta=q\prod_{n\geq 1}(1-q^n)^{24}$. More recently, Goldfeld and Huang in \cite{GH} prove that there are infinitely many classical holomorphic cusp forms (Hecke eigenforms) on $\SL_2(\BZ)$ whose $L$-functions satisfy super-positivity.
\end{remark}

\begin{remark}
The positivity of the central value is known for the standard $L$-function attached to a symplectic cuspidal representation of $\GL_n(\BA)$ by \cite{LR}.
\end{remark}

\begin{remark}
The positivity of the first derivative is known for the $L$-function appearing in the Gross--Zagier formula in \cite{GZ}, \cite{YZZ}, for example the $L$-function of an elliptic curve over $\BQ$.
\end{remark}

\begin{bibdiv}
\begin{biblist}

\bib{A}{book}{
   author={Ahlfors, Lars V.},
   title={Complex analysis: An introduction of the theory of analytic
   functions of one complex variable},
   series={Second edition},
   publisher={McGraw-Hill Book Co., New York-Toronto-London},
   date={1966},
   pages={xiii+317},
   review={\MR{0188405 (32 \#5844)}},
}

\bib{BF}{article}{
   author={Behrend, K.},
   author={Fantechi, B.},
   title={The intrinsic normal cone},
   journal={Invent. Math.},
   volume={128},
   date={1997},
   number={1},
   pages={45--88},
   issn={0020-9910},
   review={\MR{1437495 (98e:14022)}},
   doi={10.1007/s002220050136},
}


\bib{BBD}{article}{
   author={Be\u\i linson, A. A.},
   author={Bernstein, J.},
   author={Deligne, P.},
   title={Faisceaux pervers},
   language={French},
   conference={
      title={Analysis and topology on singular spaces, I},
      address={Luminy},
      date={1981},
   },
   book={
      series={Ast\'erisque},
      volume={100},
      publisher={Soc. Math. France, Paris},
   },
   date={1982},
   pages={5--171},
   review={\MR{751966}},
}

\bib{CNV}{article}{
   author={Csordas, George},
   author={Norfolk, Timothy S.},
   author={Varga, Richard S.},
   title={The Riemann hypothesis and the Tur\'an inequalities},
   journal={Trans. Amer. Math. Soc.},
   volume={296},
   date={1986},
   number={2},
   pages={521--541},
   issn={0002-9947},
   review={\MR{846596 (87i:11109)}},
   doi={10.2307/2000378},
}

\bib{stacks-project}{misc}{
  author = {de Jong, A.J.},
  title = {Stacks Project},
  url = {http://stacks.math.columbia.edu},
  date = {2015},
}

\bib{D87}{article}{
   author={Drinfel{\cprime}d, V. G.},
   title={Moduli varieties of $F$-sheaves},
   language={Russian},
   journal={Funktsional. Anal. i Prilozhen.},
   volume={21},
   date={1987},
   number={2},
   pages={23--41},
   issn={0374-1990},
   review={\MR{902291 (89b:11092)}},
}

\bib{Fulton}{book}{
   author={Fulton, William},
   title={Intersection theory},
   series={Ergebnisse der Mathematik und ihrer Grenzgebiete. 3. Folge. A
   Series of Modern Surveys in Mathematics [Results in Mathematics and
   Related Areas. 3rd Series. A Series of Modern Surveys in Mathematics]},
   volume={2},
   edition={2},
   publisher={Springer-Verlag, Berlin},
   date={1998},
   pages={xiv+470},
   isbn={3-540-62046-X},
   isbn={0-387-98549-2},
   review={\MR{1644323 (99d:14003)}},
   doi={10.1007/978-1-4612-1700-8},
}

\bib{Gillet}{article}{
   author={Gillet, Henri},
   title={Intersection theory on algebraic stacks and $Q$-varieties},
   booktitle={Proceedings of the Luminy conference on algebraic $K$-theory
   (Luminy, 1983)},
   journal={J. Pure Appl. Algebra},
   volume={34},
   date={1984},
   number={2-3},
   pages={193--240},
   issn={0022-4049},
   review={\MR{772058 (86b:14006)}},
   doi={10.1016/0022-4049(84)90036-7},
}

\bib{GJ}{book}{
   author={Godement, Roger},
   author={Jacquet, Herv{\'e}},
   title={Zeta functions of simple algebras},
   series={Lecture Notes in Mathematics, Vol. 260},
   publisher={Springer-Verlag, Berlin-New York},
   date={1972},
   pages={ix+188},
   review={\MR{0342495 (49 \#7241)}},
}

\bib{GH}{article}{
   author={Goldfeld, Dorian},
   author={Huang, Bingrong},
   title={Super-positivity of a family of L-functions},
   journal={preprint},
    date={2017}

}

\bib{GM}{article}{
   author={Goresky, Mark},
   author={MacPherson, Robert},
   title={Intersection homology. II},
   journal={Invent. Math.},
   volume={72},
   date={1983},
   number={1},
   pages={77--129},
   issn={0020-9910},
   review={\MR{696691}},
   doi={10.1007/BF01389130},
}

\bib{GZ}{article}{
   author={Gross, Benedict H.},
   author={Zagier, Don B.},
   title={Heegner points and derivatives of $L$-series},
   journal={Invent. Math.},
   volume={84},
   date={1986},
   number={2},
   pages={225--320},
   issn={0020-9910},
   review={\MR{833192 (87j:11057)}},
   doi={10.1007/BF01388809},
}

\bib{J86}{article}{
   author={Jacquet, Herv{\'e}},
   title={Sur un r\'esultat de Waldspurger},
   language={French},
   journal={Ann. Sci. \'Ecole Norm. Sup. (4)},
   volume={19},
   date={1986},
   number={2},
   pages={185--229},
   issn={0012-9593},
   review={\MR{868299 (88d:11051)}},
}

\bib{Kresch2}{article}{
   author={Kresch, Andrew},
   title={Canonical rational equivalence of intersections of divisors},
   journal={Invent. Math.},
   volume={136},
   date={1999},
   number={3},
   pages={483--496},
   issn={0020-9910},
   review={\MR{1695204 (2000d:14005)}},
   doi={10.1007/s002220050317},
}

\bib{Kresch}{article}{
   author={Kresch, Andrew},
   title={Cycle groups for Artin stacks},
   journal={Invent. Math.},
   volume={138},
   date={1999},
   number={3},
   pages={495--536},
   issn={0020-9910},
   review={\MR{1719823 (2001a:14003)}},
   doi={10.1007/s002220050351},
}

\bib{VL}{article}{
author={Lafforgue, Vincent},
title={Chtoucas pour les groupes r\'eductifs et param\`etrisation de Langlands globale},
eprint={arXiv:1209.5352},
date={2012},
}

\bib{LR}{article}{
   author={Lapid, Erez},
   author={Rallis, Stephen},
   title={On the nonnegativity of $L(\frac{1}{2},\pi)$ for ${\rm SO}_{2n+1}$},
   journal={Ann. of Math. (2)},
   volume={157},
   date={2003},
   number={3},
   pages={891--917},
   issn={0003-486X},
   review={\MR{1983784 (2004d:11039)}},
}

\bib{LO}{article}{
   author={Laszlo, Yves},
   author={Olsson, Martin},
   title={The six operations for sheaves on Artin stacks. II. Adic
   coefficients},
   journal={Publ. Math. Inst. Hautes \'Etudes Sci.},
   number={107},
   date={2008},
   pages={169--210},
   issn={0073-8301},
   review={\MR{2434693 (2009f:14003b)}},
   doi={10.1007/s10240-008-0012-5},
}

\bib{Laumon}{article}{
   author={Laumon, G\'erard},
   title={Correspondance de Langlands g\'eom\'etrique pour les corps de
   fonctions},
   language={French},
   journal={Duke Math. J.},
   volume={54},
   date={1987},
   number={2},
   pages={309--359},
   issn={0012-7094},
   review={\MR{899400}},
   doi={10.1215/S0012-7094-87-05418-4},
}


\bib{NgoH}{article}{
   author={Ng{\^o}, Bao Ch{\^a}u},
   title={Fibration de Hitchin et endoscopie},
   language={French, with English summary},
   journal={Invent. Math.},
   volume={164},
   date={2006},
   number={2},
   pages={399--453},
   issn={0020-9910},
   review={\MR{2218781}},
   doi={10.1007/s00222-005-0483-7},
}

\bib{Ngo}{article}{
   author={Ng{\^o}, Bao Ch{\^a}u},
   title={Le lemme fondamental pour les alg\`ebres de Lie},
   language={French},
   journal={Publ. Math. Inst. Hautes \'Etudes Sci.},
   number={111},
   date={2010},
   pages={1--169},
   issn={0073-8301},
   review={\MR{2653248 (2011h:22011)}},
   doi={10.1007/s10240-010-0026-7},
}

\bib{Sa}{article}{
   author={Sarnak, Peter},
   title={Letter to E. Bachmat on positive definite L-Functions},
   journal={http://publications.ias.edu/sarnak/paper/511},
   date={2011},
}

\bib{Va}{article}{
   author={Varshavsky, Yakov},
   title={Moduli spaces of principal $F$-bundles},
   journal={Selecta Math. (N.S.)},
   volume={10},
   date={2004},
   number={1},
   pages={131--166},
   issn={1022-1824},
   review={\MR{2061225 (2005m:14043)}},
   doi={10.1007/s00029-004-0343-0},
}

\bib{Var}{article}{
   author={Varshavsky, Yakov},
   title={Lefschetz-Verdier trace formula and a generalization of a theorem
   of Fujiwara},
   journal={Geom. Funct. Anal.},
   volume={17},
   date={2007},
   number={1},
   pages={271--319},
   issn={1016-443X},
   review={\MR{2306659 (2008d:14032)}},
   doi={10.1007/s00039-007-0596-9},
}

\bib{Vistoli}{article}{
   author={Vistoli, Angelo},
   title={Intersection theory on algebraic stacks and on their moduli
   spaces},
   journal={Invent. Math.},
   volume={97},
   date={1989},
   number={3},
   pages={613--670},
   issn={0020-9910},
   review={\MR{1005008 (90k:14004)}},
   doi={10.1007/BF01388892},
}

\bib{W}{article}{
   author={Waldspurger, J.-L.},
   title={Sur les valeurs de certaines fonctions $L$ automorphes en leur
   centre de sym\'etrie},
   language={French},
   journal={Compositio Math.},
   volume={54},
   date={1985},
   number={2},
   pages={173--242},
   issn={0010-437X},
   review={\MR{783511 (87g:11061b)}},
}

\bib{YZZ}{book}{
   author={Yuan, Xinyi},
   author={Zhang, Shou-Wu},
   author={Zhang, Wei},
   title={The Gross-Zagier formula on Shimura curves},
   series={Annals of Mathematics Studies},
   volume={184},
   publisher={Princeton University Press, Princeton, NJ},
   date={2013},
   pages={x+256},
   isbn={978-0-691-15592-0},
   review={\MR{3237437}},
}

\bib{Y-FL}{article}{
   author={Yun, Zhiwei},
   title={The fundamental lemma of Jacquet and Rallis},
   note={With an appendix by Julia Gordon},
   journal={Duke Math. J.},
   volume={156},
   date={2011},
   number={2},
   pages={167--227},
   issn={0012-7094},
   review={\MR{2769216 (2012b:22009)}},
   doi={10.1215/00127094-2010-210},
}

\bib{Y11}{article}{
author={Yun, Zhiwei},
title={An Arithmetic Fundamental Lemma for function fields}, 
note={preprint},
date={2011},
}

\bib{Z09}{article}{
author={Zhang, Wei},
title={Relative trace formula and arithmetic Gross--Prasad conjecture}, 
note={unpublished manuscript}, 
date={2009},
}

\bib{Z12}{article}{
   author={Zhang, Wei},
   title={On arithmetic fundamental lemmas},
   journal={Invent. Math.},
   volume={188},
   date={2012},
   number={1},
   pages={197--252},
   issn={0020-9910},
   review={\MR{2897697}},
   doi={10.1007/s00222-011-0348-1},
}

\bib{Z14}{article}{
   author={Zhang, Wei},
   title={Automorphic period and the central value of Rankin-Selberg
   $L$-function},
   journal={J. Amer. Math. Soc.},
   volume={27},
   date={2014},
   number={2},
   pages={541--612},
   issn={0894-0347},
   review={\MR{3164988}},
   doi={10.1090/S0894-0347-2014-00784-0},
}

\end{biblist}
\end{bibdiv}

\end{document}